\theoremstyle{plain}
\newtheorem{theorem}{\indent\bf Theorem}[section]
\newtheorem{lemma}[theorem]{\indent\bf Lemma}
\newtheorem{corollary}[theorem]{\indent\bf Corollary}
\newtheorem{proposition}[theorem]{\indent\bf Proposition}
\theoremstyle{definition}
\newtheorem{definition}[theorem]{\indent\bf Definition}
\newtheorem{remark}[theorem]{\indent\bf Remark}
\newtheorem{problem}[theorem]{\indent\bf Problem}
\begin{document}
\title[Logarithmic Newton polygons and polytopes]{Logarithmic Newton polygons and polytopes, \\ and the factorization of Dirichlet polynomials}

\author[N.C. Bonciocat]{Nicolae Ciprian Bonciocat}
\address{Simion Stoilow Institute of Mathematics of the Romanian Academy, Research Unit 7, P.O. Box 1-764,
Bucharest 014700, Romania}
\email{Nicolae.Bonciocat@imar.ro}

\keywords{Dirichlet polynomials, Irreducibility, logarithmic Newton polygons and polytopes}
\subjclass[2020]{Primary 11R09, 11C08, 30B50, Secondary 13P05, 13A05, 13F15}

\dedicatory{Dedicated to Professor Mihai Cipu on the occasion of his retirement}

\begin{abstract}
To study a Dirichlet polynomial $f(s)=\frac{a_{m}}{m^{s}}+\cdots +\frac{a_{n}}{n^{s}}$ by regarding it as a multivariate polynomial via the canonical map $\phi$ sending $p_i^{-s}$ to an indeterminate $X_i$, with $p_i$ the $i$th prime number, requires knowing the prime factorizations of all the integers in the support of $f$. We devise several methods to study the factorization of Dirichlet polynomials over unique factorization domains that circumvent the use of $\phi$, and obtain irreducibility criteria that are analogous to the classical results of Sch\"onemann, Eisenstein, Dumas, St\"ackel, Ore and Weisner for polynomials, and to more recent results of Filaseta and Cavachi. Some of the proofs rely on logarithmic versions of the classical Newton polygons. Criteria that use two or more $p$-adic valuations by combining information from different logarithmic Newton polygons of $f$, as well as irreducibility conditions for Dirichlet polynomials that assume a prime or a prime power value are also obtained. We also find excluding intervals for the relative degrees of the factors of a Dirichlet polynomial, and upper bounds for the multiplicities of the irreducible factors, in particular square-free criteria, that use no derivatives.  Criteria of absolute irreducibility analogous to results of Ostrowski, Gao and Stepanov-Schmidt are finally provided in the multivariate case by using logarithmic Newton polytopes and logarithmic upper Newton polygons.
\end{abstract}

\maketitle

\vspace{-0.9cm}

{\small 
\tableofcontents
}
\section{Introduction}\label{Introducere}

A Dirichlet polynomial $f(s)$ with complex coefficients $a_{i}$ and exponents $\lambda _i$ is a sum of the form $\sum _{i\in S}a_ie^{-\lambda _is}$ with $S$ a finite set of natural numbers and $(\lambda _i)$ an increasing sequence of positive real numbers. Many fundamental results in analytic and multiplicative number theory that rely on their properties refer to the particular case when $\lambda _i=\log i$.  Dirichlet polynomials are thus partial sums of corresponding Dirichlet series, and they can be used to approximate most Zeta-functions and $L$-functions, and their powers as well, and also their functional equations (Ivic \cite{Ivic}, Titchmarsh \cite{Titchmarsh}, Bombieri and Friedlander \cite{Bombieri}). Using the change of indeterminate $s\mapsto -s$, one may define the Dirichlet polynomials as sums of the form $\sum _{i\in S}a_i\cdot i^s$, which is a notation used by Lov\'asz in \cite{Lovasz}, where such series are called finite Dirichlet series. Huxley \cite{Huxley} studied Dirichlet polynomials of the form $\sum_{n=N}^{2N}\frac{a(n)\chi(n)}{n^s}$ with $\chi$ a Dirichlet character and $s=\sigma +it$, $\sigma \geq 0$. In particular, the analytical properties of Dirichlet polynomials play an important role in the study of the Riemann zeta-function (Montgomery \cite{Montgomery}). The study of almost-periodic functions relies too on the properties of Dirichlet polynomials (Besicovitch \cite{Besicovitch}), and some factorization properties of Dirichlet polynomials with integer coefficients have applications in group theory (Lucchini, Damian, Detomi and Morini \cite{Detomi}, \cite{Damian1}, \cite{Damian2}, \cite{Damian3}, \cite{Lucchini}, Brown \cite{Brown}, Hironaka \cite{Hironaka}, and Patassini \cite{Patassini1}, \cite{Patassini2}, \cite{Patassini3}, for instance). Dirichlet polynomials have also applications in lattice theory (Shala \cite{Shala}). It is well-known that the ring of Dirichlet polynomials with coefficients in a unique factorization domain $R$ is isomorphic to a ring of polynomials in countably many indeterminates over $R$, and that the ring of Dirichlet series with complex coefficients is isomorphic to a ring of formal power series in countably many indeterminates (Cashwell and Everett \cite{Cashwell}).

In contrast to the intensive study of their analytical properties (see for instance Dickson \cite{Dickson}, Montgomery \cite{Montgomery2}, Halasz \cite{Halasz}, Huxley \cite{Huxley2}, Heath-Brown \cite{Heath-Brown}, Jutila \cite{Jutila}, Bourgain \cite{Bourgain}, Oliveira \cite{Oliveira}, Roy and Vatwani \cite{RoyVatwani}, Guth and Maynard \cite{GuthMaynard}, and the references therein), the study of the factorization properties of Dirichlet polynomials has received significantly less attention. In this respect, there seems to be a lack of effective results on the factorization properties of Dirichlet polynomials over unique factorization domains, such as irreducibility criteria, square-free criteria, or bounds for the number of their irreducible factors, or for their multiplicities, and also a lack of effective results on the factorization of multivariate Dirichlet polynomials over an arbitrary field. The aim of this paper is to provide a series of such results, some of them being analogous to classical results for polynomials, such as the irreducibility criteria of Sch\"onemann-Eisenstein \cite{Schonemann}, \cite{Eisenstein}, Dumas \cite{Dumas}, St\"ackel \cite{Stackel}, Ore \cite{Ore4}, Weisner \cite{Weisner}, and Ostrowski \cite{Ostrowski1}, \cite{Ostrowski2}, or to more recent results of Filaseta \cite{Filaseta1}, Cavachi \cite{Cavachi} and Gao \cite{Gao}, for instance.

Besides being interesting on their own, a motivation to study Dirichlet polynomials over arbitrary unique factorization domains $R$ comes from the fact that they offer an additional framework to study multivariate polynomials, with which they are intimately linked. 
When $R$ is the ring of rational integers, another strong motivation to find and to study irreducible Dirichlet polynomials comes from their deep connections with arithmetic and number theory. For instance, we may hope that, under certain hypotheses, some of them might assume prime values infinitely often when $s$ runs in the set of negative integers. In this regard, one might easily formulate a conjecture similar to that of Bouniakovsky \cite{Bouniakovsky}:
\medskip

{\bf Conjecture 1.} {\it A primitive, irreducible Dirichlet polynomial $f(s)$ with positive leading coefficient and such that the set of values $f(\mathbb{Z}_{-})$ has no common divisor greater than $1$, assumes prime values infinitely often.}
\medskip 

However, such a conjecture should be considered with extreme caution, since two of its simplest cases, namely when $f_{1}(s)=-1+\frac{1}{2^{s}}$ and $f_{2}(s)=1+\frac{1}{2^{s}}$, refer to Mersenne primes (primes of the form $2^{p}-1$ with $p$ prime) and to Fermat primes (primes of the form $2^{2^{n}}+1$). While there is hope that there are infinitely many Mersenne primes, heuristics lead to a lot of skepticism on the existence of infinitely many Fermat primes. In this respect, such a conjecture might necessitate some additional assumptions on $f$. 
One might also consider an analogue for Dirichlet polynomials of Schinzel's Hypothesis H \cite{Schinzel}:
\medskip

{\bf Conjecture 2.} {\it For every nonconstant irreducible Dirichlet polynomials $f_1(s),\dots ,f_k(s)$ with positive leading coefficients, one of the following conditions holds:

1) There are infinitely many negative integers $n$ such that all of $f_1(n),\dots ,f_k(n)$ are simultaneously prime numbers;

2) There exists an integer $m>1$ that depends on $f_1,\dots ,f_k$ which divides $f_1(n)\cdots f_k(n)$ for each negative integer $n$ (i.e. there is a prime number $p$ such that for every negative integer $n$ there exists an index $i$ such that $f_i(n)$ is divisible by $p$).
}
\smallskip

We mention that to avoid considering negative integers $n$ in these conjectures, we may use the change of indeterminate $s\mapsto -s$, and define as in \cite{Lovasz} the Dirichlet polynomials as sums of the form $a_mm^s+\cdots +a_nn^s$.
\smallskip

Some fundamental results concerning Schinzel's Hypothesis H have been recently proved in various settings. Skorobogatov and Sofos \cite{Skorobogatov} proved that almost all polynomials of any fixed degree satisfy Schinzel's Hypothesis H. Bodin, D\`ebes and Najib \cite{Bodin1} proved that Schinzel's Hypothesis H holds if one replaces the ring $\mathbb{Z}$ of rational integers by the ring $R_m=R[x_1,\dots ,x_m]$ with $m\geq 1$ and $R$ is a unique factorization domain with quotient field $K$ in which the product formula holds, and in case $K$ has positive characteristic $p$, one has $K^p\neq K$ (and an extension of this result for multivariate polynomials). Improvements on a coprime version of Schinzel Hypothesis have been obtained by Bodin, D\`ebes, K\"onig and Najib in \cite{Bodin2}. We expect that some of the techniques used to prove these results might be adapted to the case of Dirichlet polynomials, to study the truthfulness of the two conjectures above. 

For a third conjecture in the same vein, we state here the simplest form of an analogue for Dirichlet polynomials of Hilbert's irreducibility theorem:
\smallskip

{\bf Conjecture 3.}\ {\em Let $f(s,t)$ be a bivariate irreducible Dirichlet polynomial with rational coefficients. Then $f$ remains irreducible for infinitely many integer specializations of $s$.
}

If we denote by $p_{1},p_{2},\dots $ the rational primes and use the map $\phi$ that sends $p_{i}^{-s}$ to an indeterminate $X_{i}$, a term of a Dirichlet polynomial $f(s)$ of the form $\frac{a_{n}}{n^{s}}$ with $n$ having the canonical decomposition $n=p_{i_{1}}^{k_{1}}\cdots p_{i_{r}}^{k_{r}}$, will be identified with the monomial $a_{n}X_{i_{1}}^{k_{1}}\cdots X_{i_{r}}^{k_{r}}$. Thus, one can regard $f$ as a multivariate polynomial, say $F(X_{i_{1}},\dots X_{i_{t}})$, and rephrase Conjecture 1, for instance, in terms of $F$, by asking whether $F$ assumes prime values for infinitely many specializations of the form $(X_{i_1},\dots ,X_{i_t})=(p_{i_1}^d,\dots ,p_{i_t}^d)$, where the primes $p_{i_1},\dots ,p_{i_t}$ are fixed and $d$ runs in the set of positive integers. Conversely, every multivariate polynomial $F(X_{1},\dots ,X_{n})$ can be transformed into a Dirichlet polynomial if we replace in $F$ the indeterminates $X_{i}$ by $p_{i}^{-s}$.

Despite this intimate connection between Dirichlet polynomials and multivariate polynomials, there is no easy way to study them simultaneously, the main obstruction coming from arithmetic. The reason is that, in general, there is not enough ``transparency" neither in factoring the integers in the support of $f$ to see what monomials will $F$ have, nor in computing and writing in ascending order the integers obtained by replacing the indeterminates of $F$ by $p_{1}^{-s},p_{2}^{-s},\dots $ to find $f$. In this respect, given a Dirichlet polynomial $f(s)=\frac{a_{m}}{m^{s}}+\cdots +\frac{a_{n}}{n^{s}}$, to effectively use $\phi$ in order to find the associated multivariate polynomial $F$, requires knowing the prime factorizations of all the indices $i$ with $a_i\neq 0$. Without knowing these prime factorizations, we cannot even find the number of indeterminates of $F$ and its degree. To overcome this obstruction, we will devise some methods to study the factorization of Dirichlet polynomials over unique factorization domains that essentially circumvent the use of $\phi$, or reduce the number of required prime factorizations to a minimum. This will be our main goal. 

The paper is structured as follows. Section \ref{DefNot} contains notations, definitions, and some basic facts on the irreducibility of Dirichlet polynomials, mostly of arithmetical nature, followed by an analogue for Dirichlet polynomials of the Sch\"onemann-Eisenstein irreducibility criterion. Section \ref{NewtonLogPol} presents the construction of the Newton log-polygon, a logarithmic version of the classical Newton polygon, which is most suitable for studying Dirichlet polynomials, and also provides results that are analogous to Dumas's irreducibility criterion for polynomials \cite{Dumas}. Using Newton log-polygon method, we also obtain irreducibility criteria for linear combinations of relatively prime Dirichlet polynomials of the form $f(s)+p^kg(s)$ with $p$ prime, that are analogous to Cavachi's criterion for polynomials \cite{Cavachi}. Other irreducibility criteria for linear combinations of Dirichlet polynomials, analogous to Sch\"onemann's criterion for polynomials and its variations, will be given in Appendix A. In Section \ref{RelativeDegree} we will adapt an elegant method of Filaseta that was crucial for proving the irreducibility of all but finitely many Bessel polynomials \cite{Filaseta1}, and obtain explicit intervals in which no relative degrees of the factors of some Dirichlet polynomials can lie.
We also obtain conditions that allow a Dirichlet polynomial to remain irreducible after multiplication of some of its coefficients by arbitrarily chosen elements belonging to the ring of coefficients. In Section \ref{moreprimes} we obtain a series of irreducibility conditions that use two or more $p$-adic valuations, by combining information from two or more Newton log-polygons of a given Dirichlet polynomial with respect to different primes. In Section \ref{primevalues} we provide several irreducibility criteria for Dirichlet polynomials that assume a prime value, or a prime power value, that are analogous to the irreducibility criteria of St\"ackel, Ore, and Weisner for polynomials. These results rely on Gelfond's inequality for the product of the heights of the factors of a multivariate polynomial, and also on some suitable estimates for the prime counting function $\pi(n)$. In Section \ref{squarefree} we will first show that in some cases the multiplicities of the irreducible factors of a Dirichlet polynomial $f$ are bounded by expressions that depend only on information about the indices $i$ in the support of $f$, not on the values of the coefficients. We will then devise a general method to study the multiplicities of the irreducible factors of Dirichlet polynomials, that requires no derivatives. This method holds over arbitrary unique factorization domains $R$, that do not necessarily include the logarithms of positive rational integers, required to define $f'(s)$. In particular, we will obtain a square-free criterion that requires no derivatives, and which reduces in case $R$ has positive characteristic to asking certain matrices associated to $f$ to have full rank. Conditions for two Dirichlet polynomials $f$ and $g$ to have a common factor will be then given in terms of the rank of some matrices associated to $f$ and $g$, which are analogous to the Sylvester matrix of two univariate polynomials. Section \ref{logpolytope} is devoted to the study of the factorization of multivariate Dirichlet polynomials over an arbitrary field $K$, which requires using logarithmic versions of the classical Newton polytopes. Here we obtain a logarithmic analogue of Ostrowski's Theorem on the decomposability of the Newton polytope of a product of two multivariate polynomials \cite{Ostrowski1}, \cite{Ostrowski2}, and applications to the study of the absolute irreducibility of multivariate Dirichlet polynomials, that rely on the concept of {\it log-integrally indecomposable} polytopes. We also provide a criterion of log-integrally indecomposability of polytopes, which is a logarithmic analogue of Gao's criterion for integrally indecomposability of polytopes \cite{Gao}. In Section \ref{Stepanov} we use a logarithmic analogue for bivariate Dirichlet polynomials of the {\it upper} Newton polygon for bivariate polynomials \cite{Gao}, and obtain irreducibility conditions, analogous to those in the Stepanov-Schmidt criterion for bivariate polynomials, that are expressed in terms of the degrees of the coefficients with respect to one of the indeterminates. Several examples will be given in the last section of the paper. 

\section{Definitions, notations, some basic facts, and an analogue of the \\ Sch\"onemann-Eisenstein irreducibility criterion}\label{DefNot}

A Dirichlet polynomial $f(s)$ with coefficients in a unique factorization domain $R$ is a finite sum of the form $\frac{a_{1}}{i_{1}^{s}}+\frac{a_{2}}{i_{2}^{s}}+\cdots +\frac{a_{n}}{i_{n}^{s}}$, with $i_{1}<i_{2}<\dots <i_{n}$ positive integers, $a_1,\dots,a_n\in R$, $a_{1}\cdots a_{n}\neq 0$, and $s$ regarded as an indeterminate. A simpler notation for $f$ that we will mostly use is $\frac{a_{m}}{m^{s}}+\cdots +\frac{a_{n}}{n^{s}}$, with $m,n$ arbitrary integers with $1\leq m\leq n$, and $a_ma_n\neq 0$.
One can obviously add and multiply in a natural way two such objects, of finite arbitrary length, to obtain other finite sums of this type. As is known, unlike in the polynomial case, here we will use for multiplication the Dirichlet product. Thus, if $f(s)=\frac{a_{m}}{m^{s}}+\cdots +\frac{a_{n}}{n^{s}}$ and $g(s)=\frac{b_{u}}{u^{s}}+\cdots +\frac{b_{v}}{v^{s}}$, then the product $f\cdot g(s)$ will be the Dirichlet polynomial $\frac{c_{mu}}{(mu)^{s}}+\cdots +\frac{c_{nv}}{(nv)^{s}}$, with coefficients 
\[
c_{k}=\sum\limits _{i\cdot j=k}a_{i}b_{j},\quad k=mu,\dots ,nv.
\]
One may easily check that with respect to the usual addition and to the multiplication defined above, the set $DP(R)[s]$ of Dirichlet polynomials with coefficients in $R$ is a commutative ring with unity. Using the map $\phi$ described before, the ring $DP(R)[s]$ of Dirichlet polynomials is easily seen to be isomorphic to a ring of polynomials in countably many indeterminates over $R$, thus being a unique factorization domain. 

We will proceed now with some definitions and notations.  

\begin{definition}\label{def1}
Let $R$ be a unique factorization domain, $DP(R)[s]$ the ring of Dirichlet polynomial with coefficients in $R$, and let $f\in DP(R)[s]$. The {\it degree} of a nonzero term of $f$ of the form $\frac{a_{i}}{i^{s}}$ will be by definition $i$, so for a Dirichlet polynomial $f(s)=\frac{a_{m}}{m^{s}}+\cdots +\frac{a_{n}}{n^{s}}$ with $a_{m}a_{n}\neq 0$ we will refer to $m$ as the {\it min-degree} of $f$ and we will denote it $\deg _{{\rm min}}f$, while $n$ will be called the {\it degree} of $f$, and will be denoted by $\deg f$. Also, by convention, the zero Dirichlet polynomial will have degree zero, and if $f\neq 0$, the rational number $\frac{n}{m}\geq 1$ will be referred to as the {\it relative degree} of $f$.
Moreover,  $a_{n}$ will be referred to as the {\it leading coefficient} of $f$, $a_m$ as the {\it min-degree coefficient} of $f$, $a_1$ as the {\it constant term} of $f$, and if $a_n=1$, $f$ will be called {\it monic}. The {\it support} of $f$, denoted by $Supp(f)$, is the set of indices $i$ with $a_i\neq 0$. 

We mention here that, unless $a_1\neq 0$, we will try to avoid writing $f$ as $\frac{a_1}{1^s}+\cdots +\frac{a_{n}}{n^{s}}$, since many factorization problems rely on knowing $\deg _{{\rm min}}f$, as we will see later.

A Dirichlet polynomial $f$ is called {\it primitive} if its coefficients are coprime. An analogue of Gauss's lemma holds in this case too, so a product of primitive Dirichlet polynomials is easily seen to be primitive too.

A Dirichlet polynomial $f$ is called {\it algebraically primitive}
if the integers in the support of $f$ are relatively prime. If $Supp(f)=\{i_1,\dots ,i_k\}$ and $\gcd (i_1,\dots ,i_k)=d>1$, then the Dirichlet polynomial obtained from $f$ by dividing each of the indices $i_1,\dots ,i_k$ by $d$ will be called the {\it algebraically primitive part of} $f$. We note here that the product of two algebraically primitive Dirichlet polynomials is also algebraically primitive.

A nonconstant Dirichlet polynomial $f$ is called {\it irreducible} if it cannot be expressed as a product of two nonconstant Dirichlet polynomials $g,h\in DP(R)[s]$, otherwise being called {\it reducible}. We note that in this definition we don't ask an irreducible Dirichlet polynomial to necessarily be primitive. Thus, just like in the case of polynomials, a non-primitive irreducible $f$ will be {\it irreducible over} $Q(R)$, the quotient field of $R$, while a primitive irreducible $f$ will be referred to as {\it irreducible over} $R$. 

Let $P_{f}=\{ p_{1},\dots ,p_{r}\} $ be the set of all the prime numbers that appear in at least one of the the prime factorizations of the indices $i\in Supp(f)$.  We will refer to the prime numbers $p\in P_{f}$ as the {\it relevant} primes of $f$.

A Dirichlet polynomial $f$ is {\it square-free} if it has no repeated irreducible factors (including irreducible elements in $R$). Thus, a primitive Dirichlet polynomial is square-free if it has no repeated nonconstant irreducible factors. More generally, if $k\geq 2$ is an integer, we will say that (a primitive) $f$ is {\it $k$-power-free} if the multiplicities of its (nonconstant) irreducible factors are at most $k-1$.
\end{definition}

Throughout the paper, proving the irreducibility of a Dirichlet polynomial $f$ without explicitly assuming that $f$ is primitive, will mean that $f$ is irreducible over $Q(R)$, the quotient field of the ring $R$ of coefficients. Moreover, in some of our results, where there is no risk of confusion, to simplify the statements, we will no longer specify the ring $R$ of coefficients.

The simplest examples of infinite families of irreducible Dirichlet polynomials are easily obtained directly from Definition \ref{def1}:
\begin{proposition}\label{prop1}
All Dirichlet polynomials of prime degree are irreducible.
\end{proposition}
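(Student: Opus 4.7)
The plan is to argue by contradiction, exploiting multiplicativity of the degree under the Dirichlet product and the observation that a nonconstant Dirichlet polynomial must have degree at least $2$.

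First I would verify multiplicativity of the degree. Suppose $f=g\cdot h$ with $g(s)=\frac{b_{u}}{u^{s}}+\cdots +\frac{b_{v}}{v^{s}}$ and $h(s)=\frac{c_{x}}{x^{s}}+\cdots +\frac{c_{y}}{y^{s}}$, where $b_{u}b_{v}c_{x}c_{y}\neq 0$. From the formula $c_{k}=\sum_{ij=k}a_{i}b_{j}$ given right after Definition \ref{def1}, the coefficient of $\frac{1}{(vy)^{s}}$ in $g\cdot h$ is $b_{v}c_{y}\neq 0$ (since $R$ is an integral domain), and no larger index can appear in the support. Hence $\deg (g\cdot h)=\deg g\cdot \deg h$. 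An analogous argument applied to the smallest index shows $\deg_{\min}(g\cdot h)=\deg_{\min}g\cdot \deg_{\min}h$, though only the former identity is needed here.

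Next I would record the trivial but crucial fact that a nonconstant Dirichlet polynomial has degree at least $2$. Indeed, if $\deg g=1$, then by definition $1\leq \deg_{\min}g\leq \deg g=1$, so the support of $g$ consists of the single index $1$, and $g=\frac{b_{1}}{1^{s}}=b_{1}$ is constant. Thus for $g$ nonconstant one has $\deg g\geq 2$.

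Now suppose, for the sake of contradiction, that a Dirichlet polynomial $f$ of prime degree $p$ admits a factorization $f=g\cdot h$ with $g,h\in DP(R)[s]$ nonconstant. By the two observations above, $p=\deg f=\deg g\cdot \deg h$ with $\deg g,\deg h\geq 2$, which forces $p$ to be composite, a contradiction. Hence no such factorization exists and $f$ is irreducible. I do not foresee any real obstacle; the only point that requires some care is checking that the leading term in the Dirichlet product does not vanish, which is immediate since $R$ is a domain (in fact a UFD).
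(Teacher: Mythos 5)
Your proof is correct and is exactly the argument the paper has in mind: the paper gives no explicit proof of Proposition~\ref{prop1}, stating only that it follows ``directly from Definition~\ref{def1},'' and the implicit reasoning is precisely your multiplicativity of degree under the Dirichlet product together with the observation that a nonconstant Dirichlet polynomial has degree at least $2$. The one point you rightly flag — that the top coefficient $b_v c_y$ of the product does not vanish because $R$ is a domain — is indeed the only place where a hypothesis on $R$ is used.
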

We note here that unlike univariate polynomials, where for instance, $X^{2}+1$ is irreducible over $\mathbb{Z}$, being reducible over $\mathbb{Z}[i]$,  a Dirichlet polynomial of prime degree will remain irreducible even if we embed its ring of coefficients into a larger one, with enriched factorization properties. This is the simplest example of an {\it absolutely irreducible} Dirichlet polynomial, meaning that it remains irreducible over an algebraic closure of the quotient field of $R$.

Among the Dirichlet polynomials $f(s)=\frac{a_{m}}{m^{s}}+\cdots +\frac{a_{n}}{n^{s}}$ of composite degree $n$, or having the min-degree $m>1$, one may easily identify a class of irreducible ones, consisting of those having nonzero terms of degree sufficiently close to $n$ or $m$. In this respect we have:
\begin{proposition}\label{prop2}
Let $f$ be a Dirichlet polynomial of composite degree $n$, and let $p_{n}$ be the smallest prime divisor of $n$. If $f$ has a nonzero coefficient $a_{i}$ with $n-p_{n}<i<n$, then $f$ is irreducible.
\end{proposition}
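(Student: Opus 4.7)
The plan is to argue by contradiction: I would suppose $f$ factors as a Dirichlet product $f=g\cdot h$ with both $g$ and $h$ nonconstant, and then show that the support of $f$ cannot contain any index strictly between $n-p_n$ and $n$, contradicting the hypothesis on $a_i$.

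First I would set $u=\deg g$ and $v=\deg h$. Since the Dirichlet product is degree-multiplicative (the top term of $g\cdot h$ comes from multiplying the leading terms), one has $uv=n$. Because $g$ and $h$ are both nonconstant, $u>1$ and $v>1$, so both $u$ and $v$ are divisors of $n$ that exceed $1$; in particular $u\geq p_n$ and $v\geq p_n$.

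The key step is to inspect which indices can appear in the support of $f=g\cdot h$ below the top degree. Every nonzero term of $g\cdot h$ has index of the form $jk$ with $j\in\mathrm{Supp}(g)$ and $k\in\mathrm{Supp}(h)$, hence $1\leq j\leq u$ and $1\leq k\leq v$. If $jk<n=uv$, then we cannot have simultaneously $j=u$ and $k=v$, so either $j\leq u-1$ or $k\leq v-1$. In the first case $jk\leq(u-1)v=n-v\leq n-p_n$, and in the second $jk\leq u(v-1)=n-u\leq n-p_n$. Thus every index $i$ in $\mathrm{Supp}(f)$ with $i<n$ satisfies $i\leq n-p_n$, contradicting the existence of a nonzero $a_i$ with $n-p_n<i<n$.

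There is no real technical obstacle here; the argument is essentially a pigeonhole observation on the convolution support, relying only on the fact that every nontrivial divisor of $n$ is at least $p_n$. The single subtle point to flag in the write-up is the passage from $j<u$ (with $j$ a positive integer) to $j\leq u-1$, which justifies the inequality $jk\leq(u-1)v$ used in the bound.
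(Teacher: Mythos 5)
Your proposal is correct and follows essentially the same argument as the paper: assume a nontrivial factorization $f=gh$, note that any index $i<n$ in the support of $f$ must arise as $jk$ with $j\leq\deg g$, $k\leq\deg h$ and at least one inequality strict, and then bound $jk\leq n-p_n$ using the fact that every proper divisor of $n$ exceeding $1$ is at least $p_n$. The only cosmetic difference is that you treat the two symmetric cases explicitly where the paper invokes "without loss of generality."
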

\begin{proof}
\ Let us assume to the contrary that $f(s)=g(s)\cdot h(s)$ for two nonconstant Dirichlet polynomials $g(s)=\frac{b_{d_{1}}}{d_{1}^{s}}+\cdots +\frac{b_{d_{2}}}{d_{2}^{s}}$ and $h(s)=\frac{c_{e_{1}}}{e_{1}^{s}}+\cdots +\frac{c_{e_{2}}}{e_{2}^{s}}$, say, with $d_2>1$, $e_2>1$, $b_{d_{2}}\neq 0$, $c_{e_{2}}\neq 0$, and $d_{2}\cdot e_{2}=n$. By the multiplication rule, the coefficient $a_{i}$ is given by
\[
a_{i}=\sum\limits_{j\cdot k=i}b_{j}c_{k},
\] 
and in this sum we must have $b_{j}c_{k}\neq 0$ for at least one pair $(j,k)$ with $j\cdot k=i$, as $a_{i}\neq 0$. Consider such a pair $(j,k)$. Since $d_{2}\cdot e_{2}=n$ and $i<n$, at least one of the inequalities $j\leq d_{2}$ and $k\leq e_{2}$ must be a strict one, so without loss of generality we will assume that $j<d_{2}$. Since $e_2$ is a divisor of $n$ and $e_{2}\neq 1$, we must have $e_2\geq p_n$. This and the fact that $j<d_{2}$ allow us to successively deduce that
\[
i=j\cdot k\leq (d_{2}-1)k\leq (d_{2}-1)e_{2}=n-e_{2}\leq n-p_{n},
\]
which contradicts our assumption that $n-p_{n}<i$, and completes the proof. 
\end{proof}
\begin{proposition}\label{prop3}
Let $f(s)=\frac{a_{m}}{m^{s}}+\cdots +\frac{a_{n}}{n^{s}}$ be a Dirichlet polynomial with $m>1$ and $a_ma_n\neq 0$, and let $p_{m}$ and $p_{n}$ be the smallest prime divisors of $m$ and $n$, respectively. If $m>n/p_{n}$ and $f$ has a nonzero coefficient $a_{i}$ with $m<i<m+p_{m}$, then $f$ is irreducible.
\end{proposition}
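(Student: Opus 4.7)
The plan is to mirror the proof of Proposition \ref{prop2}, this time pushing the contradiction to the min-degree end of $f$. I would assume for contradiction that $f(s)=g(s)h(s)$ with both factors nonconstant, and write
\[
g(s) = \frac{b_{d_1}}{d_1^s} + \cdots + \frac{b_{d_2}}{d_2^s}, \qquad h(s) = \frac{c_{e_1}}{e_1^s} + \cdots + \frac{c_{e_2}}{e_2^s},
\]
with $d_2, e_2 > 1$, $b_{d_1}b_{d_2}c_{e_1}c_{e_2} \neq 0$, $d_1 e_1 = m$, and $d_2 e_2 = n$. Expanding $a_i = \sum_{jk = i} b_j c_k$, the assumption $a_i \neq 0$ produces a pair $(j, k)$ with $b_j c_k \neq 0$, $jk = i$, $d_1 \leq j \leq d_2$, and $e_1 \leq k \leq e_2$. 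Since $jk = i > m = d_1 e_1$, at least one of the inequalities $j \geq d_1$ and $k \geq e_1$ must be strict; without loss of generality $j \geq d_1 + 1$, which gives
\[
i = jk \geq (d_1 + 1)e_1 = m + e_1.
\]

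The heart of the argument is then to establish $e_1 \geq p_m$ (and by the same argument $d_1 \geq p_m$), so that this estimate yields $i \geq m + p_m$ and contradicts $i < m + p_m$. I would rewrite the hypothesis $m > n/p_n$ as $n/m < p_n$; using the identity $(d_2/d_1)(e_2/e_1) = n/m$ and the fact that both ratios are at least $1$, this forces $d_2/d_1 < p_n$ and $e_2/e_1 < p_n$. Since $d_2, e_2 > 1$ are divisors of $n$, every prime factor of either one divides $n$, so $d_2, e_2 \geq p_n$. Combining, $d_1 > d_2/p_n \geq 1$ and $e_1 > e_2/p_n \geq 1$, so $d_1, e_1 \geq 2$. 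But then $d_1$ and $e_1$ are divisors of $m$ greater than $1$, so every prime factor of either one is a prime factor of $m$ and is therefore at least $p_m$; hence $d_1 \geq p_m$ and $e_1 \geq p_m$, as desired.

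The symmetric case $k \geq e_1 + 1$ is handled identically via $i \geq d_1(e_1+1) = m + d_1 \geq m + p_m$. The main obstacle is precisely the step that produces $d_1, e_1 \geq 2$: this is where the hypothesis $m > n/p_n$ is essential, since it rules out the degenerate possibility that one of the factors has min-degree equal to $1$ (which would collapse the lower bound for the jump from $m$ to $i$ down to size $1$ instead of $p_m$). Once this is secured, the rest follows along the lines of the proof of Proposition \ref{prop2}.
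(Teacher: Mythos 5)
Your proof is correct and follows essentially the same route as the paper's: establish that the min-degrees $d_1,e_1$ of the putative factors exceed $1$ (hence are at least $p_m$), then use $a_i\neq 0$ to produce a pair $(j,k)$ with one of $j>d_1$, $k>e_1$ and conclude $i=jk\geq m+p_m$. Your derivation of $d_1,e_1>1$ via the ratio identity $(d_2/d_1)(e_2/e_1)=n/m<p_n$ together with $d_2,e_2\geq p_n$ is a slight repackaging of the paper's argument (which assumes $e_1=1$, deduces $d_2\geq d_1=m$, and contradicts $m>n/p_n$ using $d_2\leq n/p_n$), but the two are logically equivalent.
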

\begin{proof}\ We may obviously assume that $n$ is composite. We will use the same notations as in the proof of Proposition \ref{prop2}. So let us assume again that $f$ is reducible. We first observe that our condition $m>n/p_{n}$ prevents $d_{1}$ and $e_{1}$ to be equal to $1$. Indeed, if $e_{1}=1$, say, then $d_{1}$ must be equal to $m$, so 
\begin{equation}\label{d2sim}
d_{2}\geq m.
\end{equation}
On the other hand, as $g$ and $h$ are assumed to be nonconstant factors of $f$, $e_{2}$ must be greater than $1$, so $d_2$ must be a proper divisor of $n$, and hence $d_{2}\leq n/p_{n}$, which by (\ref{d2sim}) yields $m\leq n/p_{n}$, a contradiction. In a similar way one can prove that  $d_1>1$.

We next see that in the sum $\sum_{j\cdot k=i}b_{j}c_{k}$ defining $a_{i}$, for any pair $(j,k)$ with $b_{j}c_{k}\neq 0$ and $j\cdot k=i$, at least one of the inequalities $j\geq d_{1}$ and $k\geq e_{1}$ must be a strict one, as $i>m=d_1e_1$. Without loss of generality we may assume that $j>d_{1}$. This allows us to deduce that
\[
i=j\cdot k\geq (d_{1}+1)k\geq (d_{1}+1)e_{1}=m+e_{1}\geq m+p_{m},
\]
as $e_{1}\mid m$ and $e_{1}\neq 1$ (by the argument above). This contradicts our assumption that $i<m+p_{m}$, and completes the proof. 
\end{proof}

In particular, from Propositions \ref{prop1}, \ref{prop2} and \ref{prop3} we obtain: 
\begin{corollary}\label{coro1}
Let  $f(s)=\frac{a_{m}}{m^{s}}+\cdots +\frac{a_{n}}{n^{s}}$ be a Dirichlet polynomial with $a_{m}a_{n}\neq 0$. If $a_{n-1}\neq 0$, or, if $m>n/2$ and $a_{m+1}\neq 0$, then $f$ is irreducible. 
\end{corollary}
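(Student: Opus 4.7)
The plan is to derive this corollary directly from Propositions \ref{prop1}, \ref{prop2}, and \ref{prop3}, by a short case analysis on whether $\deg f = n$ is prime or composite, and showing that the indices $n-1$ and $m+1$ fall into the intervals required by Propositions \ref{prop2} and \ref{prop3} respectively.

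First I would handle the hypothesis $a_{n-1} \neq 0$. If $n$ is prime, Proposition \ref{prop1} immediately yields irreducibility. Otherwise $n$ is composite, and its smallest prime divisor $p_n$ satisfies $p_n \geq 2$, hence
\[
n - p_n \;\leq\; n - 2 \;<\; n - 1 \;<\; n.
\]
Thus the index $i = n - 1$ lies strictly in the open interval $(n - p_n,\,n)$, and since $a_{n-1} \neq 0$, Proposition \ref{prop2} gives irreducibility.

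Next I would handle the hypothesis $m > n/2$ together with $a_{m+1} \neq 0$. Again, if $n$ is prime, Proposition \ref{prop1} does the job. Otherwise $n$ is composite, so $p_n \geq 2$ and therefore $n/p_n \leq n/2 < m$, which is precisely the hypothesis $m > n/p_n$ required by Proposition \ref{prop3}. Also, $m > n/2 \geq 1$ forces $m \geq 2$, so $p_m$ is well defined and $p_m \geq 2$, whence $m < m+1 < m + p_m$. Since $a_{m+1} \neq 0$, Proposition \ref{prop3} applies and $f$ is irreducible.

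No real obstacle is anticipated: the whole proof reduces to verifying the obvious chains of inequalities $n - p_n < n - 1$ and $m + 1 < m + p_m$, together with the splitting off of the prime case via Proposition \ref{prop1}. The only piece of bookkeeping worth recording is that the assumption $m > n/2$, together with $m$ and $n$ being positive integers giving a genuine Dirichlet polynomial with $a_{m+1}$ meaningfully referenced, automatically guarantees $m \geq 2$ so that $p_m$ in Proposition \ref{prop3} is well defined.
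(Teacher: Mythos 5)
Your proof is correct and is exactly the argument the paper has in mind; the paper simply states the corollary as an immediate consequence of Propositions \ref{prop1}, \ref{prop2} and \ref{prop3} without writing out the case split, and you have supplied precisely that bookkeeping, including the small observation that $m>n/2$ forces $m\geq 2$ so that $p_m$ is defined.
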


These results also show that in the case of Dirichlet polynomials with integer coefficients, Tur\'an's conjecture on the irreducibility of neighboring polynomials is trivially settled in the affirmative, for if $f(s)$ has no nonzero terms $\frac{a_i}{i^s}$ with $n-p_n<i<n$, we may pick such an index $i$, and the Dirichlet polynomial $f(s)+\frac{1}{i^s}$ will be irreducible. Thus, for a nontrivial analogue of Tur\'an's conjecture, one may for instance consider the following one (with possible refinements that take into account Proposition \ref{prop3}).
\smallskip

{\bf Conjecture 4.} {\em There is an absolute constant $C>0$ such that, if $f(s)=\sum\limits_{i=1}^{n}\frac{a_{i}}{i^{s}}$ is a Dirichlet polynomial of degree $n$ with integer coefficients, and $p_n$ is the smallest prime factor of $n$, then there is a Dirichlet polynomial $g(s)=\sum\limits _{i=1}^{n-p_n}\frac{b_{i}}{i^{s}}$ with integer coefficients such that $\sum\limits_{i=1}^{n-p_n}|b_i|\leq C$ and $f(s)+g(s)$ is irreducible over $\mathbb{Q}$.}
\smallskip

Unlike univariate polynomials, a Dirichlet polynomial $f(s)=\frac{a_{m}}{m^{s}}+\cdots +\frac{a_{n}}{n^{s}}$ is subject to more restrictive conditions on the possible values of the degrees and relative degrees of its factors, and these inherent restrictions are imposed by the prime factorizations of $m$ and $n$. So $m$ and $n$ alone might dictate much of the factorization properties of $f$, sometimes diminishing the role of the coefficients $a_{i}$. To see this, we need to introduce some arithmetical objects, which will be used here and in some of the following sections.
\begin{definition}\label{def3}
For a pair of fixed, arbitrarily chosen integers $(m,n)$ with $1\leq m<n$, and any integer $k>0$ we define
\begin{eqnarray*}
S_{rd}(m,n) & := & \left\{ \frac{d}{c}\ :\ d\mid n,\ c\mid m,\  \ {\rm and}\ \ 1<\frac{d}{c}<\frac{n}{m}\right\} ,\\
S^{k}_{rd}(m,n) & := &\left\{ \frac{d}{c}\ :\ d\mid n,\ c\mid m,\  \ {\rm and}\ \ 1<\frac{d}{c}\leq \sqrt[k+1]{\frac{n}{m}}\thinspace \right\},\\
\rho (m,n) & := & \max\left\{ \frac{d}{c}\ :\ d\mid n,\ c\mid m,\  \ {\rm and}\ \ \frac{d}{c}\leq \sqrt{\frac{n}{m}}\thinspace \right\}, \quad {\rm and} \\
\delta (m,n) & := & \thinspace \min\left\{ \frac{d}{c}\ :\ d\mid n,\ c\mid m,\ \  {\rm and}\ \ d>c>0\right\} .
\end{eqnarray*}
We may regard $S_{rd}(m,n)$ as the set of all the possible relative degrees of the non-trivial proper factors of an arbitrary algebraically primitive Dirichlet polynomial $f(s)=\frac{a_{m}}{m^{s}}+\cdots +\frac{a_{n}}{n^{s}}$ with $a_{m}a_{n}\neq 0$, and $S^{k}_{rd}(m,n)$ as the set of all such possible relative degrees that do not exceed $\sqrt[k+1]{\frac{n}{m}}$. We will call $\rho (m,n)$ the {\it rational square root} of the pair $(m,n)$, and $\delta (m,n)$ the {\it rational floor} of the pair $(m,n)$.
\end{definition}
\begin{remark}\label{somearithm}
i) We note that $\rho (m,n)\geq 1$, as $c=d=1$ satisfy trivially this definition, and also that if $\rho (m,n)>1$, then $\delta (m,n)\leq\rho (m,n)$. It should be stressed that $\rho (m,n)$ and $\delta (m,n)$ depend on both $m$ and $n$, not only on their ratio $n/m$. To see this, take for instance $(m_{1},n_{1})=(3,7)$ and $(m_{2},n_{2})=(15,35)$. Even if $7/3=35/15$, we have $\rho (3,7)=1$ and $\delta (3,7)=7/3$, while $\rho (15,35)=\delta (15,35)=7/5$. 

ii) Let $c$ and $d$ be positive divisors of $m$ and $n$, respectively, and let $c'=\frac{m}{c}$ and $d'=\frac{n}{d}$ be their complementary divisors. It is easy to check that $1<\frac{d}{c}\leq \sqrt{\frac{n}{m}}$ if and only if $\sqrt{\frac{n}{m}}\leq \frac{d'}{c'}<\frac{n}{m}$. This allows us to conclude that
\begin{equation}\label{arithmirred}
S_{rd}(m,n)=\emptyset \quad \Leftrightarrow\quad S^{1}_{rd}(m,n)=\emptyset \quad \Leftrightarrow\quad \rho (m,n)=1.
\end{equation}

iii) Let $c$ and $d$ be positive divisors of $m$ and $n$, respectively. For $\frac{d}{c}$ to belong to $S^{1}_{rd}(m,n)$, $d$ must be a proper divisor of $n$, for otherwise $n$ would be forced to satisfy the inequality $n\leq \frac{c^2}{m}$, which cannot hold, as $n>m\geq c$.

iv) Although it is a trivial fact, we will also mention here that given an algebraically primitive Dirichlet polynomial $f(s)=\frac{a_{m}}{m^{s}}+\cdots +\frac{a_{n}}{n^{s}}$ with $a_{m}a_{n}\neq 0$, and two positive divisors $c$ and $d$ of $m$ and $n$ respectively, with $c<d$, for $d/c$ to be the relative degree of a hypothetical non-trivial factor $g$ of $f$, we also have to ask $d/c<n/m$ (with equality possible if one drops the condition that $f$ is algebraically primitive) and in this case the complementary factor of $g$ will have relative degree $(nc)/(md)$, which will also belong to $S_{rd}(m,n)$.
Analyzing the sets $S_{rd}(m,n)$ and $S^{k}_{rd}(m,n)$ might provide us valuable information on the canonical decomposition of $f$, obtained without even looking at its coefficients.

v) Rather surprisingly, $\rho(m,n)$ also appears in the study of the irreducibility of integer polynomials $f(X)$ whose roots lie in the Apollonius circle defined as the locus of points $P$ with $d(P,(b,0))=\rho(|f(a)|,|f(b)|)\cdot d(P,(a,0))$, and $a,b$ integers with $|f(b)|>|f(a)|$ (see \cite{BoncioIndagationes}).
\end{remark}

If one of the equivalent conditions in (\ref{arithmirred}) is satisfied, then an algebraically primitive Dirichlet polynomial $f(s)=\frac{a_{m}}{m^{s}}+\cdots +\frac{a_{n}}{n^{s}}$ with $a_{m}a_{n}\neq 0$ must be irreducible, as seen in the following simple result. 
\begin{proposition}\label{rationalfloor}
Let $f(s)=\frac{a_{m}}{m^{s}}+\cdots +\frac{a_{n}}{n^{s}}$ be an algebraically primitive Dirichlet polynomial with $a_{m}a_{n}\neq 0$. If $\rho (m,n)=1$, then $f$ must be irreducible.
\end{proposition}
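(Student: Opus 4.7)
The plan is to argue by contradiction. Suppose $f$ admits a factorization $f(s)=g(s)\cdot h(s)$ with both $g$ and $h$ nonconstant, and write $g(s)=\frac{b_{d_1}}{d_1^s}+\cdots +\frac{b_{d_2}}{d_2^s}$, $h(s)=\frac{c_{e_1}}{e_1^s}+\cdots +\frac{c_{e_2}}{e_2^s}$, with $b_{d_1}b_{d_2}\neq 0$, $c_{e_1}c_{e_2}\neq 0$. From the Dirichlet multiplication rule it is immediate that the minimum-degree and leading terms of $f$ force $d_1 e_1=m$ and $d_2 e_2=n$, so in particular $d_1,e_1\mid m$ and $d_2,e_2\mid n$, and the relative degrees of $g$ and $h$ are $d_2/d_1$ and $e_2/e_1$, whose product equals $n/m$.

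The first subtlety I would address — this is really the only nontrivial point — is that the algebraic primitivity of $f$ forces \emph{both} factors $g$ and $h$ to have relative degree strictly greater than $1$. Indeed, a nonconstant Dirichlet polynomial with relative degree equal to $1$ must consist of a single term $\frac{b}{k^s}$ with $k>1$; but then every element of $Supp(f)$ would be a multiple of $k$, contradicting the hypothesis $\gcd(Supp(f))=1$. Hence $1<d_2/d_1$ and $1<e_2/e_1$.

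Since the two relative degrees multiply to $n/m$, at least one of them, say $d_2/d_1$ (after possibly swapping $g$ and $h$), must satisfy $d_2/d_1\leq \sqrt{n/m}$. Setting $d=d_2$, $c=d_1$, we obtain a pair with $d\mid n$, $c\mid m$, and $1<d/c\leq \sqrt{n/m}$, which by Definition \ref{def3} shows that $\rho(m,n)\geq d/c>1$. This contradicts the hypothesis $\rho(m,n)=1$, and the proof is complete.

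The entire argument is short; the only place where one must pause is the step ruling out the possibility that one of the factors is a single monomial $\frac{b}{k^s}$ with $k>1$, which is exactly where the algebraic primitivity hypothesis is used. Everything else is just bookkeeping with divisors of $m$ and $n$ together with the trivial observation that of two positive reals whose product is $n/m$, one is at most $\sqrt{n/m}$.
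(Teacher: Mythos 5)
Your proof is correct and follows essentially the same route as the paper: both argue by contradiction, use the observation that of the two relative degrees multiplying to $n/m$ one is at most $\sqrt{n/m}$, and invoke algebraic primitivity to rule out a single-term factor. The only difference is presentational — you first show both relative degrees exceed $1$ and then contradict $\rho(m,n)=1$, while the paper deduces directly that the smaller relative degree is $\leq\rho(m,n)=1$, forcing a single-term factor — but the ingredients are the same.
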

\begin{proof}\ Assume to the contrary that we can write $f$ as a product of two nonconstant Dirichlet polynomials, say $g(s)=\frac{\alpha _{c_{1}}}{c_{1}^{s}}+\cdots +\frac{\alpha _{d_{1}}}{d_{1}^{s}}$ and $h(s)=\frac{\beta _{c_{2}}}{c_{2}^{s}}+\cdots +\frac{\beta _{d_{2}}}{d_{2}^{s}}$. Since
\[
\frac{n}{m}=\frac{d_{1}}{c_{1}}\cdot \frac{d_{2}}{c_{2}},
\]
one of the relative degrees of the factors $g$ and $h$, say $d_{1}/c_{1}$, must be less than or equal to $\sqrt{n/m}$, so $d_{1}/c_{1}\leq \rho (m,n)=1$. This forces $c_{1}=d_{1}>1$, so $g$ must have a single term, which obviously cannot hold, as $f$ was assumed to be algebraically primitive. Therefore $f$ must be irreducible, and this completes the proof. \end{proof}
We notice here that the conclusion in the statement of Proposition \ref{rationalfloor} fails if one drops the condition that $f$ is algebraically primitive. 
Reasoning in a similar way, one can actually prove the following more general result.
\begin{proposition}\label{radicalkplus1}
Let $f(s)=\frac{a_{m}}{m^{s}}+\cdots +\frac{a_{n}}{n^{s}}$ be an algebraically primitive Dirichlet polynomial with $a_{m}a_{n}\neq 0$. If $S^{k}_{rd}(m,n)=\emptyset$, then $f$ is a product of at most $k$ irreducible factors.
\end{proposition}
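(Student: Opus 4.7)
The plan is to argue by contradiction, generalizing the proof of Proposition~\ref{rationalfloor}. Suppose $f$ could be written as $f=g_1 g_2\cdots g_t$ with $t\geq k+1$ nonconstant Dirichlet polynomials. Set $c_i:=\deg_{\min}g_i$ and $d_i:=\deg g_i$ for each $i$. Because the minimum-index and maximum-index terms of a Dirichlet product arise uniquely from multiplying the corresponding extremal terms of each factor (no other tuple of indices can reach the extremal product), one immediately obtains $\prod_{i=1}^{t}c_i=m$ and $\prod_{i=1}^{t}d_i=n$; in particular each $c_i$ divides $m$, each $d_i$ divides $n$, and
\[
\prod_{i=1}^{t}\frac{d_i}{c_i}=\frac{n}{m}.
\]

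The key auxiliary step, and the only place where algebraic primitivity truly enters, is to verify that $d_i>c_i$ for every $i$. Indeed, if some factor $g_j$ had $c_j=d_j=q$, then $g_j$ would consist of a single term $\frac{\alpha}{q^s}$ with $q>1$ (forced by the nonconstancy of $g_j$); but then every index in $Supp(f)$ would be a multiple of $q$, contradicting the algebraic primitivity of $f$. I expect this to be the only mildly delicate part of the argument, but once phrased as above it is essentially immediate.

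With each relative degree $d_i/c_i$ strictly greater than $1$ and $\prod_{i=1}^{t}(d_i/c_i)=n/m$, the smallest among them, say $d_j/c_j$, must satisfy
\[
1<\frac{d_j}{c_j}\leq \left(\frac{n}{m}\right)^{1/t}\leq \sqrt[k+1]{\frac{n}{m}}.
\]
Combined with $d_j\mid n$ and $c_j\mid m$, this places $d_j/c_j$ in $S^{k}_{rd}(m,n)$, contradicting the assumed emptiness of this set. Consequently $t\leq k$, so any factorization of $f$ into nonconstant irreducible pieces has at most $k$ factors, which is the desired conclusion.
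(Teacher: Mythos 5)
Your proof is correct and matches the approach the paper indicates (the paper simply says ``reasoning in a similar way'' to Proposition~\ref{rationalfloor} rather than writing out the argument). Both the key observations — that $\deg_{\min}$ and $\deg$ are multiplicative across Dirichlet products, and that algebraic primitivity forces every nonconstant factor to have relative degree strictly greater than $1$ — are exactly what is needed, and the pigeonhole step on the smallest relative degree closes the argument cleanly.
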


\begin{definition}\label{def4}
If $\rho (m,n)=1$, then an algebraically primitive Dirichlet polynomial $f(s)=\frac{a_{m}}{m^{s}}+\cdots +\frac{a_{n}}{n^{s}}$ with $a_{m}a_{n}\neq 0$ will be called {\it arithmetically irreducible}.
\end{definition}
Even if the proofs so far have been quite simple, at this point we may already state the following seemingly difficult problem:
\begin{problem}\label{pb1}
Find all the pairs of integers $n>m\geq 1$ with  $\rho (m,n)=1$.
\end{problem}
As we shall see in the following result, our condition $\rho (m,n)=1$ forces $m$ and $n$ to be ``rather close". 
\begin{proposition}\label{closeenough}
Let $m$ and $n$ be integers with $1\leq m<n$, let $p$ be the smallest prime factor of $n$, and $q$ the largest divisor of $m$ smaller than $p$. If $\rho (m,n)=1$, then 
$1<\frac{n}{m}<\frac{p^2}{q^2}$. In particular, if $\rho (m,n)=1$ and $n$ is even, then $1<\frac{n}{m}<4$. 
\end{proposition}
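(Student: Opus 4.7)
The plan is to exploit the definition of $\rho(m,n)$ by testing a single well-chosen pair of divisors, namely $d=p$ and $c=q$. Since $\rho(m,n)=1$ means that the maximum of $d/c$ over divisors $d\mid n$, $c\mid m$ with $d/c\le\sqrt{n/m}$ equals $1$, every pair satisfying $d/c>1$ must in fact satisfy $d/c>\sqrt{n/m}$; applying this to $(p,q)$ and squaring gives the upper bound $n/m<p^2/q^2$. The lower bound $1<n/m$ is immediate from the hypothesis $m<n$.

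First I would verify that $q$ is well defined. Since $p\ge 2$, the integer $1$ is a divisor of $m$ strictly smaller than $p$, so the set of divisors of $m$ smaller than $p$ is non-empty, whence $q\ge 1$. By construction $q\mid m$, $q<p$, and $p\mid n$. Consequently $p/q>1$, and this is a ratio of the form $d/c$ admissible in the definition of $\rho(m,n)$. If $p/q$ were $\le\sqrt{n/m}$, the set whose maximum defines $\rho(m,n)$ would contain a value strictly greater than $1$, contradicting $\rho(m,n)=1$. Hence $p/q>\sqrt{n/m}$, which yields $n/m<p^2/q^2$.

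For the ``in particular'' statement, the hypothesis that $n$ is even forces $p=2$; the only divisor of $m$ smaller than $2$ is then $1$, so $q=1$ and the bound becomes $n/m<4$.

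I do not foresee a real obstacle, since the argument is essentially a one-line application of the definition once the correct pair of divisors is chosen; the only place where one must be careful is checking that the divisor $q$ actually exists for every $m\ge 1$, which is ensured by the trivial choice $q=1$ when $m$ has no larger divisor below $p$.
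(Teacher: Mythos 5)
Your proof is correct and takes essentially the same approach as the paper: you apply the definition of $\rho(m,n)$ to the specific pair $(d,c)=(p,q)$ and deduce $p/q>\sqrt{n/m}$, whereas the paper argues by contradiction assuming $n/m\ge p^2/q^2$; the two are the same argument in contrapositive form.
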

\begin{proof}\ If we assume to the contrary that $\frac{n}{m}\geq\frac{p^2}{q^2}$, then $\frac{p}{q}\leq \sqrt{\frac{n}{m}}$, which further implies that $\rho (m,n)\geq\frac{p}{q}>1$, a contradiction. For an even $n$ we have $p=2$, so $q$ is equal to $1$. 
\end{proof}

Our hopes to solve Problem \ref{pb1} in its entire generality are somehow dashed by the fact that, even if we fix the primes dividing $m$ and $n$ and let their multiplicities vary, computing $\rho(m,n)$ will require testing inequations between products of prime powers, which can only be done on a case by case basis. However, we present in the following result some simple cases where one can easily prove that $\rho(m,n)$ is equal to $1$.
\begin{proposition}\label{corofloor} 
$\rho (m,n)=1$ in each one of the following cases:

\noindent i) \ \thinspace $n$ is a prime number greater than $m$;

\noindent ii) \thinspace $m=p^kq$, $n=p^{k+1}$ with $p$ a prime number and integers $k,q$ with $k\geq 0$ and $0<q<p$;

\noindent iii) $m=p^{k}$, $n=p^{k+1}$ for some prime number $p$ and some integer $k\geq 0$;

\noindent iv) \thinspace $m=p^{k}$, $n=p^{k}q$ for some prime numbers $p,q$ with $q<p$ and some integer $k\geq 1$.
\end{proposition}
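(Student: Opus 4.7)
The plan is to unwind the definition of $\rho(m,n)$: the equality $\rho(m,n)=1$ is equivalent to saying that no pair of positive divisors $c\mid m$, $d\mid n$ satisfies $1<d/c\leq\sqrt{n/m}$. In each of the four cases, I would parameterize all divisors of $m$ and of $n$ and check that every ratio $d/c>1$ actually exceeds $\sqrt{n/m}$ strictly. This is purely an arithmetic verification, with no need to invoke any of the earlier structural results.

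Case (i) is immediate since the only divisors of a prime $n$ are $1$ and $n$. The option $d=1$ forces $d/c\leq 1$, while $d=n$ combined with $c\leq m<n$ gives $c^2\leq m^2<mn$, i.e., $n/c>\sqrt{n/m}$, contradicting the required upper bound.

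For cases (ii) and (iii), the divisors of $n=p^{k+1}$ are the powers $p^j$ with $0\leq j\leq k+1$, while those of $m=p^kq$ (with $q=1$ in case (iii)) are either $p^i$ or $p^iq$ with $0\leq i\leq k$. Hence any candidate ratio $d/c>1$ has either the form $p^{j-i}$ with $j>i$ or the form $p^{j-i}/q$ with $j\geq i+1$. In the first case $d/c\geq p$, in the second $d/c\geq p/q$. Using $\sqrt{n/m}=\sqrt{p/q}$, the strict inequalities $p>\sqrt{p/q}$ and $p/q>\sqrt{p/q}$ reduce to $pq>1$ and $p>q$, both of which hold by hypothesis.

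Case (iv) follows the same scheme: divisors of $n=p^kq$ split into $p^j$ and $p^jq$, while divisors of $m=p^k$ are the pure powers $p^i$. The ratio $d/c$ is then $p^{j-i}$ or $p^{j-i}q$; in the first shape $d/c>1$ forces $d/c\geq p$, and in the second shape, the subcase $j\geq i$ gives $d/c\geq q$, while the subcase $j<i$ gives $d/c=q/p^{i-j}<q/p<1$. Since $\sqrt{n/m}=\sqrt{q}$ and $q$ is a prime with $q<p$, we have $p>\sqrt{q}$ and $q>\sqrt{q}$, so every candidate with $d/c>1$ strictly exceeds $\sqrt{n/m}$. The only real ``obstacle'' across the four cases is bookkeeping the divisor shapes; no conceptual difficulty arises.
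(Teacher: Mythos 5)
Your proof is correct and follows essentially the same strategy as the paper's: parameterize all divisor ratios $d/c$ in each case and verify that every ratio strictly exceeding $1$ actually exceeds $\sqrt{n/m}$. The only cosmetic difference is that you fold case (iii) into case (ii) by taking $q=1$, whereas the paper spells it out separately; the arithmetic verifications are otherwise identical.
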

\begin{proof}\ i) In our first case, as $n=p$ with $p$ a prime greater than $m$, any positive quotient $\frac{d}{c}$ in the definition of 
$\rho (m,n)$ either has the form $\frac{1}{c}$ with $c\mid m$, and 
hence is at most $1$, or is equal to $\frac{p}{c}$, which obviously 
exceeds $\sqrt{\frac{p}{m}}$. Therefore $\rho (m,n)$ must be equal to $1$. 

ii) In this case any positive quotient $\frac{d}{c}$
with $d\mid n$ and $c\mid m$ has the form $\frac{p^i}{r}$
with $i$ an integer satisfying $-k\leq i\leq k+1$, and $r$ a divisor of $q$. For $i\leq 0$ these quotients will be at most $1$, while for positive $i$ all the corresponding quotients will exceed $\sqrt{\frac{p}{q}}$, since $\frac{p}{r}>\sqrt{\frac{p}{q}}$.

iii) Here we observe that if $m=p^{k}$ and $n=p^{k+1}$, then any 
positive quotient $d/c$ with $c\mid m$ and $d\mid n$ 
has the form $p^{i}$ with $i$ an integer satisfying $-k\leq i\leq k+1$, 
and the maximal such power of $p$ that is less than or equal to 
$\sqrt{p}$ is $1$, so in this case too we have $\rho (m,n)=1$. 

iv) In this case, if $m=p^{k}$ and $n=p^{k}q$, any positive quotient 
$d/c$ with $c\mid m$ and $d\mid n$ 
has the form $p^{i}q^{j}$ with $i$ an integer satisfying 
$-k\leq i\leq k$ and $j\in \{ 0,1\} $. For $j=0$, 
no such quotient other than 1 belongs to the interval [$1,\sqrt{q}$], 
since $p>\sqrt{q}$. Finally, we observe that for $j=1$ no 
integer $i$ can satisfy the condition  $1< p^{i}q< \sqrt{q}$, since $p>q$. 
\end{proof}

As we shall see later in Theorems \ref{TeoremaCaLaFilaseta1} and \ref{TeoremaCaLaFilaseta2}, when $\rho (m,n)>1$, both $\rho (m,n)$ and $\delta (m,n)$ play an important role in some irreducibility tests that rely on $p$-adic information about the coefficients of Dirichlet polynomials, where $p$ is a prime element of the ring of coefficients.

We will proceed now with irreducibility conditions that use information about the prime factorization of the coefficients of a Dirichlet polynomial. For polynomials, the first such irreducibility conditions appeared in the Sch\"onemann-Eisenstein irreducibility criterion (see Cox \cite{Cox} for the history of this criterion). Over the time, this criterion was generalized by various authors, of which we will only mention K\" onigsberger \cite{Konigsberger}, Netto \cite{Netto}, Bauer \cite{Bauer}, Perron \cite{Perron}, Kurshach \cite{Kurschak}, Ore \cite{Ore1}, \cite{Ore2}, \cite{Ore3}, Rella \cite{Rella}, MacLane \cite{MacLane}, and in more recent years Panaitopol and \c Stef\u anescu \cite{PanaitopolStefanescu1}, \cite{PanaitopolStefanescu2}, Mott \cite{Mott}, Brown \cite{RBrown}, Bush and Hajir \cite{BushHajir}, Weintraub \cite{Weintraub} and Jakhar \cite{Jakhar1}, \cite{Jakhar2}.
A natural question is whether for Dirichlet polynomials there exists an analogue of the Sch\"onemann-Eisenstein criterion. The answer is affirmative, and the simplest such result that we can prove by ignoring the conditions in Propositions \ref{prop2} and \ref{prop3} is the following one.
\begin{theorem}\label{naiveEisenstein}
Let $f(s)=\frac{a_{m}}{m^{s}}+\cdots +\frac{a_{n}}{n^{s}}$ be an algebraically primitive Dirichlet polynomial with coefficients in a unique factorization domain $R$, with $a_{m}a_{n}\neq 0$. If for a prime element $p$ of $R$ we either have 

i) \ $p\mid a_{i}$ for each $i=m,\dots ,n-1$, $p\nmid a_{n}$ and $p^{2}\nmid a_{m}$, \ or

ii) $p\mid a_{i}$ for each $i=m+1,\dots ,n$, $p\nmid a_{m}$ and $p^{2}\nmid a_{n}$,

\noindent then $f$ is irreducible over $Q(R)$, the quotient field of $R$.
\end{theorem}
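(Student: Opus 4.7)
The plan is to mimic the classical proof of the Schönemann–Eisenstein criterion by reducing modulo $p$ and exploiting the multiplicativity of the extremal (min-degree and leading) coefficients under Dirichlet convolution, using the algebraic primitivity of $f$ to force both hypothetical factors to have at least two nonzero terms.

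First I would suppose for contradiction that $f = g \cdot h$ with $g = \frac{b_{d_1}}{d_1^s}+\cdots+\frac{b_{d_2}}{d_2^s}$ and $h = \frac{c_{e_1}}{e_1^s}+\cdots+\frac{c_{e_2}}{e_2^s}$ nonconstant, so $d_1 e_1 = m$ and $d_2 e_2 = n$. Since a product of algebraically primitive Dirichlet polynomials is algebraically primitive, the analogous converse argument (a common divisor $d>1$ of $\mathrm{Supp}(g)$ would force $d \mid \gcd(\mathrm{Supp}(f))$) shows that both $g$ and $h$ inherit algebraic primitivity from $f$. A nonconstant algebraically primitive Dirichlet polynomial cannot be a single monomial (a monomial $b_i/i^s$ with $i>1$ has $\gcd(\mathrm{Supp})=i>1$), so $d_1<d_2$ and $e_1<e_2$, and the Dirichlet convolution yields the extremal identities $a_m = b_{d_1}c_{e_1}$ and $a_n = b_{d_2}c_{e_2}$.

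Next I would reduce mod $p$ in $DP(R/(p))[s]$, which is a domain (via the isomorphism with a polynomial ring over the domain $R/(p)$ in countably many indeterminates). In case (i), the hypotheses yield $\bar f = \bar a_n / n^s$, a single monomial. From $p \nmid a_n$ I get $p \nmid b_{d_2}$ and $p \nmid c_{e_2}$; from $p \mid a_m$ together with $p^2 \nmid a_m$, exactly one of $b_{d_1}, c_{e_1}$ is divisible by $p$, so WLOG $p \mid b_{d_1}$ and $p \nmid c_{e_1}$. Then $\bar h$ has nonzero coefficients at both $e_1$ and $e_2>e_1$, so $\bar h$ has at least two terms, while $\bar g$ is nonzero (having nonzero coefficient at $d_2$). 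The key observation to close the argument is the following monomial lemma: if two nonzero Dirichlet polynomials over a domain have Dirichlet product equal to a single monomial, then each factor is itself a single monomial — indeed the min-degree of the product equals the product of the min-degrees, its degree equals the product of the degrees, and neither extremal coefficient can vanish by integrality, so if the product is a monomial both factors must collapse to one term. This contradicts $\bar h$ having at least two terms. Case (ii) is symmetric: $\bar f = \bar a_m/m^s$, the roles of $b_{d_1},c_{e_1}$ and $b_{d_2},c_{e_2}$ are swapped, and WLOG $\bar g$ turns out to have at least two terms.

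The only step that requires any care is the monomial lemma, which is what plays the role of the classical "the bottom term of $f$ has $p$-valuation exactly equal to the sum of bottom $p$-valuations of $g$ and $h$." Everything else is bookkeeping around the fact that algebraic primitivity rules out single-monomial factors and that $d_1e_1=m$, $d_2e_2=n$ are honest products.
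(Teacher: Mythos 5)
Your proof is correct. It proceeds by the same Eisenstein-style mechanism as the paper's proof, but packages the key step differently. The paper works element-by-element inside $R$: after forcing (WLOG) $p\mid b_{r}$, $p\nmid c_{u}$, $p\nmid b_{t}$, $p\nmid c_{v}$, it picks the least index $k$ with $p\nmid b_{k}$, computes $a_{ku}=\sum_{ij=ku}b_i c_j$ directly, shows $a_{ku}\equiv b_k c_u\not\equiv 0 \pmod p$, and checks $m<ku<n$ to contradict the divisibility hypothesis. You instead reduce modulo $p$ at the outset, observe $\bar f$ collapses to a single monomial in the domain $DP(R/(p))[s]$, and invoke a clean structural lemma: in a domain, a monomial only factors into monomials, because min-degree and top-degree are both multiplicative under Dirichlet convolution. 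The factor $\bar h$ visibly has two terms (at $e_1$ and $e_2$, whose coefficients you check survive reduction), so you get the contradiction without ever naming the index $k$ or the coefficient $a_{ku}$. Your version buys a tidier, index-free argument and isolates a reusable lemma; the paper's direct computation is more self-contained in that it never appeals to the fact that $DP$ over a domain is a domain (though the paper does record the needed isomorphism with a polynomial ring in the introduction). Both proofs also rely on the same use of algebraic primitivity to force $d_1<d_2$, $e_1<e_2$, and your justification that factors inherit algebraic primitivity is sound.
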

\begin{proof}\ i) Assume to the contrary that $f(s)=g(s)h(s)$ with $g(s)=\frac{b_{r}}{r^{s}}+\cdots +\frac{b_{t}}{t^{s}}$ and $h(s)=\frac{c_{u}}{u^{s}}+\cdots +\frac{c_{v}}{v^{s}}$, so $b_{r}c_{u}=a_{m}$ and $b_{t}c_{v}=a_{n}$. Besides, since $f$ is algebraically primitive, we must have $r<t$ and $u<v$. Since $p^{2}\nmid a_{m}$, only one of $b_{r}$ and $c_{u}$ is divisible by $p$, say $p\mid b_{r}$ and $p\nmid c_{u}$. On the other hand, as $p\nmid a_{n}$, none of $b_{t}$ and $c_{v}$ is divisible by $p$, so it makes sense to consider the least index $k$ such that $p\nmid b_{k}$. This index $k$ should therefore satisfy the inequalities $r<k\leq t$. Let us consider now the coefficient $a_{k\cdot u}$. By the multiplication rule we have
\[
a_{k\cdot u}=\sum\limits _{i\cdot j=k\cdot u}b_{i}c_{j}.
\]
We observe that for a term $b_{i}c_{j}$ in this sum, we must have $i\geq r$, $j\geq u$ and $i\cdot j=k\cdot u$, so $i$ cannot exceed $k$. Therefore, except for $b_{k}c_{u}$, which is not divisible by $p$, in all the remaining terms $b_{i}c_{j}$ (if any) the index $i$ is less than $k$, so $b_{i}$ must be divisible by $p$. This shows that $p\nmid a_{k\cdot u}$, which contradicts our hypothesis that $p\mid a_{i}$ for each $i=m,\dots ,n-1$, as $ku<kv\leq tv=n$;

ii) In this case, since $p^{2}\nmid a_{n}$, only one of $b_{t}$ and $c_{v}$ is divisible by $p$, say $p\mid b_{t}$ and $p\nmid c_{v}$. On the other hand, as $p\nmid a_{m}$, none of $b_{r}$ and $c_{u}$ is divisible by $p$, so it makes sense to consider this time the largest index $k$ such that $p\nmid b_{k}$, which will satisfy the inequalities $r\leq k<t$. We will consider this time the coefficient $a_{k\cdot v}$, given by
\[
a_{k\cdot v}=\sum\limits _{i\cdot j=k\cdot v}b_{i}c_{j}.
\]
For a term $b_{i}c_{j}$ in the above sum, we must have $i\leq t$, $j\leq v$ and $i\cdot j=k\cdot v$, so $i\geq k$. Therefore, except for $b_{k}c_{v}$, which is not divisible by $p$, in all the remaining terms $b_{i}c_{j}$ (if any) the index $i$ must exceed $k$, so $b_{i}$ must be divisible by $p$. This shows that $p\nmid a_{k\cdot v}$, which contradicts our hypothesis that $p\mid a_{i}$ for each $i=m+1,\dots ,n$, as $kv>ku\geq ru=m$.
This completes the proof of the theorem. 
\end{proof}

As we shall see in the following section, Theorem \ref{naiveEisenstein} is a corollary of a more general result obtained by using logarithmic Newton polygons, namely Theorem \ref{calaDumas}, which is an analogue for Dirichlet polynomials of the classical irreducibility criterion of Dumas for polynomials. 

By combining Theorem \ref{naiveEisenstein} and Propositions \ref{prop2} and \ref{prop3}, one can find similar irreducibility criteria that avoid imposing divisibility conditions to terms $a_{i}$ with $i$ close to $m$ and $n$.

\section{Logarithmic Newton polygons and an analogue of \\ Dumas's irreducibility criterion} \label{NewtonLogPol}

In this section we will construct a logarithmic version of the Newton polygon, which is an analogue for algebraically primitive Dirichlet polynomials of the classical Newton polygon for polynomials, and we will refer to it as the {\it logarithmic Newton polygon}, or briefly the {\it Newton log-polygon}. 
First of all, we will fix an arbitrarily chosen real number $\mathfrak{b}$ greater than $1$, which will be the base our logarithms will be considered to (say $\mathfrak{b}=2$, or the base $e$ of natural logarithms, or $10$). For a given base $\mathfrak{b}$, we will call {\it log-integral} any point having coordinates $(\log _{\mathfrak{b}}x,\ y)$ with $x$ a positive integer, and $y$ an integer. As we shall see later, the base $\mathfrak{b}$ will be in some sense irrelevant, so unless for some reason there is a need to use a particular base, to keep notations as simple as possible, we choose to work with natural logarithms. It is however worth mentioning that the construction of the Newton log-polygon below can be done using any base $\mathfrak{b}>1$ for our logarithms.

Now let $p$ be a fixed, prime element of $R$, and let $f(s)=\frac{A_{m}}{m^{s}}+\cdots +\frac{A_{n}}{n^{s}}$ be a Dirichlet polynomial with coefficients in $R$, $A_{m}A_{n}\neq 0$, $1\leq m<n$. Let us represent the nonzero coefficients of $f$ as $A_{i}=a_{i}p^{\alpha _{i}}$ with $a_{i}\in R$ and $p\nmid a_{i}$. We will sometimes denote $\alpha _i$ by $\nu_p(A_i)$. To each nonzero coefficient $a_{i}p^{\alpha _{i}}$ we will associate a point in the plane with coordinates $(\log i,\alpha _{i})$. The lower convex hull of the points $(\log m,\alpha _{m}),\dots ,(\log n,\alpha _{n})$ is an analogue for $f$ of the classical Newton polygon, and is constructed as follows: 
Let $i_{1}=m$, $P_{1}=(\log i_{1},\alpha _{i_{1}})=(\log m,\alpha _{m})$, and let $P_{2}=(\log i_{2},\alpha _{i_{2}})$ with $i_{2}$ the largest integer with the property that there are no points $(\log i,\alpha _{i})$ lying below the line passing through $P_{1}$ and $P_{2}$. Then let $P_{3}=(\log i_{3},\alpha _{i_{3}})$, with $i_{3}$ the largest integer such that there are no points $(\log i,\alpha _{i})$ lying below the line passing through $P_{2}$ and $P_{3}$, and so on. The last line segment built in this way will be $P_{r-1}P_{r}$, with $P_{r}=(\log n,\alpha _{n})$. The broken line $P_{1}P_{2}\dots P_{r}$ thus constructed will be called the {\it Newton log-polygon of} $f$ with respect to the prime element $p$. We will refer to the log-integral points $P_{1},\dots ,P_{r}$ as the {\it vertices} of the Newton log-polygon of $f$, and the Newton log-polygon of $f$ is the lower convex hull of its vertices $P_{1},\dots ,P_{r}$.
The line segments $P_{l}P_{l+1}$ will be called {\it edges}. An edge $P_{l}P_{l+1}$ might pass through some log-integral points, other than its endpoints $P_{l}$ and $P_{l+1}$. These points have coordinates of the form $(\log i, x_{i})$ with $i\in \mathbb{Z_{+}}$ and $x_{i}\in \mathbb{N}$, with $x_{i}$ not necessarily equal to $\alpha _{i}$. Let us denote all these intermediate points by $Q_{1},\dots ,Q_{k}$, say. Then $P_{l}Q_{1}$, $Q_{1}Q_{2}$, $\dots $, $Q_{k-1}Q_{k}$ and $Q_{k}P_{l+1}$ will be called {\it segments} of the Newton log-polygon. Thus, a segment will have no log-integral points other than its endpoints, and an edge will consist of a number of segments, possibly just one, in which case the edge itself will be called a segment.  Besides, any two edges must have different slopes, while two segments that belong to the same edge will obviously have the same slope. The vectors $\overline{P_{i}P_{i+1}}$ will be called {\it vectors} of the edges of the Newton log-polygon, and {\it the vector system} of the Newton log-polygon will be the union of all the vectors of its edges.

Recall that $\frac{n}{m}$ is called the {\it relative degree} of $f$. Since there is no risc of confusion, for an edge (segment) $AB$ of the Newton log-polygon of $f$ having endpoints $A=(\log i, x_{i})$ and $B=(\log j, x_{j})$ with $i<j$, the rational number $\frac{j}{i}>1$ will be called the {\it relative degree} of the edge (segment) $AB$, with $\log \frac{j}{i}$ being the {\it width} of $AB$. Thus, the relative degree of an edge is the product of the relative degrees of all of its segments, while the relative degree of $f$ is the product of the relative degrees of all the edges appearing in the Newton log-polygon of $f$.

One can also use this construction for Dirichlet polynomials $f$ that are not algebraically primitive, but doing so will only result in a translation on the $x$-axis of the Newton log-polygon of the algebraically primitive part of $f$ by some log-integral value. 

The reader may naturally wonder how difficult will be to plot the lower convex hull of some set of log-integral points. Fortunately the Newton log-polygon is in some sense a virtual construction, that requires no precision at all when plotting log-integral points in the plane, not even approximations of the logarithms that we use. The reason is that, actually, the construction of the vertices $P_{1},\dots ,P_{r}$ depends only on some diophantine inequalities, and the identification of all the intermediate log-integral points lying on the edges $P_{1}P_{2},\dots ,P_{r-1}P_{r}$ only requires solving some diophantine equations. Indeed, if we choose two indices $i,j$ with $i<j$, we see that a log-integral point $(\log k,\nu_{p}(A_{k}))$ lies over or on the line passing through the log-integral points $(\log i,\nu_{p}(A_{i}))$ and $(\log j,\nu_{p}(A_{j}))$ if and only if
\begin{equation}\label{deasupra}
j^{\nu _{p}(A_{k})-\nu _{p}(A_{i})}\geq k^{\nu _{p}(A_{j})-\nu _{p}(A_{i})}\cdot i^{\nu _{p}(A_{k})-\nu _{p}(A_{j})},
\end{equation}
while a log-integral point $(\log k,y)$ with $y$ and $k$ integers with $i<k<j$, lies on this line if and only if we have the equality
\begin{equation}\label{exactpe}
j^{y-\nu _{p}(A_{i})}=k^{\nu _{p}(A_{j})-\nu _{p}(A_{i})}\cdot i^{y-\nu _{p}(A_{j})}.
\end{equation}
The Newton log-polygon will help us visualize and gain geometric insight on some factorization properties, which are arithmetic in nature, and often reduce to conditions such as (\ref{deasupra}) and (\ref{exactpe}) that do not depend on the base of the logarithms that we use.

The total number of log-integral points lying on a line segment with endpoints that are log-integral too will be given later in Lemma \ref{puncte}, which will be fundamental in establishing effective irreducibility conditions for our Dirichlet polynomials. 

Let us consider an example for the case that $p=\mathfrak{b}=2$. So let us take for instance $f(s)=\frac{2^{2}}{4^{s}}+\frac{2^{2}}{6^{s}}+\frac{2}{8^{s}}+\frac{1}{9^{s}}+\frac{2^{2}}{ 10^{s}}+\frac{1}{12^{s}}+\frac{2}{15^{s}}$, and observe that $f=g\cdot h$ with 
$g(s)=\frac{2}{2^{s}}+\frac{1}{3^{s}}+\frac{1}{4^{s}}+\frac{2}{5^{s}}$ and $h(s)=\frac{2}{ 2^{s}}+\frac{1}{3^{s}}$. Now let us plot in Figure 1 the Newton log-polygon of $f$, and in Figure 2 the Newton log-polygons of its factors $g$ and $h$. We note that the point $Q_{1}=(\log _{2}6,1)$ is a log-integral point that belongs to the line segment $P_{1}P_{2}$.
We also observe that the edges of the Newton log-polygon of $f$ are obtained by translates of the edges of the Newton polygons of $g$ and $h$, so their slopes and lengths are not altered, and the translates of the edges with the same slope (which are the edges with negative slope $\log _{2}\frac{2}{3}$ in the Newton log-polygons of $g$ and $h$) are ``glued" together in $Q_{1}$. As we shall see immediately, this is not just a coincidence.

\begin{center}
\hspace{2.5cm}
\setlength{\unitlength}{8mm}
\begin{picture}(12,6)
\linethickness{0.15mm}

\multiput(0,0)(0,2){3}{\line(1,0){9}}

\put(0,0){\vector(1,0){10}}
\put(0,0){\vector(0,1){5}}

\thicklines

\put(0,4){\line(3,-2){6}} 
\put(6,0){\line(1,0){1.75}}
\put(7.65,0){\line(2,3){1.3}}  
\linethickness{0.1mm}

\put(3,0){\line(0,1){4}}
\put(4.75,0){\line(0,1){4}}
\put(6,0){\line(0,1){4}}
\put(6.8,0){\line(0,1){4}}
\put(7.65,0){\line(0,1){4}}
\put(8.97,0){\line(0,1){4}}

{\tiny 
\put(-0.5,1.85){$1$}
\put(-0.5,3.85){$2$}

\put(-0.75,-0.5){$(\log _{2}4,0)$}
\put(2.4,-0.5){$\log _{2}6$}
\put(4,-0.5){$\log _{2}8$}
\put(5.5,-0.5){$\log _{2}9$}
\put(7.15,-0.5){$\log _{2}12$}
\put(8.65,-0.5){$\log _{2}15$}

\put(0.25,4.25){$P_{1}$}
\put(3.25,2.25){$Q_{1}$}
\put(6.15,0.25){$P_{2}$}
\put(7.15,0.25){$P_{3}$}
\put(8.35,2.25){$P_{4}$}

}
\put(0,4){\circle{0.08}}
\put(0,4){\circle{0.12}}

\put(3,4){\circle{0.08}}
\put(3,4){\circle{0.12}}

\put(4.75,2){\circle{0.08}}
\put(4.75,2){\circle{0.12}}

\put(6,0){\circle{0.08}}
\put(6,0){\circle{0.12}}

\put(6.8,4){\circle{0.08}}
\put(6.8,4){\circle{0.12}}

\put(7.65,0){\circle{0.08}}
\put(7.65,0){\circle{0.12}}

\put(8.97,2){\circle{0.08}}
\put(8.97,2){\circle{0.12}}

\end{picture}
\end{center}
\bigskip

{\small
{\bf Figure 1.} The Newton log-polygon for $f(s)=\frac{2^{2}}{4^{s}}+\frac{2^{2}}{6^{s}}+\frac{2}{8^{s}}+\frac{1}{9^{s}}+\frac{2^{2}}{ 10^{s}}+\frac{1}{12^{s}}+\frac{2}{15^{s}}$ with respect to the prime number $p=2$ 
}

\begin{center}
\hspace{1cm}
\setlength{\unitlength}{8mm}
\begin{picture}(12,4)
\linethickness{0.15mm}

\multiput(0,0)(0,2){2}{\line(1,0){6}}

\multiput(8,0)(0,2){2}{\line(1,0){3}}

\put(0,0){\vector(1,0){7}}
\put(0,0){\vector(0,1){3}}

\put(8,0){\vector(1,0){4}}
\put(8,0){\vector(0,1){3}}

\thicklines

\put(0,2){\line(3,-2){3}} 
\put(3,0){\line(1,0){1.7}}
\put(4.67,0){\line(2,3){1.3}}  

\put(8,2){\line(3,-2){3}} 
\linethickness{0.1mm}

\put(3,0){\line(0,1){2}}
\put(4.67,0){\line(0,1){2}}
\put(5.97,0){\line(0,1){2}}

\put(11,0){\line(0,1){2}}
{\tiny 
\put(-0.5,1.85){$1$}

\put(7.5,1.85){$1$}

\put(-0.75,-0.5){$(\log _{2}2,0)$}
\put(2.4,-0.5){$\log _{2}3$}
\put(4.1,-0.5){$\log _{2}4$}
\put(5.4,-0.5){$\log _{2}5$}

\put(7.3,-0.5){$(\log _{2}2,0)$}
\put(10.6,-0.5){$\log _{2}3$}

}
\put(0,2){\circle{0.08}}
\put(0,2){\circle{0.12}}

\put(3,0){\circle{0.08}}
\put(3,0){\circle{0.12}}

\put(4.67,0){\circle{0.08}}
\put(4.67,0){\circle{0.12}}

\put(5.98,2){\circle{0.08}}
\put(5.98,2){\circle{0.12}}

\put(8,2){\circle{0.08}}
\put(8,2){\circle{0.12}}

\put(11,0){\circle{0.08}}
\put(11,0){\circle{0.12}}

\end{picture}
\end{center}
\bigskip

{\small
{\bf Figure 2.} The Newton log-polygons for $g(s)=\frac{2}{2^{s}}+\frac{1}{3^{s}}+\frac{1}{4^{s}}+\frac{2}{5^{s}}$ and $h(s)=\frac{2}{ 2^{s}}+\frac{1}{3^{s}}$
 with respect to the prime number $p=2$ 
}
\medskip

The following result, which will be fundamental to the theory of Newton log-polygons, is an analogue for Dirichlet polynomials of the famous Theorem of Dumas \cite{Dumas} for polynomials (see also Prasolov \cite[Th. 2.2.1]{Prasolov} for a short proof of Dumas's Theorem).

\begin{theorem}\label{DumDir}
Let $p\in R$ be a prime element, $g,h$ nonconstant algebraically primitive Dirichlet polynomials with coefficients in $R$, and let $f=g\cdot h$. Then the Newton log-polygon of $f$ with respect to $p$ is obtained by using translates of the edges of the Newton log-polygons of $g$ and $h$ with respect to $p$, using precisely one translate for each edge, so as to form a polygonal line with increasing slopes, when considered from left to the right.
\end{theorem}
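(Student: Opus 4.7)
The plan is to adapt the classical Dumas strategy to the Dirichlet setting by encoding each Newton log-polygon through a concave support function. For any Dirichlet polynomial $F=\sum A_i/i^s$ with coefficients in $R$, I would define
$$\psi_F(\mu) \;:=\; \min_{i\in Supp(F)}\bigl(\nu_p(A_i) - \mu \log i\bigr), \qquad \mu\in \mathbb{R}.$$
This is a concave, piecewise-linear function whose break-points record the slopes of the edges of the Newton log-polygon $N(F)$, while the distances between consecutive break-points record the widths of those edges; in particular, two Dirichlet polynomials have the same Newton log-polygon if and only if their support functions coincide. It therefore suffices to prove that $\psi_f(\mu)=\psi_g(\mu)+\psi_h(\mu)$ for every $\mu\in\mathbb{R}$, for this identity forces $N(f)$ to be the Minkowski sum of $N(g)$ and $N(h)$, whose boundary is obtained by concatenating one translate of each edge of $N(g)$ and $N(h)$ in order of increasing slope, which is precisely the theorem's claim.

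The inequality $\psi_f(\mu)\geq \psi_g(\mu)+\psi_h(\mu)$ will follow immediately from the Dirichlet product rule: writing $c_k=\sum_{ij=k} b_i c_j$ and $\log k=\log i+\log j$, one obtains
$$\nu_p(c_k)-\mu\log k \;\geq\; \min_{\substack{ij=k\\ b_ic_j\neq 0}}\Bigl[\bigl(\nu_p(b_i)-\mu\log i\bigr)+\bigl(\nu_p(c_j)-\mu\log j\bigr)\Bigr] \;\geq\; \psi_g(\mu)+\psi_h(\mu)$$
for each $k\in Supp(f)$. For the reverse inequality I would fix $\mu$, let $i^*$ be the \emph{smallest} index of $Supp(g)$ at which the minimum defining $\psi_g(\mu)$ is attained, and $j^*$ the smallest such index for $h$. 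I would then inspect the coefficient $c_{i^*j^*}$ of $f$: each non-zero term $b_i c_j$ in its Dirichlet convolution satisfies $\nu_p(b_i)+\nu_p(c_j)\geq \psi_g(\mu)+\psi_h(\mu)+\mu\log(i^*j^*)$, with equality occurring only when $b_i$ and $c_j$ individually attain their respective minima, in which case the minimality choices together with the constraint $ij=i^*j^*$ force $(i,j)=(i^*,j^*)$. Consequently $b_{i^*}c_{j^*}$ is the unique strictly $\nu_p$-minimizing term, no cancellation occurs, and $\nu_p(c_{i^*j^*})=\nu_p(b_{i^*})+\nu_p(c_{j^*})$, yielding $\psi_f(\mu)\leq\psi_g(\mu)+\psi_h(\mu)$.

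The main obstacle, and the point that most distinguishes the argument from Dumas's original one, will be the uniqueness step just sketched: in the polynomial setting the relevant constraint is the additive identity $i+j=\mathrm{const}$, while here it is the multiplicative $ij=\mathrm{const}$, so several distinct factorizations of a single integer could a priori yield several minimizing pairs. The ``smallest-index'' trick dispatches this cleanly because two positive divisors satisfying $i\geq i^*$, $j\geq j^*$ together with $ij=i^*j^*$ must be exactly $i=i^*$ and $j=j^*$. Once the identity $\psi_f=\psi_g+\psi_h$ has been established, a standard Minkowski-sum computation in the plane recovers the stated structural description: the vector system of $N(f)$ is the disjoint union of those of $N(g)$ and $N(h)$, arranged in order of increasing slope, and each edge of $N(g)$ or $N(h)$ contributes exactly one translated copy to the polygonal line defining $N(f)$.
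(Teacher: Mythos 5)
Your proposal is correct, and the core mechanism is the same as the paper's: both evaluate, for a fixed direction $\mu$ (the paper's $S=A/I$), the linear functional $\nu_p(\cdot)-\mu\log(\cdot)$ on the supports of $f$, $g$, $h$, and both use the least-index trick (your $i^*,j^*$, the paper's $j_m,k_m$) combined with the multiplicative constraint $ij=i^*j^*$ to show that exactly one term of the Dirichlet convolution attains the minimum, hence no $p$-adic cancellation occurs. The paper's ``weight'' $w=I\alpha_i-A\log i$ is literally $I\cdot\bigl(\nu_p(a_i)-S\log i\bigr)$, so its relation $G+H=F$ is your $\psi_g(S)+\psi_h(S)=\psi_f(S)$ for that particular slope $S$.

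Where you genuinely diverge is in packaging. You prove the global identity $\psi_f(\mu)=\psi_g(\mu)+\psi_h(\mu)$ for \emph{every} $\mu\in\mathbb{R}$ and then read off the vector-system decomposition from standard support-function/Minkowski-sum duality in one stroke; in particular, the breaks of $\psi_f$ are automatically the union of the breaks of $\psi_g$ and $\psi_h$, with widths adding. The paper instead fixes an edge of $N(f)$ at a time, hand-verifies that $j_mk_m=i_m$ and $j_Mk_M=i_M$ so that the widths on the two sides match for that slope, and then needs an additional bookkeeping step at the end (the inequalities $W_g\le \log d-\log d'$, $W_h\le\cdots$, with $W_f=W_g+W_h$ forcing equality) to rule out the possibility that $N(g)$ or $N(h)$ has an edge whose slope does not occur in $N(f)$. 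Your global identity makes that last step unnecessary. The trade-off is that the paper's version is self-contained at the level of elementary manipulations, while yours quietly relies on the reader accepting that a piecewise-linear concave function determines its Newton polygon and that pointwise addition of such functions corresponds to Minkowski addition of the polygons; a fully rigorous write-up would state and prove (or cite) that equivalence, including the remark that the asymptotic slopes of $\psi_F$ as $\mu\to\pm\infty$ recover $\deg F$ and $\deg_{\min}F$, so that the multiplicativity of $\deg$ and $\deg_{\min}$ is also encoded. No gap in substance, but that duality lemma is the one piece you'd need to supply.
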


In other words, this theorem says that the vector system of the Newton log-polygon of $f$ is obtained by considering the union of the vector systems of the Newton log-polygons of $g$ and $h$, repetitions allowed, and then replacing any two vectors having the same orientation (slope) by their sum.

\begin{proof}\ We will adapt here some of the notations and ideas in the proof of Dumas's Theorem given in \cite{Prasolov}, and the proof will be given in full detail. 

Let $f(s)=\sum\limits _{i=n'}^{n}\frac{a_{i}p^{\alpha _{i}}}{i^{s}}$, $g(s)=\sum\limits _{j=d'}^{d}\frac{b_{j}p^{\beta _{j}}}{j^{s}}$ and $h(s)=\sum\limits _{k=n'/d'}^{n/d}\frac{c_{k}p^{\gamma _{k}}}{k^{s}}$ with $a_{i},b_{j}$ and $c_{k}$ not divisible by $p$. Let us consider an edge of the Newton log-polygon of $f$, say $P_{l}P_{l+1}$ (that may consist of more than a single segment), and let us assume that the points $P_{l}$ and $P_{l+1}$ have coordinates $(\log i_{m},\alpha _{i_{m}})$, and $(\log i_{M},\alpha _{i_{M}})$, respectively, with $i_{m}<i_{M}$. The slope of $P_{l}P_{l+1}$ is
\[
S=\frac{\alpha _{i_{M}}-\alpha _{i_{m}}}{\log i_{M}-\log i_{m}}.
\]
Let now $\alpha _{i_{M}}-\alpha _{i_{m}}=A$ and $\log i_{M}-\log i_{m}=I$, so $S=\frac{A}{I}$. The edge $P_{l}P_{l+1}$ of the Newton log-polygon of $f$ belongs to the line $Iy -Ax=F$, where
\[
F=I\alpha _{i_{M}}-A\log i_{M}=I\alpha _{i_{m}}-A\log i_{m}.
\] 
According to the definition, all the points $(\log i,\alpha _{i})$, $i=n',\dots ,n$, either lie on this line, or above it, so $I\alpha _{i}-A\log i\geq F$ for all $i$, with a strict inequality for $i<i_{m}$ and $i>i_{M}$, and we have equality in the endpoints corresponding to $i_{m}$ and $i_{M}$ (and for the points $(\log i, \alpha _{i})$ with $i_{m}<i<i_{M}$, if any, both cases ``$>$" and ``=" being possible).

To each term $\frac{a_{i}p^{\alpha _{i}}}{i^{s}}$ we will associate a real number
\[
w\Bigl(\frac{a_{i}p^{\alpha _{i}}}{i^{s}}\Bigr):=I\alpha _{i}-A\log i,
\]
called its {\it weight} with respect to the edge $P_{l}P_{l+1}$. We may then regard $i_{m}$ and $i_{M}$ as the least and the largest $i$ such that the corresponding term $\frac{a_{i}p^{\alpha _{i}}}{i^{s}}$ of $f$ has minimum weight (which is $F$) with respect to the edge $P_{l}P_{l+1}$, which shows that $i_{m}$ and $i_{M}$ are in this way uniquely determined.
For the Dirichlet polynomial $g$ we will define now 
\[
G:=\underset{d'\leq j\leq d}{\rm min}\{I\beta _{j}-A\log j\} 
\]
and we will also define $j_{m}$ and $j_{M}$ as the least and the largest index, respectively, such that
\begin{equation}\label{GDirichlet}
G=I\beta _{j_{m}}-A\log j_{m}=I\beta _{j_{M}}-A\log j_{M}.
\end{equation}  
We notice that $j_{m}\leq j_{M}$ (while  $i_{m}<i_{M}$, here we don't necessarily have a strict inequality). Similarly, for $h$ we define
\[
H:=\underset{n'/d'\leq k\leq n/d}{\rm min}\{I\gamma _{k}-A\log k\} 
\]
and then we denote by $k_{m}$ and $k_{M}$ the least and the largest index, respectively, such that
\begin{equation}\label{HDirichlet}
H=I\gamma _{k_{m}}-A\log k_{m}=I\gamma _{k_{M}}-A\log k_{M}.
\end{equation}  
Here too, we have $k_{m}\leq k_{M}$ (not necessarily a strict inequality). By the multiplication rule for Dirichlet series, we have
\begin{equation}\label{regulaDirichlet}
\frac{a_{j_{m}k_{m}}p^{\alpha _{j_{m}k_{m}}}}{(j_{m}k_{m})^{s}}=\sum\limits _{j\cdot k=j_{m}\cdot k_{m}}\frac{b_{j}p^{\beta _{j}}}{j^{s}}\cdot \frac{c_{k}p^{\gamma _{k}}}{k^{s}}.
\end{equation}  
We observe that for two terms that are multiplied in (\ref{regulaDirichlet}) we have
\begin{eqnarray*}
w\Bigl(\frac{b_{j}p^{\beta _{j}}}{j^{s}}\cdot \frac{c_{k}p^{\gamma _{k}}}{k^{s}}\Bigr) & = & w\Bigl(\frac{b_{j}c_{k}p^{\beta _{j}+\gamma _{k}}}{(jk)^{s}}\Bigr)=I(\beta _{j}+\gamma _{k})-A\log (jk)\\
& = & I(\beta _{j}+\gamma _{k})-A(\log j+\log k)=
w\Bigl(\frac{b_{j}p^{\beta _{j}}}{j^{s}}\Bigr)+w\Bigl(\frac{c_{k}p^{\gamma _{k}}}{k^{s}}\Bigr),
\end{eqnarray*}
so the weight of the summand with $j=j_{m}$ and $k=k_{m}$ in (\ref{regulaDirichlet}) is $G+H$, while the weights of the rest of the summands all exceed $G+H$, since for these ones we either have $j<j_{m}$, or $k<k_{m}$. Indeed, let us assume that $j<j_{m}$ and $k>k_{m}$, for instance. Then
$w(\frac{b_{j}p^{\beta _{j}}}{j^{s}})>G$ and $w(\frac{c_{k}p^{\gamma _{k}}}{k^{s}})\geq H$, so
\[
w\Bigl(\frac{b_{j}p^{\beta _{j}}}{j^{s}}\cdot \frac{c_{k}p^{\gamma _{k}}}{k^{s}}\Bigr) >G+H
\]
(and similarly for $k<k_{m}$). Besides, we notice that when $j\cdot k$ is constant, the weight of a product $\frac{b_{j}p^{\beta _{j}}}{j^{s}}\cdot \frac{c_{k}p^{\gamma _{k}}}{k^{s}}$ regarded as a function on $\beta _{j}+\gamma _{k}$ is a strictly increasing function, as $I>0$. Therefore, since in (\ref{regulaDirichlet}) we have $j\cdot k=j_{m}\cdot k_{m}$, hence $j\cdot k$ is constant, the sum $\beta _{j}+\gamma _{k}$ attains its minimum for $j=j_{m}$ and $k=k_{m}$, and is strictly larger for any other pair $(j,k)$. Thanks to (\ref{regulaDirichlet}) we see now that
\begin{equation}\label{exactDirichlet}
w\Bigl(\frac{a_{j_{m}k_{m}}p^{\alpha _{j_{m}k_{m}}}}{(j_{m}k_{m})^{s}}\Bigr)=G+H.
\end{equation}
Moreover, reasoning similarly with (\ref{regulaDirichlet}) replaced by
\begin{equation}\label{regulageneralaDirichlet}
\frac{a_{i}p^{\alpha _{i}}}{i^{s}}=\sum\limits _{j\cdot k=i}\frac{b_{j}p^{\beta _{j}}}{j^{s}}\cdot \frac{c_{k}p^{\gamma _{k}}}{k^{s}},
\end{equation}  
we will prove that
\begin{eqnarray}
w\Bigl(\frac{a_{i}p^{\alpha _{i}}}{i^{s}}\Bigr) & > & G+H\qquad
{\rm for}\  i<j_{m}\cdot k_{m},\label{caz1Dirichlet}\\
w\Bigl(\frac{a_{i}p^{\alpha _{i}}}{i^{s}}\Bigr) & \geq & G+H\qquad
{\rm for}\  i\geq j_{m}\cdot k_{m}.\label{caz2Dirichlet}
\end{eqnarray}
Indeed, in view of (\ref{regulageneralaDirichlet}) we see that
\begin{equation}\label{minimDirichlet}
\alpha _{i}\geq \underset{j\cdot k=i}{\rm min}(\beta _{j}+\gamma _{k}),
\end{equation}
and let us assume that this minimum is attained for a pair $(j_{0},k_{0})$ with $j_{0}\cdot k_{0}=i<j_{m}\cdot k_{m}$. Then at least one of the inequalities $j_{0}<j_{m}$ and $k_{0}<k_{m}$ must hold, say $j_{0}<j_{m}$. In this case we have $w(\frac{b_{j_{0}}p^{\beta _{j_{0}}}}{j_{0}^{s}})>G$ and $w(\frac{c_{k_{0}}p^{\gamma _{k_{0}}}}{k_{0}^{s}})\geq H$, so $w(\frac{b_{j_{0}}p^{\beta _{j_{0}}}}{j_{0}^{s}}\cdot \frac{c_{k_{0}}p^{\gamma _{k_{0}}}}{k_{0}^{s}})>G+H$.
According to (\ref{minimDirichlet}) we then deduce that
\begin{eqnarray*}
w\Bigl(\frac{a_{i}p^{\alpha _{i}}}{i^{s}}\Bigr) & = & I\alpha _{i}-A\log i\geq
I(\beta _{j_{0}}+\gamma _{k_{0}})-A\log (j_{0}\cdot k_{0})\\
& = &
 w\Bigl(\frac{b_{j_{0}}c_{k_{0}}p^{\beta _{j_{0}}+\gamma _{k_{0}}}}{(j_{0}\cdot k_{0})^{s}}\Bigr)>G+H.
\end{eqnarray*}
Assume now that the minimum in (\ref{minimDirichlet}) is attained for a pair $(j_{0},k_{0})$ with $j_{0}\cdot k_{0}=i\geq j_{m}\cdot k_{m}$. In this case we might have $j_{0}=j_{m}$ and $k_{0}=k_{m}$, so we only get
\[
w\Bigl(\frac{a_{i}p^{\alpha _{i}}}{i^{s}}\Bigr) =I\alpha _{i}-A\log i\geq w\Bigl(\frac{b_{j_{0}}c_{k_{0}}p^{\beta _{j_{0}}+\gamma _{k_{0}}}}{i^{s}}\Bigr)\geq G+H.
\]
Summarizing, by (\ref{exactDirichlet}), (\ref{caz1Dirichlet}) and (\ref{caz2Dirichlet}), we must have
\[
G+H=F\qquad {\rm and}\qquad j_{m}\cdot k_{m}=i_{m}.
\]
Similarly, we also obtain $j_{M}\cdot k_{M}=i_{M}$. Taking the logaritm, one obtains
\begin{equation}\label{sumaDirichlet}
\log i_{M}-\log i_{m}=(\log j_{M}-\log j_{m})+(\log k_{M}-\log k_{m}).
\end{equation}
In particular, we see that at least one of the numbers $\log j_{M}-\log j_{m}$ and $\log k_{M}-\log k_{m}$ must be nonzero (hence positive). If both $\log j_{M}-\log j_{m}$ and $\log k_{M}-\log k_{m}$ are nonzero, then the geometric segment with endpoints $(\log j_{m},\beta _{j_{m}})$ and $(\log j_{M},\beta _{j_{M}})$ must be an edge of the Newton log-polygon of $g$, and the geometric segment with endpoints $(\log k_{m},\gamma _{k_{m}})$ and $(\log k_{M},\gamma _{k_{M}})$ must be an edge of the Newton log-polygon of $h$, and moreover, the slopes of these two edges will be the same, namely $S=\frac{A}{I}$, since from (\ref{GDirichlet}) and (\ref{HDirichlet}) one obtains
\begin{equation}\label{pantaDirichlet}
\frac{\beta _{j_{M}}-\beta _{j_{m}}}{\log j_{M}-\log j_{m}}=\frac{A}{I}=\frac{\gamma _{k_{M}}-\gamma _{k_{m}}}{\log k_{M}-\log k_{m}}.
\end{equation}
Indeed, the fact that the geometric segment with endpoints $(\log j_{m},\beta _{j_{m}})$ and $(\log j_{M},\beta _{j_{M}})$ is an edge of the Newton log-polygon of $g$ follows since, according to the definition of $G$ and of the indices $j_{m}$ and $j_{M}$, for $j<j_{m}$ and $j>j_{M}$ we have $I\beta _{j}-A\log j>G$, which shows that there are no points $(\log j,\beta _{j})$ lying under the line determined by this geometric segment. Moreover, the slope of the geometric segment with endpoints $(\log j_{m},\beta _{j_{m}})$ and $(\log j_{M},\beta _{j_{M}})$ is precisely $\frac{A}{I}$. Absolutely similar, one proves that the geometric segment with endpoints $(\log k_{m},\gamma _{k_{m}})$ and $(\log k_{M},\gamma _{k_{M}})$ is an edge of the Newton log-polygon of $h$. Moreover, if one of the numbers $\log j_{M}-\log j_{m}$ and $\log k_{M}-\log k_{m}$ is zero, say $\log k_{M}-\log k_{m}=0$, in relation (\ref{pantaDirichlet}) only the first equality will make sense, but this will still be sufficient to argue in the same way that the segment with endpoints $(\log j_{m},\beta _{j_{m}})$ and $(\log j_{M},\beta _{j_{M}})$ is an edge of the Newton log-polygon of $g$. Thus the conclusion remains valid for $g$ in this case too, the only difference being the fact that in this case there no longer exists an edge of slope $\frac{A}{I}$ in the Newton log-polygon of $h$.

Relation (\ref{sumaDirichlet}) is an equality of lengths of projections on the $x$-axis of some line segments having the same slope (according to (\ref{pantaDirichlet})), so this relation also shows us that the sum of the lengths of the edges with slope $S$ in the Newton log-polygons of $g$ and $h$ is precisely the length of the edge with slope $S$ in the Newton log-polygon of $f$. If one of the numbers $\log j_{M}-\log j_{m}$ and $\log k_{M}-\log k_{m}$ is zero, then the Newton log-poligon of one of $g$ and $h$ has an edge with slope $S$ of the same length as the edge with slope $S$ in the Newton log-polygon of $f$, while the Newton log-polygon of the other factor will not contain any edge with slope $S$. Thus, the vector of the edge with slope $S$ in the Newton log-polygon of $f$ is the sum of the vectors of the edges with slope $S$ in the Newton log-polygons of $g$ and $h$. Besides, the fact that $\log n=\log d+\log n/d$ and $\log n'=\log d'+\log n'/d'$ combined with (\ref{sumaDirichlet}) and (\ref{pantaDirichlet}) shows us that if one of the factors $g$ and $h$ has an edge with slope $S$ in its Newton log-polygon, then $S$ will necessarily appear among the slopes of the edges in the Newton log-polygon of $f$.
Indeed, let us denote by $Lf_{1},\dots ,Lf_{r}$ the lengths of the projections on the $x$-axis of the edges of the Newton log-polygon of $f$, and with $S_{1},\dots ,S_{r}$ the slopes of these edges, respectively. Similarly, let $Lg_{1},\dots ,Lg_{r}$ and $Lh_{1},\dots ,Lh_{r}$ be the lengths of the projections on the $x$-axis of the edges with slopes $S_{1},\dots ,S_{r}$, respectively, in the Newton log-polygons of $g$ and $h$ (some of these lengths possibly being zero). We then have
\begin{equation}\label{egalitDirichlet}
Lf_{i}=Lg_{i}+Lh_{i}, \quad i=1,\dots ,r,
\end{equation}
where for a fixed $i$ we have $Lg_{i}\geq 0$ and $Lh_{i}\geq 0$, with at least one of $Lg_{i}$ and $Lh_{i}$ nonzero. 
Denote now
\[
W_{f}:=\sum\limits _{i=1}^{r}Lf_{i},\quad W_{g}:=\sum\limits _{i=1}^{r}Lg_{i},\quad 
W_{h}:=\sum\limits _{i=1}^{r}Lh_{i}.
\]
We have $W_{f}=\log n-\log n'$, and since the Newton log-polygons of $g$ and $h$ might in principle contain edges of slopes different from $S_{1},\dots ,S_{r}$, we deduce that $W_{g}\leq \log d-\log d'$ and $W_{h}\leq \log n-\log n'-(\log d-\log d')$. On the other hand by (\ref{sumaDirichlet}) one obtains by summation that $W_{f}=W_{g}+W_{h}$, which forces the equalities $W_{g}=\log d-\log d'$ and $W_{h}=\log n-\log n'-(\log d-\log d')$. This shows that in fact, the Newton log-polygons of $g$ and $h$ cannot have edges with slopes other than $S_{1},\dots ,S_{r}$. This completes the proof of the theorem. 
\end{proof}

In particular, this theorem shows that each nonconstant factor of a Dirichlet polynomial $f$ must contribute at least one edge to the Newton log-polygon of $f$. Therefore, as in the case of polynomials we have:
\begin{corollary}\label{coroDumas}
If with respect to a prime element $p$ of $R$, the Newton log-polygon of an algebraically primitive Dirichlet polynomial $f$ consists of a single edge, and this edge contains no log-integral points other than its endpoints, then $f$ must be irreducible.
\end{corollary}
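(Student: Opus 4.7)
The plan is to argue by contradiction using Theorem \ref{DumDir}. Suppose $f = g \cdot h$ with $g, h \in DP(R)[s]$ nonconstant. First I would observe that since $f$ is algebraically primitive, both $g$ and $h$ must be algebraically primitive as well: if either had its support with a common factor $d > 1$, that factor would be inherited by the support of the product $f$, contradicting the assumption on $f$. Moreover, each of $g$ and $h$, being nonconstant and algebraically primitive, must have at least two nonzero terms (a single term $a/i^s$ is algebraically primitive only when $i=1$, i.e., constant). Consequently, each of their Newton log-polygons contains at least one edge.

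Next I would apply Theorem \ref{DumDir}: the Newton log-polygon of $f$ is obtained by gathering translates of all edges of the Newton log-polygons of $g$ and $h$ and gluing edges of equal slope into a single convex chain with strictly increasing slopes. Since by hypothesis the Newton log-polygon of $f$ consists of one edge, every edge of $g$ and every edge of $h$ must share the common slope $S$ of this unique edge; hence $g$ and $h$ each have exactly one edge, and these two edges get glued (after translation) into the single edge of $f$.

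Now the main point: the gluing point is an interior log-integral point on $f$'s edge, which is the contradiction. Writing $g$'s edge from $(\log d', \beta_{d'})$ to $(\log d, \beta_{d})$ and $h$'s edge from $(\log e', \gamma_{e'})$ to $(\log e, \gamma_{e})$, with $d'e' = \deg_{\min} f$ and $de = \deg f$, the translation procedure in the proof of Theorem \ref{DumDir} places the join at the point
\[
\bigl(\log(d\,e'),\ \beta_{d}+\gamma_{e'}\bigr).
\]
This is log-integral since $de'$ is a positive integer and $\beta_d+\gamma_{e'} \in \mathbb{Z}$. Because $g,h$ are nonconstant and algebraically primitive, we have $d' < d$ and $e' < e$, so $d'e' < de' < de$, which shows that the joining point lies strictly between the two endpoints $(\log d'e', \alpha_{d'e'})$ and $(\log de, \alpha_{de})$ of $f$'s edge. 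This contradicts the hypothesis that the edge contains no log-integral points other than its endpoints, proving that $f$ is irreducible.

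The only delicate step is verifying that the join is genuinely interior and genuinely log-integral; once algebraic primitivity has been propagated to the factors, both claims follow immediately from the bookkeeping in the proof of Theorem \ref{DumDir}, so no substantial new obstacle arises.
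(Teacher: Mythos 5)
Your argument is correct and is precisely the reasoning the paper indicates (after Theorem \ref{DumDir}, it observes that each nonconstant factor must contribute an edge, and the remark following Corollary \ref{coroDumas} explains that edges of equal slope from the two factors would glue at an interior log-integral point). You fill in the details carefully — in particular that algebraic primitivity passes to factors and that a nonconstant algebraically primitive Dirichlet polynomial has at least two terms, hence a nondegenerate Newton log-polygon — and the identification of the joining point $(\log de', \beta_d + \gamma_{e'})$ as a strictly interior log-integral point is exactly what contradicts the hypothesis.
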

Here it is crucial to make distinction between edges and segments. It is not sufficient to ask the Newton log-polygon of $f$ to have a single edge, for an edge might in principle contain multiple segments coming from the Newton log-polygons of two or more of the nonconstant factors of $f$. That's why in Corollary \ref{coroDumas} we had to ask the edge to contain no log-integral points other than its endpoints.
In order to obtain an effective instance of this corollary, we will need the following lemma, that counts the log-integral points on a line segment having endpoints that are log-integral too, thus allowing one to see when an edge in the Newton log-polygon contains a single segment.

\begin{lemma}\label{puncte}
Let $x_{1},x_{2}$ be positive integers with $x_{1}<x_{2}$, and $p_{1},\dots ,p_{k}$ the distinct prime factors of $x_{1}\cdot x_{2}$. Let also $y_{1},y_{2}$ be two integers with $y_{1}\neq y_{2}$. Then the log-integral points $(\log x,y)$ lying on the line segment with endpoints $P=(\log x_{1},y_{1})$ and $Q=(\log x_{2},y_{2})$ are
\[
\left( \bigl(1-\frac{i}{\delta}\bigr)\log x_{1}+\frac{i}{\delta }\log x_{2}, \ \bigl(1-\frac{i}{\delta }\bigr)y_{1}+\frac{i}{\delta }y_{2}\right) ,\quad i=0,\dots ,\delta ,
\]
with $\delta =\gcd (y_{2}-y_{1},\nu _{p_{1}}(x_{2})-\nu _{p_{1}}(x_{1}),\dots ,\nu _{p_{k}}(x_{2})-\nu _{p_{k}}(x_{1}))$. Thus, the number of line segments lying on $PQ$ that contain no log-integral points other than their endpoints is precisely $\delta $.
\end{lemma}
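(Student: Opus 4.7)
The plan is to parametrize every point of the segment $PQ$ as
\[
\bigl((1-t)\log x_1 + t\log x_2,\ (1-t)y_1 + ty_2\bigr) \;=\; \bigl(\log(x_1^{1-t}x_2^t),\ y_1 + t(y_2-y_1)\bigr),\quad t\in[0,1].
\]
Since $y_1\neq y_2$, integrality of the $y$-coordinate $y_1+t(y_2-y_1)$ already forces $t$ to be rational. Writing $t=p/q$ in lowest terms, and setting $a_j=\nu_{p_j}(x_1)$, $b_j=\nu_{p_j}(x_2)$, the $x$-coordinate equals $\log\bigl((x_1^{q-p}x_2^p)^{1/q}\bigr)$, so it is log-integral iff $x_1^{q-p}x_2^p$ is a perfect $q$-th power in $\mathbb{Z}$. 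By unique factorization, this happens iff $q\mid (q-p)a_j+pb_j=qa_j+p(b_j-a_j)$ for every $j$, which since $\gcd(p,q)=1$ is equivalent to $q\mid b_j-a_j$, i.e.\ to $t(b_j-a_j)\in\mathbb{Z}$. Combined with the $y$-condition, log-integral points on $PQ$ correspond bijectively to $t\in[0,1]$ satisfying $td\in\mathbb{Z}$ for every $d$ in the finite set $D=\{y_2-y_1,\,b_1-a_1,\ldots,b_k-a_k\}$.

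The next step is to identify this set of admissible $t$'s. Put $\delta=\gcd(D)$, which is a positive integer because $y_1\neq y_2$. The claim is that $td\in\mathbb{Z}$ for every $d\in D$ iff $t\in\{i/\delta:\,i=0,1,\ldots,\delta\}$. The inclusion ``$\supseteq$'' is immediate since $\delta\mid d$. For ``$\subseteq$'', I would invoke a Bezout identity $1=\sum_{d\in D}n_d(d/\delta)$ with $n_d\in\mathbb{Z}$, which yields
\[
t\delta \;=\; \sum_{d\in D}n_d\,(td)\;\in\;\mathbb{Z},
\]
and then $t\in[0,1]$ forces $t\delta\in\{0,1,\ldots,\delta\}$.

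Substituting $t=i/\delta$ into the parametrization recovers precisely the $\delta+1$ log-integral points listed in the statement, and they split $PQ$ into $\delta$ consecutive subsegments, each free of interior log-integral points by construction, giving the announced count $\delta$. The step requiring the most care — and in my view the only genuinely substantive one — is the reduction to rational $t$ and the ensuing unique-factorization characterization of when $x_1^{1-t}x_2^t$ is an integer; once that is in place, the Bezout argument producing the exact value $\delta$ is essentially bookkeeping. It is worth noting that the same argument, with cosmetic changes, also handles the degenerate case where some $b_j-a_j$ vanishes (the corresponding $d$ drops out of $D$ without affecting $\delta$), which explains why only the hypothesis $y_1\neq y_2$ is needed to keep $\delta$ positive.
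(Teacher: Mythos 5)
Your proof is correct and follows essentially the same approach as the paper: parametrize the segment, reduce log-integrality of a point to divisibility constraints on the prime-multiplicity differences via unique factorization, and conclude that the admissible parameters are exactly the multiples of $1/\delta$ in $[0,1]$. The B\'ezout step you use to close the count is an alternative to the paper's direct enumeration of reduced fractions $a/b$ with $b\mid\delta$; both amount to the observation that $q\mid d$ for every $d\in D$ is equivalent to $q\mid\delta$, and the two arguments are interchangeable.
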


\begin{proof}\ Without loss of generality we will assume that $y_{1}<y_{2}$. The line passing through $P$ and $Q$ has equation
\begin{equation}\label{dreapta}
Y=\frac{y_{2}-y_{1}}{\log x_{2}-\log x_{1}}\cdot  X+\frac{y_{1}\log x_{2}-y_{2}\log x_{1}}{\log x_{2}-\log x_{1}}.
\end{equation}
Suppose that there exists an integer $x$ with  $x_{1}<x<x_{2}$ and an integer $y$ with $y_{1}<y<y_{2}$ such that the log-integral point $(\log x,y)$ lies on the line with equation (\ref{dreapta}). Then $x$ and $y$ must satisfy the relation
\begin{equation}\label{Pedreapta}
x_{2}^{y-y_{1}}=x_{1}^{y-y_{2}}\cdot x^{y_{2}-y_{1}}.
\end{equation}
Let us consider all the prime numbers that divide at least one of $x_{1}$ and $x_{2}$, say $p_{1},\dots ,p_{k}$. In view of (\ref{Pedreapta}), the prime factors of $x$ are forced to belong to the set $\{ p_{1},\dots ,p_{k}\} $. We may therefore assume without loss of generality that
\[
x=p_{1}^{\alpha _{1}}\cdots p_{k}^{\alpha _{k}},\ \ x_{1}=p_{1}^{\beta _{1}}\cdots p_{k}^{\beta _{k}},\ \ 
x_{2}=p_{1}^{\gamma _{1}}\cdots p_{k}^{\gamma _{k}},
\]
with multiplicities $\alpha _{i}\geq 0$, $\beta _{i}\geq 0$ and $\gamma _{i}\geq 0$, such that for any given $i$, at most one of $\alpha _{i}$, $\beta _{i}$ and $\gamma _{i}$ might be zero, so any prime $p_{1},\dots ,p_{k}$ might be missing in at most one of the prime decompositions of $x$, $x_{1}$ and $x_{2}$. Now, equating the multiplicities in (\ref{Pedreapta}), we deduce that
\[
\gamma _{i}(y-y_{1})=\beta _{i}(y-y_{2})+\alpha _{i}(y_{2}-y_{1}),\quad {\rm for} \ i=1,\dots ,k,
\]
or equivalently, that
\begin{equation}\label{multiplicit}
(\gamma _{i}-\beta _{i})y=(\gamma _{i}-\beta _{i})y_{1}+(\alpha _{i}-\beta _{i})(y_{2}-y_{1}),\quad {\rm for} \ i=1,\dots ,k.
\end{equation}
We will analyse the possible values for $\alpha _{i}$ in (\ref{multiplicit}). We first observe that if for an index $i$ we have $\gamma _{i}=\beta _{i}$, by (\ref{multiplicit}) we must also have $\alpha _{i}=\beta _{i}$, so in this case we have only one possible value for $\alpha _{i}$. As $x_{1}<x_{2}$, we cannot have $\gamma _{i}=\beta _{i}$ for all $i$, so 
there exists a non-empty subset of $\{ p_{1},\dots ,p_{k}\} $, say $\{ p_{1},\dots ,p_{l}\} $ after relabelling, with $\gamma _{1}\neq \beta _{1},\dots ,\gamma _{l}\neq \beta _{l}$. This will allow us to divide by $\gamma _{1}-\beta _{1}$, $\dots $, $\gamma _{l}-\beta _{l}$ in the first $l$ equations in (\ref{multiplicit}), respectively, to obtain
\begin{equation}\label{yul}
y=y_{1}+\frac{\alpha _{i}-\beta _{i}}{\gamma _{i}-\beta _{i}}\cdot (y_{2}-y_{1}),\quad {\rm for}\ i=1,\dots ,l.
\end{equation}
By (\ref{yul}), the rational number $\frac{\alpha _{i}-\beta _{i}}{\gamma _{i}-\beta _{i}}$ must belong to the interval $(0,1)$, and moreover, it must be the same for each $i$. If we write it in lowest terms, say
\[
\frac{\alpha _{i}-\beta _{i}}{\gamma _{i}-\beta _{i}}=\frac{a}{b}\quad {\rm with}\ \ 0<a<b \ \ {\rm and}\ \gcd (a,b)=1,
\] 
we see that the multiplicities $\alpha _{i}$ must be of the form
\begin{equation}\label{liniar}
\alpha _{i}=\beta _{i}+\frac{a}{b}(\gamma _{i}-\beta _{i})\quad {\rm for}\ i=1,\dots ,l.
\end{equation}
For these multiplicities to be integers, $b$ must divide
$\gamma _{i}-\beta _{i}$ for each $i$. On the other hand, $y$ must be an integer too, so by (\ref{yul}) $b$ must also divide $y_{2}-y_{1}$. This forces $b$ to be a divisor of 
\begin{equation}\label{delta}
\delta:=\gcd (y_{2}-y_{1},\gamma _{1}-\beta _{1},\dots ,\gamma _{l}-\beta _{l}).
\end{equation}
Thus, the solutions $(x,y)$ of (\ref{Pedreapta}) with $x_{1}<x<x_{2}$ and $y_{1}<y<y_{2}$ are in a one-to-one correspondence with the rational numbers $\frac{a}{b}$ with $0<a<b$, $\gcd (a,b)=1$ and $b\mid \delta $. The number of these rationals is obviously equal to $\delta -1$, as they can only be obtained by cancelling the common factors of the numerators and denominators in each of the quotients $\frac{d}{\delta }$ with $d=1,\dots ,\delta -1$. We recall that if for an index $j$ we have $\gamma _{j}=\beta _{j}$, then $\alpha _{j}$ will also be equal to $\beta _{j}$, thus satisfying (\ref{liniar}) trivially, and we may obviously add $\gamma _{j}-\beta _{j}$ in the list of terms in the $\gcd $ in (\ref{delta}) without affecting the value of $\delta $. The number of log-integral points lying on the line segment $PQ$ (including $P$ and $Q$) is thus equal to
\[
\gcd (y_{2}-y_{1},\gamma _{1}-\beta _{1},\dots ,\gamma _{k}-\beta _{k})+1,
\]
and their coordinates are
\[
\left( \log p_{1}^{\beta _{1}+\frac{i}{\delta }(\gamma _{1}-\beta _{1})}\cdots p_{k}^{\beta _{k}+\frac{i}{\delta }(\gamma _{k}-\beta _{k})}, \ y_{1}+\frac{i}{\delta }\cdot (y_{2}-y_{1})\right) ,\quad i=0,\dots ,\delta ,
\]
or, equivalently,
\[
\left( \log (x_{1}^{\frac{\delta -i}{\delta }}x_{2}^{\frac{i}{\delta }}), \ y_{1}\frac{\delta -i}{\delta }+y_{2}\frac{i}{\delta }\right) ,\quad i=0,\dots ,\delta , 
\]
with $\delta =\gcd (y_{2}-y_{1},\gamma _{1}-\beta _{1},\dots ,\gamma _{k}-\beta _{k})$.
This completes the proof of the lemma. 
\end{proof}

In \cite{Dumas}, Dumas proved his famous irreducibility criterion for polynomials, imposing explicit conditions for the Newton polygon to consist of a single edge having no lattice points other than its endpoints:
\medskip

{\bf  Theorem (Dumas, 1906)} \ {\em  
Let $f(X)=a_{0}+a_{1}X+\cdots +a_{n}X^{n}$ be a polynomial with coefficients in a unique factorization domain $R$, and let
$p$ be a prime element of $R$. If
\smallskip

i) $\frac{\nu _{p}(a_{i})}{i}>\frac{\nu _{p}(a_{n})}{n}$ for $i=1,\dots ,n-1$,

ii) $\nu _{p}(a_{0})=0$,

iii) $\gcd(\nu _{p}(a_{n}),n)=1$,
\smallskip

\noindent then $f$ is irreducible over $Q(R)$, the quotient field of $R$.}
\medskip

We can now prove the following effective instance of Corollary \ref{coroDumas}, which may be regarded as the analogous for Dirichlet polynomials of Dumas's Theorem. 

\begin{theorem}\label{calaDumas}
Let $f(s)=\frac{a_{m}}{m^{s}}+\cdots +\frac{a_{n}}{n^{s}}$ be an algebraically primitive Dirichlet polynomial with coefficients in a unique factorization domain $R$, $a_{m}a_{n}\neq 0$, let
$p$ be a prime element of $R$, and assume that $q_{1},\dots ,q_{k}$ are all the prime factors of $m\cdot n$. If
\smallskip

i) \ $\nu _{p}(a_{n})\neq \nu _{p}(a_{m})$,

ii) $\left ( \frac{n}{i}\right ) ^{\nu _{p}(a_{i})-\nu _{p}(a_{m})}>\left ( \frac{m}{i}\right ) ^{\nu _{p}(a_{i})-\nu _{p}(a_{n})}$ \ for $m<i<n$,

iii) $\gcd (\nu _{p}(a_{n})-\nu _{p}(a_{m}),\nu _{q_{1}}(n)-\nu _{q_{1}}(m),\dots ,\nu _{q_{k}}(n)-\nu _{q_{k}}(m))=1$,
\smallskip

\noindent then $f$ is irreducible over $Q(R)$, the quotient field of $R$.
\end{theorem}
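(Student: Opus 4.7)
The plan is to reduce to Corollary \ref{coroDumas}: it suffices to show that, with respect to $p$, the Newton log-polygon of $f$ consists of a single edge containing no log-integral points other than its endpoints. Writing $\alpha_i := \nu_p(a_i)$, and denoting $P_m = (\log m, \alpha_m)$, $P_n = (\log n, \alpha_n)$, the first step is to reinterpret hypothesis (ii) geometrically. Taking logarithms in (ii) yields
\[
(\alpha_i - \alpha_m)(\log n - \log i) \;>\; (\alpha_i - \alpha_n)(\log m - \log i),
\]
and a routine rearrangement (collecting the $\alpha_i$ terms and using $\log n - \log m > 0$) shows this is equivalent to
\[
\alpha_i \;>\; \alpha_m \cdot \frac{\log n - \log i}{\log n - \log m} \;+\; \alpha_n \cdot \frac{\log i - \log m}{\log n - \log m},
\]
that is, to the point $(\log i, \alpha_i)$ lying strictly above the line through $P_m$ and $P_n$. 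Thus every intermediate point $(\log i, \alpha_i)$ with $m < i < n$ and $a_i \neq 0$ lies strictly above that line, so the lower convex hull of the points associated with $f$ is precisely the segment $P_m P_n$, and the Newton log-polygon of $f$ consists of this single edge.

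The second step is to show that $P_m P_n$ contains no log-integral points other than $P_m$ and $P_n$. Hypothesis (i) ensures $\alpha_m \neq \alpha_n$, so Lemma \ref{puncte} applies with $x_1 = m$, $x_2 = n$, $y_1 = \alpha_m$, $y_2 = \alpha_n$; the list of primes dividing $x_1 x_2 = mn$ is precisely $q_1, \dots, q_k$. According to that lemma, the number of log-integral points on $P_m P_n$ equals $\delta + 1$, where
\[
\delta \;=\; \gcd\bigl(\alpha_n - \alpha_m,\ \nu_{q_1}(n) - \nu_{q_1}(m),\ \dots,\ \nu_{q_k}(n) - \nu_{q_k}(m)\bigr).
\]
Hypothesis (iii) is exactly the statement $\delta = 1$, so the only log-integral points on the segment $P_m P_n$ are its endpoints. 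Since $f$ is algebraically primitive, Corollary \ref{coroDumas} then yields the irreducibility of $f$ over $Q(R)$.

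No serious obstacle arises: the only nontrivial manipulation is the algebraic identity that translates the arithmetic form of (ii) into the geometric \emph{strictly above the line} statement, which is a routine but easily miscopied expansion. The roles of the three hypotheses separate cleanly: (ii) collapses the Newton log-polygon to the single edge $P_m P_n$; (i) both permits invocation of Lemma \ref{puncte} (which requires $y_1 \neq y_2$) and rules out the degenerate horizontal case, where $P_m P_n$ would automatically contain many log-integral points coming from the integers in $[m,n]$; and (iii) then forces that single edge to consist of a single segment with no interior log-integral points.
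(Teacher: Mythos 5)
Your proof is correct and takes essentially the same route as the paper: interpret hypothesis (ii) as saying each intermediate point $(\log i,\nu_p(a_i))$ lies strictly above the line through $P_m$ and $P_n$, invoke Lemma \ref{puncte} (using (i) and (iii)) to conclude the edge $P_mP_n$ is a single segment, and apply Corollary \ref{coroDumas}. The algebraic rearrangement of (ii) that you carried out is accurate, and your remark that (i) also excludes the degenerate horizontal edge (which would be littered with log-integral points) is a correct and slightly more explicit account of the role of that hypothesis than the paper offers.
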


\begin{proof}\ We will prove that the Newton log-polygon of $f$ consists of a single edge joining the points $P=(\log m,\nu _{p}(a_{m}))$ and $Q=(\log n,\nu _{p}(a_{n}))$. To do so, we first note that the line passing through $P$ and $Q$ has equation
\[
y=\frac{\nu _{p}(a_{n})-\nu _{p}(a_{m})}{\log n-\log m}\cdot  x+\frac{\nu _{p}(a_{m})\log n-\nu _{p}(a_{n})\log m}{\log n-\log m}.
\]
One may easily check now that condition \emph{ii)} forces all the points $(\log i,\nu _{p}(a_{i}))$ with $m<i<n$ to lie above this line. To prove that the edge $PQ$ is actually a segment, we need to check that the only log-integral points lying on $PQ$ are its endpoints $P$ and $Q$. By Lemma \ref{puncte} with $x_{1}=m$, $x_{2}=n$, $y_{1}=\nu _{p}(a_{m})$, $y_{2}=\nu _{p}(a_{n})$, the total number of log-integral points on the line segment $PQ$ is $1+\gcd (\nu _{p}(a_{n})-\nu _{p}(a_{m}),\nu _{p_{1}}(n)-\nu _{p_{1}}(m),\dots ,\nu _{p_{k}}(n)-\nu _{p_{k}}(m))$,
which by \emph{iii)} is equal to $2$, so $PQ$ is indeed a segment, hence by Corollary (\ref{coroDumas}), $f$ must be irreducible.
\end{proof}

Theorem \ref{naiveEisenstein} is now easily seen to be the simplest instance of Theorem \ref{calaDumas}. In the first case of Theorem \ref{naiveEisenstein}, $\left ( \frac{n}{i}\right ) ^{\nu _{p}(a_{i})-\nu _{p}(a_{m})}\geq 1$ and $\left ( \frac{m}{i}\right ) ^{\nu _{p}(a_{i})-\nu _{p}(a_{n})}<1$, while in the second case $\left ( \frac{n}{i}\right ) ^{\nu _{p}(a_{i})-\nu _{p}(a_{m})}> 1$ and $\left ( \frac{m}{i}\right ) ^{\nu _{p}(a_{i})-\nu _{p}(a_{n})}\leq 1$, so condition ii) in Theorem \ref{calaDumas} is satisfied.

Two simple instances of Theorem \ref{calaDumas} are obtained when $m=1$, like in the following two corollaries, corresponding to the cases that $p\nmid a_{1}$, $p\mid a_{n}$, and $p\mid a_{1}$, $p\nmid a_{n}$, respectively.

\begin{corollary}\label{calaDumasSimplu1}
Let $f(s)=\frac{a_{1}}{1^s}+\frac{a_{2}}{2^{s}}+\cdots +\frac{a_{n}}{n^{s}}$ be a Dirichlet polynomial with coefficients in a unique factorization domain $R$, with $a_{1}a_{n}\neq 0$, let
$p$ be a prime element of $R$, and assume that $q_{1},\dots ,q_{k}$ are all the prime factors of $n$. If
\smallskip

i) \ $p\nmid a_{1}$, $p\mid a_{n}$,

ii) \thinspace $n^{\nu _{p}(a_{i})}>i^{\nu _{p}(a_{n})}$ for $1<i<n$,

iii) $\gcd (\nu _{p}(a_{n}),\nu _{q_{1}}(n),\dots ,\nu _{q_{k}}(n))=1$,
\smallskip

\noindent then $f$ is irreducible over $Q(R)$.
\end{corollary}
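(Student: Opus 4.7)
The plan is to obtain this corollary as a direct specialization of Theorem \ref{calaDumas} with $m=1$, so the work reduces to verifying that the three hypotheses in the corollary match exactly the three hypotheses of the theorem after setting $m=1$, and that the algebraic primitivity assumption is automatically fulfilled.

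First I would observe that since $a_1\neq 0$, the index $1$ lies in $\mathrm{Supp}(f)$, and hence $\gcd$ of the indices in the support equals $1$; thus $f$ is algebraically primitive and the standing hypothesis of Theorem \ref{calaDumas} is met. Next, using $m=1$ one has $\nu_p(a_m)=\nu_p(a_1)=0$ by condition i), while $\nu_p(a_n)>0$ because $p\mid a_n$; in particular $\nu_p(a_n)\neq\nu_p(a_m)$, which is condition i) of the theorem.

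For condition ii), I would substitute $m=1$ and $\nu_p(a_m)=0$ into the inequality
\[
\left(\frac{n}{i}\right)^{\nu_p(a_i)-\nu_p(a_m)}>\left(\frac{m}{i}\right)^{\nu_p(a_i)-\nu_p(a_n)}
\]
to get $\left(\frac{n}{i}\right)^{\nu_p(a_i)}>i^{\nu_p(a_n)-\nu_p(a_i)}$, which after multiplying both sides by $i^{\nu_p(a_i)}$ becomes exactly $n^{\nu_p(a_i)}>i^{\nu_p(a_n)}$, the inequality in ii) of the corollary. Finally, for condition iii), since $\nu_{q_j}(m)=\nu_{q_j}(1)=0$ for every $j$, the gcd in the theorem collapses to $\gcd(\nu_p(a_n),\nu_{q_1}(n),\dots,\nu_{q_k}(n))$, which is assumed to be $1$.

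There is no real obstacle; the only thing to be careful about is bookkeeping, namely that $\nu_p(a_m)=0$ enters condition ii) both as an exponent and inside the factor $\left(\frac{m}{i}\right)^{\nu_p(a_i)-\nu_p(a_n)}$, so one must verify the algebraic simplification cleanly. Once all three conditions are translated as above, Theorem \ref{calaDumas} applies verbatim and yields the irreducibility of $f$ over $Q(R)$.
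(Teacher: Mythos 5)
Your proof is correct and takes exactly the approach the paper intends: the paper states Corollaries \ref{calaDumasSimplu1} and \ref{calaDumasSimplu2} are ``two simple instances of Theorem \ref{calaDumas} ... obtained when $m=1$,'' and you have carried out the routine substitution $m=1$, $\nu_p(a_m)=0$, verified algebraic primitivity, and simplified all three conditions accurately.
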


\begin{corollary}\label{calaDumasSimplu2}
Let $f(s)=\frac{a_{1}}{1^s}+\frac{a_{2}}{2^{s}}+\cdots +\frac{a_{n}}{n^{s}}$ be a Dirichlet polynomial with coefficients in a unique factorization domain $R$, with $a_{1}a_{n}\neq 0$, let
$p$ be a prime element of $R$, and assume that $q_{1},\dots ,q_{k}$ are all the prime factors of $n$. If
\smallskip

i) \ $p\mid a_{1}$, $p\nmid a_{n}$,

ii) \thinspace $n^{\nu _{p}(a_{1})-\nu _{p}(a_{i})}<i^{\nu _{p}(a_{1})}$ for $1<i<n$,

iii) $\gcd (\nu _{p}(a_{1}),\nu _{q_{1}}(n),\dots ,\nu _{q_{k}}(n))=1$,
\smallskip

\noindent then $f$ is irreducible over $Q(R)$.
\end{corollary}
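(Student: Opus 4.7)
The plan is to deduce this corollary as a direct specialization of Theorem \ref{calaDumas} to the case $m=1$ (combined with the hypotheses $p\mid a_1$ and $p\nmid a_n$), and to verify that each of the three hypotheses of Theorem \ref{calaDumas} translates into the corresponding condition of the corollary. First, since $a_1\neq 0$, the index $1$ lies in $\mathrm{Supp}(f)$, and so $\gcd$ of the support is $1$; hence $f$ is algebraically primitive, which is required to apply Theorem \ref{calaDumas}. Moreover, the set of prime factors of $m\cdot n=n$ is precisely $\{q_1,\dots,q_k\}$.

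Next I would translate each hypothesis. Since $p\mid a_1$ we have $\nu_p(a_m)=\nu_p(a_1)\geq 1$, while $p\nmid a_n$ gives $\nu_p(a_n)=0$; in particular $\nu_p(a_n)\neq\nu_p(a_m)$, which is condition i) of Theorem \ref{calaDumas}. For condition iii), observe that $\nu_{q_j}(m)=\nu_{q_j}(1)=0$ for each $j$, and $\nu_p(a_n)-\nu_p(a_m)=-\nu_p(a_1)$, so the gcd in Theorem \ref{calaDumas} becomes
\[
\gcd\bigl(-\nu_p(a_1),\,\nu_{q_1}(n),\dots,\nu_{q_k}(n)\bigr)=\gcd\bigl(\nu_p(a_1),\nu_{q_1}(n),\dots,\nu_{q_k}(n)\bigr),
\]
which is exactly hypothesis iii) of the corollary.

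The main computational step is checking that hypothesis ii) of the corollary matches condition ii) of Theorem \ref{calaDumas}. Substituting $m=1$, $\nu_p(a_m)=\nu_p(a_1)$ and $\nu_p(a_n)=0$, that condition reads
\[
\Bigl(\frac{n}{i}\Bigr)^{\nu_p(a_i)-\nu_p(a_1)}>\Bigl(\frac{1}{i}\Bigr)^{\nu_p(a_i)}\qquad \text{for } 1<i<n.
\]
Multiplying both sides by $i^{\nu_p(a_i)-\nu_p(a_1)}\cdot i^{\nu_p(a_1)}$ (a positive quantity) and simplifying converts this into $n^{\nu_p(a_i)-\nu_p(a_1)}>i^{-\nu_p(a_1)}$, i.e.\ $n^{\nu_p(a_1)-\nu_p(a_i)}<i^{\nu_p(a_1)}$, which is precisely hypothesis ii). For indices $i$ with $a_i=0$ there is no corresponding log-integral point to place above the line $PQ$, so the condition is irrelevant there (and, with the convention $\nu_p(0)=\infty$, it holds trivially). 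The only conceivable obstacle is this algebraic juggling of exponents and inequalities, but it is purely mechanical. With all three hypotheses of Theorem \ref{calaDumas} verified, the corollary follows immediately.
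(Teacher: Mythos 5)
Your proof is correct and is precisely the intended derivation: the paper states the corollary as the $m=1$, $p\mid a_1$, $p\nmid a_n$ specialization of Theorem \ref{calaDumas}, and your translation of conditions i)--iii), including the exponent manipulation that turns condition ii) of the theorem into $n^{\nu_p(a_1)-\nu_p(a_i)}<i^{\nu_p(a_1)}$, is exactly right.
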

\begin{remark}\label{totalnroffactors}
Theorem \ref{DumDir} combined with Lemma \ref{puncte} allows us to find an upper bound for the total number $n_f$ of irreducible factors of $f$ (counting multiplicities), by counting all the segments on all the edges $P_1Q_1,\dots ,P_{\ell}Q_{\ell}$ in the Newton log-polygon of $f$. Thus, we have $n_f\leq\delta _1+\cdots +\delta _{\ell}$, where for each $i$, $\delta _i$ is obtained by applying Lemma \ref{puncte} to the edge $P_iQ_i$.
\end{remark}
For Dirichlet polynomials having only two nonzero terms we obtain from Theorem \ref{calaDumas} the following simple irreducibility conditions.
\begin{corollary}\label{CelMaiSimpluCorolar}
Let $f(s)=\frac{a_{m}}{m^{s}}+\frac{a_{n}}{n^{s}}$ be a Dirichlet polynomial with coefficients in a unique factorization domain $R$, with $a_{m}a_{n}\neq 0$, $\gcd(m,n)=1$, and assume that $a_ma_n$ has a prime factor of multiplicity one. Then $f$ is irreducible over $Q(R)$.
\end{corollary}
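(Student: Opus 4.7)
The plan is to deduce the result as a direct instance of Theorem \ref{calaDumas}. First, since $\gcd(m,n)=1$, the support $\{m,n\}$ of $f$ has trivial gcd, so $f$ is algebraically primitive, as required by that theorem. By hypothesis there exists a prime element $p\in R$ with $\nu_p(a_m a_n)=1$; without loss of generality assume $\nu_p(a_m)=0$ and $\nu_p(a_n)=1$ (the other case is completely symmetric, and corresponds to applying the same argument to the ``reversed'' log-polygon).

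Next I would verify each of the three hypotheses of Theorem \ref{calaDumas} in turn. Condition i), $\nu_p(a_n)\neq \nu_p(a_m)$, is immediate from the choice of $p$. Condition ii) concerns indices $i$ with $m<i<n$; but every such index has $a_i=0$, so there are no intermediate points $(\log i,\nu_p(a_i))$ in the Newton log-polygon of $f$ at all, and the condition is vacuously satisfied. Condition iii) asks that
\[
\gcd\bigl(\nu_p(a_n)-\nu_p(a_m),\ \nu_{q_1}(n)-\nu_{q_1}(m),\ \dots,\ \nu_{q_k}(n)-\nu_{q_k}(m)\bigr)=1,
\]
which holds automatically because the first entry is $\pm 1$. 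Thus all the hypotheses of Theorem \ref{calaDumas} are met and $f$ is irreducible over $Q(R)$.

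There is essentially no obstacle here; the corollary is a one-shot application of the logarithmic Dumas criterion. If one prefers a geometric phrasing, the same conclusion follows by observing that the Newton log-polygon of $f$ with respect to $p$ consists of the single edge joining $(\log m,\nu_p(a_m))$ and $(\log n,\nu_p(a_n))$, and that by Lemma \ref{puncte} the number of log-integral points on this edge equals $1+\gcd(1,\nu_{q_1}(n)-\nu_{q_1}(m),\dots,\nu_{q_k}(n)-\nu_{q_k}(m))=2$, so the edge is a single segment and Corollary \ref{coroDumas} applies.
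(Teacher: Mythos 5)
Your proof is correct and is essentially identical to the paper's own argument: both reduce the corollary to a direct application of Theorem \ref{calaDumas}, noting that algebraic primitivity follows from $\gcd(m,n)=1$, condition ii) is vacuous for a two-term Dirichlet polynomial, and conditions i) and iii) follow from $\nu_p(a_n)-\nu_p(a_m)=\pm1$.
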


\begin{proof}\ Note that $f$ is algebraically primitive, as $m$ and $n$ are coprime. Since $f$ has only two nonzero terms, condition ii) in Theorem \ref{calaDumas} is trivially satisfied, and our hypothesis that $\nu _{p}(a_{m}a_{n})=1$ for some prime element $p$ of $R$ shows that we must have $\nu _{p}(a_{n})-\nu _{p}(a_{m})=\pm 1$, so conditions i) and iii) in Theorem \ref{calaDumas} are satisfied too.
\end{proof}
For Dirichlet polynomials with integer coefficients we can generalize Theorem \ref{calaDumas} by using a change of indeterminate, as follows.

\begin{theorem}\label{calaDumasRationalCoefficients}
Let $f(s)=\frac{a_{m}}{m^{s}}+\cdots +\frac{a_{n}}{n^{s}}$ be an algebraically primitive Dirichlet polynomial with integer coefficients, with $a_{m}a_{n}\neq 0$, $t\geq 0$ an integer,
$p$ a prime number, and assume that $q_{1},\dots ,q_{k}$ are all the prime factors of $m\cdot n$. If
\smallskip

i) \ $\nu _{p}(a_{n}n^t)\neq \nu _{p}(a_{m}m^t)$,

ii) $\left ( \frac{n}{i}\right ) ^{\nu _{p}(a_{i}i^t)-\nu _{p}(a_{m}m^t)}>\left ( \frac{m}{i}\right ) ^{\nu _{p}(a_{i}i^t)-\nu _{p}(a_{n}n^t)}$ \ for $m<i<n$,

iii) $\gcd (\nu _{p}(a_{n}n^t)-\nu _{p}(a_{m}m^t),\nu _{q_{1}}(n)-\nu _{q_{1}}(m),\dots ,\nu _{q_{k}}(n)-\nu _{q_{k}}(m))=1$,
\smallskip

\noindent then $f$ is irreducible over $\mathbb{Q}$.
\end{theorem}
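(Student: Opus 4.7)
The plan is to reduce to Theorem~\ref{calaDumas} by applying a ``change of indeterminate'' that transforms $f$ into the Dirichlet polynomial whose coefficients are $a_i\cdot i^t$. Concretely, I would define $\psi_t : DP(\mathbb{Q})[s]\to DP(\mathbb{Q})[s]$ on terms by $\psi_t\bigl(\tfrac{a}{i^s}\bigr)=\tfrac{a\cdot i^t}{i^s}$, extended $\mathbb{Q}$-linearly. The identity
\[
\psi_t\!\left(\tfrac{a}{i^s}\cdot \tfrac{b}{j^s}\right)=\tfrac{ab\,(ij)^t}{(ij)^s}=\psi_t\bigl(\tfrac{a}{i^s}\bigr)\cdot \psi_t\bigl(\tfrac{b}{j^s}\bigr)
\]
shows that $\psi_t$ is a ring homomorphism with respect to the Dirichlet product. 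Equivalently, via the isomorphism $\phi$ identifying $DP(\mathbb{Q})[s]$ with $\mathbb{Q}[X_1,X_2,\dots]$, $\psi_t$ corresponds to the monomial rescaling $X_i\mapsto p_i^t X_i$, which is a $\mathbb{Q}$-algebra automorphism because each $p_i^t$ is a unit in $\mathbb{Q}$; its inverse is $\psi_{-t}$. In particular, $\psi_t$ preserves irreducibility over $\mathbb{Q}$.

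Next I would set $g(s):=\psi_t(f(s))=\frac{a_m m^t}{m^s}+\cdots+\frac{a_n n^t}{n^s}$. Since $i^t\ge 1$ for every positive integer $i$, $g$ has exactly the same support as $f$, so $g$ is algebraically primitive with min-degree $m$, degree $n$, and the same list of primes $q_1,\dots,q_k$ dividing $m\cdot n$. Replacing $\nu_p(a_i)$ by $\nu_p(a_i\,i^t)$ throughout the three hypotheses of Theorem~\ref{calaDumas} turns them into exactly hypotheses i), ii), iii) of Theorem~\ref{calaDumasRationalCoefficients}; note that iii) only involves $\nu_{q_j}(n)-\nu_{q_j}(m)$, which is unaffected by the passage from $f$ to $g$. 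Hence Theorem~\ref{calaDumas} applied to $g$ yields the irreducibility of $g$ over $\mathbb{Q}$, and since $\psi_t$ is an automorphism of $DP(\mathbb{Q})[s]$, $f$ must be irreducible over $\mathbb{Q}$ as well.

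No serious obstacle arises. The only delicate point is justifying that $\psi_t$ is a $\mathbb{Q}$-algebra automorphism, which is cleanest in the $\phi$-picture where it reduces to the triviality that the monomial rescaling $X_i\mapsto p_i^t X_i$ is invertible in $\mathbb{Q}[X_1,X_2,\dots]$. Over $\mathbb{Z}$ the map $\psi_t$ is only injective (for $t>0$), but since the conclusion concerns irreducibility over $\mathbb{Q}$, working in the quotient field is precisely what is required.
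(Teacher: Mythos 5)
Your proposal is correct and takes essentially the same approach as the paper: the map $\psi_t$ you define is precisely the substitution $f(s)\mapsto f(s-t)$ that the paper uses, since $\frac{a_i}{i^{s-t}}=\frac{a_i\cdot i^t}{i^s}$. The only cosmetic difference is that you package the operation as an explicit $\mathbb{Q}$-algebra automorphism of $DP(\mathbb{Q})[s]$ (so that preservation of irreducibility is immediate), whereas the paper runs the equivalent contrapositive by hand, expanding a hypothetical factorization $f=gh$ and multiplying the coefficient identity $a_i=\sum_{jk=i}b_jc_k$ through by $i^t$ to exhibit $f(s-t)=g(s-t)h(s-t)$.
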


\begin{proof}\ Consider the Dirichlet polynomial $f(s-t)=\frac{a_{m}\cdot m^t}{m^{s}}+\cdots +\frac{a_{n}\cdot n^t}{n^{s}}$, which has integer coefficients, is algebraically primitive, and according to i), ii) and iii), satisfies the hypotheses of Theorem \ref{calaDumas}, and hence is irreducible over $\mathbb{Q}$. Assume to the contrary that $f(s)=g(s)h(s)$ with $g,h$ nonconstant Dirichlet polynomials with integer coefficients, say $g(s)=\frac{b_{u}}{u^{s}}+\cdots +\frac{b_{v}}{v^{s}}$ and $h(s)=\frac{c_{l}}{l^{s}}+\cdots +\frac{c_{w}}{w^{s}}$, with $a_i=\sum_{i=j\cdot k}b_jc_k$ for each $i\in \{m,\dots ,n\}$. Multiplication by $i^t$ yields $a_ii^t=\sum_{i=j\cdot k}(b_jj^t)(c_kk^t)$ for each $i$, which shows that $f(s-t)=g(s-t)h(s-t)$, implying that $f(s-t)$ is reducible, a contradiction.
\end{proof}
We notice that for Dirichlet polynomials 
with $|a_m|=|a_n|$ we cannot apply Theorem \ref{calaDumas}, no matter what primes $p$ we consider. For such Dirichlet polynomials $f$, we may at least try to test the conditions i), ii) and iii) in Theorem \ref{calaDumasRationalCoefficients} for some of the relevant primes of $f$, as in the following result.

\begin{corollary}\label{CorolarCaLaDumasRationalCoefficients}
Let $f(s)=\frac{a_{m}}{m^{s}}+\cdots +\frac{a_{n}}{n^{s}}$ be an algebraically primitive Dirichlet polynomial with integer coefficients, with $|a_{m}|=|a_{n}|\neq 0$, and assume that $q_{1},\dots ,q_{k}$ are all the prime factors of $m\cdot n$. Let $p\in \{q_1,\dots ,q_k\} $ be a prime number for which $\nu_{p}(m)<\nu_{p}(n)$.
If
\[
\nu _p(ia_i)\geq \nu _p(na_n) \ \ for\ \ m<i<n \ \ and\  \ \gcd (\nu _{q_{1}}(n)-\nu _{q_{1}}(m),\dots ,\nu _{q_{k}}(n)-\nu _{q_{k}}(m))=1,
\]
\noindent then $f$ is irreducible over $\mathbb{Q}$.
\end{corollary}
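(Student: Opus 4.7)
The plan is to derive this corollary directly from Theorem \ref{calaDumasRationalCoefficients} by choosing the shift parameter $t=1$. The hypothesis $|a_m|=|a_n|$ forces $\nu_p(a_m)=\nu_p(a_n)$ for every prime $p$, so no Newton log-polygon of $f$ itself can have its leftmost and rightmost vertices at different heights; this is exactly the obstacle the shift $s\mapsto s-t$ is designed to overcome. Since condition \emph{i)} of Theorem \ref{calaDumasRationalCoefficients} requires $\nu_p(a_nn^t)\neq\nu_p(a_mm^t)$, i.e. $t(\nu_p(n)-\nu_p(m))\neq 0$, the assumption $\nu_p(m)<\nu_p(n)$ ensures that any $t\geq 1$ does the job; the minimal choice $t=1$ turns out to match the hypothesis on the middle coefficients, so I would try it first.

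Setting $A_i:=\nu_p(a_ii)=\nu_p(a_i)+\nu_p(i)$, the hypothesis $\nu_p(ia_i)\geq\nu_p(na_n)$ reads $A_i\geq A_n$ for $m<i<n$, while $\nu_p(a_m)=\nu_p(a_n)$ combined with $\nu_p(m)<\nu_p(n)$ gives $A_m<A_n\leq A_i$. To verify condition \emph{ii)} of Theorem \ref{calaDumasRationalCoefficients} at $t=1$, namely
\[
\left(\tfrac{n}{i}\right)^{A_i-A_m}>\left(\tfrac{m}{i}\right)^{A_i-A_n},
\]
I would note that for $m<i<n$ one has $n/i>1$ and $m/i<1$; the left-hand side has a strictly positive exponent (since $A_i\geq A_n>A_m$) so it is strictly greater than $1$, whereas the right-hand side has a non-negative exponent applied to a base in $(0,1)$, so it is at most $1$. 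The inequality thus holds term by term, with no nontrivial estimate needed.

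Finally, condition \emph{iii)} of Theorem \ref{calaDumasRationalCoefficients} demands
\[
\gcd\bigl(\nu_p(a_nn)-\nu_p(a_mm),\ \nu_{q_1}(n)-\nu_{q_1}(m),\dots,\nu_{q_k}(n)-\nu_{q_k}(m)\bigr)=1.
\]
Since $\nu_p(a_n)=\nu_p(a_m)$, the first entry equals $\nu_p(n)-\nu_p(m)$, which is one of the remaining entries because $p\in\{q_1,\dots,q_k\}$. Hence this gcd reduces to $\gcd(\nu_{q_1}(n)-\nu_{q_1}(m),\dots,\nu_{q_k}(n)-\nu_{q_k}(m))$, which is $1$ by assumption. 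With all three conditions of Theorem \ref{calaDumasRationalCoefficients} verified at $t=1$, the irreducibility of $f$ over $\mathbb{Q}$ follows.

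I expect no genuine obstacle here; the only subtle point is recognizing that the correct shift is $t=1$ (so that $\nu_p(a_ii^t)=\nu_p(ia_i)$ matches the hypothesis exactly) and carefully tracking the strict versus non-strict inequalities in condition \emph{ii)} to confirm that the required strict inequality always holds.
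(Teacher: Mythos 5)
Your proof is correct and takes essentially the same route as the paper: specialize Theorem~\ref{calaDumasRationalCoefficients} at $t=1$, use $|a_m|=|a_n|$ to reduce condition~\emph{i)} to $\nu_p(n)\neq\nu_p(m)$ and condition~\emph{iii)} to the stated gcd, and note that the hypothesis on the middle coefficients makes the exponent of $n/i$ strictly positive and the exponent of $m/i$ non-negative, so that condition~\emph{ii)} holds by comparing a quantity $>1$ with one $\leq 1$. Your $A_i$ bookkeeping is a tidy restatement of the paper's inequality $\nu_p(ia_i)-\nu_p(ma_m)>\nu_p(ia_i)-\nu_p(na_n)\geq 0$; nothing is missing.
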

\begin{proof} \ First of all we observe that since $m<n$, there exists at least one prime $p$ among $q_1,\dots ,q_k$, such that $\nu _{p}(m)<\nu _{p}(n)$, so if we choose $t=1$, condition i) in Theorem \ref{calaDumasRationalCoefficients} will hold for our prime $p$, as $|a_m|=|a_n|$. We then observe that condition iii) in Theorem \ref{calaDumasRationalCoefficients} reduces to $\gcd (\nu _{q_{1}}(n)-\nu _{q_{1}}(m),\dots ,\nu _{q_{k}}(n)-\nu _{q_{k}}(m))=1$, since for $t=1$ we have
\[
\nu _{p}(a_{n}n^t)-\nu _{p}(a_{m}m^t)=\nu _{p}(na_{n})-\nu _{p}(ma_{m})=\nu _{p}(n)-\nu _{p}(m),
\]
which is one of the terms 
$\nu _{q_{1}}(n)-\nu _{q_{1}}(m),\dots ,\nu _{q_{k}}(n)-\nu _{q_{k}}(m)$. Finally, we observe that for $m<i<n$ we have $\frac{n}{i}>1$ and $\frac{m}{i}<1$, and our assumption that $\nu _p(ia_i)\geq \nu _p(na_n)$ together with the fact that $\nu _p(ma_m)<\nu _p(na_n)$ imply that 
\[
\nu _p(ia_i)-\nu _p(ma_m)>\nu _p(ia_i)-\nu _p(na_n)\geq 0.
\]
Therefore for $t=1$ condition ii) in Theorem \ref{calaDumasRationalCoefficients} is satisfied too.
\end{proof}

We will also include in this section two lemmas that will be used to derive irreducibility conditions for linear combinations of the form $f(s)+p^kg(s)$, with $f,g$ Dirichlet polynomials of coprime degrees, $p$ a prime element of $R$, and $k$ a positive integer, and whose proofs rely on Theorem \ref{DumDir} and Lemma \ref{puncte}. We mention here that linear combinations of relatively prime polynomials have been studied in both univariate and multivariate cases by Cavachi, V\^aj\^aitu and Zaharescu \cite{Cavachi}, \cite{CVZ1}, \cite{CVZ2}, Zhang, Yuan and Zhou \cite{Zhang}, and also in \cite{BoncioIJNT}, \cite{BoncioPMD2} and \cite{BoncioResMat}, and the same techniques apply to compositions and multiplicative convolutions of polynomials \cite{BoncioMathNach}, \cite{BoncioAnStUnivOvid}, \cite{BoncioBullSSMR}, \cite{BoncioMonatshefte}, \cite{BoncioAMH} and \cite{BoncioPMD1}.

\begin{lemma}
\label{UniquePositiveSlope}
Let $f,g$ be two Dirichlet polynomials with coefficients in a unique factorization domain $R$, with $\deg g=n$ and $\deg f=m$, $m<n$, and let $p_1,\dots ,p_r$ be the distinct prime factors of $m\cdot n$. Let also $p$ be a prime element of $R$ that divides none of the leading coefficients of $f$ and $g$, and let $k$ be any positive integer with 
\begin{equation}\label{ConditiaDeSegment}
\gcd(k,\nu _{p_1}(n)-\nu _{p_1}(m),\dots ,\nu _{p_r}(n)-\nu _{p_r}(m))=1.
\end{equation}
If $f(s)+p^{k}g(s)$ is algebraically primitive and can be written as a product of two nonconstant Dirichlet polynomials with coefficients in $R$, say $f_{1}(s)$ and $f_{2}(s)$, then one of the leading coefficients of $f_{1}$ and $f_{2}$ must be divisible by $p^{k}$, and the other must be coprime to $p$.
\end{lemma}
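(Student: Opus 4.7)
The plan is to pin down the rightmost edge of the Newton log-polygon of $F(s) := f(s) + p^k g(s)$ with respect to $p$, and then read off the $p$-adic valuations of the leading coefficients of $f_1$ and $f_2$ via Theorem \ref{DumDir}.

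First I would identify that edge. Since $\deg f = m < n = \deg g$ and $p$ divides neither leading coefficient, the coefficient of $F$ at index $n$ equals $p^k\cdot \mathrm{lead}(g)$ and has $\nu_p = k$; at index $m$ the coefficient is $\mathrm{lead}(f)+p^k g_m$ (with $g_m$ possibly zero), so its $\nu_p$ is $0$; and at every intermediate index $m<i<n$ only $p^k g$ can contribute, so the corresponding $\nu_p$ is at least $k$. Comparing with the line through $(\log m, 0)$ and $(\log n, k)$, whose $y$-values lie strictly below $k$ on the open interior, all intermediate points of the diagram of $F$ lie strictly above that line. Hence the segment from $(\log m,0)$ to $(\log n,k)$ is precisely the rightmost edge of the Newton log-polygon of $F$, with slope $S := k/\log(n/m)$. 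Next I would apply Lemma \ref{puncte} to this segment with $(x_1,y_1)=(m,0)$ and $(x_2,y_2)=(n,k)$: by the hypothesis (\ref{ConditiaDeSegment}) the gcd appearing there equals $1$, so the segment contains no log-integral points other than its endpoints; equivalently, this edge is a single segment.

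Now suppose $F = f_1 f_2$ as in the statement. By Theorem \ref{DumDir}, the rightmost edge of the polygon of $F$ is obtained by gluing translates of those edges of $f_1$ and $f_2$ that share its slope. Since this edge is a single segment, one of $f_1, f_2$, say $f_2$, must contribute no edge of slope $S$, for otherwise the junction point of the two nonzero vectors would be an interior log-integral point on a segment. Following the notation in the proof of Theorem \ref{DumDir}, the right endpoint $(\log n, k)$ corresponds to indices $j_M, k_M$ of $f_1, f_2$ with $j_M k_M = n$, $j_M \le \deg f_1$, and $k_M \le \deg f_2$; combined with $\deg f_1 \cdot \deg f_2 = n$ this forces $j_M = \deg f_1$ and $k_M = \deg f_2$, so $\nu_p(\mathrm{lead}(f_1)) + \nu_p(\mathrm{lead}(f_2)) = k$. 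The absence of an edge of slope $S$ in $f_2$ amounts to $k_m = k_M = \deg f_2$, so the vanishing-sum relation at the left endpoint, namely $\nu_p$(coefficient of $f_1$ at $j_m$) $+$ $\nu_p$(coefficient of $f_2$ at $k_m$) $= 0$, forces $\nu_p(\mathrm{lead}(f_2)) = 0$, whence $\nu_p(\mathrm{lead}(f_1)) = k$. The symmetric scenario gives the opposite labelling.

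The main delicate point is the vertex-index bookkeeping inherited from the proof of Theorem \ref{DumDir}: one must verify that the extremal pair $(j_M, k_M)$ at the rightmost edge is forced to $(\deg f_1, \deg f_2)$, and that the absence of a slope-$S$ edge in $f_2$ translates precisely into $k_m = k_M$. With these recorded, the $p$-adic valuations read off at the two endpoints of the single-segment edge pin down $\{\nu_p(\mathrm{lead}(f_1)), \nu_p(\mathrm{lead}(f_2))\} = \{0, k\}$, giving the claim.
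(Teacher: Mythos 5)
Your proof is correct and follows the same strategy as the paper: identify the rightmost edge of the Newton log-polygon of $f+p^kg$ joining $(\log m,0)$ to $(\log n,k)$, use Lemma \ref{puncte} together with condition (\ref{ConditiaDeSegment}) to show it is a single segment, and then apply Theorem \ref{DumDir}. The one divergence is purely in the final step: where you unwind the $j_m,j_M,k_m,k_M$ bookkeeping from the proof of Theorem \ref{DumDir} to read the valuations off the two endpoints, the paper reaches the same conclusion more briefly by contradiction, observing that since $p\nmid c_m$ each factor has a coefficient coprime to $p$, so if both leading coefficients were divisible by $p$ each factor would contribute a positive-slope segment, yielding at least two positive-slope segments in the polygon of $f+p^kg$ where there is exactly one.
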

\begin{proof}
Let $f(s)=\frac{a_{1}}{1^s}+\cdots +\frac{a_{m}}{m^s}$ and 
$g(s)=\frac{b_{1}}{1^s}+\cdots +\frac{b_{n}}{n^s}$ , $a_{m}b_{n}\neq 0$, 
and let us write $f(s)+p^{k}g(s)=\frac{c_{1}}{1^s}+\cdots +\frac{c_{n}}{n^s}$, 
where $c_{i}=a_{i}+p^{k}b_{i}$ for $i=1,\dots ,m$, while 
$c_{i}=p^{k}b_{i}$ for $i=m+1,\dots ,n$. Our 
assumption that $p\nmid a_{m}b_{n}$ implies that $c_{m}$ is not 
divisible by $p$, $c_{m+1},\dots , c_{n}$ are all 
divisible by $p^{k}$, and moreover, $p^{k+1}\nmid c_{n}$. Thus, 
in the Newton log-polygon of $f+p^{k}g$ with respect to the prime element $p$, the rightmost edge will join the points $(\log m,0)$ 
and $(\log n,k)$, that are labelled in Figure 3 below by $Q_{\ell-1}$ and $Q_{\ell}$, 
respectively. 

\begin{center}

\setlength{\unitlength}{5mm}
\begin{picture}(12,7)
\linethickness{0.15mm}

\put(12,0.3){\circle{0.01}}

\put(12,0.6){\circle{0.01}}

\put(12,0.9){\circle{0.01}}

\put(12,1.2){\circle{0.01}}

\put(12,1.5){\circle{0.01}}

\put(12,1.8){\circle{0.01}}

\put(12,2.1){\circle{0.01}}

\put(12,2.4){\circle{0.01}}

\put(12,2.7){\circle{0.01}}

\put(12,3){\circle{0.01}}

\put(12,3.3){\circle{0.01}}

\put(12,3.6){\circle{0.01}}

\put(12,3.9){\circle{0.01}}

\put(12,4.2){\circle{0.01}}

\put(12,4.5){\circle{0.01}}

\thicklines  

\put(4.8,0){\line(2,1){7.2}}   

\linethickness{0.15mm}

\put(0,0){\vector(1,0){14}}
\put(0,0){\vector(0,1){6}}

\put(4.8,0){\circle{0.08}}
\put(4.8,0){\circle{0.12}}
\put(12,0){\circle{0.08}}
\put(12,0){\circle{0.12}}
\put(12,3.6){\circle{0.08}}
\put(12,3.6){\circle{0.12}}

{\small 
\put(12.3,3.4){$Q_{\ell}=(\log n,k)$}

\put(3.7,-0.75){$Q_{\ell-1}=(\log m,0)$}

\put(10.6,-0.75){$(\log n,0)$}

}

\end{picture}
\end{center}
\bigskip

{\small
{\bf Figure 3.} The rightmost segment in the Newton log-polygon 
of $f(s)+p^{k}g(s)$ with respect to $p$.}
\medskip

Moreover, since $k$ satisfies (\ref{ConditiaDeSegment}), we deduce by Lemma \ref{puncte} that $Q_{\ell-1}Q_{\ell}$ is in fact a segment of the Newton log-polygon, as it contains no log-integral points other than 
its endpoints $Q_{\ell-1}$ and $Q_{\ell}$. 
Since the sequence of the slopes of the edges of the Newton log-polygon, when 
considered from left to the right, is a strictly increasing one, we 
deduce that $Q_{\ell-1}Q_{\ell}$ is the only segment with positive slope. 
On the other hand, if $f+p^{k}g$ is the product of two 
non-constant Dirichlet polynomials $f_{1}$ and $f_{2}$ with integer coefficients, 
then each of the factors $f_{1}$ and $f_{2}$ must have at least 
one coefficient which is not divisible by $p$, as $p\nmid c_m$. If we assume now 
that each of the leading coefficients of $f_{1}$ and $f_{2}$ is 
divisible by $p$, then each of the Newton log-polygons of $f_{1}$ and 
$f_{2}$ with respect to $p$ will contain at least one segment 
with positive slope, which by Theorem \ref{DumDir} would produce at least 
two segments with positive slopes in the Newton log-polygon of $f+p^{k}g$, 
a contradiction. Therefore, one of the leading coefficients of 
$f_{1}$ and $f_{2}$ must be divisible by $p^{k}$, while the other 
must be coprime to $p$. This completes the proof of our lemma. 
\end{proof}
We mention that Lemma \ref{UniquePositiveSlope} is the logarithmic analogue of Lemma 1.4 in \cite{BoncioIJNT}, which provides information on the leading coefficients of the factors of integer polynomials of the form $f(X)+p^kg(X)$, with $\deg f<\deg g$ and $k$ coprime to $\deg g-\deg f$.

As an application of Lemma \ref{UniquePositiveSlope} we have the following irreducibility criterion for linear combinations of Dirichlet polynomials.
\begin{theorem}
\label{FplusP^kGPositiveSlope}
Let $f$ and $g$ be two Dirichlet polynomials with coefficients in a unique factorization domain $R$, of coprime degrees $m$ and $n$, respectively, with $m<n$, and let $p_1,\dots ,p_r$ be the distinct prime factors of $m\cdot n$. Let also $p$ be a prime element of $R$ that divides none of the leading coefficients of $f$ and $g$, and let $k$ be any positive integer with 
\[
\gcd(k,\nu _{p_1}(n)-\nu _{p_1}(m),\dots ,\nu _{p_r}(n)-\nu _{p_r}(m))=1.
\]
Then $f(s)+p^{k}g(s)$ is irreducible over $Q(R)$.
\end{theorem}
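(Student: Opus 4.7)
The plan is to build on Lemma \ref{UniquePositiveSlope} and then use the Dumas-type theorem (Theorem \ref{DumDir}) together with the arithmetic hypothesis $\gcd(m,n)=1$ to force one of the factors to be constant.

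First I would verify that $f(s)+p^{k}g(s)$ is algebraically primitive. Since $p\nmid \mathrm{lc}(g)$, the coefficient at index $n$ of $f+p^{k}g$ is $p^{k}\mathrm{lc}(g)\neq 0$, and since $p\nmid \mathrm{lc}(f)$, the coefficient at index $m$ equals $\mathrm{lc}(f)+p^{k}b_{m}\neq 0$ (with $\nu_{p}$ equal to $0$). Thus both $m$ and $n$ lie in the support, and because $\gcd(m,n)=1$ the whole support has gcd $1$. This puts us in the setting of Lemma \ref{UniquePositiveSlope}.

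Now assume for contradiction that $f(s)+p^{k}g(s)=f_{1}(s)f_{2}(s)$ for nonconstant $f_{1},f_{2}\in DP(R)[s]$. By Lemma \ref{UniquePositiveSlope} we may assume $p\nmid\mathrm{lc}(f_{1})$ while $p^{k}\mid\mathrm{lc}(f_{2})$; the second divisibility must be exact, because the product of the leading coefficients is $p^{k}\mathrm{lc}(g)$, of $p$-valuation exactly $k$. As shown in the proof of Lemma \ref{UniquePositiveSlope}, the Newton log-polygon of $f+p^{k}g$ with respect to $p$ has its rightmost edge joining $(\log m,0)$ to $(\log n,k)$, and the gcd condition on $k$ combined with Lemma \ref{puncte} ensures this edge is in fact a single segment, containing no log-integral points other than its endpoints.

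Next I would invoke Theorem \ref{DumDir}: the vector system of the Newton log-polygon of $f+p^{k}g$ is obtained from those of $f_{1}$ and $f_{2}$ by summing vectors of equal slope. The segment $(\log m,0)\to(\log n,k)$ is the unique edge of positive slope. It cannot arise as the sum of two nonzero vectors of the same slope coming from edges in the Newton log-polygons of $f_{1}$ and $f_{2}$: such a sum would produce an intermediate log-integral point on the segment, contradicting its irreducibility as a segment. So this edge comes entirely from one factor, necessarily $f_{2}$, since only $\mathrm{lc}(f_{2})$ contributes positive $p$-valuation. Consequently the Newton log-polygon of $f_{2}$ contains an edge whose translate coincides with the segment $(\log m,0)\to(\log n,k)$; in particular $f_{2}$ has an index $c$ in its support with $c\cdot(n/m)=\deg f_{2}$, hence
\[
\frac{\deg f_{2}}{c}=\frac{n}{m}.
\]
Since $\deg f_{2}\mid n$ and $\gcd(m,n)=1$, the equation $m\cdot\deg f_{2}=n\cdot c$ forces $n\mid\deg f_{2}$, so $\deg f_{2}=n$ and $\deg f_{1}=1$. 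But a Dirichlet polynomial of degree $1$ is a constant, contradicting the nonconstancy of $f_{1}$. Therefore no such factorization exists, and $f(s)+p^{k}g(s)$ is irreducible over $Q(R)$.

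The main obstacle I expect is the step asserting that the unique positive-slope segment cannot be split between $f_{1}$ and $f_{2}$; this is where the indecomposability of the segment (no interior log-integral points, guaranteed by the gcd hypothesis via Lemma \ref{puncte}) plays its essential role, and it must be combined carefully with the conclusion of Lemma \ref{UniquePositiveSlope} telling us which factor carries the positive-slope contribution.
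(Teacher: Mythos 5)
Your proof is correct and reaches the conclusion by a genuinely different route from the paper. After Lemma \ref{UniquePositiveSlope}, the paper reduces the equation $f + p^k g = f_1 f_2$ modulo $p$: since $p$ divides neither the leading coefficient of $f$ nor that of $f_2$, the reduction $\overline{f}=\overline{f_1}\,\overline{f_2}$ gives $\deg \overline{f_1}\cdot\deg f_2 = m$ while $\deg f_1 \cdot \deg f_2 = n$, so $\deg f_2$ is a common divisor of the coprime integers $m$ and $n$, hence equals $1$, a contradiction. You instead argue entirely inside the Newton log-polygon: the positive-slope edge from $(\log m, 0)$ to $(\log n, k)$ is a single segment by Lemma \ref{puncte} and the gcd hypothesis; by Theorem \ref{DumDir} it cannot split between $f_1$ and $f_2$ (that would force an interior log-integral point at the gluing index $j_M k_m$ in the notation of that proof) and cannot come from $f_1$ (whose Newton log-polygon has no positive-slope edge since $p$ does not divide its leading coefficient), so it is a translate of an edge of $f_2$, giving $m\deg f_2 = nc$ for some index $c$ in the support of $f_2$; combined with $\deg f_2 \mid n$ and $\gcd(m,n)=1$, this forces $\deg f_2 = n$, i.e., $f_1$ constant. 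Both arguments are sound and of comparable length; the paper's modular reduction is slightly more elementary once Lemma \ref{UniquePositiveSlope} is in hand, while your route exercises Theorem \ref{DumDir} more fully and makes the geometric role of the single-segment hypothesis transparent.
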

\begin{proof}
We first observe that the Dirichlet polynomial $f(s)+p^kg(s)$ is algebraically primitive, as $\gcd(m,n)=1$ and $p$ divides none of the leading coefficients of $f$ and $g$. Let us assume to the contrary that $f(s)+p^{k}g(s)=f_1(s)f_2(s)$ with $f_1,f_2$ nonconstant Dirichlet polynomials with coefficients in $R$, say $f_1(s)=\frac{a_{c_1}}{c_1^s}+\cdots +\frac{a_{d_1}}{d_1^s}$ and $f_2(s)=\frac{b_{c_2}}{c_2^s}+\cdots +\frac{b_{d_2}}{d_2^s}$ with $d_1\geq 2$, $d_2\geq 2$ and $d_1d_2=n$. By Lemma \ref{UniquePositiveSlope}, one of the leading coefficients of $f_{1}$ and $f_{2}$ must be divisible by $p^{k}$, and the other must be coprime to $p$. Without loss of generality we may assume that $p^k\mid a_{d_1}$ and $p\nmid b_{d_2}$. Let us denote by $\overline{f},\overline{f_1}$ and $\overline{f_2}$ the Dirichlet polynomials obtained by reducing modulo $p$ the coefficients of $f,f_1$ and $f_2$, respectively. By reducing modulo $p$ the equality $f(s)+p^{k}g(s)=f_1(s)f_2(s)$, we obtain $\overline{f}=\overline{f_1}\cdot\overline{f_2}$. Since the leading coefficient of $f$ is not divisible by $p$, we have $\deg \overline{f}=\deg f=m$, and since $p\nmid b_{d_2}$, we have $\deg \overline{f_2}=\deg f_2=d_2$. If we denote $\deg \overline{f_1}$ by $d_1'$, say, we have $m=d_1'd_2$, while $n=d_1d_2$, so $d_2$ must be a common divisor of $m$ and $n$. Since $m$ and $n$ are coprime, $d_2$ must be equal to $1$, which is a contradiction. This completes the proof.
\end{proof}

Cavachi \cite{Cavachi} proved the following irreducibility criterion for linear combinations of relatively prime polynomials.
\medskip

{\bf Theorem} (Cavachi) {\em Let $f(X)$ and $g(X)$ be relatively prime polynomials with integer coefficients, with $\deg f<\deg g$. Then $f(X)+pg(X)$ is irreducible over $\mathbb{Q}$ for all but finitely many prime numbers $p$.}
\medskip

In particular, for $k=1$ we obtain from Theorem \ref{FplusP^kGPositiveSlope} the following analogue for Dirichlet polynomials of Cavachi's Theorem.
\begin{corollary}
\label{Coro1FplusP^kGPositiveSlope}
Let $f(s)$ and $g(s)$ be two Dirichlet polynomials with integer coefficients, of coprime degrees $m$ and $n$, respectively, with $m<n$.
Then $f(s)+pg(s)$ is irreducible over $\mathbb{Q}$ for all but finitely many prime numbers $p$.
\end{corollary}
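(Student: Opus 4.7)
The plan is to apply Theorem \ref{FplusP^kGPositiveSlope} with the exponent $k$ taken equal to $1$, so that only a mild restriction on $p$ remains. Since the statement concerns integer coefficients, we take $R = \mathbb{Z}$, for which every nonzero prime integer is a prime element of $R$, and $Q(R) = \mathbb{Q}$.

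First I would read off what Theorem \ref{FplusP^kGPositiveSlope} requires in the case $k=1$: one needs a prime $p$ that divides neither the leading coefficient of $f$ nor the leading coefficient of $g$, together with the divisibility condition
\[
\gcd\bigl(k,\ \nu_{p_1}(n)-\nu_{p_1}(m),\dots,\nu_{p_r}(n)-\nu_{p_r}(m)\bigr)=1,
\]
where $p_1,\dots,p_r$ are the distinct prime factors of $m\cdot n$. For $k=1$ this gcd is automatically $1$, regardless of what the valuations $\nu_{p_i}(n),\nu_{p_i}(m)$ happen to be, so this arithmetic constraint imposes no restriction at all. The coprimality of $m$ and $n$ needed in Theorem \ref{FplusP^kGPositiveSlope} is given by hypothesis.

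Next I would observe that the leading coefficient of $f$ is a fixed nonzero integer $a_m$, and the leading coefficient of $g$ is a fixed nonzero integer $b_n$, so the product $a_m b_n$ is a fixed nonzero integer. The set of prime numbers dividing $a_m b_n$ is therefore finite, and for every prime $p$ outside this finite exceptional set we have $p\nmid a_m$ and $p\nmid b_n$. Theorem \ref{FplusP^kGPositiveSlope} applied with this $p$ and $k=1$ then yields the irreducibility of $f(s)+p g(s)$ over $\mathbb{Q}$, which is exactly the claim.

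There is really no obstacle here; the only thing to watch is that Theorem \ref{FplusP^kGPositiveSlope} is formulated so as to give the strong quantitative statement $k=1$ just by inspecting the gcd condition, which is why the result reduces to excluding the finitely many primes dividing $a_m b_n$. If one wanted a more refined version (an explicit list of exceptional primes), one would simply list the prime factors of $a_m b_n$, but for the qualitative ``all but finitely many'' statement no further work is needed.
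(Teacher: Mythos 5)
Your proposal is correct and takes essentially the same approach as the paper: apply Theorem \ref{FplusP^kGPositiveSlope} with $k=1$, observe that the gcd condition is then automatic, and exclude the finitely many primes dividing the leading coefficients of $f$ and $g$.
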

\begin{proof}
Since $k=1$, condition (\ref{ConditiaDeSegment}) is trivially satisfied, and the irreducibility of $f(s)+pg(s)$ is ensured for every prime $p$ that divides none of the leading coefficients of $f$ and $g$.
\end{proof}
Another simple instance of Theorem \ref{FplusP^kGPositiveSlope} is the following irreducibility criterion.
\begin{corollary}
\label{Coro2FplusP^kGPositiveSlope}
Let $f(s)$ and $g(s)$ be two Dirichlet polynomials with integer coefficients, of coprime squarefree degrees $m$ and $n$, respectively, with $m<n$, and let $p$ be a prime number that divides none of the leading coefficients of $f$ and $g$. 
Then $f(s)+p^kg(s)$ is irreducible over $\mathbb{Q}$ for all positive integers $k$.
\end{corollary}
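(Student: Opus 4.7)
The plan is to apply Theorem \ref{FplusP^kGPositiveSlope} directly. The hypotheses on $f$ and $g$ already match: both have integer coefficients, their degrees $m < n$ are coprime, and $p$ divides neither leading coefficient. So the only thing to verify is the condition
\[
\gcd\bigl(k,\ \nu_{p_1}(n)-\nu_{p_1}(m),\ \dots,\ \nu_{p_r}(n)-\nu_{p_r}(m)\bigr)=1,
\]
where $p_1,\dots,p_r$ are the distinct prime factors of $m\cdot n$. This is the whole content of the reduction; once it is checked, Theorem \ref{FplusP^kGPositiveSlope} gives the irreducibility of $f(s)+p^kg(s)$ over $\mathbb{Q}$ for every positive integer $k$.

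To verify the gcd condition, I would exploit squarefreeness and coprimality of $m$ and $n$ as follows. Since $\gcd(m,n)=1$, each prime $p_i\in\{p_1,\dots,p_r\}$ divides exactly one of $m$ and $n$, and since both $m$ and $n$ are squarefree, the corresponding valuation is exactly $1$. Hence for every index $i$ one has $\nu_{p_i}(n)-\nu_{p_i}(m)\in\{+1,-1\}$. Because $n>m\geq 1$ forces $n\geq 2$, there is at least one prime factor of $n$, and that prime contributes the difference $+1$ to the gcd. Therefore the gcd above equals $1$ regardless of $k$, and Theorem \ref{FplusP^kGPositiveSlope} applies.

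The main ``obstacle'' is thus purely bookkeeping on valuations, and there really is none of substance: the corollary is essentially a special case in which the squarefree hypothesis trivialises condition (\ref{ConditiaDeSegment}). No additional geometric or arithmetic input beyond Theorem \ref{FplusP^kGPositiveSlope} is required.
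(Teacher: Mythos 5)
Your proposal is correct and takes essentially the same approach as the paper: reduce to Theorem \ref{FplusP^kGPositiveSlope} and observe that coprimality plus squarefreeness force each difference $\nu_{p_i}(n)-\nu_{p_i}(m)$ to be $\pm 1$, so condition (\ref{ConditiaDeSegment}) holds for every $k$. The extra remark that $n\geq 2$ guarantees at least one such difference is a harmless bit of completeness the paper leaves implicit.
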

\begin{proof}
Since $m$ and $n$ are coprime and squarefree, $\nu _{p_i}(n)-\nu _{p_i}(m)=\pm 1$, so condition (\ref{ConditiaDeSegment}) is trivially satisfied for every $k$.
\end{proof}

Our following result is similar to Lemma \ref{UniquePositiveSlope}, and refers to the min-degree coefficients of the factors of $f+p^kg$, instead of their leading coefficients.

\begin{lemma}
\label{UniqueNegativeSlope}
Let $f,g$ be two Dirichlet polynomials with coefficients in a unique factorization domain $R$, with $\deg _{{\rm min}}f=m$, $\deg _{{\rm min}}g=n$, $m>n$, and let $p_1,\dots ,p_r$ be the distinct prime factors of $m\cdot n$. Let also $p$ be a prime element of $R$ that divides none of the min-degree coefficients of $f$ and $g$, and let $k$ be any positive integer with 
\begin{equation}\label{ConditiaDeSegment2}
\gcd(k,\nu _{p_1}(n)-\nu _{p_1}(m),\dots ,\nu _{p_r}(n)-\nu _{p_r}(m))=1.
\end{equation}
If $f(s)+p^{k}g(s)$ is algebraically primitive and can be written as a product of two nonconstant Dirichlet polynomials with coefficients in $R$, say $f_{1}(s)$ and $f_{2}(s)$, then one of the min-degree coefficients of $f_{1}$ and $f_{2}$ must be divisible by $p^{k}$.
\end{lemma}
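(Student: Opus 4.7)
The plan is to mimic the proof of Lemma \ref{UniquePositiveSlope}, but focused on the left end of the Newton log-polygon instead of the right end. Write $f(s)=\frac{a_m}{m^s}+\cdots $ and $g(s)=\frac{b_n}{n^s}+\cdots $ with $p\nmid a_m$ and $p\nmid b_n$, and set $h:=f+p^{k}g$. The coefficient of $h$ at index $n$ is $p^{k}b_n$, so has $\nu_p$ exactly $k$; at each intermediate index $n<i<m$ only $p^{k}g$ contributes, so $\nu_p\geq k$; and at index $m$ the coefficient is $a_m+p^{k}b_m$, coprime to $p$. Hence in the Newton log-polygon of $h$ with respect to $p$, the leftmost vertex is $(\log n,k)$ and the next vertex is $(\log m,0)$. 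By Lemma \ref{puncte} together with hypothesis (\ref{ConditiaDeSegment2}), the line segment between them contains no intermediate log-integral points, so it is a single segment. Since edge slopes strictly increase from left to right, this is the unique segment of (minimum) slope $-k/\log(m/n)$.

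Suppose now $h=f_{1}f_{2}$ with nonconstant factors. Since $h$ is algebraically primitive, neither $f_{i}$ can be a monomial (else all indices in $\mathrm{Supp}(h)$ would share a common factor), and in particular each Newton log-polygon has at least one edge. Let $n_{1},n_{2}$ be the min-degrees and $\alpha ,\beta $ the $p$-adic valuations of the min-degree coefficients of $f_{1},f_{2}$; the Dirichlet product rule gives $n_{1}n_{2}=n$ and $\alpha +\beta =k$. Let $\sigma _{1},\sigma _{2}$ denote the slopes of the leftmost edges of $f_{1},f_{2}$. By Theorem \ref{DumDir}, the minimum slope of $h$'s polygon equals $\min (\sigma _{1},\sigma _{2})=-k/\log (m/n)$. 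If $\sigma _{1}\neq \sigma _{2}$, say $\sigma _{1}<\sigma _{2}$, then $f_{2}$ contributes no edge of this slope, so the unique leftmost segment of $h$ is precisely the leftmost edge of $f_{1}$: its vector is $(\log (m/n),-k)$, descending $k$ units from the starting height $\alpha $. Non-negativity of heights forces $\alpha \geq k$, so $\alpha =k$ and $p^{k}$ divides the min-degree coefficient of $f_{1}$.

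The main obstacle is the remaining case $\sigma _{1}=\sigma _{2}=-k/\log (m/n)$, where by Dumas the two leftmost edge vectors add. Write $f_{i}$'s leftmost edge as $(\log (M_{i}/n_{i}),-h_{i})$ with $M_{i}$ a positive integer and $h_{i}$ a positive integer (positive because the slope is negative and the width is positive). Summation gives $h_{1}+h_{2}=k$, and equality of slopes gives $(M_{i}/n_{i})^{k}=(m/n)^{h_{i}}$. Comparing prime valuations yields $k(\nu _{q}(M_{i})-\nu _{q}(n_{i}))=h_{i}(\nu _{q}(m)-\nu _{q}(n))$ for every prime $q$. Setting $d=\gcd (k,h_{i})$, $k=dk'$, $h_{i}=dh_{i}'$ with $\gcd (k',h_{i}')=1$, we deduce $k'\mid \nu _{q}(m)-\nu _{q}(n)$ for each prime $q$, in particular for each $p_{j}$; together with $k'\mid k$ and hypothesis (\ref{ConditiaDeSegment2}) this forces $k'=1$, i.e.\ $k\mid h_{i}$. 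Since each $h_{i}$ is a positive multiple of $k$ while $h_{1}+h_{2}=k$, we have $h_{1},h_{2}\geq k$, a contradiction. Hence this case cannot occur, and in every admissible case one of the min-degree coefficients of $f_{1},f_{2}$ is divisible by $p^{k}$ (as a byproduct, since $\alpha +\beta =k$, the other is coprime to $p$).
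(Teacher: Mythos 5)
Your proof is correct, but it takes a noticeably different and more laborious route through the key step than the paper does. Both proofs establish identically that the leftmost edge of the Newton log-polygon of $f+p^{k}g$ joins $(\log n,k)$ to $(\log m,0)$ and, by Lemma \ref{puncte} and hypothesis (\ref{ConditiaDeSegment2}), is a single segment and is the unique segment of negative slope. The paper then finishes in one stroke: since $p\nmid c_{m}$, neither factor can have all coefficients divisible by $p$, so if \emph{both} min-degree coefficients were divisible by $p$, each factor's Newton log-polygon would contain at least one negative-slope segment, and Theorem \ref{DumDir} would then force at least two negative-slope segments in the polygon of $f+p^{k}g$ (a merged edge carries a log-integral junction point), contradicting uniqueness. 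You instead split on whether the minimum slopes $\sigma_{1},\sigma_{2}$ of the factors coincide. Your unequal-slope case is correct and quick. Your equal-slope case, however, essentially re-proves by hand the content of Lemma \ref{puncte}: the relation $(M_{i}/n_{i})^{k}=(m/n)^{h_{i}}$ together with the $\gcd$ hypothesis forcing $k\mid h_{i}$ is exactly the arithmetic underlying the statement that the segment from $(\log n,k)$ to $(\log m,0)$ has no interior log-integral point. So what you buy is a fully explicit, self-contained argument that avoids any implicit reasoning about how segments behave under merging of same-slope edges; what the paper buys is brevity by pushing that arithmetic into Lemma \ref{puncte} once and then arguing purely at the level of segment counts. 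Both are valid; yours is a genuine alternative, just less economical. One small stylistic point: in the unequal-slope case you could shortcut to $\beta=0$ directly (if $\sigma_{2}\geq 0$ then the leftmost vertex of $f_{2}$'s polygon is its lowest, and since $f_{2}$ has a coefficient coprime to $p$, that height is $0$), rather than going through the vector of $f_{1}$'s edge, but what you wrote is equally correct.
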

\begin{proof}
Let $f(s)=\frac{a_{m}}{m^s}+\cdots +\frac{a_{M}}{M^s}$ and 
$g(s)=\frac{b_{n}}{n^s}+\cdots +\frac{b_{N}}{N^s}$ , $a_{m}a_{M}b_{n}b_{N}\neq 0$, 
and let us write $f(s)+p^{k}g(s)=\frac{c_{n}}{n^s}+\cdots +\frac{c_{u}}{u^s}$, 
where $c_{i}=p^{k}b_{i}$ for $i=n,\dots ,m-1$, while 
$c_{i}=a_k+p^{k}b_{i}$ for $i=m,\dots ,u$. Our 
assumption that $p\nmid a_{m}b_{n}$ implies that $c_{m}$ is not 
divisible by $p$, $c_{n},\dots , c_{m-1}$ are all 
divisible by $p^{k}$, and moreover, $p^{k+1}\nmid c_{n}$. Thus, 
in the Newton log-polygon of $f+p^{k}g$ with respect to the prime element $p$, the leftmost edge will join the points $(\log n,k)$ 
and $(\log m,0)$, that are labelled in Figure 4 below by $Q_{\ell-1}$ and $Q_{\ell}$, respectively.

\begin{center}

\setlength{\unitlength}{5mm}
\begin{picture}(12,7)
\linethickness{0.15mm}

\put(3.7,0.3){\circle{0.01}}

\put(3.7,0.6){\circle{0.01}}

\put(3.7,0.9){\circle{0.01}}

\put(3.7,1.2){\circle{0.01}}

\put(3.7,1.5){\circle{0.01}}

\put(3.7,1.8){\circle{0.01}}

\put(3.7,2.1){\circle{0.01}}

\put(3.7,2.4){\circle{0.01}}

\put(3.7,2.7){\circle{0.01}}

\put(3.7,3){\circle{0.01}}

\put(3.7,3.3){\circle{0.01}}

\put(3.7,3.6){\circle{0.01}}

\put(3.7,3.9){\circle{0.01}}

\put(3.7,4.2){\circle{0.01}}

\put(3.7,4.5){\circle{0.01}}

\thicklines  

\put(3.7,3.6){\line(2,-1){7.2}}   

\linethickness{0.15mm}

\put(0,0){\vector(1,0){15}}
\put(0,0){\vector(0,1){6}}

\put(3.7,0){\circle{0.08}}
\put(3.7,0){\circle{0.12}}
\put(3.7,3.6){\circle{0.08}}
\put(3.7,3.6){\circle{0.12}}
\put(10.9,0){\circle{0.08}}
\put(10.9,0){\circle{0.12}}

{\small 
\put(8.2,-0.75){$Q_{\ell}=(\log m,0)$}

\put(4.1,3.8){$Q_{\ell-1}=(\log n,k)$}

\put(2.2,-0.75){$(\log n,0)$}

}

\end{picture}
\end{center}
\bigskip

{\small
{\bf Figure 4.} The leftmost segment in the Newton log-polygon of $f(s)+p^{k}g(s)$ with respect to $p$.}
\medskip

Since $k$ satisfies (\ref{ConditiaDeSegment}), we see by Lemma \ref{puncte} that the edge $Q_{\ell-1}Q_{\ell}$ is in fact a segment of the Newton log-polygon, as it contains no log-integral points other than 
its endpoints $Q_{\ell-1}$ and $Q_{\ell}$, and moreover, it is the only segment with negative slope. 
Note that if $f+p^{k}g=f_1f_2$ with $f_1$ and $f_2$ non-constant Dirichlet polynomials with integer coefficients, then each of the factors $f_{1}$ and $f_{2}$ must have at least one coefficient not divisible by $p$, as $p\nmid c_m$. If we assume now 
that each of the min-degree coefficients of $f_{1}$ and $f_{2}$ is 
divisible by $p$, then each of the Newton log-polygons of $f_{1}$ and 
$f_{2}$ with respect to $p$ will contain at least one segment 
with negative slope, which by Theorem \ref{DumDir} will produce at least 
two segments with negative slopes in the Newton log-polygon of $f+p^{k}g$, which is a contradiction. Therefore, one of the min-degree coefficients of 
$f_{1}$ and $f_{2}$ must be divisible by $p^{k}$, and the other 
must be coprime to $p$. This completes the proof. 
\end{proof}

As an application of Lemma \ref{UniqueNegativeSlope} we obtain the following irreducibility criterion for linear combinations of Dirichlet polynomials, in terms of their min-degrees and their min-degree coefficients.
\begin{theorem}
\label{FplusP^kGNegativeSlope}
Let $f$ and $g$ be Dirichlet polynomials with coefficients in a unique factorization domain $R$, of coprime min-degrees $m$ and $n$, respectively, with $m>n>1$, and assume that $\max\{\deg f,\deg g\}\leq 2n$. Let $p_1,\dots ,p_r$ be the distinct prime factors of $m\cdot n$, let $p$ be a prime element of $R$ that divides none of the min-degree coefficients of $f$ and $g$, and let $k$ be any positive integer with 
\[
\gcd(k,\nu _{p_1}(n)-\nu _{p_1}(m),\dots ,\nu _{p_r}(n)-\nu _{p_r}(m))=1.
\]
Then $f(s)+p^{k}g(s)$ is irreducible over $Q(R)$.
\end{theorem}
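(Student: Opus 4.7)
The plan is to mirror the argument of Theorem \ref{FplusP^kGPositiveSlope}, replacing Lemma \ref{UniquePositiveSlope} by its min-degree analogue Lemma \ref{UniqueNegativeSlope}, and then to exploit the extra hypothesis $\max\{\deg f,\deg g\}\leq 2n$ to force a contradiction at the level of supports. First I would verify that $f(s)+p^{k}g(s)$ is algebraically primitive so that Lemma \ref{UniqueNegativeSlope} is applicable: writing $f=\sum a_i/i^s$ and $g=\sum b_i/i^s$, the coefficient at index $n$ of $f+p^{k}g$ equals $p^{k}b_n\neq 0$ (as $p\nmid b_n$), while the coefficient at index $m$ equals $a_m+p^{k}b_m\neq 0$ (as $p\nmid a_m$); hence both $m$ and $n$ lie in the support of $f+p^{k}g$, and $\gcd(m,n)=1$. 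Assuming for contradiction that $f+p^{k}g=f_1f_2$ with $f_1,f_2$ nonconstant, and denoting $c_i=\deg_{\min}f_i$, $d_i=\deg f_i$, Lemma \ref{UniqueNegativeSlope} yields, after relabelling, that $p^{k}$ divides the min-degree coefficient of $f_1$, while the min-degree coefficient of $f_2$ is coprime to $p$.

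The next step is reduction modulo $p$. Since $k\geq 1$, one has $\overline{f+p^{k}g}=\overline f$, and because $p\nmid a_m$ the min-degree of $\overline f$ equals $m$. From $\overline{f_1}\cdot\overline{f_2}=\overline f$, the min-degree of $\overline{f_1}$ equals some $c_1'>c_1$ (the min-degree term of $f_1$ vanishes mod $p$, and $\overline{f_1}$ is itself nonzero, as noted in the proof of Lemma \ref{UniqueNegativeSlope}), while $\overline{f_2}$ retains min-degree $c_2$; hence $c_1'c_2=m$. Combining this with $c_1c_2=n$ shows that $c_2$ divides $\gcd(m,n)=1$, so $c_2=1$ and $c_1=n$. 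Using $d_1\geq c_1=n$ together with $d_1d_2=\deg(f+p^{k}g)\leq\max\{\deg f,\deg g\}\leq 2n$ forces $d_2\leq 2$; since $f_2$ is nonconstant with $c_2=1$, this forces $d_2=2$, and hence $d_1=n$. Therefore $f_1$ is a single monomial $\alpha/n^s$ with $p^{k}\mid\alpha$.

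Finally, this last conclusion contradicts the algebraic primitivity established at the outset. Indeed, if $f_1=\alpha/n^s$, then $\mathrm{Supp}(f_1f_2)\subseteq n\cdot\mathrm{Supp}(f_2)\subseteq\{n,2n\}$, so every index in the support of $f+p^{k}g$ is a multiple of $n$. But $m$ belongs to that support and $\gcd(m,n)=1$ with $n>1$, so $m$ is not a multiple of $n$, the desired contradiction. The main obstacle, and the precise point where the hypothesis $\max\{\deg f,\deg g\}\leq 2n$ plays its role, is the degree bookkeeping in the middle step: without the bound $d_1d_2\leq 2n$, the reduction modulo $p$ alone yields only $c_2=1$, which leaves enough flexibility for $f_2$ to have arbitrary degree and for $f_1$ to have more than one term, so the support-based contradiction collapses.
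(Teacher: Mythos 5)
Your proof is correct and follows essentially the same route as the paper's: apply Lemma \ref{UniqueNegativeSlope}, reduce modulo $p$ to compare min-degrees and conclude $c_2=1$, $c_1=n$, then use the degree bound $\leq 2n$ together with algebraic primitivity to reach a contradiction. The only cosmetic difference is in the endgame: the paper invokes algebraic primitivity first to get $d_1>n$ and hence $d_2<2$, whereas you first pin down $d_1=n$, $d_2=2$ from the degree bound and then observe that this would force $f_1$ to be a monomial, which contradicts algebraic primitivity via the support argument -- a reordering of the same two ingredients.
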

\begin{proof}
We notice that $f(s)+p^kg(s)$ is algebraically primitive, as $\gcd(m,n)=1$ and $p$ divides none of the min-degree coefficients of $f$ and $g$.  Let us assume to the contrary that $f(s)+p^{k}g(s)=f_1(s)f_2(s)$ with $f_1,f_2$ nonconstant Dirichlet polynomials with coefficients in $R$, say $f_1(s)=\frac{a_{c_1}}{c_1^s}+\cdots +\frac{a_{d_1}}{d_1^s}$ and $f_2(s)=\frac{b_{c_2}}{c_2^s}+\cdots +\frac{b_{d_2}}{d_2^s}$ with $d_1\geq 2$, $d_2\geq 2$ and $c_1c_2=n$. By Lemma \ref{UniqueNegativeSlope}, one of the min-degree coefficients of $f_{1}$ and $f_{2}$ must be divisible by $p^{k}$, and the other must be coprime to $p$, say $p^k\mid a_{c_1}$ and $p\nmid b_{c_2}$. As before, let us denote by $\overline{f},\overline{f_1}$ and $\overline{f_2}$ the Dirichlet polynomials obtained by reducing modulo $p$ the coefficients of $f,f_1$ and $f_2$, respectively. By reducing modulo $p$ the equality $f(s)+p^{k}g(s)=f_1(s)f_2(s)$, we obtain $\overline{f}=\overline{f_1}\cdot\overline{f_2}$. Since the min-degree coefficient of $f$ is not divisible by $p$, we have $\deg_{\rm min} \overline{f}=\deg_{\rm min} f=m$, and since $p\nmid b_{c_2}$, we have $\deg_{\rm min} \overline{f_2}=\deg_{\rm min} f_2=c_2$. If we denote $\deg_{\rm min} \overline{f_1}$ by $c_1'$, say, we have $m=c_1'c_2$, while $n=c_1c_2$, so $c_2$ must be a common divisor of $m$ and $n$. Since $m$ and $n$ are coprime, $c_2$ must be equal to $1$, so $c_1$ must be equal to $n$. This in turn forces $d_1$ to be greater than $n$, as $f+p^kg$ cannot have a factor of the form $\frac{a_n}{n^s}$, being algebraically primitive. We thus have $d_2=\frac{\deg (f+p^kg)}{d_1}\leq \frac{2n}{d_1}<2$, so $d_2$ is forced to be equal to $1$, a contradiction. 
\end{proof}
In particular, we obtain for $k=1$ the following corollary for Dirichlet polynomials with integer coefficients.
\begin{corollary}
\label{Coro1FplusP^kGNegativeSlope}
Let $f$ and $g$ be two Dirichlet polynomials with integer coefficients, of coprime min-degrees $m$ and $n$, respectively, with $m>n>1$, and assume that $\max\{\deg f,\deg g\}\leq 2n$. Then $f(s)+pg(s)$ is irreducible over $\mathbb{Q}$ for all but finitely many prime numbers $p$.
\end{corollary}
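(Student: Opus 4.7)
The plan is to deduce this corollary directly from Theorem \ref{FplusP^kGNegativeSlope} by specializing $k=1$. With $k=1$, the gcd condition
\[
\gcd(k,\nu _{p_1}(n)-\nu _{p_1}(m),\dots ,\nu _{p_r}(n)-\nu _{p_r}(m))=1
\]
holds automatically, regardless of the prime factorizations of $m$ and $n$, since any gcd involving $1$ equals $1$. Thus the only remaining hypothesis of Theorem \ref{FplusP^kGNegativeSlope} that depends on the choice of $p$ is that $p$ divides none of the min-degree coefficients of $f$ and $g$.

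First I would denote the min-degree coefficients of $f$ and $g$ by $a_m$ and $b_n$, which are fixed nonzero integers by hypothesis. The set of prime numbers that divide the nonzero integer $a_m b_n$ is finite. For every prime $p$ outside this finite set, $p$ divides neither $a_m$ nor $b_n$, so all hypotheses of Theorem \ref{FplusP^kGNegativeSlope} are fulfilled (the coprimality of $m$ and $n$, the inequalities $m>n>1$ and $\max\{\deg f,\deg g\}\leq 2n$ being assumed, and the gcd condition being automatic for $k=1$). We therefore conclude that $f(s)+pg(s)$ is irreducible over $\mathbb{Q}$ for all such $p$.

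Since only the finitely many prime divisors of $a_mb_n$ need be excluded, the conclusion of the corollary follows. There is no genuine obstacle here beyond correctly invoking the already-established theorem; the entire content lies in observing that the arithmetic compatibility condition in Theorem \ref{FplusP^kGNegativeSlope} becomes vacuous at $k=1$, which is precisely the feature that converts the general criterion into a Cavachi-type ``all but finitely many'' statement, exactly analogous to Corollary \ref{Coro1FplusP^kGPositiveSlope}.
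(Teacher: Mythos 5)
Your proposal is correct and matches the paper's own proof: both set $k=1$ in Theorem \ref{FplusP^kGNegativeSlope}, note that condition (\ref{ConditiaDeSegment2}) becomes vacuous, and observe that only the finitely many primes dividing the min-degree coefficients of $f$ and $g$ need be excluded.
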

\begin{proof}
Since $k=1$, condition (\ref{ConditiaDeSegment2}) is obviously satisfied, and the irreducibility of $f(s)+pg(s)$ is now guaranteed for every $p$ that divides none of the min-degree coefficients of $f$ and $g$.
\end{proof}

Another immediate consequence of Theorem \ref{FplusP^kGNegativeSlope} is the following irreducibility criterion.
\begin{corollary}
\label{Coro2FplusP^kGNegativeSlope}
Let $f$ and $g$ be two Dirichlet polynomials with integer coefficients, of coprime squarefree min-degrees $m$ and $n$, respectively, with $m>n>1$ and $\max\{\deg f,\deg g\}\leq 2n$. Let $p$ be a prime number that divides none of the min-degree coefficients of $f$ and $g$. 
Then $f(s)+p^kg(s)$ is irreducible over $\mathbb{Q}$ for all positive integers $k$.
\end{corollary}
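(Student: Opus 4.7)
The plan is to invoke Theorem \ref{FplusP^kGNegativeSlope} directly, after observing that the $\gcd$ condition on the exponent $k$ appearing in its hypotheses is automatic under the squarefree assumption. Writing $p_1,\dots,p_r$ for the distinct prime factors of $m\cdot n$, since $m$ and $n$ are coprime and squarefree, each prime $p_i$ divides exactly one of $m,n$, and does so to multiplicity one. Consequently $\nu_{p_i}(n)-\nu_{p_i}(m)\in\{-1,+1\}$ for every $i\in\{1,\dots,r\}$, so
\[
\gcd\bigl(k,\ \nu_{p_1}(n)-\nu_{p_1}(m),\ \dots,\ \nu_{p_r}(n)-\nu_{p_r}(m)\bigr)=1
\]
for any positive integer $k$ whatsoever, because one of the entries of the $\gcd$ already equals $\pm 1$.

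The remaining hypotheses of Theorem \ref{FplusP^kGNegativeSlope}, namely that $m>n>1$ are coprime, that $\max\{\deg f,\deg g\}\le 2n$, and that the chosen prime $p$ divides neither of the min-degree coefficients of $f$ and $g$, are assumed verbatim in the statement of the corollary. The theorem therefore applies and delivers the irreducibility of $f(s)+p^kg(s)$ over $\mathbb{Q}$ for every positive integer $k$. I do not expect any genuine obstacle here: the argument simply specializes the previous theorem to the squarefree case and mirrors, word for word, the derivation of Corollary \ref{Coro2FplusP^kGPositiveSlope} from Theorem \ref{FplusP^kGPositiveSlope} in the leading-coefficient (positive slope) setting.
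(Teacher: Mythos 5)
Your proof is correct and matches the paper's own argument: it reduces the corollary to Theorem \ref{FplusP^kGNegativeSlope} by observing that under the coprime-squarefree hypothesis each $\nu_{p_i}(n)-\nu_{p_i}(m)$ equals $\pm 1$, so the gcd condition on $k$ holds automatically.
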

\begin{proof}
Since $m$ and $n$ are coprime and squarefree, $\nu _{p_i}(n)-\nu _{p_i}(m)=\pm 1$, so condition (\ref{ConditiaDeSegment2}) is obviously satisfied for every $k$.
\end{proof}

In the case when the degrees of $f$ and $g$ are not relatively prime, it is considerably more difficult to draw a conclusion on the irreducibility of $f+p^kg$, but for small values of $k$, we can at least find some information on the relative degrees of its possible factors, as we will see in the following section in Proposition \ref{comblinear1} and Proposition \ref{comblinear2}. Other irreducibility criteria for linear combinations of two Dirichlet polynomials, analogous to Sch\"onemann's criterion for polynomials and its variations will be given in Appendix A. 

We end this section with the following conjecture, in which we drop the conditions on the degrees of $f$ and $g$ in Corollary \ref{Coro1FplusP^kGPositiveSlope}, and instead ask $f$ and $g$ to be relatively prime.
\smallskip

{\bf Conjecture 5.} {\em Let $f$ and $g$ be relatively prime Dirichlet polynomials with integer coefficients. Then $f(s)+pg(s)$ is irreducible over $\mathbb{Q}$ for all but finitely many prime numbers $p$.}

\section{The relative degrees of the factors of a Dirichlet polynomial}\label{RelativeDegree}
When we study the canonical decomposition of a Dirichlet polynomial, it is useful to have criteria to exclude some of the possible values of the relative degrees of its hypothetical factors. 
For polynomials, an important such result was obtained by Filaseta \cite{Filaseta1}, and this result was crucial in proving the irreducibility of all but finitely many Bessel polynomials (see also Filaseta and Trifonov \cite{FilasetaTrifonov}, and Filaseta, Finch and Leidy \cite{Filaseta-Finch-Leidy}, where this result was used to prove the irreducibility of some classes of generalized Laguerre polynomials):
\medskip

{\bf Lemma}\ (Filaseta, \cite[Lemma 2]{Filaseta1}) {\em \ Let $k$ and $l$ be integers with $k>l\geq 0$. Suppose $g(x)=\sum _{j=0}^{n}b_{j}x^{j}\in \mathbb{Z}[x]$ and $p$ is a prime such that $p\nmid b_{n}$, $p\mid b_{j}$ for all $j\in \{ 0,1,\dots ,n-l-1\} $, and the rightmost edge of the Newton polygon for $g(x)$ with respect to $p$ has slope $<1/k$. Then for any integers $a_{0},a_{1},\dots ,a_{n}$ with $|a_{0}|=|a_{n}|=1$, the polynomial $f(x)=\sum _{j=0}^{n}a_{j}b_{j}x^{j}\in \mathbb{Z}[x]$ cannot have a factor with degree in the interval $[l+1,k]$. }
\smallskip

Here the Newton polygon of $\sum _{j=0}^{n}b_{j}x^{j}$ is the lower convex hull of the set of points in the extended plane $\{ (0,\nu_p(b_n)),(1,\nu _p(b_{n-1})),\dots,(n,\nu _p(b_0))\} $.
\medskip

For Dirichlet polynomials we will first prove the following analogous result.

\begin{lemma}\label{calaFilaseta1}
Let $f(s)=\frac{a_{m}}{m^{s}}+\cdots +\frac{a_{n}}{n^{s}}$ be an algebraically primitive Dirichlet polynomial with coefficients in a unique factorization domain $R$, $a_{m}a_{n}\neq 0$, let $p$ be a prime element of $R$, $c_{1},c_{2}$ divisors of $m$ and $d_{1},d_{2}$ divisors of $n$ with $1<\frac{d_{1}}{c_{1}}\leq\frac{d_{2}}{c_{2}}<\frac{n}{m}$. Suppose that $p\nmid a_{i}$ for some index $i<\frac{md_{1}}{c_{1}}$, $p\mid a_{j}$ for all $j\in \{ \frac{md_{1}}{c_{1}},\dots ,n\} $, and the rightmost edge in the Newton log-polygon of $f$ with respect to $p$ has slope $<1/\log \frac{d_{2}}{c_{2}}$. Then for any nonzero elements $b_{m},\dots ,b_{n}\in R$ with $p\nmid b_{i}b_{n}$, the Dirichlet polynomial $g(s)=\frac{a_{m}b_{m}}{m^{s}}+\cdots +\frac{a_{n}b_{n}}{n^{s}}$ cannot have factors whose relative degrees belong to $[\frac{d_{1}}{c_{1}},\frac{d_{2}}{c_{2}}]\cup [\frac{nc_{2}}{md_{2}},\frac{nc_{1}}{md_{1}}]$.
\end{lemma}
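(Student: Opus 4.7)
The plan is to argue by contradiction. Suppose $g$ admits a non-trivial factorization $g = g_1 g_2$ in $DP(R)[s]$, where $g_1$ has relative degree $r$. It suffices to rule out $r \in [d_1/c_1, d_2/c_2]$: if instead $r$ lies in the complementary interval $[nc_2/(md_2), nc_1/(md_1)]$, the other factor $g_2$ has relative degree $(n/m)/r \in [d_1/c_1, d_2/c_2]$, and relabelling returns us to the first case. Henceforth assume $r \in [d_1/c_1, d_2/c_2]$.

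First I would compare the Newton log-polygons $P_f$ and $P_g$ of $f$ and $g$ with respect to $p$. Since $\nu_p(a_j b_j) = \nu_p(a_j) + \nu_p(b_j) \geq \nu_p(a_j)$ for every $j$ in the (common) support, with equality at $j=i$ (so $(\log i, 0)$ lies on $P_g$) and at $j=n$ (so $(\log n, \nu_p(a_n))$ is the shared rightmost vertex), a slope comparison from each $(\log k,\,\cdot)$ to the common rightmost vertex shows that the maximum slope of $P_g$ is bounded above by the maximum slope of $P_f$, hence strictly less than $1/\log(d_2/c_2)$. Since $f$ is algebraically primitive and $g$ has the same support, $g$ is algebraically primitive, which in turn forces $g_1$ and $g_2$ to be algebraically primitive, so Theorem~\ref{DumDir} applies to the factorization $g = g_1 g_2$.

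The decisive step is to show that $P_{g_1}$ has no edge of strictly positive slope. By Theorem~\ref{DumDir}, every slope of $P_{g_1}$ is a slope of $P_g$, so all slopes of $P_{g_1}$ are $< 1/\log(d_2/c_2) \leq 1/\log r$ (using $r \leq d_2/c_2$). Since the total width of $P_{g_1}$ equals $\log r$, the total height gain along its positive-slope edges is at most $(\text{max slope})\cdot \log r < 1$. But this height gain is a sum of strictly positive integers (differences of $p$-adic valuations across positive-slope edges), hence $\geq 1$ whenever any positive-slope edge is present; so no such edge exists.

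The contradiction then follows from a width comparison. By Theorem~\ref{DumDir}, the positive-slope edges of $P_g$ are supplied entirely by those of $P_{g_2}$, so their combined $x$-width is at most the full width of $P_{g_2}$, namely $\log((n/m)/r) \leq \log(nc_1/(md_1))$ (using $r \geq d_1/c_1$). On the other hand, $P_g$ attains minimum height $0$ at $(\log i, 0)$, so its rightmost vertex $(\log j_+, 0)$ of height $0$ satisfies $\nu_p(a_{j_+} b_{j_+}) = 0$; hence $p \nmid a_{j_+}$, and the hypothesis $p \mid a_j$ for $j \geq md_1/c_1$ forces $j_+ < md_1/c_1$. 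Consequently the positive-slope edges of $P_g$ span $x$-width $\log n - \log j_+ > \log(nc_1/(md_1))$, contradicting the preceding upper bound. The main obstacle is the integrality argument of the third paragraph: leveraging the width bound $\log r$ on $P_{g_1}$ against the strict slope bound inherited from $P_f$ to eliminate every positive-slope edge of $P_{g_1}$ is the crux of the proof; once that is secured, the remainder is routine width bookkeeping.
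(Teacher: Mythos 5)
Your proof is correct and follows the paper's overall strategy: reduce to the case $r \in [d_1/c_1,\, d_2/c_2]$, apply Theorem~\ref{DumDir} to $g = g_1 g_2$, show that $P_{g_1}$ (the Newton log-polygon of the small factor) has no positive-slope edges, and reach a width contradiction. The main difference is in the key step. You rule out positive-slope edges in $P_{g_1}$ by integrality: their total height gain would be $< \log r/\log(d_2/c_2) \leq 1$, yet must be a positive integer if any such edge exists. The paper instead invokes Lemma~\ref{puncte} to show that every positive-slope \emph{segment} of $P_f$ has relative degree $> d_2/c_2 \geq d/c$, so no translate of an edge of $P_{f_1}$ (whose total width is only $\log(d/c)$) can lie on a positive-slope edge of $P_f$. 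Your integrality argument is more elementary and bypasses the log-integral point count. The remaining bookkeeping is equivalent up to complementarity: you bound the positive-slope $x$-span from above by $\log((n/m)/r)$ via $P_{g_2}$ and from below by $\log(n/j_+) > \log(nc_1/(md_1))$ via the hypothesis on the $a_j$, while the paper bounds the non-positive-slope span instead. You also treat $g$ directly rather than first proving the case $b_j = 1$ and then extending; your second paragraph encapsulates the paper's extension argument — that coefficient multiplication fixes the rightmost vertex, fixes the log-integral point $(\log i, 0)$, and can only decrease the rightmost slope — so nothing is lost.
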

\begin{proof}\ We will first consider the case that $b_{j}=1$ for all $j$, so $g(s)=f(s)$. The conditions of the lemma imply that the Newton log-polygon of $f$ contains at least one edge with positive slope, as $p\nmid a_{i}$ and $p\mid a_{n}$, and moreover, at least one edge with slope $\leq 0$, if $i>m$. Besides, all the edges in the Newton log-polygon of $f$ with respect to $p$ have slopes less than $1/\log \frac{d_{2}}{c_{2}}$. Consider now an edge $PQ$ with positive slope, and assume that $(\log x_{1},y_{1})$ and $(\log x_{2},y_{2})$ are two distinct log-integral points on such an edge, with $x_{1}<x_{2}$ and $y_{1}<y_{2}$, say. Then the slope $S$ of this edge satisfies
\[
\frac{1}{\log x_{2}-\log x_{1}}\leq S=\frac{y_{2}-y_{1}}{\log x_{2}-\log x_{1}}<\frac{1}{\log \frac{d_{2}}{c_{2}}},
\]
so 
\begin{equation}\label{ecart}
\frac{x_{2}}{x_{1}}>\frac{d_{2}}{c_{2}}.
\end{equation}
Let us assume to the contrary that $f$ has a factor $f_{1}$ with relative degree $\frac{d}{c}$ in the interval $[\frac{d_{1}}{c_{1}},\frac{d_{2}}{c_{2}}]$. In particular, in view of (\ref{ecart}) we have $\frac{d}{c}\leq \frac{d_{2}}{c_{2}}<\frac{x_{2}}{x_{1}}$, so the relative degree of $f_1$ must be smaller than the relative degree of any segment that belongs to an edge with positive slope. Thus, since all the translates of the edges in the Newton log-polygon of $f_{1}$ with respect to $p$ become either edges or parts of edges in the Newton log-polygon of $f$, we conclude that all these translates must be actually found on edges with slope $\leq 0$ of the Newton log-polygon of $f$. If there exist no edges with slope $\leq 0$ (for instance if $i=m$ and $p\mid a_{j}$ for all $j>m$), then such a factor $f_{1}$ cannot exist. So let us assume that there exist edges with slope $\leq 0$, and among them let $PQ$ be the rightmost one. Thus the sum $L$ of the lengths of the projections on the $x$-axis of all the edges with slopes $\leq 0$ must be at least $\log \frac{d}{c}$. On the other hand, since $p\mid a_{j}$ for all $j\in \{ \frac{md_{1}}{c_{1}},\dots ,n\} $, the $x$-coordinate of $Q$ cannot exceed $\log (\frac{md_{1}}{c_{1}}-1)$, so $L$ satisfies
\[
L\leq \log \left( \frac{md_{1}}{c_{1}}-1\right) -\log m<\log \frac{d_{1}}{c_{1}}\leq \log \frac{d}{c},
\]  
a contradiction. Therefore $f$ cannot have a factor of relative degree $\frac{d}{c}$ in the interval $[\frac{d_{1}}{c_{1}},\frac{d_{2}}{c_{2}}]$. Besides, $g$ cannot have a factor $h$ with relative degree in the interval $[\frac{nc_{2}}{md_{2}},\frac{nc_{1}}{md_{1}}]$, for otherwise the relative degree of its complementary factor $\frac{g}{h}$ would lie in the interval $[\frac{d_{1}}{c_{1}},\frac{d_{2}}{c_{2}}]$.

We will consider now the general case that $b_{m},\dots ,b_{n}$ are arbitrary nonzero elements of $R$ with $p\nmid b_{i}b_{n}$. First of all, we note that $g$ is algebraically primitive too. Now, the fact that $b_{i}$ and $b_{n}$ are not divisible by $p$ shows that the log-integral point $(\log i,0)$ and the rightmost endpoint in the Newton log-polygon of $f$ with respect to $p$ will not be affected by passing from $f$ to $g$, so their positions will remain the same in the Newton log-polygon of $g$. Since all the edges in the Newton log-polygon of $f$ lie above or on the line that contains its rightmost edge $P_{r}Q_{r}$, and $\nu _{p}(a_{k}b_{k})\geq \nu _{p}(a_{k})$ for all $k\in \{ m,\dots ,n\} $, all the edges of the Newton log-polygon of $g$ too will lie above or on the line containing $P_{r}Q_{r}$. Moreover, since the two Newton log-polygons have the same rightmost endpoint $Q_{r}$, we also deduce that the slope of the rightmost edge in the Newton log-polygon of $g$ is less than or equal to the slope of $P_{r}Q_{r}$.
This can also be seen by observing that
\[
\max _{k<n}\frac{\nu _{p}(a_{n})-\nu _{p}(a_{k}b_{k})}{\log n-\log k}\leq \max _{k<n}\frac{\nu _{p}(a_{n})-\nu _{p}(a_{k})}{\log n-\log k},
\]
with the left and the right sides being the slopes of the rightmost edges in the Newton log-polygons of $g$ and $f$, respectively. Therefore $g$ satisfies the same conditions that are imposed on $f$ in the statement of the lemma, so by the first part of the proof we conclude that $g$ too cannot have a factor of relative degree belonging to $[\frac{d_{1}}{c_{1}},\frac{d_{2}}{c_{2}}]\cup [\frac{nc_{2}}{md_{2}},\frac{nc_{1}}{md_{1}}]$. 
\end{proof}

Let us recall the definition of the {\it rational square root} and of the {\it rational floor} of a pair $(m,n)$:
\begin{eqnarray*}
\rho (m,n) & := & \max\left\{ \frac{d}{c}\ :\ d\mid n,\ c\mid m,\  \ {\rm and}\ \ \frac{d}{c}\leq \sqrt{\frac{n}{m}}\thinspace \right\}, \\
\delta (m,n) & := & \thinspace \min\left\{ \frac{d}{c}\ :\ d\mid n,\ c\mid m,\ \  {\rm and}\ \ d>c>0\right\} ,
\end{eqnarray*}
and the fact that, according to Proposition \ref{rationalfloor}, if $\rho (m,n)=1$, then any algebraically primitive Dirichlet polynomial $f(s)=\frac{a_{m}}{m^{s}}+\cdots +\frac{a_{n}}{n^{s}}$ with $a_{m}a_{n}\neq 0$ must be irreducible.

As an immediate consequence of Lemma \ref{calaFilaseta1}, one obtains the following irreducibility conditions for the case that $\rho (m,n)>1$ (which implies that $1<\delta (m,n)\leq\rho (m,n)$).

\begin{theorem}\label{TeoremaCaLaFilaseta1}
Let $f(s)=\frac{a_{m}}{m^{s}}+\cdots +\frac{a_{n}}{n^{s}}$ be an algebraically primitive Dirichlet polynomial with coefficients in a unique factorization domain $R$, $a_{m}a_{n}\neq 0$, and let $p$ be a prime element of $R$. If $p\nmid a_{k}$ for some index $k<m\delta(m,n)$, $p\mid a_{j}$ for all $j\geq m\delta (m,n)$, and 
\begin{equation}\label{PantaMicaDeTot}
\rho (m,n)<\left( \frac{n}{i}\right) ^{\frac{1}{\nu _{p}(a_{n})-\nu _{p}(a_{i})}}\ for\ all\ i<n\ with\ \nu _{p}(a_{i})<\nu _{p}(a_{n}),
\end{equation}
then for any nonzero elements $b_{m},\dots ,b_{n}\in R$ with $p\nmid b_{k}b_{n}$, the Dirichlet polynomial $g(s)=\frac{a_{m}b_{m}}{m^{s}}+\cdots +\frac{a_{n}b_{n}}{n^{s}}$ is irreducible over $Q(R)$.
\end{theorem}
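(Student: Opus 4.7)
The plan is to apply Lemma \ref{calaFilaseta1} with the canonical choice $d_1/c_1=\delta(m,n)$ and $d_2/c_2=\rho(m,n)$, after first disposing of the degenerate case $\rho(m,n)=1$. Note that $g$ has the same support as $f$ because each $b_i$ is nonzero, so $g$ is algebraically primitive with $\deg_{\min}g=m$ and $\deg g=n$; in particular, when $\rho(m,n)=1$, Proposition \ref{rationalfloor} immediately yields the irreducibility of $g$.

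Assume from now on that $\rho(m,n)>1$. By Remark \ref{somearithm}(i) we have $1<\delta(m,n)\leq\rho(m,n)\leq\sqrt{n/m}<n/m$, so we may fix divisors $c_1\mid m$, $d_1\mid n$ realizing $d_1/c_1=\delta(m,n)$ and $c_2\mid m$, $d_2\mid n$ realizing $d_2/c_2=\rho(m,n)$. The two divisibility hypotheses of Lemma \ref{calaFilaseta1} (with the index $i$ there taken to be the index $k$ of the theorem) are precisely those assumed on $f$. The required slope bound on the rightmost edge of the Newton log-polygon of $f$ is obtained by rewriting (\ref{PantaMicaDeTot}) in the equivalent form
\[
\frac{\nu _p(a_n)-\nu _p(a_i)}{\log n-\log i}<\frac{1}{\log\rho(m,n)},
\]
valid for every $i<n$ with $a_i\neq 0$ and $\nu _p(a_i)<\nu _p(a_n)$; for indices $i<n$ with $\nu _p(a_i)\geq\nu _p(a_n)$ the left-hand side is nonpositive, and hence the bound holds trivially. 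Since the slope of the rightmost edge equals the maximum of these quantities over all $i<n$ with $a_i\neq 0$, the third hypothesis of Lemma \ref{calaFilaseta1} is verified. Applying the lemma, no factor of $g$ can have relative degree in $[\delta(m,n),\rho(m,n)]\cup[n/(m\rho(m,n)),n/(m\delta(m,n))]$.

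Finally, assume for contradiction that $g=g_1g_2$ is a factorization into nonconstant Dirichlet polynomials with relative degrees $\alpha,\beta\in S_{rd}(m,n)$ satisfying $\alpha\beta=n/m$. After interchanging factors if necessary we may assume $\alpha\leq\sqrt{n/m}$; then $\alpha\leq\rho(m,n)$ by the very definition of the rational square root, while $\alpha>1$ together with $\alpha$ being a ratio of a divisor of $n$ by a divisor of $m$ forces $\alpha\geq\delta(m,n)$. Hence $\alpha\in[\delta(m,n),\rho(m,n)]$, contradicting the forbidden-interval conclusion obtained from Lemma \ref{calaFilaseta1}. The only genuinely delicate step is the translation of hypothesis (\ref{PantaMicaDeTot}) into the slope bound; all remaining work is bookkeeping, once one observes that the pair of intervals $[\delta(m,n),\rho(m,n)]$ and $[n/(m\rho(m,n)),n/(m\delta(m,n))]$ together captures every possible relative degree that could arise from a nontrivial factorization of $g$.
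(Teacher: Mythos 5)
Your proposal is correct and follows essentially the same route as the paper's proof: dispose of the case $\rho(m,n)=1$ via Proposition \ref{rationalfloor}, translate (\ref{PantaMicaDeTot}) into a bound on the slope of the rightmost edge of the Newton log-polygon, and invoke Lemma \ref{calaFilaseta1} with $d_1/c_1=\delta(m,n)$, $d_2/c_2=\rho(m,n)$, $i=k$ to forbid the only interval of relative degrees a nontrivial factor could occupy. Your writeup is somewhat more explicit than the paper's (the trivial case of indices with $\nu_p(a_i)\geq\nu_p(a_n)$, and the bookkeeping in the final contradiction), but these are elaborations, not a different argument.
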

\begin{proof}\  If $\rho (m,n)=1$, then $f$ and $g$ are irreducible by Proposition \ref{rationalfloor}, hence we may assume that $\rho (m,n)>1$, so 1/$\log \rho (m,n)$ makes sense. Note that $k$ is among the indices $i$ with $\nu _{p}(a_{i})<\nu _{p}(a_{n})$, so the set of indices $i<n$ with $\nu _{p}(a_{i})<\nu _{p}(a_{n})$ in (\ref{PantaMicaDeTot}) is nonempty.
We observe that condition (\ref{PantaMicaDeTot}) forces the slope $S$ of the rightmost edge in the Newton log-polygon of $f$ with respect to $p$ to satisfy the inequality
\[
S=\max_{i<n}\frac{\nu _{p}(a_{n})-\nu _{p}(a_{i})}{\log n-\log i}<\frac{1}{\log \rho (m,n)}.
\] 

According to Lemma \ref{calaFilaseta1} with $\frac{d_1}{c_1}=\delta (m,n)$, $\frac{d_2}{c_2}=\rho (m,n)$, and $i=k$, our assumptions that $p\nmid a_{k}$ and $p\mid a_{j}$ for all $j>m\delta (m,n)$, together with the fact that the rightmost edge in the Newton log-polygon of $f$ has slope smaller than $1/\log \rho (m,n)$, will prevent $g$ to have factors with relative degree in the interval $[\delta (m,n),\rho (m,n)]$. 
On the other hand, as $g$ is algebraically primitive, if it would be reducible, it should obviously have a nonconstant factor, say $h=\frac{e_{c}}{c^{s}}+\cdots +\frac{e_{d}}{d^{s}}$ with $d>c$, and whose relative degree $\frac{d}{c}$ belongs to the interval $[\delta (m,n),\rho (m,n)]$. This completes the proof.
\end{proof}
In particular, for $m=1$ one obtains the following simpler instance of Lemma \ref{calaFilaseta1}.
\begin{lemma}\label{calaFilaseta1m=1}
Let $f(s)=\frac{a_{1}}{1^s}+\frac{a_2}{2^s}+\cdots +\frac{a_{n}}{n^{s}}$ be a Dirichlet polynomial with coefficients in a unique factorization domain $R$, $a_{1}a_{n}\neq 0$, $p$ a prime element of $R$, and $d_{1},d_{2}$ divisors of $n$ with $1<d_1\leq d_2<n$. Suppose that $p\nmid a_{i}$ for some index $i<d_1$, $p\mid a_{j}$ for all $j\geq d_1$, and the rightmost edge in the Newton log-polygon of $f$ with respect to $p$ has slope $<1/\log  d_2$. Then for any nonzero elements $b_{1},\dots ,b_{n}\in R$ with $p\nmid b_{i}b_{n}$, the Dirichlet polynomial $g(s)=\frac{a_{1}b_{1}}{1^s}+\frac{a_2b_2}{2^s}+\cdots +\frac{a_{n}b_{n}}{n^{s}}$ cannot have factors whose degrees belong to $[d_1,d_2]\cup [\frac{n}{d_{2}},\frac{n}{d_{1}}]$.
\end{lemma}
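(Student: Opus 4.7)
The plan is to obtain Lemma \ref{calaFilaseta1m=1} as an immediate specialization of Lemma \ref{calaFilaseta1} by taking $m=1$. The first step is to observe that when $m=1$, the Dirichlet polynomial $f$ is automatically algebraically primitive, since $1\in \mathrm{Supp}(f)$ and any set of positive integers containing $1$ has gcd equal to $1$. Therefore the algebraic primitivity hypothesis of Lemma \ref{calaFilaseta1} is met without needing to assume anything further, and the same is true for $g=\sum \frac{a_ib_i}{i^s}$, which still has a nonzero constant term because $p\nmid b_1\cdot a_1$ is implied by $p\nmid b_i b_n$ together with our setup (more precisely, $a_1b_1\neq 0$ since $b_1\neq 0$ and $a_1\neq 0$).

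The second step is to match the parameters. Since the only positive divisor of $m=1$ is $1$, we are forced to take $c_1=c_2=1$ in Lemma \ref{calaFilaseta1}. With this choice the relative degrees $d_1/c_1$ and $d_2/c_2$ reduce to the integers $d_1$ and $d_2$ respectively, the threshold $md_1/c_1$ becomes $d_1$, and the complementary interval $[nc_2/(md_2),\ nc_1/(md_1)]$ becomes $[n/d_2,\ n/d_1]$. The slope condition $<1/\log(d_2/c_2)$ becomes $<1/\log d_2$, in perfect agreement with the hypothesis. Finally the inequalities $1<d_1/c_1\le d_2/c_2<n/m$ translate into $1<d_1\le d_2<n$, which is exactly what is assumed.

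With these identifications, Lemma \ref{calaFilaseta1} applied to the index $i<d_1$ with $p\nmid a_i$ and to the indices $j\in\{d_1,\dots,n\}$ with $p\mid a_j$ yields that $g$ has no factor whose relative degree lies in $[d_1,d_2]\cup[n/d_2,n/d_1]$. Because $\deg_{\min}g=1$, the relative degree of any factor of $g$ coincides with its degree, so the excluded set of relative degrees is the excluded set of degrees. This finishes the argument. The step that would have been delicate had we proved the statement from scratch, namely controlling the behaviour under the perturbation $a_i\mapsto a_ib_i$ via the fact that this move can only lower the rightmost edge of the Newton log-polygon while fixing its right endpoint, is already carried out inside the proof of Lemma \ref{calaFilaseta1} and is inherited for free through the specialization.
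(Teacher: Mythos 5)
Your proof is correct and matches the paper's intent exactly: the paper presents Lemma~\ref{calaFilaseta1m=1} explicitly as the $m=1$ instance of Lemma~\ref{calaFilaseta1} (``In particular, for $m=1$ one obtains the following simpler instance\ldots''), with no separate proof, and you have supplied the straightforward verification that all hypotheses specialize as claimed. The only slightly garbled phrase is ``$p\nmid b_1\cdot a_1$ is implied by $p\nmid b_i b_n$'' (it is not, and is not needed); what is needed for algebraic primitivity of $g$ is just $a_1b_1\neq 0$, which you immediately supply, and the observation that $\deg_{\min}g=1$ forces $\deg_{\min}h=1$ for any factor $h$, so relative degree equals degree, correctly closes the argument.
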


A slightly weaker instance of Theorem \ref{TeoremaCaLaFilaseta1} is the following result.

\begin{theorem}\label{coroFilaseta1}
Let $f(s)=\frac{a_{m}}{m^{s}}+\cdots +\frac{a_{n}}{n^{s}}$ be an algebraically primitive Dirichlet polynomial with coefficients in a unique factorization domain $R$, $a_{m}a_{n}\neq 0$, let $p$ be a prime element of $R$, and assume that $p\nmid a_{m}$ and $p\mid a_{j}$ for all $j>m$. 
If
\begin{equation}\label{pantamica}
\rho (m,n)<\left( \frac{n}{i}\right) ^{\frac{1}{\nu _{p}(a_{n})-\nu _{p}(a_{i})}}\ for\ all\ i<n\ with\ \nu _{p}(a_{i})<\nu _{p}(a_{n}),
\end{equation}
then for any nonzero elements $b_{m},\dots ,b_{n}\in R$ with $p\nmid b_{m}b_{n}$, the Dirichlet polynomial $g(s)=\frac{a_{m}b_{m}}{m^{s}}+\cdots +\frac{a_{n}b_{n}}{n^{s}}$ is irreducible over $Q(R)$.
\end{theorem}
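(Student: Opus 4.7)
The plan is to deduce Theorem \ref{coroFilaseta1} directly from Theorem \ref{TeoremaCaLaFilaseta1}, by specialising the auxiliary index $k$ appearing there to $k=m$. All the real work has already been done in Theorem \ref{TeoremaCaLaFilaseta1} (which itself rests on Lemma \ref{calaFilaseta1} via a careful bookkeeping of the slopes and widths of the edges of the Newton log-polygon), so what remains is a routine verification that the stronger hypotheses of Theorem \ref{coroFilaseta1} entail the hypotheses of Theorem \ref{TeoremaCaLaFilaseta1} with this specific choice of $k$.

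First I would check that the inequality $k<m\delta(m,n)$ required in Theorem \ref{TeoremaCaLaFilaseta1} is satisfied when $k=m$, i.e.\ that $\delta(m,n)>1$. This is immediate from the definition
\[
\delta(m,n)=\min\Bigl\{\tfrac{d}{c}\,:\,d\mid n,\ c\mid m,\ d>c>0\Bigr\},
\]
since every ratio $d/c$ with $d>c>0$ exceeds $1$, and the set is nonempty because the pair $(c,d)=(1,n)$ qualifies (as $n>m\geq 1$). In particular, $m\delta(m,n)>m$, so the set of indices $\{j:j\geq m\delta(m,n)\}$ is contained in $\{j:j>m\}$.

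With this in hand, the matching of hypotheses is straightforward. The assumption $p\nmid a_m$ of Theorem \ref{coroFilaseta1} is precisely the condition ``$p\nmid a_k$ for some $k<m\delta(m,n)$'' with $k=m$. Because $m\delta(m,n)>m$, the assumption that $p\mid a_j$ for all $j>m$ implies $p\mid a_j$ for all $j\geq m\delta(m,n)$, giving the second hypothesis of Theorem \ref{TeoremaCaLaFilaseta1}. Condition (\ref{pantamica}) is textually identical to condition (\ref{PantaMicaDeTot}), so the third hypothesis is in force as well. Finally, the coefficient restriction $p\nmid b_k b_n$ of Theorem \ref{TeoremaCaLaFilaseta1} becomes $p\nmid b_m b_n$, which is exactly what is assumed here. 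Invoking Theorem \ref{TeoremaCaLaFilaseta1} then yields that $g(s)$ is irreducible over $Q(R)$. There is no genuine obstacle in this argument; the only nontrivial point is the strict inequality $\delta(m,n)>1$, and it follows immediately from the defining condition $d>c>0$.
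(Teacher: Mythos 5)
Your proof is correct and is precisely the specialization the paper has in mind: the paper presents Theorem \ref{coroFilaseta1} as a "slightly weaker instance of Theorem \ref{TeoremaCaLaFilaseta1}" without an explicit argument, and your derivation via $k=m$ — together with the easy observation that $\delta(m,n)>1$ so that $m<m\delta(m,n)$ and hence the hypotheses of Theorem \ref{TeoremaCaLaFilaseta1} are subsumed — supplies exactly the intended reasoning.
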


\begin{remark}\label{rho-Eisenstein1}
We mention here that Theorem \ref{naiveEisenstein} ii) is a special case of Theorem \ref{coroFilaseta1}, for if $p\mid a_{i}$ for each $i=m+1,\dots ,n$, $p\nmid a_{m}$ and $p^{2}\nmid a_{n}$, then condition (\ref{pantamica}) reduces to $\rho (m,n)<\frac{n}{m}$, which is obviously satisfied.
\end{remark}
In particular, Lemma \ref{calaFilaseta1} provides information on the factors for some classes of linear combinations of Dirichlet polynomials. We will only present here the simplest such result for linear combinations of the form $f(s)+p^{k}g(s)$, but one may easily prove more general statements, with less restrictive conditions on the coefficients and on the min-degrees of the Dirichlet polynomial that we are studying.

\begin{proposition}\label{comblinear1} 
Let $f(s)=\frac{a_{1}}{1^s}+\frac{a_{2}}{2^{s}}+\cdots +\frac{a_{t}}{t^{s}}$ and $g(s)=\frac{b_{1}}{1^s}+\frac{b_{2}}{2^{s}}+\cdots +\frac{b_{n}}{n^{s}}$ be two Dirichlet polynomials with coefficients in a unique factorization domain $R$, with $a_{1}a_{t}b_{1}b_{n}\neq 0$. Assume that $t<d\leq D$, with $d,D$ divisors of $n$, $D=\max\{ \delta :\delta \mid n,\ \delta\leq \sqrt{n}\} $, and let $p$ be a prime element of $R$ with $p\nmid a_{1}a_{t}b_{n}$. Then for any positive integer $k$ with
\begin{equation}\label{inegk}
k<\log _{D}\frac{n}{t},
\end{equation}
the Dirichlet polynomial $f(s)+p^{k}g(s)$ cannot have factors with degrees in $[d,D]\cup [\frac{n}{D},\frac{n}{d}]$.
\end{proposition}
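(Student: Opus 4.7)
The plan is to reduce this to a direct application of Lemma~\ref{calaFilaseta1m=1} to the Dirichlet polynomial $h(s):=f(s)+p^{k}g(s)$, taking $d_1=d$, $d_2=D$, and all multipliers in that lemma equal to $1$ (so its ``$g$'' coincides with its ``$f$''). First I would expand $h(s)=\sum_{i=1}^{n}c_i/i^{s}$ with $c_i=a_i+p^{k}b_i$ for $i\le t$ and $c_i=p^{k}b_i$ for $t<i\le n$ (using the convention that $a_i=0$ for $i>t$ and $b_i=0$ whenever $i$ lies outside the support of $g$). The hypothesis $p\nmid a_1 a_t b_n$ together with $k\ge 1$ then immediately yields $\nu_p(c_1)=\nu_p(c_t)=0$ and $\nu_p(c_n)=k$, so $c_1 c_n\ne 0$ and $h$ has the shape required by the lemma. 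Since $d$ and $D$ divide $n$ with $1<d\le D<n$ (the strict upper bound following from $D\le\sqrt n$), and since $t<d$, the index $i=t$ supplies the required index below $d_1$ with $p\nmid c_i$, while every $j\ge d$ satisfies $j>t$, so $c_j=p^{k}b_j$ is divisible by $p$.

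The only nontrivial step -- and the one I expect to be the main obstacle, though still fairly mechanical -- is the upper bound on the slope $S_{\mathrm{right}}$ of the rightmost edge of the Newton log-polygon of $h$ with respect to $p$. Writing
\[
S_{\mathrm{right}}=\max_{i<n,\,c_i\ne 0}\frac{k-\nu_p(c_i)}{\log n-\log i},
\]
I would split the maximum into two cases: for $t<i<n$ with $c_i\ne 0$ one has $\nu_p(c_i)\ge k$, so the ratio is non-positive; for $1\le i\le t$ with $c_i\ne 0$ one has $\nu_p(c_i)\ge 0$ together with $\log(n/i)\ge\log(n/t)$, so the ratio is at most $k/\log(n/t)$. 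Taking the max yields $S_{\mathrm{right}}\le k/\log(n/t)$, and the hypothesis $k<\log_D(n/t)=\log(n/t)/\log D$ then gives $S_{\mathrm{right}}<1/\log D$, which is precisely the slope threshold demanded by Lemma~\ref{calaFilaseta1m=1}.

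Once the slope estimate is established, the lemma applies and forces the conclusion that $h=f+p^{k}g$ cannot have factors of degree in $[d,D]\cup[n/D,n/d]$, which is exactly the statement. Conceptually, the argument is forced by the shape of the perturbation: the low-index portion of $h$ retains the $p$-unit coefficients of $f$ (so the Newton log-polygon passes through $(\log t,0)$), while the high-index tail is entirely a multiple of $p^{k}$ (pinning the right endpoint at height $k$), so the rightmost edge is controlled by the segment from $(\log t,0)$ to $(\log n,k)$. The threshold $k<\log_D(n/t)$ is then exactly the critical condition needed for this controlling segment to have slope below $1/\log D$.
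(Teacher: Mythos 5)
Your proposal is correct and follows essentially the same route as the paper: both reduce to Lemma~\ref{calaFilaseta1m=1} with $d_1=d$, $d_2=D$, $i=t$, after checking $p\nmid c_t$, $p^k\mid c_j$ for $j>t$, and bounding the slope of the rightmost edge by $k/\log(n/t)<1/\log D$. The paper identifies the rightmost edge directly as the segment from $(\log t,0)$ to $(\log n,k)$, whereas you bound the slope via the $\max$ formula, but this is a cosmetic difference.
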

\begin{proof}\ First of all, we observe that $D<\frac{n}{t}$, as $t<D\leq \sqrt{n}\leq \frac{n}{D}$, so the set of positive integers $k$ satisfying (\ref{inegk}) is nonempty. We also observe that $f(s)+p^{k}g(s)$ is algebraically primitive, as its constant term $a_{1}+p^{k}b_{1}$ is nonzero (since $p\nmid a_{1}$). Note that, according to the definition of $\rho (m,n)$, in this case we have $D=\rho (1,n)$.  Now, if we write $f(s)+p^{k}g(s)=\sum _{i=1}^{n}\frac{c_{i}}{i^{s}}$, with $c_{1}c_{n}\ne 0$, we see that $p\nmid c_{t}$, $p^{k}\mid c_{i}$ for $i\geq t+1$, and $p^{k+1}\nmid c_{n}$, as $p\nmid a_{t}b_{n}$. Therefore the rightmost edge in the Newton log-polygon of $f(s)+p^{k}g(s)$ with respect to $p$ has endpoints $(\log t,0)$ and $(\log n,k)$, so has slope $S$ satisfying
\[
S=\frac{k}{\log \frac{n}{t}}<\frac{\log _{D}\frac{n}{t}}{\log \frac{n}{t}}=\frac{1}{\log D}.
\]
By Lemma \ref{calaFilaseta1m=1} with $d_1=d$, $d_2=D$ and $i=t$, we deduce that $f(s)+p^{k}g(s)$ cannot have a factor with degree in the union of intervals $[d,D]\cup [\frac{n}{D},\frac{n}{d}]$.
\end{proof}

Lemma \ref{calaFilaseta1}, Theorem \ref{TeoremaCaLaFilaseta1}, Theorem \ref{coroFilaseta1} and Proposition \ref{comblinear1} have some ``mirrored" results that rely on information on the slope of the leftmost edge in the Newton log-polygon.

\begin{lemma}\label{calaFilaseta2}
Let $f(s)=\frac{a_{m}}{m^{s}}+\cdots +\frac{a_{n}}{n^{s}}$ be an algebraically primitive Dirichlet polynomial with coefficients in a unique factorization domain $R$, $a_{m}a_{n}\neq 0$, let $p$ be a prime element of $R$, $c_{1},c_{2}$ divisors of $m$ and $d_{1},d_{2}$ divisors of $n$ with $1<\frac{d_{1}}{c_{1}}\leq\frac{d_{2}}{c_{2}}<\frac{n}{m}$. Suppose that $p\mid a_{j}$ for all $j\in \{ m,\dots ,\frac{nc_{1}}{d_{1}}\} $, $p\nmid a_{i}$ for some index $i>\frac{nc_{1}}{d_{1}}$, and the leftmost edge in the Newton log-polygon of $f$ with respect to $p$ has slope $>-1/\log \frac{d_{2}}{c_{2}}$. Then for any nonzero elements $b_{m},\dots ,b_{n}\in R$ with $p\nmid b_{i}b_{m}$, the Dirichlet polynomial $g(s)=\frac{a_{m}b_{m}}{m^{s}}+\cdots +\frac{a_{n}b_{n}}{n^{s}}$ cannot have factors whose relative degrees belong to $[\frac{d_{1}}{c_{1}},\frac{d_{2}}{c_{2}}]\cup [\frac{nc_{2}}{md_{2}},\frac{nc_{1}}{md_{1}}]$.
\end{lemma}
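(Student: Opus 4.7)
The plan is to mirror the proof of Lemma \ref{calaFilaseta1} in every respect: the leftmost edge replaces the rightmost, negative slopes replace positive ones, and a lower bound on the slope replaces the upper bound. I would first treat the special case $b_m=\cdots=b_n=1$ (so $g=f$), and then bootstrap to the general case by comparing the two Newton log-polygons.

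For the special case, the hypotheses guarantee that the Newton log-polygon of $f$ contains at least one edge of negative slope, since $\nu_p(a_m)\geq 1$ while $\nu_p(a_i)=0$. For any two distinct log-integral points $(\log x_1,y_1),(\log x_2,y_2)$ lying on a negative-slope edge with $x_1<x_2$ (hence $y_1>y_2$, so $y_1-y_2\geq 1$), the slope $S$ of that edge satisfies
\[
-\frac{1}{\log x_2-\log x_1}\;\geq\; S\;>\;-\frac{1}{\log(d_2/c_2)},
\]
forcing $x_2/x_1>d_2/c_2$; that is, every segment lying on a negative-slope edge has relative degree strictly exceeding $d_2/c_2$. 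Assume now, for contradiction, that $f$ has a factor $f_1$ of relative degree $d/c\in[d_1/c_1,d_2/c_2]$. By Theorem \ref{DumDir}, each edge of the Newton log-polygon of $f_1$ appears as a translate which covers an integer number of consecutive segments on some edge of the same slope in $f$'s polygon. The width of any such translate is bounded by the total width $\log(d/c)\leq\log(d_2/c_2)$ of $f_1$'s polygon, which is strictly smaller than the width of any single segment lying on a negative-slope edge; hence no edge of $f_1$ can land on such a segment. Therefore every edge of $f_1$'s polygon sits on the portion of $f$'s polygon with slope $\geq 0$, whose total width $L$ must satisfy $L\geq\log(d/c)$. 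On the other hand, since $p\mid a_j$ for every integer $m\leq j\leq nc_1/d_1$, the leftmost vertex at which the polygon attains its minimum height has $x$-coordinate at least $\log(nc_1/d_1+1)$, giving
\[
L\;\leq\;\log n-\log\bigl(nc_1/d_1+1\bigr)\;<\;\log(d_1/c_1)\;\leq\;\log(d/c),
\]
a contradiction. A hypothetical factor of relative degree in $[nc_2/(md_2),nc_1/(md_1)]$ is then ruled out by considering its complementary factor, whose relative degree falls in $[d_1/c_1,d_2/c_2]$.

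To handle arbitrary $b_m,\ldots,b_n\in R$ with $p\nmid b_mb_i$, I would verify that $g$ inherits all three hypotheses: it is still algebraically primitive; the leftmost vertex $(\log m,\nu_p(a_m))$ and the point $(\log i,0)$ are both preserved in passing from $f$ to $g$ (because $p\nmid b_m$ and $p\nmid b_i$); the divisibility $p\mid a_jb_j$ for $m\leq j\leq nc_1/d_1$ is automatic; and since $\nu_p(a_jb_j)\geq\nu_p(a_j)$ for every $j$, the leftmost edge of $g$'s polygon can only become less steep than that of $f$, so its slope remains above $-1/\log(d_2/c_2)$. Applying the first part of the argument to $g$ then yields the conclusion. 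The only subtle piece of bookkeeping — and the main (modest) obstacle — is confirming that the divisibility condition on the initial block $\{m,\dots,nc_1/d_1\}$ really forces the $x$-coordinate of the leftmost minimum-height vertex to exceed $\log(nc_1/d_1)$; this is the mirror of the analogous upper bound on the rightmost non-$p$-divisible index used in Lemma \ref{calaFilaseta1}, and it is what drives the contradiction $L<\log(d_1/c_1)$.
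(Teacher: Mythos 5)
Your proof mirrors the paper's argument exactly: handle the case $g=f$ first, use the slope bound together with Theorem \ref{DumDir} to force every edge of $f_1$'s polygon onto the portion of $f$'s polygon of slope $\geq 0$, then bound the total width $L$ of that portion below $\log(d_1/c_1)$ using the divisibility of $a_j$ for $j\le nc_1/d_1$; the reduction from general $b_j$ to $b_j=1$ is also the same. The only cosmetic difference is that you spell out the segment-width comparison a bit more explicitly than the paper does, which is fine.
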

\begin{proof}\ As before, we will first consider the case that $b_{j}=1$ for all $j$, so $g(s)=f(s)$. Our hypotheses imply that the Newton log-polygon of $f$ contains at least one edge with negative slope, as $p\nmid a_{i}$ and $p\mid a_{m}$, and moreover, at least one edge with slope $\geq 0$, if $i<n$. Besides, all the edges in the Newton log-polygon of $f$ with respect to $p$ have now slopes greater than $-1/\log \frac{d_{2}}{c_{2}}$. Let us consider now an edge $PQ$ with negative slope, and assume that $(\log x_{1},y_{1})$ and $(\log x_{2},y_{2})$ are two distinct log-integral points on such an edge, with $x_{1}<x_{2}$ and $y_{1}>y_{2}$, say. Then the slope $S$ of this edge satisfies
\[
\frac{-1}{\log x_{2}-\log x_{1}}\geq S=\frac{y_{2}-y_{1}}{\log x_{2}-\log x_{1}}>\frac{-1}{\log \frac{d_{2}}{c_{2}}},
\]
so as in Lemma \ref{calaFilaseta1} we must have
\begin{equation}\label{ecartNou}
\frac{x_{2}}{x_{1}}>\frac{d_{2}}{c_{2}}.
\end{equation}
We will assume to the contrary that $f$ has a factor $f_{1}$ with relative degree $\frac{d}{c}$ in the interval $[\frac{d_{1}}{c_{1}},\frac{d_{2}}{c_{2}}]$.
In this case (\ref{ecartNou}) forces all the translates of the edges in the Newton log-polygon of $f_{1}$ with respect to $p$ to belong to the edges with slope $\geq 0$ of the Newton log-polygon of $f$. If there exist no edges with slope $\geq 0$ (for instance if $i=n$ and $p\mid a_{j}$ for all $j<n$), then such a factor $f_{1}$ cannot exist. So let us assume that there exist edges with slope $\geq 0$, and among them let now $PQ$ be the leftmost one. Thus the sum $L$ of the lengths of the projections on the $x$-axis of all the edges with slopes $\geq 0$ must be at least $\log \frac{d}{c}$. On the other hand, since $p\mid a_{j}$ for all $j\in \{ m,\dots ,\frac{nc_{1}}{d_{1}}\} $, the $x$-coordinate of $P$ cannot be less than $\log (\frac{nc_{1}}{d_{1}}+1)$, so $L$ must satisfy
\[
L\leq \log n-\log \left( \frac{nc_{1}}{d_{1}}+1\right) <\log \frac{d_{1}}{c_{1}}\leq \log \frac{d}{c},
\]  
a contradiction. Therefore $f$ cannot have a factor $f_1$ of relative degree $\frac{d}{c}$ in the interval $[\frac{d_{1}}{c_{1}},\frac{d_{2}}{c_{2}}]$. Besides, $f$ can neither have a factor $f_1$ of relative degree $\frac{d}{c}$ in the interval $[\frac{nc_{2}}{md_{2}},\frac{nc_{1}}{md_{1}}]$, for otherwise $\frac{f}{f_1}$ would have relative degree in the interval $[\frac{d_{1}}{c_{1}},\frac{d_{2}}{c_{2}}]$.

For the general case that $b_{m},\dots ,b_{n}$ are arbitrary nonzero elements of $R$ with $p\nmid b_{i}b_{m}$, we first note that $g$ is algebraically primitive too, and then we observe that since $b_{i}$ and $b_{m}$ are not divisible by $p$, the log-integral point $(\log i,0)$ and the leftmost endpoint $(\log m,\nu _p(a_m))$ in the Newton log-polygon of $f$ with respect to $p$ will not be affected by passing from $f$ to $g$, so their positions will remain the same in the Newton log-polygon of $g$. Since all the edges in the Newton log-polygon of $f$ lie above or on the line that contains its leftmost edge $P_{l}Q_{l}$, and $\nu _{p}(a_{k}b_{k})\geq \nu _{p}(a_{k})$ for all $k\in \{ m,\dots ,n\} $, the edges of the Newton log-polygon of $g$ too will lie above or on the line containing $P_{l}Q_{l}$. Moreover, since the two Newton log-polygons have the same leftmost endpoint $P_{l}$, we also deduce that the slope of the leftmost edge in the Newton log-polygon of $g$ is greater than or equal to the slope of $P_{l}Q_{l}$.
This can also be seen by observing that
\[
\underset{k>m}{{\rm min}}\frac{\nu _{p}(a_{k}b_{k})-\nu _{p}(a_{m})}{\log k-\log m}\geq \underset{k>m}{{\rm min}}\frac{\nu _{p}(a_{k})-\nu _{p}(a_{m})}{\log k-\log m},
\]
with the left and the right sides being the slopes of the leftmost edges in the Newton log-polygons of $g$ and $f$, respectively. Thus $g$ satisfies the same conditions that are imposed on $f$ in the statement of the lemma, so by the first part of the proof we conclude again that $g$ cannot have a factor of relative degree $\frac{d}{c}$ belonging to $[\frac{d_{1}}{c_{1}},\frac{d_{2}}{c_{2}}]\cup [\frac{nc_{2}}{md_{2}},\frac{nc_{1}}{md_{1}}]$. 
\end{proof}

The mirrored result of Theorem \ref{TeoremaCaLaFilaseta1} is the following application of Lemma \ref{calaFilaseta2}.

\begin{theorem}\label{TeoremaCaLaFilaseta2}
Let $f(s)=\frac{a_{m}}{m^{s}}+\cdots +\frac{a_{n}}{n^{s}}$ be an algebraically primitive Dirichlet polynomial with coefficients in a unique factorization domain $R$, $a_{m}a_{n}\neq 0$, let $p$ be a prime element of $R$, and assume that $p\nmid a_{k}$ for some index $k>\frac{n}{\delta (m,n)}$ and $p\mid a_{j}$ for all $j\leq \frac{n}{\delta (m,n)}$. 
If
\begin{equation}\label{pantamare}
\rho (m,n)<\left( \frac{i}{m}\right) ^{\frac{1}{\nu _{p}(a_{m})-\nu _{p}(a_{i})}}\ for\ all\ i>m\ with\ \nu _{p}(a_{i})<\nu _{p}(a_{m}),
\end{equation}
then for any nonzero elements $b_{m},\dots ,b_{n}\in R$ with $p\nmid b_{m}b_{n}$, the Dirichlet polynomial $g(s)=\frac{a_{m}b_{m}}{m^{s}}+\cdots +\frac{a_{n}b_{n}}{n^{s}}$ is irreducible over $Q(R)$.
\end{theorem}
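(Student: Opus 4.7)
The plan is to mirror the proof of Theorem \ref{TeoremaCaLaFilaseta1} by invoking Lemma \ref{calaFilaseta2} in place of Lemma \ref{calaFilaseta1}, exploiting the left/right symmetry built into the two lemmas. First I observe that $g$ has the same support as $f$ (since every $b_j$ is nonzero), so $g$ is algebraically primitive. If $\rho(m,n)=1$ then Proposition \ref{rationalfloor} immediately forces $g$ to be irreducible, so I may assume $\rho(m,n)>1$, which makes $1/\log\rho(m,n)$ meaningful.

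The next step is to reinterpret condition (\ref{pantamare}) geometrically. Taking logarithms, it becomes
\[
\frac{\nu_p(a_i)-\nu_p(a_m)}{\log i-\log m}>-\frac{1}{\log\rho(m,n)}
\]
for every $i>m$ with $\nu_p(a_i)<\nu_p(a_m)$, while the same inequality is automatic when $\nu_p(a_i)\geq\nu_p(a_m)$ (its left side is nonnegative and its right side is negative). Since the slope of the leftmost edge of the Newton log-polygon of $f$ with respect to $p$ equals $\min_{i>m}(\nu_p(a_i)-\nu_p(a_m))/(\log i-\log m)$, that slope exceeds $-1/\log\rho(m,n)$. I then apply Lemma \ref{calaFilaseta2} with $d_1/c_1=\delta(m,n)$, $d_2/c_2=\rho(m,n)$, and $i=k$, so that $nc_1/d_1=n/\delta(m,n)$: its three hypotheses --- $p\mid a_j$ for all $j\in\{m,\dots,n/\delta(m,n)\}$, $p\nmid a_k$ for some $k>n/\delta(m,n)$, and the slope bound just established --- are precisely the theorem's assumptions, and the requirement $p\nmid b_m b_n$ supplies the needed $p$-coprimality on the $b_j$'s that keep the leftmost endpoint of the Newton log-polygon and the log-integral point at height $0$ in place when passing from $f$ to $g$. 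The lemma then precludes $g$ from having any factor whose relative degree lies in $[\delta(m,n),\rho(m,n)]\cup[n/(m\rho(m,n)),\,n/(m\delta(m,n))]$.

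To finish, I argue as in Theorem \ref{TeoremaCaLaFilaseta1} that every non-trivial factorization would contradict this. Suppose $g=g_1g_2$ with both factors non-constant; because $g$ is algebraically primitive neither $g_i$ can reduce to a monomial, so each has relative degree $>1$, say $d_1/c_1$ and $d_2/c_2$, with $c_1c_2=m$, $d_1d_2=n$. Since their product equals $n/m$, without loss of generality $1<d_1/c_1\leq\sqrt{n/m}$, and by the very definitions of $\delta(m,n)$ and $\rho(m,n)$ this quotient lies in $[\delta(m,n),\rho(m,n)]$, contradicting the previous step; hence $g$ is irreducible. The main technical checkpoint, as in Theorem \ref{TeoremaCaLaFilaseta1}, is aligning the lemma's index $i$ with the hypothesis's $k$ together with the divisibility conditions on the $b_j$; once this is done, the remainder is a symmetric bookkeeping adaptation of the earlier argument.
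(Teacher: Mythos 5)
Your proof follows essentially the same path as the paper's: reduce to $\rho(m,n)>1$, reinterpret (\ref{pantamare}) as the slope bound $S>-1/\log\rho(m,n)$ for the leftmost edge, invoke Lemma~\ref{calaFilaseta2} with $d_1/c_1=\delta(m,n)$, $d_2/c_2=\rho(m,n)$, $i=k$, and close by observing that an algebraically primitive reducible $g$ would have to have a nonconstant factor of relative degree in $[\delta(m,n),\rho(m,n)]$. The paper is terser but the structure is identical.

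One small point is worth flagging. You write that ``the requirement $p\nmid b_m b_n$ supplies the needed $p$-coprimality on the $b_j$'s that keep the leftmost endpoint of the Newton log-polygon and the log-integral point at height $0$ in place when passing from $f$ to $g$''; but the log-integral point at height $0$ that Lemma~\ref{calaFilaseta2} needs to preserve is $(\log k,0)$, and keeping it at height $0$ requires $p\nmid b_k$, not $p\nmid b_n$. Lemma~\ref{calaFilaseta2}, applied with $i=k$, explicitly assumes $p\nmid b_i b_m=b_k b_m$. So the correct hypothesis on the $b_j$'s, mirroring Theorem~\ref{TeoremaCaLaFilaseta1}'s $p\nmid b_k b_n$, should be $p\nmid b_k b_m$; the stated $p\nmid b_m b_n$ only coincides with this when one can take $k=n$. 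This looks like a slip in the paper's statement that you've inherited, but your attribution of the role of $p\nmid b_n$ is inaccurate unless $k=n$, which the hypotheses do not guarantee.
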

\begin{proof}\  As in the case of Theorem \ref{TeoremaCaLaFilaseta1}, we may assume that $\rho (m,n)>1$, so 1/$\log \rho (m,n)$ makes sense. We observe now that $k$ is among the indices $i>m$ with $\nu _{p}(a_{i})<\nu _{p}(a_{m})$, so the set of indices $i>m$ with $\nu _{p}(a_{i})<\nu _{p}(a_{m})$ in (\ref{pantamare}) is nonempty. Next, we observe that by (\ref{pantamare}), the slope $S$ of the leftmost edge in the Newton log-polygon of $f$ with respect to $p$ satisfies
\[
S=\underset{i>m}{{\rm min}}\thinspace\frac{\nu _{p}(a_{i})-\nu _{p}(a_{m})}{\log i-\log m}>\frac{-1}{\log \rho (m,n)}.
\] 
By Lemma \ref{calaFilaseta2} with $\frac{d_1}{c_1}=\delta (m,n)$, $\frac{d_2}{c_2}=\rho (m,n)$, and $i=k$, our assumptions that $p\nmid a_{k}$ for some index $k>\frac{n}{\delta (m,n)}$ and $p\mid a_{j}$ for all $j\leq \frac{n}{\delta (m,n)}$, together with the fact that the leftmost edge in the Newton log-polygon of $f$ has slope $>-1/\log \rho (m,n)$, will prevent $g$ to have factors with relative degree in the interval $[\delta (m,n),\rho (m,n)]$. On the other hand, as $g$ is algebraically primitive, if it would be reducible, it should obviously have a nonconstant factor whose relative degree lies in this interval. This completes the proof.
\end{proof}

In particular, for $m=1$ one obtains the following simpler instance of Lemma \ref{calaFilaseta2}.
\begin{lemma}\label{calaFilaseta2m=1}
Let $f(s)=\frac{a_{1}}{1^s}+\frac{a_2}{2^s}\cdots +\frac{a_{n}}{n^{s}}$ be a Dirichlet polynomial with coefficients in a unique factorization domain $R$, $a_{1}a_{n}\neq 0$, let $p$ be a prime element of $R$, and $d_{1},d_{2}$ divisors of $n$ with $1<d_1\leq d_2<n$. Suppose that $p\mid a_{j}$ for all $j\in \{ 1,\dots ,\frac{n}{d_{1}}\} $, $p\nmid a_{i}$ for some index $i>\frac{n}{d_{1}}$, and the leftmost edge in the Newton log-polygon of $f$ with respect to $p$ has slope $>-1/\log d_2$. Then for any nonzero elements $b_{1},\dots ,b_{n}\in R$ with $p\nmid b_{1}b_{i}$, the Dirichlet polynomial $g(s)=\frac{a_{1}b_{1}}{1^s}+\frac{a_2b_2}{2^s}\cdots +\frac{a_{n}b_{n}}{n^{s}}$ cannot have factors whose degrees belong to $[d_1,d_2]\cup [\frac{n}{d_{2}},\frac{n}{d_{1}}]$.
\end{lemma}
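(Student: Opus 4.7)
The plan is to observe that this is simply the specialization of Lemma~\ref{calaFilaseta2} to the case $m=1$, and to check that the hypotheses match under this specialization. Since the only positive divisor of $m=1$ is $1$ itself, the choice $c_{1}=c_{2}=1$ is forced. Then the quantities $d_{1}/c_{1}$ and $d_{2}/c_{2}$ appearing in Lemma~\ref{calaFilaseta2} collapse to the integers $d_{1}$ and $d_{2}$, and the quantity $nc_{1}/d_{1}$ becomes $n/d_{1}$. The chain of inequalities $1<d_{1}/c_{1}\leq d_{2}/c_{2}<n/m$ becomes $1<d_{1}\leq d_{2}<n$, which is exactly what is assumed here.

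Next I would verify that the hypotheses on the coefficients transfer correctly. The divisibility conditions ``$p\mid a_{j}$ for all $j\in\{m,\dots ,nc_{1}/d_{1}\}$'' and ``$p\nmid a_{i}$ for some $i>nc_{1}/d_{1}$'' become exactly ``$p\mid a_{j}$ for all $j\in\{1,\dots ,n/d_{1}\}$'' and ``$p\nmid a_{i}$ for some $i>n/d_{1}$''. The slope bound $>-1/\log(d_{2}/c_{2})$ becomes $>-1/\log d_{2}$, and the condition $p\nmid b_{i}b_{m}$ becomes $p\nmid b_{i}b_{1}$, both matching the statement. A small point that must be noted explicitly is that Lemma~\ref{calaFilaseta2} requires $f$ (and hence $g$) to be algebraically primitive; but this is automatic here because $1\in\mathrm{Supp}(f)$ (we have $a_{1}\neq 0$, and also $a_{1}b_{1}\neq 0$ since $p\nmid b_{1}$), so $\gcd$ of the support of $f$ and of $g$ equals $1$.

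Finally, under $m=1$, the two excluded intervals $\bigl[d_{1}/c_{1},d_{2}/c_{2}\bigr]\cup\bigl[nc_{2}/(md_{2}),nc_{1}/(md_{1})\bigr]$ specialize to $[d_{1},d_{2}]\cup[n/d_{2},n/d_{1}]$, which is the conclusion we want; relative degrees of factors of $g$ are just degrees, because any factor of $g$ will itself have min-degree $1$ (being a factor of an algebraically primitive Dirichlet polynomial whose smallest index is $1$). So the whole proof is: verify that every hypothesis of Lemma~\ref{calaFilaseta2} is met with the substitutions $m=1$, $c_{1}=c_{2}=1$, and quote the lemma. There is no real obstacle; the only thing to be careful about is the automatic algebraic primitivity and the interpretation of ``relative degree'' as ``degree'' when the min-degree is $1$.
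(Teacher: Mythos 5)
Your proposal is correct and takes exactly the paper's approach: the paper presents Lemma \ref{calaFilaseta2m=1} with no proof, merely noting it is the $m=1$ specialization of Lemma \ref{calaFilaseta2}. Your careful verification that $c_1=c_2=1$ is forced, that algebraic primitivity is automatic from $a_1b_1\neq 0$, and that relative degrees of factors reduce to degrees because every factor of a Dirichlet polynomial with min-degree $1$ itself has min-degree $1$, fills in precisely the routine checks the paper leaves implicit.
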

A slightly weaker instance of Theorem \ref{TeoremaCaLaFilaseta2} is the following result that mirrors Theorem \ref{coroFilaseta1}.
\begin{theorem}\label{coroFilaseta2}
Let $f(s)=\frac{a_{m}}{m^{s}}+\cdots +\frac{a_{n}}{n^{s}}$ be an algebraically primitive Dirichlet polynomial with coefficients in a unique factorization domain $R$, $a_{m}a_{n}\neq 0$, let $p$ be a prime element of $R$, and assume that $p\nmid a_{n}$ and $p\mid a_{j}$ for all $j<n$. 
If
\begin{equation}\label{pantamareOLD}
\rho (m,n)<\left( \frac{i}{m}\right) ^{\frac{1}{\nu _{p}(a_{m})-\nu _{p}(a_{i})}}\ for\ all\ i>m\ with\ \nu _{p}(a_{i})<\nu _{p}(a_{m}),
\end{equation}
then for any nonzero elements $b_{m},\dots ,b_{n}\in R$ with $p\nmid b_{m}b_{n}$, the Dirichlet polynomial $g(s)=\frac{a_{m}b_{m}}{m^{s}}+\cdots +\frac{a_{n}b_{n}}{n^{s}}$ is irreducible over $Q(R)$.
\end{theorem}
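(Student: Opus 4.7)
The plan is to derive this result as a direct specialization of Theorem~\ref{TeoremaCaLaFilaseta2}, taking the distinguished index $k$ in the hypothesis of the latter to be $k := n$. The idea is that the present theorem imposes the most stringent divisibility assumption possible on the interior coefficients---every $a_j$ with $j < n$ is divisible by $p$ while $p \nmid a_n$---so the role of the ``special'' index $k$ in Theorem~\ref{TeoremaCaLaFilaseta2} is forced to be $k = n$, and essentially no work is required beyond matching hypotheses.

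First I would observe that $\delta(m,n) > 1$ always holds, since the minimum in its definition ranges over positive rationals $d/c$ with $d > c > 0$, so every term exceeds $1$. Hence $n/\delta(m,n) < n$, and the choice $k := n$ automatically satisfies the required inequality $k > n/\delta(m,n)$. Next I would verify the remaining hypotheses of Theorem~\ref{TeoremaCaLaFilaseta2} one by one: the condition $p \nmid a_k$ becomes $p \nmid a_n$, which is given; the condition $p \mid a_j$ for all $j \leq n/\delta(m,n)$ is immediate from the stronger assumption $p \mid a_j$ for all $j < n$, because $n/\delta(m,n) < n$; condition~(\ref{pantamareOLD}) is literally identical to~(\ref{pantamare}); and the multiplier hypothesis $p \nmid b_m b_n$ is also the same in both statements.

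Applying Theorem~\ref{TeoremaCaLaFilaseta2} then yields that $g$ is irreducible over $Q(R)$, completing the argument. I do not expect any real obstacle here: the statement really is a specialization, and the only substantive verification beyond literal matching of hypotheses is the elementary inequality $n/\delta(m,n) < n$ observed in the preceding paragraph. It is worth noting that this specialization is analogous to the way Theorem~\ref{coroFilaseta1} was deduced from Theorem~\ref{TeoremaCaLaFilaseta1} in the ``rightmost edge'' setting, which provides further confidence that the reduction is the intended route.
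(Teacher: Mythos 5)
Your proposal is correct and is exactly the intended route: the paper itself introduces Theorem~\ref{coroFilaseta2} as ``a slightly weaker instance of Theorem~\ref{TeoremaCaLaFilaseta2}'' (just as Theorem~\ref{coroFilaseta1} is presented as a weaker instance of Theorem~\ref{TeoremaCaLaFilaseta1}), and supplies no further proof, so the specialization with $k = n$ is precisely what the author has in mind. Your one substantive check---that $\delta(m,n) > 1$, hence $n/\delta(m,n) < n$, so that $k = n$ satisfies $k > n/\delta(m,n)$ and the hypothesis $p \mid a_j$ for all $j \leq n/\delta(m,n)$ follows from $p \mid a_j$ for all $j < n$---is both necessary and correctly carried out.
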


\begin{remark}\label{rho-Eisenstein2}
We note that Theorem \ref{naiveEisenstein} i) is a special case of Theorem \ref{coroFilaseta2}, for if $p\mid a_{i}$ for each $i=m,\dots ,n-1$, $p\nmid a_{n}$ and $p^{2}\nmid a_{m}$, then condition (\ref{pantamareOLD}) reduces to $\rho (m,n)<\frac{n}{m}$, which obviously holds.
\end{remark}
The final result in this section is the following one, that mirrors Proposition \ref{comblinear1}:
\begin{proposition}\label{comblinear2} 
Let $f(s)=\frac{a_{t}}{t^{s}}+\cdots +\frac{a_{n}}{n^{s}}$ and $g(s)=\frac{b_{1}}{1^s}+\frac{b_{2}}{2^{s}}+\cdots +\frac{b_{n}}{n^{s}}$ be two Dirichlet polynomials with coefficients in a unique factorization domain $R$, with $a_{t}a_{n}b_{1}b_{n}\neq 0$. Assume that $t>\frac{n}{d}\geq \frac{n}{D}$, with $d,D$ divisors of $n$, $D=\max\{ \delta:\delta\mid n,\ \delta\leq \sqrt{n}\} $, and let $p$ be a prime element of $R$ with $p\nmid a_{t}a_{n}b_{1}$. Then for any positive integer $k$ with
\begin{equation}\label{inegk2}
k<\log _{D}t,
\end{equation}
the Dirichlet polynomial $f(s)+p^{k}g(s)$ cannot have factors with degrees in $[d,D]\cup [\frac{n}{D},\frac{n}{d}]$.
\end{proposition}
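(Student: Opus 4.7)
The plan is to mirror the proof of Proposition \ref{comblinear1}, replacing the rightmost edge of the Newton log-polygon by the leftmost edge and invoking Lemma \ref{calaFilaseta2m=1} in place of Lemma \ref{calaFilaseta1m=1}. I would begin by writing $h(s) := f(s)+p^k g(s) = \sum_{i=1}^n \frac{c_i}{i^s}$ explicitly, observing that $c_i = p^k b_i$ for $i < t$ while $c_i = a_i + p^k b_i$ for $i \geq t$. The hypothesis $p \nmid a_t a_n b_1$ then gives $\nu_p(c_1)=k$ (so $c_1 \neq 0$), $p \nmid c_t$, and $p \nmid c_n$, while $p \mid c_j$ for every $j \leq n/d$ because $n/d < t$. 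Before applying the lemma I would also verify that $1 < d \leq D < n$: since $t \leq n$ the inequality $d > n/t$ forces $d \geq 2$, and $D \leq \sqrt{n} < n$ for $n \geq 2$; the same inequalities ensure $\log_D t > 0$, so the set of $k$ satisfying (\ref{inegk2}) is nonempty.

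The crux is to establish that the leftmost edge of the Newton log-polygon of $h$ with respect to $p$ has slope $S > -1/\log D$. Its left endpoint is $(0,k) = (\log 1, \nu_p(c_1))$, so
\[
S \;=\; \min_{j>1} \frac{\nu_p(c_j)-k}{\log j}.
\]
For $j < t$ the quotient is nonnegative because $\nu_p(c_j) \geq k$. For $j \geq t$ it is bounded below by $-k/\log j \geq -k/\log t$, and the hypothesis $k < \log_D t$ rewrites as $k/\log t < 1/\log D$, yielding $S > -1/\log D$.

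Having secured the slope bound, I would apply Lemma \ref{calaFilaseta2m=1} to $h$ with $d_1 = d$, $d_2 = D$, distinguished index $i = t$ (which satisfies $t > n/d_1$ and $p \nmid c_t$), and all multipliers equal to $1$; the lemma then forbids $h$ from having factors whose degrees lie in $[d,D]\cup [n/D, n/d]$. I anticipate no serious obstacle beyond ensuring that the sign of the slope estimate falls on the correct side — the inequality is flipped from Proposition \ref{comblinear1} because the argument now hinges on the leftmost rather than the rightmost edge.
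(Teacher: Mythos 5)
Your proof is correct and follows the same strategy as the paper: write $f+p^kg = \sum c_i/i^s$, observe $\nu_p(c_1)=k$, $p\nmid c_t$, $p\mid c_j$ for $j\le n/d$, bound the slope of the leftmost edge of the Newton log-polygon below by $-k/\log t > -1/\log D$, and invoke Lemma \ref{calaFilaseta2m=1}. The only cosmetic difference is that the paper identifies the leftmost edge exactly as the segment from $(0,k)$ to $(\log t,0)$ with slope exactly $-k/\log t$, whereas you bound its slope from below by the same quantity without pinning down its right endpoint; both arguments are valid and lead to the same estimate.
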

\begin{proof}\ Here we first observe that since $D\leq \sqrt{n}$, we have $D<t$, as $t>\frac{n}{d}\geq \frac{n}{D}\geq D$, so the set of positive integers $k$ satisfying (\ref{inegk2}) is nonempty. We also observe that $f(s)+p^{k}g(s)$ is algebraically primitive, as its constant term $p^{k}b_{1}$ is nonzero.  If we write now $f(s)+p^{k}g(s)=\sum _{i=1}^{n}\frac{c_{i}}{i^{s}}$, with $c_{1}c_{n}\ne 0$ ($c_{n}=a_n+p^kb_n\neq 0$ as $p\nmid a_{n}$), we see that $p\nmid c_{t}$, $p^{k}\mid c_{i}$ for $i\leq t-1$, and $p^{k+1}\nmid c_{1}$, as $p\nmid a_{t}b_{1}$. Therefore the leftmost edge in the Newton log-polygon of $f(s)+p^{k}g(s)$ with respect to $p$ has endpoints $(0,k)$ and $(\log t,0)$, so has slope $S$ satisfying
\[
S=\frac{-k}{\log t}>\frac{-\log _{D}t}{\log t}=\frac{-1}{\log D}.
\]
The conclusion follows now by Lemma \ref{calaFilaseta2m=1} with $d_1=d$, $d_2=D$ and $i=t$.
\end{proof}

\section{Irreducibility criteria that use two or more $p$-adic valuations}\label{moreprimes}

Using a Newton polygon method, we provided in \cite{BoncioCommAlg} several irreducibility criteria that use simultaneously information on the divisibility of the coefficients of a polynomial with respect to two or more prime numbers. Two such results are the following ones.
\medskip

{\bf Theorem A.} \ {\em
Let $f(X)\in\mathbb{Z}[X]$ be a polynomial of degree $n$, let $k\geq 2$, and let $p_{1},\dots ,p_{k}$ be pairwise distinct prime numbers. For $i=1,\dots , k$ let us denote by $w_{i,1},\dots ,w_{i,n_{i}}$ the widths of all the segments of  the Newton polygon of $f$ with respect to $p_{i}$, and by $\mathcal{S}_{p_{i}}$ the set of all the integers in the interval  $(0,\lfloor \frac{n}{2}\rfloor ]$ that may be written as a linear combination of $w_{i,1},\dots ,w_{i,n_{i}}$ with coefficients $0$ or $1$. If $\mathcal{S}_{p_{1}}\cap\cdots \cap\mathcal{S}_{p_{k}}=\emptyset$,
then $f$ is irreducible over $\mathbb{Q}$.
}
\medskip

We mention here that the integers $w_{i,1},\dots ,w_{i,n_{i}}$ are not necessarily distinct. 

\medskip

{\bf Theorem B.}\ {\em 
Let $f(X)\in\mathbb{Z}[X]$ be a polynomial of degree $n$, let $k\geq 2$, let
$p_{1},\dots ,p_{k}$ be pair-wise distinct prime numbers, and for $i=1,\dots ,k$ let $d_{p_{i}}$ denote the greatest common divisor of the widths of the segments of  the Newton polygon of $f$ with respect to $p_{i}$. Then the degree of any factor of $f$ is divisible by $n/\gcd(\frac{n}{d_{p_{1}}},\dots ,\frac{n}{d_{p_{k}}})$. In particular, if $\gcd(\frac{n}{d_{p_{1}}},\dots ,\frac{n}{d_{p_{k}}})=1$,
then $f$ is irreducible over $\mathbb{Q}$.
}
\medskip

An immediate application of Theorem \ref{calaDumas} is the following analogue of Theorem A, that allows one to combine information on the Newton log-polygons of a Dirichlet polynomial $f$ with respect to different prime elements in $R$ in order to study its irreducibility. By contrast to its simple proof, this result will provide us with a lot of unexpected corollaries, as we shall see later on. Before stating it, we will recall the definition of $S^{1}_{rd}(m,n)$:
\begin{equation*}
S^{1}_{rd}(m,n) := \left\{ \frac{d}{c}\ :\ d\mid n,\ c\mid m,\  \ {\rm and}\ \ 1<\frac{d}{c}\leq \sqrt{\frac{n}{m}}\thinspace \right\},
\end{equation*}
and the fact that for a segment $AB$ of the Newton log-polygon of $f$ with endpoints $A=(\log i, x_{i})$ and $B=(\log j, x_{j})$ with $i<j$, the rational number $\frac{j}{i}>1$ is called the {\it relative degree} of $AB$.
\begin{theorem}\label{Spuri}
Let $f(s)=\frac{a_{m}}{m^{s}}+\cdots +\frac{a_{n}}{n^{s}}$ be an algebraically primitive Dirichlet polynomial with coefficients in a unique factorization domain $R$, $a_{m}a_{n}\neq 0$, and $p_{1},\dots ,p_{k}$ prime elements in $R$. For each $i=1,\dots , k$ let $q_{i,1},\dots ,q_{i,n_{i}}$ be the relative degrees of all the segments of the Newton log-polygon of $f$ with respect to $p_{i}$, repetitions allowed, and define
\[
\mathcal{S}_{p_{i}}=S^{1}_{rd}(m,n) \cap \left\{ q_{i,1}^{j_{1}}\cdots q_{i,n_{i}}^{j_{n_{i}}}\ :\ j_{1},\dots ,j_{n_{i}}\in \{0,1\}\right\} .
\]
If $\mathcal{S}_{p_{1}}\cap\cdots \cap\mathcal{S}_{p_{k}}=\emptyset$,
then $f$ is irreducible over $Q(R)$.
\end{theorem}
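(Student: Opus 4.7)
The plan is to argue by contradiction. I would suppose $f=gh$ for nonconstant $g,h\in DP(R)[s]$, set $c_g:=\deg_{\min}g$, $d_g:=\deg g$, and define $c_h,d_h$ analogously, so $c_gc_h=m$ and $d_gd_h=n$. Algebraic primitivity of $f$ rules out either factor being a single monomial (else a common divisor $>1$ would divide every index in the support of $f$), hence $c_g<d_g$ and $c_h<d_h$. Since $(d_g/c_g)(d_h/c_h)=n/m$, after relabelling I may assume $1<d_g/c_g\leq\sqrt{n/m}$; combined with $d_g\mid n$ and $c_g\mid m$ this places $d_g/c_g\in S^{1}_{rd}(m,n)$. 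The goal is then to show $d_g/c_g\in\mathcal{S}_{p_i}$ for every $i$, contradicting $\bigcap_i\mathcal{S}_{p_i}=\emptyset$.

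For a fixed $i\in\{1,\dots,k\}$, I would let $E_1,\dots,E_\ell$ denote the edges, with strictly increasing slopes, of the Newton log-polygon of $f$ with respect to $p_i$. Lemma \ref{puncte} exhibits the log-integral points of each $E_j$ as an arithmetic progression of $s_j+1$ points, subdividing $E_j$ into $s_j$ segments of a common relative degree $R_j$; thus $R_j$ appears exactly $s_j$ times in the multiset $\{q_{i,1},\dots,q_{i,n_i}\}$. Next I would invoke Theorem \ref{DumDir}: every edge of the Newton log-polygon of $g$ with respect to $p_i$, translated by a log-integral vector, becomes a sub-segment of the unique $E_j$ with matching slope, and the translated $g$- and $h$-contributions concatenate to fill $E_j$. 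The critical observation is that the integer $\delta$ of Lemma \ref{puncte} depends only on the differences of the endpoint coordinates, hence is invariant under translation by a log-integral vector; consequently the number of segments of any edge is preserved by such a translation. Writing $s_{g,j}$ for the number of segments of $g$'s edge of slope equal to that of $E_j$ (with $s_{g,j}:=0$ if no such edge exists), its translated image then has exactly $s_{g,j}$ segments and sits inside $E_j$, so it must span exactly $s_{g,j}$ consecutive segments of $E_j$. In particular $s_{g,j}\leq s_j$, and the relative degree of the corresponding edge of $g$ equals $R_j^{s_{g,j}}$.

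Multiplying over $j$, and using that the relative degree of $g$ is the product of the relative degrees of its edges, I would obtain
\[
\frac{d_g}{c_g}=\prod_{j=1}^{\ell}R_j^{s_{g,j}}.
\]
Since each $R_j$ appears $s_j\geq s_{g,j}$ times in the list $q_{i,1},\dots,q_{i,n_i}$, one can select $s_{g,j}$ distinct indices $t$ with $q_{i,t}=R_j$ for each $j$, thereby expressing $d_g/c_g$ as $q_{i,1}^{j_1}\cdots q_{i,n_i}^{j_{n_i}}$ with $j_t\in\{0,1\}$. Combined with $d_g/c_g\in S^{1}_{rd}(m,n)$, this yields $d_g/c_g\in\mathcal{S}_{p_i}$ for every $i$, the desired contradiction. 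The step I expect to require the most care is the translation-invariance of the segment count in Lemma \ref{puncte} together with the identification that a translated $g$-edge lying inside $E_j$ must, being a log-integral sub-segment, align precisely with a run of consecutive segments of $E_j$; everything else is direct bookkeeping with Theorem \ref{DumDir} and the definitions of $S^{1}_{rd}(m,n)$ and $\mathcal{S}_{p_i}$.
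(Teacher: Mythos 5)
Your argument is correct and follows essentially the same route as the paper: assume a nontrivial factorization, take the factor $g$ whose relative degree lies in $(1,\sqrt{n/m}]$ (hence in $S^1_{rd}(m,n)$), and then use Theorem \ref{DumDir} to show that $d_g/c_g$ is a product of distinct entries from each list $q_{i,1},\dots,q_{i,n_i}$, placing it in every $\mathcal{S}_{p_i}$ and forcing a contradiction. The one place where you supply more detail than the paper — the translation-invariance of the $\delta$ from Lemma \ref{puncte} and the resulting alignment of the translated $g$-edge with a run of consecutive segments of $E_j$ — is a welcome elaboration of a step the paper states more tersely, and your justification is sound: since translation by a log-integral vector preserves all the valuation differences appearing in $\delta$, and since Lemma \ref{puncte} characterizes \emph{all} log-integral points on a segment of that line, the log-integral points of the translated $g$-edge coincide exactly with the log-integral points of $E_j$ falling in that sub-interval, so its segments really are segments of $E_j$.
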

Here the relative degrees $q_{i,1},\dots ,q_{i,n_{i}}$ of the segments are not necessarily distinct, and are considered with ``their multiplicities'', since an edge may consist of more than a single segment, and all the segments on an edge have the same relative degree. We also recall the fact that $q_{i,1},\dots ,q_{i,n_{i}}$ are rational numbers greater than $1$, that satisfy the relation $q_{i,1}\cdots q_{i,n_{i}}=\frac{n}{m}$ for each $i=1,\dots ,k$.

\begin{proof}\ We first note that $1\not\in \mathcal{S}_{p_{i}}$, since $1\not\in S^{1}_{rd}(m,n)$, so in the definition of $\mathcal{S}_{p_{i}}$ we may assume that at least one of the exponents $j_{1},\dots ,j_{n_{i}}$ is nonzero. Let us assume to the contrary that $f$ is reducible, and let $g(s)=\frac{b_{c}}{c^{s}}+\cdots +\frac{b_{d}}{d^{s}}$ be a nonconstant factor of $f$ with minimum relative degree $\frac{d}{c}$, so $c\mid m$, $d\mid n$ and $\frac{d}{c}\in S^{1}_{rd}(m,n)$. Let us fix now an integer $i\in \{1,\dots ,k\}$, and let us look at the Newton log-polygon of $f$ with respect to $p_{i}$. In view of Theorem \ref{DumDir}, $\log d-\log c$ must be the sum of some of the widths $\log q_{i,1},\dots , \log q_{i,n_{i}}$ of the segments in the Newton log-polygon of $f$ with respect to $p_i$ (in such a sum, each $\log q_{i,j}$ may enter at most once, but two such terms entering the sum, say $\log q_{i,j_{1}}$ and $\log q_{i,j_{2}}$, may be equal, if they are the widths of two segments lying on the same edge). In other words, $\frac{d}{c}$ must be of the form
\[
\frac{d}{c}=q_{i,1}^{j_{1}}\cdots q_{i,n_{i}}^{j_{n_{i}}},
\]
for some exponents $j_{1},\dots ,j_{n_{i}}\in \{0,1\}$, not all zero, so $\frac{d}{c}$ must belong to $\mathcal{S}_{p_{i}}$. Since $i$ was chosen arbitrarily, $\frac{d}{c}$ must belong to each $\mathcal{S}_{p_{i}}$, $i=1,\dots ,k$, which obviously cannot hold, as $\mathcal{S}_{p_{1}}\cap\cdots \cap\mathcal{S}_{p_{k}}=\emptyset$, according to our hypotheses. Therefore $f$ must be irreducible over $Q(R)$, and this completes the proof of the theorem. \end{proof}

Another immediate application of Theorem \ref{calaDumas} that allows one to combine information on the Newton log-polygons of $f$ with respect to different prime elements in $R$, is the following analogue of Theorem B.

\begin{theorem}\label{gcduri}
Let $f$ be an algebraically primitive Dirichlet polynomial with coefficients in a unique factorization domain $R$, and let $p_{1},\dots ,p_{k}$ be prime elements in $R$. For each $i=1,\dots , k$ let $q_{i,1},\dots ,q_{i,n_{i}}$ be the relative degrees of all the segments of the Newton log-polygon of $f$ with respect to $p_{i}$, and let $d_{i}$ be the greatest common divisor of the numerators of the rationals $q_{i,1},\dots ,q_{i,n_{i}}$ written in lowest terms. If ${\rm lcm}(d_{1},\dots ,d_{k})=\deg f$,
then $f$ is irreducible over $Q(R)$.
\end{theorem}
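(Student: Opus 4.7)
The argument proceeds by contradiction: suppose $f = g\cdot h$ with $g, h$ nonconstant Dirichlet polynomials in $DP(R)[s]$. Because $f$ is algebraically primitive, neither factor can reduce to a single monomial $c/i^s$ with $i>1$ (otherwise $i$ would divide every index in the support of $f$), so both $g$ and $h$ have relative degree strictly greater than $1$ and a nonempty Newton log-polygon with respect to every $p_i$. The heart of the argument is the following sub-claim, analogous to the divisibility property underlying Theorem B in the polynomial setting: for each $i\in\{1,\dots,k\}$, the integer $d_i$ divides the numerator $N_g$ of $r_g := \deg g/\deg_{\min} g$ written in lowest terms. Granting this, $\operatorname{lcm}(d_1,\dots,d_k)\mid N_g$; since $N_g \leq \deg g \leq \deg f$ and the hypothesis forces $\operatorname{lcm}(d_1,\dots,d_k) = \deg f$, we obtain $\deg g = \deg f$ and hence $\deg h = 1$, which makes $h$ a nonzero constant and contradicts the assumption.

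\textbf{Proof of the sub-claim.} Fix $p_i$ and an edge $E$ of the Newton log-polygon of $f$ with respect to $p_i$, of slope $S$, with $k$ segments and endpoints $(\log A, y_A), (\log B, y_B)$, so that each segment has common relative degree $q = (B/A)^{1/k} = \alpha/\beta$ in lowest terms. For $S \neq 0$, a computation based on Lemma \ref{puncte} shows that the log-integral points on any line of slope $S$ in the plane form a geometric progression in $x$ of common ratio exactly $q$, independently of the intercept. Combined with Theorem \ref{DumDir}, which supplies endpoints $(\log A^g,\cdot), (\log B^g,\cdot)$ of $g$'s edge at slope $S$ (when present) satisfying $A^gA^h = A$ and $B^gB^h = B$, this gives $B^g/A^g = q^{k^g}$ for a positive integer $k^g$. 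Taking the product over all non-horizontal slopes, the contribution of $g$'s non-horizontal edges to $r_g$ is
\[
\prod\nolimits_j (\alpha_j/\beta_j)^{k^g_j},
\]
and since $d_i \mid \alpha_j$ while $\gcd(\alpha_j, \beta_j) = 1$ implies $\gcd(d_i, \beta_j) = 1$, the factor $d_i$ is preserved in the numerator after reduction to lowest terms. If $f$'s polygon also has a horizontal edge, its segment relative degrees are the successive ratios $(l+1)/l$ for $A \leq l < B$; if $B - A \geq 2$ two coprime consecutive integers appear among the numerators and $d_i = 1$ already, while the tight case $B = A+1$ is settled by a short Diophantine argument using $\gcd(A, A+1) = 1$ showing that the horizontal edge is contributed in full by exactly one of $g, h$, and in either case $d_i \mid N_g$ holds (using $\gcd(d_i, A) = 1$ when the horizontal contribution comes from $g$).

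\textbf{Main obstacle.} The delicate step is the lattice-rigidity claim for non-horizontal slopes: verifying that consecutive log-integral points on any line of slope $S$ sit at common multiplicative $x$-step $q$, independently of the intercept. Setting $\Delta := (y_B - y_A)/k$, this reduces (via Lemma \ref{puncte}) to showing that $q^{m/\Delta}$ is rational only when $\Delta \mid m$, which forces the minimum $x$-step to equal $q$; the defining gcd of $k$ in Lemma \ref{puncte} is precisely what encodes this coprimality between $\Delta$ and the $p$-adic data attached to $\alpha/\beta$. Once this rigidity is secured the remaining divisibility bookkeeping and the horizontal-edge handling are routine.
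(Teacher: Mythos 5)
Your argument is correct in outline and, at bottom, follows the same route as the paper's own proof: assume $f=g\cdot h$ with both factors nonconstant, show that each $d_i$ divides a quantity bounded above by $\deg g$, and conclude $\deg g=\deg f$ from the lcm hypothesis. The paper does this much more directly, though. Taking $g$ of degree $d$ and min-degree $c$, it invokes Theorem~\ref{DumDir} together with Lemma~\ref{puncte} (as in the proof of Theorem~\ref{Spuri}: the translates of $g$'s edges have log-integral endpoints lying on $f$'s edges, which Lemma~\ref{puncte} identifies as segment boundaries) to write $d/c=q_{i,1}^{j_1}\cdots q_{i,n_i}^{j_{n_i}}$ with each $j_l\in\{0,1\}$ and not all zero; since $d_i$ divides every numerator $\alpha_j$ and hence is coprime to every denominator $\beta_j$, clearing denominators in $d\prod\beta_j^{j_j}=c\prod\alpha_j^{j_j}$ gives $d_i\mid d$ at once. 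That single step absorbs the horizontal numerators $l+1$ and the sloped numerators $\alpha_j$ uniformly, so there is no need to pass to the reduced numerator $N_g$ of $r_g$, to prove a geometric-progression ``lattice rigidity'' statement about arbitrary lines of fixed slope, or to split horizontal edges off as a special case.

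Your ``lattice rigidity'' claim is in fact true (with $q=\alpha/\beta$ in lowest terms and $S=\mu/\log q$, Lemma~\ref{puncte} forces $\gcd(\mu,\{\nu_p(\alpha)\}_p,\{\nu_p(\beta)\}_p)=1$, from which any two log-integral points on a line of slope $S$ have $y$-difference a multiple of $\mu$ and hence $x$-ratio an integer power of $q$), but you leave it open — your ``Main obstacle'' paragraph concedes exactly this — and it is more than the theorem needs: you only ever apply it to edges of $g$ whose translates already sit on $f$'s edges, where Lemma~\ref{puncte} applied to $f$'s edge directly places the translated endpoints at segment boundaries. Closing the rigidity gap, handling the horizontal edge via the separate Diophantine case split, and then carrying $d_i$ through the reduction of $r_g$ to lowest terms reproduces, with considerable extra bookkeeping, what the paper's one-line divisibility argument achieves.
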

\begin{proof}\ Assume that $f(s)=\frac{a_{m}}{m^{s}}+\cdots +\frac{a_{n}}{n^{s}}$, and let $g(s)=\frac{b_{c}}{c^{s}}+\cdots +\frac{b_{d}}{d^{s}}$ be a nonconstant factor of $f$ of minimum degree. Obviously $c\mid m$ and $d\mid n$.
If we fix an integer $i\in \{1,\dots ,k\}$ and look at the Newton log-polygon of $f$ with respect to $p_{i}$, then by Theorem \ref{DumDir}, $\log d-\log c$ must be the sum of some of the widths $\log q_{i,1},\dots , \log q_{i,n_{i}}$ of the segments, so we deduce as in the proof of Theorem \ref{Spuri} that
\[
\frac{d}{c}=q_{i,1}^{j_{1}}\cdots q_{i,n_{i}}^{j_{n_{i}}},
\]
for some exponents $j_{1},\dots ,j_{n_{i}}\in \{0,1\}$, not all zero. On the other hand, since $q_{i,1},\dots q_{i,n_{i}}$ are written in lowest terms, $d_{i}$ must be coprime to all the denominators of $q_{i,1},\dots q_{i,n_{i}}$, so $d_{i}$ must divide $d$. Since this must hold for all $i=1,\dots ,k$, $d$ must be a common multiple of $d_{1},\dots ,d_{k}$, and hence a multiple of ${\rm lcm}(d_{1},\dots ,d_{k})$. Therefore, if ${\rm lcm}(d_{1},\dots ,d_{k})=n$, $d$ must be equal to $n$, forcing $f$ to be irreducible. This completes the proof. \end{proof}

In particular, for Dirichlet polynomials $f(s)=\frac{a_{m}}{m^{s}}+\cdots +\frac{a_{n}}{n^{s}}$ with $\gcd (m,n)=1$ we have the following result, which allows us to extend Theorem \ref{calaDumas} by using divisibility conditions for the coefficients of $f$ with respect to any number of prime elements in $R$.
\begin{theorem}\label{DumasMultiPrimes}
Let $f(s)=\frac{a_{m}}{m^{s}}+\cdots +\frac{a_{n}}{n^{s}}$ be an algebraically primitive Dirichlet polynomial with coefficients in a unique factorization domain $R$, with $a_{m}a_{n}\neq 0$ and $\gcd (m,n)=1$, and let
$p_{1},\dots ,p_{t}$ be prime elements of $R$. Let $q_{1},\dots ,q_{k}$ be all the prime factors of $m\cdot n$. If
\smallskip

i) \ \thinspace $\nu _{p_{j}}(a_{m})\neq \nu _{p_{j}}(a_{n})$, for $j=1,\dots ,t$,

ii) $\left ( \frac{n}{i}\right ) ^{\nu _{p_{j}}(a_{i})-\nu _{p_{j}}(a_{m})}>\left ( \frac{m}{i}\right ) ^{\nu _{p_{j}}(a_{i})-\nu _{p_{j}}(a_{n})}$ for $m<i<n$, $j=1,\dots ,t$,

iii) $\gcd (\nu _{p_{1}}(a_{n})-\nu _{p_{1}}(a_{m}),\dots ,\nu _{p_{t}}(a_{n})-\nu _{p_{t}}(a_{m}),\nu _{q_{1}}(n)-\nu _{q_{1}}(m),\dots , \nu _{q_{k}}(n)-\nu _{q_{k}}(m))=1$,
\smallskip

\noindent then $f$ is irreducible over $Q(R)$.
\end{theorem}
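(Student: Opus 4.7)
The plan is to combine the geometric single-prime argument behind Theorem~\ref{calaDumas} with a joint divisibility argument across the primes $p_1,\dots,p_t$, leveraging Theorem~\ref{DumDir} and Lemma~\ref{puncte}. First, I will verify that, for each fixed $j$, conditions~(i) and~(ii) place every intermediate point $(\log i,\nu_{p_j}(a_i))$ with $m<i<n$ strictly above the line through $P_j:=(\log m,\nu_{p_j}(a_m))$ and $Q_j:=(\log n,\nu_{p_j}(a_n))$ --- this is the same computation already used in the proof of Theorem~\ref{calaDumas} and reduces the Newton log-polygon of $f$ with respect to $p_j$ to the single edge $P_jQ_j$. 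Unlike in the single-prime case, I will \emph{not} try to conclude that this edge is itself a segment; instead I will keep track of
\[
\delta_j := \gcd\bigl(\nu_{p_j}(a_n)-\nu_{p_j}(a_m),\ \nu_{q_1}(n)-\nu_{q_1}(m),\ \dots,\ \nu_{q_k}(n)-\nu_{q_k}(m)\bigr),
\]
which by Lemma~\ref{puncte} counts the number of segments into which $P_jQ_j$ splits.

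Next I will assume for contradiction that $f=gh$ with $g,h$ nonconstant, and set $c_1=\deg_{\min}g$, $d_1=\deg g$, $c_2=\deg_{\min}h$, $d_2=\deg h$, so $c_1c_2=m$, $d_1d_2=n$, and $1<d_1/c_1<n/m$ (the right inequality coming from $h$ being nonconstant). By Theorem~\ref{DumDir} applied to $p_j$, every edge of the log-polygon of $g$ must be parallel to $P_jQ_j$; since slopes strictly increase along any log-polygon, $g$ contributes exactly one edge, whose translate to start at $P_j$ ends at the point $(\log(md_1/c_1),\,\nu_{p_j}(a_m)+\nu_{p_j}(b_{d_1})-\nu_{p_j}(b_{c_1}))$, which is log-integral (since $c_1\mid m$) and lies strictly interior to $P_jQ_j$. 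Matching its $x$-coordinate against the enumeration of log-integral points given by Lemma~\ref{puncte} then yields
\[
\frac{d_1}{c_1}=\left(\frac{n}{m}\right)^{i_j/\delta_j}\quad\text{for some integer}\ i_j\in\{1,\dots,\delta_j-1\}.
\]

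Finally I will set $\lambda:=\log(d_1/c_1)/\log(n/m)\in(0,1)$; the previous identity then reads $\lambda=i_j/\delta_j$ for every $j$. Writing $\lambda=a/b$ in lowest terms forces $b\mid\delta_j$ for each $j$, hence $b\mid\gcd(\delta_1,\dots,\delta_t)$, which by condition~(iii) equals $1$. Thus $\lambda\in\mathbb{Z}$, contradicting $\lambda\in(0,1)$, so $f$ must be irreducible. The delicate step is pinning down the identity $d_1/c_1=(n/m)^{i_j/\delta_j}$: one must carefully check the log-integrality of the translated endpoint and identify it with one of the intermediate points produced by Lemma~\ref{puncte}; the subsequent aggregation of the fractions $i_j/\delta_j$ across $j$ via condition~(iii) is then a short arithmetic step.
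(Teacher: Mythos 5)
Your proof is correct and follows essentially the same route as the paper: both reduce each Newton log-polygon to a single edge via conditions (i)--(ii), use Lemma~\ref{puncte} to express the relative degree of a hypothetical factor as a power $(n/m)^{i_j/\delta_j}$, and then exploit condition~(iii), noting that $\gcd(\delta_1,\dots,\delta_t)=1$, to force the exponent to be an integer. The only cosmetic difference is that you phrase the endgame as a contradiction with $\lambda\in(0,1)$ arising from a proper factorization, whereas the paper allows $M_j\le\delta_j$ and proves $M_j=\delta_j$, then invokes $\gcd(m,n)=1$ to conclude $d=n$; these are interchangeable.
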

\begin{proof}\ Note first that $f$ is algebraically primitive, as $m$ and $n$ are coprime. Conditions i) and ii) show that for each $j=1,\dots ,t$ the Newton log-polygon of $f$ with respect to $p_{j}$ consists of a single edge $P_{j}Q_{j}$, say. Let us fix an index $j\in \{ 1,\dots ,t\}$ and let $g(s)=\frac{b_{c}}{c^{s}}+\cdots +\frac{b_{d}}{d^{s}}$ be a nonconstant factor of $f$. Obviously $c\mid m$ and $d\mid n$. By Theorem \ref{DumDir}, $\log d-\log c$ must be the sum of some of the widths of the segments that the edge $P_{j}Q_{j}$ is composed of, so by Lemma \ref{puncte}, it must be a multiple of $\frac{1}{\delta _{j}}(\log n-\log m)$, say
\[
\log d-\log c=\frac{M_{j}}{\delta _{j}}(\log n-\log m)
\]
with $\delta _{j}=\gcd (\nu _{p_{j}}(a_{n})-\nu _{p_{j}}(a_{m}),\nu _{q_{1}}(n)-\nu _{q_{1}}(m),\dots ,\nu _{q_{k}}(n)-\nu _{q_{k}}(m))$, and $M_{j}$ an integer with $0<M_{j}\leq \delta _{j}$. This equality leads us to
\begin{equation}\label{Mjuri}
\frac{d}{c}=\left( \frac{n}{m}\right) ^{\frac{M_{j}}{\delta _{j}}}\quad {\rm for}\  j=1,\dots ,t.
\end{equation}
We shall prove now that $M_{j}=\delta _{j}$ for each $j$. Let us assume to the contrary that this is false. Note that $\frac{M_j}{\delta _j}$ are all rational numbers, which by (\ref{Mjuri}) must all be equal, so we may write
\[
\frac{M_{1}}{\delta _{1}}=\cdots =\frac{M_{t}}{\delta _{t}}=\frac{a}{b},
\]
with $0<a<b$ and $\gcd (a,b)=1$. This yields $a\delta _{j}=bM_{j}$ for each $j$, which, since $a$ and $b$ are coprime, forces $b$ to divide $\delta _{j}$ for each $j$. On the other hand, by condition iii) we see that $\delta _{1},\dots ,\delta _{t}$ must be coprime, so $b=1$, which leaves no possibility for $a$. Therefore $M_{j}=\delta _{j}$ for each $j$, so by (\ref{Mjuri}) $\frac{d}{c}$ and $\frac{n}{m}$ must be equal too. Since $d\mid n$, $c\mid m$, and $m$ and $n$ are coprime, we finally see that $d$ must be equal to $n$ (and $c$ must be equal to $m$), which proves that $f$ has no nonconstant factors other than itself, thus being irreducible. 
\end{proof}

In what follows, we will give a series of explicit irreducibility criteria that use divisibility conditions with respect to two prime elements $p,q \in R$ for the case that each of the Newton log-polygons with respect to $p$ and $q$ has at most two edges. Figure 5 below displays all the relevant such combinations of shapes for the two Newton log-polygons, depending on the sign of the slopes of the two edges. Here we had to exclude the four cases obtained by flipping horizontally Figures 5.g, 5.h, 5.i and 5.k, as they would contain a rightmost horizontal edge. Since we will be only interested in edges that contain a single segment, such a horizontal edge would have endpoints $(\log (n-1),0)$ and $(\log n,0)$, thus forcing $a_{n-1}\neq 0$ and hence the irreducibility of $f$ by Corollary \ref{coro1}. Note that 
the situation analysed in Theorem \ref{DumasMultiPrimes} corresponds to Figure 5.n. For the $13$ remaining cases we will provide only the simplest possible irreducibility conditions, where inequalities like the one in condition ii) in Theorem \ref{DumasMultiPrimes} are automatically satisfied, thus keeping the statements as close as possible to the one of the Sch\"onemann-Eisenstein criterion for polynomials. By also considering these inequalities, one may immediately state the corresponding $13$ results in full generality, but with more involved statements. For the proof of the following 13 results we actually don't need the full strength of Theorem \ref{Spuri}, in the sense that we will not have to compute $S^{1}_{rd}(m,n)$. Instead, to exclude some possible values of the relative degrees of the hypothetical factors of $f$, it will suffice to invoke Theorem \ref{DumDir} and Lemma \ref{puncte}.
\begin{center}

\setlength{\unitlength}{2mm}
\begin{picture}(58,7)
\linethickness{0.15mm}

{\small \put(6,3){$a$}}
\setlength{\unitlength}{2mm}

\linethickness{0.15mm}

\put(12,0){\line(0,1){3.3}}   
\thicklines

\put(0,4){\line(5,-3){6.6666}}   

\put(6.6666,0){\line(3,1){5.3334}}   

\linethickness{0.15mm}

\put(0,1.5){\circle{0.12}}
\put(0,1.5){\circle{0.08}}
\put(0.5,1.25){\circle{0.08}}
\put(1,1){\circle{0.08}}
\put(1.5,0.75){\circle{0.08}}
\put(2,0.5){\circle{0.08}}
\put(2.5,0.25){\circle{0.08}}
\put(3,0){\circle{0.12}}
\put(3,0){\circle{0.08}}

\put(3.5,0.15){\circle{0.08}}
\put(4,0.3){\circle{0.08}}
\put(4.5,0.45){\circle{0.08}}
\put(5,0.6){\circle{0.08}}
\put(5.5,0.75){\circle{0.08}}
\put(6,0.9){\circle{0.08}}
\put(6.5,1.05){\circle{0.08}}
\put(7,1.2){\circle{0.08}}
\put(7.5,1.35){\circle{0.08}}
\put(8,1.5){\circle{0.08}}
\put(8.5,1.65){\circle{0.08}}
\put(9,1.8){\circle{0.08}}
\put(9.5,1.95){\circle{0.08}}
\put(10,2.1){\circle{0.08}}
\put(10.5,2.25){\circle{0.08}}
\put(11,2.4){\circle{0.08}}
\put(11.5,2.55){\circle{0.08}}
\put(12,2.7){\circle{0.08}}

\put(0,0){\vector(1,0){13.5}}
\put(0,0){\vector(0,1){5.5}}

\put(6.6666,0){\circle{0.12}}

\put(0,4){\circle{0.12}}
\put(0,4){\circle{0.08}}

\put(12,1.7778){\circle{0.12}}
\put(12,1.7778){\circle{0.08}}


{\small \put(21,3){$b$}}

\setlength{\unitlength}{2mm}

\linethickness{0.15mm}

\put(27,0){\line(0,1){4}}   
\thicklines

\put(15,4){\line(2,-1){6}}   
\put(21,1){\line(6,-1){6}}   
\linethickness{0.15mm}

\put(15,4){\circle{0.08}}
\put(15,4){\circle{0.12}}
\put(21,1){\circle{0.08}}
\put(21,1){\circle{0.12}}
\put(27,0){\circle{0.08}}
\put(27,0){\circle{0.12}}

\put(15,0){\circle{0.12}}
\put(15,0){\circle{0.08}}
\put(15.5,0.05){\circle{0.08}}
\put(16,0.1){\circle{0.08}}
\put(16.5,0.15){\circle{0.08}}
\put(17,0.2){\circle{0.08}}
\put(17.5,0.25){\circle{0.08}}
\put(18,0.3){\circle{0.08}}
\put(18.5,0.35){\circle{0.08}}
\put(19,0.4){\circle{0.08}}
\put(19.5,0.45){\circle{0.08}}
\put(20,0.5){\circle{0.08}}
\put(20.5,0.55){\circle{0.08}}
\put(21,0.6){\circle{0.08}}
\put(21.5,0.65){\circle{0.08}}
\put(22,0.7){\circle{0.08}}
\put(22.5,0.75){\circle{0.08}}
\put(23,0.8){\circle{0.08}}
\put(23.5,0.85){\circle{0.08}}
\put(24,0.9){\circle{0.08}}
\put(24,0.9){\circle{0.12}}
\put(24.5,1.2){\circle{0.08}}
\put(25,1.6){\circle{0.08}}
\put(25.5,2){\circle{0.08}}
\put(26,2.4){\circle{0.08}}
\put(26.5,2.8){\circle{0.08}}
\put(27,3.2){\circle{0.08}}
\put(27,3.2){\circle{0.12}}

\put(15,0){\vector(1,0){13.5}}
\put(15,0){\vector(0,1){5.5}}


{\small \put(36,3){$c$}}
\linethickness{0.15mm}

\put(42,0){\line(0,1){5}}   
\thicklines

\put(30,0){\line(5,1){6}}   

\put(36,1.2){\line(2,1){6}}   
\linethickness{0.15mm}

\put(36,1.2){\circle{0.08}}
\put(36,1.2){\circle{0.12}}
\put(42,4.2){\circle{0.12}}

\put(30,0){\circle{0.12}}
\put(30,0){\circle{0.08}}
\put(30.5,0.05){\circle{0.08}}
\put(31,0.1){\circle{0.08}}
\put(31.5,0.15){\circle{0.08}}
\put(32,0.2){\circle{0.08}}
\put(32.5,0.25){\circle{0.08}}
\put(33,0.3){\circle{0.08}}
\put(33.5,0.35){\circle{0.08}}
\put(34,0.4){\circle{0.08}}
\put(34.5,0.45){\circle{0.08}}
\put(35,0.5){\circle{0.08}}
\put(35.5,0.55){\circle{0.08}}
\put(36,0.6){\circle{0.08}}
\put(36.5,0.65){\circle{0.08}}
\put(37,0.7){\circle{0.08}}
\put(37.5,0.75){\circle{0.08}}
\put(38,0.8){\circle{0.08}}
\put(38,0.8){\circle{0.12}}
\put(38.5,1){\circle{0.08}}
\put(39,1.2){\circle{0.08}}
\put(39.5,1.4){\circle{0.08}}
\put(40,1.6){\circle{0.08}}
\put(40.5,1.8){\circle{0.08}}
\put(41,2){\circle{0.08}}
\put(41.5,2.2){\circle{0.08}}
\put(42,2.4){\circle{0.08}}
\put(42,2.4){\circle{0.12}}

\put(30,0){\vector(1,0){13.5}}
\put(30,0){\vector(0,1){5.5}}

{\small \put(51,3){$d$}}

\setlength{\unitlength}{2mm}

\linethickness{0.075mm}

\put(57,0){\line(0,1){5}}   
\thicklines

\put(45,4.2){\line(2,-1){6}}   
\put(51,1.2){\line(5,-1){6}}

\put(51,1.2){\circle{0.08}}
\put(45,4.2){\circle{0.12}}

\put(57,0){\circle{0.12}}
\put(56.5,0.05){\circle{0.08}}
\put(56,0.1){\circle{0.08}}
\put(55.5,0.15){\circle{0.08}}
\put(55,0.2){\circle{0.08}}
\put(54.5,0.25){\circle{0.08}}
\put(54,0.3){\circle{0.08}}
\put(53.5,0.35){\circle{0.08}}
\put(53,0.4){\circle{0.08}}
\put(52.5,0.45){\circle{0.08}}
\put(52,0.5){\circle{0.08}}
\put(51.5,0.55){\circle{0.08}}
\put(51,0.6){\circle{0.08}}
\put(50.5,0.65){\circle{0.08}}
\put(50,0.7){\circle{0.08}}
\put(49.5,0.75){\circle{0.08}}
\put(49,0.8){\circle{0.08}}
\put(49,0.8){\circle{0.12}}
\put(48.5,1){\circle{0.08}}
\put(48,1.2){\circle{0.08}}
\put(47.5,1.4){\circle{0.08}}
\put(47,1.6){\circle{0.08}}
\put(46.5,1.8){\circle{0.08}}
\put(46,2){\circle{0.08}}
\put(45.5,2.2){\circle{0.08}}
\put(45,2.4){\circle{0.08}}
\put(45,2.4){\circle{0.12}}

\linethickness{0.15mm}

\put(45,0){\vector(1,0){13.5}}
\put(45,0){\vector(0,1){5.5}}

\end{picture}
\end{center}

\begin{center}
\setlength{\unitlength}{2mm}
\begin{picture}(58,7)
\linethickness{0.15mm}

{\small \put(6,3){$e$}}
\setlength{\unitlength}{2mm}

\linethickness{0.15mm}

\put(12,0){\line(0,1){5}}   

\thicklines
\put(0,0){\line(5,1){5}}   

\put(5,1){\line(2,1){7}}   

\linethickness{0.15mm}

\put(0,4){\circle{0.12}}
\put(0,4){\circle{0.08}}
\put(0.5,3.75){\circle{0.08}}
\put(1,3.5){\circle{0.08}}
\put(1.5,3.25){\circle{0.08}}
\put(2,3){\circle{0.08}}
\put(2.5,2.75){\circle{0.08}}
\put(3,2.5){\circle{0.08}}

\put(3.5,2.25){\circle{0.08}}
\put(4,2){\circle{0.08}}
\put(4.5,1.75){\circle{0.08}}
\put(5,1.5){\circle{0.08}}
\put(5.5,1.25){\circle{0.08}}
\put(6,1){\circle{0.08}}
\put(6.5,0.75){\circle{0.08}}
\put(7,0.5){\circle{0.08}}
\put(7.5,0.25){\circle{0.08}}
\put(8,0){\circle{0.08}}
\put(8,0){\circle{0.12}}

\put(8.5,0.25){\circle{0.08}}
\put(9,0.5){\circle{0.08}}
\put(9.5,0.75){\circle{0.08}}
\put(10,1){\circle{0.08}}
\put(10.5,1.25){\circle{0.08}}
\put(11,1.5){\circle{0.08}}
\put(11.5,1.75){\circle{0.08}}
\put(12,2){\circle{0.08}}
\put(12,2){\circle{0.12}}

\put(0,0){\vector(1,0){13.5}}
\put(0,0){\vector(0,1){5.5}}

\put(0,0){\circle{0.12}}
\put(0,0){\circle{0.08}}

\put(5,1){\circle{0.12}}
\put(5,1){\circle{0.08}}

\put(0,4){\circle{0.12}}
\put(0,4){\circle{0.08}}

\put(12,4.5){\circle{0.12}}
\put(12,4.5){\circle{0.08}}

{\small \put(21,3){$f$}}
\setlength{\unitlength}{2mm}

\linethickness{0.075mm}

\put(27,0){\line(0,1){4.8}}   
\thicklines

\put(15,4.5){\line(2,-1){7}}   

\put(22,1){\line(5,-1){5}}   

\linethickness{0.15mm}

\put(27,4){\circle{0.12}}
\put(27,4){\circle{0.08}}
\put(26.5,3.75){\circle{0.08}}
\put(26,3.5){\circle{0.08}}
\put(25.5,3.25){\circle{0.08}}
\put(25,3){\circle{0.08}}
\put(24.5,2.75){\circle{0.08}}
\put(24,2.5){\circle{0.08}}

\put(23.5,2.25){\circle{0.08}}
\put(23,2){\circle{0.08}}
\put(22.5,1.75){\circle{0.08}}
\put(22,1.5){\circle{0.08}}
\put(21.5,1.25){\circle{0.08}}
\put(21,1){\circle{0.08}}
\put(20.5,0.75){\circle{0.08}}
\put(20,0.5){\circle{0.08}}
\put(19.5,0.25){\circle{0.08}}
\put(19,0){\circle{0.08}}
\put(19,0){\circle{0.12}}

\put(18.5,0.25){\circle{0.08}}
\put(18,0.5){\circle{0.08}}
\put(17.5,0.75){\circle{0.08}}
\put(17,1){\circle{0.08}}
\put(16.5,1.25){\circle{0.08}}
\put(16,1.5){\circle{0.08}}
\put(15.5,1.75){\circle{0.08}}
\put(15,2){\circle{0.08}}
\put(15,2){\circle{0.12}}

\put(15,0){\vector(1,0){13.5}}
\put(15,0){\vector(0,1){5.5}}

\put(15,0){\circle{0.12}}
\put(15,0){\circle{0.08}}

\put(22,1){\circle{0.12}}
\put(22,1){\circle{0.08}}

\put(15,4.5){\circle{0.12}}
\put(15,4.5){\circle{0.08}}

\put(27,0){\circle{0.12}}
\put(27,0){\circle{0.08}}


{\small \put(36,3){$g$}}
\linethickness{0.15mm}

\put(42,0){\line(0,1){4.5}}   

\thicklines
\put(30,0){\line(1,0){4}}   

\put(34,0){\line(2,1){8}}   

\linethickness{0.15mm}

\put(30,4){\circle{0.12}}
\put(30,4){\circle{0.08}}
\put(30.5,3.75){\circle{0.08}}
\put(31,3.5){\circle{0.08}}
\put(31.5,3.25){\circle{0.08}}
\put(32,3){\circle{0.08}}
\put(32.5,2.75){\circle{0.08}}
\put(33,2.5){\circle{0.08}}

\put(33.5,2.25){\circle{0.08}}
\put(34,2){\circle{0.08}}
\put(34.5,1.75){\circle{0.08}}
\put(35,1.5){\circle{0.08}}
\put(35.5,1.25){\circle{0.08}}
\put(36,1){\circle{0.08}}
\put(36.5,0.75){\circle{0.08}}
\put(37,0.5){\circle{0.08}}
\put(37.5,0.25){\circle{0.08}}
\put(38,0){\circle{0.08}}

\put(38.5,0.25){\circle{0.08}}
\put(39,0.5){\circle{0.08}}
\put(39.5,0.75){\circle{0.08}}
\put(40,1){\circle{0.08}}
\put(40.5,1.25){\circle{0.08}}
\put(41,1.5){\circle{0.08}}
\put(41.5,1.75){\circle{0.08}}
\put(42,2){\circle{0.08}}
\put(42,2){\circle{0.12}}

\put(30,0){\vector(1,0){13.5}}
\put(30,0){\vector(0,1){5.5}}

\put(30,0){\circle{0.12}}
\put(30,0){\circle{0.08}}

\put(34,0){\circle{0.12}}
\put(34,0){\circle{0.08}}

\put(30,4){\circle{0.12}}
\put(30,4){\circle{0.08}}

\put(42,4){\circle{0.12}}
\put(42,4){\circle{0.08}}


{\small \put(51,3){$h$}}

\setlength{\unitlength}{2mm}

\linethickness{0.15mm}

\put(57,0){\line(0,1){4.7}}   

\thicklines

\put(45,0){\line(1,0){4}}   

\put(49,0){\line(2,1){8}}   

\linethickness{0.15mm}

\put(49,0){\circle{0.08}}
\put(49,0){\circle{0.12}}

\put(57,4){\circle{0.12}}
\put(57,4){\circle{0.08}}

\put(45,0){\circle{0.12}}
\put(45,0){\circle{0.08}}
\put(45.5,0.05){\circle{0.08}}
\put(46,0.1){\circle{0.08}}
\put(46.5,0.15){\circle{0.08}}
\put(47,0.2){\circle{0.08}}
\put(47.5,0.25){\circle{0.08}}
\put(48,0.3){\circle{0.08}}
\put(48.5,0.35){\circle{0.08}}
\put(49,0.4){\circle{0.08}}
\put(49.5,0.45){\circle{0.08}}
\put(50,0.5){\circle{0.08}}
\put(50.5,0.55){\circle{0.08}}
\put(51,0.6){\circle{0.08}}
\put(51.5,0.65){\circle{0.08}}
\put(52,0.7){\circle{0.08}}
\put(52.5,0.75){\circle{0.08}}
\put(53,0.8){\circle{0.08}}
\put(53,0.8){\circle{0.12}}
\put(53.5,1){\circle{0.08}}
\put(54,1.2){\circle{0.08}}
\put(54.5,1.4){\circle{0.08}}
\put(55,1.6){\circle{0.08}}
\put(55.5,1.8){\circle{0.08}}
\put(56,2){\circle{0.08}}
\put(56.5,2.2){\circle{0.08}}
\put(57,2.4){\circle{0.08}}
\put(57,2.4){\circle{0.12}}

\put(45,0){\vector(1,0){13.5}}
\put(45,0){\vector(0,1){5.5}}

\end{picture}
\end{center}

\begin{center}
\setlength{\unitlength}{2mm}
\begin{picture}(58,7)
\linethickness{0.15mm}

{\small \put(6,3){$i$}}

\setlength{\unitlength}{2mm}

\linethickness{0.15mm}

\put(12,0){\line(0,1){4.5}}   

\thicklines
\put(0,0){\line(1,0){4}}   

\put(4,0){\line(2,1){8}}   

\linethickness{0.15mm}

\put(0,4){\circle{0.12}}
\put(0,4){\circle{0.08}}
\put(0.5,3.75){\circle{0.08}}
\put(1,3.5){\circle{0.08}}
\put(1.5,3.25){\circle{0.08}}
\put(2,3){\circle{0.08}}
\put(2.5,2.75){\circle{0.08}}
\put(3,2.5){\circle{0.08}}

\put(3.5,2.25){\circle{0.08}}
\put(4,2){\circle{0.08}}
\put(4.5,1.75){\circle{0.08}}
\put(5,1.5){\circle{0.08}}
\put(5.5,1.25){\circle{0.08}}
\put(6,1){\circle{0.08}}
\put(6.5,0.75){\circle{0.08}}

\put(7,0.5){\circle{0.12}}
\put(7,0.5){\circle{0.08}}
\put(7.5,0.45){\circle{0.08}}
\put(8,0.4){\circle{0.08}}

\put(8.5,0.35){\circle{0.08}}
\put(9,0.3){\circle{0.08}}
\put(9.5,0.25){\circle{0.08}}
\put(10,0.2){\circle{0.08}}
\put(10.5,0.15){\circle{0.08}}
\put(11,0.1){\circle{0.08}}
\put(11.5,0.05){\circle{0.08}}
\put(12,0){\circle{0.08}}
\put(12,0){\circle{0.12}}

\put(0,0){\vector(1,0){13.5}}
\put(0,0){\vector(0,1){5.5}}

\put(0,0){\circle{0.12}}
\put(0,0){\circle{0.08}}

\put(4,0){\circle{0.12}}
\put(4,0){\circle{0.08}}

\put(0,4){\circle{0.12}}
\put(0,4){\circle{0.08}}

\put(12,4){\circle{0.12}}
\put(12,4){\circle{0.08}}


{\small \put(21,3){$j$}}

\setlength{\unitlength}{2mm}

\linethickness{0.15mm}

\put(27,0){\line(0,1){4}}   
\thicklines

\put(15,1){\line(6,1){12}}   

\linethickness{0.15mm}
\put(15,4){\circle{0.12}}
\put(15,4){\circle{0.08}}
\put(15.5,3.75){\circle{0.08}}
\put(16,3.5){\circle{0.08}}
\put(16.5,3.25){\circle{0.08}}
\put(17,3){\circle{0.08}}
\put(17.5,2.75){\circle{0.08}}
\put(18,2.5){\circle{0.08}}

\put(18.5,2.25){\circle{0.08}}
\put(19,2){\circle{0.08}}
\put(19.5,1.75){\circle{0.08}}
\put(20,1.5){\circle{0.08}}
\put(20.5,1.25){\circle{0.08}}
\put(21,1){\circle{0.08}}
\put(21.5,0.75){\circle{0.08}}
\put(22,0.5){\circle{0.08}}
\put(22.5,0.25){\circle{0.08}}
\put(23,0){\circle{0.08}}

\put(23.5,0.25){\circle{0.08}}
\put(24,0.5){\circle{0.08}}
\put(24.5,0.75){\circle{0.08}}
\put(25,1){\circle{0.08}}
\put(25.5,1.25){\circle{0.08}}
\put(26,1.5){\circle{0.08}}
\put(26.5,1.75){\circle{0.08}}
\put(27,2){\circle{0.08}}
\put(27,2){\circle{0.12}}

\put(15,1){\circle{0.12}}
\put(15,1){\circle{0.08}}

\put(27,3){\circle{0.12}}
\put(27,3){\circle{0.08}}

\put(15,4){\circle{0.12}}
\put(15,4){\circle{0.08}}

\put(27,4){\circle{0.12}}
\put(27,4){\circle{0.08}}

\put(15,0){\vector(1,0){13.5}}
\put(15,0){\vector(0,1){5.5}}


{\small \put(36,3){$k$}}
\setlength{\unitlength}{2mm}

\linethickness{0.15mm}

\put(42,0){\line(0,1){4}}   

\thicklines

\put(30,0){\line(1,0){6.6666}}   
\put(36.6666,0){\line(3,1){5.3334}}   
\linethickness{0.15mm}

\put(30,0.5){\circle{0.12}}
\put(30,0.5){\circle{0.08}}
\put(30.5,0.6){\circle{0.08}}
\put(31,0.7){\circle{0.08}}
\put(31.5,0.8){\circle{0.08}}
\put(32,0.9){\circle{0.08}}
\put(32.5,1){\circle{0.08}}
\put(33,1.1){\circle{0.08}}
\put(33.5,1.2){\circle{0.08}}
\put(34,1.3){\circle{0.08}}
\put(34.5,1.4){\circle{0.08}}
\put(35,1.5){\circle{0.08}}
\put(35.5,1.6){\circle{0.08}}
\put(36,1.7){\circle{0.08}}
\put(36.5,1.8){\circle{0.08}}
\put(37,1.9){\circle{0.08}}
\put(37.5,2){\circle{0.08}}
\put(38,2.1){\circle{0.08}}
\put(38.5,2.2){\circle{0.08}}
\put(39,2.3){\circle{0.08}}
\put(39.5,2.4){\circle{0.08}}
\put(40,2.5){\circle{0.08}}
\put(40.5,2.6){\circle{0.08}}
\put(41,2.7){\circle{0.08}}
\put(41.5,2.8){\circle{0.08}}
\put(42,2.9){\circle{0.08}}
\put(42,2.9){\circle{0.12}}

\put(30,0){\vector(1,0){13.5}}
\put(30,0){\vector(0,1){5.5}}

\put(36.6666,0){\circle{0.12}}

\put(42,1.7778){\circle{0.12}}
\put(42,1.7778){\circle{0.08}}


{\small \put(51,3.2){$l$}}
\setlength{\unitlength}{2mm}

\linethickness{0.15mm}

\put(57,0){\line(0,1){4}}  

\thicklines

\put(45,1){\line(6,1){12}}   

\linethickness{0.15mm}

\put(45,0){\circle{0.12}}
\put(45,0){\circle{0.08}}

\put(45,1){\circle{0.08}}
\put(45,1){\circle{0.12}}

\put(57,3){\circle{0.12}}
\put(57,3){\circle{0.08}}

\put(45,0){\circle{0.12}}
\put(45,0){\circle{0.08}}
\put(45.5,0.05){\circle{0.08}}
\put(46,0.1){\circle{0.08}}
\put(46.5,0.15){\circle{0.08}}
\put(47,0.2){\circle{0.08}}
\put(47.5,0.25){\circle{0.08}}
\put(48,0.3){\circle{0.08}}
\put(48.5,0.35){\circle{0.08}}
\put(49,0.4){\circle{0.08}}
\put(49.5,0.45){\circle{0.08}}
\put(50,0.5){\circle{0.08}}
\put(50.5,0.55){\circle{0.08}}
\put(51,0.6){\circle{0.08}}
\put(51.5,0.65){\circle{0.08}}
\put(52,0.7){\circle{0.08}}
\put(52.5,0.75){\circle{0.08}}
\put(53,0.8){\circle{0.08}}
\put(53,0.8){\circle{0.12}}
\put(53.5,1){\circle{0.08}}
\put(54,1.2){\circle{0.08}}
\put(54.5,1.4){\circle{0.08}}
\put(55,1.6){\circle{0.08}}
\put(55.5,1.8){\circle{0.08}}
\put(56,2){\circle{0.08}}
\put(56.5,2.2){\circle{0.08}}
\put(57,2.4){\circle{0.08}}
\put(57,2.4){\circle{0.12}}

\put(45,0){\vector(1,0){13.5}}
\put(45,0){\vector(0,1){5.5}}

\end{picture}
\end{center}

\begin{center}
\setlength{\unitlength}{2mm}
\begin{picture}(58,7)
\linethickness{0.15mm}

{\small \put(21,3){$m$}}

\setlength{\unitlength}{2mm}

\linethickness{0.15mm}

\put(27,0){\line(0,1){4.5}}   

\thicklines

\put(15,1){\line(6,1){12}}   

\linethickness{0.15mm}

\put(15,4){\circle{0.12}}
\put(15,4){\circle{0.08}}
\put(15.5,3.75){\circle{0.08}}
\put(16,3.5){\circle{0.08}}
\put(16.5,3.25){\circle{0.08}}
\put(17,3){\circle{0.08}}
\put(17.5,2.75){\circle{0.08}}
\put(18,2.5){\circle{0.08}}

\put(18.5,2.25){\circle{0.08}}
\put(19,2){\circle{0.08}}
\put(19.5,1.75){\circle{0.08}}
\put(20,1.5){\circle{0.08}}
\put(20.5,1.25){\circle{0.08}}
\put(21,1){\circle{0.08}}
\put(21.5,0.75){\circle{0.08}}

\put(22,0.5){\circle{0.12}}
\put(22,0.5){\circle{0.08}}
\put(22.5,0.45){\circle{0.08}}
\put(23,0.4){\circle{0.08}}

\put(23.5,0.35){\circle{0.08}}
\put(24,0.3){\circle{0.08}}
\put(24.5,0.25){\circle{0.08}}
\put(25,0.2){\circle{0.08}}
\put(25.5,0.15){\circle{0.08}}
\put(26,0.1){\circle{0.08}}
\put(26.5,0.05){\circle{0.08}}
\put(27,0){\circle{0.08}}
\put(27,0){\circle{0.12}}

\put(15,0){\vector(1,0){13.5}}
\put(15,0){\vector(0,1){5.5}}

\put(15,1){\circle{0.12}}
\put(15,1){\circle{0.08}}

\put(15,4){\circle{0.12}}
\put(15,4){\circle{0.08}}

\put(27,3){\circle{0.12}}
\put(27,3){\circle{0.08}}


{\small \put(36,3){$n$}}

\put(42,0){\line(0,1){4}}   
\thicklines

\put(30,3){\line(6,-1){12}}   
\linethickness{0.15mm}

\put(30,0.5){\circle{0.12}}
\put(30,0.5){\circle{0.08}}
\put(30.5,0.6){\circle{0.08}}
\put(31,0.7){\circle{0.08}}
\put(31.5,0.8){\circle{0.08}}
\put(32,0.9){\circle{0.08}}
\put(32.5,1){\circle{0.08}}
\put(33,1.1){\circle{0.08}}
\put(33.5,1.2){\circle{0.08}}
\put(34,1.3){\circle{0.08}}
\put(34.5,1.4){\circle{0.08}}
\put(35,1.5){\circle{0.08}}
\put(35.5,1.6){\circle{0.08}}
\put(36,1.7){\circle{0.08}}
\put(36.5,1.8){\circle{0.08}}
\put(37,1.9){\circle{0.08}}
\put(37.5,2){\circle{0.08}}
\put(38,2.1){\circle{0.08}}
\put(38.5,2.2){\circle{0.08}}
\put(39,2.3){\circle{0.08}}
\put(39.5,2.4){\circle{0.08}}
\put(40,2.5){\circle{0.08}}
\put(40.5,2.6){\circle{0.08}}
\put(41,2.7){\circle{0.08}}
\put(41.5,2.8){\circle{0.08}}
\put(42,2.9){\circle{0.08}}
\put(42,2.9){\circle{0.12}}

\put(30,0){\vector(1,0){13.5}}
\put(30,0){\vector(0,1){5.5}}

\put(30,3){\circle{0.12}}
\put(30,3){\circle{0.08}}

\put(42,1){\circle{0.12}}
\put(42,1){\circle{0.08}}

\end{picture}
\end{center}

\medskip

{\em {\small  {\bf Figure 5.}  Relevant combinations of shapes of Newton log-polygons of a Dirichlet polynomial with respect to a pair of prime elements, each one having at most two segments.
}
}
\medskip

Most of the irreducibility conditions for polynomials in the literature are asymmetric with respect to the usual and reverse ordering of the coefficients. Among the results in \cite{BoncioCommAlg}, we will also mention one providing irreducibility conditions which use two prime numbers and are invariant under reversing the order of the coefficients: 

\medskip

{\bf Theorem C.} (\cite[Corollary 1.14]{BoncioCommAlg})\ {\em Let $f(X)=a_{0}+a_{1}X+\cdots +a_{n}X^{n}\in \mathbb{Z}[X]$, 
$a_{0}a_{n}\neq 0$. If there exist two distinct indices $j,k\in \{1,\dots ,n-1\} $ 
such that $j+k\neq n$ and two distinct prime numbers $p$ and $q$ such that

i) \ $p\mid a_{i}$ \ for \ $i\neq j$, \ $p\nmid a_{j}$, \ $p^{2}\nmid a_{0}$ \ and \ $p^{2}\nmid a_{n}$,

ii) $q\mid a_{i}$ \ for \ $i\neq k$, \ $q\nmid a_{k}$, \ $q^{2}\nmid a_{0}$ \ and \ $q^{2}\nmid a_{n}$,

\noindent then $f$ is irreducible over $\mathbb{Q}$.}
\medskip

The following result, that uses symmetric irreducibility conditions too, may be considered as an analogue for Dirichlet polynomial of Theorem C, and corresponds to Figure 5.a. 

\begin{theorem}\label{npnp}
Let $f(s)=\frac{a_{m}}{m^{s}}+\cdots +\frac{a_{n}}{n^{s}}$ be an algebraically primitive Dirichlet polynomial with coefficients in a unique factorization domain $R$, with $a_{m}a_{n}\neq 0$ and let
$p,q$ be two prime elements of $R$. Assume there exist two different indices $j_{1},j_{2}\in \{ m+1,\dots ,n-1\} $ with $j_{1}j_{2}\neq mn$ such that:

i) \thinspace \thinspace $p\mid a_{i}$ for all $i\neq j_{1}$, $p\nmid a_{j_{1}}$, $p^{2}\nmid a_{m}$ and $p^{2}\nmid a_{n}$;

ii) \thinspace $q\mid a_{i}$ for all $i\neq j_{2}$, $q\nmid a_{j_{2}}$, $q^{2}\nmid a_{m}$ and $q^{2}\nmid a_{n}$;

\noindent Then $f$ is irreducible over $Q(R)$.
\end{theorem}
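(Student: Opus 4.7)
The plan is to apply the Dumas-type theorem for Dirichlet polynomials (Theorem \ref{DumDir}) to the Newton log-polygons of $f$ with respect to both primes $p$ and $q$, and combine the resulting constraints on the relative degrees of a hypothetical factor. Under hypothesis (i), the conditions $p\mid a_i$ for all $i\neq j_1$, $p\nmid a_{j_1}$, $p^2\nmid a_m$ and $p^2\nmid a_n$ force the Newton log-polygon of $f$ with respect to $p$ to have exactly the three vertices $(\log m,1)$, $(\log j_1,0)$, $(\log n,1)$, matching the shape in Figure 5.a. Since the vertical change across each of its two edges is $\pm 1$, Lemma \ref{puncte} shows that each edge consists of a single segment, of relative degrees $j_1/m$ and $n/j_1$ respectively. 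The analogous description applies to the Newton log-polygon of $f$ with respect to $q$, whose two segments have relative degrees $j_2/m$ and $n/j_2$.

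Next, suppose to the contrary that $f=g\cdot h$ is a factorization with both $g$ and $h$ nonconstant. Since $f$ is algebraically primitive, neither $g$ nor $h$ can be a single term $a/k^s$ with $k>1$, for otherwise every index in $Supp(f)$ would be a multiple of $k$. Hence each of $g$ and $h$ has at least two nonzero terms, so each of their Newton log-polygons (with respect to either prime) has at least one edge. Applying Theorem \ref{DumDir} to the $p$-adic log-polygon of $f$, its vector system is obtained from the union of the vector systems of $g$ and $h$ by summing vectors of equal slope. Since the $y$-component of each of the two edge vectors of $f$ is $\pm 1$, while the $y$-components contributed from $g$ and $h$ at a given slope are integers of the same sign summing to $\pm 1$, exactly one factor supplies each edge; because each factor must contribute at least one edge, one of them supplies the negative-slope edge and the other supplies the positive-slope edge. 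Hence the set of relative degrees $\{\deg g/\deg_{\min} g,\; \deg h/\deg_{\min} h\}$ equals $\{j_1/m,\; n/j_1\}$.

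Applying the same reasoning to the Newton log-polygon of $f$ with respect to $q$ yields $\{\deg g/\deg_{\min} g,\; \deg h/\deg_{\min} h\}=\{j_2/m,\; n/j_2\}$. Equating these two sets produces one of the equalities $j_1/m=j_2/m$, $n/j_1=n/j_2$, $j_1/m=n/j_2$, or $n/j_1=j_2/m$, which reduce either to $j_1=j_2$ or to $j_1 j_2=mn$, both ruled out by hypothesis. Therefore $f$ admits no nontrivial factorization, and is irreducible over $Q(R)$. The main obstacle I anticipate is making the ``each edge of $f$ is supplied by exactly one factor'' step airtight: one must invoke both that Theorem \ref{DumDir} forces any contribution to a given edge of $f$ to have that same slope in the factor's log-polygon, and that algebraic primitivity of $f$ prevents either factor from degenerating to a single monomial, so that each factor must contribute at least one edge to the decomposition.
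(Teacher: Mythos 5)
Your proof is correct and takes essentially the same approach as the paper: identify the two-segment shape of each Newton log-polygon via Lemma \ref{puncte}, use Theorem \ref{DumDir} and algebraic primitivity to conclude that each nonconstant factor must account for exactly one segment, and then derive a contradiction from $j_1\neq j_2$ and $j_1j_2\neq mn$. Your verbose justification of the ``one segment per factor'' step (via the signs of the $y$-components of the edge vectors) makes explicit what the paper leaves implicit, but it is the same argument.
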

\begin{proof}\ Let us represent in Figure 6 the Newton log-polygons with respect to two prime elements $p$ and $q$, each one consisting of two edges having slopes with different signs, as in Figure 5.a. 
\begin{center}
\setlength{\unitlength}{4.65mm}
\begin{picture}(12,6)
\linethickness{0.075mm}

\put(12,0){\line(0,1){3.5}}   
\thicklines

\put(0,4){\line(5,-3){6.6666}}   

\put(6.6666,0){\line(3,1){5.3334}}

\put(0,3){\circle{0.12}}
\put(0,3){\circle{0.08}}
\put(0.25,2.75){\circle{0.08}}
\put(0.5,2.5){\circle{0.08}}
\put(0.75,2.25){\circle{0.08}}
\put(1,2){\circle{0.08}}
\put(1.25,1.75){\circle{0.08}}
\put(1.5,1.5){\circle{0.08}}
\put(1.75,1.25){\circle{0.08}}
\put(2,1){\circle{0.08}}
\put(2.25,0.75){\circle{0.08}}
\put(2.5,0.5){\circle{0.08}}
\put(2.75,0.25){\circle{0.08}}
\put(3,0){\circle{0.12}}
\put(3,0){\circle{0.08}}

\put(3,0){\circle{0.12}}
\put(3,0){\circle{0.08}}
\put(3.35,0.1){\circle{0.08}}
\put(3.7,0.2){\circle{0.08}}
\put(4.05,0.3){\circle{0.08}}
\put(4.4,0.4){\circle{0.08}}
\put(4.75,0.5){\circle{0.08}}
\put(5.1,0.6){\circle{0.08}}
\put(5.45,0.7){\circle{0.08}}
\put(5.8,0.8){\circle{0.08}}
\put(6.15,0.9){\circle{0.08}}
\put(6.5,1){\circle{0.08}}
\put(6.85,1.1){\circle{0.08}}
\put(7.2,1.2){\circle{0.08}}
\put(7.55,1.3){\circle{0.08}}
\put(7.9,1.4){\circle{0.08}}
\put(8.25,1.5){\circle{0.08}}
\put(8.6,1.6){\circle{0.08}}
\put(8.95,1.7){\circle{0.08}}
\put(9.3,1.8){\circle{0.08}}
\put(9.65,1.9){\circle{0.08}}
\put(10,2){\circle{0.08}}
\put(10.35,2.1){\circle{0.08}}
\put(10.7,2.2){\circle{0.08}}
\put(11.05,2.3){\circle{0.08}}
\put(11.4,2.4){\circle{0.08}}
\put(11.7,2.48){\circle{0.08}}
\put(12,2.56){\circle{0.08}}
\put(12,2.56){\circle{0.12}}

\linethickness{0.15mm}

\put(0,0){\vector(1,0){14}}
\put(0,0){\vector(0,1){5.5}}

\put(6.6666,0){\circle{0.12}}

\put(0,4){\circle{0.12}}
\put(0,4){\circle{0.08}}

\put(12,1.7778){\circle{0.12}}
\put(12,1.7778){\circle{0.08}}

{\tiny
\put(-6.25,4){$P_{1}=(\log m,\nu _{p}(a_{m}))$}

\put(-6.25,3){$P_{1}'=(\log m,\nu _{q}(a_{m}))$}

\put(6.3,-0.85){$P_{2}=(\log j_{1},0)$}

\put(1.25,-0.85){$P_{2}'=(\log j_{2},0)$}

\put(12.3,1.7778){$P_{3}=(\log n,\nu _{p}(a_{n}))$}

\put(12.3,2.7778){$P_{3}'=(\log n,\nu _{q}(a_{n}))$}

\put(-2,-0.85){$(\log m,0)$}

\put(11,-0.85){$(\log n,0)$}

\put(-10.9,-0.85){{\bf Figure 6.}}
}

\end{picture}
\end{center}
\bigskip

In our case $\nu _{p}(a_{m})=\nu _{p}(a_{n})=\nu _{q}(a_{m})=\nu _{q}(a_{n})=1$, so by Lemma \ref{puncte}, each one of the edges $P_{1}P_{2},P_{2}P_{3}$ and $P_{1}'P_{2}',P_{2}'P_{3}'$ contains a single segment. Let $g(s)=\frac{b_{c}}{c^{s}}+\cdots +\frac{b_{d}}{d^{s}}$ be a nonconstant factor of $f$, and let us assume that $d<n$. By using Theorem \ref{DumDir} for the prime elements $p$ and $q$, we deduce that the relative degree $\frac{d}{c}$ of $g$ must belong to the intersection
\[
\left\{ \frac{j_{1}}{m},\frac{n}{j_{1}}\right\} \bigcap \left\{ \frac{j_{2}}{m},\frac{n}{j_{2}}\right\} .
\]
On the other hand this intersection is empty, as $j_{1}\neq j_{2}$ and $j_{1}j_{2}\neq mn$, so $d$ must be equal to $n$, and $c$ to $m$, implying $g=f$. Therefore $f$ must be irreducible. 
\end{proof}

\begin{theorem}\label{nnpp}
Let $f(s)=\frac{a_{m}}{m^{s}}+\cdots +\frac{a_{n}}{n^{s}}$ be an algebraically primitive Dirichlet polynomial with coefficients in a unique factorization domain $R$, with $a_{m}a_{n}\neq 0$ and let
$p,q$ be two prime elements of $R$. Assume there exist indices $j_{1},j_{2}$ with $m<j_{1}<\sqrt {mn}<j_{2}<n$ and $j_{1}j_{2}\neq mn$ such that:

i) \ $p^{2}\mid a_{i}$ for $m\leq i<j_{1}$, $p\mid a_{i}$ for $j_{1}\leq i<n$,  $p\nmid a_{n}$, $p^{2}\nmid a_{j_{1}}$ and $p^{3}\nmid a_{m}$;

ii) \thinspace $q^{2}\mid a_{i}$ for $j_{2}<i\leq n$, $q\mid a_{i}$ for $m<i\leq j_{2}$,  $q\nmid a_{m}$, $q^{2}\nmid a_{j_{2}}$ and $q^{3}\nmid a_{n}$;

\noindent Then $f$ is irreducible over $Q(R)$.
\end{theorem}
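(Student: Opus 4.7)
The strategy is to pin down the shape of both Newton log-polygons of $f$ (with respect to $p$ and $q$) and then force a contradiction by cross-referencing the relative degrees they prescribe for any hypothetical nontrivial factor.

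First I would read off the $p$-adic valuations from condition i): $\nu_{p}(a_{m}) = 2$, $\nu_{p}(a_{i}) \geq 2$ for $m < i < j_{1}$, $\nu_{p}(a_{j_{1}}) = 1$, $\nu_{p}(a_{i}) \geq 1$ for $j_{1} < i < n$, and $\nu_{p}(a_{n}) = 0$. The input $j_{1} < \sqrt{mn}$ is designed precisely to force $(\log j_{1}, 1)$ to lie strictly below the line segment from $(\log m, 2)$ to $(\log n, 0)$; a slope comparison then identifies the vertices of the $p$-Newton log-polygon of $f$ as $P_{1} = (\log m, 2)$, $P_{2} = (\log j_{1}, 1)$, $P_{3} = (\log n, 0)$. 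Since consecutive $y$-coordinates differ by $1$, Lemma \ref{puncte} tells me at once that both edges $P_{1}P_{2}$ and $P_{2}P_{3}$ are single segments. Symmetrically, condition ii) combined with $j_{2} > \sqrt{mn}$ yields a $q$-Newton log-polygon with vertices $(\log m, 0)$, $(\log j_{2}, 1)$, $(\log n, 2)$, again consisting of two single segments.

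Next I would assume $f = g \cdot h$ with $g, h$ nonconstant. A short check shows both factors inherit algebraic primitivity from $f$ (if the support of $g$ had common divisor $d > 1$, all indices of $f$ would be divisible by $d$), so Theorem \ref{DumDir} applies. Since each edge of $f$'s $p$-log-polygon is a single segment, it cannot be assembled from two sub-edges of smaller positive width --- the gluing node would be a log-integral interior point of the edge, forbidden by Lemma \ref{puncte}. Hence each edge of $f$ is contributed in full by exactly one of $g, h$, and since both factors must contribute at least one edge, the two edges of $f$ are distributed one to each factor. This forces the unordered pair of relative degrees of $g$ and $h$ to be $\{j_{1}/m,\ n/j_{1}\}$. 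Running the identical argument on the $q$-log-polygon yields the same pair equal to $\{j_{2}/m,\ n/j_{2}\}$.

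Finally I would equate the two pairs: the only solutions of $\{j_{1}/m,\ n/j_{1}\} = \{j_{2}/m,\ n/j_{2}\}$ are $j_{1} = j_{2}$ or $j_{1} j_{2} = mn$, both explicitly ruled out by hypothesis, giving the desired contradiction. The only step requiring genuine care is the vertex identification in each log-polygon --- verifying that the middle points $(\log j_{1}, 1)$ and $(\log j_{2}, 1)$ are really reached by the respective lower convex hulls --- and this is exactly what the inequalities $j_{1} < \sqrt{mn} < j_{2}$ are engineered to guarantee; the remainder is a routine multiset comparison in the spirit of Theorem \ref{Spuri}.
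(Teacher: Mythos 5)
Your proposal is correct and follows essentially the same route as the paper: identify the two Newton log-polygons via conditions i)–ii) and the inequalities $j_1<\sqrt{mn}<j_2$, use Lemma \ref{puncte} to see each edge is a single segment, invoke Theorem \ref{DumDir} to pin the relative degree of any proper factor to $\{j_1/m,\ n/j_1\}\cap\{j_2/m,\ n/j_2\}$, and observe this intersection is empty since $j_1\neq j_2$ and $j_1j_2\neq mn$. The one place you are slightly more explicit than the paper --- spelling out that a single-segment edge cannot be a Minkowski sum of two positive-width sub-edges (the gluing node would be a forbidden log-integral interior point), and hence each factor inherits exactly one edge --- is a useful unpacking of how Theorem \ref{DumDir} is being applied, but is not a different argument.
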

\begin{proof}\ Let us represent now in Figure 7 the Newton log-polygons with respect to two prime elements $p$ and $q$, one consisting of two edges having negative slopes, and the other consisting of two edges having positive slopes, as in Figure 5.b. 
\begin{center}

\setlength{\unitlength}{4.65mm}
\begin{picture}(12,6)
\linethickness{0.075mm}

\put(12,0){\line(0,1){5}}   
\thicklines

\put(0,4){\line(2,-1){6}}   
\put(6,1){\line(6,-1){6}}   
\linethickness{0.15mm}

\put(0,4){\circle{0.08}}
\put(0,4){\circle{0.12}}
\put(6,1){\circle{0.08}}
\put(6,1){\circle{0.12}}
\put(12,0){\circle{0.08}}
\put(12,0){\circle{0.12}}

\put(0,0){\circle{0.12}}
\put(0,0){\circle{0.08}}
\put(0.25,0.025){\circle{0.08}}
\put(0.5,0.05){\circle{0.08}}
\put(0.75,0.075){\circle{0.08}}
\put(1,0.1){\circle{0.08}}
\put(1.25,0.125){\circle{0.08}}
\put(1.5,0.15){\circle{0.08}}
\put(1.75,0.175){\circle{0.08}}
\put(2,0.2){\circle{0.08}}
\put(2.25,0.225){\circle{0.08}}
\put(2.5,0.25){\circle{0.08}}
\put(2.75,0.275){\circle{0.08}}
\put(3,0.3){\circle{0.08}}
\put(3.25,0.325){\circle{0.08}}
\put(3.5,0.35){\circle{0.08}}
\put(3.75,0.375){\circle{0.08}}
\put(4,0.4){\circle{0.08}}
\put(4.25,0.425){\circle{0.08}}
\put(4.5,0.45){\circle{0.08}}
\put(4.75,0.475){\circle{0.08}}
\put(5,0.5){\circle{0.08}}
\put(5.25,0.525){\circle{0.08}}
\put(5.5,0.55){\circle{0.08}}
\put(5.75,0.575){\circle{0.08}}
\put(6,0.6){\circle{0.08}}
\put(6.25,0.625){\circle{0.08}}
\put(6.5,0.65){\circle{0.08}}
\put(6.75,0.675){\circle{0.08}}
\put(7,0.7){\circle{0.08}}
\put(7.25,0.725){\circle{0.08}}
\put(7.5,0.75){\circle{0.08}}
\put(7.75,0.775){\circle{0.08}}
\put(8,0.8){\circle{0.08}}
\put(8.25,0.825){\circle{0.08}}
\put(8.5,0.85){\circle{0.08}}
\put(8.75,0.875){\circle{0.08}}
\put(9,0.9){\circle{0.08}}
\put(9,0.9){\circle{0.12}}
\put(9.25,1){\circle{0.08}}
\put(9.5,1.2){\circle{0.08}}
\put(9.75,1.4){\circle{0.08}}
\put(10,1.6){\circle{0.08}}
\put(10.25,1.8){\circle{0.08}}
\put(10.5,2){\circle{0.08}}
\put(10.75,2.2){\circle{0.08}}
\put(11,2.4){\circle{0.08}}
\put(11.25,2.6){\circle{0.08}}
\put(11.5,2.8){\circle{0.08}}
\put(11.75,3){\circle{0.08}}
\put(12,3.2){\circle{0.08}}
\put(12,3.2){\circle{0.12}}

\put(0,0){\vector(1,0){14.5}}
\put(0,0){\vector(0,1){5.5}}

{\tiny
\put(-0.8,0.85){$P_{2}=(\log j_{1},\nu _{p}(a_{j_{1}}))$}

\put(9.85,0.85){$P_{2}'=(\log j_{2},\nu _{q}(a_{j_{2}}))$}

\put(12.3,3.2){$P_{3}'=(\log n,\nu _{q}(a_{n}))$}

\put(-6.2,3.85){$P_{1}=(\log m,\nu _{p}(a_{m}))$}

\put(-3,-0.85){$P_{1}'=(\log m,0)$}

\put(11.85,-0.85){$P_{3}=(\log n,0)$}

\put(-10.9,-0.85){{\bf Figure 7.}}
}

\end{picture}
\end{center}
\bigskip

In our particular case we have $\nu _{p}(a_{m})=\nu _{q}(a_{n})=2$ and $\nu _{p}(a_{j_{1}})=\nu _{q}(a_{j_{2}})=1$, so conditions i) and ii) together with Lemma \ref{puncte} show that each one of the edges $P_{1}P_{2}$, $P_{2}P_{3}$, $P_{1}'P_{2}'$ and $P_{2}'P_{3}'$ contains a single segment. The inequalities $m<j_{1}<\sqrt {mn}<j_{2}<n$ show that the slopes of $P_1P_2$ and $P_1'P_2'$ are smaller than the slopes of $P_{2}P_{3}$ and $P_{2}'P_{3}'$, respectively. In particular, $P_{1},P_{2}$ and $P_{3}$ are not collinear, and $P_{1}',P_{2}'$ and $P_{3}'$ too are not collinear.
As before, $f$ cannot have a nonconstant factor $g(s)=\frac{b_{c}}{c^{s}}+\cdots +\frac{b_{d}}{d^{s}}$ with $d<n$, since by Theorem \ref{DumDir}, $\frac{d}{c}$ would be forced to belong to the intersection
\[
\left\{ \frac{j_{1}}{m},\frac{n}{j_{1}}\right\} \bigcap \left\{ \frac{j_{2}}{m},\frac{n}{j_{2}}\right\} ,
\]
which is again empty, as $j_{1}\neq j_{2}$ and $j_{1}j_{2}\neq mn$. So again $f$ must be irreducible. \end{proof}

\begin{theorem}\label{pppp}
Let $f(s)=\frac{a_{m}}{m^{s}}+\cdots +\frac{a_{n}}{n^{s}}$ be an algebraically primitive Dirichlet polynomial with coefficients in a unique factorization domain $R$, with $a_{m}a_{n}\neq 0$ and let
$p,q$ be two prime elements of $R$. Assume there exist indices $j_{1}\neq j_{2}$ with $j_1,j_2\in (\sqrt {mn},n)$ 
such that:

i) \ $p\nmid a_{m}$, $p\mid a_{i}$ for $m<i\leq j_{1}$, $p^{2}\nmid a_{j_{1}}$,  $p^{2}\mid a_{i}$ for $j_{1}<i\leq n$ and $p^{3}\nmid a_{n}$;

ii) \thinspace $q\nmid a_{m}$, $q\mid a_{i}$ for $m<i\leq j_{2}$, $q^{2}\nmid a_{j_{2}}$, $q^{2}\mid a_{i}$ for $j_{2}<i\leq n$ and $q^{3}\nmid a_{n}$.

\noindent Then $f$ is irreducible over $Q(R)$.
\end{theorem}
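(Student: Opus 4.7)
The plan is to follow the same Newton log-polygon strategy used in the proofs of Theorems \ref{npnp} and \ref{nnpp}, but now for the configuration of Figure 5.c, where both Newton log-polygons (with respect to $p$ and to $q$) consist of two edges of positive slope. The setup mirrors Figure 7 with the roles reversed: for prime $p$, conditions i) force $\nu_p(a_m)=0$, $\nu_p(a_{j_1})=1$, $\nu_p(a_n)=2$, and all other plotted points lie weakly above the broken line $(\log m,0)\to(\log j_1,1)\to(\log n,2)$; analogously for $q$ with vertices $(\log m,0)$, $(\log j_2,1)$, $(\log n,2)$. The hypothesis $j_1,j_2\in(\sqrt{mn},n)$ guarantees that $j_1/m>n/j_1$ and $j_2/m>n/j_2$, so the two edges have strictly different (both positive) slopes, hence each Newton log-polygon genuinely has two edges rather than one.

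Next, I would verify that each of these four edges is a single segment by invoking Lemma \ref{puncte}. In every case the vertical difference between the endpoints is $1$, so $\delta=\gcd(1,\dots)=1$, and therefore each edge contains no log-integral point other than its endpoints. Consequently, the segment-relative-degrees appearing in the Newton log-polygon of $f$ with respect to $p$ are exactly $\{j_1/m,\,n/j_1\}$, while those with respect to $q$ are exactly $\{j_2/m,\,n/j_2\}$.

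Now suppose for contradiction that $f=gh$ with both $g$ and $h$ nonconstant. Since $f$ is algebraically primitive, so are $g$ and $h$. Let $g$ have relative degree $d/c$ with $c\mid m$, $d\mid n$ and $1<d/c<n/m$. Applying Theorem \ref{DumDir} with the prime $p$, the width $\log(d/c)$ must be the sum of a proper nonempty subcollection of the segment widths of $f$'s Newton log-polygon w.r.t.\ $p$, which forces
\[
\frac{d}{c}\in\left\{\frac{j_1}{m},\ \frac{n}{j_1}\right\}.
\]
Applying Theorem \ref{DumDir} with $q$ gives analogously $\frac{d}{c}\in\{\frac{j_2}{m},\,\frac{n}{j_2}\}$. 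Thus $d/c$ must lie in the intersection of these two sets.

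The main (and only) point to verify is that this intersection is empty. The equalities $j_1/m=j_2/m$ and $n/j_1=n/j_2$ are excluded by $j_1\neq j_2$; the cross equalities $j_1/m=n/j_2$ and $n/j_1=j_2/m$ both amount to $j_1j_2=mn$, which is impossible because $j_1,j_2>\sqrt{mn}$ gives $j_1j_2>mn$. Hence the intersection is empty, contradicting the existence of the factorization, and $f$ is irreducible over $Q(R)$. I do not expect any genuine obstacle: the whole argument is a direct translation of the proofs of Theorems \ref{npnp} and \ref{nnpp}, the only mildly delicate observation being that the condition $j_1,j_2>\sqrt{mn}$ already rules out $j_1j_2=mn$, so no separate hypothesis of the form ``$j_1j_2\neq mn$'' is needed.
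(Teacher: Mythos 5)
Your proof is correct and follows essentially the same route as the paper's: both establish that each of the two Newton log-polygons (w.r.t.\ $p$ and $q$) has two genuine positive-slope edges because $j_1,j_2>\sqrt{mn}$, that each edge is a single segment by Lemma \ref{puncte} (vertical jump $1$ gives $\delta=1$), and then conclude via Theorem \ref{DumDir} that the relative degree of a hypothetical nonconstant proper factor would lie in $\{j_1/m,n/j_1\}\cap\{j_2/m,n/j_2\}$, which is empty since $j_1\neq j_2$ and $j_1j_2>mn$.
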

\begin{proof}\ We represent now in Figure 8 the Newton log-polygons with respect to two prime elements $p$ and $q$, both consisting of two edges having positive slopes, as in Figure 5.c. 

\begin{center}
\setlength{\unitlength}{4.65mm}
\begin{picture}(12,6)
\linethickness{0.075mm}

\put(12,0){\line(0,1){5}}   
\thicklines

\put(0,0){\line(5,1){6}}   

\put(6,1.2){\line(2,1){6}}   
\put(6,1.2){\circle{0.08}}
\put(6,1.2){\circle{0.12}}
\put(12,4.2){\circle{0.12}}

\put(0,0){\circle{0.12}}
\put(0,0){\circle{0.08}}
\put(0.25,0.025){\circle{0.08}}
\put(0.5,0.05){\circle{0.08}}
\put(0.75,0.075){\circle{0.08}}
\put(1,0.1){\circle{0.08}}
\put(1.25,0.125){\circle{0.08}}
\put(1.5,0.15){\circle{0.08}}
\put(1.75,0.175){\circle{0.08}}
\put(2,0.2){\circle{0.08}}
\put(2.25,0.225){\circle{0.08}}
\put(2.5,0.25){\circle{0.08}}
\put(2.75,0.275){\circle{0.08}}
\put(3,0.3){\circle{0.08}}
\put(3.25,0.325){\circle{0.08}}
\put(3.5,0.35){\circle{0.08}}
\put(3.75,0.375){\circle{0.08}}
\put(4,0.4){\circle{0.08}}
\put(4.25,0.425){\circle{0.08}}
\put(4.5,0.45){\circle{0.08}}
\put(4.75,0.475){\circle{0.08}}
\put(5,0.5){\circle{0.08}}
\put(5.25,0.525){\circle{0.08}}
\put(5.5,0.55){\circle{0.08}}
\put(5.75,0.575){\circle{0.08}}
\put(6,0.6){\circle{0.08}}
\put(6.25,0.625){\circle{0.08}}
\put(6.5,0.65){\circle{0.08}}
\put(6.75,0.675){\circle{0.08}}
\put(7,0.7){\circle{0.08}}
\put(7.25,0.725){\circle{0.08}}
\put(7.5,0.75){\circle{0.08}}
\put(7.75,0.775){\circle{0.08}}
\put(8,0.8){\circle{0.08}}
\put(8,0.8){\circle{0.12}}
\put(8.25,0.9){\circle{0.08}}
\put(8.5,1){\circle{0.08}}
\put(8.75,1.1){\circle{0.08}}
\put(9,1.2){\circle{0.08}}
\put(9.25,1.3){\circle{0.08}}
\put(9.5,1.4){\circle{0.08}}
\put(9.75,1.5){\circle{0.08}}
\put(10,1.6){\circle{0.08}}
\put(10.25,1.7){\circle{0.08}}
\put(10.5,1.8){\circle{0.08}}
\put(10.75,1.9){\circle{0.08}}
\put(11,2){\circle{0.08}}
\put(11.25,2.1){\circle{0.08}}
\put(11.5,2.2){\circle{0.08}}
\put(11.75,2.3){\circle{0.08}}
\put(12,2.4){\circle{0.08}}
\put(12,2.4){\circle{0.12}}

\linethickness{0.15mm}

\put(0,0){\vector(1,0){14.9}}
\put(0,0){\vector(0,1){5.5}}

{\tiny
\put(0.5,1.7){$P_{2}=(\log j_{1},\nu _{p}(a_{j_{1}}))$}

\put(8.7,0.5){$P_{2}'=(\log j_{2},\nu _{q}(a_{j_{2}}))$}

\put(12.3,2.4){$P_{3}'=(\log n,\nu _{q}(a_{n}))$}

\put(5.8,4.2){$P_{3}=(\log n,\nu _{p}(a_{n}))$}

\put(-3,-0.85){$P_{1}=P_{1}'=(\log m,0)$}

\put(10.75,-0.85){$(\log n,0)$}

\put(-10.9,-0.85){{\bf Figure 8.}}
}

\end{picture}
\end{center}
\bigskip

Here $\nu _{p}(a_{n})=\nu _{q}(a_{n})=2$ and $\nu _{p}(a_{j_{1}})=\nu _{q}(a_{j_{2}})=1$, so by i), ii) and Lemma \ref{puncte}, each one of the edges $P_{1}P_{2}$, $P_{2}P_{3}$, $P_{1}'P_{2}'$ and $P_{2}'P_{3}'$ contains a single segment. Using the fact that both indices $j_1$ and $j_2$ belong to the interval $(\sqrt {mn},n)$, one may check that the slopes of $P_2P_3$ and $P_{2}'P_{3}'$ are greater than the slopes of $P_1P_2$ and $P_1'P_2'$, respectively. In particular, $P_{1},P_{2}$ and $P_{3}$ are not collinear, and $P_{1}',P_{2}'$ and $P_{3}'$ too are not collinear. Now, since $j_1\neq j_2$ and $j_1j_2>mn$, we conclude that the intersection
\[
\left\{ \frac{j_{1}}{m},\frac{n}{j_{1}}\right\} \bigcap \left\{ \frac{j_{2}}{m},\frac{n}{j_{2}}\right\} 
\]
is empty, thus forcing $f$ to be irreducible. \end{proof}

\begin{theorem}\label{nnnn}
Let $f(s)=\frac{a_{m}}{m^{s}}+\cdots +\frac{a_{n}}{n^{s}}$ be an algebraically primitive Dirichlet polynomial with coefficients in a unique factorization domain $R$, with $a_{m}a_{n}\neq 0$ and let
$p,q$ be two prime elements of $R$. Assume there exist indices $j_{1}\neq j_{2}$ with $j_{1}, j_{2}\in (m,\sqrt {mn})$ such that:

i) \thinspace $p^{2}\mid a_{i}$ for $m\leq i<j_{1}$, $p\mid a_{i}$ for $j_{1}\leq i<n$,  $p\nmid a_{n}$, $p^{2}\nmid a_{j_{1}}$ and $p^{3}\nmid a_{m}$;

ii) $q^{2}\mid a_{i}$ for $m\leq i<j_{2}$, $q\mid a_{i}$ for $j_{2}\leq i<n$,  $q\nmid a_{n}$, $q^{2}\nmid a_{j_{2}}$ and $q^{3}\nmid a_{m}$;

\noindent Then $f$ is irreducible over $Q(R)$.
\end{theorem}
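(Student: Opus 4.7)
The plan is to mimic the strategy of Theorems \ref{npnp}, \ref{nnpp} and \ref{pppp}: draw the two Newton log-polygons of $f$ with respect to $p$ and $q$, verify that in each one every edge consists of a single segment, use Theorem \ref{DumDir} to pin down the only possible relative degrees of a hypothetical proper factor, and then observe that the two candidate sets are disjoint.

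First, I would translate conditions i) and ii) into the precise $p$-adic (respectively $q$-adic) valuations of the three ``corner'' coefficients. From i) one reads $\nu_{p}(a_{m})=2$, $\nu_{p}(a_{j_{1}})=1$, $\nu_{p}(a_{n})=0$, while for $m<i<j_{1}$ we have $\nu_{p}(a_{i})\geq 2$ and for $j_{1}<i<n$ we have $\nu_{p}(a_{i})\geq 1$. The condition $j_{1}<\sqrt{mn}$ translates into $j_{1}/m<n/j_{1}$, i.e.\ the slope of the segment joining $(\log m,2)$ to $(\log j_{1},1)$ is more negative than that of the segment joining $(\log j_{1},1)$ to $(\log n,0)$, so these two points together with $(\log m,2)$ are indeed the vertices of the Newton log-polygon of $f$ with respect to $p$, and the polygon has the shape in Figure 5.d. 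An identical discussion with $q$ and $j_{2}$ gives the Newton log-polygon with respect to $q$ — again two strictly decreasing edges.

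Next I would apply Lemma \ref{puncte} to each edge. Since in every case the difference of the $y$-coordinates of the endpoints equals $1$, the $\gcd$ appearing in the statement of that lemma is forced to be $1$, so each of the four edges $P_{1}P_{2}$, $P_{2}P_{3}$, $P_{1}'P_{2}'$, $P_{2}'P_{3}'$ consists of a single segment containing no log-integral points other than its endpoints. By Theorem \ref{DumDir}, if $f=gh$ is a nontrivial factorization, the vector system of each of the Newton log-polygons of $g$ and $h$ is a subsum (with the appropriate vector addition for coinciding slopes) of that of $f$, and since no edge of $f$ can be split between $g$ and $h$, the relative degree $d/c$ of a nonconstant proper factor $g$ must be the width of one complete edge. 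Reading off the widths gives
\[
\frac{d}{c}\in\Bigl\{\frac{j_{1}}{m},\frac{n}{j_{1}}\Bigr\}\cap\Bigl\{\frac{j_{2}}{m},\frac{n}{j_{2}}\Bigr\}.
\]

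The final step is to show this intersection is empty. Since $j_{1}\neq j_{2}$, we cannot have $j_{1}/m=j_{2}/m$ nor $n/j_{1}=n/j_{2}$, so the only potential coincidences are $j_{1}/m=n/j_{2}$ or $n/j_{1}=j_{2}/m$, both equivalent to $j_{1}j_{2}=mn$; but $j_{1},j_{2}<\sqrt{mn}$ forces $j_{1}j_{2}<mn$, ruling this out. Hence $f$ admits no nonconstant proper factor and is irreducible over $Q(R)$. The only part requiring care is the bookkeeping that certifies each edge is a single segment and that no other log-integral point of the support lies below the two polygons; both are immediate from the valuation inequalities in i) and ii) combined with $j_{1},j_{2}<\sqrt{mn}$, so I do not anticipate any real obstacle beyond routine verification.
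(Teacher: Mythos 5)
Your proof is correct and follows essentially the same route as the paper's: identify the two Newton log-polygons (Figure 5.d shape), use Lemma \ref{puncte} (since the vertical differences are all $1$) to see each edge is a single segment, apply Theorem \ref{DumDir} to confine the relative degree of any proper factor to $\{j_1/m,n/j_1\}\cap\{j_2/m,n/j_2\}$, and observe this intersection is empty because $j_1\neq j_2$ and $j_1j_2<mn$.
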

\begin{proof}\ We represent now in Figure 9 the Newton log-polygons with respect to two prime elements $p$ and $q$, both consisting of two edges having negative slopes, as in Figure 5.d. 

\begin{center}
\setlength{\unitlength}{4.65mm}
\begin{picture}(12,6)
\linethickness{0.075mm}

\put(12,0){\line(0,1){5}}   
\thicklines

\put(0,4.2){\line(2,-1){6}}   
\put(6,1.2){\line(5,-1){6}}

\put(6,1.2){\circle{0.08}}
\put(6,1.2){\circle{0.12}}
\put(0,4.2){\circle{0.12}}
\put(0,4.2){\circle{0.08}}

\put(12,0){\circle{0.12}}
\put(12,0){\circle{0.08}}
\put(11.75,0.025){\circle{0.08}}
\put(11.5,0.05){\circle{0.08}}
\put(11.25,0.075){\circle{0.08}}
\put(11,0.1){\circle{0.08}}
\put(10.75,0.125){\circle{0.08}}
\put(10.5,0.15){\circle{0.08}}
\put(10.25,0.175){\circle{0.08}}
\put(10,0.2){\circle{0.08}}
\put(9.75,0.225){\circle{0.08}}
\put(9.5,0.25){\circle{0.08}}
\put(9.25,0.275){\circle{0.08}}
\put(9,0.3){\circle{0.08}}
\put(8.75,0.325){\circle{0.08}}
\put(8.5,0.35){\circle{0.08}}
\put(8.25,0.375){\circle{0.08}}
\put(8,0.4){\circle{0.08}}
\put(7.75,0.425){\circle{0.08}}
\put(7.5,0.45){\circle{0.08}}
\put(7.25,0.475){\circle{0.08}}
\put(7,0.5){\circle{0.08}}
\put(6.75,0.525){\circle{0.08}}
\put(6.5,0.55){\circle{0.08}}
\put(6.25,0.575){\circle{0.08}}
\put(6,0.6){\circle{0.08}}
\put(5.75,0.625){\circle{0.08}}
\put(5.5,0.65){\circle{0.08}}
\put(5.25,0.675){\circle{0.08}}
\put(5,0.7){\circle{0.08}}
\put(4.75,0.725){\circle{0.08}}
\put(4.5,0.75){\circle{0.08}}
\put(4.25,0.775){\circle{0.08}}
\put(4,0.8){\circle{0.08}}
\put(4,0.8){\circle{0.12}}
\put(3.75,0.9){\circle{0.08}}
\put(3.5,1){\circle{0.08}}
\put(3.25,1.1){\circle{0.08}}
\put(3,1.2){\circle{0.08}}
\put(2.75,1.3){\circle{0.08}}
\put(2.5,1.4){\circle{0.08}}
\put(2.25,1.5){\circle{0.08}}
\put(2,1.6){\circle{0.08}}
\put(1.75,1.7){\circle{0.08}}
\put(1.5,1.8){\circle{0.08}}
\put(1.25,1.9){\circle{0.08}}
\put(1,2){\circle{0.08}}
\put(0.75,2.1){\circle{0.08}}
\put(0.5,2.2){\circle{0.08}}
\put(0.25,2.3){\circle{0.08}}
\put(0,2.4){\circle{0.08}}
\put(0,2.4){\circle{0.12}}

\linethickness{0.15mm}

\put(0,0){\vector(1,0){14.9}}
\put(0,0){\vector(0,1){5.5}}

{\tiny
\put(6,1.6){$P_{2}=(\log j_{1},\nu _{p}(a_{j_{1}}))$}

\put(-2.72,0.55){$P_{2}'=(\log j_{2},\nu _{q}(a_{j_{2}}))$}

\put(-6.2,2.3){$P_{1}'=(\log m,\nu _{q}(a_{m}))$}

\put(-6.2,4.1){$P_{1}=(\log m,\nu _{p}(a_{m}))$}

\put(-1.2,-0.85){$(\log m,0)$}

\put(9.5,-0.85){$P_{3}=P_{3}'=(\log n,0)$}

\put(-10.9,-0.85){{\bf Figure 9.}}
}

\end{picture}
\end{center}
\bigskip

Here $\nu _{p}(a_{m})=\nu _{q}(a_{m})=2$ and $\nu _{p}(a_{j_{1}})=\nu _{q}(a_{j_{2}})=1$, so by i), ii) and Lemma \ref{puncte}, each one of the edges $P_{1}P_{2}$, $P_{2}P_{3}$, $P_{1}'P_{2}'$ and $P_{2}'P_{3}'$ contains a single segment. The fact that $j_{1}, j_{2}\in (m,\sqrt {mn})$ forces the slopes of $P_2P_3$ and $P_{2}'P_{3}'$ to be greater than the slopes of $P_1P_2$ and $P_1'P_2'$, respectively. In particular, $P_{1},P_{2}$ and $P_{3}$ are not collinear, and $P_{1}',P_{2}'$ and $P_{3}'$ too are not collinear. Since $j_1\neq j_2$ and $j_1j_2<mn$, the intersection
\[
\left\{ \frac{j_{1}}{m},\frac{n}{j_{1}}\right\} \bigcap \left\{ \frac{j_{2}}{m},\frac{n}{j_{2}}\right\}
\]
is empty, so $f$ must be irreducible. 
\end{proof}

\begin{theorem}\label{ppnp}
Let $f(s)=\frac{a_{m}}{m^{s}}+\cdots +\frac{a_{n}}{n^{s}}$ be an algebraically primitive Dirichlet polynomial with coefficients in a unique factorization domain $R$, with $a_{m}a_{n}\neq 0$ and let
$p,q$ be two prime elements of $R$. Assume there exist indices $j_{1}\neq j_{2}$ with $\sqrt {mn}<j_{1}<n$, $m<j_{2}<n$, $j_{1}j_{2}\neq mn$ such that:

i) \ $p\nmid a_{m}$, $p\mid a_{i}$ for $m<i\leq j_{1}$, $p^{2}\nmid a_{j_{1}}$,  $p^{2}\mid a_{i}$ for $j_{1}<i\leq n$ and $p^{3}\nmid a_{n}$;

ii) \thinspace $q\mid a_{i}$ for $i\neq j_{2}$, $q\nmid a_{j_{2}}$, $q^{2}\nmid a_{m}$ and  $q^{2}\nmid a_{n}$.

\noindent Then $f$ is irreducible over $Q(R)$.
\end{theorem}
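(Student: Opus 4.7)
The plan is to follow the template established in Theorems \ref{npnp}, \ref{nnpp}, \ref{pppp}, and \ref{nnnn}: describe the two Newton log-polygons of $f$ (one with respect to $p$, one with respect to $q$), verify that each edge is a single segment, and then invoke Theorem \ref{DumDir} to show the possible relative degrees of a proper nonconstant factor must lie in a certain intersection, which turns out to be empty.

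First I would draw the Newton log-polygon of $f$ with respect to $p$. Condition i) forces it to have vertices $P_1=(\log m,0)$, $P_2=(\log j_1,1)$, $P_3=(\log n,2)$, and the hypothesis $j_1>\sqrt{mn}$ guarantees that the slope of $P_1P_2$, namely $\tfrac{1}{\log j_1-\log m}$, is strictly less than the slope of $P_2P_3$, namely $\tfrac{1}{\log n-\log j_1}$, so $P_1,P_2,P_3$ are not collinear and we indeed have two distinct edges of positive slope. Analogously, condition ii) yields for the $q$-polygon the vertices $P_1'=(\log m,1)$, $P_2'=(\log j_2,0)$, $P_3'=(\log n,1)$, forming an edge of negative slope followed by an edge of positive slope. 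Since on each of the four edges the difference of $y$-coordinates of the endpoints is $\pm 1$, Lemma \ref{puncte} (the gcd appearing there divides $1$, hence equals $1$) shows that each of these four edges consists of a single segment.

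Next I would assume for contradiction that $f=g\cdot h$ with both factors nonconstant, and let $g(s)=\frac{b_c}{c^s}+\cdots+\frac{b_d}{d^s}$ with $c<d$ (the case $c=d$ is excluded because $f$ is algebraically primitive). Applying Theorem \ref{DumDir} to the $p$-polygon, the width $\log d-\log c$ must be a sum of some of the widths of the segments of the $p$-polygon, so the relative degree $\tfrac{d}{c}$ lies in $\{\tfrac{j_1}{m},\tfrac{n}{j_1},\tfrac{n}{m}\}$; the value $\tfrac{n}{m}$ would force $g=f$, so for a proper factor $\tfrac{d}{c}\in\{\tfrac{j_1}{m},\tfrac{n}{j_1}\}$. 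By the same argument applied to the $q$-polygon, $\tfrac{d}{c}\in\{\tfrac{j_2}{m},\tfrac{n}{j_2}\}$.

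Finally I would check that the intersection
\[
\left\{\frac{j_1}{m},\frac{n}{j_1}\right\}\bigcap \left\{\frac{j_2}{m},\frac{n}{j_2}\right\}
\]
is empty: the equalities $\tfrac{j_1}{m}=\tfrac{j_2}{m}$ and $\tfrac{n}{j_1}=\tfrac{n}{j_2}$ are ruled out by $j_1\neq j_2$, while $\tfrac{j_1}{m}=\tfrac{n}{j_2}$ and $\tfrac{n}{j_1}=\tfrac{j_2}{m}$ are ruled out by $j_1j_2\neq mn$. This contradiction forces $f$ to be irreducible. The proof is mostly a matter of bookkeeping; the only nontrivial point is verifying that each of the four edges is a single segment, which here is immediate because the $y$-coordinate jump on every edge is $1$, so no restriction needs to be placed on the prime factorizations of $m,n,j_1,j_2$ beyond what is already stated.
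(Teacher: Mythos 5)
Your proof is correct and follows essentially the same route as the paper: construct both Newton log-polygons, note that each edge's $y$-jump is $\pm 1$ so Lemma \ref{puncte} immediately gives a single segment per edge, and apply Theorem \ref{DumDir} to conclude that any proper factor's relative degree would have to lie in the empty intersection $\{j_1/m, n/j_1\}\cap\{j_2/m, n/j_2\}$. The only difference is that you explicitly rule out $d/c = n/m$ via algebraic primitivity rather than simply assuming $d < n$ at the outset, but this is cosmetic.
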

\begin{proof}\ We represent now in Figure 10 the Newton log-polygons with respect to two prime elements $p$ and $q$, one consisting of two edges having positive slopes, and the other consisting of two edges having slopes of different signs, as in Figure 5.e. 

\begin{center}
\setlength{\unitlength}{4.65mm}
\begin{picture}(12,6)
\linethickness{0.075mm}

\put(12,0){\line(0,1){4.8}}   
\thicklines

\put(0,0){\line(5,1){5}}   

\put(5,1){\line(2,1){7}}   

\linethickness{0.15mm}

\put(0,4){\circle{0.12}}
\put(0,4){\circle{0.08}}
\put(0.25,3.875){\circle{0.08}}
\put(0.5,3.75){\circle{0.08}}
\put(0.75,3.625){\circle{0.08}}
\put(1,3.5){\circle{0.08}}
\put(1.25,3.375){\circle{0.08}}
\put(1.5,3.25){\circle{0.08}}
\put(1.75,3.125){\circle{0.08}}
\put(2,3){\circle{0.08}}
\put(2.25,2.875){\circle{0.08}}
\put(2.5,2.75){\circle{0.08}}
\put(2.75,2.625){\circle{0.08}}
\put(3,2.5){\circle{0.08}}
\put(3.25,2.375){\circle{0.08}}

\put(3.5,2.25){\circle{0.08}}
\put(3.75,2.125){\circle{0.08}}
\put(4,2){\circle{0.08}}
\put(4.25,1.875){\circle{0.08}}
\put(4.5,1.75){\circle{0.08}}
\put(4.75,1.625){\circle{0.08}}
\put(5,1.5){\circle{0.08}}
\put(5.25,1.375){\circle{0.08}}
\put(5.5,1.25){\circle{0.08}}
\put(5.75,1.125){\circle{0.08}}
\put(6,1){\circle{0.08}}
\put(6.25,0.875){\circle{0.08}}
\put(6.5,0.75){\circle{0.08}}
\put(6.75,0.625){\circle{0.08}}
\put(7,0.5){\circle{0.08}}
\put(7.25,0.375){\circle{0.08}}
\put(7.5,0.25){\circle{0.08}}
\put(7.75,0.125){\circle{0.08}}
\put(8,0){\circle{0.08}}
\put(8,0){\circle{0.12}}

\put(8.25,0.125){\circle{0.08}}
\put(8.5,0.25){\circle{0.08}}
\put(8.75,0.375){\circle{0.08}}
\put(9,0.5){\circle{0.08}}
\put(9.25,0.625){\circle{0.08}}
\put(9.5,0.75){\circle{0.08}}
\put(9.75,0.875){\circle{0.08}}
\put(10,1){\circle{0.08}}
\put(10.25,1.125){\circle{0.08}}
\put(10.5,1.25){\circle{0.08}}
\put(10.75,1.375){\circle{0.08}}
\put(11,1.5){\circle{0.08}}
\put(11.25,1.625){\circle{0.08}}
\put(11.5,1.75){\circle{0.08}}
\put(11.75,1.875){\circle{0.08}}
\put(12,2){\circle{0.08}}
\put(12,2){\circle{0.12}}

\put(0,0){\vector(1,0){14}}
\put(0,0){\vector(0,1){5.5}}

\put(0,0){\circle{0.12}}
\put(0,0){\circle{0.08}}

\put(5,1){\circle{0.12}}
\put(5,1){\circle{0.08}}

\put(0,4){\circle{0.12}}
\put(0,4){\circle{0.08}}

\put(12,4.5){\circle{0.12}}
\put(12,4.5){\circle{0.08}}

{\tiny

\put(-6.2,4){$P_{1}'=(\log m,\nu _{q}(a_{m}))$}

\put(6,-0.85){$P_{2}'=(\log j_{2},0)$}

\put(-1.5,1.22){$P_{2}=(\log j_{1},\nu _{p}(a_{j_{1}}))$}

\put(12.3,4.4){$P_{3}=(\log n,\nu _{p}(a_{n}))$}

\put(12.3,1.9){$P_{3}'=(\log n,\nu _{q}(a_{n}))$}

\put(-3,-0.85){$P_{1}=(\log m,0)$}

\put(11,-0.85){$(\log n,0)$}

\put(-10.9,-0.85){{\bf Figure 10.}}
}

\end{picture}
\end{center}
\bigskip

In our case $\nu _{q}(a_{n})=\nu _{p}(a_{j_{1}})=\nu _{q}(a_{m})=1$ and $\nu _{p}(a_{n})=2$, so by i), ii) and Lemma \ref{puncte}, each one of the edges $P_{1}P_{2}$, $P_{2}P_{3}$, $P_{1}'P_{2}'$ and $P_{2}'P_{3}'$ contains a single segment. Moreover, the slope of $P_1P_2$ is smaller than the slope of $P_2P_3$, as $\sqrt {mn}<j_{1}<n$, so in particular the log-integral points $P_{1},P_{2}$ and $P_{3}$ are not collinear. 
The intersection
\[
\left\{ \frac{j_{1}}{m},\frac{n}{j_{1}}\right\} \bigcap \left\{ \frac{j_{2}}{m},\frac{n}{j_{2}}\right\} ,
\]
is again empty, showing that $f$ must be irreducible. 
\end{proof}

\begin{theorem}\label{nnnp}
Let $f(s)=\frac{a_{m}}{m^{s}}+\cdots +\frac{a_{n}}{n^{s}}$ be an algebraically primitive Dirichlet polynomial with coefficients in a unique factorization domain $R$, with $a_{m}a_{n}\neq 0$ and let
$p,q$ be two prime elements of $R$. Assume there exist indices $j_{1}\neq j_{2}$ with $m<j_{1}<\sqrt{mn}$, $m<j_{2}<n$ and $j_{1}j_{2}\neq mn$ such that:

i) \thinspace $p^{2}\mid a_{i}$ for $m\leq i<j_{1}$, $p\mid a_{i}$ for $j_{1}\leq i<n$,  $p\nmid a_{n}$, $p^{2}\nmid a_{j_{1}}$ and $p^{3}\nmid a_{m}$;

ii) \thinspace $q\mid a_{i}$ for $i\neq j_{2}$, $q\nmid a_{j_{2}}$, $q^{2}\nmid a_{m}$ and  $q^{2}\nmid a_{n}$.

\noindent Then $f$ is irreducible over $Q(R)$.
\end{theorem}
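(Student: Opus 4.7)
The plan is to mirror the proofs of Theorems \ref{npnp}, \ref{nnpp}, \ref{pppp}, \ref{nnnn}, and \ref{ppnp}. First I would draw the two Newton log-polygons of $f$. With respect to $p$, conditions i) give $\nu_p(a_m)=2$, $\nu_p(a_{j_1})=1$, $\nu_p(a_n)=0$, and the polygon consists of two edges $P_1P_2$, $P_2P_3$ with negative slopes, where $P_1=(\log m,2)$, $P_2=(\log j_1,1)$, $P_3=(\log n,0)$. With respect to $q$, conditions ii) give $\nu_q(a_m)=\nu_q(a_n)=1$ and $\nu_q(a_{j_2})=0$, and the polygon consists of two edges $P_1'P_2'$, $P_2'P_3'$ of slopes with different signs meeting at $P_2'=(\log j_2,0)$, with $P_1'=(\log m,1)$ and $P_3'=(\log n,1)$. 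The hypothesis $m<j_1<\sqrt{mn}$ is exactly what is needed for $-1/\log(j_1/m)<-1/\log(n/j_1)$, i.e.\ for the slopes of $P_1P_2$, $P_2P_3$ to be strictly increasing from left to right, confirming that the $p$-polygon really has the stated shape.

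Next I would verify via Lemma \ref{puncte} that each of the four edges $P_1P_2$, $P_2P_3$, $P_1'P_2'$, $P_2'P_3'$ consists of a single segment. Since the differences of the $y$-coordinates at consecutive vertices are all $\pm 1$, the gcd appearing in Lemma \ref{puncte} is forced to be $1$ in each case, so no edge contains a log-integral point other than its endpoints.

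After these preliminaries I would assume for contradiction that $f$ admits a nonconstant proper factor $g(s)=\frac{b_c}{c^s}+\cdots+\frac{b_d}{d^s}$ with $d<n$ (or $c>m$). By Theorem \ref{DumDir} applied with respect to $p$, the width $\log(d/c)$ of the Newton log-polygon of $g$ must be a sum of widths of some of the segments of the $p$-polygon of $f$, and since each edge is a single segment, the only proper nontrivial possibilities give $d/c\in\{j_1/m,\,n/j_1\}$. Applying Theorem \ref{DumDir} with respect to $q$ yields $d/c\in\{j_2/m,\,n/j_2\}$. Hence $d/c$ must belong to
\[
\Bigl\{\tfrac{j_1}{m},\tfrac{n}{j_1}\Bigr\}\,\bigcap\,\Bigl\{\tfrac{j_2}{m},\tfrac{n}{j_2}\Bigr\}.
\]
The hypothesis $j_1\neq j_2$ rules out the equalities $j_1/m=j_2/m$ and $n/j_1=n/j_2$, while $j_1j_2\neq mn$ rules out $j_1/m=n/j_2$ and $n/j_1=j_2/m$. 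So the intersection is empty, contradicting the existence of $g$, and $f$ must be irreducible.

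I expect no genuine obstacle here: the combinatorial exclusion of the four possible relative degrees is identical to the arguments already carried out in the preceding five theorems of this section. The only points that require a small amount of care are the bookkeeping checks that the $p$-polygon truly has the two-edge shape displayed (ensured by $j_1<\sqrt{mn}$ together with the $p$-adic valuations prescribed by i)), and the verification that each edge is a single segment, both of which are immediate consequences of Lemma \ref{puncte}.
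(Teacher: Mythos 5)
Your proof is correct and follows essentially the same approach as the paper: the same vertex identification $P_1=(\log m,2)$, $P_2=(\log j_1,1)$, $P_3=(\log n,0)$ and $P_1'=(\log m,1)$, $P_2'=(\log j_2,0)$, $P_3'=(\log n,1)$, the same appeal to Lemma \ref{puncte} to conclude each edge is a single segment (since the $y$-differences at consecutive vertices are $\pm 1$), the same use of $j_1<\sqrt{mn}$ to ensure the $p$-polygon has two edges, and the same application of Theorem \ref{DumDir} to reduce to the emptiness of $\{j_1/m,\,n/j_1\}\cap\{j_2/m,\,n/j_2\}$.
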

\begin{proof}\ Figure 11 shows the Newton log-polygons with respect to two prime elements $p$ and $q$, one consisting of two edges having negative slopes, and the other consisting of two edges having slopes of different sign, as in Figure 5.f. 

\begin{center}

\setlength{\unitlength}{4.65mm}
\begin{picture}(12,6)
\linethickness{0.075mm}

\put(12,0){\line(0,1){4.8}}   
\thicklines

\put(0,4.5){\line(2,-1){7}}   

\put(7,1){\line(5,-1){5}}   

\linethickness{0.15mm}

\put(12,4){\circle{0.12}}
\put(12,4){\circle{0.08}}
\put(11.75,3.875){\circle{0.08}}
\put(11.5,3.75){\circle{0.08}}
\put(11.25,3.625){\circle{0.08}}
\put(11,3.5){\circle{0.08}}
\put(10.75,3.375){\circle{0.08}}
\put(10.5,3.25){\circle{0.08}}
\put(10.25,3.125){\circle{0.08}}
\put(10,3){\circle{0.08}}
\put(9.75,2.875){\circle{0.08}}
\put(9.5,2.75){\circle{0.08}}
\put(9.25,2.625){\circle{0.08}}
\put(9,2.5){\circle{0.08}}
\put(8.75,2.375){\circle{0.08}}

\put(8.5,2.25){\circle{0.08}}
\put(8.25,2.125){\circle{0.08}}
\put(8,2){\circle{0.08}}
\put(7.75,1.875){\circle{0.08}}
\put(7.5,1.75){\circle{0.08}}
\put(7.25,1.625){\circle{0.08}}
\put(7,1.5){\circle{0.08}}
\put(6.75,1.375){\circle{0.08}}
\put(6.5,1.25){\circle{0.08}}
\put(6.25,1.125){\circle{0.08}}
\put(6,1){\circle{0.08}}
\put(5.75,0.875){\circle{0.08}}
\put(5.5,0.75){\circle{0.08}}
\put(5.25,0.625){\circle{0.08}}
\put(5,0.5){\circle{0.08}}
\put(4.75,0.375){\circle{0.08}}
\put(4.5,0.25){\circle{0.08}}
\put(4.25,0.125){\circle{0.08}}
\put(4,0){\circle{0.08}}
\put(4,0){\circle{0.12}}

\put(3.75,0.125){\circle{0.08}}
\put(3.5,0.25){\circle{0.08}}
\put(3.25,0.375){\circle{0.08}}
\put(3,0.5){\circle{0.08}}
\put(2.75,0.625){\circle{0.08}}
\put(2.5,0.75){\circle{0.08}}
\put(2.25,0.875){\circle{0.08}}
\put(2,1){\circle{0.08}}
\put(1.75,1.125){\circle{0.08}}
\put(1.5,1.25){\circle{0.08}}
\put(1.25,1.375){\circle{0.08}}
\put(1,1.5){\circle{0.08}}
\put(0.75,1.625){\circle{0.08}}
\put(0.5,1.75){\circle{0.08}}
\put(0.25,1.875){\circle{0.08}}
\put(0,2){\circle{0.08}}
\put(0,2){\circle{0.12}}

\put(0,0){\vector(1,0){14}}
\put(0,0){\vector(0,1){5.5}}

\put(0,0){\circle{0.12}}
\put(0,0){\circle{0.08}}

\put(7,1){\circle{0.12}}
\put(7,1){\circle{0.08}}

\put(0,4.5){\circle{0.12}}
\put(0,4.5){\circle{0.08}}

\put(12,0){\circle{0.12}}
\put(12,0){\circle{0.08}}

{\tiny

\put(-6.2,4.4){$P_{1}=(\log m,\nu _{p}(a_{m}))$}

\put(7.2,1.1){$P_{2}=(\log j_{1},\nu _{p}(a_{j_{1}}))$}

\put(2.5,-0.85){$P_{2}'=(\log j_{2},0)$}

\put(-6.2,1.85){$P_{1}'=(\log m,\nu _{q}(a_{m}))$}

\put(12.3,4){$P_{3}'=(\log n,\nu _{q}(a_{n}))$}

\put(-1.5,-0.85){$(\log m,0)$}

\put(10,-0.85){$P_{3}=(\log n,0)$}

\put(-10.9,-0.85){{\bf Figure 11.}}
}

\end{picture}
\end{center}
\bigskip

In our case $\nu _{q}(a_{n})=\nu _{p}(a_{j_{1}})=\nu _{q}(a_{m})=1$ and $\nu _{p}(a_{m})=2$, so by i), ii) and Lemma \ref{puncte}, each one of the edges $P_{1}P_{2}$, $P_{2}P_{3}$, $P_{1}'P_{2}'$ and $P_{2}'P_{3}'$ contains a single segment. The slope of $P_1P_2$ is smaller than the slope of $P_2P_3$, this time because $m<j_{1}<\sqrt{mn}$, so in particular the log-integral points $P_{1},P_{2}$ and $P_{3}$ are not collinear.
The intersection
\[
\left\{ \frac{j_{1}}{m},\frac{n}{j_{1}}\right\} \bigcap \left\{ \frac{j_{2}}{m},\frac{n}{j_{2}}\right\} ,
\]
is again empty, which shows that $f$ is irreducible. 
\end{proof}

\begin{theorem}\label{zpnp}
Let $f(s)=\frac{a_{m}}{m^{s}}+\cdots +\frac{a_{n}}{n^{s}}$ be an algebraically primitive Dirichlet polynomial with coefficients in a unique factorization domain $R$, with $a_{m}a_{n}\neq 0$ and let
$p,q$ be two prime elements of $R$. Assume there exists an index $j\in \{ m+2,\dots ,n-1\} $ with $j\neq mn/(m+1)$ such that:

i) \ $p\mid a_{i}$ for all $i\geq m+2$, $p\nmid a_{m}a_{m+1}$, $p^{2}\nmid a_{n}$;

ii) \thinspace $q\mid a_{i}$ for all $i\neq j$, $q\nmid a_{j}$, $q^{2}\nmid a_{m}$ and $q^{2}\nmid a_{n}$;

\noindent Then $f$ is irreducible over $Q(R)$.
\end{theorem}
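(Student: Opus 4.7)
The plan is to mirror the proofs of Theorems \ref{npnp}--\ref{nnnp}: draw the Newton log-polygons of $f$ with respect to $p$ and to $q$, verify via Lemma \ref{puncte} that every edge is a single segment, and then use Theorem \ref{DumDir} to rule out every potential relative degree of a non-trivial factor.

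For the prime $p$, condition i) gives $\nu_p(a_m)=\nu_p(a_{m+1})=0$, $\nu_p(a_n)=1$, and $\nu_p(a_i)\geq 1$ for $m+2\leq i\leq n-1$. Hence the Newton log-polygon of $f$ with respect to $p$ consists of a horizontal edge from $(\log m,0)$ to $(\log(m+1),0)$ followed by an edge of positive slope ending at $(\log n,1)$. The horizontal edge contains no interior log-integral point since no integer lies strictly between $m$ and $m+1$, and the second edge is a single segment by Lemma \ref{puncte} because the vertical jump equals $1$. So the two segment relative degrees coming from $p$ are $(m+1)/m$ and $n/(m+1)$.

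For the prime $q$, condition ii) yields $\nu_q(a_m)=\nu_q(a_n)=1$, $\nu_q(a_j)=0$, and $\nu_q(a_i)\geq 1$ for $m\leq i\leq n$ with $i\neq j$. The corresponding Newton log-polygon has edges from $(\log m,1)$ to $(\log j,0)$ and from $(\log j,0)$ to $(\log n,1)$; both are single segments by Lemma \ref{puncte} since the vertical jumps are $\pm 1$, and their relative degrees are $j/m$ and $n/j$.

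Finally, I would assume for a contradiction that $f=gh$ with $g,h$ both nonconstant (they are automatically algebraically primitive, so $d/c>1$ strictly), and let $d/c\in(1,n/m)$ be the relative degree of $g$, with $c\mid m$ and $d\mid n$. By Theorem \ref{DumDir} applied with each prime, and because each polygon consists of two single-segment edges whose widths sum to $\log(n/m)$, the value $d/c$ must lie in both $\{(m+1)/m,\,n/(m+1)\}$ and $\{j/m,\,n/j\}$. A direct case check of the four equalities disposes of the argument: $(m+1)/m=j/m$ and $n/(m+1)=n/j$ both give $j=m+1$, contradicting $j\geq m+2$; the cross-equalities $(m+1)/m=n/j$ and $n/(m+1)=j/m$ both give $j=mn/(m+1)$, which is excluded by hypothesis. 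Thus the intersection is empty, $g$ cannot exist, and $f$ is irreducible. The only mild technicality is that the horizontal $p$-edge has $y_1=y_2$, falling outside the $y_1\neq y_2$ hypothesis of Lemma \ref{puncte}, but this is immediately handled by the observation that $(m,m+1)$ contains no integers.
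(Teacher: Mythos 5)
Your proposal is correct and follows the same route as the paper's proof: you reconstruct the two Newton log-polygons (horizontal-plus-positive-slope for $p$, V-shape for $q$), verify each edge is a single segment, invoke Theorem \ref{DumDir} to constrain the relative degree $d/c$ of a hypothetical factor to $\{(m+1)/m,\,n/(m+1)\}\cap\{j/m,\,n/j\}$, and show the intersection is empty under the hypotheses $j\neq m+1$ and $j\neq mn/(m+1)$. Your explicit handling of the horizontal $p$-edge, where Lemma \ref{puncte} formally requires $y_1\neq y_2$ but the absence of integers in $(m,m+1)$ settles the matter directly, is a small but welcome clarification that the paper glosses over.
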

\begin{proof}\ Let us represent in Figure 12 the Newton log-polygons with respect to two prime elements $p$ and $q$, the first one consisting of an horizontal edge followed by an edge with positive slope, and the second one consisting of two edges having slopes with different signs, as in Figure 5.g. 

\begin{center}

\setlength{\unitlength}{4.65mm}
\begin{picture}(12,6)
\linethickness{0.075mm}

\put(12,0){\line(0,1){4.5}}   
\thicklines

\put(0,0){\line(1,0){4}}   

\put(4,0){\line(2,1){8}}   

\linethickness{0.15mm}

\put(0,4){\circle{0.12}}
\put(0,4){\circle{0.08}}
\put(0.25,3.875){\circle{0.08}}
\put(0.5,3.75){\circle{0.08}}
\put(0.75,3.625){\circle{0.08}}
\put(1,3.5){\circle{0.08}}
\put(1.25,3.375){\circle{0.08}}
\put(1.5,3.25){\circle{0.08}}
\put(1.75,3.125){\circle{0.08}}
\put(2,3){\circle{0.08}}
\put(2.25,2.875){\circle{0.08}}
\put(2.5,2.75){\circle{0.08}}
\put(2.75,2.625){\circle{0.08}}
\put(3,2.5){\circle{0.08}}
\put(3.25,2.375){\circle{0.08}}

\put(3.5,2.25){\circle{0.08}}
\put(3.75,2.125){\circle{0.08}}
\put(4,2){\circle{0.08}}
\put(4.25,1.875){\circle{0.08}}
\put(4.5,1.75){\circle{0.08}}
\put(4.75,1.625){\circle{0.08}}
\put(5,1.5){\circle{0.08}}
\put(5.25,1.375){\circle{0.08}}
\put(5.5,1.25){\circle{0.08}}
\put(5.75,1.125){\circle{0.08}}
\put(6,1){\circle{0.08}}
\put(6.25,0.875){\circle{0.08}}
\put(6.5,0.75){\circle{0.08}}
\put(6.75,0.625){\circle{0.08}}
\put(7,0.5){\circle{0.08}}
\put(7.25,0.375){\circle{0.08}}
\put(7.5,0.25){\circle{0.08}}
\put(7.75,0.125){\circle{0.08}}
\put(8,0){\circle{0.08}}
\put(8,0){\circle{0.12}}

\put(8.25,0.125){\circle{0.08}}
\put(8.5,0.25){\circle{0.08}}
\put(8.75,0.375){\circle{0.08}}
\put(9,0.5){\circle{0.08}}
\put(9.25,0.625){\circle{0.08}}
\put(9.5,0.75){\circle{0.08}}
\put(9.75,0.875){\circle{0.08}}
\put(10,1){\circle{0.08}}
\put(10.25,1.125){\circle{0.08}}
\put(10.5,1.25){\circle{0.08}}
\put(10.75,1.375){\circle{0.08}}
\put(11,1.5){\circle{0.08}}
\put(11.25,1.625){\circle{0.08}}
\put(11.5,1.75){\circle{0.08}}
\put(11.75,1.875){\circle{0.08}}
\put(12,2){\circle{0.08}}
\put(12,2){\circle{0.12}}

\put(0,0){\vector(1,0){14}}
\put(0,0){\vector(0,1){5.5}}

\put(0,0){\circle{0.12}}
\put(0,0){\circle{0.08}}

\put(4,0){\circle{0.12}}
\put(4,0){\circle{0.08}}

\put(0,4){\circle{0.12}}
\put(0,4){\circle{0.08}}

\put(12,4){\circle{0.12}}
\put(12,4){\circle{0.08}}

{\tiny

\put(-6.2,3.9){$P_{1}'=(\log m,\nu _{q}(a_{m}))$}

\put(6.3,-0.85){$P_{2}'=(\log j_{2},0)$}

\put(1.5,-0.85){$P_{2}=(\log j_{1},0)$}

\put(12.3,3.9){$P_{3}=(\log n,\nu _{p}(a_{n}))$}

\put(12.3,1.9){$P_{3}'=(\log n,\nu _{q}(a_{n}))$}

\put(-3.2,-0.85){$P_{1}=(\log m,0)$}

\put(11.35,-0.85){$(\log n,0)$}

\put(-10.9,-0.85){{\bf Figure 12.}}
}

\end{picture}
\end{center}
\bigskip

In our case $j_{1}=m+1$, $j_{2}$ is denoted by $j$, $\nu _{p}(a_{n})=\nu _{q}(a_{m})=\nu _{q}(a_{n})=1$, so by i), ii) and Lemma \ref{puncte}, each of the edges $P_{1}P_{2}$, $P_{2}P_{3}$, $P_{1}'P_{2}'$ and $P_{2}'P_{3}'$ contains a single segment. Again, let $g(s)=\frac{b_{c}}{c^{s}}+\cdots +\frac{b_{d}}{d^{s}}$ be a nonconstant factor of $f$. If $d$ would be less than $n$, then by Theorem \ref{DumDir}, $\frac{d}{c}$ would belong to
\[
\left\{ \frac{m+1}{m},\frac{n}{m+1}\right\} \bigcap \left\{ \frac{j}{m},\frac{n}{j}\right\} .
\]
Since this intersection is empty (as $j\neq m+1$ and $j\neq mn/(m+1)$), $d$ must be equal to $n$ (and $c$ to $m$), proving that $f$ is irreducible. 
\end{proof}

\begin{theorem}\label{zppp}
Let $f(s)=\frac{a_{m}}{m^{s}}+\cdots +\frac{a_{n}}{n^{s}}$ be an algebraically primitive Dirichlet polynomial with coefficients in a unique factorization domain $R$, with $a_{m}a_{n}\neq 0$, and let $p$ and $q$ be two prime elements of $R$. Assume that there exists an index $j\in \{ m+2,\dots ,n-1\} $ with $j\neq mn/(m+1)$ and $j>\sqrt{mn}$ such that:

i) \ $p\mid a_{i}$ for all $i\geq m+2$, $p\nmid a_{m}a_{m+1}$, $p^{2}\nmid a_{n}$;

ii) \thinspace $q\nmid a_{m}$, $q\mid a_{i}$ for $m<i\leq j$, $q^{2}\nmid a_{j}$, $q^{2}\mid a_{i}$ for $j<i\leq n$ and $q^{3}\nmid a_{n}$.

\noindent Then $f$ is irreducible over $Q(R)$.
\end{theorem}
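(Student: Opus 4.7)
The proof will follow the same template as the preceding Theorems \ref{npnp}--\ref{zpnp}: I will read off the shape of the two Newton log-polygons of $f$ with respect to $p$ and $q$, use Lemma \ref{puncte} to verify that every edge of each polygon is a single segment, and then apply Theorem \ref{DumDir} to both primes simultaneously in order to force the relative degree of any hypothetical nonconstant factor to lie in the intersection of two small sets, which I will then show is empty.

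First, condition i) forces the $p$-Newton log-polygon of $f$ to have exactly the three vertices $P_1=(\log m,0)$, $P_2=(\log(m+1),0)$, $P_3=(\log n,1)$, i.e.\ a horizontal edge followed by a positive-slope edge (this is the configuration of Figure 5.h). The horizontal edge is a single segment since no integer lies strictly between $m$ and $m+1$, and $P_2P_3$ is a single segment by Lemma \ref{puncte} because $\nu_p(a_n)-\nu_p(a_{m+1})=1$. Condition ii) together with the hypothesis $j>\sqrt{mn}$ then forces the $q$-Newton log-polygon to have vertices $P_1'=(\log m,0)$, $P_2'=(\log j,1)$, $P_3'=(\log n,2)$; the inequality $j>\sqrt{mn}$ is exactly what makes the slope of $P_1'P_2'$ strictly smaller than that of $P_2'P_3'$, so $P_2'$ really is a vertex. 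Both edges are again single segments by Lemma \ref{puncte}, since $\nu_q(a_j)-\nu_q(a_m)=1$ and $\nu_q(a_n)-\nu_q(a_j)=1$. The segment widths are thus $\log\frac{m+1}{m}$ and $\log\frac{n}{m+1}$ in the first polygon and $\log\frac{j}{m}$ and $\log\frac{n}{j}$ in the second.

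Now suppose, for contradiction, that $f=gh$ with $g,h$ both nonconstant, and write $g(s)=\frac{b_c}{c^s}+\cdots+\frac{b_d}{d^s}$ with $c<d$. Since $f$ is algebraically primitive, neither $g$ nor $h$ can reduce to a single term, so each contributes at least one edge to each of the two Newton log-polygons. Because every edge of the $p$- and $q$-polygons of $f$ is a single segment, Theorem \ref{DumDir} forces each edge of $g$ to coincide with a full edge of the corresponding polygon of $f$ (any partial overlap would create an intermediate log-integral glue point on a single segment, which is impossible). Hence the relative degree $\frac{d}{c}$ is simultaneously a nonempty proper subproduct of $\{\frac{m+1}{m},\frac{n}{m+1}\}$ and of $\{\frac{j}{m},\frac{n}{j}\}$, so
\[
\frac{d}{c}\in\left\{\frac{m+1}{m},\frac{n}{m+1}\right\}\cap\left\{\frac{j}{m},\frac{n}{j}\right\}.
\]
The only remaining check, and the main (though modest) obstacle, is to verify that the four possible coincidences between the elements of these two sets are each ruled out by the hypotheses: two of them collapse to $j=m+1$ (excluded by $j\geq m+2$) and the other two to $j=\frac{mn}{m+1}$ (excluded by assumption). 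Thus the intersection is empty, no proper nonconstant factorization of $f$ can exist, and $f$ is irreducible over $Q(R)$.
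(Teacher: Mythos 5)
Your proof is correct and follows essentially the same approach as the paper: identify the shape of the two Newton log-polygons (Figure 5.h), verify via Lemma \ref{puncte} and the hypothesis $j>\sqrt{mn}$ that each edge is a single segment, apply Theorem \ref{DumDir} to force the relative degree of a hypothetical proper nonconstant factor into the intersection $\{\frac{m+1}{m},\frac{n}{m+1}\}\cap\{\frac{j}{m},\frac{n}{j}\}$, and then check that the four possible coincidences reduce to $j=m+1$ or $j=mn/(m+1)$, both excluded by hypothesis. Your parenthetical justification that a partial overlap of a factor's edge with a single-segment edge of $f$ would force an intermediate log-integral point is a sensible way to articulate the step that the paper leaves implicit.
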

\begin{proof}\ Let us represent in Figure 13 the Newton log-polygons with respect to two prime elements $p,q$, the first one consisting of an horizontal edge followed by an edge with positive slope, and the second one consisting of two edges having positive slopes, as in Figure 5.h. 

\begin{center}
\setlength{\unitlength}{4.65mm}
\begin{picture}(12,6)
\linethickness{0.075mm}

\put(12,0){\line(0,1){5}}   
\thicklines

\put(0,0){\line(1,0){4}}   

\put(4,0){\line(2,1){8}}   

\linethickness{0.15mm}

\put(4,0){\circle{0.08}}
\put(4,0){\circle{0.12}}

\put(12,4){\circle{0.12}}
\put(12,4){\circle{0.08}}

\put(0,0){\circle{0.12}}
\put(0,0){\circle{0.08}}
\put(0.25,0.025){\circle{0.08}}
\put(0.5,0.05){\circle{0.08}}
\put(0.75,0.075){\circle{0.08}}
\put(1,0.1){\circle{0.08}}
\put(1.25,0.125){\circle{0.08}}
\put(1.5,0.15){\circle{0.08}}
\put(1.75,0.175){\circle{0.08}}
\put(2,0.2){\circle{0.08}}
\put(2.25,0.225){\circle{0.08}}
\put(2.5,0.25){\circle{0.08}}
\put(2.75,0.275){\circle{0.08}}
\put(3,0.3){\circle{0.08}}
\put(3.25,0.325){\circle{0.08}}
\put(3.5,0.35){\circle{0.08}}
\put(3.75,0.375){\circle{0.08}}
\put(4,0.4){\circle{0.08}}
\put(4.25,0.425){\circle{0.08}}
\put(4.5,0.45){\circle{0.08}}
\put(4.75,0.475){\circle{0.08}}
\put(5,0.5){\circle{0.08}}
\put(5.25,0.525){\circle{0.08}}
\put(5.5,0.55){\circle{0.08}}
\put(5.75,0.575){\circle{0.08}}
\put(6,0.6){\circle{0.08}}
\put(6.25,0.625){\circle{0.08}}
\put(6.5,0.65){\circle{0.08}}
\put(6.75,0.675){\circle{0.08}}
\put(7,0.7){\circle{0.08}}
\put(7.25,0.725){\circle{0.08}}
\put(7.5,0.75){\circle{0.08}}
\put(7.75,0.775){\circle{0.08}}
\put(8,0.8){\circle{0.08}}
\put(8,0.8){\circle{0.12}}
\put(8.25,0.9){\circle{0.08}}
\put(8.5,1){\circle{0.08}}
\put(8.75,1.1){\circle{0.08}}
\put(9,1.2){\circle{0.08}}
\put(9.25,1.3){\circle{0.08}}
\put(9.5,1.4){\circle{0.08}}
\put(9.75,1.5){\circle{0.08}}
\put(10,1.6){\circle{0.08}}
\put(10.25,1.7){\circle{0.08}}
\put(10.5,1.8){\circle{0.08}}
\put(10.75,1.9){\circle{0.08}}
\put(11,2){\circle{0.08}}
\put(11.25,2.1){\circle{0.08}}
\put(11.5,2.2){\circle{0.08}}
\put(11.75,2.3){\circle{0.08}}
\put(12,2.4){\circle{0.08}}
\put(12,2.4){\circle{0.12}}

\put(0,0){\vector(1,0){14.9}}
\put(0,0){\vector(0,1){5.5}}

{\tiny 
\put(3.5,-0.85){$P_{2}=(\log j_{1},0)$}

\put(8.7,0.37){$P_{2}'=(\log j_{2},\nu _{q}(a_{j_{2}}))$}

\put(12.3,2.2){$P_{3}'=(\log n,\nu _{q}(a_{n}))$}

\put(12.3,3.8){$P_{3}=(\log n,\nu _{p}(a_{n}))$}

\put(-3.5,-0.85){$P_{1}=P_{1}'=(\log m,0)$}

\put(11,-0.85){$(\log n,0)$}

\put(-10.9,-0.85){{\bf Figure 13.}}
}

\end{picture}
\end{center}
\bigskip

In our particular case $j_{1}=m+1$, $j_{2}$ is denoted by $j$, $\nu _{p}(a_{n})=\nu _{q}(a_{j})=1$ and $\nu _{q}(a_{n})=2$, so by i), ii) and Lemma \ref{puncte}, each one of the edges $P_{1}P_{2}$, $P_{2}P_{3}$, $P_{1}'P_{2}'$ and $P_{2}'P_{3}'$ contains a single segment. Also, the slope of $P_2'P_3'$ is greater than the slope of $P_1'P_2'$, as $j>\sqrt{mn}$, so in particular the log-integral points $P_{1}',P_{2}'$ and $P_{3}'$ are not collinear. Again, let $g(s)=\frac{b_{c}}{c^{s}}+\cdots +\frac{b_{d}}{d^{s}}$ be a nonconstant factor of $f$. If $d$ would be less than $n$, by Theorem \ref{DumDir}, $\frac{d}{c}$ would belong to
\[
\left\{ \frac{m+1}{m},\frac{n}{m+1}\right\} \bigcap \left\{ \frac{j}{m},\frac{n}{j}\right\} .
\]
Since this intersection is empty (as $j\neq m+1$ and $j\neq mn/(m+1)$), $d$ must be equal to $n$ (and $c$ to $m$), so $f$ must be irreducible. 
\end{proof}

\begin{theorem}\label{zpnn}
Let $f(s)=\frac{a_{m}}{m^{s}}+\cdots +\frac{a_{n}}{n^{s}}$ be an algebraically primitive Dirichlet polynomial with coefficients in a unique factorization domain $R$, with $a_{m}a_{n}\neq 0$ and let
$p,q$ be two prime elements of $R$. Assume there exists an index $j$ with $m+1<j<\sqrt{mn}$ such that:

i) \ $p\mid a_{i}$ for all $i\geq m+2$, $p\nmid a_{m}a_{m+1}$, $p^{2}\nmid a_{n}$;

ii) $q^{2}\mid a_{i}$ for $m\leq i<j$, $q\mid a_{i}$ for $j\leq i<n$,  $q\nmid a_{n}$, $q^{2}\nmid a_{j}$ and $q^{3}\nmid a_{m}$;

\noindent Then $f$ is irreducible over $Q(R)$.
\end{theorem}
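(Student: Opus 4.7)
The plan is to follow the same strategy used in the proofs of Theorems \ref{zpnp}, \ref{zppp}, and more generally the family of results built on Theorem \ref{DumDir} combined with Lemma \ref{puncte}: one draws both Newton log-polygons, verifies that all of the edges appearing are in fact single segments, reads off from each polygon the list of possible relative degrees of a hypothetical nonconstant factor, and then checks that the two lists are disjoint. Here, the $p$-polygon is the one pictured in Figure 5.i (a horizontal edge followed by an edge of positive slope), while the $q$-polygon is the one in Figure 5.d (two edges of strictly decreasing negative slope).

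Concretely, condition i) gives $\nu_p(a_m)=\nu_p(a_{m+1})=0$ and $\nu_p(a_n)=1$, so the $p$-polygon has vertices $P_1=(\log m,0)$, $P_2=(\log(m+1),0)$, $P_3=(\log n,1)$. The edge $P_1P_2$ is trivially a segment because there is no integer strictly between $m$ and $m+1$, and $P_2P_3$ is a segment by Lemma \ref{puncte} since $\nu_p(a_n)-\nu_p(a_{m+1})=1$. Condition ii) gives $\nu_q(a_m)=2$, $\nu_q(a_j)=1$, $\nu_q(a_n)=0$, so the $q$-polygon has vertices $P_1'=(\log m,2)$, $P_2'=(\log j,1)$, $P_3'=(\log n,0)$; again each edge is a segment by Lemma \ref{puncte}, since the $y$-coordinates drop by exactly $1$ across each edge. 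The inequality $m+1<j<\sqrt{mn}$ guarantees convexity of the $q$-polygon (the slope of $P_1'P_2'$ is strictly less than that of $P_2'P_3'$ precisely when $j<\sqrt{mn}$).

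Applying Theorem \ref{DumDir} to a hypothetical nonconstant factor $g(s)=\frac{b_c}{c^s}+\cdots+\frac{b_d}{d^s}$ of $f$, the relative degree $d/c$ must arise by summing the widths of some subset of segments in each polygon. Reading both polygons, this forces
\[
\frac{d}{c}\in\left\{\frac{m+1}{m},\frac{n}{m+1}\right\}\cap\left\{\frac{j}{m},\frac{n}{j}\right\}.
\]
The main (and essentially only) nontrivial point is to verify that this intersection is empty. The equalities $(m+1)/m=j/m$ and $n/(m+1)=n/j$ both force $j=m+1$, contradicting $j>m+1$; and the cross-equalities $(m+1)/m=n/j$ and $n/(m+1)=j/m$ both reduce to $j=mn/(m+1)$. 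The remaining step is therefore to rule out $j=mn/(m+1)$, which I would do by showing $mn/(m+1)>\sqrt{mn}$ under our hypotheses. Indeed, since $j$ is an integer with $m+1<j<\sqrt{mn}$ we have $\sqrt{mn}>m+2>m+1$, i.e.\ $mn>(m+1)^2$, which rearranges to $mn/(m+1)>\sqrt{mn}>j$.

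Once the intersection is shown to be empty, no nontrivial proper factor $g$ of $f$ can exist, so $f$ is irreducible over $Q(R)$. The only subtle point is the arithmetic exclusion $j\neq mn/(m+1)$, and the upper bound $j<\sqrt{mn}$ is precisely what is needed both to make the $q$-polygon convex and to rule out this last equality; every other step is a direct application of the machinery already developed in Section \ref{moreprimes}.
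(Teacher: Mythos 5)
Your proof is correct and follows essentially the same route as the paper's: construct both Newton log-polygons, verify each edge is a single segment via Lemma~\ref{puncte}, apply Theorem~\ref{DumDir} to force the relative degree of a hypothetical factor into $\{\frac{m+1}{m},\frac{n}{m+1}\}\cap\{\frac{j}{m},\frac{n}{j}\}$, and then rule out $j=\frac{mn}{m+1}$ by observing that $\frac{mn}{m+1}>\sqrt{mn}>j$. The paper's proof is the same argument, stated a bit more tersely.
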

\begin{proof}\ Figure 14 displays the Newton log-polygons with respect to two prime elements $p$ and $q$, the first one consisting of a horizontal edge followed by an edge with positive slope, and the second one consisting of two edges having negative slopes, as in Figure 5.i. 

\begin{center}
\setlength{\unitlength}{4.65mm}
\begin{picture}(12,6)
\linethickness{0.075mm}

\put(12,0){\line(0,1){4.5}}   
\thicklines

\put(0,0){\line(1,0){4}}   

\put(4,0){\line(2,1){8}}   

\linethickness{0.15mm}

\put(0,4){\circle{0.12}}
\put(0,4){\circle{0.08}}
\put(0.25,3.875){\circle{0.08}}
\put(0.5,3.75){\circle{0.08}}
\put(0.75,3.625){\circle{0.08}}
\put(1,3.5){\circle{0.08}}
\put(1.25,3.375){\circle{0.08}}
\put(1.5,3.25){\circle{0.08}}
\put(1.75,3.125){\circle{0.08}}
\put(2,3){\circle{0.08}}
\put(2.25,2.875){\circle{0.08}}
\put(2.5,2.75){\circle{0.08}}
\put(2.75,2.625){\circle{0.08}}
\put(3,2.5){\circle{0.08}}
\put(3.25,2.375){\circle{0.08}}

\put(3.5,2.25){\circle{0.08}}
\put(3.75,2.125){\circle{0.08}}
\put(4,2){\circle{0.08}}
\put(4.25,1.875){\circle{0.08}}
\put(4.5,1.75){\circle{0.08}}
\put(4.75,1.625){\circle{0.08}}
\put(5,1.5){\circle{0.08}}
\put(5.25,1.375){\circle{0.08}}
\put(5.5,1.25){\circle{0.08}}
\put(5.75,1.125){\circle{0.08}}
\put(6,1){\circle{0.08}}
\put(6.25,0.875){\circle{0.08}}
\put(6.5,0.75){\circle{0.08}}
\put(6.75,0.625){\circle{0.08}}

\put(7,0.5){\circle{0.12}}
\put(7,0.5){\circle{0.08}}
\put(7.25,0.475){\circle{0.08}}
\put(7.5,0.45){\circle{0.08}}
\put(7.75,0.425){\circle{0.08}}
\put(8,0.4){\circle{0.08}}

\put(8.25,0.375){\circle{0.08}}
\put(8.5,0.35){\circle{0.08}}
\put(8.75,0.325){\circle{0.08}}
\put(9,0.3){\circle{0.08}}
\put(9.25,0.275){\circle{0.08}}
\put(9.5,0.25){\circle{0.08}}
\put(9.75,0.225){\circle{0.08}}
\put(10,0.2){\circle{0.08}}
\put(10.25,0.175){\circle{0.08}}
\put(10.5,0.15){\circle{0.08}}
\put(10.75,0.125){\circle{0.08}}
\put(11,0.1){\circle{0.08}}
\put(11.25,0.075){\circle{0.08}}
\put(11.5,0.05){\circle{0.08}}
\put(11.75,0.025){\circle{0.08}}
\put(12,0){\circle{0.08}}
\put(12,0){\circle{0.12}}

\put(0,0){\vector(1,0){14}}
\put(0,0){\vector(0,1){5.5}}

\put(0,0){\circle{0.12}}
\put(0,0){\circle{0.08}}

\put(4,0){\circle{0.12}}
\put(4,0){\circle{0.08}}

\put(0,4){\circle{0.12}}
\put(0,4){\circle{0.08}}

\put(12,4){\circle{0.12}}
\put(12,4){\circle{0.08}}

{\tiny

\put(-6.2,3.85){$P_{1}'=(\log m,\nu _{q}(a_{m}))$}

\put(7.1,0.85){$P_{2}'=(\log j_{2},\nu _{q}(a_{j_{2}}))$}

\put(3,-0.85){$P_{2}=(\log j_{1},0)$}

\put(12.3,3.85){$P_{3}=(\log n,\nu _{p}(a_{n}))$}

\put(-2,-0.85){$P_{1}=(\log m,0)$}

\put(10.2,-0.85){$P_{3}'=(\log n,0)$}

\put(-10.9,-0.85){{\bf Figure 14.}}
}

\end{picture}
\end{center}
\bigskip

In our particular case $j_{1}=m+1$, $j_{2}$ is denoted by $j$, $\nu _{p}(a_{n})=\nu _{q}(a_{j})=1$ and $\nu _{q}(a_{m})=2$, so by i), ii) and Lemma \ref{puncte}, each one of the edges $P_{1}P_{2}$, $P_{2}P_{3}$, $P_{1}'P_{2}'$ and $P_{2}'P_{3}'$ contains a single segment. Moreover, the slope of $P_2'P_3'$ is greater than the slope of $P_1'P_2'$, as $j<\sqrt{mn}$, so in particular the log-integral points $P_{1}',P_{2}'$ and $P_{3}'$ are not collinear. Now, if $g(s)=\frac{b_{c}}{c^{s}}+\cdots +\frac{b_{d}}{d^{s}}$ is a nonconstant factor of $f$ with $d<n$, then by Theorem \ref{DumDir}, $\frac{d}{c}$ must belong to
\[
\left\{ \frac{m+1}{m},\frac{n}{m+1}\right\} \bigcap \left\{ \frac{j}{m},\frac{n}{j}\right\} .
\]
The existence of an index $j$ with $m+1<j<\sqrt{mn}$ requires that $\sqrt{mn}>m+2$, and also implies that $\frac{mn}{m+1}>\sqrt{mn}$, so our $j$ cannot be equal to $\frac{mn}{m+1}$. Thus, the above intersection is empty, so $d$ must be equal to $n$, and $c$ to $m$, which proves the irreducibility of $f$. 
\end{proof}

\begin{theorem}\label{pnp}
Let $f(s)=\frac{a_{m}}{m^{s}}+\cdots +\frac{a_{n}}{n^{s}}$ be an algebraically primitive Dirichlet polynomial with coefficients in a unique factorization domain $R$, $a_{m}a_{n}\neq 0$, let $p,q$ be prime elements of $R$, and assume that $q_{1},\dots ,q_{k}$ are all the prime factors of $m\cdot n$. Let $j$ be an index with $m<j<n$ and let $\delta :=\gcd (\nu _{p}(a_{n})-\nu _{p}(a_{m}),\nu _{q_{1}}(n)-\nu _{q_{1}}(m),\dots ,\nu _{q_{k}}(n)-\nu _{q_{k}}(m))$. If
\smallskip

i) \ $\nu _{p}(a_{m})\neq \nu _{p}(a_{n})$\ \ and \ \ 
$\left ( \frac{n}{i}\right ) ^{\nu _{p}(a_{i})-\nu _{p}(a_{m})}>\left ( \frac{m}{i}\right ) ^{\nu _{p}(a_{i})-\nu _{p}(a_{n})}$ for $m<i<n$,

ii) \thinspace $q\mid a_{i}$ for all $i\neq j$, $q\nmid a_{j}$, $q^{2}\nmid a_{m}$ and $q^{2}\nmid a_{n}$;

iii) $j\not\in \{ m^{1-\frac{i}{\delta }}n^{\frac{i}{\delta }}\ :\ 0<i<\delta \} $,
\smallskip

\noindent then $f$ is irreducible over $Q(R)$.
\end{theorem}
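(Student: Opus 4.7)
The plan is to combine, in the spirit of the preceding theorems of this section, the analyses of the Newton log-polygons of $f$ with respect to the two primes $p$ and $q$, and then to use condition iii) to eliminate the finitely many remaining possibilities for the relative degree of a hypothetical nonconstant proper factor of $f$.

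First I would observe that condition i) forces the $p$-Newton log-polygon of $f$ to consist of a single edge $E_p$ joining $(\log m,\nu_p(a_m))$ to $(\log n,\nu_p(a_n))$: the inequality in i) is precisely (\ref{deasupra}) applied to the line through these two endpoints, placing every intermediate log-integral point strictly above $E_p$. Lemma \ref{puncte} applied to $E_p$ with $x_1=m,x_2=n,y_1=\nu_p(a_m),y_2=\nu_p(a_n)$ then shows that $E_p$ decomposes into $\delta$ segments of common width $\tfrac{1}{\delta}(\log n-\log m)$, and that the log-integral points on $E_p$ have $x$-coordinates $\log(m^{1-i/\delta}n^{i/\delta})$ for $i=0,\dots,\delta$. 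In parallel, condition ii) yields a $q$-Newton log-polygon with three vertices $(\log m,1),(\log j,0),(\log n,1)$; its two edges have vertical drop $\pm 1$, so by Lemma \ref{puncte} each is a single segment, of relative degree $j/m$ and $n/j$ respectively.

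Assume now, for contradiction, that $f=gh$ with $g,h$ nonconstant Dirichlet polynomials, and let $g$ have min-degree $c$ and degree $d$. Algebraic primitivity of $f$ descends to $g$ and $h$, from which $c\mid m$, $d\mid n$ and $1<d/c<n/m$. Applying Theorem \ref{DumDir} to the $q$-Newton log-polygon of $f$, $\log(d/c)$ must be a subsum of $\{\log(j/m),\log(n/j)\}$ in which each term appears at most once; combined with $d/c<n/m$ this yields $d/c\in\{j/m,n/j\}$. Applying Theorem \ref{DumDir} to the $p$-Newton log-polygon, $\log(d/c)$ must be an integer multiple of $\tfrac{1}{\delta}(\log n-\log m)$, hence $d/c=(n/m)^{k/\delta}$ for some $k$ with $0<k<\delta$.

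Combining the two forms of $d/c$: if $j/m=(n/m)^{k/\delta}$ then $j=m^{1-k/\delta}n^{k/\delta}$, while if $n/j=(n/m)^{k/\delta}$ then $j=m^{k/\delta}n^{1-k/\delta}=m^{1-k'/\delta}n^{k'/\delta}$ with $k':=\delta-k\in\{1,\dots,\delta-1\}$. In either case $j$ belongs to the forbidden set listed in iii), a contradiction. The main obstacle is the simultaneous application of Theorem \ref{DumDir} to the two Newton log-polygons so as to pin down both the multiplicative form $d/c=(n/m)^{k/\delta}$ and the membership $d/c\in\{j/m,n/j\}$; once both are secured, condition iii) closes the argument by a clean case split.
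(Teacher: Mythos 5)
Your proof is correct and takes essentially the same route as the paper: condition i) collapses the $p$-Newton log-polygon to a single edge with $\delta$ equal segments, condition ii) gives the $q$-Newton log-polygon two one-segment edges of relative degrees $j/m$ and $n/j$, and Theorem \ref{DumDir} applied to each forces the relative degree $d/c$ of a hypothetical proper factor to lie in $\{j/m,n/j\}\cap\{(n/m)^{i/\delta}:0<i<\delta\}$, which iii) makes empty. The paper additionally dispatches $\delta=1$ via Theorem \ref{calaDumas} before running the argument, but your version handles that case implicitly since the second set is then empty.
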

\begin{proof} \ Figure 15 displays the Newton log-polygons with respect to two prime elements $p$ and $q$, the first one consisting of a single edge, and the second one consisting of two edges with slopes of different signs, as in Figure 5.j. 

\begin{center}

\setlength{\unitlength}{4.65mm}
\begin{picture}(12,6)
\linethickness{0.075mm}

\put(12,0){\line(0,1){4.5}}   
\thicklines

\put(0,1){\line(4,1){12}}   

\linethickness{0.15mm}

\put(0,4){\circle{0.12}}
\put(0,4){\circle{0.08}}
\put(0.25,3.875){\circle{0.08}}
\put(0.5,3.75){\circle{0.08}}
\put(0.75,3.625){\circle{0.08}}
\put(1,3.5){\circle{0.08}}
\put(1.25,3.375){\circle{0.08}}
\put(1.5,3.25){\circle{0.08}}
\put(1.75,3.125){\circle{0.08}}
\put(2,3){\circle{0.08}}
\put(2.25,2.875){\circle{0.08}}
\put(2.5,2.75){\circle{0.08}}
\put(2.75,2.625){\circle{0.08}}
\put(3,2.5){\circle{0.08}}
\put(3.25,2.375){\circle{0.08}}

\put(3.5,2.25){\circle{0.08}}
\put(3.75,2.125){\circle{0.08}}
\put(4,2){\circle{0.08}}
\put(4.25,1.875){\circle{0.08}}
\put(4.5,1.75){\circle{0.08}}
\put(4.75,1.625){\circle{0.08}}
\put(5,1.5){\circle{0.08}}
\put(5.25,1.375){\circle{0.08}}
\put(5.5,1.25){\circle{0.08}}
\put(5.75,1.125){\circle{0.08}}
\put(6,1){\circle{0.08}}
\put(6.25,0.875){\circle{0.08}}
\put(6.5,0.75){\circle{0.08}}
\put(6.75,0.625){\circle{0.08}}
\put(7,0.5){\circle{0.08}}
\put(7.25,0.375){\circle{0.08}}
\put(7.5,0.25){\circle{0.08}}
\put(7.75,0.125){\circle{0.08}}
\put(8,0){\circle{0.08}}
\put(8,0){\circle{0.12}}

\put(8.25,0.125){\circle{0.08}}
\put(8.5,0.25){\circle{0.08}}
\put(8.75,0.375){\circle{0.08}}
\put(9,0.5){\circle{0.08}}
\put(9.25,0.625){\circle{0.08}}
\put(9.5,0.75){\circle{0.08}}
\put(9.75,0.875){\circle{0.08}}
\put(10,1){\circle{0.08}}
\put(10.25,1.125){\circle{0.08}}
\put(10.5,1.25){\circle{0.08}}
\put(10.75,1.375){\circle{0.08}}
\put(11,1.5){\circle{0.08}}
\put(11.25,1.625){\circle{0.08}}
\put(11.5,1.75){\circle{0.08}}
\put(11.75,1.875){\circle{0.08}}
\put(12,2){\circle{0.08}}
\put(12,2){\circle{0.12}}

\put(0,0){\vector(1,0){14}}
\put(0,0){\vector(0,1){5.5}}

\put(0,1){\circle{0.12}}
\put(0,1){\circle{0.08}}

\put(12,4){\circle{0.12}}
\put(12,4){\circle{0.08}}

{\tiny

\put(-6.2,3.9){$P_{1}'=(\log m,\nu _{q}(a_{m}))$}

\put(6.1,-0.85){$P_{2}'=(\log j,0)$}

\put(12.3,3.9){$P_{2}=(\log n,\nu _{p}(a_{n}))$}

\put(12.3,1.9){$P_{3}'=(\log n,\nu _{q}(a_{n}))$}

\put(-6.2,0.9){$P_{1}=(\log m,\nu _{p}(a_{m}))$}

\put(-1.3,-0.85){$(\log m,0)$}

\put(10.8,-0.85){$(\log n,0)$}

\put(-10.9,-0.85){{\bf Figure 15.}}
}

\end{picture}
\end{center}
\bigskip

In our particular case $\nu _{q}(a_{m})=\nu _{q}(a_{n})=1$, so by i), ii) and Lemma \ref{puncte}, each one of the edges $P_{1}'P_{2}'$ and $P_{2}'P_{3}'$ contains a single segment, while $P_{1}P_{2}$ contains precisely $\delta $ segments.
If $\delta =1$, then $f$ is irreducible by Theorem \ref{calaDumas}, so we may assume that $\delta \geq 2$. If $g(s)=\frac{b_{c}}{c^{s}}+\cdots +\frac{b_{d}}{d^{s}}$ is a nonconstant factor of $f$ with $d<n$, then by Theorem \ref{DumDir}, $\frac{d}{c}$ must belong to
\[
\left\{ \frac{j}{m},\frac{n}{j}\right\} \bigcap \left\{ \left( \frac{n}{m}\right) ^{\frac{i}{\delta }}\ :\ 0<i<\delta \right\} .
\]
In view of iii), this intersection is empty, so $d$ must be equal to $n$, and $c$ to $m$, which proves the irreducibility of $f$. 
\end{proof}

\begin{theorem}\label{pzp}
Let $f(s)=\frac{a_{m}}{m^{s}}+\cdots +\frac{a_{n}}{n^{s}}$ be an algebraically primitive Dirichlet polynomial with coefficients in a unique factorization domain $R$, $a_{m}a_{n}\neq 0$, and let $p,q$ be prime elements of $R$. If
\smallskip

i) \ $\nu _{p}(a_{m})\neq \nu _{p}(a_{n})$\ \ and \ \ 
$\left ( \frac{n}{i}\right ) ^{\nu _{p}(a_{i})-\nu _{p}(a_{m})}>\left ( \frac{m}{i}\right ) ^{\nu _{p}(a_{i})-\nu _{p}(a_{n})}$ for $m<i<n$,

ii) \thinspace $q\nmid a_{m}a_{m+1}$, $q\mid a_{i}$ for all $i>m+1$, and $q^{2}\nmid a_{n}$,
\smallskip

\noindent then $f$ is irreducible over $Q(R)$.
\end{theorem}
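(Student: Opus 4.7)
The strategy parallels the proofs of Theorems \ref{pnp} and \ref{zpnp}: I would combine information from the Newton log-polygons of $f$ with respect to $p$ and $q$ via Theorem \ref{DumDir}, corresponding to the configuration in Figure 5.l, with a single edge for the $p$-polygon and an L-shape for the $q$-polygon.

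Condition (i) is exactly the hypothesis used in Theorem \ref{calaDumas} to ensure that the Newton log-polygon of $f$ with respect to $p$ consists of a single edge $E_p$ from $P_1=(\log m,\nu_p(a_m))$ to $P_2=(\log n,\nu_p(a_n))$, since the stated inequality is equivalent to every intermediate point $(\log i,\nu_p(a_i))$ with $m<i<n$ and $a_i\neq 0$ lying strictly above the line through $P_1$ and $P_2$. By Lemma \ref{puncte}, $E_p$ decomposes into $\delta$ equal segments, where $\delta=\gcd(\nu_p(a_n)-\nu_p(a_m),\nu_{q_1}(n)-\nu_{q_1}(m),\ldots,\nu_{q_k}(n)-\nu_{q_k}(m))$ and $q_1,\ldots,q_k$ are the distinct prime factors of $mn$. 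Condition (ii) gives $\nu_q(a_m)=\nu_q(a_{m+1})=0$, $\nu_q(a_n)=1$, and $\nu_q(a_i)\geq 1$ for $m+1<i\leq n$, so (when $n\geq m+2$) the $q$-Newton log-polygon consists of a horizontal edge from $(\log m,0)$ to $(\log(m+1),0)$ followed by an edge of positive slope from $(\log(m+1),0)$ to $(\log n,1)$. A further application of Lemma \ref{puncte} shows each of these is a single segment: the horizontal one trivially, and the positive-slope one because its total height change is $1$.

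Assume for contradiction that $f=g\cdot h$ with both $g$ and $h$ nonconstant, and let $d/c$ denote the relative degree of $g$, so $c\mid m$, $d\mid n$, and $1<d/c<n/m$. Theorem \ref{DumDir} applied to the $q$-polygon forces $d/c$ to be a product of relative degrees of a proper nonempty subset of the two segments, hence $d/c\in\{(m+1)/m,\ n/(m+1)\}$. On the other hand, Theorem \ref{DumDir} applied to $E_p$, together with Lemma \ref{puncte}, forces $d/c=(n/m)^{i/\delta}$ for some integer $0<i<\delta$. Matching these constraints yields the Diophantine identity $(m+1)^{\delta}=n^{i}m^{\delta-i}$, or its symmetric counterpart from equating with $n/(m+1)$, and this is the main obstacle. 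I would resolve it by invoking $\gcd(m,m+1)=1$: the factor $m^{\delta-i}$ on the right would have to divide $(m+1)^{\delta}$, forcing $\delta-i=0$, so $m+1=n$, contradicting the very existence of the second segment in the $q$-polygon. The degenerate case $n=m+1$ is handled directly: any nontrivial relative degree $d/c\in S_{rd}(m,m+1)$ with $c\mid m$, $d\mid m+1$ and $d>c$ must satisfy $c=m$, $d=m+1$ by $\gcd(m,m+1)=1$, which yields only the trivial factorization and hence the irreducibility of $f$.
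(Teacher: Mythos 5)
Your proof tracks the paper's argument almost step for step: condition i) gives a single-edge $p$-polygon, which by Lemma \ref{puncte} splits into $\delta$ equal segments, so that the relative degree of any proper nonconstant factor must be $(n/m)^{i/\delta}$ with $0<i<\delta$; condition ii) gives a two-segment $q$-polygon (horizontal of relative degree $\frac{m+1}{m}$ followed by a positive-slope segment of relative degree $\frac{n}{m+1}$), so that the same relative degree must lie in $\{\frac{m+1}{m},\ \frac{n}{m+1}\}$; and the intersection is argued empty via the Diophantine equation $(m+1)^\delta=n^i m^{\delta-i}$ (or its mirror) together with $\gcd(m,m+1)=1$. Your explicit treatment of $n=m+1$ is a clean addition the paper leaves tacit, and the remark that $\delta-i=0$ already contradicts $0<i<\delta$ would tighten the last step.

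However, both your write-up and the paper's proof share a gap at $m=1$. There the key inference ``$m^{\delta-i}$ divides $(m+1)^\delta$, hence $\delta-i=0$'' is vacuous because $m^{\delta-i}=1$, and the equation $2^\delta=n^i$ with $0<i<\delta$ can in fact hold. Concretely, $f(s)=1+\frac{8}{2^s}+\frac{12}{4^s}=\bigl(1+\frac{2}{2^s}\bigr)\bigl(1+\frac{6}{2^s}\bigr)$ is reducible, yet it satisfies all the stated hypotheses with $p=2$, $q=3$: it is algebraically primitive, $\nu_2(a_1)=0\neq 2=\nu_2(a_4)$, $(4/2)^{3}>(1/2)^{1}$ for $i=2$ (and $a_3=0$, so condition i) imposes nothing at $i=3$ while $q\mid a_3$ holds trivially), $3\nmid 8$, $3\mid 12$, and $9\nmid 12$. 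Here $\delta=\gcd(2,2)=2$, so both polygons confine the relative degree of a proper factor to $2$, and the intersection is not empty because $2^2=4^1\cdot 1^1$. So the Diophantine argument genuinely needs the hypothesis $m\geq 2$ (so that $m$ has a prime factor not dividing $m+1$), a gap you inherited faithfully from the source.
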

\begin{proof}\ In Figure 16 we present the Newton log-polygons with respect to two prime elements $p$ and $q$, the first one consisting of a single edge, and the second one consisting of a horizontal edge, followed by an edge with positive slope, as in Figure 5.k. 

\begin{center}
\setlength{\unitlength}{4.65mm}
\begin{picture}(12,6)
\linethickness{0.075mm}

\put(12,0){\line(0,1){4}}   

\thicklines

\put(0,0){\line(1,0){6.6666}}   
\put(6.6666,0){\line(3,1){5.3334}}   
\linethickness{0.15mm}

\put(0,0){\circle{0.12}}
\put(0,0){\circle{0.08}}

\put(0,1){\circle{0.12}}
\put(0,1){\circle{0.08}}
\put(0.25,1.05){\circle{0.08}}
\put(0.5,1.1){\circle{0.08}}
\put(0.75,1.15){\circle{0.08}}
\put(1,1.2){\circle{0.08}}
\put(1.25,1.25){\circle{0.08}}
\put(1.5,1.3){\circle{0.08}}
\put(1.75,1.35){\circle{0.08}}
\put(2,1.4){\circle{0.08}}
\put(2.25,1.45){\circle{0.08}}
\put(2.5,1.5){\circle{0.08}}
\put(2.75,1.55){\circle{0.08}}
\put(3,1.6){\circle{0.08}}
\put(3.25,1.65){\circle{0.08}}
\put(3.5,1.7){\circle{0.08}}
\put(3.75,1.75){\circle{0.08}}
\put(4,1.8){\circle{0.08}}
\put(4.25,1.85){\circle{0.08}}
\put(4.5,1.9){\circle{0.08}}
\put(4.75,1.95){\circle{0.08}}
\put(5,2){\circle{0.08}}
\put(5.25,2.05){\circle{0.08}}
\put(5.5,2.1){\circle{0.08}}
\put(5.75,2.15){\circle{0.08}}
\put(6,2.2){\circle{0.08}}
\put(6.25,2.25){\circle{0.08}}
\put(6.5,2.3){\circle{0.08}}
\put(6.75,2.35){\circle{0.08}}
\put(7,2.4){\circle{0.08}}
\put(7.25,2.45){\circle{0.08}}
\put(7.5,2.5){\circle{0.08}}
\put(7.75,2.55){\circle{0.08}}
\put(8,2.6){\circle{0.08}}
\put(8.25,2.65){\circle{0.08}}
\put(8.5,2.7){\circle{0.08}}
\put(8.75,2.75){\circle{0.08}}
\put(9,2.8){\circle{0.08}}
\put(9.25,2.85){\circle{0.08}}
\put(9.5,2.9){\circle{0.08}}
\put(9.75,2.95){\circle{0.08}}
\put(10,3){\circle{0.08}}
\put(10.25,3.05){\circle{0.08}}
\put(10.5,3.1){\circle{0.08}}
\put(10.75,3.15){\circle{0.08}}
\put(11,3.2){\circle{0.08}}
\put(11.25,3.25){\circle{0.08}}
\put(11.5,3.3){\circle{0.08}}
\put(11.75,3.35){\circle{0.08}}
\put(12,3.4){\circle{0.08}}
\put(12,3.4){\circle{0.12}}

\put(0,0){\vector(1,0){14}}
\put(0,0){\vector(0,1){5.5}}

\put(6.6666,0){\circle{0.12}}

\put(12,1.7778){\circle{0.12}}
\put(12,1.7778){\circle{0.08}}

{\tiny

\put(4.8,-0.85){$P_{2}=(\log j,0)$}

\put(12.3,1.5){$P_{3}=(\log n,\nu _{q}(a_{n}))$}

\put(-6.3,0.9){$P_{1}'=(\log m,\nu _{p}(a_{m}))$}

\put(12.3,3.2){$P_{2}'=(\log n,\nu _{p}(a_{n}))$}

\put(-2.2,-0.85){$P_{1}=(\log m,0)$}

\put(10.8,-0.85){$(\log n,0)$}

\put(-10.9,-0.85){{\bf Figure 16.}}
}
\end{picture}
\end{center}
\bigskip

In our particular case $j=m+1$ and $\nu _{q}(a_{n})=1$, so by i), ii) and Lemma \ref{puncte}, each one of the edges $P_{1}P_{2}$ and $P_{2}P_{3}$ contains a single segment, while $P_{1}P_{2}$ contains precisely $\delta $ segments, with 
\[
\delta :=\gcd (\nu _{p}(a_{n})-\nu _{p}(a_{m}),\nu _{q_{1}}(n)-\nu _{q_{1}}(m),\dots ,\nu _{q_{k}}(n)-\nu _{q_{k}}(m)),
\]
where $q_{1},\dots ,q_{k}$ are all the prime factors of $m\cdot n$. If $\delta =1$, the irreducibility of $f$ follows by Theorem \ref{calaDumas}, so we may assume that $\delta \geq 2$. Now, if $g(s)=\frac{b_{c}}{c^{s}}+\cdots +\frac{b_{d}}{d^{s}}$ is a nonconstant factor of $f$ with $d<n$, then by Theorem \ref{DumDir}, $\frac{d}{c}$ must belong to
\[
\left\{ \frac{m+1}{m},\frac{n}{m+1}\right\} \bigcap \left\{ \left( \frac{n}{m}\right) ^{\frac{i}{\delta }}\ :\ 0<i<\delta \right\} .
\]
To prove that this intersection is empty, we observe that an equality of the form $\frac{m+1}{m}=(\frac{n}{m})^{\frac{i}{\delta }}$ with $i\in \{ 1,\dots ,\delta -1\} $ leads to $(m+1)^{\delta }=n^{i}m^{\delta -i}$, and an equality of the form $\frac{n}{m+1}=(\frac{n}{m})^{\frac{i}{\delta }}$ with $i\in \{ 1,\dots ,\delta -1\} $ leads to $(m+1)^{\delta }=n^{\delta -i}m^{i}$. As $m$ and $m+1$ are coprime, and $i$ is at least $1$, none of these equalities can hold, so $d$ must be equal to $n$, and $c$ to $m$, proving that $f$ is irreducible. 
\end{proof}

\begin{theorem}\label{ppp}
Let $f(s)=\frac{a_{m}}{m^{s}}+\cdots +\frac{a_{n}}{n^{s}}$ be an algebraically primitive Dirichlet polynomial with coefficients in a unique factorization domain $R$, $a_{m}a_{n}\neq 0$, let $p,q$ be prime elements of $R$, and assume that $q_{1},\dots ,q_{k}$ are all the prime factors of $m\cdot n$. Let $j$ be an index, $\sqrt{mn}<j<n$ and let $\delta :=\gcd (\nu _{p}(a_{n})-\nu _{p}(a_{m}),\nu _{q_{1}}(n)-\nu _{q_{1}}(m),\dots ,\nu _{q_{k}}(n)-\nu _{q_{k}}(m))$. If
\smallskip

i) \ $\nu _{p}(a_{m})\neq \nu _{p}(a_{n})$\ \ and \ \ 
$\left ( \frac{n}{i}\right ) ^{\nu _{p}(a_{i})-\nu _{p}(a_{m})}>\left ( \frac{m}{i}\right ) ^{\nu _{p}(a_{i})-\nu _{p}(a_{n})}$ for $m<i<n$,

ii) \thinspace $q\nmid a_{m}$, $q\mid a_{i}$ for $m<i\leq j$, $q^{2}\nmid a_{j}$, $q^{2}\mid a_{i}$ for $j<i\leq n$ and $q^{3}\nmid a_{n}$,

iii) $j\not\in \{ m^{1-\frac{i}{\delta }}n^{\frac{i}{\delta }}\ :\ 0<i<\delta \} $,
\smallskip

\noindent then $f$ is irreducible over $Q(R)$.
\end{theorem}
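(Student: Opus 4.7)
The plan is to follow the same strategy that worked for Theorems \ref{pnp}, \ref{pzp}: analyze the two Newton log-polygons, apply Theorems \ref{DumDir} and \ref{calaDumas} together with Lemma \ref{puncte}, and then check that the arithmetic constraint (iii) forces the intersection of admissible relative degrees to be empty. Concretely, condition (i) tells me that the Newton log-polygon of $f$ with respect to $p$ is a single edge from $P_{1}=(\log m,\nu_{p}(a_{m}))$ to $P_{2}=(\log n,\nu_{p}(a_{n}))$. By Lemma \ref{puncte} applied to this edge, the number of segments it contains is exactly $\delta$. If $\delta =1$ the conclusion follows immediately from Theorem \ref{calaDumas}, so I may assume $\delta \geq 2$.

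Condition (ii) shows that the Newton log-polygon of $f$ with respect to $q$ consists of two edges $P'_{1}P'_{2}$ and $P'_{2}P'_{3}$, joining $(\log m,0)$, $(\log j,1)$ and $(\log n,2)$. The inequality $\sqrt{mn}<j<n$ is equivalent to $j/m>n/j$, hence $\log(j/m)>\log(n/j)$, so the slope of $P'_{2}P'_{3}$, which is $1/\log(n/j)$, strictly exceeds that of $P'_{1}P'_{2}$, which is $1/\log(j/m)$; thus the two edges are genuinely distinct (the three points are not collinear). Each edge contains a single segment by Lemma \ref{puncte}, since in each case the $y$-coordinates of the endpoints differ by~$1$.

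Now suppose for contradiction that $f$ has a nonconstant factor $g(s)=\frac{b_{c}}{c^{s}}+\cdots +\frac{b_{d}}{d^{s}}$ with $d<n$. By Theorem \ref{DumDir}, the logarithmic width $\log(d/c)$ of $g$ must be expressible as a sum of widths of segments of the Newton log-polygon of $f$ (each segment used at most once), simultaneously for both primes $p$ and $q$. From the $q$-polygon this forces $d/c\in\{j/m,\,n/j\}$. From the $p$-polygon, where the unique edge splits into $\delta$ equal segments each of width $\frac{1}{\delta}\log(n/m)$, it forces $d/c=(n/m)^{i/\delta}$ for some $i\in\{1,\dots ,\delta-1\}$. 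Hence
\[
\frac{d}{c}\in \left\{ \frac{j}{m},\frac{n}{j}\right\} \cap \left\{ \left( \frac{n}{m}\right) ^{i/\delta}\ :\ 0<i<\delta\right\}.
\]

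The final step, which is the main (but mild) obstacle, is to show this intersection is empty using hypothesis (iii). The equality $j/m=(n/m)^{i/\delta}$ is equivalent to $j=m^{1-i/\delta}n^{i/\delta}$, which is directly ruled out by (iii). For the other candidate, $n/j=(n/m)^{i/\delta}$ rearranges to $j=n^{1-i/\delta}m^{i/\delta}=m^{1-(\delta-i)/\delta}\,n^{(\delta-i)/\delta}$; letting $i'=\delta-i$ run through $\{1,\dots ,\delta-1\}$ as $i$ does, this is again excluded by (iii). Thus no such factor $g$ can exist, forcing $d=n$ and $c=m$, so $f$ is irreducible over $Q(R)$.
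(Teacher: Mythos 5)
Your proof is correct and follows essentially the same route as the paper: read off the two Newton log-polygons, invoke Theorem \ref{DumDir} and Lemma \ref{puncte} to constrain the relative degree of any hypothetical factor to lie in the intersection $\{j/m, n/j\}\cap\{(n/m)^{i/\delta}:0<i<\delta\}$, and then use (iii) to conclude the intersection is empty. The one place you are more explicit than the paper is the final step: the paper simply asserts that (iii) empties the intersection, while you verify both cases, using the symmetry $i\mapsto\delta-i$ to handle $n/j=(n/m)^{i/\delta}$, which is a worthwhile clarification but not a different method.
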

\begin{proof}\ Figure 17 displays the Newton log-polygons with respect to two prime elements $p$ and $q$, the first one consisting of a single edge, and the second one consisting of two edges with positive slopes, as in Figure 5.l. 

\begin{center}
\setlength{\unitlength}{4.65mm}
\begin{picture}(12,6)
\linethickness{0.075mm}

\put(12,0){\line(0,1){5}}   
\thicklines

\put(0,1){\line(4,1){12}}   

\linethickness{0.15mm}

\put(0,1){\circle{0.08}}
\put(0,1){\circle{0.12}}

\put(12,4){\circle{0.12}}
\put(12,4){\circle{0.08}}

\put(0,0){\circle{0.12}}
\put(0,0){\circle{0.08}}
\put(0.25,0.025){\circle{0.08}}
\put(0.5,0.05){\circle{0.08}}
\put(0.75,0.075){\circle{0.08}}
\put(1,0.1){\circle{0.08}}
\put(1.25,0.125){\circle{0.08}}
\put(1.5,0.15){\circle{0.08}}
\put(1.75,0.175){\circle{0.08}}
\put(2,0.2){\circle{0.08}}
\put(2.25,0.225){\circle{0.08}}
\put(2.5,0.25){\circle{0.08}}
\put(2.75,0.275){\circle{0.08}}
\put(3,0.3){\circle{0.08}}
\put(3.25,0.325){\circle{0.08}}
\put(3.5,0.35){\circle{0.08}}
\put(3.75,0.375){\circle{0.08}}
\put(4,0.4){\circle{0.08}}
\put(4.25,0.425){\circle{0.08}}
\put(4.5,0.45){\circle{0.08}}
\put(4.75,0.475){\circle{0.08}}
\put(5,0.5){\circle{0.08}}
\put(5.25,0.525){\circle{0.08}}
\put(5.5,0.55){\circle{0.08}}
\put(5.75,0.575){\circle{0.08}}
\put(6,0.6){\circle{0.08}}
\put(6.25,0.625){\circle{0.08}}
\put(6.5,0.65){\circle{0.08}}
\put(6.75,0.675){\circle{0.08}}
\put(7,0.7){\circle{0.08}}
\put(7.25,0.725){\circle{0.08}}
\put(7.5,0.75){\circle{0.08}}
\put(7.75,0.775){\circle{0.08}}
\put(8,0.8){\circle{0.08}}
\put(8,0.8){\circle{0.12}}
\put(8.25,0.9){\circle{0.08}}
\put(8.5,1){\circle{0.08}}
\put(8.75,1.1){\circle{0.08}}
\put(9,1.2){\circle{0.08}}
\put(9.25,1.3){\circle{0.08}}
\put(9.5,1.4){\circle{0.08}}
\put(9.75,1.5){\circle{0.08}}
\put(10,1.6){\circle{0.08}}
\put(10.25,1.7){\circle{0.08}}
\put(10.5,1.8){\circle{0.08}}
\put(10.75,1.9){\circle{0.08}}
\put(11,2){\circle{0.08}}
\put(11.25,2.1){\circle{0.08}}
\put(11.5,2.2){\circle{0.08}}
\put(11.75,2.3){\circle{0.08}}
\put(12,2.4){\circle{0.08}}
\put(12,2.4){\circle{0.12}}

\put(0,0){\vector(1,0){14.9}}
\put(0,0){\vector(0,1){5.5}}

{\tiny 

\put(8.75,0.37){$P_{2}'=(\log j,\nu _{q}(a_{j}))$}

\put(12.3,2.2){$P_{3}'=(\log n,\nu _{q}(a_{n}))$}

\put(12.3,3.8){$P_{2}=(\log n,\nu _{p}(a_{n}))$}

\put(-6.3,0.9){$P_{1}=(\log m,\nu _{p}(a_{m}))$}

\put(-2.1,-0.85){$P_{1}'=(\log m,0)$}

\put(10.8,-0.85){$(\log n,0)$}

\put(-10.9,-0.85){{\bf Figure 17.}}
}

\end{picture}
\end{center}
\bigskip

In our particular case $\nu _{q}(a_{j})=1$ and $\nu _{q}(a_{n})=2$, so by i), ii) and Lemma \ref{puncte}, each one of the edges $P_{1}'P_{2}'$ and $P_{2}'P_{3}'$ contains a single segment, while $P_{1}P_{2}$ contains precisely $\delta $ segments. Also, the slope of $P_2'P_3'$ is greater than the slope of $P_1'P_2'$, as $\sqrt{mn}<j<n$. Again, if $g(s)=\frac{b_{c}}{c^{s}}+\cdots +\frac{b_{d}}{d^{s}}$ is a nonconstant factor of $f$ with $d<n$, then by Theorem \ref{DumDir}, $\frac{d}{c}$ must belong to
\[
\left\{ \frac{j}{m},\frac{n}{j}\right\} \bigcap \left\{ \left( \frac{n}{m}\right) ^{\frac{i}{\delta }}\ :\ 0<i<\delta \right\} .
\]
On the other hand, according to iii), this intersection must be empty, so $d$ must be equal to $n$, and $c$ to $m$, which proves the irreducibility of $f$. 
\end{proof}

Our last result in this section is the following one.

\begin{theorem}\label{pnn}
Let $f(s)=\frac{a_{m}}{m^{s}}+\cdots +\frac{a_{n}}{n^{s}}$ be an algebraically primitive Dirichlet polynomial with coefficients in a unique factorization domain $R$, $a_{m}a_{n}\neq 0$, let $p,q$ be prime elements of $R$, and assume that $q_{1},\dots ,q_{k}$ are all the prime factors of $m\cdot n$. Let $j$ be an index, $m<j<\sqrt{mn}$ and let $\delta :=\gcd (\nu _{p}(a_{n})-\nu _{p}(a_{m}),\nu _{q_{1}}(n)-\nu _{q_{1}}(m),\dots ,\nu _{q_{k}}(n)-\nu _{q_{k}}(m))$. If
\smallskip

i) \ $\nu _{p}(a_{m})\neq \nu _{p}(a_{n})$\ \ and \ \ 
$\left ( \frac{n}{i}\right ) ^{\nu _{p}(a_{i})-\nu _{p}(a_{m})}>\left ( \frac{m}{i}\right ) ^{\nu _{p}(a_{i})-\nu _{p}(a_{n})}$ for $m<i<n$,

ii) $q^{2}\mid a_{i}$ for $m\leq i<j$, $q\mid a_{i}$ for $j\leq i<n$,  $q\nmid a_{n}$, $q^{2}\nmid a_{j}$ and $q^{3}\nmid a_{m}$,

iii) $j\not\in \{ m^{1-\frac{i}{\delta }}n^{\frac{i}{\delta }}\ :\ 0<i<\delta \} $,
\smallskip

\noindent then $f$ is irreducible over $Q(R)$.
\end{theorem}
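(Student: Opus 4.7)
The plan is to mirror the proof of Theorem \ref{ppp}, exploiting the symmetry between the two situations. The Newton log-polygon of $f$ with respect to $p$ will still consist of a single edge $P_1P_2$ joining $P_1=(\log m,\nu_p(a_m))$ and $P_2=(\log n,\nu_p(a_n))$, by condition i) (exactly as in Theorem \ref{calaDumas}); by Lemma \ref{puncte}, this edge contains precisely $\delta$ segments, all of relative degree $(n/m)^{1/\delta}$. If $\delta=1$, then $f$ is already irreducible by Theorem \ref{calaDumas}, so we may assume $\delta\geq 2$.

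Next, I would analyze the $q$-Newton log-polygon using condition ii). This corresponds to the configuration in Figure 5.m: the polygon has three vertices $P_1'=(\log m,2)$, $P_2'=(\log j,1)$, $P_3'=(\log n,0)$, producing two edges of negative slope. The assumption $m<j<\sqrt{mn}$ ensures
\[
\frac{-1}{\log j-\log m}<\frac{-1}{\log n-\log j},
\]
so the slopes are strictly increasing from left to right, confirming that $P_1'P_2'$ and $P_2'P_3'$ are indeed distinct edges of the convex hull. Since the vertical jumps across each edge are equal to $1$, Lemma \ref{puncte} yields that each of these edges is a single segment, of relative degrees $j/m$ and $n/j$ respectively.

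Now suppose for contradiction that $f=gh$ with $g$ and $h$ nonconstant Dirichlet polynomials, and let $g(s)=\frac{b_c}{c^s}+\cdots +\frac{b_d}{d^s}$ be a nonconstant factor with $c<d$ and $d<n$ (such a factor must exist if $f$ is reducible, as $f$ is algebraically primitive). Applying Theorem \ref{DumDir} with respect to $p$ forces the relative degree $d/c$ of $g$ to equal a product of a subfamily of the relative degrees of the segments of the $p$-log-polygon, hence $d/c=(n/m)^{i/\delta}$ for some $i$ with $0<i<\delta$. Applying the same theorem with respect to $q$, and using that the $q$-log-polygon has just two segments with distinct relative degrees, forces $d/c\in\{j/m,\ n/j\}$. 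Hence $d/c$ must lie in
\[
\left\{\frac{j}{m},\ \frac{n}{j}\right\}\ \bigcap\ \left\{\left(\frac{n}{m}\right)^{i/\delta}:0<i<\delta\right\}.
\]

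The key step is to verify that condition iii) makes this intersection empty. An equality $j/m=(n/m)^{i/\delta}$ is equivalent to $j=m^{1-i/\delta}n^{i/\delta}$, which is directly excluded by iii). An equality $n/j=(n/m)^{i/\delta}$ rearranges to $j=m^{i/\delta}n^{1-i/\delta}=m^{1-i'/\delta}n^{i'/\delta}$ with $i'=\delta-i\in\{1,\dots,\delta-1\}$, and is thus also excluded by iii). Therefore the intersection is empty, contradicting the existence of $g$, and $f$ is irreducible over $Q(R)$. The main obstacle here is not computational but organizational: one must correctly check that the set displayed in condition iii) is closed under the involution $i\mapsto \delta-i$, so that the single arithmetic condition iii) simultaneously rules out both candidate relative degrees $j/m$ and $n/j$ coming from the $q$-log-polygon.
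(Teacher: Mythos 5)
Your proposal is correct and follows essentially the same route as the paper: single-edge $p$-log-polygon with $\delta$ segments, two-edge $q$-log-polygon (Figure 5.m configuration with the slope check coming from $j<\sqrt{mn}$), and the empty intersection argument via Theorem \ref{DumDir}. The one place you are more explicit than the paper is in spelling out that the set in condition iii) is invariant under $i\mapsto\delta-i$, so that iii) simultaneously excludes both $j/m$ and $n/j$; the paper leaves this to the reader.
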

\begin{proof}\ In Figure 18 we plot the Newton log-polygons with respect to two prime elements $p$ and $q$, the first one consisting of a single edge, and the second one consisting of two edges with negative slopes, as in Figure 5.m. 

\begin{center}
\setlength{\unitlength}{4.65mm}
\begin{picture}(12,6)
\linethickness{0.075mm}

\put(12,0){\line(0,1){4.5}}   
\thicklines

\put(0,1){\line(4,1){12}}   

\linethickness{0.15mm}

\put(0,1){\circle{0.12}}
\put(0,1){\circle{0.08}}
\put(0.25,3.875){\circle{0.08}}
\put(0.5,3.75){\circle{0.08}}
\put(0.75,3.625){\circle{0.08}}
\put(1,3.5){\circle{0.08}}
\put(1.25,3.375){\circle{0.08}}
\put(1.5,3.25){\circle{0.08}}
\put(1.75,3.125){\circle{0.08}}
\put(2,3){\circle{0.08}}
\put(2.25,2.875){\circle{0.08}}
\put(2.5,2.75){\circle{0.08}}
\put(2.75,2.625){\circle{0.08}}
\put(3,2.5){\circle{0.08}}
\put(3.25,2.375){\circle{0.08}}

\put(3.5,2.25){\circle{0.08}}
\put(3.75,2.125){\circle{0.08}}
\put(4,2){\circle{0.08}}
\put(4.25,1.875){\circle{0.08}}
\put(4.5,1.75){\circle{0.08}}
\put(4.75,1.625){\circle{0.08}}
\put(5,1.5){\circle{0.08}}
\put(5.25,1.375){\circle{0.08}}
\put(5.5,1.25){\circle{0.08}}
\put(5.75,1.125){\circle{0.08}}
\put(6,1){\circle{0.08}}
\put(6.25,0.875){\circle{0.08}}
\put(6.5,0.75){\circle{0.08}}
\put(6.75,0.625){\circle{0.08}}

\put(7,0.5){\circle{0.12}}
\put(7,0.5){\circle{0.08}}
\put(7.25,0.475){\circle{0.08}}
\put(7.5,0.45){\circle{0.08}}
\put(7.75,0.425){\circle{0.08}}
\put(8,0.4){\circle{0.08}}

\put(8.25,0.375){\circle{0.08}}
\put(8.5,0.35){\circle{0.08}}
\put(8.75,0.325){\circle{0.08}}
\put(9,0.3){\circle{0.08}}
\put(9.25,0.275){\circle{0.08}}
\put(9.5,0.25){\circle{0.08}}
\put(9.75,0.225){\circle{0.08}}
\put(10,0.2){\circle{0.08}}
\put(10.25,0.175){\circle{0.08}}
\put(10.5,0.15){\circle{0.08}}
\put(10.75,0.125){\circle{0.08}}
\put(11,0.1){\circle{0.08}}
\put(11.25,0.075){\circle{0.08}}
\put(11.5,0.05){\circle{0.08}}
\put(11.75,0.025){\circle{0.08}}
\put(12,0){\circle{0.08}}
\put(12,0){\circle{0.12}}

\put(0,0){\vector(1,0){14}}
\put(0,0){\vector(0,1){5.5}}

\put(0,4){\circle{0.12}}
\put(0,4){\circle{0.08}}

\put(12,4){\circle{0.12}}
\put(12,4){\circle{0.08}}

{\tiny

\put(-6.2,3.85){$P_{1}'=(\log m,\nu _{q}(a_{m}))$}

\put(7.1,0.85){$P_{2}'=(\log j,\nu _{q}(a_{j}))$}

\put(12.3,3.85){$P_{2}=(\log n,\nu _{p}(a_{n}))$}

\put(-6.3,0.85){$P_{1}=(\log m,\nu _{p}(a_{m}))$}

\put(-1.3,-0.85){$(\log m,0)$}

\put(10,-0.85){$P_{3}'=(\log n,0)$}

\put(-10.9,-0.85){{\bf Figure 18.}}
}

\end{picture}
\end{center}
\bigskip

In our particular case $\nu _{q}(a_{j})=1$ and $\nu _{q}(a_{m})=2$, so by i), ii) and Lemma \ref{puncte}, each one of the edges $P_{1}'P_{2}'$ and $P_{2}'P_{3}'$ contains a single segment, while $P_{1}P_{2}$ contains precisely $\delta $ segments. Also, the slope of $P_2'P_3'$ is greater than the slope of $P_1'P_2'$, as $m<j<\sqrt{mn}$. Again, if $g(s)=\frac{b_{c}}{c^{s}}+\cdots +\frac{b_{d}}{d^{s}}$ is a nonconstant factor of $f$ with $d<n$, then by Theorem \ref{DumDir}, $\frac{d}{c}$ must belong to
\[
\left\{ \frac{j}{m},\frac{n}{j}\right\} \bigcap \left\{ \left( \frac{n}{m}\right) ^{\frac{i}{\delta }}\ :\ 0<i<\delta \right\} .
\]
By iii), this intersection must be empty, so $d$ must be equal to $n$, and $c$ to $m$, proving the irreducibility of $f$. 
\end{proof}

\section{The irreducibility of Dirichlet polynomials that assume a prime value}\label{primevalues}

Some of the classical or more recent irreducibility criteria for integer polynomials rely on information on the prime factorizations of the values that they assume at some integral arguments. Among the first such results that appeared over the time, we mention here the following irreducibility criteria of St\"ackel \cite{Stackel}, Ore \cite{Ore4}, and Weisner \cite{Weisner}.
\medskip

{\bf Theorem} (St\"ackel, 1918)\ {\em Let $f(X)\in\mathbb{Z}[X]$ and let $A$ denote the maximum of the absolute values of the coefficients of $f(X)$. If there is an integer $t$ with $|t|>1+A$ for which $f(t)$ is a prime number, then $f(X)$ is irreducible in $\mathbb{Z}[X]$.}
\medskip

{\bf Theorem} (Ore, 1934)\ {\em Let $f\in\mathbb{Z}[X]$ be an arbitrary polynomial of degree $n\geq 2$. Assume that for an integer $t$ we have $|f(t)|=pq$ for some positive integers $p,q$ with $p$ prime. Assume also that all the roots of $f$ lie outside the disk $\{z:|z-t|\leq\rho\}$ for some real number $\rho\geq q$. Then $f$ is irreducible in $\mathbb{Q}[X]$.}
\medskip

{\bf Theorem} (Weisner, 1934)\ {\em Let $f\in\mathbb{Z}[X]$ be an arbitrary polynomial of degree $n\geq 2$ and leading coefficient $a_n$. Assume that all the roots of $f$ are in the disk $\{z:|z|<\rho\}$ for some real number $\rho\geq 1$. Assume also that for an integer $t$ we have $|f(t)|=p^{\ell}q$ and $p\nmid f'(t)^{\ell-1}$ for some positive integers $p,q,\ell$ with $p$ prime. If $|t|\geq q+\rho$ or $p^{\ell}\geq |a_n|(|t|+\rho)^{n-1}$, then $f$ is irreducible in $\mathbb{Q}[X]$.}
\medskip

Similar related results have been proved by G. P\'olya and G. Szeg\"o \cite{PolyaSzego}, Girstmair \cite{Girstmair} and Guersenzvaig \cite{Guersenzvaig}, to name just a few authors. Other famous such results are, for instance, Cohn's irreducibility criterion and its generalization to an arbitrary base obtained by Brillhart, Filaseta and Odlyzko \cite{Brillhart}, and to function fields over finite fields by Murty \cite{Murty}.

The aim of this section is to find irreducibility criteria for Dirichlet polynomials that assume a value divisible by a prime, or by a prime power. Some of the results that we will obtain are analogous to the above criteria of St\"ackel, Ore and Weisner. In the proof of our results we will need an inequality of Gelfond on the heights of the factors of a multivariate polynomial (see Mahler \cite{Mahler} for a proof). We will first need to fix some notations. For a polynomial
\[
f(z_1,\dots ,z_n)=\sum_{h_1=0}^{m_1}\dots \sum_{h_n=0}^{m_n}a_{h_1\dots h_n}z_1^{h_1}\cdots z_n^{h_n}
\]
in $n$ variables $z_1,\dots ,z_n$ with arbitrary real or complex coefficients, we let 
\[
H(f)=\max\limits _{h_i=0,\dots ,m_i,\ i=1,\dots ,n}|a_{h_1\dots h_n}|,
\]
the height of $f$, and denote by $\nu (f)$ the number of variables $z_1,\dots ,z_n$ that occur in $f$ at least to the first degree. With this notation we have the following result.
\medskip

{\bf Theorem} (Gelfond). {\em Let $f(z_1,\dots ,z_n)=\prod\limits _{i=1}^{\ell}f_i(z_1,\dots ,z_n)$ with $f_1,\dots ,f_{\ell }$ polynomials with arbitrary real or complex coefficients. Then we have the inequality}
\begin{equation}\label{GelfondInequality}
\prod\limits _{i=1}^{\ell}H(f_i)\leq 2^{m_1+\cdots +m_n-\nu(f)}\{(m_1+1)\cdots (m_n+1)\}^{\frac{1}{2}}H(f).
\end{equation}

This inequality will be particularly useful in the case of polynomials with integer coefficients, for which the right side provides an upper bound for the heights of the factors $f_1,\dots ,f_{\ell}$. 
\begin{definition}
For a Dirichlet polynomial $f(s)=\frac{a_{m}}{m^{s}}+\cdots +\frac{a_{n}}{n^{s}}$ with integer coefficients and $a_ma_n\neq 0$ we define $H(f)=\max\{ |a_m|,\dots ,|a_n|\} $ and call it the ``height'' of $f$.
\end{definition}
Before stating our first result in this section, we will mention that despite its somewhat unaestethic presence in inequality (\ref{camcomplicata}) below, the constant $1+\frac{\log _3(\log _212)}{2}$ plays a crucial role in obtaining conditions in closed form for $t$ in Corollaries \ref{Corolarsupportcardinality1}, \ref{Corolarsupportcardinality2} and \ref{Corolarsupportcardinality3}, and also in Corollaries \ref{CorolarsupportcardinalityPlaell1}, \ref{CorolarsupportcardinalityPlaell2} and \ref{CorolarsupportcardinalityPlaell3}.
\begin{theorem}\label{supportcardinality}
Let $f$ be a Dirichlet polynomial of composite degree $n$ with integer coefficients, and assume that $p$ is the least prime factor of $n$, and that $f$ has $k\geq 1$ relevant primes. If $|f(-t)|=Pq$ with $P$ a prime number, $q$ a positive integer, and $t$ an integer satisfying
\begin{equation}\label{camcomplicata}
t>\frac{n}{p}\log \biggl( \frac{nqH(f)}{p}\cdot\Bigl(\frac{n}{2}\Bigr)^{k\bigl(1+\frac{\log _3(\log _212)}{2}\bigr)}\biggr),
\end{equation}
then $f$ is irreducible.
\end{theorem}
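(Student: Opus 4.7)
The plan is to argue by contradiction: assume $f = g\cdot h$ with $g,h \in DP(\mathbb{Z})[s]$ both nonconstant, and let $d_1 = \deg g$, $d_2 = \deg h$, so that $d_1 d_2 = n$ and both $d_1, d_2$ are divisors of $n$ with $p \leq d_1, d_2 \leq n/p$. Evaluating at $s = -t$ gives the equality of nonzero integers $|g(-t)|\cdot|h(-t)| = |f(-t)| = Pq$. Since $P$ is prime, exactly one of the two factors, say $|g(-t)|$, is divisible by $P$, forcing $|h(-t)|$ to be a divisor of $q$ and in particular $|h(-t)| \leq q$. The whole strategy is to derive a contradiction by producing a lower bound $|h(-t)| > q$ from the hypothesis on $t$.

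The first technical step is to use the canonical map $\phi$ to identify $f$ with a polynomial $F(X_{i_1},\ldots,X_{i_k}) \in \mathbb{Z}[X_{i_1},\ldots,X_{i_k}]$, where $p_{i_1},\ldots,p_{i_k}$ are the $k$ relevant primes of $f$ and $n = p_{i_1}^{b_1}\cdots p_{i_k}^{b_k}$, so that $\deg_{X_{i_j}} F = b_j$, $\nu(F) = k$, and $\prod_j(b_j+1) = \tau(n)$. The factorization $f = gh$ becomes $F = GH$. Writing $d_2 = \prod_j p_{i_j}^{a_j}$ with $0 \leq a_j \leq b_j$, the monomials of $H$ live in the box $\{X_{i_1}^{c_1}\cdots X_{i_k}^{c_k} : 0 \leq c_j \leq a_j\}$, so every index in $\mathrm{supp}(h)$ divides $d_2$ and $|\mathrm{supp}(h)| \leq \tau(d_2) \leq \tau(n)$. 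Writing $h(-t) = c_{d_2} d_2^t + \sum_{i \mid d_2,\, i < d_2} c_i i^t$ with $|c_{d_2}| \geq 1$ and using the trivial bound $i \leq d_2 - 1$ on the remaining indices, I would obtain
\[
|h(-t)| \;\geq\; d_2^t - H(h)\tau(d_2)(d_2-1)^t.
\]
Since $d_2 \leq n/p$, the elementary inequality $\log(d_2/(d_2-1)) \geq 1/d_2 \geq p/n$ converts the desired inequality $d_2^t - H(h)\tau(d_2)(d_2-1)^t > q$ into a condition uniform in $d_2$ of the form $t > (n/p)\log(\cdots)$, which explains the prefactor $n/p$ appearing in the hypothesis.

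The remaining task is to bound the argument of that logarithm. Gelfond's inequality applied to $F = GH$ gives, using $H(G) \geq 1$ and $b_1 + \cdots + b_k \leq \log_2 n$,
\[
H(h) \;\leq\; 2^{b_1+\cdots+b_k-k}\sqrt{\tau(n)}\,H(f) \;\leq\; \frac{n}{2^k}\sqrt{\tau(n)}\,H(f),
\]
and combining with $\tau(d_2) \leq \tau(n)$ places the quantity inside the logarithm at roughly $q\,n\,H(f)\,\tau(n)^{3/2}/2^k$. The main obstacle, and the point where the constant $\alpha = 1 + \tfrac{1}{2}\log_3(\log_2 12)$ enters, is to dominate this by $\frac{nqH(f)}{p}(n/2)^{k\alpha}$, i.e.\ to prove an inequality of the shape $\tau(n)^{3/2}/2^k \leq p\,(n/2)^{k\alpha}$. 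This is where estimates for the prime counting function $\pi(n)$ are indispensable: the $k$ relevant primes lie among the first $\pi(n)$ primes, so the primorial bound $p_{i_1}\cdots p_{i_k} \geq p_k\#$ and Chebyshev-type estimates link $k$ to $\log n$, and a small-cases analysis (with the extremal behaviour at small $n$ producing the $\log_2 12$) yields the clean closed-form bound. Once this estimate is in hand, the hypothesis $t > (n/p)\log\bigl(\tfrac{nqH(f)}{p}(n/2)^{k\alpha}\bigr)$ forces $|h(-t)| > q$, contradicting $|h(-t)| \leq q$ and establishing the irreducibility of $f$.
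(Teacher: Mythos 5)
Your overall skeleton matches the paper's: assume $f=g\cdot h$, evaluate at $-t$, use primality of $P$ to force one factor (say $h$) to satisfy $|h(-t)|\leq q$, and then derive $|h(-t)|>q$ from Gelfond's inequality and the size hypothesis on $t$. But two structural claims in the middle of your argument are false, and a third claim (about where the constant $1+\tfrac12\log_3(\log_2 12)$ comes from) is a misattribution, so the proposal does not carry through.

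First, you assert $\deg_{X_{i_j}}F=b_j=\nu_{p_{i_j}}(n)$, $\nu(F)=k$, and $\prod_j(b_j+1)=\tau(n)$. The degree of $F=\phi(f)$ in the variable $X_{i_j}$ is $m_j=\max_{N\in\mathrm{Supp}(f)}\nu_{p_{i_j}}(N)$, and this exceeds $\nu_{p_{i_j}}(n)$ whenever some interior index has higher $p_{i_j}$-adic valuation than $n$ does. For instance $\mathrm{Supp}(f)=\{1,8,12\}$ gives $n=12$, $\deg_{X_2}F=3>2=\nu_2(12)$, and $m_1+m_2=4>\log_2 12$. Consequently the inequality $\sum_j b_j\leq\log_2 n$ that you use to get $2^{\sum b_j-k}\leq n/2^k$ does \emph{not} apply to $\sum_j m_j$; only the weaker uniform bound $m_j\leq\log_2 n$ (for each $j$ separately) is available, and this produces $(n/2)^k$ and $(\log_2 2n)^{k/2}$ rather than your $n/2^k$ and $\sqrt{\tau(n)}$. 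Your claimed Gelfond estimate $H(h)\leq\frac{n}{2^k}\sqrt{\tau(n)}H(f)$ is therefore unjustified, and replacing it with the correct one changes the whole quantitative shape of the argument.

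Second, you claim that ``every index in $\mathrm{supp}(h)$ divides $d_2$'', hence $|\mathrm{supp}(h)|\leq\tau(d_2)$. A factor $h(s)=\frac{c_{c_2}}{c_2^s}+\cdots+\frac{c_{d_2}}{d_2^s}$ of a Dirichlet polynomial can have nonzero terms at \emph{any} indices between its min-degree and its degree; there is no reason for those indices to divide $d_2$ (e.g.\ $h(s)=1+\frac1{3^s}+\frac1{4^s}$ has degree $4$ but $3\nmid 4$), and the same example shows $\deg_{X_{i_j}}H$ is not confined to the box $\prod_j[0,\nu_{p_{i_j}}(d_2)]$. The correct count of lower-order terms is bounded by $d_2-1$, not $\tau(d_2)$, which is why the paper ends up with the factor $(d-1)^{t+1}$ from $1+2^t+\cdots+(d-1)^t$ rather than $\tau(d_2)(d_2-1)^t$.

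Finally, the constant $1+\tfrac12\log_3(\log_2 12)$ in the hypothesis does not come from prime-counting or primorial estimates. In the paper those only enter in the later corollary where $k$ is replaced by $\pi(n)$. Here the constant arises from the elementary inequality $\log_2(2n)\leq(n/2)^\delta$ for $n\geq 6$, with $\delta=\log_3(\log_2 12)$ chosen so that equality holds at $n=6$; this lets $(\log_2 2n)^{k/2}$ be absorbed into $(n/2)^{k\delta/2}$. You would also need to treat separately both the case where $h$ is a single term and the small case $n=4$ (which the paper handles with an explicit enumeration of coefficient patterns when $H(f)=1$), neither of which appears in the proposal.
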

\begin{proof}
Let us assume to the contrary that $f(s)=g(s)h(s)$ with $g,h$ nonconstant Dirichlet polynomials with integer coefficients. Since $|f(-t)|=Pq$ and $P$ is prime, we deduce from the equality $|f(-t)|=|g(-t)|\cdot |h(-t)|$ that one of the integers $|g(-t)|$ and $|h(-t)|$ must be divisible by $P$, so the other must divide $q$, say $|g(-t)|\mid q$. In particular we have
\begin{equation}\label{maimicdecatq}
|g(-t)|\leq q.
\end{equation}
Let  now $F(X_1,\dots ,X_k)\in \mathbb{Z}[X_1,\dots ,X_k]$ be the polynomial obtained from $f(s)$ by letting $X_i=p_i^{-s}$ for each of the $k$ relevant primes $p_1,\dots ,p_k$ of $f$. We obviously have $H(f)=H(F)$. Suppose now that $g(s)=\frac{b_{c}}{c^{s}}+\cdots +\frac{b_{d}}{d^{s}}$, with $d$ a proper divisor of $n$, and $c\leq d$. Since $g$ is a factor of $f$, we can use the same identification above and obtain a polynomial  $G(X_1,\dots ,X_k)\in \mathbb{Z}[X_1,\dots ,X_k]$, which is a factor of $F$ and satisfies $H(g)=H(G)$. Note that some of the indeterminates $X_1,\dots ,X_k$ might not effectively appear in the expression of $G$.
Let $\nu _{p_i}(\cdot )$ denote the usual $p_i$-adic valuation, for $i=1,\dots ,k$.   By (\ref{GelfondInequality}) we must have
\[
H(g)\leq 2^{m_1+\cdots +m_k-k}\{(m_1+1)\cdots (m_k+1)\}^{\frac{1}{2}}H(f),
\]
where for each $i=1,\dots ,k$, $m_i=\max\limits _{N\in Supp(f)}\nu _{p_i}(N)$, the greatest multiplicity of $p_i$ in the prime factorizations of the integers in $Supp(f)$. Since a uniform upper bound for $m_1, \dots ,m_k$ is obviously $\log_2n$, we deduce that
\begin{equation}\label{cevacumi}
H(g)\leq 2^{k\log _2\frac{n}{2}}(\log _22n)^{\frac{k}{2}}H(f)=\Bigl(\frac{n}{2}\Bigr)^k(\log _22n)^{\frac{k}{2}}H(f).
\end{equation}

We will first assume that $c=d$, so $g(-t)=b_dd^t$.
According to (\ref{maimicdecatq}), we have $|g(-t)|\leq q$, so in this case we reach a contradiction if $|d^t|>q$, that is for $t>\log _dq$, as $|b_d|\geq 1$. In particular, we obtain a contradiction for $t>\log _2q$, as $d\geq 2$. Since the right side in (\ref{camcomplicata}) exceeds $\log _2q$, we conclude that $g$ cannot consist of a single term, so $f$ must be algebraically primitive. We will therefore assume that $c<d$. By the triangle inequality and (\ref{cevacumi}) we successively deduce in this case that
\begin{eqnarray*}
q & \geq & |b_cc^t+\cdots +b_dd^t|\geq |b_dd^t|-H(g)(c^t+\cdots +(d-1)^t)\\
 & \geq & d^t-H(g)(1+\cdots +(d-1)^t)\\
 & \geq & d^t-\Bigl(\frac{n}{2}\Bigr)^k(\log _22n)^{\frac{k}{2}}H(f)(1+\cdots +(d-1)^t) \\
 & \geq & d^t-\Bigl(\frac{n}{2}\Bigr)^k(\log _22n)^{\frac{k}{2}}H(f)(d-1)^{t+1}.
\end{eqnarray*}
To reach a contradiction, it suffices to show that $d^t>(\frac{n}{2})^k(\log _22n)^{\frac{k}{2}}H(f)(d-1)^{t+1}+q$. We will actually prove that $(\frac{d}{d-1})^t>(\frac{n}{2})^k(\log _22n)^{\frac{k}{2}}H(f)(d-1)+q$, that is
\[
t>\log _{\frac{d}{d-1}}\left(\Bigl(\frac{n}{2}\Bigr)^k(\log _22n)^{\frac{k}{2}}H(f)(d-1)+q\right).
\]
Observing that the right side in last inequality is an increasing function of $d$, and $d\leq \frac{n}{p}$, it will be sufficient to prove that
\begin{equation}\label{ceacunminusp}
t>\log _{\frac{\frac{n}{p}}{\frac{n}{p}-1}}\Bigl(\Bigl(\frac{n}{2}\Bigr)^k(\log _22n)^{\frac{k}{2}}H(f)\Bigl(\frac{n}{p}-1\Bigr)+q\Bigr),
\end{equation}
or even more, to simplify computations, that
\begin{equation}\label{aproapegata}
t>\log _{\frac{\frac{n}{p}}{\frac{n}{p}-1}}\left(\Bigl(\frac{n}{2}\Bigr)^k(\log _22n)^{\frac{k}{2}}H(f)\cdot\frac{nq}{p}\right).
\end{equation}
Let us denote by $A$ the term under the logarithm in (\ref{aproapegata}). At this point, we can use the fact that $\bigl(\frac{\frac{n}{p}}{\frac{n}{p}-1}\bigr)^{\frac{n}{p}}>e$, the base of natural logarithm, to deduce that 
\[
\frac{n}{p}\log A>\log _{\frac{\frac{n}{p}}{\frac{n}{p}-1}}A,
\]
which shows that we can actually ask $t$ to satisfy the inequality
\[
t>\frac{n}{p}\left( k\log \frac{n}{2}+\frac{k}{2}\log (\log _2 2n) +\log \Bigl(\frac{nq}{p}H(f)\Bigr) \right).
\]
Since $n$ is composite, we have $n\geq 4$. We will study the case $n=4$ separately, so we will first assume that $n\geq 6$. We search for a real positive $\delta $ as small as possible such that $(\frac{n}{2})^{\delta }\geq \log _2 2n$ for $n\geq 6$. It is easy to check that $\delta =\log _3(\log _212)$. Thus, it suffices to ask $t$ to satisfy the inequality
\[
t>\frac{n}{p}\left( k\log \frac{n}{2}+k\frac{\log _3(\log _212)}{2}\log \frac{n}{2} +\log \Bigl(\frac{nq}{p}H(f)\Bigr) \right),
\]
or, equivalently,
\[
t>\frac{n}{p}\log \biggl(\Bigl(\frac{n}{2}\Bigr)^{k\bigl(1+\frac{\log _3(\log _212)}{2}\bigr)}\cdot \frac{nqH(f)}{p} \biggr),
\]
which is precisely condition (\ref{camcomplicata}). This completes the proof for $n\geq 6$.

We consider now the case $n=4$. Here we have $p=d=2$, and if $f$ would have a nonzero term $\frac{a_3}{3^s}$, its irreducibility would follow by Proposition \ref{prop2}, so we can assume that the only relevant prime of $f$ is $2$, meaning that $k=1$. Thus, we need to prove that the conclusion on the irreducibility of $f$ holds for $t$ satisfying
\begin{equation}\label{penultima}
t>2\log \left(2^{2+\frac{\log _3(\log _212)}{2}}\cdot qH(f) \right).
\end{equation}
On the other hand, condition (\ref{ceacunminusp}) reduces in this case to \begin{equation}\label{cuplusq}
t>\log _2(2\sqrt{3}H(f)+q).
\end{equation} 

We will first consider the case that $q\geq 2$. In this case (\ref{cuplusq}) obviously holds if $t$ satisfies (\ref{penultima}), since $\log _2(2\sqrt{3}qH(f))>\log _2(2\sqrt{3}H(f)+q)$, $2\bigl(2+\frac{\log _3(\log _212)}{2}\bigr)\log 2>3>\log_{2}(2\sqrt{3})$, and $2\log (qH(f))=2(\log 2)(\log _{2}(qH(f)))>\log _{2}(qH(f))$. 

For $q=1$ we need to show that 
\[
2\log \left(2^{2+\frac{\log _3(\log _212)}{2}}\cdot H(f) \right)>\log _2(2\sqrt{3}H(f)+1),
\]
or, equivalently, that
\begin{equation}\label{ultimulcaz}
\log _2\left(2^{2+\frac{\log _3(\log _212)}{2}}\cdot H(f) \right)^{2\log 2}>\log _2(2\sqrt{3}H(f)+1),
\end{equation}
which obviously holds for $H(f)\geq 2$, since the term under the logarithm in the left side is greater than $3.72H(f)^{1.38}$, which in turn is greater than $2\sqrt{3}H(f)+1$ for $H(f)\geq 2$.

However, inequality (\ref{ultimulcaz}) fails for $H(f)=1$, so in this case we need to analyse separately the Dirichlet polynomials of the form $f(s)=\frac{a}{1^s}+\frac{b}{2^s}+\frac{c}{4^s}$ with integers $a,b,c$ satisfying $|a|\leq 1$, $|b|\leq 1$ and $|c|=1$. If both $a$ and $b$ are zero, $|f(-t)|$ cannot be a prime number for positive integers $t$, so such an $f$ cannot satisfy the hypotheses of the theorem. If $a=0$ and $b\neq 0$, then $f$ must be of the form $\pm (\frac{1}{2^s}+\frac{1}{4^s})$, or of the form $\pm(\frac{-1}{2^s}+\frac{1}{4^s})$, which are not algebraically primitive. Note that even if we ignore the fact that $f$ must be algebraically primitive, both $\pm (\frac{1}{2^s}+\frac{1}{4^s})$ and $\pm(\frac{-1}{2^s}+\frac{1}{4^s})$ fail to satisfy the conditions in the statement. In the first case $|f(-t)|$ cannot be a prime number for positive integers $t$, while in the second case, the only prime value of the form $4^t-2^t$ is obtained for $t=1$, but $t=1$ does not satisfy condition (\ref{camcomplicata}), since the right side in (\ref{camcomplicata}) is in this case greater than $3$. 

Assume now that $b=0$ and $a\neq 0$, so $f$ is now of the form $\pm (\frac{1}{1^s}+\frac{1}{4^s})$, or of the form $\pm(\frac{-1}{1^s}+\frac{1}{4^s})$. In the first case, the irreducibility of $f$ follows by direct computation, without looking at its possible prime values. However, in this case for $|f(-t)|=4^t+1$ to be a prime, it must in fact be a Fermat prime, and $t=4$ and $t=8$ satisfy condition (\ref{camcomplicata}), with corresponding prime values the Fermat primes $F_3=257$ and $F_4=65537$. In our second case, the Dirichlet polynomials $f(s)=\pm(\frac{-1}{1^s}+\frac{1}{4^s})$ are obviously reducible, and the only prime value of the form $|f(-t)|=4^t-1$ is $3$, and is obtained for $t=1$, which fails to satisfy condition (\ref{camcomplicata}).

We are thus left with the case that none of $a,b$ and $c$ is zero, so $|a|=|b|=|c|=1$. It is easy to see that all the corresponding eight such Dirichlet polynomials are irreducible, since their hypothetical nonconstant factors would be forced to be of the form $\pm(\frac{1}{1^s}+\frac{1}{2^s})$, or of the form $\pm(\frac{-1}{1^s}+\frac{1}{2^s})$, and any product of two such factors would force $b$ to be zero or $\pm2$. 
This completes the proof.
\end{proof}
\begin{remark}
It should be noted that to improve (\ref{camcomplicata}), 
instead of bounding $1+\cdots +(d-1)^t$ from above by $(d-1)^{t+1}$, one might use Faulhaber's formula
\[
\sum\limits _{i=1}^n(d-1)^t=\frac{1}{t+1}\sum\limits _{i=0}^t\tbinom{t+1}{i} B_i(d-1)^{t-i+1},
\]
combined with majorants for the absolute values of the Bernoulli numbers $B_i$. Another way to improve (\ref{camcomplicata}) would be to search for a sharper bound on $H(g)$ than the right side in (\ref{GelfondInequality}),
but such attempts might produce much more involved formulae.
\end{remark}
In particular, we obtain from Theorem \ref{supportcardinality} the following result for the case that no information on the least prime factor of $f$ and on the number of relevant primes of $f$ is known.
\begin{corollary}\label{Corolarsupportcardinality1}
Let $f$ be a Dirichlet polynomial of degree $n\geq 2$ with integer coefficients. If $|f(-t)|=Pq$ with $P$ a prime number, $q$ a positive integer, and $t$ an integer satisfying
\[
t>n^2+\frac{n}{2}\log (qH(f)),
\]
then $f$ is irreducible.
\end{corollary}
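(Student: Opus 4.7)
The plan is to derive Corollary \ref{Corolarsupportcardinality1} directly from Theorem \ref{supportcardinality} by showing that the simpler coefficient-free lower bound $t > n^{2}+\frac{n}{2}\log(qH(f))$ implies the sharper but more intricate condition (\ref{camcomplicata}). First, if $n$ is prime, $f$ is irreducible by Proposition \ref{prop1} and there is nothing to prove. So assume $n$ is composite and let $p$ denote the smallest prime factor of $n$ and $k$ the number of relevant primes of $f$; then $p\geq 2$ and, since every relevant prime divides some index $\leq n$, we have $k\leq \pi(n)$.

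Next, I would bound the right-hand side of (\ref{camcomplicata}) from above. Writing $C:=1+\tfrac{\log_{3}(\log_{2}12)}{2}$, using $p\geq 2$, and distributing the logarithm, the right-hand side is at most
\[
\frac{n}{2}\Bigl[\log\tfrac{nqH(f)}{2}+kC\log\tfrac{n}{2}\Bigr]
=\frac{n}{2}\log\tfrac{n}{2}+\frac{n}{2}\log(qH(f))+\frac{nkC}{2}\log\tfrac{n}{2}.
\]
The middle summand matches the $\frac{n}{2}\log(qH(f))$ already present in the hypothesis, so it is enough to verify the purely arithmetic inequality
\[
(1+kC)\log\tfrac{n}{2}\leq 2n
\]
for every composite $n\geq 4$ and every admissible $k\leq \pi(n)$.

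For the verification, I would invoke a Chebyshev/Rosser-type upper bound of the form $\pi(n)\leq 1.26\, n/\log n$, which is valid for $n\geq 17$. Substituting and using $C<2$ together with $\log(n/2)/\log n < 1$, one obtains $(1+kC)\log(n/2)\leq \log(n/2)+1.26\,Cn\,\log(n/2)/\log n<2n$ after a short computation. The handful of small composite values $n\in\{4,6,8,9,10,12,14,15,16\}$ are checked numerically, using the exact values of $\pi(n)$; in each case $kC$ is safely below $2n/\log(n/2)-1$. Once this arithmetic lemma is in place, the hypothesis of Theorem \ref{supportcardinality} is satisfied, and that theorem yields the irreducibility of $f$.

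The main obstacle is quantitative margin. Since the constant $C\approx 1.58$ sits uncomfortably close to the threshold $2$, a crude bound such as $\pi(n)\leq n$ is useless, and one really does need a Chebyshev-strength estimate to force the inequality past the $2n$ barrier. This is precisely the ``suitable estimate for the prime counting function $\pi(n)$'' alluded to in the introduction, and handling the small composite cases by hand is unavoidable because the asymptotic Rosser bound only kicks in past $n=17$.
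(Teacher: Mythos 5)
Your overall strategy is exactly the one the paper uses: handle prime $n$ by Proposition~\ref{prop1}, then for composite $n$ substitute $p=2$ and $k\leq\pi(n)$ into (\ref{camcomplicata}), invoke the Rosser--Schoenfeld bound, and reduce the claim to a purely arithmetic inequality. Your reduction to $(1+kC)\log\tfrac{n}{2}\leq 2n$ is a clean and correct equivalent formulation of what must be verified.

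However, the bounds you cite to ``close'' this inequality do not actually close it, and this is where the mathematical content of the corollary lives. Using $C<2$ and $\log(n/2)/\log n<1$ as stated gives $\log\tfrac{n}{2}+1.26\,C\,n\,\tfrac{\log(n/2)}{\log n}<\log\tfrac{n}{2}+2.52\,n$, which is \emph{larger} than $2n$, not smaller. The inequality is true, but only because of a much tighter numerical coincidence: the paper's key observation is that $\tfrac{1.25506}{2}\bigl(1+\tfrac{\log_3(\log_212)}{2}\bigr)=0.99217\ldots$ is \emph{strictly less than} $1$, so that after writing $\tfrac{n\pi(n)C}{2}\log\tfrac{n}{2}<0.99217\,\tfrac{n^2}{\log n}(\log n-\log 2)=n^2\cdot(0.99217)-0.99217\,\tfrac{n^2\log 2}{\log n}$ one has a \emph{negative} correction term $-\tfrac{n^2\log 2}{\log n}$ available, and the whole proof comes down to verifying $\tfrac{n^2\log 2}{\log n}>\tfrac{n}{2}\log\tfrac{n}{2}$ for $n\geq 2$. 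Deferring this to ``a short computation'' while quoting bounds that point the wrong way obscures the single nontrivial step. A minor factual slip compounds this: Rosser--Schoenfeld (3.6), $\pi(x)<1.25506\,x/\log x$, holds for all $x>1$, not only $x\geq 17$, so the ad hoc numerical check of small composite $n$ is unnecessary once the inequality is handled precisely.
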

\begin{proof}
If $n$ is prime, the irreducibility of $f$ follows by Proposition \ref{prop1}, without any additional condition. For a composite $n$ we may ask $t$ to satisfy (\ref{camcomplicata}) with $2$ instead of $p$, and with $\pi(n)$, the prime counting function, instead of $k$, since the number of relevant primes of $f$ is at most $\pi(n)$. In this case condition (\ref{camcomplicata}) becomes
\[
t>\left(1+\frac{\log _3(\log _212)}{2}\right)\frac{n\pi(n)}{2}\log \left(\frac{n}{2}\right)+\frac{n}{2}\log \left(\frac{nqH(f)}{2} \right).
\]
By \cite[(3.6)]{BarkleyRosserSchoenfeld} we have $\pi(n)<1.25506\frac{n}{\log n}$ for $n>1$, so we can ask $t$ to satisfy
\[
t>\frac{1.25506}{2}\cdot\left(1+\frac{\log _3(\log _212)}{2}\right)\cdot \frac{n^2}{\log n}\log \left(\frac{n}{2}\right)+\frac{n}{2}\log \left(\frac{nqH(f)}{2} \right).
\]
Rather surprisingly, the constant $\frac{1.25506}{2}\cdot\bigl(1+\frac{\log _3(\log _212)}{2}\bigr)$ is quite close to $1$, more precisely, it is equal to $0.99217077...$, so we can ask $t$ to satisfy
\[
t>\frac{n^2}{\log n}(\log n-\log 2)+\frac{n}{2}\log \left(\frac{n}{2}\right)+\frac{n}{2}\log (qH(f)),
\]
or, equivalently,
\[
t>n^2-\frac{n^2\log 2}{\log n}+\frac{n}{2}\log \left(\frac{n}{2}\right)+\frac{n}{2}\log (qH(f)).
\]
The proof finishes by observing that $\frac{n^2\log 2}{\log n}>\frac{n}{2}\log \bigl(\frac{n}{2}\bigr)$ for $n\geq 2$.
\end{proof}
In particular, we obtain for $q=1$ the following result for Dirichlet polynomials that assume a prime value, that may be regarded as an analogue of St\"ackel's irreducibility criterion.
\begin{corollary}\label{Corolarsupportcardinality2}
Let $f$ be a Dirichlet polynomial of degree $n\geq 2$ with integer coefficients. If $|f(-t)|$ is a prime number for some integer $t>n^2+\frac{n}{2}\log H(f)$, then $f$ is irreducible.
\end{corollary}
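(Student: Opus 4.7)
The plan is simply to specialize Corollary \ref{Corolarsupportcardinality1} to the case $q=1$. Since $|f(-t)|$ is a prime number by hypothesis, we may write $|f(-t)|=P\cdot 1$ with $P=|f(-t)|$ prime and $q=1$. Substituting $q=1$ into the bound
\[
t>n^2+\frac{n}{2}\log(qH(f))
\]
of Corollary \ref{Corolarsupportcardinality1} yields precisely the hypothesis of the present statement, namely $t>n^2+\frac{n}{2}\log H(f)$. Hence Corollary \ref{Corolarsupportcardinality1} applies directly and gives the irreducibility of $f$.

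There is no genuine obstacle here: all the work has already been absorbed in Theorem \ref{supportcardinality} (where the Gelfond height inequality, the identification of $f$ with a multivariate polynomial via $\phi$, the bound $H(g)\leq (n/2)^k(\log_2 2n)^{k/2}H(f)$, and the triangle inequality estimate $|g(-t)|\geq d^t-H(g)(d-1)^{t+1}$ are combined) and in the passage from Theorem \ref{supportcardinality} to Corollary \ref{Corolarsupportcardinality1} (where the least prime factor $p$ of $n$ is replaced by $2$, the number $k$ of relevant primes is bounded by $\pi(n)$, and the Rosser--Schoenfeld estimate $\pi(n)<1.25506\,n/\log n$ is used, with the pleasant numerical coincidence that $\tfrac{1.25506}{2}(1+\tfrac{1}{2}\log_3(\log_212))<1$). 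One only has to verify that, for $n\geq 2$ prime, the statement still holds, which follows at once from Proposition \ref{prop1}, independently of any assumption on $t$.
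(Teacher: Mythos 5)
Your proof is correct and matches the paper's approach exactly: the paper introduces Corollary \ref{Corolarsupportcardinality2} with the words ``In particular, we obtain for $q=1$ the following result,'' so the intended argument is precisely the specialization $q=1$ in Corollary \ref{Corolarsupportcardinality1}. Your note that the prime-degree case is handled by Proposition \ref{prop1} (and is anyway already absorbed into Corollary \ref{Corolarsupportcardinality1}) is also accurate.
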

The lower bound for $t$ simplifies even more if all the coefficients of $f$ have absolute values at most 1, as in the following result.
\begin{corollary}\label{Corolarsupportcardinality3}
Let $f$ be a Dirichlet polynomial of degree $n\geq 2$ with integer coefficients $a_i\in \{-1,0,1\}$. If $|f(-t)|$ is a prime number for some integer $t>n^2$, then $f$ is irreducible.
\end{corollary}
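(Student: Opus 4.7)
The plan is to apply Corollary \ref{Corolarsupportcardinality2} directly, treating Corollary \ref{Corolarsupportcardinality3} as the specialization where all coefficients are bounded by $1$ in absolute value. Since $f$ has degree $n$, its leading coefficient $a_n$ is nonzero and, by hypothesis, belongs to $\{-1,0,1\}$, so $|a_n|=1$. Combined with the hypothesis $a_i\in\{-1,0,1\}$ for every $i$, this forces $H(f)=\max_i|a_i|=1$, and hence $\log H(f)=0$.

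With this observation, the threshold $n^2+\tfrac{n}{2}\log H(f)$ appearing in Corollary \ref{Corolarsupportcardinality2} collapses to $n^2$. Thus the assumption $t>n^2$ in the statement of Corollary \ref{Corolarsupportcardinality3} is exactly the assumption $t>n^2+\tfrac{n}{2}\log H(f)$ required by Corollary \ref{Corolarsupportcardinality2}. Since $|f(-t)|$ is a prime number, the hypotheses of Corollary \ref{Corolarsupportcardinality2} are satisfied (with $P=|f(-t)|$ and $q=1$), and the irreducibility of $f$ follows immediately.

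There is essentially no obstacle here: the corollary is a clean specialization, and the only thing worth remarking is the convention that the leading coefficient of a degree-$n$ Dirichlet polynomial is nonzero, so that $H(f)$ cannot be zero and is in fact equal to $1$ under the coefficient restriction. The proof should therefore be written as a two-line deduction pointing out that $H(f)=1$ and invoking Corollary \ref{Corolarsupportcardinality2}.
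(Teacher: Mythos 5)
Your proposal is correct and matches the paper's intent exactly: the paper states this corollary without proof as an immediate specialization of Corollary \ref{Corolarsupportcardinality2}, and your observation that $a_i\in\{-1,0,1\}$ together with $a_n\neq 0$ forces $H(f)=1$, hence $\log H(f)=0$, is precisely the intended two-line deduction.
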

We mention here that for $\ell>1$, Weisner's technical condition that $p\nmid f'(t)^{\ell -1}$ reduces to $p\nmid f'(t)$, and in case $f$ can be written as a product of two nonconstant factors $g$ and $h$, prevents $g(t)$ and $h(t)$ to be both divisible by $p$. As we shall see in the following result, in the case of a Dirichlet polynomial $f(s)$ that factors nontrivially as $g(s)\cdot h(s)$, to prevent $g(-t)$ and $h(-t)$ to be both divisible by a certain prime $P$, we will need a surprisingly different kind of condition on the derivative of $f$, namely an irrationality condition for the principal $P$th root of a certain expression depending on $f$.

\begin{theorem}\label{supportcardinalityPlaell}
Let $f$ be a Dirichlet polynomial of composite degree $n\geq 2$ with integer coefficients $a_i$, and assume that $p$ is the smallest prime factor of $n$, and that $f$ has $k\geq 1$ relevant primes. Assume that $|f(-t)|=P^{\ell}q$ with $P$ a prime number, $\ell$ and $q$ positive integers, $P\nmid q$, $\ell \geq 2$, and $t$ an integer satisfying
\[
t>\frac{n}{p}\log \left( \frac{nqH(f)}{p}\cdot\Bigl(\frac{n}{2}\Bigr)^{k\bigl(1+\frac{\log _3(\log _212)}{2}\bigr)}\right).
\]
If the principal $P$th root of \thinspace $\prod\limits _{i}i^{a_i\cdot i^t}$ is an irrational number, then $f$ is irreducible.
\end{theorem}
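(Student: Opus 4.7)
The plan is to run the same Gelfond-height argument as in Theorem~\ref{supportcardinality}, combined with a new ingredient that rules out the case where a prime $P$ divides both $g(-t)$ and $h(-t)$. So assume, for contradiction, that $f=gh$ with $g,h$ nonconstant Dirichlet polynomials with integer coefficients. From $|f(-t)|=|g(-t)|\cdot|h(-t)|=P^{\ell}q$ with $P\nmid q$, write $|g(-t)|=P^{a}g_{0}$ and $|h(-t)|=P^{b}h_{0}$ with $P\nmid g_{0}h_{0}$, $a+b=\ell$, and $g_{0}h_{0}=q$. If $a=0$ (or, symmetrically, $b=0$), then $|g(-t)|=g_{0}\leq q$, and the Gelfond-bound estimate from the proof of Theorem~\ref{supportcardinality}, together with the assumed lower bound on $t$, yields a contradiction verbatim. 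The only new case to treat is $a\geq 1$ and $b\geq 1$, i.e.\ $P\mid g(-t)$ \emph{and} $P\mid h(-t)$.

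To handle this case I would translate the factorization $f=gh$ into the multivariate setting through the isomorphism $\phi$: letting $p_{1},\dots,p_{k}$ be the relevant primes and $X_{j}=\phi(p_{j}^{-s})$, one has $F=GH$ in $\mathbb{Z}[X_{1},\dots,X_{k}]$, with $F=\phi(f)$, $G=\phi(g)$, $H=\phi(h)$, and $f(-t)=F(p_{1}^{t},\dots,p_{k}^{t})$. Differentiating the identity $F=GH$ in each $X_{j}$ and specializing at $(p_{1}^{t},\dots,p_{k}^{t})$ gives
\begin{equation*}
F_{X_{j}}(p_{1}^{t},\dots,p_{k}^{t})=G_{X_{j}}(p_{1}^{t},\dots,p_{k}^{t})\cdot h(-t)+g(-t)\cdot H_{X_{j}}(p_{1}^{t},\dots,p_{k}^{t}),
\end{equation*}
so if $P\mid g(-t)$ and $P\mid h(-t)$ then $P\mid F_{X_{j}}(p_{1}^{t},\dots,p_{k}^{t})$ for every $j$.

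The key computational step is to relate these partial derivatives to the product appearing in the hypothesis. A direct expansion of $F_{X_{j}}$ shows that $E_{j}:=p_{j}^{t}F_{X_{j}}(p_{1}^{t},\dots,p_{k}^{t})=\sum_{i}\nu_{p_{j}}(i)\,a_{i}\,i^{t}$, and hence
\begin{equation*}
\prod_{i}i^{a_{i}\cdot i^{t}}=\prod_{j=1}^{k}p_{j}^{E_{j}}.
\end{equation*}
Since the $p_{j}$ are distinct primes, the principal $P$-th root of $\prod_{j}p_{j}^{E_{j}}$ is rational if and only if $P\mid E_{j}$ for every $j=1,\dots,k$. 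But in the case $a,b\geq 1$ we have shown $P\mid F_{X_{j}}(p_{1}^{t},\dots,p_{k}^{t})$ for every $j$, and for the index $j$ (if any) with $p_{j}=P$ the factor $p_{j}^{t}$ already supplies divisibility by $P$ since $t\geq 1$; in either case $P\mid E_{j}$ for all $j$, contradicting the assumption that the principal $P$-th root of $\prod_{i}i^{a_{i}\cdot i^{t}}$ is irrational.

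Thus the case $a,b\geq 1$ is impossible under the irrationality hypothesis, and we are reduced to the case $a=0$ or $b=0$ already dealt with, completing the proof. The main obstacle I expect is the careful bookkeeping of the identity $E_{j}=p_{j}^{t}F_{X_{j}}(p_{1}^{t},\dots,p_{k}^{t})$ and the separate handling of the subcase $P\in\{p_{1},\dots,p_{k}\}$, where the factor $p_{j}^{t}$ contributes the divisibility by $P$ \emph{gratis} rather than through the product rule.
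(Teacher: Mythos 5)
Your proof is correct and reaches the same conclusion as the paper, but by a genuinely different computational route. The paper works intrinsically with the $s$-derivative of $f$: from $f'(-t)=g'(-t)h(-t)+g(-t)h'(-t)$ and $|g(-t)|=P^{a}q_{1}$, $|h(-t)|=P^{b}q_{2}$, it obtains $\sum_{i}a_{i}i^{t}\log i = h(-t)\sum_{i}b_{i}i^{t}\log i + g(-t)\sum_{i}c_{i}i^{t}\log i$ and then exponentiates to get $\prod_{i}i^{a_{i}i^{t}}=\bigl(\prod_{i}i^{b_{i}i^{t}}\bigr)^{h(-t)}\cdot\bigl(\prod_{i}i^{c_{i}i^{t}}\bigr)^{g(-t)}$, which is visibly a $P$th power of a positive rational whenever both $a\geq1$ and $b\geq1$, forcing the principal $P$th root to be rational. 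You instead pass to $F=GH$ via $\phi$, differentiate in each $X_{j}$, specialize at $(p_{1}^{t},\dots,p_{k}^{t})$, verify the clean identity $E_{j}=p_{j}^{t}F_{X_{j}}(p_{1}^{t},\dots,p_{k}^{t})=\sum_{i}\nu_{p_{j}}(i)a_{i}i^{t}$, and observe $\prod_{i}i^{a_{i}i^{t}}=\prod_{j}p_{j}^{E_{j}}$; then $P\mid F_{X_{j}}(\dots)$ for all $j$ gives $P\mid E_{j}$ for all $j$ and unique factorization of rationals gives the same contradiction. What the paper's route buys is brevity: it never needs to decompose $\log i$ along the prime basis or bookkeep $k$ separate partial derivatives, and it works with a single scalar identity. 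What your route buys is explicitness: the unique-factorization structure and the role of the relevant primes are laid bare, and the equivalence ``principal $P$th root rational $\Leftrightarrow$ $P\mid E_{j}$ for all $j$'' is transparent rather than packaged into a ``visible $P$th power'' observation. One small remark: the clause about the index $j$ with $p_{j}=P$ getting divisibility ``gratis'' from the factor $p_{j}^{t}$ is superfluous in the case you are handling, since you have already shown $P\mid F_{X_{j}}(\dots)$ for \emph{every} $j$, which alone gives $P\mid E_{j}$; the clause does no harm, but you could delete it without loss.
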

\begin{proof}
Assume that $f(s)=\frac{a_m}{m^s}+\cdots +\frac{a_n}{n^s}$, and that $f=g\cdot h$, with $g(s)=\frac{b_{m_1}}{m_1^s}+\cdots +\frac{b_{n_1}}{n_1^s}$ and $h(s)=\frac{c_{m_2}}{m_2^s}+\cdots +\frac{c_{n_2}}{n_2^s}$ nonconstant Dirichlet polynomials with integer coefficients. Since $|f(-t)|=P^{\ell}q$ and $f(-t)=g(-t)h(-t)$, we have $|g(-t)|=P^{a}q_1$ and $|h(-t)|=P^{b}q_2$ for two nonnegative integers $a,b$ with $a+b=\ell$ and two positive integers $q_1,q_2$ with $q_1q_2=q$ (hence none of $q_1$ and $q_2$ is divisible by $P$). We will prove now that one of the integers $a$ and $b$ must be zero. Let us assume to the contrary that both $a$ and $b$ are positive. Observe that
\begin{eqnarray*}
f'(-t) & = & -\sum\limits _{m\leq i\leq n}a_ii^t\log i\\
& = & -h(-t)\cdot\biggl(\sum\limits _{m_1\leq i\leq n_1}b_ii^t\log i\biggr) -g(-t)\cdot\biggl(\sum\limits _{m_2\leq i\leq n_2}c_ii^t\log i\biggr)\\
& = & -P^bq_2\biggl(\sum\limits _{m_1\leq i\leq n_1}b_ii^t\log i\biggr)-P^aq_1\biggl(\sum\limits _{m_2\leq i\leq n_2}c_ii^t\log i\biggr),
\end{eqnarray*}
from which we deduce that
\begin{equation}\label{Ppower}
\prod\limits _{i}i^{a_i\cdot i^t}=\Bigl(\prod\limits _{i}i^{b_i\cdot i^t}\Bigr)^{P^{b}q_2}\cdot \Bigl(\prod\limits _{i}i^{c_i\cdot i^t}\Bigr)^{P^{a}q_1}.
\end{equation}
Observe now that the right side in (\ref{Ppower}) is the $P$th power of a rational number, implying that the principal $P$th root of  $\prod\limits _{i}i^{a_i\cdot i^t}$ is a rational number, a contradiction. Therefore one of $a$ and $b$ must be zero, say $a=0$, implying that $|g(-t)|=q_1$. In particular, this shows that $|g(-t)|\leq q$. The proof continues now using precisely the same lines as in the proof of Theorem \ref{supportcardinality}.
\end{proof}
\begin{remark}\label{finitelymanyP}
We notice that for a given Dirichlet polynomial with integer coefficients $a_i$, if $\prod\limits _{i}i^{a_i\cdot i^t}\neq 1$ for some positive integer $t$, then the principal $P$th root of  $\prod\limits _{i}i^{a_i\cdot i^t}$ is an irrational number for all but finitely many prime numbers $P$.
\end{remark}
In particular, we obtain from Theorem \ref{supportcardinalityPlaell} the following result for the case that no information on the least prime factor of $f$ and on the number of relevant primes of $f$ is known.
\begin{corollary}\label{CorolarsupportcardinalityPlaell1}
Let $f$ be a Dirichlet polynomial of degree $n\geq 2$ with integer coefficients $a_i$. Assume that $|f(-t)|=P^{\ell}q$ with $P$ a prime number, $q$ a positive integer, $P\nmid q$, $\ell\geq 2$ an integer, and $t$ an integer satisfying
\[
t>n^2+\frac{n}{2}\log (qH(f)).
\]
If the principal $P$th root of \thinspace $\prod\limits _{i}i^{a_i\cdot i^t}$ is an irrational number, then $f$ is irreducible.
\end{corollary}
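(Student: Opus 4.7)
The plan is to deduce this corollary from Theorem \ref{supportcardinalityPlaell} by exactly the same route used to derive Corollary \ref{Corolarsupportcardinality1} from Theorem \ref{supportcardinality}. First, if $n$ is prime then Proposition \ref{prop1} yields the irreducibility of $f$ for free, without needing any condition on $|f(-t)|$ or on the principal $P$th root. So the only real content is in the case that $n$ is composite, where Theorem \ref{supportcardinalityPlaell} actually applies.

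For composite $n$, I would invoke Theorem \ref{supportcardinalityPlaell} with the crude but uniform bounds $p \geq 2$ for the smallest prime factor of $n$ and $k \leq \pi(n)$ for the number of relevant primes of $f$, since $p_1,\dots,p_k$ are prime factors of integers in $Supp(f) \subseteq \{1,\dots,n\}$. Substituting $p=2$ and $k=\pi(n)$ into the hypothesis of Theorem \ref{supportcardinalityPlaell} and separating the $\log$ gives the sufficient condition
\[
t > \left(1+\tfrac{\log_3(\log_2 12)}{2}\right)\frac{n\,\pi(n)}{2}\log\!\left(\frac{n}{2}\right) + \frac{n}{2}\log\!\left(\frac{nqH(f)}{2}\right).
\]
The irrationality condition on the principal $P$th root of $\prod_i i^{a_i\cdot i^t}$ is carried over verbatim, so nothing new needs to be proved there.

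The only remaining task is to show that the cleaner bound $t > n^2 + \frac{n}{2}\log(qH(f))$ already forces the displayed inequality above. For this I would use the Rosser--Schoenfeld estimate $\pi(n) < 1.25506\,\frac{n}{\log n}$ for $n>1$, exactly as in Corollary \ref{Corolarsupportcardinality1}. The numerical coincidence that $\frac{1.25506}{2}\bigl(1+\frac{\log_3(\log_2 12)}{2}\bigr) = 0.99217\ldots < 1$ lets the first summand be majorized by $\frac{n^2}{\log n}(\log n - \log 2) = n^2 - \frac{n^2\log 2}{\log n}$, and the elementary inequality $\frac{n^2\log 2}{\log n} \geq \frac{n}{2}\log\frac{n}{2}$ (valid for $n\geq 2$) then absorbs the leftover $\frac{n}{2}\log\frac{n}{2}$ into the savings, yielding the claimed threshold $n^2 + \frac{n}{2}\log(qH(f))$.

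The main obstacle is really just making sure that the numerical slack in the constant $0.99217\ldots$ is genuine for all composite $n\geq 4$, especially in the smallest cases where $\pi(n)/\frac{n}{\log n}$ is relatively large; but since the argument for Corollary \ref{Corolarsupportcardinality1} already goes through without needing to isolate edge cases, no additional case analysis (unlike the $n=4$ treatment inside Theorem \ref{supportcardinality}) is needed here, because Theorem \ref{supportcardinalityPlaell} has already absorbed that small-$n$ analysis into its hypotheses.
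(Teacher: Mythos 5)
Your proposal is correct and matches the paper's approach exactly: the paper does not spell out a separate proof for this corollary, but the intended derivation is precisely the one you give, mirroring word for word the paper's proof of Corollary \ref{Corolarsupportcardinality1} (treat prime $n$ by Proposition \ref{prop1}, bound $p\geq 2$ and $k\leq\pi(n)$, apply Rosser--Schoenfeld, and finish with $\frac{n^2\log 2}{\log n}>\frac{n}{2}\log\frac{n}{2}$), with the irrationality hypothesis on the principal $P$th root carried along unchanged into Theorem \ref{supportcardinalityPlaell}. One tiny imprecision in your wording: the $n=4$ special analysis is absorbed into the \emph{proof} of Theorem \ref{supportcardinalityPlaell} (which ends by reusing the proof of Theorem \ref{supportcardinality} verbatim), not into its hypotheses; but this does not affect the argument.
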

We note that Corollary \ref{Corolarsupportcardinality1} and Corollary \ref{CorolarsupportcardinalityPlaell1} can be viewed as analogous results for Dirichlet polynomials of the irreducibility criteria of Ore and Weissner.

As a particular case, we obtain for $q=1$ the following irreducibility conditions for Dirichlet polynomials that assume a prime power value.
\begin{corollary}\label{CorolarsupportcardinalityPlaell2}
Let $f$ be a Dirichlet polynomial of degree $n\geq 2$ with integer coefficients $a_i$. Assume that $|f(-t)|=P^{\ell}$ with $t>n^2+\frac{n}{2}\log H(f)$ an integer, $P$ a prime, and $\ell\geq 2$ an integer. If the principal $P$th root of \thinspace $\prod\limits _{i}i^{a_i\cdot i^t}$ is an irrational number, then $f$ is irreducible.
\end{corollary}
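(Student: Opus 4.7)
The plan is to derive this corollary as the immediate specialization $q=1$ of Corollary \ref{CorolarsupportcardinalityPlaell1}, which itself reduces, via the argument that proves Corollary \ref{Corolarsupportcardinality1}, to Theorem \ref{supportcardinalityPlaell}. So essentially no new work is required beyond checking that the hypotheses of Corollary \ref{CorolarsupportcardinalityPlaell1} are met.

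First I would separate the two cases on $n$. If $n$ is prime, the irreducibility of $f$ is immediate from Proposition \ref{prop1}, with no condition on $t$, on $P$, or on the principal $P$th root being needed. I only need to dispose of the case that $n$ is composite.

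For composite $n$, I would apply Corollary \ref{CorolarsupportcardinalityPlaell1} directly with $q=1$. The hypothesis $|f(-t)|=P^{\ell}q$ with $P\nmid q$ is trivially satisfied, since $q=1$ is coprime to every prime $P$. The hypothesis $\ell\geq 2$ is assumed, the hypothesis on the irrationality of the principal $P$th root of $\prod_{i}i^{a_i\cdot i^t}$ is imposed, and the lower bound for $t$ becomes $t>n^2+\tfrac{n}{2}\log H(f)$, since $\log(qH(f))=\log H(f)$ when $q=1$. Thus every hypothesis of Corollary \ref{CorolarsupportcardinalityPlaell1} is in force, and its conclusion, namely that $f$ is irreducible, follows.

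There is no real obstacle in this step; the content of the statement is already fully absorbed in Theorem \ref{supportcardinalityPlaell} and Corollary \ref{CorolarsupportcardinalityPlaell1}. The only point worth flagging explicitly in writing the proof is to mention the prime-$n$ case and the vacuous divisibility condition $P\nmid 1$, so that the reader sees that Corollary \ref{CorolarsupportcardinalityPlaell1} applies verbatim, with the bound $n^2+\tfrac{n}{2}\log(qH(f))$ collapsing to $n^2+\tfrac{n}{2}\log H(f)$ in the present setting.
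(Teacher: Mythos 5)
Your proof is correct and follows exactly the route the paper takes: the paper states Corollary \ref{CorolarsupportcardinalityPlaell2} as the $q=1$ specialization of Corollary \ref{CorolarsupportcardinalityPlaell1}, which is precisely your argument. The remarks about prime $n$ and the vacuous condition $P\nmid 1$ are harmless extras and do not change the substance.
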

Here too, the simplest conditions on $t$ appear when all the coefficients $a_i$ are zero, or $\pm 1$.
\begin{corollary}\label{CorolarsupportcardinalityPlaell3}
Let $f$ be a Dirichlet polynomial of degree $n\geq 2$ with integer coefficients $a_i\in\{-1,0,1\}$. Assume that $|f(-t)|=P^{\ell}$ with $t,P$ and $\ell$ integers with $t>n^2$, $P$ prime, and $\ell\geq 2$. If the principal $P$th root of $\prod\limits _{i}i^{a_ii^t}$ is an irrational number, then $f$ is irreducible.
\end{corollary}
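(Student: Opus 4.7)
The statement is an immediate specialization of Corollary~\ref{CorolarsupportcardinalityPlaell2}, so the plan is simply to observe why the hypothesis $a_i\in\{-1,0,1\}$ forces the extra term $\frac{n}{2}\log H(f)$ in the lower bound on $t$ to vanish.

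First I would note that $f$ has degree $n\geq 2$, so by definition the leading coefficient $a_n$ is nonzero; combined with $a_n\in\{-1,0,1\}$ this gives $|a_n|=1$. Since every coefficient $a_i$ satisfies $|a_i|\leq 1$, we conclude $H(f)=\max_i |a_i|=1$, and hence $\log H(f)=0$. Thus the numerical hypothesis of Corollary~\ref{CorolarsupportcardinalityPlaell2}, namely
\[
t>n^2+\tfrac{n}{2}\log H(f),
\]
reduces exactly to $t>n^2$, which is precisely what we are assuming here.

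Next, the remaining hypotheses of Corollary~\ref{CorolarsupportcardinalityPlaell2} are directly inherited from our statement: $|f(-t)|=P^{\ell}$ with $P$ prime and $\ell\geq 2$ is given, and the irrationality of the principal $P$th root of $\prod_i i^{a_i i^t}$ is assumed verbatim. (Note that the condition $P\nmid q$ in Corollary~\ref{CorolarsupportcardinalityPlaell2} is automatic for $q=1$.) Applying Corollary~\ref{CorolarsupportcardinalityPlaell2} therefore yields the irreducibility of $f$, completing the proof.

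There is no genuine obstacle: the whole content sits in the preceding corollaries, and the step that actually matters---showing that a nontrivial factorization $f=gh$ would force both $|g(-t)|$ and $|h(-t)|$ to be divisible by $P$ and thereby contradict the irrationality of the principal $P$th root of $\prod_i i^{a_i i^t}$---was already carried out in the proof of Theorem~\ref{supportcardinalityPlaell}.
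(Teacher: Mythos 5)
Your proof is correct and matches the paper's (unstated but clearly intended) approach: the paper presents this corollary immediately after Corollary~\ref{CorolarsupportcardinalityPlaell2} with the remark that the bound simplifies when all coefficients lie in $\{-1,0,1\}$, which is precisely your observation that $H(f)=1$ forces $\log H(f)=0$ and collapses the hypothesis on $t$ to $t>n^2$. The rest of your argument — inheriting the irrationality hypothesis verbatim and noting that $q=1$ makes $P\nmid q$ automatic — is exactly what is needed.
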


\section{The multiplicities of the irreducible factors of Dirichlet polynomials\\ and square-free criteria}\label{squarefree}
A direct consequence of the definition is the following square-free criterion.
\begin{proposition}\label{elementarysquarefree}
A primitive Dirichlet polynomial of square-free degree is square-free.
\end{proposition}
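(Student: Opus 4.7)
The plan is to argue by contradiction, exploiting the fact that degrees of Dirichlet polynomials multiply under the Dirichlet product. Suppose $f$ is a primitive Dirichlet polynomial of square-free degree $n$ that fails to be square-free. Since $f$ is primitive, by the definition recalled in Definition \ref{def1} this means that $f$ possesses a repeated \emph{nonconstant} irreducible factor $g$, so that we may write $f(s)=g(s)^{2}\cdot h(s)$ for some $h\in DP(R)[s]$.

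Next I would take degrees on both sides. Because the Dirichlet product of $\frac{a}{i^{s}}$ and $\frac{b}{j^{s}}$ is $\frac{ab}{(ij)^{s}}$, the degree map from Definition \ref{def1} is multiplicative on $DP(R)[s]$, giving
\[
n\;=\;\deg f\;=\;(\deg g)^{2}\cdot \deg h.
\]
Hence $(\deg g)^{2}$ divides $n$. Since $n$ is square-free, the only square dividing $n$ is $1$, so we are forced to have $\deg g = 1$.

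The contradiction now comes from unpacking what $\deg g=1$ means: a Dirichlet polynomial of degree $1$ consists of the single term $\frac{a_{1}}{1^{s}}=a_{1}\in R$, which by Definition \ref{def1} is constant. This contradicts the choice of $g$ as a \emph{nonconstant} irreducible factor, so no such $g$ can exist and $f$ is square-free.

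The argument is essentially one line once the multiplicativity of the Dirichlet degree is invoked; there is no genuine obstacle. The only subtle point worth flagging explicitly is the convention, already built into Definition \ref{def1}, that a degree-$1$ Dirichlet polynomial is just an element of $R$, so that the case $\deg g=1$ is incompatible with $g$ being nonconstant — this is what makes the primitivity hypothesis the right one to pair with square-free degree (without it, one would also have to separately rule out constant repeated factors from $R$, which is exactly what primitivity accomplishes).
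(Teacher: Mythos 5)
Your proof is correct and is exactly the argument the paper has in mind when it calls this proposition ``a direct consequence of the definition'': multiplicativity of degree under the Dirichlet product forces $(\deg g)^2 \mid n$, which for square-free $n$ gives $\deg g = 1$, i.e.\ $g$ constant, contradicting the choice of a nonconstant repeated irreducible factor. Your closing remark on why primitivity is precisely the right companion hypothesis (to rule out repeated constant factors from $R$) is an accurate reading of Definition~\ref{def1}.
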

In the forthcoming results we will use the following definition.
\begin{definition}\label{Deltak(n)}
For any integer $n\geq 2$ we let $M(n)=\max\limits_{p\mid n} \nu _p(n)$, and for any positive integers $n$ and $k$ we let $\Delta _k(n)=\max\{d:d\mid n\ \text{and}\ d^k\mid n\}$, and also
\[
n_{<k} = \prod\limits_{p\hspace{0.8mm}\text{prime},\hspace{0.6mm} \nu_p(n)<k}p^{\nu _p(n)}\quad \text{and}\quad n_{\geq k}=\prod\limits_{p\hspace{0.8mm} \text{prime},\hspace{0.6mm} \nu_p(n)\geq k}p^{\nu _p(n)},
\]
with an empty product equal to $1$. Thus $n=n_{<k}\cdot n_{\geq k}$,
$\Delta_1(n)=n$, and for $k\geq 2$ we have $\Delta_k(n)=1$ if and only if $n$ is $k$-power-free, that is if and only if $n=n_{<k}$.

For a primitive Dirichlet polynomial $f(s)\in DP(R)[s]$ having the canonical decomposition $f(s)=c\cdot f_1(s)^{n_1}\cdots f_k(s)^{n_k}$, with $c\in R\setminus \{0\}$ and $f_i\in DP(R)[s]$ irreducible and pairwise nonassociated in divisibility, we let $M(f)=\max\{ n_1,\dots ,n_k\}$.
\end{definition}

We will first prove two $k$-power-freeness criteria for Dirichlet polynomials $f(s)\in DP(R)[s]$ whose degrees have at least one prime factor with multiplicity $\geq k$, which complement Proposition \ref{prop2} and Proposition \ref{prop3}.

\begin{theorem}\label{THsquareoftheleastpn}
Let $k\geq 2$ and $n$ be integers with $M(n)\geq k$, let $q_k$ be the smallest prime factor of $n$ with multiplicity $\geq k$, and $f$ a Dirichlet polynomial of degree $n$. If $f$ has a nonzero coefficient $a_i$ with $n-\min\{n_{<k},q_k\}q_{k}^{k-1}<i<n$, then $f$ is $k$-power-free.
\end{theorem}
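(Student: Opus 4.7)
The plan is to argue by contradiction, assuming $f=g^{k}h$ for some nonconstant Dirichlet polynomial $g$ and some $h\in DP(R)[s]$, and to extract from this a bound on how close to $n$ a nonzero coefficient of $f$ can lie. Write $g(s)=\frac{b_{d_1}}{d_1^{s}}+\cdots+\frac{b_d}{d^{s}}$ and $h(s)=\frac{c_{e_1}}{e_1^{s}}+\cdots+\frac{c_e}{e^{s}}$ with $d=\deg g\geq 2$ and $e=\deg h$; then $d^{k}e=n$, so $d^{k}\mid n$. This forces every prime factor of $d$ to appear in $n$ with multiplicity at least $k$, so $d\mid n_{\geq k}$ and hence $d\geq q_k$. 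Moreover, $d^{k}\mid n_{\geq k}$ yields $n_{\geq k}/d\geq d^{k-1}\geq q_k^{k-1}$, and multiplying by $n_{<k}$ produces the two basic estimates
\[
\frac{n}{d}=n_{<k}\cdot\frac{n_{\geq k}}{d}\geq n_{<k}\,q_k^{k-1}\qquad\text{and}\qquad d^{k}\geq q_k^{k}.
\]

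Next, I would apply the Dirichlet multiplication rule to the factorization $f=g^{k}h$ at the fixed index $i$. Since $a_i\neq 0$, there exists at least one tuple $(j_1,\dots,j_k,\ell)$ with $j_1\cdots j_k\cdot\ell=i$, each $j_r\in Supp(g)$ and $\ell\in Supp(h)$, whose contribution $b_{j_1}\cdots b_{j_k}c_\ell$ to $a_i$ is nonzero. From $j_r\leq d$, $\ell\leq e$, and $i<n=d^{k}e$, at least one of these inequalities must be strict, and the analysis splits into two cases in the spirit of Proposition \ref{prop2}: if some $j_r<d$, then $j_r\leq d-1$ gives $i\leq(d-1)d^{k-1}e=n-n/d$; if instead all $j_r=d$ and $\ell<e$, then $\ell\leq e-1$ gives $i\leq d^{k}(e-1)=n-d^{k}$. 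Either way, $n-i\geq\min(n/d,d^{k})$.

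Combining this with the two estimates above produces
\[
n-i\geq\min\bigl(n_{<k}\,q_k^{k-1},\,q_k^{k}\bigr)=q_k^{k-1}\min(n_{<k},q_k),
\]
which contradicts the hypothesis $n-i<\min(n_{<k},q_k)\,q_k^{k-1}$. This rules out every factorization $f=g^{k}h$ with $g$ nonconstant, and therefore $f$ has no irreducible factor of multiplicity at least $k$, that is, $f$ is $k$-power-free. The only delicate point I anticipate is the bookkeeping needed to derive the two bounds $n/d\geq n_{<k}q_k^{k-1}$ and $d^{k}\geq q_k^{k}$ from the single constraint $d^{k}\mid n$ with $d>1$; once these are in place, the two-case argument on $(j_1,\dots,j_k,\ell)$ parallels exactly the one in Proposition \ref{prop2}.
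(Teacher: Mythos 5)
Your argument is correct and follows the same contradiction-and-Dirichlet-product strategy as the paper, with the same two-case split on whether some $j_r<d$ or all $j_r=d$ and $\ell<e$, yielding $n-i\geq\min(n/d,d^{k})$. The one place you genuinely streamline things is the arithmetic: the paper introduces $\Delta_k(n)$ and establishes $n/\Delta_k(n)\geq n_{<k}q_k^{k-1}$ via a three-way case analysis (whether some multiplicity exceeds $k$, whether $r\geq 2$, etc.), then bounds $t\leq\Delta_k(n)$; you instead observe directly that $d^{k}\mid n$ forces $d^{k}\mid n_{\geq k}$, whence $n_{\geq k}/d\geq d^{k-1}\geq q_k^{k-1}$ and $d\geq q_k$, obtaining $n/d\geq n_{<k}q_k^{k-1}$ with no auxiliary quantity and no subcases. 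This is a small but real simplification that makes the proof tighter without losing anything.
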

\begin{proof}
Let us assume that
$n_{\geq k}=p_1^{n_1}\cdots p_r^{n_r}$, say, with $p_1,\dots ,p_r$ distinct prime numbers and $n_i\geq k$ for $i=1,\dots,r$. Then we have $\Delta_k(n)=p_1^{\lfloor \frac{n_1}{k}\rfloor }\cdots p_r^{\lfloor \frac{n_r}{k}\rfloor }$ and $q_k=\min\{ p_1,\dots ,p_r \}$. If one of 
$n_1,\dots ,n_r$ is at least $k+1$, say $n_1\geq k+1$, then $\frac{n}{\Delta_k(n)}\geq n_{<k}\cdot p_1^{n_1-\lfloor \frac{n_1}{k}\rfloor }\geq n_{<k}\cdot p_{1}^k\geq n_{<k}\cdot q_{k}^k$. If $r\geq 2$ and all the $n_i$'s are equal to $k$, then $\frac{n}{\Delta_k(n)}= n_{<k}\cdot p_1^{k-1}\cdots p_{r}^{k-1}>n_{<k}\cdot q_{k}^{2k-2}\geq n_{<k}\cdot q_{k}^k$. We are left with the case that $r=1$ and $n_1=k$, so $q_k=p_1$ and $n_{\geq k}=q_k^k$. In this case we have $\Delta_k(n)=q_k$ and $\frac{n}{\Delta_k(n)}=n_{<k}\cdot q_k^{k-1}$. Thus, we conclude that
\begin{equation}\label{n/Deltak}
\frac{n}{\Delta_k(n)}\geq n_{<k}\cdot q_k^{k-1}.
\end{equation}
Let $f(s)=\frac{a_1}{1^s}+\cdots +\frac{a_n}{n^s}$ and assume towards a contradiction that $f(s)=g(s)^kh(s)$ for some Dirichlet polynomials $g$ and $h$, say $g(s)=\frac{b_1}{1^s}+\cdots +\frac{b_t}{t^s}$ and $h(s)=\frac{c_1}{1^s}+\cdots +\frac{c_{\ell}}{\ell^s}$, with $t>1$. We may then write $g^k(s)=\frac{d_1}{1^s}+\cdots +\frac{d_{t^k}}{t^{ks}}$ with coefficients $d_i=\sum_{j_1\cdots j_k=i}b_{j_1}\cdots b_{j_k}$ for $i=1,\dots ,t^k$. Let now $i_{max}=\max\{i:i<n\ \text{and}\ a_i\neq 0\}$. By the multiplication rule, the coefficient $a_{i_{max}}$ is given by $a_{i_{max}}=\sum_{j\cdot j'=i_{max}}d_{j}c_{j'}$, and in this sum we must have $d_{j}c_{j'}\neq 0$ for at least one pair $(j,j')$ with $j\cdot j'=i_{max}$, as $a_{i_{max}}\neq 0$. Consider such a pair $(j,j')$. Since $t^k\cdot \ell=n$ and $i_{max}<n$, at least one of the inequalities $j\leq t^k$ and $j'\leq \ell$ must be a strict one (note that if $n=p^k$ for some prime $p$, then we must have $t=p=q_k$ and $\ell =1$, so for such integers $n$ the inequality $j'<\ell $ cannot hold). 

Assume first that $j<t^k$ and $j'\leq \ell$. Since $t^k\mid n$, we have $t\leq\Delta_k(n)$, and since $j$ is an index smaller than $t^k$ such that $d_j\neq 0$, we observe that $j\leq t^{k-1}(t-1)$, with equality if $b_{t-1}\neq 0$. Using also (\ref{n/Deltak}), we successively deduce that
\[
i_{max}=j\cdot j'\leq t^{k-1}(t-1)j'\leq (t^k-t^{k-1})\ell=n-\frac{n}{t}\leq n-\frac{n}{\Delta_k(n)}\leq n-n_{<k}\cdot q_k^{k-1}.
\]

Let us assume now that $j\leq t^k$ and $j'<\ell$. In this case we have
\[
i_{max}=j\cdot j'\leq t^k(\ell-1)=n-t^k\leq n-q_{k}^k,
\]
as $t>1$ is a divisor of $n$ with $t^k\mid n$, and hence $t\geq q_{k}$. By these inequalities we deduce that $i_{max}$ is at most $n-\min\{n_{<k},q_k\}q_{k}^{k-1}$. We conclude that if $f$ has a nonzero coefficient $a_i$ with $n-\min\{n_{<k},q_k\}q_{k}^{k-1}<i<n$, then $f$ must be $k$-power-free.
\end{proof}

Let us denote by $p_n$ the smallest prime factor of $n$. Observe that $p_n\leq q_k$, and that if $n_{<k}\neq 1$, then we also have $p_n\leq n_{<k}$, so we obtain as a corollary the following weaker, but simpler result.

\begin{corollary}\label{squareoftheleastpn}
Let $k\geq 2$ and $n$ be integers with $M(n)\geq k$, $p_n$ the smallest prime factor of $n$, and $f$ a primitive Dirichlet polynomial of degree $n$. Then $f$ is $k$-power-free in each of the following two cases:

i) \ $n_{<k}\neq 1$ and $f$ has a nonzero coefficient $a_{i}$ with $n-p_n^k<i<n$;

ii) $n_{<k}=1$ and $f$ has a nonzero coefficient $a_{i}$ with $n-p_n^{k-1}<i<n$.
\end{corollary}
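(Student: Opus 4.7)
The plan is to derive both parts of the corollary directly from Theorem \ref{THsquareoftheleastpn} by showing that, in each case, the interval $(n - p_n^k, n)$ (respectively $(n - p_n^{k-1}, n)$) is contained in the larger interval $(n - \min\{n_{<k}, q_k\} q_k^{k-1}, n)$ appearing in the hypothesis of the theorem. Equivalently, the task reduces to verifying that
\[
\min\{n_{<k}, q_k\} \cdot q_k^{k-1} \;\geq\; p_n^k \quad \text{in case i)}, \qquad \min\{n_{<k}, q_k\} \cdot q_k^{k-1} \;\geq\; p_n^{k-1} \quad \text{in case ii)},
\]
after which the $k$-power-freeness of $f$ follows immediately by invoking Theorem \ref{THsquareoftheleastpn}.

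First I would handle case ii), which is essentially a tautology. The hypothesis $n_{<k} = 1$ means that every prime divisor of $n$ occurs in $n$ with multiplicity at least $k$, so the smallest prime factor $p_n$ of $n$ coincides with $q_k$, the smallest prime factor of $n$ of multiplicity at least $k$. Moreover $\min\{n_{<k}, q_k\} = \min\{1, q_k\} = 1$, and therefore $\min\{n_{<k}, q_k\} q_k^{k-1} = q_k^{k-1} = p_n^{k-1}$, which is exactly the bound in the hypothesis of ii).

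Next I would treat case i). Here $n_{<k} \neq 1$, so there exists at least one prime $p \mid n$ with $\nu_p(n) < k$; such a prime is, by definition, a divisor of $n_{<k}$, and any prime divisor of $n$ is at least $p_n$. Hence $n_{<k} \geq p_n$. On the other hand $q_k$ is itself a prime factor of $n$, so $q_k \geq p_n$ as well. Combining the two inequalities yields $\min\{n_{<k}, q_k\} \geq p_n$ and $q_k^{k-1} \geq p_n^{k-1}$, and therefore $\min\{n_{<k}, q_k\} \cdot q_k^{k-1} \geq p_n \cdot p_n^{k-1} = p_n^k$, which is the required bound.

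There is no genuine obstacle: the corollary is simply the reformulation of Theorem \ref{THsquareoftheleastpn} obtained by replacing the delicate quantities $n_{<k}$ and $q_k$ with the uniform and easier-to-use lower bound $p_n$, and the dichotomy in the statement merely reflects whether the factor $n_{<k}$ contributes meaningfully (case i)) or collapses to $1$ and hence forces the loss of one power of $p_n$ (case ii)).
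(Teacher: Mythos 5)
Your proposal is correct and follows exactly the same route as the paper: deduce the corollary from Theorem \ref{THsquareoftheleastpn} by showing $p_n \leq q_k$ always and $p_n \leq n_{<k}$ when $n_{<k}\neq 1$, so that the interval in the corollary is contained in the one in the theorem. Your observation in case ii) that $p_n = q_k$ is slightly stronger than what is needed (the inequality $p_n \leq q_k$ already gives $p_n^{k-1}\leq q_k^{k-1}$), but this is a harmless refinement rather than a different argument.
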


\begin{theorem}\label{THsquareoftheleastpmpn}
Let $k\geq 2$ be an integer, $f=\frac{a_m}{m^s}+\cdots +\frac{a_n}{n^s}$ a primitive Dirichlet polynomial with $a_ma_n\neq 0$, $M(m)\geq k$ and $M(n)\geq k$. Let $q_k$ and $r_k$ be the smallest prime factors of $m$ and $n$ that have multiplicities $\geq k$, respectively. If $m>\frac{n}{r_k^k}$ and $f$ has a nonzero coefficient $a_i$ with $m<i<m+\min\{m_{<k},q_k\}\cdot q_k^{k-1}$, then $f$ is $k$-power-free.
\end{theorem}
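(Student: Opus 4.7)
The plan is to mirror the proof of Theorem \ref{THsquareoftheleastpn}, replacing the role of the top of the support by the bottom. I argue by contradiction: suppose $f=g^kh$ for some $g,h\in DP(R)[s]$ with $g$ nonconstant, and write $g(s)=\frac{b_{m_1}}{m_1^s}+\cdots+\frac{b_{t_1}}{t_1^s}$ and $h(s)=\frac{c_{m_2}}{m_2^s}+\cdots+\frac{c_{t_2}}{t_2^s}$ with $b_{m_1}b_{t_1}c_{m_2}c_{t_2}\neq 0$ and $t_1\geq 2$. The Dirichlet multiplication rule forces $m_1^km_2=m$ and $t_1^kt_2=n$, so in particular $m_1^k\mid m$ and $t_1^k\mid n$.

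The key new step, compared to Theorem \ref{THsquareoftheleastpn}, is the use of the hypothesis $m>n/r_k^k$ to exclude the possibility $m_1=1$. If $m_1=1$, then $m=m_2\leq t_2=n/t_1^k$; but $t_1\geq 2$ with $t_1^k\mid n$ implies that every prime factor of $t_1$ has multiplicity at least $k$ in $n$, so $t_1\geq r_k$ and hence $m\leq n/r_k^k$, contradicting the hypothesis. Therefore $m_1\geq 2$, and since $m_1^k\mid m$, each prime factor of $m_1$ has multiplicity at least $k$ in $m$, forcing $m_1\geq q_k$ and $m_1^k\geq q_k^k$.

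Next I set $i_{\min}=\min\{i:i>m,\,a_i\neq 0\}$, write $g^k(s)=\frac{d_{m_1^k}}{(m_1^k)^s}+\cdots+\frac{d_{t_1^k}}{(t_1^k)^s}$, and use $a_{i_{\min}}=\sum_{j\cdot j'=i_{\min}}d_jc_{j'}$ to pick a pair $(j,j')$ with $d_jc_{j'}\neq 0$, $j\geq m_1^k$, $j'\geq m_2$. Since $i_{\min}>m_1^km_2$, at least one of the inequalities $j>m_1^k$ or $j'>m_2$ is strict. If $j>m_1^k$, then $j$ is a product of $k$ support indices of $g$, each at least $m_1$ with at least one strictly larger, so $j\geq m_1^{k-1}(m_1+1)$, giving $i_{\min}\geq m+m/m_1\geq m+m_{<k}\cdot q_k^{k-1}$; here I use $m_1\leq\Delta_k(m)$ together with the inequality $m/\Delta_k(m)\geq m_{<k}\cdot q_k^{k-1}$ established in the proof of Theorem \ref{THsquareoftheleastpn}. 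If instead $j=m_1^k$ and $j'>m_2$, then $j'\geq m_2+1$ and $i_{\min}\geq m+m_1^k\geq m+q_k\cdot q_k^{k-1}$. In either case $i_{\min}\geq m+\min\{m_{<k},q_k\}\cdot q_k^{k-1}$, contradicting the hypothesis on the coefficient $a_i$. The only real subtlety, rather than an obstacle, is the clean exclusion of $m_1=1$ via $m>n/r_k^k$; everything else is a routine mirror of the previous argument, and the edge cases where $g$ is a single term or $h$ is a constant are absorbed without change, since only one of the two subcases for $(j,j')$ is then available but still yields the required bound.
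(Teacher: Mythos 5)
Your proof is correct and follows essentially the same route as the paper's: the same decomposition $f=g^kh$, the same inequality $m/\Delta_k(m)\geq m_{<k}\cdot q_k^{k-1}$, the same two-case split on the pair $(j,j')$ via the Dirichlet multiplication rule applied at $i_{\min}$, and the same use of the hypothesis $m>n/r_k^k$ to exclude $m_1=1$. The only cosmetic deviation is that you discard $m_1=1$ up front, whereas the paper rules it out inside the second subcase only (the first subcase, $j>m_1^k$, does not in fact require $m_1>1$, since $m/m_1\geq m/\Delta_k(m)$ already holds for $m_1=1$).
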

\begin{proof}
We deduce as in the proof of Theorem \ref{THsquareoftheleastpn} that
\begin{equation}\label{m/deltanouCuk}
\frac{m}{\Delta_k(m)}\geq m_{<k}\cdot q_k^{k-1}.
\end{equation}

Let us assume again towards a contradiction that $f(s)=g(s)^kh(s)$ for some Dirichlet polynomials $g$ and $h$, say $g(s)=\frac{b_{u}}{u^s}+\cdots +\frac{b_t}{t^s}$ and $h(s)=\frac{c_v}{v^s}+\cdots +\frac{c_{\ell}}{\ell^s}$, with $t>1$. We may then write $g^k(s)=\frac{d_{u^k}}{u^{ks}}+\cdots +\frac{d_{t^k}}{t^{ks}}$ with coefficients $d_i=\sum_{j_1\cdots j_k=i}b_{j_1}\cdots b_{j_k}$ for $i=u^k,\dots ,t^k$. Let now $i_{min}=\min\{i:i>m\ \text{and}\ a_i\neq 0\}$. By the multiplication rule, the coefficient $a_{i_{min}}$ is given by $a_{i_{min}}=\sum_{j\cdot j'=i_{min}}d_{j}c_{j'}$, and in this sum we must have $d_{j}c_{j'}\neq 0$ for at least one pair $(j,j')$ with $j\cdot j'=i_{min}$, as $a_{i_{min}}\neq 0$. Let $(j,j')$ be such a pair. Since $u^k\cdot v=m$ and $i_{min}>m$, at least one of the inequalities $j\geq u^k$ and $j'\geq v$ must be a strict one (note that if $m=p^k$ for some prime $p$, then we must have $u=p=q_k$ and $v =1$, so for such $m$ the inequality $j'<v$ cannot hold).

Assume first that $j>u^k$ and $j'\geq v$. Since $u^k\mid m$, we have $u\leq\Delta_k(m)$, and since $j$ is an index greater than $u^k$ such that $d_j\neq 0$, we observe that $j\geq u^{k-1}(u+1)$, with equality if $b_{u+1}\neq 0$. In view of (\ref{m/deltanouCuk}), we deduce that
\[
i_{min}=j\cdot j'\geq u^{k-1}(u+1)j'\geq (u^k+u^{k-1})v=m+\frac{m}{u}\geq m+\frac{m}{\Delta_k(m)}\geq m+m_{<k}\cdot q_k^{k-1}.
\]

Assume now that $j\geq u^k$ and $j'>v$. In this case we have
\[
i_{min}=j\cdot j'\geq u^k(v+1)=m+u^k\geq m+q_{k}^k,
\]
provided we show that $u$ is a divisor of $m$ greater than $1$, and hence $u\geq q_{k}$, as $u^k\mid m$. To see this, assume to the contrary that $u=1$. This will force $v$ to be equal to $m$, implying that $\ell \geq m$, which leads to $n=t^k\ell\geq t^km\geq r_k^km$ (the inequality $t\geq r_k$ holds due to the fact that $t^k\mid n$), thus contradicting our assumption that $m>\frac{n}{r_k^k}$, as needed.

Since in both cases $i_{min}$ is at least $m+\min\{m_{<k},q_k\}\cdot q_k^{k-1}$, we conclude that if $f$ has a nonzero coefficient $a_i$ with $m<i<m+\min\{m_{<k},q_k\}\cdot q_k^{k-1}$, then $f$ must be $k$-power-free.
\end{proof}

If we denote by $p_m$ and $p_{n}$ the smallest prime factors of $m$ and $n$, respectively, we have $p_m\leq q_k$, and that if $m_{<k}\neq 1$, then we also have $p_m\leq m_{<k}$. Besides, $p_n\leq r_k$, so we have as a corollary the following simpler $k$-power-freeness criterion.

\begin{corollary}\label{squareoftheleastpmpn}
Let $k\geq 2$ be an integer and $f=\frac{a_m}{m^s}+\cdots +\frac{a_n}{n^s}$ a primitive Dirichlet polynomial with $a_ma_n\neq 0$, $M(m)\geq k$ and $M(n)\geq k$. Let $p_m$ and $p_{n}$ be the smallest prime factors of $m$ and $n$, respectively. If $m>\frac{n}{p_n^k}$, then $f$ is $k$-power-free in each of the following two cases:

i) \ $m_{<k}\neq 1$ and $f$ has a nonzero coefficient $a_{i}$ with $m<i<m+p_{m}^k$;

ii) $m_{<k}=1$ and $f$ has a nonzero coefficient $a_{i}$ with $m<i<m+p_{m}^{k-1}$.
\end{corollary}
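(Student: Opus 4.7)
The plan is to deduce this corollary directly from Theorem \ref{THsquareoftheleastpmpn} by checking that, under the corollary's hypotheses, all the invariants appearing in the theorem are controlled by the simpler quantities $p_m$ and $p_n$. Concretely, I need to verify two things: first, that the ambient inequality $m > n/r_k^k$ from the theorem holds, and second, that the interval of indices specified in each case of the corollary is contained in the interval $(m,\, m+\min\{m_{<k},q_k\}\cdot q_k^{k-1})$ used by the theorem. Once both inclusions are in place, the existence of a nonzero coefficient $a_i$ in the corollary's interval automatically produces one in the theorem's interval, and the theorem delivers the $k$-power-freeness of $f$.

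For the ambient inequality, I would observe that $p_n \le r_k$ since $p_n$ is the \emph{smallest} prime factor of $n$ whereas $r_k$ is just one particular prime factor of $n$; this gives $n/p_n^k \ge n/r_k^k$, so the hypothesis $m > n/p_n^k$ immediately yields $m > n/r_k^k$. The key numerical bound for handling the intervals is $p_m \le \min\{m_{<k},q_k\}$, valid whenever $m_{<k}\neq 1$. The inequality $p_m \le q_k$ is trivial because $q_k$ is a prime factor of $m$. For $p_m \le m_{<k}$ I would split into two subcases: if $\nu_{p_m}(m)<k$ then $p_m$ itself divides $m_{<k}$, hence $p_m\le m_{<k}$; otherwise $\nu_{p_m}(m)\ge k$, and since $m_{<k}\neq 1$ there must exist some prime factor $p$ of $m$ with $\nu_p(m)<k$, which then satisfies $p>p_m$ and $p\le m_{<k}$. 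With this bound in hand, case i) follows at once, since $p_m^k = p_m\cdot p_m^{k-1} \le \min\{m_{<k},q_k\}\cdot q_k^{k-1}$, so $(m,\,m+p_m^k)$ is contained in the theorem's interval.

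Case ii) is even cleaner: the assumption $m_{<k}=1$ means every prime factor of $m$ has multiplicity $\ge k$, so the smallest prime factor $p_m$ is automatically a candidate for $q_k$, forcing $p_m=q_k$. Thus $\min\{m_{<k},q_k\}\cdot q_k^{k-1}= q_k^{k-1}=p_m^{k-1}$ and the two intervals simply coincide. I do not expect a serious obstacle in this proof; it is essentially bookkeeping translating the finer invariants $(q_k,r_k,m_{<k})$ of Theorem \ref{THsquareoftheleastpmpn} into the coarser invariants $(p_m,p_n)$, and the only mildly delicate step is the two-case verification of $p_m\le m_{<k}$ outlined above.
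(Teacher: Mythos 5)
Your proof is correct and follows exactly the route the paper intends: reduce to Theorem \ref{THsquareoftheleastpmpn} by showing $p_n\leq r_k$ (so the ambient inequality transfers), $p_m\leq q_k$, and $p_m\leq m_{<k}$ when $m_{<k}\neq 1$, together with the identification $q_k=p_m$ when $m_{<k}=1$. The paper merely states these comparisons without proof; your two-case verification of $p_m\leq m_{<k}$ supplies the small amount of bookkeeping the paper leaves implicit, so the arguments coincide.
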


In particular, from Corollary \ref{squareoftheleastpn} and Corollary \ref{squareoftheleastpmpn} we obtain for $k=2$ the following square-free criterion.
\begin{corollary}\label{corocoeficientszero}
Let $m<n$ be non square-free integers, each one having at least one prime factor with multiplicity $1$, and let $f(s)=\frac{a_{m}}{m^{s}}+\cdots +\frac{a_{n}}{n^{s}}$ be a Dirichlet polynomial with $a_{m}a_{n}\neq 0$. If one of the coefficients $a_{n-1}$, $a_{n-2}$ or $a_{n-3}$ is nonzero, or, if $m>n/4$ and one of the coefficients $a_{m+1}$, $a_{m+2}$ or $a_{m+3}$ is nonzero, then $f$ is square-free. 
\end{corollary}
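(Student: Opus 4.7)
The plan is to derive Corollary~\ref{corocoeficientszero} as a direct specialization of Corollary~\ref{squareoftheleastpn} and Corollary~\ref{squareoftheleastpmpn} to $k=2$. First I would observe that the hypotheses line up perfectly: the assumption that $m$ and $n$ are non square-free translates into $M(m)\geq 2$ and $M(n)\geq 2$, while the stipulation that each of $m$ and $n$ has at least one prime factor of multiplicity one is exactly $m_{<2}\neq 1$ and $n_{<2}\neq 1$, placing us in case i) of each of those corollaries.

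For the first clause, involving $a_{n-1}$, $a_{n-2}$, $a_{n-3}$, I would invoke Corollary~\ref{squareoftheleastpn}~i) with $k=2$. Since the smallest prime divisor $p_n$ of $n$ satisfies $p_n\geq 2$, one has $p_n^2\geq 4$, so the interval $(n-p_n^2,n)$ contains the three consecutive integers $n-3$, $n-2$, $n-1$. Thus any nonzero $a_{n-1}$, $a_{n-2}$, or $a_{n-3}$ supplies the required nonzero coefficient $a_i$ with $n-p_n^2<i<n$, and square-freeness of $f$ follows immediately.

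For the second clause, I would apply Corollary~\ref{squareoftheleastpmpn}~i) with $k=2$. The hypothesis $m>n/p_n^2$ required there is automatic from $m>n/4$, because $p_n\geq 2$ forces $n/p_n^2\leq n/4<m$. Moreover $p_m\geq 2$ gives $p_m^2\geq 4$, so the interval $(m,m+p_m^2)$ contains $m+1$, $m+2$, $m+3$, and any nonzero $a_{m+1}$, $a_{m+2}$, or $a_{m+3}$ triggers the corollary and again yields square-freeness.

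No genuine obstacle is anticipated; the argument is simply a numerical unpacking of the $k=2$ case of the two preceding corollaries. The only mild point requiring attention is verifying that the hypothesis $m>n/p_n^2$ of Corollary~\ref{squareoftheleastpmpn} is uniformly implied by the stated $m>n/4$ for every admissible smallest prime factor $p_n$, which is immediate from the monotonicity of $p_n\mapsto p_n^2$.
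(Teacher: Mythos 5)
Your proposal is correct and is exactly the derivation the paper intends: the paper explicitly introduces Corollary~\ref{corocoeficientszero} as the $k=2$ specialization of Corollaries~\ref{squareoftheleastpn} and~\ref{squareoftheleastpmpn}, and your argument simply unpacks the numerics ($p_n^2\geq 4$ and $p_m^2\geq 4$) to show the stated coefficient indices and the bound $m>n/4$ satisfy the hypotheses of case i) of each. No gaps.
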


From Theorem \ref{THsquareoftheleastpn} and Theorem \ref{THsquareoftheleastpmpn} one may obtain upper bounds for the multiplicities of the irreducible factors of $f$ that depend only on information on the prime factorizations of $m$ and $n$, and on $Supp(f)$, regardless of the values of the nonzero coefficients of $f$. These bounds are given in full generality in Corollary \ref{BoundMaxMult1} and Corollary \ref{BoundMaxMult2}, and in weaker, but more tractable form in Corollary \ref{BoundMaxMult1weak} and Corollary \ref{BoundMaxMult2weak}.
\begin{corollary}\label{BoundMaxMult1}
Let $n$ be a positive integer with $M(n)=M\geq 2$. For any integer $k$ with $2\leq k\leq M$ denote by $q_k$ the smallest prime factor of $n$ with multiplicity $\geq k$. Let $f(s)=\frac{a_1}{1^s}+\cdots +\frac{a_n}{n^s}$ be a primitive Dirichlet polynomial of degree $n$, and let $i_{max}=\max\{i:i<n\ {\rm and}\ a_i\neq 0\}$. If $i_{max}>n-\min\{n_{<M},q_M\}\cdot q_{M}^{M-1}$, then
\begin{equation}\label{primamajorare}
M(f)<\min\{k:\ 2\leq k\leq M\  {\rm and}\  i_{max}>n-\min\{n_{<k},q_k\}\cdot q_k^{k-1}\}.
\end{equation}
\end{corollary}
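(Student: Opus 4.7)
The plan is to reduce the statement to a single application of Theorem \ref{THsquareoftheleastpn}. First I would observe that the set
\[
S \;=\; \bigl\{ k : 2 \leq k \leq M \ \text{and}\ i_{max} > n - \min\{n_{<k}, q_k\}\, q_k^{k-1} \bigr\}
\]
is nonempty: the standing hypothesis $i_{max} > n - \min\{n_{<M}, q_M\}\, q_M^{M-1}$ says precisely that $M \in S$. Hence $k^{*} := \min S$ exists and satisfies $2 \leq k^{*} \leq M$; in particular $M(n) \geq k^{*}$, so the prime $q_{k^{*}}$ is well defined.

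Next I would invoke Theorem \ref{THsquareoftheleastpn} with the exponent $k = k^{*}$. By the definition of $k^{*}$ we have $i_{max} > n - \min\{n_{<k^{*}}, q_{k^{*}}\}\, q_{k^{*}}^{k^{*}-1}$, and by the definition of $i_{max}$ we have $i_{max} < n$ and $a_{i_{max}} \neq 0$. Thus $a_{i_{max}}$ is a nonzero coefficient of $f$ lying in the open interval $\bigl( n - \min\{n_{<k^{*}}, q_{k^{*}}\}\, q_{k^{*}}^{k^{*}-1},\, n \bigr)$, which is exactly what Theorem \ref{THsquareoftheleastpn} requires. That theorem then gives that $f$ is $k^{*}$-power-free, and by Definition \ref{Deltak(n)} this means $M(f) \leq k^{*} - 1 < k^{*}$, which is precisely inequality (\ref{primamajorare}).

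The corollary is therefore a formal unpacking of Theorem \ref{THsquareoftheleastpn}: its role is to extract the sharpest $M(f)$-bound that the theorem provides by letting $k$ vary over $\{2,\dots,M\}$ and selecting the smallest admissible value. No real obstacle is expected; the only point worth checking with care is that the minimum is taken over a nonempty set, which is guaranteed by the single hypothesis on $i_{max}$ imposed in the statement.
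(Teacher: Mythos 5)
Your proof is correct and takes essentially the same route as the paper: both reduce the corollary to a single application of Theorem \ref{THsquareoftheleastpn} at the smallest admissible $k$. The only difference is that the paper first proves the sequence $n-\min\{n_{<k},q_k\}\cdot q_k^{k-1}$ is strictly decreasing in $k$, which you correctly identify as unnecessary for extracting the minimum --- nonemptiness of the set, guaranteed by the hypothesis (which places $M$ in it), already suffices.
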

\begin{proof}
For $k_1<k_2$ we have $q_{k_1}\leq q_{k_2}$ and $n_{<k_1}\leq n_{<k_2}$, so 
\[
\min\{n_{<k_1},q_{k_1}\}\cdot q_{k_1}^{k_1-1}<\min\{n_{<k_2},q_{k_2}\}\cdot q_{k_2}^{k_2-1},
\]
which shows that the sequence $n-\min\{n_{<k},q_k\}\cdot q_k^{k-1}$ is a strictly decreasing one. Thus, if $i_{max}>n-\min\{n_{<M},q_M\}\cdot q_{M}^{M-1}$, there exists a smallest integer $k\in \{2,3,\dots ,M\}$ such that $n-\min\{n_{<k},q_k\}\cdot q_k^{k-1}<i_{max}$, say $k_{min}$. By Theorem \ref{THsquareoftheleastpn} we deduce that $f$ is $k_{min}$-power-free, so $M(f)$ satisfies (\ref{primamajorare}). 
\end{proof}
If we replace in (\ref{primamajorare}) the term $\min\{n_{<k},q_k\}\cdot q_k^{k-1}$ by $p_n^{k-1}$, with $p_n$ the smallest prime factor of $n$, we obtain a weaker, but more tractable bound for $M(f)$,  as in the following result.

\begin{corollary}\label{BoundMaxMult1weak}
Let $n$ be a positive integer with $M(n)=M\geq 2$, and let $p_n$ be the smallest prime factor of $n$. Let $f(s)=\frac{a_1}{1^s}+\cdots +\frac{a_n}{n^s}$ be a primitive Dirichlet polynomial of degree $n$, and let $i_{max}=\max\{i:i<n\ {\rm and}\ a_i\neq 0\}$. If $i_{max}>n-p_n^{M-1}$, then 
\begin{equation}\label{formulapentrukmin}
M(f)<\begin{cases}
             1+\lceil\log_{p_n}(n-i_{max})\rceil, & \text{if $i_{max}\not\in\{n-p_n,n-p_n^2,\dots ,n-p_n^{M-1}\}$},\\
             2+\log_{p_n}(n-i_{max}), & \text{otherwise}.
           \end{cases}
\end{equation}
\end{corollary}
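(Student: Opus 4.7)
The plan is to derive Corollary \ref{BoundMaxMult1weak} as a streamlined consequence of Corollary \ref{BoundMaxMult1}. The key observation is arithmetic: since $p_n$ is the \emph{smallest} prime factor of $n$ and $q_k$ is by definition a prime factor of $n$ (of multiplicity at least $k$), we have $q_k\geq p_n$ for every $k\in\{2,\dots,M\}$. Moreover $\min\{n_{<k},q_k\}\geq 1$. Combining these two inequalities yields
\[
\min\{n_{<k},q_k\}\cdot q_k^{k-1}\;\geq\; p_n^{k-1},\qquad 2\leq k\leq M.
\]
So the inequality $i_{\max}>n-p_n^{k-1}$ is stronger than (or equal to) the inequality $i_{\max}>n-\min\{n_{<k},q_k\}\cdot q_k^{k-1}$ required by Corollary \ref{BoundMaxMult1}.

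Next, since the hypothesis gives $i_{\max}>n-p_n^{M-1}$, the set $\mathcal{K}:=\{k:2\leq k\leq M,\ i_{\max}>n-p_n^{k-1}\}$ contains $M$, hence is nonempty. I would set $k^{*}:=\min\mathcal{K}$. By the previous paragraph $k^{*}$ also belongs to the set appearing in (\ref{primamajorare}), so Corollary \ref{BoundMaxMult1} yields $M(f)<k^{*}$.

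It then remains to express $k^{*}$ in the closed form stated in (\ref{formulapentrukmin}). Setting $d:=n-i_{\max}$, the condition $i_{\max}>n-p_n^{k-1}$ reads $d<p_n^{k-1}$, i.e.\ $k>1+\log_{p_n}d$, so $k^{*}$ is the least integer $k\geq 2$ strictly exceeding $1+\log_{p_n}d$. I would split into the two cases of the formula. If $i_{\max}\in\{n-p_n,\dots,n-p_n^{M-1}\}$, then $d=p_n^{j}$ with $j\in\{1,\dots,M-1\}$, so $1+\log_{p_n}d=1+j$ is an integer, and the smallest integer strictly above it is $2+j=2+\log_{p_n}d$, giving the second branch. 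Otherwise $\log_{p_n}d$ avoids $\{1,\dots,M-1\}$; in the generic subcase it is not an integer at all, and then
\[
k^{*}\;=\;\bigl\lceil 1+\log_{p_n}d\bigr\rceil\;=\;1+\bigl\lceil\log_{p_n}d\bigr\rceil,
\]
which is the first branch.

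The only subtlety—and the step I would write most carefully—is the bookkeeping around integer values of $\log_{p_n}d$ in the first branch (most notably $d=1$, which sits in the first branch of the formula yet has $\log_{p_n}d=0$ an integer). Here one has to either verify that the generic formula $1+\lceil\log_{p_n}d\rceil$ still yields the correct $k^{*}$, or handle this boundary case separately by invoking Proposition \ref{prop2} directly. Apart from this accounting, the argument is a direct transfer from Corollary \ref{BoundMaxMult1}, and there is no genuine new obstacle.
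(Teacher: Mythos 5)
Your proposal takes essentially the same route as the paper: reduce to Corollary \ref{BoundMaxMult1} via the comparison $\min\{n_{<k},q_k\}\cdot q_k^{k-1}\geq p_n^{k-1}$, pick out the smallest admissible $k$, and verify the closed form. The paper's own proof is in fact more terse than yours — it simply asserts that the value "displayed in curly brace" is the smallest $k\in\{2,\dots,M\}$ with $n-p_n^{k-1}<i_{max}$, without carrying out the case analysis you do.

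Your caution about the boundary case $d:=n-i_{max}=1$ is well placed, and you are right not to wave it away: this is a genuine off-by-one in the stated formula. When $i_{max}=n-1$, the correct $k_{min}$ is $2$ (the smallest $k\geq 2$ with $1<p_n^{k-1}$), yet the first branch of (\ref{formulapentrukmin}) yields $1+\lceil\log_{p_n}1\rceil=1$, which would assert $M(f)<1$ — impossible for a nonconstant $f$. The discrepancy is harmless in substance (by Corollary \ref{coro1} such an $f$ is irreducible, so $M(f)=1<2$), but the display as written does not capture $k_{min}$ at $d=1$, and a careful statement should either replace the first branch by $\max\bigl(2,\ 1+\lceil\log_{p_n}(n-i_{max})\rceil\bigr)$ or exclude $i_{max}=n-1$ up front. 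For every other $d$ covered by the hypothesis the formula does compute $k_{min}$: if $d$ is not a power of $p_n$ with $p_n^{j}<d<p_n^{j+1}$, one gets $\lceil\log_{p_n}d\rceil=j+1$ and hence $k_{min}=j+2$; if $d=p_n^{j}$ with $1\leq j\leq M-2$ one is in the second branch and $k_{min}=j+2$ again. So apart from flagging (correctly) a defect in the original statement, your argument is sound and coincides with the paper's.
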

\begin{proof}
If $i_{max}>n-p_n^{M-1}$, there exists a smallest $k\in \{2,3,\dots ,M\}$ such that $n-p_n^{k-1}<i_{max}$, say $k_{min}$, displayed in curly brace in (\ref{formulapentrukmin}). By Corollary \ref{BoundMaxMult1} we conclude that $M(f)<k_{min}$.
\end{proof}

\begin{corollary}\label{BoundMaxMult2}
Let $m$ and $n$ be positive integers with $m<n$ and such that $M(n)\geq M(m)=M\geq 2$, and let $f=\frac{a_m}{m^s}+\cdots +\frac{a_n}{n^s}$ be a primitive Dirichlet polynomial with $a_ma_n\neq 0$. For any integer $k\in\{2,3,\dots ,M\}$ let $q_k$ and $r_k$ be the smallest prime factors of $m$ and $n$ that have multiplicities $\geq k$, respectively. Let $i_{min}=\min\{i:i>m\ \text{and}\ a_i\neq 0\}$.
If $m\geq \frac{n}{r_M^M}$ and $i_{min}<m+\min\{m_{<M},q_M\}\cdot q_{M}^{M-1}$, then
\begin{equation}\label{adouamajorare}
M(f)<\min\{k:\ 2\leq k\leq M,\  m>\frac{n}{r_k^k}\ {\rm and}\  i_{min}<m+\min\{m_{<k},q_k\}\cdot q_k^{k-1}\}.
\end{equation}
\end{corollary}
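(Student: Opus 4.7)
The plan is to mirror closely the proof of Corollary \ref{BoundMaxMult1}, letting the leftmost nonzero coefficient $a_{i_{min}}$ of $f$ above $m$ play the role there assigned to the rightmost nonzero coefficient below $n$, and invoking Theorem \ref{THsquareoftheleastpmpn} in place of Theorem \ref{THsquareoftheleastpn}. First I would verify that both constraints appearing in the set on the right-hand side of (\ref{adouamajorare}) are monotone in $k$, in the sense that they become easier to satisfy as $k$ grows. For $2\leq k_1<k_2\leq M$, every prime of $n$ of multiplicity at least $k_2$ also has multiplicity at least $k_1$, so $r_{k_1}\leq r_{k_2}$ and hence $r_{k_1}^{k_1}\leq r_{k_2}^{k_2}$; this gives $\frac{n}{r_{k_1}^{k_1}}\geq\frac{n}{r_{k_2}^{k_2}}$, and so the inequality $m>\frac{n}{r_{k_1}^{k_1}}$ implies $m>\frac{n}{r_{k_2}^{k_2}}$. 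Analogously, the sequences $q_k$ and $m_{<k}$ are nondecreasing in $k$, and exactly as in the monotonicity argument of Corollary \ref{BoundMaxMult1} one checks that $\min\{m_{<k},q_k\}\cdot q_k^{k-1}$ is strictly increasing in $k$, so the inequality $i_{min}<m+\min\{m_{<k},q_k\}\cdot q_k^{k-1}$ is also preserved under passage to larger $k$.

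Next I would exploit the hypothesis. Since $m>\frac{n}{r_M^M}$ and $i_{min}<m+\min\{m_{<M},q_M\}\cdot q_M^{M-1}$ both hold, the integer $k=M$ lies in the set
\[
\mathcal{K}:=\Bigl\{k\ :\ 2\leq k\leq M,\ m>\frac{n}{r_k^k},\ i_{min}<m+\min\{m_{<k},q_k\}\cdot q_k^{k-1}\Bigr\},
\]
which is therefore nonempty. Let $k_{min}$ denote its smallest element. Then both hypotheses of Theorem \ref{THsquareoftheleastpmpn} are satisfied with $k=k_{min}$: indeed, $m>\frac{n}{r_{k_{min}}^{k_{min}}}$, and the coefficient $a_{i_{min}}$ is nonzero with $m<i_{min}<m+\min\{m_{<k_{min}},q_{k_{min}}\}\cdot q_{k_{min}}^{k_{min}-1}$. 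Applying that theorem yields that $f$ is $k_{min}$-power-free, which is precisely the bound $M(f)<k_{min}$ asserted in (\ref{adouamajorare}).

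No serious obstacle is expected: the argument is a direct adaptation of Corollary \ref{BoundMaxMult1}. The only point requiring care is checking that the newly present constraint $m>\frac{n}{r_k^k}$ is monotone in the permissive direction, i.e. that passing from $k_1$ to a larger $k_2$ weakens this requirement. Once that is confirmed, the remainder of the proof is a one-line appeal to Theorem \ref{THsquareoftheleastpmpn} at the smallest admissible $k$.
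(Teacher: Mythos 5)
Your proposal is correct and follows essentially the same approach as the paper: verify that $\frac{n}{r_k^k}$ is nonincreasing and $m+\min\{m_{<k},q_k\}\cdot q_k^{k-1}$ is strictly increasing in $k$, observe that the hypothesis places $k=M$ in the admissible set so that a minimal $k_{min}$ exists, and then apply Theorem \ref{THsquareoftheleastpmpn} at $k_{min}$ to conclude that $f$ is $k_{min}$-power-free. The only remark is that both you and the paper's proof silently pass from the hypothesis $m\geq\frac{n}{r_M^M}$ to the strict inequality $m>\frac{n}{r_M^M}$ required for $M$ to lie in the set, an apparent slip in the paper's statement rather than a defect specific to your argument.
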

\begin{proof}
For $k_1<k_2$ we have $r_{k_1}\leq r_{k_2}$, $q_{k_1}\leq q_{k_2}$ and $m_{<k_1}\leq m_{<k_2}$, so $\frac{n}{r_{k_1}^{k_1}}\geq \frac{n}{r_{k_2}^{k_2}}$ and
\[
\min\{m_{<k_1},q_{k_1}\}\cdot q_{k_1}^{k_1-1}<\min\{m_{<k_2},q_{k_2}\}\cdot q_{k_2}^{k_2-1},
\]
which shows that the sequence $\frac{n}{r_k^k}$ is a decreasing one, while the sequence $m+\min\{m_{<k},q_k\}\cdot q_k^{k-1}$ is a strictly increasing one. Thus, if $m>\frac{n}{r_M^M}$ and $i_{min}<m+\min\{m_{<M},q_M\}\cdot q_{M}^{M-1}$, there exists a smallest integer $k\in \{2,3,\dots ,M\}$ such that $m>\frac{n}{r_k^k}$ and $i_{min}<m+\min\{m_{<k},q_k\}\cdot q_k^{k-1}$, say $k_{min}$. By Theorem \ref{THsquareoftheleastpmpn} we deduce that $f$ is $k_{min}$-power-free, so $M(f)$ satisfies (\ref{adouamajorare}). 
\end{proof}
If we replace in (\ref{adouamajorare}) the term $\min\{m_{<k},q_k\}\cdot q_k^{k-1}$ by $p_m^{k-1}$, and $r_k$ by $p_n$, with $p_m$ and $p_n$ the smallest prime factors of $m$ and $n$, respectively, we obtain the following weaker, but more tractable bound for $M(f)$.
\begin{corollary}\label{BoundMaxMult2weak}
Let $m$ and $n$ be integers with $2\leq m<n$ and $M(n)\geq M(m)=M\geq 2$, and let $p_m$ and $p_n$ be the smallest prime factors of $m$ and $n$, respectively. Let $f=\frac{a_m}{m^s}+\cdots +\frac{a_n}{n^s}$ be a primitive Dirichlet polynomial with $a_ma_n\neq 0$, and let $i_{min}=\min\{i:i>m\ \text{and}\ a_i\neq 0\}$.
If $m\geq \frac{n}{p_n^M}$ and $i_{min}<m+p_m^{M-1}$, then $M(f)<k_{min}$ with
\begin{equation}\label{adouamajorareweak}
k_{min}=\min\left\{k:\ 2\leq k\leq M,\ k>\max \{\log_{p_n}(\frac{n}{m}),1+\log _{p_m}(i_{min}-m) \}\right\}.
\end{equation}
\end{corollary}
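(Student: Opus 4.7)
The plan is to derive this corollary as a direct consequence of Corollary~\ref{BoundMaxMult2}, by replacing the sharper but less tractable quantities $r_k$, $q_k$ and $m_{<k}$ with uniform minorants/majorants involving only the smallest prime factors $p_n$ and $p_m$. First I would verify that the hypotheses of Corollary~\ref{BoundMaxMult2weak} imply those of Corollary~\ref{BoundMaxMult2}. Since $p_n$ is the smallest prime factor of $n$, while $r_M$ is the smallest one having multiplicity $\geq M$ in $n$, we have $p_n \leq r_M$, hence $n/p_n^M \geq n/r_M^M$, so the assumption $m \geq n/p_n^M$ yields $m \geq n/r_M^M$. Analogously, $p_m \leq q_M$ together with the trivial bound $\min\{m_{<M},q_M\} \geq 1$ gives
\[
p_m^{M-1} \leq q_M^{M-1} \leq \min\{m_{<M},q_M\}\cdot q_M^{M-1},
\]
so $i_{min} < m + p_m^{M-1}$ forces $i_{min} < m + \min\{m_{<M},q_M\}\cdot q_M^{M-1}$. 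Hence Corollary~\ref{BoundMaxMult2} applies and delivers $M(f) < k'_{min}$, where $k'_{min}$ denotes the minimum of the set in (\ref{adouamajorare}).

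Next I would show that $k'_{min} \leq k_{min}$, which immediately gives the desired bound $M(f) < k_{min}$. For this it suffices to check that every integer $k \in \{2,\dots,M\}$ satisfying the two strict inequalities defining $k_{min}$ also satisfies the corresponding conditions appearing in the definition of $k'_{min}$. The condition $k > \log_{p_n}(n/m)$ is equivalent to $p_n^k > n/m$, that is $m > n/p_n^k$, and, combined with $p_n \leq r_k$, it gives $m > n/p_n^k \geq n/r_k^k$. In the same spirit, $k > 1 + \log_{p_m}(i_{min}-m)$ rearranges to $p_m^{k-1} > i_{min}-m$, so using once again $p_m \leq q_k$ and $\min\{m_{<k},q_k\} \geq 1$ we obtain
\[
i_{min} < m + p_m^{k-1} \leq m + \min\{m_{<k},q_k\}\cdot q_k^{k-1}.
\]
Both strict conditions entering the definition of $k'_{min}$ are thus satisfied, whence $k'_{min} \leq k_{min}$ and the corollary follows.

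The proof is essentially a careful tracking of elementary inequalities between the arithmetic invariants $p_n, r_k, p_m, q_k, m_{<k}$ attached to $m$ and $n$. No substantive obstacle is expected; the only point requiring mild attention is ensuring that each inequality is used in the correct direction so that strict comparisons are preserved when passing back and forth between logarithmic form and prime-power form.
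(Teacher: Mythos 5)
Your proof is correct and follows essentially the same route as the paper's: the corollary is deduced from Corollary~\ref{BoundMaxMult2} by replacing $r_k$ with the smaller quantity $p_n$ and $\min\{m_{<k},q_k\}\cdot q_k^{k-1}$ with the smaller quantity $p_m^{k-1}$, so that the set defining $k_{min}$ is contained in the set defining the minimum in (\ref{adouamajorare}). Your write-up is in fact more explicit than the paper's very terse proof — in particular you spell out the inequality chains $p_n\leq r_k$, $p_m\leq q_k$, $\min\{m_{<k},q_k\}\geq 1$ that justify the monotone replacements, and you avoid a small typographical slip in the paper's proof (which writes $p_m$ where $p_n$ is meant in the condition $m\geq n/p_n^k$).
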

\begin{proof}
If $m\geq \frac{n}{p_m^M}$ and $i_{min}<m+p_m^{M-1}$, there exists a smallest $k\in \{2,3,\dots ,M\}$ such that $m\geq \frac{n}{p_m^k}$ and $i_{min}<m+p_m^{k-1}$, say $k_{min}$, given by (\ref{adouamajorareweak}). By Corollary \ref{BoundMaxMult2} we conclude that $M(f)<k_{min}$.
\end{proof}

In \cite{Alkan} we proved the following square-free criterion for polynomials over unique factorization domains, that uses no derivatives.
\medskip

{\bf Proposition}\ {\em Let $R$ be a unique factorization domain and $f\in R[X]$ a primitive
polynomial with $\deg (f)=n\geq 2$. Fix an arbitrarily chosen
integer $m\geq 2$. Then the following are equivalent: 

i) The polynomial $f$ is square-free; 

ii) For every $g\in R[X]$ with $\deg (g)=n-1$, $g^{m}$ is not divisible by $f$.}
\medskip

For unique factorization domains $R$ that include the real numbers (in particular logarithms of positive rationals), one may define the derivative and the higher derivatives with respect to $s$ of a Dirichlet polynomial $f\in DP(R)[s]$, and use them to study its repeated irreducible factors.
For Dirichlet polynomials over arbitrary unique factorization domains that do not necessarily include the real numbers, we will devise analogous square-free conditions that require no derivatives.

\begin{theorem}\label{squarefreecrit}
Let $R$ be a unique factorization domain and $f\in DP(R)[s]$ a primitive Dirichlet polynomial of degree $n\geq 2$, with $n$ not square-free. Fix an arbitrarily chosen integer $m\geq 2$. Then the following statements are equivalent: 

i) The Dirichlet polynomial $f$ is square-free; 

ii) For every prime factor $p$ of $n$ with multiplicity at least $2$, and every Dirichlet polynomial $g\in DP(R)[s]$ of degree at most $\frac{n}{p}$, $g^{m}$ is not divisible by $f$;

iii) For every prime factor $p$ of $n$ with multiplicity at least $2$, and every Dirichlet polynomial $g\in DP(R)[s]$ of degree $\frac{n}{p}$, $g^{m}$ is not divisible by $f$.
\end{theorem}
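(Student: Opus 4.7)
I would establish the equivalence through the cycle $(i)\Rightarrow(ii)\Rightarrow(iii)\Rightarrow(i)$, where $(ii)\Rightarrow(iii)$ is immediate since (iii) is simply the restriction of (ii) to Dirichlet polynomials of degree exactly $n/p$ rather than at most $n/p$.

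For $(i)\Rightarrow(ii)$, the plan is to exploit the fact that $DP(R)[s]$ is a UFD. Supposing to the contrary that $f$ is square-free but $f\mid g^{m}$ for some $g$ of degree at most $n/p$, each nonconstant irreducible factor of $f$ — which appears with multiplicity one because $f$ is primitive and square-free — must divide $g^{m}$ and hence divide $g$. Pairwise coprimality of these factors then yields $f\mid g$ (the unit content of $f$ being absorbed by primitivity), forcing $n=\deg f\leq\deg g\leq n/p$, which is impossible since $p\geq 2$.

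For $(iii)\Rightarrow(i)$ I would argue by contrapositive. If $f$ is not square-free, write $f=h^{2}k$ with $h\in DP(R)[s]$ nonconstant irreducible and $k\in DP(R)[s]$. Set $d=\deg h\geq 2$; from $h^{2}\mid f$ we obtain $d^{2}\mid n$, so every prime $p$ dividing $d$ satisfies $p^{2}\mid n$ and is thus eligible in (iii). I would fix such a $p$, let $d'=d/p$, and define
\[
g(s)\;:=\;h(s)\,k(s)\,\frac{1}{(d')^{s}}
\]
(with the convention that $1/(d')^{s}$ is the unit $1$ when $d'=1$). Using $\deg k=n/d^{2}$, a direct computation gives $\deg g=d\cdot (n/d^{2})\cdot d'=n/p$, so $g$ has the required degree. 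Since $m\geq 2$, the identity $g^{m}=h^{m}k^{m}/((d')^{m})^{s}$ combined with $h^{2}\mid h^{m}$ and $k\mid k^{m}$ yields $f\mid g^{m}$, contradicting (iii).

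The main obstacle is isolating a \emph{prime} $p$ of multiplicity at least two in $n$ from the data of a squared irreducible factor $h$ of $f$: the degree $d$ of $h$ need not itself be prime, so the naive choice $g=h\cdot k$ only produces a Dirichlet polynomial of degree $n/d$, which is $n/p$ only when $d=p$. The divisibility $d^{2}\mid n$ circumvents this by forcing every prime divisor of $d$ to have multiplicity at least two in $n$, and the auxiliary monomial $1/(d')^{s}$ plays the role of a ``degree-dilator'' that raises the degree of $g$ from $n/d$ to the prescribed value $n/p$ without disturbing the divisibility $f\mid g^{m}$.
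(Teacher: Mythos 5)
Your proposal is correct and takes essentially the same approach as the paper. The implications $(i)\Rightarrow(ii)$ and $(ii)\Rightarrow(iii)$ coincide with the paper's, and your direct argument for $(iii)\Rightarrow(i)$ is exactly the $k=2$ instance of the paper's Theorem~\ref{THBoundMultiplicities}\,i): extract a squared irreducible factor $h$, observe that $(\deg h)^2\mid n$ so any prime $p\mid\deg h$ satisfies $\nu_p(n)\geq 2$, and inflate the cofactor $f/h$ by an auxiliary factor of degree $(\deg h)/p$ (your explicit monomial $1/(d')^s$ being one valid choice where the paper allows any Dirichlet polynomial of that degree) to produce a counterexample $g$ of degree $n/p$.
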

\begin{proof}\ $i)\Rightarrow ii)$ Let $f=f_{1}\cdots f_{k}$, with $f_{1},\ldots ,f_{k}\in DP(R)[s]$ irreducible and pairwise nonassociated in divisibility. Let $p$ be a prime factor of $n$ and assume there exists a Dirichlet polynomial $g\in DP(R)[s]$ with $\deg g\leq\frac{n}{p}$, such that $f$ divides $g^{m}$. Then each one of the $f_{i}$'s will divide $g$, and hence $f$ will divide $g$, which is impossible since $\deg g<n$.

$ii)\Rightarrow iii)$ The argument is trivial.

$iii)\Rightarrow i)$ This implication is a particular case of the following more general result on the multiplicities of the irreducible factors of Dirichlet polynomials.
\end{proof}
\begin{theorem}\label{THBoundMultiplicities}
Let $k\geq 2$ be an integer, $R$ a unique factorization domain and $f\in DP(R)[s]$ a primitive Dirichlet polynomial of degree $n$. Let us fix an arbitrarily chosen integer $m\geq k$. 

i) If for every prime factor $p$ of $n$ with $p^k\mid n$, and for every Dirichlet polynomial $g\in DP(R)[s]$ of degree $\frac{n}{p^{k-1}}$, $g^{m}$ is not divisible by $f$, then $f$ is $k$-power-free;

ii) If $f$ is $k$-power-free, then for every prime factor $p$ of $n$ with $p^k\mid n$, and for every Dirichlet polynomial $g\in DP(R)[s]$ of degree $\frac{n}{p^{\nu_p(n)-1}}$, $g^{m}$ is not divisible by $f$.
\end{theorem}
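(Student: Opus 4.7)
My plan is to handle the two implications separately; both arguments rely on comparing multiplicities of irreducible factors with $p$-adic valuations of Dirichlet degrees, exploiting that $DP(R)[s]$ is a UFD (via $\phi$) and that $\deg(AB)=\deg A\cdot \deg B$ on $DP(R)[s]$.

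For part i), I will argue by contrapositive. Suppose $f$ is not $k$-power-free, so its canonical decomposition contains an irreducible $f_1$ with multiplicity $e\geq k$; write $f=uf_1^e h'$ with $u\in R$ a unit, $f_1\nmid h'$, and set $d=\deg f_1$. Since $d^e\mid n$, every prime $p$ dividing $d$ satisfies $p^k\mid n$. Fix such a $p$ and let $q=(d/p)^{k-1}$, a positive integer. The candidate
\[
g(s) \;:=\; \frac{1}{q^s}\cdot \frac{f(s)}{f_1(s)^{k-1}}
\]
has $\deg g = q\cdot (n/d^{k-1}) = n/p^{k-1}$, as required, and expanding gives
\[
g^m \;=\; \frac{u^m}{q^{ms}}\cdot f_1^{\,m(e-k+1)}\cdot (h')^{m}.
\]
Thus $f\mid g^m$ as soon as $m(e-k+1)\geq e$, equivalently $e\geq (k-1)+\tfrac{k-1}{m-1}$; since $m\geq k\geq 2$ and $e\geq k$, this is immediate.

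For part ii), I will argue by contradiction: suppose $f$ is $k$-power-free yet $f\mid g^m$ for some prime $p$ with $p^k\mid n$ and some $g\in DP(R)[s]$ with $\deg g=n/p^{\nu_p(n)-1}$. Writing the canonical decomposition $f=uf_1^{e_1}\cdots f_r^{e_r}$ with each $e_i\leq k-1$ and $d_i=\deg f_i$, the divisibility $f_i^{e_i}\mid g^m$ combined with $m\geq k>e_i$ forces $f_i\mid g$ for every $i$. Hence $g=v\cdot f_1^{b_1}\cdots f_r^{b_r}\cdot h$ with $v$ a unit, each $b_i\geq 1$, and $h$ coprime to each $f_i$. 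Taking degrees and applying $\nu_p$ yields
\[
1 \;=\; \nu_p(\deg g) \;=\; \sum_{i=1}^r b_i\,\nu_p(d_i)+\nu_p(\deg h).
\]
Since the summands are non-negative integers and each $b_i\geq 1$, either $\nu_p(d_i)=0$ for every $i$, or there is a unique index $i_0$ with $b_{i_0}=\nu_p(d_{i_0})=1$ and $\nu_p(d_j)=0$ for $j\neq i_0$. Substituting into $\nu_p(n)=\sum_i e_i\,\nu_p(d_i)$ gives either $\nu_p(n)=0$ or $\nu_p(n)=e_{i_0}\leq k-1$, both contradicting $p^k\mid n$.

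The main conceptual obstacle is the degree bookkeeping in part i): the obvious quotient $f/f_1^{k-1}$ has degree $n/d^{k-1}$, which matches the required $n/p^{k-1}$ only when $f_1$ happens to have prime degree $d=p$. The fix is to pad by the monomial $1/q^s$ with $q=(d/p)^{k-1}$, which rescales the degree multiplicatively by the missing factor while preserving divisibility by $f$. Part ii) is essentially routine once one notices that the exponent $\nu_p(n)-1$ in the prescribed degree of $g$ leaves only a single unit of $p$-adic valuation to be distributed among the $b_i\,\nu_p(d_i)$, which is incompatible with the lower bound $\nu_p(n)\geq k\geq 2$ forced by $p^k\mid n$ on one side and the upper bound $e_i\leq k-1$ on the other.
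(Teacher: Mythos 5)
Your proof is correct and follows essentially the same route as the paper: part i) constructs the witnessing $g$ from the quotient $f/f_1^{k-1}$ padded by a one-term Dirichlet polynomial of degree $q=(\deg f_1/p)^{k-1}$ (the paper allows any auxiliary factor of that degree, you pick $1/q^{s}$), and part ii) reduces to a $p$-adic valuation count on $\deg g$, which you do by expanding $g$ through the $f_i$'s while the paper instead observes that $e_i\le k-1$ gives $f\mid g^{k-1}$ and hence $n\mid(\deg g)^{k-1}$, a slightly slicker way to reach the same inequality $\nu_p(n)\le k-1$.
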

\begin{proof}\ i) The conclusion is trivial if all the prime factors of $n$ have multiplicities at most $k-1$, so we may assume that $n$ has at least one prime factor with multiplicity $\geq k$. Now let us assume to the contrary that $f=c\cdot f_{1}^{n_{1}}\cdots f_{r}^{n_{r}}$ with $c\in R\setminus \{0\}$ and $f_{1},\ldots ,f_{r}\in DP(R)[s]$ irreducible and pairwise nonassociated in divisibility, with at least one of the multiplicities $\geq k$, say $n_{1}\geq k$. Thus, the multiplicity of $f_{1}$ in the canonical decomposition of
$(\frac{f}{f_{1}^{k-1}})^{m}$ is $m(n_{1}-k+1)$, which is greater than or equal to $n_{1}$, as
$m\geq k$ and $k\geq \frac{n_{1}}{n_1-k+1}$. In particular, this shows that $f$ will divide $(\frac{f}{f_{1}^{k-1}})^{m}$. Let $p$ be a prime factor of $\deg f_1$, and observe that $p$ has multiplicity at least $k$ in the prime factorization of $n$, since $n_1\geq k$ and $(\deg f_1)^{n_1}\mid n$. Let now $q=(\frac{\deg f_1}{p})^{k-1}$. 
In case $q=1$ we take $g=\frac{f}{f_{1}^{k-1}}$, while if $q>1$ we multiply $\frac{f}{f_{1}^{k-1}}$ by a Dirichlet polynomial $h\in DP(R)[s]$ with $\deg h=q$, and take instead $g=\frac{fh}{f_{1}^{k-1}}$. In both situations we conclude that $g^{m}$ is divisible by $f$ and $\deg g=\frac{n}{p^{k-1}}$ with $p$ a prime factor of $n$ with $p^k\mid n$, a contradiction.

ii) Conversely, assume that $f=c\cdot f_{1}^{n_{1}}\cdots f_{r}^{n_{r}}$ with $c\in R\setminus \{0\}$ and $f_{1},\ldots ,f_{r}\in DP(R)[s]$ irreducible and pairwise nonassociated in divisibility, and with all the multiplicities $n_i$ at most $k-1$, and that there is a prime $p$ with $p^k\mid n$ and $g\in DP(R)[s]$ of degree $\frac{n}{p^{\nu_p(n)-1}}$ such that $f\mid g^m$. Then $f_1\cdots f_r$ must divide $g$, and since $n_i\leq k-1$ for each $i$, $f$ must in fact divide $g^{k-1}$, which in turn forces $n$ to divide $(\frac{n}{p^{\nu_p(n)-1}})^{k-1}$. In particular, this divisibility forces $\nu_p(n)$ to satisfy $\nu _p(n)\leq (k-1)(\nu _p(n)-(\nu_p(n)-1))=k-1$, which cannot hold, since $p^k\mid n$, and this completes the proof.
\end{proof}

In particular, by taking $m=k=M(n)$ in Theorem \ref{THBoundMultiplicities}, we obtain necessary and sufficient conditions for $M(f)$ to be smaller than $M(n)$.
\begin{corollary}\label{maxmultk}
Let $R$ be a unique factorization domain, $f\in DP(R)[s]$ a primitive Dirichlet polynomial of degree $n$, with $M(n)\geq 2$, and let us fix an arbitrarily chosen integer $m\geq M(n)$. Then $M(f)<M(n)$ if and only if for every prime factor $p$ of $n$ with multiplicity $M(n)$, and for every Dirichlet polynomial $g\in DP(R)[s]$ of degree $\frac{n}{p^{M(n)-1}}$, $g^{m}$ is not divisible by $f$.
\end{corollary}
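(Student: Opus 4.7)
My plan is to deduce this directly from Theorem \ref{THBoundMultiplicities} by specializing $k=M(n)$, observing that under this choice the two degree expressions appearing in parts i) and ii) of the theorem coincide, so that the implication becomes an equivalence.

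First, I set $k=M(n)$. The hypotheses $M(n)\geq 2$ and $m\geq M(n)$ give $k\geq 2$ and $m\geq k$, matching the hypotheses of Theorem \ref{THBoundMultiplicities}. The key observation is that with this choice, the condition ``$p$ is a prime factor of $n$ with $p^{k}\mid n$'' becomes ``$\nu_{p}(n)\geq M(n)$'', which by the very definition of $M(n)$ forces $\nu_{p}(n)=M(n)=k$. Consequently, $\nu_{p}(n)-1=k-1$, and the degree $\frac{n}{p^{\nu_{p}(n)-1}}$ appearing in part ii) of Theorem \ref{THBoundMultiplicities} equals $\frac{n}{p^{k-1}}=\frac{n}{p^{M(n)-1}}$, which is exactly the degree appearing in part i) as well as in the statement of the corollary.

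For the ``if'' direction, I invoke part i) of Theorem \ref{THBoundMultiplicities}: if for every prime factor $p$ of $n$ with multiplicity $M(n)$ and every Dirichlet polynomial $g\in DP(R)[s]$ of degree $\frac{n}{p^{M(n)-1}}$, $g^{m}$ is not divisible by $f$, then $f$ is $k$-power-free with $k=M(n)$, i.e. $M(f)<M(n)$. For the ``only if'' direction, I invoke part ii): if $M(f)<M(n)$, then $f$ is $M(n)$-power-free, hence $k$-power-free, and Theorem \ref{THBoundMultiplicities}ii) yields that for every prime factor $p$ of $n$ with $p^{k}\mid n$ and every $g\in DP(R)[s]$ of degree $\frac{n}{p^{\nu_{p}(n)-1}}=\frac{n}{p^{M(n)-1}}$, $g^{m}$ is not divisible by $f$.

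There is no real obstacle here; the only substantive point is the coincidence of degrees forced by $k=M(n)$, which is what makes the one-sided statement of Theorem \ref{THBoundMultiplicities} upgrade to an ``if and only if'' in this specific case. In particular, no new combinatorial argument on multiplicities is required beyond what has already been established in Theorem \ref{THBoundMultiplicities}.
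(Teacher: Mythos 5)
Your proof is correct and follows exactly the route the paper intends: specialize $k=M(n)$ in Theorem \ref{THBoundMultiplicities} and note that $p^{k}\mid n$ with $k=M(n)$ forces $\nu_p(n)=M(n)$, so the degree $\frac{n}{p^{\nu_p(n)-1}}$ in part ii) collapses to $\frac{n}{p^{M(n)-1}}$, making i) and ii) converses of one another. The paper states this as a one-line remark ("by taking $m=k=M(n)$"); your write-up is a correct and more careful elaboration of the same argument.
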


The square-free conditions in Theorem \ref{squarefreecrit} simplify when the order of $f$ is a prime power, as follows.
\begin{corollary}\label{plamsquarefree}
Let $R$ be a unique factorization domain, and $f\in DP(R)[s]$ a primitive Dirichlet polynomial of degree $p^d$, with $p$ a prime number and $d\geq 2$. Fix an arbitrarily chosen integer $m\geq 2$. Then the following statements are equivalent: 

i) The Dirichlet polynomial $f$ is square-free; 

ii) For every $g\in DP(R)[s]$ of degree at most $p^{d-1}$, $g^{m}$ is not divisible by $f$;

iii) For every $g\in DP(R)[s]$ of degree $p^{d-1}$, $g^{m}$ is not divisible by $f$.
\end{corollary}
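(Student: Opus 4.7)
The plan is to deduce Corollary \ref{plamsquarefree} directly from Theorem \ref{squarefreecrit}, by specializing to the prime-power degree case. Since the degree $n=p^d$ with $d\geq 2$ is not square-free, the hypotheses of Theorem \ref{squarefreecrit} are satisfied, so the equivalence of the three conditions in that theorem applies to our $f$.

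Next, I would identify the set of prime factors of $n=p^d$ having multiplicity at least $2$ in $n$. Because $n$ is a prime power with exponent $d\geq 2$, this set consists of the single prime $p$ itself, and for this unique prime we have $\tfrac{n}{p}=p^{d-1}$. Hence the universal quantification ``for every prime factor $q$ of $n$ with $\nu_q(n)\geq 2$, and every Dirichlet polynomial $g\in DP(R)[s]$ of degree at most (respectively equal to) $\tfrac{n}{q}$'' in conditions (ii) and (iii) of Theorem \ref{squarefreecrit} collapses to ``every Dirichlet polynomial $g\in DP(R)[s]$ of degree at most (respectively equal to) $p^{d-1}$,'' which is precisely conditions (ii) and (iii) of Corollary \ref{plamsquarefree}. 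Thus the three statements of the corollary translate verbatim into the three statements of Theorem \ref{squarefreecrit} applied to $n=p^d$, and their equivalence follows.

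There is no genuine obstacle here: the corollary is a clean specialization and its proof will be one short paragraph. The only thing to take care of is to mention explicitly that the assumption $d\geq 2$ is used both to ensure that $n$ is not square-free (so that Theorem \ref{squarefreecrit} is applicable) and to ensure that $p$ has multiplicity at least $2$ in $n$ (so that $p$ enters the quantification in conditions (ii) and (iii) of the theorem), and that $\tfrac{n}{p}=p^{d-1}$ matches the degree bound appearing in the corollary. With these remarks in place, the three-way equivalence is immediate.
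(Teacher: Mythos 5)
Your proposal is correct and coincides with the paper's treatment: the corollary is introduced there with the remark that the conditions in Theorem \ref{squarefreecrit} ``simplify when the order of $f$ is a prime power,'' and no separate proof is given because the statement is precisely the specialization you describe, with the quantification over prime factors of multiplicity $\geq 2$ collapsing to the single prime $p$ and $\frac{n}{p}=p^{d-1}$.
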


\begin{remark}\label{liniarizareGenerala}
When $R$ is a unique factorization domain with positive characteristic $\mathfrak{p}\geq k$, we may take the integer $m$ in Theorem \ref{THBoundMultiplicities} (and also in Theorem \ref{squarefreecrit} if $k=2$) to be equal to $\mathfrak{p}$, so that a condition like $f$ divides $g^{m}$ may be regarded as a homogeneous system of linear equations. Indeed, if $f(s)=\sum_{i=1}^{n}\frac{a_i}{i^s}$ and we assume that for a prime divisor $p$ of $n$ with $p^k\mid n$ and a Dirichlet polynomial $g$ of degree $\frac{n}{p^{k-1}}$, say $g(s)=\sum_{i=1}^{n/p^{k-1}}\frac{c_i}{i^s}$ we have \begin{equation}\label{fh=gmGenerala}
f(s)h(s)=g(s)^{\mathfrak{p}}
\end{equation}
for some Dirichlet polynomial $h$ of degree $t$, say $h(s)=\sum_{i=1}^{t}\frac{b_i}{i^s}$, we see that $t=\frac{n^{\mathfrak{p}-1}}{p^{(k-1)\mathfrak{p}}}$ and equality (\ref{fh=gmGenerala}) reads
\[
\sum_{i=1}^{n}\frac{a_i}{i^s}
\sum_{i=1}^{t}\frac{b_i}{i^s}=
\sum_{i=1}^{\frac{n}{p^{k-1}}}\frac{c_i^{\mathfrak{p}}}{i^{\mathfrak{p}s}}.  
\]
By equating the coefficients in this equality one obtains a homogeneous system of $(\frac{n}{p^{k-1}})^{\mathfrak{p}}$ linear equations in the unknowns $b_1,\dots ,b_t,c_1^{\mathfrak{p}},\dots ,c_{n/p^{k-1}}^{\mathfrak{p}}$, of matrix form
\begin{equation}\label{sistemGenerala}
\mathcal{A}_{\mathfrak{p},n,p,k}\cdot X=\mathbf{0},
\end{equation}
with $\mathcal{A}_{\mathfrak{p},n,p,k}$ the coefficient matrix depending on $\mathfrak{p},n,p$ and $k$, having $(\frac{n}{p^{k-1}})^{\mathfrak{p}}$ rows and $t+\frac{n}{p^{k-1}}$ columns, $X$ the column vector $(b_1,\dots ,b_t,c_1^{\mathfrak{p}},\dots ,c_{n/p^{k-1}}^{\mathfrak{p}})^T$, and $\mathbf{0}$ a zero column matrix.

One may easily check that the entries $\mathfrak{a}_{i,j}$ of $\mathcal{A}_{\mathfrak{p},n,p,k}$ are given by 
\begin{equation}\label{formulapentruaij}
\mathfrak{a}_{i,j}=\begin{cases}
             a_{\frac{i}{j}}, & \text{if $j\mid i$ and $j\in\{1,\dots , t\}$},\\
             0, & \text{if $j\nmid i$ and $j\in\{1,\dots ,t\}$},\\
            -1, & \text{if $i=(j-t)^{\mathfrak{p}}$ and $j\in\{t+1,\dots ,t+\frac{n}{p^{k-1}}\}$},\\
             0, & \text{if $i\neq (j-t)^{\mathfrak{p}}$ and $j\in\{t+1,\dots ,t+\frac{n}{p^{k-1}}\}$},
           \end{cases}
\end{equation}
with the convention that $a_{\mathfrak{i}}=0$ for $\mathfrak{i}>n$.
We notice that the system (\ref{sistemGenerala}) has more equations than unknowns, that is $(\frac{n}{p^{k-1}})^{\mathfrak{p}}>t+\frac{n}{p^{k-1}}$, which is equivalent to $n^{\mathfrak{p}-2}(n-1)>p^{(\mathfrak{p}-1)(k-1)}$. Indeed, since we may write $n=p^k\ell $ for some positive integer $\ell $, we need to check that $p^{\mathfrak{p}-k-1}\ell ^{\mathfrak{p}-2}(p^k\ell -1)>1$. Recall now that $\mathfrak{p}\geq k$. Last inequality obviously holds for $\mathfrak{p}\geq k+1$, while for $\mathfrak{p}=k$, it reduces to $\ell ^{k-2}(p^k\ell -1)>p$, which also holds, since $\ell ^{k-2}(p^k\ell -1)\geq p^2-1>p$, as $k\geq 2$, $\ell \geq 1$ and $p\geq 2$.
However, $\mathcal{A}_{\mathfrak{p},n,p,k}$ may have rows $r_i$ consisting only of zero entries, if for instance $i$ is not a $\mathfrak{p}$th power and $a_{\frac{i}{j}}=0$ for each divisor $j$ of $i$ with $1\leq j\leq t$. On the other hand, if $p^k$ is a proper divisor of $n$, that is $\ell>1$ (so that $t>1$ as well), some of the zero rows may simply occur due to some arithmetic constraints, regardless of the values of the coefficients of $f$. This is the case of the rows with index $i>n$ (implying that $a_i=0$ by our convention), with $i$ not a $\mathfrak{p}$th power, and not a multiple of any of the integers $2,3,\dots ,t$, and the removal of these rows does not affect the rank of $\mathcal{A}_{\mathfrak{p},n,p,k}$. Letting $\pi _t=\prod_{p\ \text{prime},\ p\leq t}p$, we see that these are the rows whose indices $i$ belong to the interval $(\max\{t,n\},(\frac{n}{p^{k-1}})^{\mathfrak{p}})$, are not $\mathfrak{p}$th powers, and are coprime to $\pi_t$. Let us denote by $\mathcal{S}$ the set of these indices, and by $\beta $ its cardinality. To count these indices, one may use the fact that given two positive integers $A$ and $B$, the number $\mathfrak{n}(A,B)$ of positive integers not greater than $A$ and which are coprime to $B$ is $\mathfrak{n}(A,B)=\sum_{d\mid B}\mu (d)\lfloor \frac{A}{d}\rfloor$, with $\mu$ the M\"obius function. Let $\mathcal{S}'$ be the set of integers in the interval $(\max\{t,n\},(\frac{n}{p^{k-1}})^{\mathfrak{p}})$ that are coprime to $\pi_t$, and $\gamma $ its cardinality. Then we have
\begin{equation}\label{GamaPrim}
\gamma =\mathfrak{n}((n/p^{k-1})^{\mathfrak{p}},\pi_t)-\mathfrak{n}(\max\{t,n\},\pi_t),
\end{equation}
as $t\mid (\frac{n}{p^{k-1}})^{\mathfrak{p}}$. We will prove now that $\mathfrak{p}\leq t$, hence $\mathfrak{p}\mid \pi_t$, implying that all the integers in $\mathcal{S}'$ are in fact coprime to $\mathfrak{p}$, so in particular none of them can be a $\mathfrak{p}$th power. To do this, we first notice that $t=p^{\mathfrak{p}-k}\ell^{\mathfrak{p}-1}$, and $t<n$ if and only if $p^{\mathfrak{p}-2k}<\ell^{2-\mathfrak{p}}$, which, since $\mathfrak{p}\geq 2$, is equivalent to $\mathfrak{p}<2k$. We distinguish the following three cases:

{\it Case 1:}\ \ $\mathfrak{p}=k$; \ in this case we have $\mathfrak{p}=k\leq 2^{k-1}\leq \ell^{\mathfrak{p}-1}=t$;

{\it Case 2:}\ \ $k+1\leq \mathfrak{p}<2k$; \ here we have $\mathfrak{p}<2k\leq 2\cdot 2^{k}\leq 2\cdot 2^{\mathfrak{p}-1}\leq 2\cdot \ell^{\mathfrak{p}-1}\leq p^{\mathfrak{p}-k}\ell^{\mathfrak{p}-1}=t$;

{\it Case 3:}\ \ $\mathfrak{p}>2k$; \ in this case we have $\mathfrak{p}<4\cdot 2^{\mathfrak{p}-1}\leq p^{k}\cdot 2^{\mathfrak{p}-1}\leq p^{k}\cdot \ell^{\mathfrak{p}-1}\leq p^{\mathfrak{p}-k}\ell^{\mathfrak{p}-1}=t$.

\noindent This shows us that $\mathcal{S}=\mathcal{S}'$, and hence $\beta =\gamma$.
We thus conclude that
\begin{equation}\label{InegalitateRankA}
{\rm rank}(\mathcal{A}_{\mathfrak{p},n,p,k})\leq \min\left\{\frac{n^{\mathfrak{p}-1}}{p^{(k-1)\mathfrak{p}}}+\frac{n}{p^{k-1}},\frac{n^{\mathfrak{p}}}{p^{(k-1)\mathfrak{p}}}-\gamma\right\},
\end{equation}
with $\gamma$ given by (\ref{GamaPrim}).

Note that if
\begin{equation}\label{rankGenerala}
{\rm rank}(\mathcal{A}_{\mathfrak{p},n,p,k})=\frac{n^{\mathfrak{p}-1}}{p^{(k-1)\mathfrak{p}}}+\frac{n}{p^{k-1}},
\end{equation}
then the system (\ref{sistemGenerala}) will have no nontrivial solutions.
Let us also note that the rightmost $\frac{n}{p^{k-1}}$ columns of $\mathcal{A}_{\mathfrak{p},n,p,k}$ contain a square diagonal submatrix of order $\frac{n}{p^{k-1}}$ whose diagonal entries are all equal to $-1$, corresponding to the rows of $\mathcal{A}_{\mathfrak{p},n,p,k}$ with indices $1,2^{\mathfrak{p}},\dots ,(\frac{n}{p^{k-1}})^{\mathfrak{p}}$. 

Let now $\mathcal{B}_{\mathfrak{p},n,p,k}$ be the matrix obtained from $\mathcal{A}_{\mathfrak{p},n,p,k}$ by removing its rightmost $\frac{n}{p^{k-1}}$ columns and its rows with indices $1,2^{\mathfrak{p}},\dots ,(\frac{n}{p^{k-1}})^{\mathfrak{p}}$. The matrix $\mathcal{B}_{\mathfrak{p},n,p,k}$ has $(\frac{n}{p^{k-1}})^{\mathfrak{p}}-\frac{n}{p^{k-1}}$ rows and $\frac{n^{\mathfrak{p}-1}}{p^{(k-1)\mathfrak{p}}}$ columns, and its entries $x_{i,j}$ can be described as follows. The set $S=\{1,2,\dots ,(\frac{n}{p^{k-1}})^{\mathfrak{p}}-\frac{n}{p^{k-1}}\}$ of indices of the rows of $\mathcal{B}_{\mathfrak{p},n,p,k}$ can be partitioned into $\frac{n}{p^{k-1}}-1$ disjoint subsets of consecutive integers
\[
S=\bigcup\limits _{\delta=1}^{\frac{n}{p^{k-1}}-1}S_{\delta}
\]
with $S_{\delta}=\{\delta^{\mathfrak{p}}-\delta+1,\delta^{\mathfrak{p}}-\delta+2,\dots ,(\delta+1)^{\mathfrak{p}}-(\delta+1) \}$ for $\delta=1,\dots ,\frac{n}{p^{k-1}}-1$. Thus, to any integer $i\in S$ we can associate a unique index $\delta_i\in\{1,\dots ,\frac{n}{p^{k-1}}-1\}$ such that $i\in S_{\delta_i}$, and an entry $x_{i,j}$ of the matrix $\mathcal{B}_{\mathfrak{p},n,p,k}$ is now easily seen to be
\[
x_{i,j}=\begin{cases}
             a_{\frac{i+\delta_i}{j}}, & \text{if $j\mid (i+\delta_i)$},\\
             0, & \text{otherwise},
           \end{cases}
\]
with the shift of $i$ with the corresponding $\delta_i$ appearing as a consequence of the removal of the rows of $\mathcal{A}_{\mathfrak{p},n,p,k}$ with indices $1,2^{\mathfrak{p}},\dots ,(\frac{n}{p^{k-1}})^{\mathfrak{p}}$. Here we use again the convention that $a_{\mathfrak{i}}=0$ for $\mathfrak{i}>n$.
Next, we observe that if instead of (\ref{rankGenerala}) we ask $\mathcal{B}_{\mathfrak{p},n,p,k}$ to have full rank, that is to satisfy
\begin{equation}\label{thesamerankGenerala}
{\rm rank}(\mathcal{B}_{\mathfrak{p},n,p,k})=\frac{n^{\mathfrak{p}-1}}{p^{(k-1)\mathfrak{p}}},
\end{equation}
then the system (\ref{sistemGenerala}) will have no nontrivial solutions.
We thus conclude that if (\ref{thesamerankGenerala}) holds for each prime factor $p$ of $n$ that has multiplicity at least $k$, then $f$ must be $k$-power-free. 

Note that if $f$ is as in Corollary \ref{plamsquarefree}, we need to test (\ref{thesamerankGenerala}) only for the prime $p$ in the statement, so we only need to check the equality ${\rm rank}(\mathcal{B}_{\mathfrak{p},n,p,2})=p^{(d-1)\mathfrak{p}-d}$. 
\end{remark}

By Theorem \ref{THBoundMultiplicities} and Remark \ref{liniarizareGenerala} one obtains the following $k$-power-free criterion.
\begin{theorem}\label{MatrixTHBoundMultiplicities}
Let $k\geq 2$ be an integer, $R$ a unique factorization domain with prime characteristic $\mathfrak{p}\geq k$ and $f\in DP(R)[s]$ a primitive Dirichlet polynomial of degree $n$. 
If for every prime factor $p$ of $n$ with $p^k\mid n$ we have
${\rm rank}(\mathcal{B}_{\mathfrak{p},n,p,k})=\frac{n^{\mathfrak{p}-1}}{p^{(k-1)\mathfrak{p}}}$, then $f$ is $k$-power-free.
\end{theorem}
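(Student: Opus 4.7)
The plan is to reduce to Theorem \ref{THBoundMultiplicities} i) by choosing $m=\mathfrak{p}$, which is legitimate since $\mathfrak{p}\ge k$. So I would fix a prime factor $p$ of $n$ with $p^k\mid n$ and an arbitrary $g\in DP(R)[s]$ of degree $\frac{n}{p^{k-1}}$, and show, under the hypothesis of the theorem, that $f\nmid g^{\mathfrak{p}}$.

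Assume for contradiction that $f\cdot h=g^{\mathfrak{p}}$ for some $h\in DP(R)[s]$. Comparing degrees one sees that $h$ has degree exactly $t=\frac{n^{\mathfrak{p}-1}}{p^{(k-1)\mathfrak{p}}}$. Writing $f(s)=\sum_{i=1}^{n}a_i/i^s$, $g(s)=\sum_{j=1}^{n/p^{k-1}}c_j/j^s$, and $h(s)=\sum_{\ell=1}^{t}b_{\ell}/\ell^s$, and crucially exploiting that in characteristic $\mathfrak{p}$ one has the Frobenius identity
\[
g(s)^{\mathfrak{p}}=\sum_{j=1}^{n/p^{k-1}}\frac{c_j^{\mathfrak{p}}}{j^{\mathfrak{p}s}},
\]
the equality $fh=g^{\mathfrak{p}}$ becomes, coefficient by coefficient, exactly the homogeneous linear system $\mathcal{A}_{\mathfrak{p},n,p,k}\cdot X=\mathbf{0}$ from Remark \ref{liniarizareGenerala}, with $X=(b_1,\ldots,b_t,c_1^{\mathfrak{p}},\ldots,c_{n/p^{k-1}}^{\mathfrak{p}})^T$ and entries $\mathfrak{a}_{i,j}$ given by (\ref{formulapentruaij}).

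Now I would split the rows of $\mathcal{A}_{\mathfrak{p},n,p,k}$ into those of index $j^{\mathfrak{p}}$ with $1\le j\le n/p^{k-1}$ (which carry the only nonzero entries in the rightmost $\frac{n}{p^{k-1}}$ columns and simply read $c_j^{\mathfrak{p}}=\sum_{\ell\mid j^{\mathfrak{p}},\,\ell\le t}a_{j^{\mathfrak{p}}/\ell}\, b_{\ell}$) and the remaining rows, which involve only the unknowns $b_1,\ldots,b_t$ and whose coefficient matrix is exactly $\mathcal{B}_{\mathfrak{p},n,p,k}$. The hypothesis ${\rm rank}(\mathcal{B}_{\mathfrak{p},n,p,k})=\frac{n^{\mathfrak{p}-1}}{p^{(k-1)\mathfrak{p}}}=t$ thus forces $b_1=\cdots=b_t=0$, i.e. $h=0$. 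But then $g^{\mathfrak{p}}=fh=0$ in the integral domain $DP(R)[s]$, so $g=0$, contradicting $\deg g=\frac{n}{p^{k-1}}$. Consequently $f\nmid g^{\mathfrak{p}}$, and Theorem \ref{THBoundMultiplicities} i) delivers the $k$-power-freeness of $f$.

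The only real subtlety—and the point where positive characteristic is used essentially—is that the Frobenius map linearizes $g\mapsto g^{\mathfrak{p}}$ in the coefficients of $g$, so the \emph{a priori} polynomial equations $fh=g^{\mathfrak{p}}$ collapse to a linear system; all else is bookkeeping against the indexing of $\mathcal{A}$ and $\mathcal{B}$ already done in Remark \ref{liniarizareGenerala}. Note also that one never needs to pass back from $c_j^{\mathfrak{p}}$ to $c_j$: the conclusion $h=0$ already kills $g^{\mathfrak{p}}$ inside the integral domain $DP(R)[s]$, which is where the UFD assumption on $R$ is used (to ensure $DP(R)[s]$ is itself a domain).
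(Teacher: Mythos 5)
Your proof is correct and takes essentially the same route as the paper, which simply cites Remark \ref{liniarizareGenerala} together with Theorem \ref{THBoundMultiplicities}. You have filled in exactly the bookkeeping that the Remark carries out: the choice $m=\mathfrak{p}$, the Frobenius linearization of $g\mapsto g^{\mathfrak{p}}$ that turns $fh=g^{\mathfrak{p}}$ into the system $\mathcal{A}_{\mathfrak{p},n,p,k}X=\mathbf{0}$, the observation that the rows with non-$\mathfrak{p}$th-power index involve only $b_1,\dots,b_t$ and form $\mathcal{B}_{\mathfrak{p},n,p,k}$, and the passage from $h=0$ to $g=0$ via the integrality of $DP(R)[s]$.
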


We may prove now the following square-free criterion, that relies on Theorem \ref{squarefreecrit} and on the study of the rank of the matrix $\mathcal{B}_{\mathfrak{p},n,p,2}$.
\begin{theorem}\label{squarefreeinpositivecharacteristic} 
Let $R$ be a unique factorization domain with prime characteristic $\mathfrak{p}$, and $f\in DP(R)[s]$ a primitive Dirichlet polynomial of degree $n\geq 2$.

i) If ${\rm rank}(\mathcal{B}_{\mathfrak{p},n,p,2})=\frac{n^{\mathfrak{p}-1}}{p^{\mathfrak{p}}}$ for every prime factor $p$ of $n$ with $p^2\mid n$, then $f$ is square-free. 

ii) If the Frobenius map of $R$ is surjective, then $f$ is square-free if and only if the equality ${\rm rank}(\mathcal{B}_{\mathfrak{p},n,p,2})=\frac{n^{\mathfrak{p}-1}}{p^{\mathfrak{p}}}$ holds for every prime factor $p$ of $n$ with $p^2\mid n$.
\end{theorem}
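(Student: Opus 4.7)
The plan is to combine Theorem \ref{MatrixTHBoundMultiplicities}, Proposition \ref{elementarysquarefree}, Theorem \ref{squarefreecrit}, and the linear system analysis developed in Remark \ref{liniarizareGenerala}, the whole argument hinging on the fact that in characteristic $\mathfrak{p}$ the map $g\mapsto g^{\mathfrak{p}}$ is additive, so that the coefficients $c_{i}^{\mathfrak{p}}$ of $g^{\mathfrak{p}}$ enter linearly into the identity $fh=g^{\mathfrak{p}}$.

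For part i) I would split on whether $n$ is square-free. If $n$ is square-free the rank condition is vacuous and Proposition \ref{elementarysquarefree} already yields that $f$ is square-free. Otherwise $n$ has a prime factor of multiplicity at least $2$, and Theorem \ref{MatrixTHBoundMultiplicities} applies with $k=2$ (the requirement $\mathfrak{p}\geq k$ being automatic since $\mathfrak{p}$ is prime), giving that $f$ is $2$-power-free, i.e.\ square-free.

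For part ii) the ``if'' direction is exactly part i). For the converse I would assume $f$ square-free and prove the rank equality by contradiction. If $n$ is square-free there is nothing to prove, so I fix a prime $p$ with $p^{2}\mid n$ and suppose $\mathrm{rank}(\mathcal{B}_{\mathfrak{p},n,p,2})<\frac{n^{\mathfrak{p}-1}}{p^{\mathfrak{p}}}$. By the elimination argument of Remark \ref{liniarizareGenerala}, a nontrivial null vector of $\mathcal{B}_{\mathfrak{p},n,p,2}$ over the fraction field $K$ of $R$, cleared of denominators, produces a nontrivial tuple $(b_{1},\dots,b_{t},y_{1},\dots,y_{n/p})\in R^{t+n/p}$ solving $\mathcal{A}_{\mathfrak{p},n,p,2}\cdot X=\mathbf{0}$. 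Using surjectivity of the Frobenius I then write each $y_{i}=c_{i}^{\mathfrak{p}}$ with $c_{i}\in R$, and define $h(s)=\sum_{i}b_{i}/i^{s}$ and $g(s)=\sum_{i}c_{i}/i^{s}$. In characteristic $\mathfrak{p}$ one has $g(s)^{\mathfrak{p}}=\sum_{i}c_{i}^{\mathfrak{p}}/i^{\mathfrak{p}s}=\sum_{i}y_{i}/i^{\mathfrak{p}s}$, and the linear equations carried by the system are precisely the coefficient-wise identification of $f\cdot h$ with $g^{\mathfrak{p}}$, so $f\mid g^{\mathfrak{p}}$ in $DP(R)[s]$.

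It remains to check that $g\neq 0$: if every $c_{i}$ were zero then every $y_{i}$ would be zero, the nontrivial solution would reduce to some $b_{i}\neq 0$ with $f\cdot h=0$, which is impossible in the integral domain $DP(R)[s]$ since $f\neq 0$. Hence $g$ is a nonzero Dirichlet polynomial of degree at most $n/p$ satisfying $f\mid g^{\mathfrak{p}}$, and Theorem \ref{squarefreecrit} applied with $m=\mathfrak{p}\geq 2$ forces $f$ to be non-square-free, a contradiction. The main obstacle is exactly this lifting step: Frobenius surjectivity is indispensable, because without it an abstract $K$-linear kernel element could produce values $y_{i}\in R$ that are not $\mathfrak{p}$-th powers in $R$, so no genuine Dirichlet polynomial $g\in DP(R)[s]$ could be reconstructed from the linear data and the passage from ``rank not full'' to ``$f$ divides some $g^{\mathfrak{p}}$'' would break down.
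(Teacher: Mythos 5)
Your proof is correct and takes essentially the same route as the paper's: part i) is exactly the $k=2$ case of Theorem \ref{MatrixTHBoundMultiplicities} (the paper instead cites the equivalence i) $\Leftrightarrow$ iii) in Theorem \ref{squarefreecrit}, but these are the same chain through Remark \ref{liniarizareGenerala}), and for the converse direction of ii) both arguments extend a nontrivial kernel vector of $\mathcal{B}_{\mathfrak{p},n,p,2}$ to a solution of the full system $\mathcal{A}_{\mathfrak{p},n,p,2}X=\mathbf{0}$ over $Q(R)$, clear denominators, use Frobenius surjectivity to recover $c_j\in R$ with $c_j^{\mathfrak{p}}=y_j$, and invoke Theorem \ref{squarefreecrit} via the resulting relation $fh=g^{\mathfrak{p}}$. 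Your explicit check that $g\neq 0$ (and hence $h\neq 0$) using the integral domain property matches the paper's observation that a nontrivial solution must have both a nonzero $b_i$ and a nonzero $c_j$.
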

\begin{proof}
i) We may obviously assume that $n$ is not square-free. If for every prime factor $p$ of $n$ with $p^2\mid n$ we have ${\rm rank}(\mathcal{B}_{\mathfrak{p},n,p,2})=\frac{n^{\mathfrak{p}-1}}{p^{\mathfrak{p}}}$, then all the corresponding systems of equations (\ref{sistemGenerala}) will have only trivial solutions, so no equality of the form (\ref{fh=gmGenerala}) with $g$ of degree $\frac{n}{p}$ will hold. By the equivalence i) $\Leftrightarrow$ iii) in Theorem \ref{squarefreecrit} we conclude that $f$ must be square-free.

ii) If we assume that the Frobenius map of $R$ is surjective, then we may replace $c_1^{\mathfrak{p}},\dots ,c_{n/p}^{\mathfrak{p}}$ by some arbitrary unknowns $d_1,\dots ,d_{n/p}$ and regard the system (\ref{sistemGenerala}) as a linear one. Therefore, if ${\rm rank}(\mathcal{B}_{\mathfrak{p},n,p,2})<\frac{n^{\mathfrak{p}-1}}{p^{\mathfrak{p}}}$, the system (\ref{sistemGenerala}) will have a nontrivial solution in $Q(R)$, the quotient field of $R$.  Multiplying by a common denominator if necessary, which is itself the $\mathfrak{p}$th power of an element in $R$, will provide a solution in $R$ having at least one nonzero $b_i$ or one nonzero $d_j$. We notice that in fact, such a nonzero solution must have at least one nonzero $b_i$ and at least one nonzero $d_j$ (and hence at least one nonzero $c_j$, as $c_j^{\mathfrak{p}}=d_j$). For such a nonzero solution we let $i_{max}=\max\{i:b_i\neq 0\}$ and $j_{max}=\max\{j:c_j\neq 0\}$ with  $i_{max}\leq t=\frac{n^{\mathfrak{p}-1}}{p^{\mathfrak{p}}}$, $j_{max}\leq \frac{n}{p}$, and $n\cdot i_{max}=j_{max}^{\mathfrak{p}}$, so equality (\ref{fh=gmGenerala}) holds for some $h(s)$ and $g(s)$ with $\deg h=i_{max}$ and $\deg g=j_{max}$. In particular, we found a Dirichlet polynomial $g\in DP(R)[s]$ of degree at most $\frac{n}{p}$ such that $g^{\mathfrak{p}}$ is divisible by $f$. By the equivalence i) $\Leftrightarrow$ ii) in Theorem \ref{squarefreecrit}, we conclude that $f$ cannot be square-free.
\end{proof}

\begin{corollary}\label{Corolarsquarefreeinpositivecharacteristic} 
Let $R$ be a unique factorization domain with prime characteristic $\mathfrak{p}$, and $f\in DP(R)[s]$ a primitive Dirichlet polynomial of degree $p^d$, with $p$ a prime number and $d\geq 2$. 

i) \ If ${\rm rank}(\mathcal{B}_{\mathfrak{p},n,p,2})=p^{(d-1)\mathfrak{p}-d}$, then $f$ is square-free. 

ii) If the Frobenius map of $R$ is surjective, then $f$ is square-free if and only if we have ${\rm rank}(\mathcal{B}_{\mathfrak{p},n,p,2})=p^{(d-1)\mathfrak{p}-d}$.
\end{corollary}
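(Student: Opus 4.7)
The plan is to obtain this corollary as an immediate specialization of Theorem \ref{squarefreeinpositivecharacteristic}. Since $n = p^d$ with $p$ prime and $d \geq 2$, the only prime factor of $n$ having multiplicity at least $2$ in its prime factorization is $p$ itself. Consequently, the universal quantifier ``for every prime factor $p$ of $n$ with $p^2 \mid n$'' appearing in both parts of Theorem \ref{squarefreeinpositivecharacteristic} collapses to a single condition imposed on the rank of the matrix $\mathcal{B}_{\mathfrak{p},n,p,2}$ associated to the prime $p$.

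Next, I would verify that the numerical value of the rank threshold specializes correctly. Substituting $n = p^d$ into $\frac{n^{\mathfrak{p}-1}}{p^{\mathfrak{p}}}$ gives
\[
\frac{(p^d)^{\mathfrak{p}-1}}{p^{\mathfrak{p}}} = \frac{p^{d(\mathfrak{p}-1)}}{p^{\mathfrak{p}}} = p^{d\mathfrak{p}-d-\mathfrak{p}} = p^{(d-1)\mathfrak{p}-d},
\]
which matches exactly the exponent appearing in the statement of the corollary. Therefore the full-rank hypothesis ${\rm rank}(\mathcal{B}_{\mathfrak{p},n,p,2}) = p^{(d-1)\mathfrak{p}-d}$ is nothing but the specialization of the hypothesis in Theorem \ref{squarefreeinpositivecharacteristic}.

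With these two observations in hand, part i) follows directly from part i) of Theorem \ref{squarefreeinpositivecharacteristic}: the full-rank condition on $\mathcal{B}_{\mathfrak{p},n,p,2}$ forces the corresponding homogeneous system \eqref{sistemGenerala} to admit only the trivial solution, which by the reasoning of Remark \ref{liniarizareGenerala} rules out any equality of the form $f(s)h(s) = g(s)^{\mathfrak{p}}$ with $\deg g = n/p = p^{d-1}$; by the equivalence i) $\Leftrightarrow$ iii) of Theorem \ref{squarefreecrit}, this yields that $f$ is square-free. Likewise, part ii) follows from part ii) of Theorem \ref{squarefreeinpositivecharacteristic}, since under surjectivity of the Frobenius map we may treat the $c_j^{\mathfrak{p}}$ as free variables, so the rank equality becomes both necessary and sufficient.

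Since both parts reduce to quoting Theorem \ref{squarefreeinpositivecharacteristic} in the single-prime case, there is no real obstacle: the only step requiring any care is the arithmetic check that $\frac{n^{\mathfrak{p}-1}}{p^{\mathfrak{p}}}$ simplifies to $p^{(d-1)\mathfrak{p}-d}$ under $n=p^d$, which was carried out above. The proof itself can therefore be written in essentially two lines.
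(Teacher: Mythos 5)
Your proof is correct and follows the intended route: the corollary is a direct specialization of Theorem \ref{squarefreeinpositivecharacteristic} to the case $n=p^d$, where the only prime with multiplicity $\geq 2$ is $p$ itself, and your arithmetic check that $\frac{n^{\mathfrak{p}-1}}{p^{\mathfrak{p}}}=p^{(d-1)\mathfrak{p}-d}$ matches the computation already noted at the end of Remark \ref{liniarizareGenerala}. The paper states the corollary without a separate proof precisely because this two-line specialization suffices.
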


As in the case of polynomials, we can search the repeated irreducible factors of a Dirichlet polynomial with complex coefficients by studying its derivative. We will first study the common factors of two Dirichlet polynomials $f$ and $g$ by adapting some well-known techniques used for polynomials. In this respect, we have the following result.

\begin{proposition}\label{commonfactors}
Let $f(s)=\frac{a_1}{1^s}+\cdots +\frac{a_m}{m^s}$ and $g(s)=\frac{b_1}{1^s}+\cdots +\frac{b_n}{n^s}$ be two Dirichlet polynomials with coefficients in a unique factorization domain $R$, with $a_mb_n\neq 0$.  

i) If $f$ and $g$ have a common factor of degree $d$, then there exist two Dirichlet polynomials $u,v\in DP(R)[s]$ with $\deg u=\frac{n}{d}$, $\deg v=\frac{m}{d}$, and such that $f(s)u(s)+g(s)v(s)=0$. 

ii) Conversely, if $f(s)u(s)+g(s)v(s)=0$ for two Dirichlet polynomials $u,v\in DP(R)[s]$ with $\deg u=\frac{n}{d}$ and $\deg v=\frac{m}{d}$ for some divisor $d$ of $\gcd (m,n)$, then $\gcd(f,g)$ has degree a multiple of $d$, with $\deg \gcd(f,g)=d$ if $u$ and $v$ are relatively prime.
\end{proposition}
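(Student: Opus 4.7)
The plan is to exploit the fact that $DP(R)[s]$ is a unique factorization domain, so that $f$ and $g$ admit a greatest common divisor $h$, and that the degree map is multiplicative: $\deg(f_1 f_2) = \deg f_1 \cdot \deg f_2$ for nonzero Dirichlet polynomials, while the units of $DP(R)[s]$ are precisely the units of $R$ (viewed as terms $r/1^s$), so a positive divisor of $\deg f$ assigned to a factor is unambiguous up to associates. Part i) should then drop out by a direct construction, while part ii) will be a Euclid-style divisibility argument.

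For part i), assume $h = \gcd(f,g)$ has degree $d$. Writing $f = h \cdot v_0$ and $g = h \cdot u_0$ with $u_0, v_0 \in DP(R)[s]$, multiplicativity of the degree forces $\deg v_0 = m/d$ and $\deg u_0 = n/d$. Setting $u := u_0$ and $v := -v_0$, I get $fu + gv = h v_0 u_0 - h u_0 v_0 = 0$, which is exactly what is required.

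For part ii), suppose $fu + gv = 0$ with $\deg u = n/d$ and $\deg v = m/d$. Let $h := \gcd(f,g)$, say of degree $e$ (a divisor of $\gcd(m,n)$), and write $f = h \tilde v$, $g = h \tilde u$ with $\gcd(\tilde u, \tilde v) = 1$, so that $\deg \tilde u = n/e$ and $\deg \tilde v = m/e$. Substituting into $fu + gv = 0$ and cancelling $h$ yields $\tilde v u = -\tilde u v$. Since $\tilde u$ and $\tilde v$ are coprime in the UFD $DP(R)[s]$, $\tilde u$ must divide $u$; write $u = \tilde u w$, and then $v = -\tilde v w$. Comparing degrees, $n/d = \deg u = (n/e)\cdot \deg w$, giving $\deg w = e/d$. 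In particular $\deg w$ is an integer $\geq 1$, so $d \mid e$, proving that $\deg \gcd(f,g)$ is a multiple of $d$. If moreover $\gcd(u,v) = 1$, then $w$, being a common divisor of $u$ and $v$, must be a unit, hence $\deg w = 1$ and $e = d$.

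The argument is essentially a transcription of the classical resultant identity for univariate polynomials, and I do not expect any real obstacle: the only point that demands a moment's care is the multiplicativity $\deg(f_1 f_2) = \deg f_1 \cdot \deg f_2$ and the identification of the units of $DP(R)[s]$, both of which follow from the isomorphism with a polynomial ring in countably many indeterminates recalled in Section~\ref{DefNot}.
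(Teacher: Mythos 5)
Your proof is correct and follows essentially the same route as the paper's: part i) by taking $u=g/h$, $v=-f/h$, and part ii) by cancelling the gcd $h$, invoking coprimality of the cofactors to get a divisibility, and then comparing degrees multiplicatively. Your explicit introduction of the cofactor $w$ with $\deg w=e/d$ is just a slightly more detailed spelling-out of what the paper records as the divisibilities $\frac{m}{k}\mid\frac{m}{d}$ and $\frac{n}{k}\mid\frac{n}{d}$.
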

\begin{proof}
i) If $f$ and $g$ share a factor $h(s)$ of degree $d$, with $d\mid \gcd(m,n)$, then one may take $u(s)=\frac{g(s)}{h(s)}$ and $v(s)=-\frac{f(s)}{h(s)}$, and $u$ and $v$ have the desired properties.

ii) Assume now that $f(s)u(s)+g(s)v(s)=0$ for two Dirichlet polynomials $u,v\in DP(R)[s]$ with $\deg u=\frac{n}{d}$ and $\deg v=\frac{m}{d}$. Let $h(s)=\gcd(f(s),g(s))$, and let $\deg h=k$, say, with $1\leq k\leq \gcd(m,n)$. Writing $f=hf_1$ and $g=hg_1$ with $f_1$ and $g_1$ relatively prime yields $f_1(s)u(s)=-g_1(s)v(s)$, which further implies the divisibilities $f_1(s)\mid v(s)$ and $g_1(s)\mid u(s)$. In particular, we must have $\frac{m}{k}\mid \frac{m}{d}$ and $\frac{n}{k}\mid \frac{n}{d}$, so $k$ must be a multiple of $d$. 

Moreover, if $u$ and $v$ are relatively prime, the equality $f_1(s)u(s)=-g_1(s)v(s)$ forces $f_1$ and $v$ to be associated in divisibility, and also $f_2$ and $u$ to be associated in divisibility, so we must have the equalities $\deg f_1=\deg v$ and $\deg g_1=\deg u$. Thus $k$ must be equal to $d$, and in this case $f$ and $g$ have $h$ as a common factor of degree $d$. 
\end{proof}
Let now $u(s)=\frac{u_1}{1^s}+\cdots +\frac{u_{n/d}}{(n/d)^s}$ and $v(s)=\frac{v_1}{1^s}+\cdots +\frac{v_{m/d}}{(m/d)^s}$ in $DP(R)[s]$ be such that 
\begin{equation}\label{fu=gv}
f(s)u(s)+g(s)v(s)=0,
\end{equation}
with $f,g$ as in Proposition \ref{commonfactors}, and $d$ a divisor of $\gcd(m,n)$. By equating the coefficients, we may regard equality (\ref{fu=gv}) as a homogeneous system of $\frac{mn}{d}$ linear equations in the $\frac{n}{d}+\frac{m}{d}$ unknowns $u_1,\dots ,u_{n/d},v_1,\dots ,v_{m/d}$, written in matrix form as
\begin{equation}\label{matrixformresultant}
\mathcal{R}_d\cdot X=\mathbf{0},
\end{equation}
with $\mathcal{R}_d$ the coefficient matrix having $\frac{mn}{d}$ rows and $\frac{m+n}{d}$ columns, $X=(u_1,\dots ,u_{\frac{n}{d}},v_1,\dots ,v_{\frac{m}{d}})^T$, and $\mathbf{0}$ a zero column matrix. We notice that except for the case that $m=n=2$, the matrix $\mathcal{R}_d$ has more rows than columns. One may easily check that the entries $r_{i,j}$ of $\mathcal{R}_d$ are given by 
\begin{equation}\label{formulapentrurij}
r_{i,j}=\begin{cases}
             a_{\frac{i}{j}}, & \text{if $j\mid i$ and $j\in\{1,\dots , \frac{n}{d}\}$},\\
             0, & \text{if $j\nmid i$ and $j\in\{1,\dots ,\frac{n}{d}\}$},\\
             b_{\frac{i}{j-\frac{n}{d}}}, & \text{if $j-\frac{n}{d}\mid i$ and $j\in\{\frac{n}{d}+1,\dots ,\frac{n}{d}+\frac{m}{d}\}$},\\
             0, & \text{if $j-\frac{n}{d}\nmid i$ and $j\in\{\frac{n}{d}+1,\dots , \frac{n}{d}+\frac{m}{d}\}$},
           \end{cases}
\end{equation}
where by convention, we put $a_{\mathfrak{i}}=0$ for $\mathfrak{i}>m$ and $b_{\mathfrak{i}}=0$ for $\mathfrak{i}>n$. Here too, as in the case of the matrices $\mathcal{A}_{\mathfrak{p},n,p,k}$, there are rows of $\mathcal{R}_d$ that have only zero entries, no matter what coefficients we choose for $f$ and $g$. These are the rows whose indices $i$ exceed $\max\{m,n\}$ (so that $a_i=b_i=0$, by convention) and are not a multiple of any of the integers in $\{ 2,3,\dots,\frac{\max\{m,n\}}{d}\}$, that is the rows with indices $i$ in the closed interval $[\max\{m,n\}+1,\frac{mn}{d}]$, with $i$ not divisible by any prime number less than or equal to $\frac{\max\{m,n\}}{d}$. An account of these indices may be done by the method described in Remark \ref{liniarizareGenerala}.

We may now state the following criterion for two Dirichlet polynomials to be relatively prime.
\begin{theorem}\label{nocommonfactors} Let $f$ and $g$ be two Dirichlet polynomials with coefficients in a unique factorization domain $R$, of degrees $m$ and $n$, respectively, with $\gcd(m,n)>1$, and let $d$ be a divisor of $\gcd (m,n)$. Then $f$ and $g$ have no common factors of degree $\geq d$ if and only if ${\rm rank} (\mathcal{R}_d)=\frac{m+n}{d}$. In particular, $f$ and $g$ are relatively prime if and only if ${\rm rank} (\mathcal{R}_1)=m+n$.
\end{theorem}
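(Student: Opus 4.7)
The plan is to derive the theorem directly from Proposition \ref{commonfactors}, exploiting the fact that $\mathcal{R}_d\cdot X = \mathbf{0}$ is just the matrix form of the equation $f(s)u(s)+g(s)v(s)=0$ with $\deg u\le n/d$ and $\deg v\le m/d$.

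For the easy direction, I would assume that $f$ and $g$ share a common factor of some degree $e\ge d$. Since this factor divides both $f$ and $g$, we have $e\mid \gcd(m,n)$, and $e\ge d$ forces $n/e\le n/d$ and $m/e\le m/d$. By Proposition \ref{commonfactors}(i) applied with the divisor $e$, there exist $u,v\in DP(R)[s]$ with $\deg u=n/e$, $\deg v=m/e$, and $fu+gv=0$. Padding the coefficient sequences of $u$ and $v$ with zeros up to lengths $n/d$ and $m/d$, respectively, yields a nontrivial solution of $\mathcal{R}_d\cdot X=\mathbf{0}$, hence $\operatorname{rank}(\mathcal{R}_d)<(m+n)/d$.

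For the converse, I would argue contrapositively. Assume $\operatorname{rank}(\mathcal{R}_d)<(m+n)/d$. Then the system admits a nontrivial solution in $Q(R)^{(m+n)/d}$, and after clearing denominators, in $R^{(m+n)/d}$. This produces Dirichlet polynomials $u,v\in DP(R)[s]$ with supports contained in $\{1,\dots,n/d\}$ and $\{1,\dots,m/d\}$ respectively, not both zero, satisfying $fu+gv=0$. Since $DP(R)[s]$ is an integral domain (being isomorphic to a polynomial ring over $R$ in countably many indeterminates), if one of $u,v$ vanished then so would the other; so both are nonzero. Comparing degrees in $fu=-gv$ gives $m\deg u=n\deg v$. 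Setting $e=\gcd(m,n)$ and writing $n/m=(n/e)/(m/e)$ in lowest terms forces $\deg u=cn/e$ and $\deg v=cm/e$ for some positive integer $c$, and the constraints $\deg u\le n/d$, $\deg v\le m/d$ both reduce to $cd\le e$.

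The key remaining step is to transfer this information back to $\gcd(f,g)$. Let $h=\gcd(f,g)$, write $f=hf_1$, $g=hg_1$ with $\gcd(f_1,g_1)=1$, and set $k=\deg h$, so $k\mid e$. From $f_1u=-g_1v$ the coprimality of $f_1$ and $g_1$ yields $g_1\mid u$, say $u=g_1w$, and then $v=-f_1w$. Taking degrees gives $cn/e=(n/k)\deg w$, whence $\deg w=ck/e$; writing $e=k\ell$ we get $\deg w=c/\ell$, so $\ell\mid c$, and with $c=\ell c'$ we obtain $\deg u=c'n/k$, $\deg v=c'm/k$. The bounds $\deg u\le n/d$ and $\deg v\le m/d$ now read $c'd\le k$, so in particular $k\ge d$, meaning $f$ and $g$ share a common factor of degree at least $d$, as desired. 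The main obstacle I anticipate is precisely this degree-tracking argument, since the nontrivial solution produced by the rank condition need not a priori have the ``nice'' degree $n/d'$, $m/d'$ required to apply Proposition \ref{commonfactors}(ii) verbatim; the trick is to pass to the primitive factors $f_1,g_1$ to recover such a structure.
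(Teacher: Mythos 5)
Your proof is correct and follows essentially the same path as the paper's: both directions come from interpreting solutions of $\mathcal{R}_d\cdot X=\mathbf{0}$ as identities $fu+gv=0$, and the converse is handled by passing to $h=\gcd(f,g)$, writing $f=hf_1$, $g=hg_1$ with $f_1,g_1$ coprime, and using the divisibility $g_1\mid u$ (equivalently $f_1\mid v$) together with degree multiplicativity to force $\deg h\geq d$. The intermediate bookkeeping you do via $c$, $\ell$, $c'$ and the ``lowest-terms'' reduction of $n/m$ is not actually needed; once you have $f_1\mid v$ with $v$ nonzero, $\frac{m}{k}=\deg f_1\leq\deg v\leq\frac{m}{d}$ already gives $k\geq d$, which is the shortcut the paper takes.
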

\begin{proof}
If ${\rm rank} (\mathcal{R}_d)=\frac{m+n}{d}$ for some divisor $d$ of $\gcd (m,n)$, then the system (\ref{matrixformresultant}) has only the trivial solution, so any equality of the form (\ref{fu=gv}) with $1\leq \deg u\leq \frac{n}{d}$ and $1\leq \deg v\leq \frac{m}{d}$ must fail. In particular, will fail any equality of the form (\ref{fu=gv}) with $\deg u=\frac{n}{D}$ and $\deg v= \frac{m}{D}$ with $D$ a divisor of $n$ greater than or equal to $d$. By Proposition \ref{commonfactors} i) we deduce that $f$ and $g$ can have no common factors of degree $\geq d$. 

Assume now that ${\rm rank} (\mathcal{R}_d)<\frac{m+n}{d}$. Then the system (\ref{matrixformresultant}) will have a nonzero solution in the quotient field of $R$, which after multiplication by a common denominator will give a nonzero solution in $R$, that is a pair of nonzero $u$ and $v$ with $1\leq \deg u\leq \frac{n}{d}$ and $1\leq \deg v\leq \frac{m}{d}$ such that $f(s)u(s)=-g(s)v(s)$.
Let again $h(s)=\gcd(f(s),g(s))$, and assume that $\deg h=k$, say, with $1\leq k\leq \gcd(m,n)$. Writing again $f=hf_1$ and $g=hg_1$ with $f_1$ and $g_1$ relatively prime yields $f_1(s)u(s)=-g_1(s)v(s)$, so $f_1$ must divide $v$. In particular, we must have $\frac{m}{k}\mid \deg v$, and since $\deg v\leq\frac{m}{d}$, we conclude that $k\geq d$, so $f$ and $g$ have $h$ as a common factor of degree $\geq d$.

In particular, $f$ and $g$ are relatively prime if and only if ${\rm rank} (\mathcal{R}_1)=m+n$. This also shows that if $\mathcal{R}_1$ has full rank, then for each divisor $d$ of $\gcd (m,n)$, $\mathcal{R}_d$ must also have full rank.
\end{proof}

The matrix $\mathcal{R}_1$ may be regarded as an analogue of the Sylvester matrix associated to two univariate polynomials, and has the following shape, with $n$ columns with $a_i$'s, followed by $m$ columns with $b_j$'s, the $mn$ rows being filled out with zeros. Its entries are given by (\ref{formulapentrurij}) for $d=1$. Here we will only display $\mathcal{R}_1$ for the case that $m<n$. 

{\tiny
\[
\mathcal{R}_1=\left[
\begin{array}{ccccccccccccccccccccc}
a_1 &  &  &  &  & & & & & & & b_1 &  &  &  &  & & & & \\
a_2 & a_1  &  &  & & & & & & & & b_2 & b_1 &  &  &  & & & & \\
a_3 & 0 & a_1  &  & & & & & & & & b_3 & 0 & b_1  &  &  & & & \\
a_4 & a_2 & 0 & a_1 & & & & & & & & b_4 & b_2 & 0 & b_1 & & & & \\
a_5 & 0 & 0 & 0 & a_1 & & & & & & & b_5 & 0 & 0 & 0 & b_1 & & & \\
a_6 & a_3 & a_2 & 0 & 0 & a_1 & & & & & & b_6 & b_3 & b_2 & 0 & 0 & b_1 & \\
a_7 & 0 & 0 & 0 & 0 & 0 & a_1 & & & & & b_7 & 0 & 0 & 0 & 0 & 0 &  b_1 & \\
\vdots  & \vdots &  \vdots & \vdots & \vdots & \vdots & \vdots & \ddots & &  & & \vdots & \vdots  & \vdots  & \vdots & \vdots  & \vdots & \vdots  & \ddots & & \\
a_m  & \cdot & \cdot & \cdot & \cdot & \cdot & \cdot & \cdot & a_1 & & & b_m & \cdot &\cdot  &  \cdot & \cdot & \cdot &\cdot & \cdot & b_1 \\
   & \vdots &  \vdots  & \vdots & \vdots  & \vdots  & \vdots & \vdots  & \vdots  & \ddots & & \vdots & \vdots  & \vdots   & \vdots & \vdots  & \vdots & \vdots  & \vdots  & \vdots & \\
      &  & \cdot  & \cdot & \cdot& \cdot & \cdot & \cdot & \cdot  & \cdot & a_1 & \cdot & \cdot & \cdot & \cdot & \cdot & \cdot & \cdot & \cdot &  \cdot & \\
      &  & \vdots  & \vdots & \vdots & \vdots & \vdots & \vdots & \vdots & \vdots & \vdots & \vdots & \vdots  & \vdots & \vdots & \vdots & \vdots & \vdots & \vdots & \vdots & \\
 &  &  & \cdot & \cdot & \cdot & \cdot & \cdot & \cdot & \cdot & \cdot & b_n & \cdot & \cdot & \cdot & \cdot & \cdot & \cdot & \cdot & \cdot & \\
   & &  & a_m & \vdots & \vdots & \vdots & \vdots & \vdots &  \vdots & \vdots &  & \vdots & \vdots & \vdots & \vdots & \vdots & \vdots & \vdots & \vdots & \\
  &  &  &  &  & \vdots & \vdots & \vdots & \vdots & \vdots & \vdots &  & b_n & \vdots & \vdots & \vdots & \vdots & \vdots & \vdots & \vdots & \\
   &  &   &  &  & a_m & \vdots & \vdots & \vdots & \vdots & \vdots &  &  &  & \vdots & \vdots & \vdots & \vdots & \vdots & \vdots & \\
   &  &   &  &  &  &  &  &  &  & \vdots &  &  &  & b_n & & &  &  & \vdots & \\
   &  &   &  &  &  &  &  & &  & a_m & &  &  &  &  & &  &  & b_n & 
\end{array}
\hspace{-3.5mm}
\right] ,
\]
}

The lowest nonzero entry in each one of the first $n$ columns is $a_m$, and the lowest nonzero entry in each one of the following $m$ columns is $b_n$. We also note that in a column $c_j$ with $j\in \{1,\dots,n\}$ the number of zeros between $a_i$ and $a_{i+1}$ is equal to $j-1$, while in a column $c_j$ with $j\in \{n+1,\dots,n+m\}$ the number of zeros between $b_i$ and $b_{i+1}$ is equal to $j-n-1$.

In comparison to the case of univariate polynomials, where we have to compute the resultant of $f(X)$ and $g(X)$ to see if they are relatively prime, in the case of Dirichlet polynomials we have to compute the rank of $\mathcal{R}_1$, which seems to be a more involved task. 

For a Dirichlet polynomial $f$ of order $m$ with complex coefficients $a_i$, we may let $g$ to be $f^{(k)}$, one of the higher derivatives of $f$, and denote the corresponding matrices $\mathcal{R}_d$ (with $d\mid m$) by $\mathcal{D}_d^{(k)}$, with entries $d_{i,j}^{(k)}$ given by 
\begin{equation}\label{higherderivativescoefficients}
d_{i,j}^{(k)}=\begin{cases}
             a_{\frac{i}{j}}, & \text{if $j\mid i$ and $j\in\{1,\dots , \frac{m}{d}\}$},\\
             0, & \text{if $j\nmid i$ and $j\in\{1,\dots ,\frac{m}{d}\}$},\\
             (-1)^{k}a_{\frac{i}{j-\frac{m}{d}}}\log^{k}(\frac{i}{j-\frac{m}{d}}), & \text{if $j-\frac{m}{d}\mid i$ and $j\in\{\frac{m}{d}+1,\dots ,\frac{2m}{d}\}$},\\
             0, & \text{if $j-\frac{m}{d}\nmid i$ and $j\in\{\frac{m}{d}+1,\dots , \frac{2m}{d}\}$},
           \end{cases}
\end{equation}
with the convention that $a_{\mathfrak{i}}=0$ for $\mathfrak{i}>m$.
In comparison to the case of a univariate polynomial, where we need to compute its discriminant to see if it has repeated factors, for a Dirichlet polynomial we need to compute the rank of its associated matrix $\mathcal{D}_1^{(1)}$. In this respect, as an immediate consequence of Theorem \ref{nocommonfactors} we have the following square-free criterion.

\begin{corollary}\label{nosquaredfactors} Let $f(s)=\frac{a_1}{1^s}+\cdots +\frac{a_m}{m^s}$ be an algebraically primitive Dirichlet polynomial of degree $m$ with complex coefficients, with $m$ not square-free, and let $d$ be a divisor of $m$. Then $f$ and $f'$ have no common factors of degree $\geq d$ if and only if ${\rm rank} (\mathcal{D}_d^{(1)})=\frac{2m}{d}$. In particular, $f$ is square-free if and only if ${\rm rank} (\mathcal{D}_1^{(1)})=2m$.
\end{corollary}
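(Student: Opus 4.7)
My plan is to derive the corollary as a direct specialization of Theorem \ref{nocommonfactors} to the pair $(f,f')$, together with the classical equivalence ``square-free $\Leftrightarrow$ coprime with derivative'', which I will justify in the Dirichlet setting using the algebraic primitivity of $f$.

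First I would observe that the formal derivative is again a Dirichlet polynomial, namely
\[
f'(s)=-\sum_{i=1}^{m}\frac{a_{i}\log i}{i^{s}}=-\sum_{i=2}^{m}\frac{a_{i}\log i}{i^{s}},
\]
since the $i=1$ term vanishes. Its degree equals $m$, as the leading coefficient $-a_{m}\log m$ is nonzero ($m\geq 2$ because $m$ is not square-free, forcing $m\geq 4$, so $\gcd(\deg f,\deg f')=m>1$, and Theorem \ref{nocommonfactors} applies). Next, I would compare, entry by entry, the matrix $\mathcal{D}_{d}^{(1)}$ from (\ref{higherderivativescoefficients}) with $k=1$ against the matrix $\mathcal{R}_{d}$ built for the pair $(f,f')$ via (\ref{formulapentrurij}), using $b_{i}=-a_{i}\log i$. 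The $\frac{m}{d}$ leftmost columns coincide trivially (both use the coefficients $a_{i}$), and for the $\frac{m}{d}$ rightmost columns the formula $b_{i/(j-m/d)}=-a_{i/(j-m/d)}\log(i/(j-m/d))$ is exactly the entry prescribed by (\ref{higherderivativescoefficients}). Hence $\mathcal{D}_{d}^{(1)}=\mathcal{R}_{d}$.

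Having identified the two matrices, the first claim of the corollary is then immediate from Theorem \ref{nocommonfactors}: $f$ and $f'$ share no common factor of degree $\geq d$ if and only if $\mathcal{R}_{d}$ has full rank $\frac{m+n}{d}=\frac{2m}{d}$.

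For the ``in particular'' part ($d=1$), I must still argue that the condition ``$f$ and $f'$ are relatively prime'' is equivalent to ``$f$ is square-free''. One direction is routine: if $f=h^{2}g$ with $h$ nonconstant, then $f'=h(2h'g+hg')$, so $h$ is a nontrivial common factor. For the converse, writing $f=u f_{1}\cdots f_{k}$ with pairwise non-associated irreducibles $f_{i}$ and a unit $u$, and using $f'=u\sum_{i}(\prod_{j\neq i}f_{j})f_{i}'$, an irreducible common factor would have to be some $f_{\ell}$ dividing $f_{\ell}'$. Since $\deg f_{\ell}=\deg f_{\ell}'$ (in the Dirichlet sense), this would force $f_{\ell}'=cf_{\ell}$ for a constant $c$, equivalently $\log i=-c$ for every $i\in\mathrm{Supp}(f_{\ell})$, i.e., $f_{\ell}$ would be a monomial $a/i_{0}^{s}$; irreducibility would then make $i_{0}$ prime, and this prime would divide every index of $\mathrm{Supp}(f)$, contradicting the hypothesis that $f$ is algebraically primitive. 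Therefore $\gcd(f,f')$ is a unit precisely when $f$ is square-free, completing the proof.

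The main obstacle is the very last step: making the equivalence ``square-free $\Leftrightarrow$ coprime with $f'$'' airtight requires ruling out the possibility of an irreducible monomial factor, and this is exactly the role played by the algebraic primitivity assumption, which I would emphasize explicitly. The rest is a straightforward identification of matrices and an appeal to Theorem \ref{nocommonfactors}.
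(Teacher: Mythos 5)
Your proposal is correct and follows essentially the same route as the paper: identify $\mathcal{D}_d^{(1)}$ with $\mathcal{R}_d$ for the pair $(f,f')$, invoke Theorem \ref{nocommonfactors}, and then reduce the square-free criterion to showing an irreducible factor $g$ cannot divide $g'$, which fails precisely because $g'=cg$ would force $g$ to be a single-term Dirichlet polynomial with prime degree, contradicting algebraic primitivity. The only cosmetic difference is that you phrase the contrapositive through the full canonical factorization $f=u\,f_1\cdots f_k$, whereas the paper reaches the same conclusion more directly from a single factorization $f=gh$, $f'=g\ell$; both hinge on the identical key step.
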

\begin{proof}
It remains to prove that $f$ is square-free if and only if $f$ and $f'$ are relatively prime. Indeed, if $f=g^2h$ for some Dirichlet polynomials $g$ and $h$ with $g$ nonconstant, the $g$ will be also a factor of $f'$. Conversely, assume that $f$ and $f'$ share a nonconstant irreducible factor $g$, say $f=gh$ and $f'=g\ell$. This yields $g'h+gh'=g\ell$, so $g'h=g(\ell-h')$. Since $g$ is irreducible, to conclude that $g\mid h$, we must prove that $g$ cannot divide $g'$. Let $g(s)=\frac{b_1}{1^s}+\cdots +\frac{b_k}{k^s}$, say, with $k>1$ a divisor of $m$ and $b_k\neq 0$. If we assume to the contrary that $g'(s)=ag(s)$ for some complex number $a$, after equating the coefficients we deduce that $-b_i\log i=ab_i$ for each $i=1,\dots ,k$. These equalities can hold only if $a=-\log k$ and $b_1=\cdots =b_{k-1}=0$, so only if $g(s)=\frac{b_k}{k^s}$, and actually with $k$ prime, as $g$ was assumed to be irreducible. But this contradicts our assumption that $f$ is algebraically primitive, so $g$ must divide $h$, and hence $g^2\mid f$.
\end{proof}

More generally, the ranks of the matrices $\mathcal{D}_1^{(k)}$ provide information on the maximal multiplicity of the irreducible factors of a Dirichlet polynomial, as follows.

\begin{corollary}\label{maximalmultiplicity} Let $f(s)=\frac{a_1}{1^s}+\cdots +\frac{a_m}{m^s}$ be a Dirichlet polynomial of degree $m$ with complex coefficients, and assume that $M:=\max _{p\mid m}\nu_p(m)>1$. If ${\rm rank} (\mathcal{D}_1^{(k)})=2m$ for some positive integer $k<M$, then the maximal multiplicity of the irreducible factors of $f$ is at most $k$.
\end{corollary}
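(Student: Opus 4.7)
The plan is to reinterpret the rank hypothesis as a coprimality statement between $f$ and $f^{(k)}$, and then rule out high-multiplicity factors by the Leibniz rule. First, I would verify that $\mathcal{D}_1^{(k)}$ is exactly the matrix $\mathcal{R}_1$ attached to the pair $(f, f^{(k)})$ in Theorem \ref{nocommonfactors}. Comparing (\ref{higherderivativescoefficients}) with (\ref{formulapentrurij}), the $a_i$-block is the same and, since the $k$-th $s$-derivative of $a_i/i^s$ is $(-1)^k a_i (\log i)^k/i^s$, the $b_j$-block matches with $b_i = (-1)^k a_i (\log i)^k$. The hypothesis $M>1$ forces $p^2\mid m$ for some prime $p$, hence $m \geq 4$; consequently $\log^k m \neq 0$, $\deg f^{(k)} = m$, and $\gcd(m,m)=m>1$. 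Applying Theorem \ref{nocommonfactors} with $d=1$ then converts ${\rm rank}(\mathcal{D}_1^{(k)}) = 2m$ into the statement that $f$ and $f^{(k)}$ share no nonconstant common factor.

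Next, I would argue by contradiction: assume some irreducible Dirichlet polynomial $h$ divides $f$ with multiplicity $\ell \geq k+1$, and write $f = h^\ell u$ with $\gcd(h,u)=1$. The Leibniz rule $(FG)' = F'G + FG'$ holds for the Dirichlet product, since equating coefficients reduces to $\log(ij) = \log i + \log j$ applied termwise in $\sum_{ij=n}a_ib_j / n^s$. An easy induction on $j$ then gives that $h^{\ell-j}$ divides $(h^\ell)^{(j)}$ for every $0 \leq j \leq \ell$: the inductive step writes $(h^\ell)^{(j)} = h^{\ell-j}v$ and differentiates to get $(h^\ell)^{(j+1)} = h^{\ell-j-1}\bigl((\ell-j)h'v + hv'\bigr)$. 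Expanding $f^{(k)} = (h^\ell u)^{(k)} = \sum_{j=0}^k \binom{k}{j}(h^\ell)^{(j)} u^{(k-j)}$, each summand is divisible by $h^{\ell-j}$, and since $\ell-j \geq \ell-k \geq 1$, we conclude $h \mid f^{(k)}$. Combined with $h \mid f$, this contradicts the coprimality established in the first step, so every irreducible factor of $f$ has multiplicity at most $k$.

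There is no substantive obstacle: the only verifications needed are (i) the matching of $\mathcal{D}_1^{(k)}$ with the $\mathcal{R}_1$ of Theorem \ref{nocommonfactors} applied to $(f, f^{(k)})$, and (ii) the Leibniz rule for Dirichlet convolution together with the inductive divisibility $(h^\ell)^{(j)} \in h^{\ell-j}\cdot DP(\mathbb{C})[s]$. Both are formal and do not require any further hypothesis beyond the standing assumption that the coefficients are complex so that the logarithmic derivatives make sense.
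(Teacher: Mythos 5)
Your argument follows the paper's intended route: no explicit proof is given for Corollary \ref{maximalmultiplicity}, but your two steps --- matching $\mathcal{D}_1^{(k)}$ with the matrix $\mathcal{R}_1$ attached to $(f,f^{(k)})$ via (\ref{higherderivativescoefficients}) and (\ref{formulapentrurij}), and then ruling out a factor of multiplicity $\geq k+1$ by the Leibniz rule --- are exactly the generalization of the paper's proof of Corollary \ref{nosquaredfactors} from $k=1$ to arbitrary $k$, with your inductive divisibility $(h^\ell)^{(j)}\in h^{\ell-j}\cdot DP(\mathbb{C})[s]$ replacing the single observation $f=g^2h\Rightarrow g\mid f'$ used there. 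The Leibniz-for-Dirichlet-product check and the verification $\deg f^{(k)}=m$ (from $m\geq 4$) are both correct, and you are right that only one direction of the coprimality equivalence is needed, so the algebraic primitivity hypothesis of Corollary \ref{nosquaredfactors} can be dropped.

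One remark worth recording, directed at Theorem \ref{nocommonfactors} rather than at your derivation: for $d=1$ the unknowns $u_1,\dots,u_n,v_1,\dots,v_m$ permit $\deg u\leq n$ and $\deg v\leq m$, so $(u,v)=(g,-f)$ is always a nonzero vector in the kernel of $\mathcal{R}_1$ (here $(u,v)=(f^{(k)},-f)$ in the kernel of $\mathcal{D}_1^{(k)}$). This forces $\mathrm{rank}(\mathcal{R}_1)<m+n$ unconditionally, so the "in particular" clause of Theorem \ref{nocommonfactors} --- and hence the rank hypothesis $\mathrm{rank}(\mathcal{D}_1^{(k)})=2m$ that you (correctly, given the stated theorem) invoke --- appears never to be satisfiable as written; the degrees of $u$ and $v$ would need to be bounded strictly below $n$ and $m$ (as with Sylvester matrices in the univariate polynomial case) for the full-rank criterion to carry content. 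This is a concern about the source statement, not a gap in your reasoning.
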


Computing the ranks of the matrices $\mathcal{D}_d^{(k)}$, with entries $d_{i,j}^{(k)}$ given by 
(\ref{higherderivativescoefficients}), requires testing the nonvanishing of some nonlinear forms in logarithms of natural numbers. This ultimately reduces to test the nonvanishing of some nonlinear forms in logarithms of prime numbers, which is by no means an easy task, not even in the simplest case when the coefficients $a_i$ are rational numbers.

\section{Logarithmic Newton polytopes and irreducibility criteria
for \\ multivariate Dirichlet polynomials}\label{logpolytope}

Many foundational results on multiple Dirichlet series appeared in the last decades, with applications in the study of moments of zeta and $L$-functions, and also in areas related to combinatorics and representation theory. For some of these results we refer the reader to Brubaker, Bump and Friedberg \cite{BrubakerBumpFriedberg1}, \cite{BrubakerBumpFriedberg2}, \cite{BrubakerBumpFriedberg3}, Diaconu, Goldfeld and Hoffstein \cite{DiaconuGoldfeldHoffstein}, and Chinta and Gunnels \cite{ChintaGunnels}, for instance. For some recent results on the residues of quadratic Weyl group multiple Dirichlet series we refer the reader to Diaconu, Ion, Pa\c sol and Popa \cite{DiaconuIonPasolPopa}, and for recent developments on the study of moments of quadratic $L$-functions to Bergstr\"om, Diaconu, Petersen and Westerland \cite{BergstromDiaconuPetersenWesterland}.

To study the factorization of multivariate Dirichlet polynomials, an intuitive approach is to search for connections with the intensively studied geometry of polytopes (see Ewald \cite{Ewald}, Gr\"unbaum \cite{Grunbaum}, Webster \cite{Webster}, Ziegler \cite{Ziegler} and Schneider \cite{Schneider} for some standard texts). 
In this section we will use a special type of polytopes that are most suitable to study the factorization of multivariate Dirichlet polynomials, namely logarithmic convex polytopes, that we will briefly call {\it log-polytopes}. Logarithmic convex hulls are used for instance in the study of logarithmically convex Reinhardt domains (Abhyankar \cite{Abhyankar}). For applications of logarithmic convexity the reader is referred to Boyd and Vandenberghe \cite{Boyd}.

Some notations and definitions are in order.

\begin{definition}\label{AlgPrimMultiv}
For a field $K$, we will denote by $DP(K)[s_1,\dots ,s_n]$ the ring of Dirichlet multivariate polynomials with coefficients in $K$ and indeterminates $s_{1},\dots ,s_{n}$. A nonzero such Dirichlet multivariate polynomial $f$ is a finite sum of the form
\begin{equation}\label{DirichletMultivariatePolynomial}
f(s_{1},\dots ,s_{n})=\sum\limits _{(i_{1},\dots ,i_{n})\in S_{f}} \frac{a_{i_{1},\dots ,i_{n}}}{i_{1}^{s_{1}}\cdots i_{n}^{s_{n}}}
\end{equation}
with $a_{i_{1},\dots ,i_{n}}\in K\setminus \{0\}$, and with the finite set $S_{f}\in {\mathbb{N}^{*}}^{n}$ called the {\it support} of $f$. 


The {\it (total) degree} of $f$, and the {\it degree of $f$ with respect to $s_j$} are defined as 
\[
\deg f=\max\limits _{(i_{1},\dots ,i_{n})\in S_{f}}i_1\cdots i_n\quad \text{and} \quad \deg _jf=\max\limits _{(i_{1},\dots ,i_{n})\in S_{f}}i_j,
\]
respectively, so we have the inequality $\deg f\leq \prod_{j=1}^n\deg _jf$.

We say that a multivariate Dirichlet polynomial $f$ is {\it algebraically primitive} if it has no nonconstant factor with only one term, that is no factor of the form $\frac{c}{i_{1}^{s_{1}}\cdots i_{n}^{s_{n}}}$ with $c\neq 0$ and at least one $i_{j}$ greater than $1$. 

\end{definition}
Unless otherwise specified, we will work over an algebraically closed field $K$. To simplify notations, we will use natural logarithms, but as in the univariate case, we might as well consider an arbitrary base $\mathfrak{b}>1$ for our logarithms, and this would not affect the study of the factorization properties of $f$. Since there is no risk of confusion with the definition of log-integral points in Section \ref{NewtonLogPol}, we will also use the term log-integral in a different setting. 
\begin{definition}\label{culogaritmi}
A point $v=(v_1,\dots ,v_n)\in\mathbb{R}^{n}$ is called {\it log-integral} if its coordinates $v_{i}$ are of the form $\log k_{i}$ for some positive integers $k_{1},\dots ,k_{n}$, respectively. A convex polytope in $\mathbb{R}^{n}$ is called {\it log-integral} if all of its vertices are log-integral. Thus a {\it log-integral} polytope will be the convex hull of a finite set $S$ of log-integral points in $\mathbb{R}^{n}$, so if $S=\{ x_{1},\dots ,x_{k}\} \subseteq \mathbb{R}^{n}$ with $x_{1},\dots ,x_{k}$ log-integral, then the log-integral polytope of $S$ is
\[
P^{log}(S)=\left\{ \sum\limits _{i=1}^{k}\lambda _{i}x_{i}: \lambda _{i}\in \left[0,1\right],\  \lambda _{1}+\cdots +\lambda _{k}=1\right\} .
\] 

Let again $f(s_{1},\dots ,s_{n})$ be as in (\ref{DirichletMultivariatePolynomial}). To each vector $(i_{1},\dots ,i_{n})\in S_{f}$ we will associate its {\it log-integral companion} $(\log i_{1},\dots ,\log i_{n})\in \mathbb{R}^{n}$, and the set of all these log-integral points will be denoted by $S_{f}^{log}$, and referred to as the {\it logarithmic support} of $f$.

The {\it log-integral Newton polytope} of $f$ (briefly {\it the Newton log-polytope} of $f$) is the convex hull of $S_{f}^{log}$ and will be denoted by $NP^{log}(f)$. Thus, if $S_{f}^{log}=\{ v_{1},\dots ,v_{m}\} $, say, then
\[
NP^{log}(f)=\left\{ \sum\limits _{i=1}^{m}\lambda _{i}v_{i}: \lambda _{i}\in \left[0,1\right],\  \lambda _{1}+\cdots +\lambda _{m}=1\right\} .
\] 
\end{definition}
The definition of the Minkowski sum of two polytopes applies to log-integral polytopes as well, so for two multivariate Dirichlet polynomials $g,h$ having the same indeterminates, one may consider the set $NP^{log}(g)+NP^{log}(h)=\{ x+y:x\in NP^{log}(g),\ y\in NP^{log}(h)\} $. With these notations we have the following analogue of Ostrowski's Theorem on Newton polytopes for multivariate polynomials \cite{Ostrowski1} (see also \cite{Ostrowski2}).
\begin{theorem}\label{OstroDir}
Let $f,g,h$ be multivariate Dirichlet polynomials such that $f=g\cdot h$. Then $NP^{log}(f)=NP^{log}(g)+NP^{log}(h)$.
\end{theorem}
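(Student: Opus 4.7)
The plan is to mimic the classical proof of Ostrowski's polytope theorem, exploiting the crucial feature that the Dirichlet product multiplies indices component-wise, which under the map $(i_1,\dots,i_n)\mapsto (\log i_1,\dots,\log i_n)$ turns into vector addition. Thus the Dirichlet multiplication rule says exactly that
\[
S_{f}^{log} \subseteq S_{g}^{log} + S_{h}^{log},
\]
since any index $(i_1,\dots,i_n)\in S_f$ with nonzero coefficient in $f=g\cdot h$ must be the component-wise product of some index $(j_1,\dots,j_n)\in S_g$ and some $(k_1,\dots,k_n)\in S_h$, and passing to logarithms replaces that product by a sum. Taking convex hulls immediately gives the inclusion $NP^{log}(f)\subseteq NP^{log}(g)+NP^{log}(h)$.

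For the reverse inclusion, since both sides are convex polytopes in $\mathbb{R}^n$, it suffices to show that every \emph{vertex} of the Minkowski sum $NP^{log}(g)+NP^{log}(h)$ belongs to $NP^{log}(f)$. I would invoke the standard fact about Minkowski sums: $v$ is a vertex of $P+Q$ if and only if there exists a linear functional $\ell\colon\mathbb{R}^n\to\mathbb{R}$ that attains its maximum on $P+Q$ at a single point, namely $v$, and in this case $v=v_g+v_h$ where $v_g$ and $v_h$ are the unique $\ell$-maximizers on $P=NP^{log}(g)$ and $Q=NP^{log}(h)$, respectively. Moreover, because extreme points of $NP^{log}(g)$ and $NP^{log}(h)$ all lie in $S_g^{log}$ and $S_h^{log}$, we may take $v_g\in S_g^{log}$ and $v_h\in S_h^{log}$, corresponding to index tuples $(j_1,\dots,j_n)\in S_g$ and $(k_1,\dots,k_n)\in S_h$ with nonzero coefficients $b_{j_1,\dots,j_n}$ and $c_{k_1,\dots,k_n}$.

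The heart of the argument, and what I expect to be the main obstacle, is showing that the coefficient of $f$ at the index $(j_1 k_1,\dots,j_n k_n)$ does not vanish after the Dirichlet convolution collapses all contributions to it. By the multiplication rule, this coefficient equals
\[
a_{j_1 k_1,\dots,j_n k_n} = \sum_{(\alpha,\beta)} b_{\alpha}\, c_{\beta},
\]
the sum running over all pairs $(\alpha,\beta)\in S_g\times S_h$ with $\alpha\cdot\beta=(j_1k_1,\dots,j_nk_n)$ component-wise; equivalently, $\log\alpha+\log\beta=v_g+v_h$. Applying $\ell$ gives $\ell(\log\alpha)+\ell(\log\beta)=\ell(v_g)+\ell(v_h)$, while the maximality properties yield $\ell(\log\alpha)\leq \ell(v_g)$ and $\ell(\log\beta)\leq \ell(v_h)$. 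Both inequalities must therefore be equalities, and \emph{uniqueness} of the maximizers forces $\log\alpha=v_g$ and $\log\beta=v_h$. Hence the sum above reduces to the single term $b_{j_1,\dots,j_n}\,c_{k_1,\dots,k_n}\neq 0$, so $(j_1k_1,\dots,j_nk_n)\in S_f$, which places $v_g+v_h\in S_f^{log}\subseteq NP^{log}(f)$.

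Combining both inclusions completes the proof. The subtlety to be careful about is the transition from the combinatorial convex-geometry statement about vertices of Minkowski sums to the algebraic non-cancellation of the coefficient in $f$; once that is framed via a separating linear functional and uniqueness of maximizers, everything else is a direct translation of the classical Ostrowski argument from additive monomial exponents to their logarithmic incarnations.
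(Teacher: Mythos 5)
Your proof is correct and follows essentially the same path as the paper: the easy inclusion $NP^{log}(f)\subseteq NP^{log}(g)+NP^{log}(h)$ from the convolution rule, followed by verifying that each vertex of the Minkowski sum lies in $S_f^{log}$ by exploiting uniqueness of its decomposition to rule out cancellation. The only cosmetic difference is that the paper establishes that uniqueness via a midpoint/extreme-point argument while you use the supporting linear functional characterization — equivalent standard lemmas — and you are somewhat more explicit than the paper in writing out the convolution sum to make the non-vanishing of the coefficient transparent.
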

\begin{proof}\ Our proof adapts the ideas in the proof of Ostrowski' s Theorem given in \cite{Gao}.
Let 
\begin{eqnarray*}
f(s_{1},\dots ,s_{n}) & = & \sum\limits _{(i_{1},\dots,i_{n})\in S_{f}} \frac{a_{i_{1},\dots ,i_{n}}}{i_{1}^{s_{1}}\cdots i_{n}^{s_{n}}}, \\
g(s_{1},\dots ,s_{n}) & = & \sum\limits _{(j_{1},\dots,j_{n})\in S_{g}} \frac{b_{j_{1},\dots ,j_{n}}}{j_{1}^{s_{1}}\cdots j_{n}^{s_{n}}},\ {\rm and}\\
h(s_{1},\dots ,s_{n}) & = & \sum\limits _{(k_{1},\dots,k_{n})\in S_{h}} \frac{c_{k_{1},\dots ,k_{n}}}{k_{1}^{s_{1}}\cdots k_{n}^{s_{n}}}
\end{eqnarray*}
be our multivariate Dirichlet polynomials with nonzero coefficients $a_{i_{1},\dots ,i_{n}}$, $b_{j_{1},\dots ,j_{n}}$, $c_{k_{1},\dots ,k_{n}}$, indeterminates $s_{1},\dots ,s_{n}$ and support sets $S_{f},S_{g}$ and $S_{h}$, respectively. Let us assume that $S_{f}^{log}=\{ u_{1},\dots ,u_{d_{1}}\} $, $S_{g}^{log}=\{ v_{1},\dots ,v_{d_{2}}\}$ and $S_{h}^{log}=\{ w_{1},\dots ,w_{d_{3}}\}$. A vector $x\in NP^{log}(f)$ may be written as 
\[
x=\sum\limits _{i=1}^{d_{1}}\lambda _{i}u_{i}\ \ {\rm with\ nonnegative}\ \lambda _{i}\ {\rm and}\ \sum\limits _{i=1}^{d_{1}}\lambda _{i}=1,
\]
where each $u_{i}$ is uniquely written as $u_{i}=(\log i_{1},\dots ,\log i_{n})$ for a certain $(i_{1},\dots ,i_{n})\in S_{f}$. By the multiplication rule for Dirichlet polynomials, our assumption that $f=g\cdot h$ shows that each term $\frac{1}{i_{1}^{s_{1}}\cdots i_{n}^{s_{n}}}$ appearing (multiplied by $a_{i_{1},\dots ,i_{n}}$) in the sum defining $f$ must be written as a product $(\frac{1}{j_{1}^{s_{1}}\cdots j_{n}^{s_{n}}})\cdot (\frac{1}{k_{1}^{s_{1}}\cdots k_{n}^{s_{n}}})$ for at least one pair of terms $(\frac{1}{j_{1}^{s_{1}}\cdots j_{n}^{s_{n}}},\frac{1}{k_{1}^{s_{1}}\cdots k_{n}^{s_{n}}})$
with $(j_{1},\dots ,j_{n})\in S_{g}$ and $(k_{1},\dots ,k_{n})\in S_{h}$, so $(\log i_{1},\dots ,\log i_{n})=(\log j_{1},\dots ,\log j_{n})+(\log k_{1},\dots ,\log k_{n})$. This shows that every convex combination of the $u_{i}$'s may be written as the sum of a convex combination of the $v_{j}$'s and a convex combination of the $w_{k}$'s, proving the inclusion $NP^{log}(f)\subseteq NP^{log}(g)+NP^{log}(h)$. 

To prove the other inclusion, let us consider a vertex $x$ of $NP^{log}(g)+NP^{log}(h)$, so $x=y+z$ for some $y\in NP^{log}(g)$ and $z\in NP^{log}(h)$. We will first prove that the pair $(y,z)$ is uniquely determined. Assume to the contrary that $x=y+z=y'+z'$ for some $y'\in NP^{log}(g)$ and $z'\in NP^{log}(h)$. Then we also have $x=\frac{1}{2}(y+z')+\frac{1}{2}(y'+z)$. Since this is a convex combination of the points $y+z'$ and $y'+z$, both belonging to $NP^{log}(g)+NP^{log}(h)$, and $x$ was assumed to be a vertex, we must actually have $y+z'=y'+z$, which forces $y-y'=y'-y$ and $z-z'=z'-z$, or equivalently $y=y'$ and $z=z'$. As $x$ is a vertex of $NP^{log}(g)+NP^{log}(h)$, we see now that the uniquely determined points $y$ and $z$ must be vertices of $NP^{log}(g)$ and $NP^{log}(h)$, respectively. Thus  
$y=(\log j_{1},\dots ,\log j_{n})$ and $z=(\log k_{1},\dots ,\log k_{n})$ for some $(j_{1},\dots ,j_{n})\in S_{g}$ and $(k_{1},\dots ,k_{n})\in S_{h}$. This further shows that the product $(\frac{1}{j_{1}^{s_{1}}\cdots j_{n}^{s_{n}}})\cdot (\frac{1}{k_{1}^{s_{1}}\cdots k_{n}^{s_{n}}})$ must appear (multiplied by a nonzero constant) in the sum defining $f$, so it must be of the form $\frac{1}{i_{1}^{s_{1}}\cdots i_{n}^{s_{n}}}$ for some $(i_{1},\dots ,i_{n})\in S_{f}$, which finally shows that $x\in S_{f}^{log}\subseteq NP^{log}(f)$. As any convex combination of the vertices of $NP^{log}(g)+NP^{log}(h)$ must also belong to $NP^{log}(f)$, we conclude that $NP^{log}(g)+NP^{log}(h)\subseteq NP^{log}(f)$, which completes the proof. 
\end{proof}

Theorem \ref{OstroDir} suggests the use of the following definition.
\begin{definition}\label{LID}
A log-integral polytope is {\it log-integrally decomposable} if it can be written as the sum of two log-integral polytopes each one containing at least two points, and {\it log-integrally indecomposable} otherwise. We also mention here that a multivariate Dirichlet polynomial is called absolutely irreducible, if it is irreducible over an algebraically closed field $K$.
\end{definition}
This definition allows one to transform Theorem \ref{OstroDir} into an absolute irreducibility criterion for multivariate Dirichlet polynomials, that may be regarded as an analogue of an absolute irreducibility criterion of Gao \cite[p. 507]{Gao} for multivariate polynomials:
\begin{theorem}\label{critOstroDir}
If $f$ is algebraically primitive and $NP^{log}(f)$ is log-integrally indecomposable, then $f$ is absolutely irreducible. 
\end{theorem}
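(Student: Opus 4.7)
The plan is to prove the contrapositive: assume that $f$ admits a nontrivial factorization $f = g\cdot h$ with $g,h \in DP(K)[s_1,\dots,s_n]$ both nonconstant, and then show that either $f$ is not algebraically primitive or $NP^{log}(f)$ is log-integrally decomposable. First I would invoke Theorem \ref{OstroDir} to get the Minkowski decomposition
\[
NP^{log}(f) \;=\; NP^{log}(g) + NP^{log}(h),
\]
and note that each summand is automatically a log-integral polytope, since the logarithmic supports $S_g^{log}$ and $S_h^{log}$ consist of points with coordinates $\log j_i$ for positive integers $j_i$, and a convex hull of finitely many log-integral points has only log-integral vertices (the vertices lie in the generating set).

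Next I would split on cases according to the sizes of these two polytopes. If both $NP^{log}(g)$ and $NP^{log}(h)$ contain at least two points, then by Definition~\ref{LID} the above identity exhibits $NP^{log}(f)$ as log-integrally decomposable, contradicting the hypothesis. Hence at least one of the summands, say $NP^{log}(g)$, must be a single point $\{(\log j_1, \dots, \log j_n)\}$ for some positive integers $j_1,\dots,j_n$. A polytope with a unique vertex corresponds to a Dirichlet polynomial with a singleton support, so necessarily
\[
g(s_1,\dots,s_n) \;=\; \frac{b}{j_1^{s_1}\cdots j_n^{s_n}}\qquad \text{for some } b\in K\setminus\{0\}.
\]

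Finally, since $g$ was assumed nonconstant, not all $j_i$ can equal $1$, and so $g$ is a nonconstant factor of $f$ consisting of a single term of the form ruled out by Definition~\ref{AlgPrimMultiv}. This contradicts the hypothesis that $f$ is algebraically primitive, completing the proof.

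I do not expect a serious obstacle here: all of the real content is concentrated in Theorem~\ref{OstroDir}, and the rest is essentially a dictionary translation between ``singleton log-integral polytope'' on the geometric side and ``monomial-type factor $b/(j_1^{s_1}\cdots j_n^{s_n})$'' on the algebraic side. The one point that needs a small bit of care is precisely this translation, together with the observation that the algebraic primitivity assumption is exactly what is needed to rule out the otherwise-trivial decomposition $NP^{log}(f) = \{v\} + (NP^{log}(f) - v)$ coming from a single-term factor.
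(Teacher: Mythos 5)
Your proposal is correct and takes essentially the same approach as the paper: both reduce the statement to Theorem~\ref{OstroDir}, observe that algebraic primitivity rules out a single-term nonconstant factor (equivalently, a singleton Minkowski summand), and conclude that a nontrivial factorization would force both summands to have at least two points, decomposing $NP^{log}(f)$. Your write-up merely makes explicit a couple of small points the paper leaves implicit (that the summand polytopes are automatically log-integral, and the dictionary between a one-point $NP^{log}(g)$ and a one-term $g$).
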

\begin{proof}\ We first note that $f$ has no nonconstant factors with only one term, as it was assumed agebraically primitive. Thus, if we suppose now that $f$ factors as a product of two nonconstant multivariate Dirichlet polynomials $g$ and $h$, then each one of $g$ and $h$ should have at least two terms, which by Theorem \ref{OstroDir} would force $NP^{log}(f)$ to be log-integrally decomposable, a contradiction.
\end{proof}

We will prove now a result that may be regarded as a logarithmic analogue of Lemma 4.1 in \cite{Gao}, which will describe all the log-integral points belonging to the convex hull of a set of two different log-integral points in $\mathbb{R}^{n}$. We will first need some additional terminology. Let $\mathbf{vw}$ be any segment with log-integral endpoints $\mathbf{v}=(\log a_{1},\dots ,\log a_{n})$ and $\mathbf{w}=(\log b_{1},\dots ,\log b_{n})$ in $\mathbb{R}^{n}$, and for each $i=1,\dots ,n$ let $d_{i}=\gcd\{ v_{p}(b_{i})-v_{p}(a_{i}):p\ {\rm prime},\ p\mid a_{i}\cdot b_{i}\} $. Let us define 
\begin{equation}\label{dul}
\overline{\gcd}(\mathbf{vw})=\overline{\gcd}(\mathbf{w}-\mathbf{v})=\gcd(d_1,\dots ,d_n).
\end{equation}
Note that $\mathbf{w}-\mathbf{v}$ might no longer be a log-integral point in $\mathbb{R}^{n}$. Moreover, for any $k$ segments $\mathbf{v_1w_1},\dots ,\mathbf{v_kw_k}$ with log-integral endpoints, we define 
\[
\overline{\gcd}(\mathbf{v_1w_1},\dots ,\mathbf{v_kw_k})=\overline{\gcd}(\mathbf{w_1}-\mathbf{v_1},\dots ,\mathbf{w_k}-\mathbf{v_k})=\gcd(\overline{\gcd}(\mathbf{v_1w_1}),\dots ,\overline{\gcd}(\mathbf{v_kw_k})).
\]
 
\begin{lemma}\label{puncteR^n}
Let $\mathbf{v}=(\log a_{1},\dots ,\log a_{n})$ and $\mathbf{w}=(\log b_{1},\dots ,\log b_{n})$ be two different log-integral points in $\mathbb{R}^{n}$. Then the log-integral points belonging to the line segment $\mathbf{vw}$ are
\[
\mathbf{u}_{i}=\left( \log \thinspace (a_{1}^{1-\frac{i}{d}}b_{1}^{\frac{i}{d}}),\dots ,\log \thinspace (a_{n}^{1-\frac{i}{d}}b_{n}^{\frac{i}{d}})\right) ,\quad i=0,\dots ,d,
\]
with $d=\overline{\gcd}(\mathbf{vw})$. Moreover, if $\mathbf{u_i}$ is such a log-integral point, then $\frac{|\mathbf{u_i}-\mathbf{v}|}{|\mathbf{w}-\mathbf{v}|}=\frac{\overline{\gcd}(\mathbf{vu_i})}{\overline{\gcd}(\mathbf{vw})}=\frac{i}{d}$, where $|\mathbf{x}|$ denotes the Euclidean length of the vector $\mathbf{x}$.
\end{lemma}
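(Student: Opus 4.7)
The plan is to parametrize points on the segment and reduce log-integrality to a system of divisibility constraints analogous to those appearing in Lemma \ref{puncte}. First I would write an arbitrary point of $\mathbf{vw}$ as $\mathbf{x}(t) = (1-t)\mathbf{v} + t\mathbf{w}$ with $t \in [0,1]$; its $k$th coordinate is $(1-t)\log a_k + t\log b_k = \log(a_k^{1-t} b_k^{t})$, so $\mathbf{x}(t)$ is log-integral exactly when each $a_k^{1-t} b_k^{t}$ is a positive integer, i.e.\ when its $p$-adic valuation is a nonnegative integer for every prime $p$ and every $k \in \{1,\dots,n\}$. Fixing such a pair $(k,p)$ with $p \mid a_k b_k$, this valuation equals $(1-t)\alpha + t\beta$, where $\alpha = \nu_p(a_k)$ and $\beta = \nu_p(b_k)$. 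Since $t \in [0,1]$, this value lies between $\min(\alpha,\beta)$ and $\max(\alpha,\beta)$, so nonnegativity is automatic; the remaining binding condition reduces to $t(\beta - \alpha) \in \mathbb{Z}$. Writing $t = r/q$ in lowest terms, this forces $q$ to divide $\nu_p(b_k) - \nu_p(a_k)$ for every such pair with nonzero difference, and hence $q \mid d$, where $d = \overline{\gcd}(\mathbf{vw})$.

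Consequently the admissible values of $t$ are contained in $\{i/d : i = 0, 1, \dots, d\}$, giving at most $d+1$ candidate points. Conversely, plugging $t_i := i/d$ back into the valuation condition, I would verify that each exponent $(1 - i/d)\nu_p(a_k) + (i/d)\nu_p(b_k)$ is a nonnegative integer, precisely because $d$ divides every nonzero difference $\nu_p(b_k) - \nu_p(a_k)$. This produces the $d+1$ points $\mathbf{u}_i$ displayed in the statement, with the claimed coordinates. For the ``moreover'' assertion, the length identity $|\mathbf{u}_i - \mathbf{v}|/|\mathbf{w}-\mathbf{v}| = i/d$ is immediate from the convex-combination parametrization. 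For the $\overline{\gcd}$ identity, I would note that the $k$th component of $\mathbf{u}_i - \mathbf{v}$ equals $(i/d)(\log b_k - \log a_k)$, so each valuation difference contributing to $\overline{\gcd}(\mathbf{vu}_i)$ equals $(i/d)$ times the corresponding difference contributing to $\overline{\gcd}(\mathbf{vw})$. Pulling the scalar $i/d$ out of the nested $\gcd$ yields $\overline{\gcd}(\mathbf{vu}_i) = (i/d)\cdot d = i$, and the stated ratio follows.

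The main obstacle I anticipate is purely bookkeeping around the definition of $\overline{\gcd}$: primes $p$ that divide some $a_k b_k$ but satisfy $\nu_p(a_k) = \nu_p(b_k)$ contribute a zero term to the inner gcd and can be discarded without affecting any computation, but one must be careful to verify that the admissible denominators $q$ in ``$t = r/q$ in lowest terms'' are exactly the divisors of $d$, not a smaller set, so that the count of $d+1$ log-integral points is sharp. Once this normalization is pinned down, the rest of the argument is a routine multi-variable adaptation of the scheme of Lemma \ref{puncte}, with the substitution $x_1 \leftrightarrow a_k$, $x_2 \leftrightarrow b_k$ carried out in parallel across all coordinates $k=1,\dots,n$.
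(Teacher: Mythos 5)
Your proposal is correct and follows essentially the same route as the paper's proof: parametrize the segment by $t\in[0,1]$, reduce log-integrality of $(1-t)\mathbf{v}+t\mathbf{w}$ to the condition that $t\cdot(\nu_p(b_k)-\nu_p(a_k))$ be an integer for every relevant prime $p$ and coordinate $k$, write $t=r/q$ in lowest terms to force $q\mid d$, and for the ``moreover'' part observe that the valuation differences for $\mathbf{vu}_i$ are the $(i/d)$-scaled differences for $\mathbf{vw}$, so that dividing the original differences by $d$ produces a coprime set and the inner/outer gcds collapse to $i$. Your side remark about discarding primes with $\nu_p(a_k)=\nu_p(b_k)$ matches the paper's restriction to indices $j$ with $v_{p_j}(a_i)\neq v_{p_j}(b_i)$, and the sharpness of the count of $d+1$ points is handled, as you anticipate, by noting that the rationals in $[0,1]$ with denominator dividing $d$ are exactly the $i/d$.
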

\begin{proof}\ A point $\mathbf{u}$ belonging to the line segment $\mathbf{vw}$ has the form $(1-t)\mathbf{v}+t\mathbf{w}$ for some real number $t\in\left[0,1\right]$. 
For $\mathbf{u}$ to be log-integral we need $a_{i}^{1-t}b_{i}^{t}$
to be integer for each $i=1,\dots ,n$. Let us fix an index $i\in \{ 1,\dots ,n\} $ for which $a_i\neq b_i$, and assume that $p_{1},\dots ,p_{k}$ are all the prime factors of $a_{i}\cdot b_{i}$, so we may write
\[
a_{i}^{1-t}b_{i}^{t}=p_{1}^{(1-t)v_{p_{1}}(a_{i})+tv_{p_{1}}(b_{i})}\cdots p_{k}^{(1-t)v_{p_{k}}(a_{i})+tv_{p_{k}}(b_{i})},
\]
where for each index $j$ at most one of the multiplicities $v_{p_{j}}(a_{i})$ and $v_{p_{j}}(b_{i})$ might be zero.
Thus, for $a_{i}^{1-t}b_{i}^{t}$ to be an integer, we need $t\cdot (v_{p_{j}}(b_{i})-v_{p_{j}}(a_{i}))$ to be an integer for each $j=1,\dots ,k$ for which $v_{p_{j}}(a_{i})\neq v_{p_{j}}(b_{i})$, which shows in particular that $t$ must be rational, say $t=\frac{r}{s}\in$\thinspace $\left[0,1\right]$, with $r$ and $s$ coprime if $t\neq 0$ and $t\neq 1$. Thus, we need $s$ to divide $v_{p_{j}}(b_{i})-v_{p_{j}}(a_{i})$ for each $j=1,\dots ,k$, as $r$ and $s$ are coprime, or equivalently, we need $s$ to divide $d_{i}$. Since this must hold for each $i$, $s$ must be a divisor of $d$. The conclusion follows by observing that the set of all quotients $\frac{r}{s}\in\left[0,1\right]$ with $s$ a divisor of $d$ and $\gcd(r,s)=1$ if $r\neq 0$ and $r\neq s$, is in fact the set of all the quotients $\frac{i}{d}$ with $i\in \{0,\dots ,d\} $. 

For the second statement, observe that $\mathbf{u_i}=(1-\frac{i}{d})\mathbf{v}+\frac{i}{d}\mathbf{w}$, so $\mathbf{u_i}-\mathbf{v}=\frac{i}{d}(\mathbf{w}-\mathbf{v})$. Thus, it remains to prove that $\overline{\gcd}(\mathbf{vu_i})=i$. To do this, we will apply the definition of $\overline{\gcd}$ with $\mathbf{u_i}$ instead of $\mathbf{w}$. For each $j=1,\dots ,n$ let 
\begin{eqnarray*}
d'_{j} & = & \gcd\Bigl\{ v_{p}(a_{j}^{1-\frac{i}{d}}b_{j}^{\frac{i}{d}})-v_{p}(a_{j}):p\ {\rm prime},\ p\mid a_{j}\cdot b_{j}\Bigr\} \\
& = & \gcd \Bigl\{ \frac{i}{d}(\nu_p(b_{j})-\nu_{p}(a_{j})):p\ {\rm prime},\ p\mid a_{j}\cdot b_{j}\Bigr\} ,
\end{eqnarray*}
with $d=\overline{\gcd}(\mathbf{vw})$ given by (\ref{dul}). According to the definition, $\overline{\gcd}(\mathbf{vu_i})=\gcd(d'_1,\dots ,d'_n)$. Recall now that $d$ is the greatest common divisor of all the differences of the form $\nu_p(b_{j})-\nu_{p}(a_{j})$ obtained when $j$ runs in $\{1,\dots ,n\}$ and for a given $j$, $p$ runs in the set of all the primes dividing $a_j\cdot b_j$. Therefore, dividing all these differences by $d$ results in a set of coprime integers, which in view of the expressions of $d'_j$ above, shows that $\gcd(d'_1,\dots ,d'_n)=i$, as claimed.
\end{proof}

In particular, from Lemma \ref{puncteR^n} we obtain the following corollary.
\begin{corollary}\label{SegmentIndecompozabil}
Let $\mathbf{v}=(\log a_{1},\dots ,\log a_{n})$ and $\mathbf{w}=(\log b_{1},\dots ,\log b_{n})$ be two different log-integral points in $\mathbb{R}^{n}$, and let $d_{i}=\gcd\{ v_{p}(a_{i})-v_{p}(b_{i}):p\ {\rm prime},\ p\mid a_{i}\cdot b_{i}\} $ for $i=1,\dots ,n$. If $\gcd (d_{1},\dots ,d_{n})=1$, then the line segment $\mathbf{vw}$ is log-integrally indecomposable.
\end{corollary}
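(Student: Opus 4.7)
The plan is to deduce this result directly from Lemma \ref{puncteR^n}, using the defining property of Minkowski decomposability. First I would observe that $\gcd(d_1,\dots,d_n)$ is exactly the quantity $\overline{\gcd}(\mathbf{vw})$ appearing in that lemma, so under the hypothesis $\gcd(d_1,\dots,d_n)=1$ the lemma specializes to the conclusion that the only log-integral points on the segment $\mathbf{vw}$ are its two endpoints $\mathbf{v}$ and $\mathbf{w}$ themselves.

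Next, I would argue by contradiction. Suppose $\mathbf{vw}=A+B$ (as a Minkowski sum) where $A$ and $B$ are log-integral polytopes each containing at least two points. Since $\mathbf{vw}$ is $1$-dimensional, both $A$ and $B$ must themselves be segments parallel to $\mathbf{w}-\mathbf{v}$, so I can write $A=[\mathbf{v_1},\mathbf{w_1}]$ and $B=[\mathbf{v_2},\mathbf{w_2}]$ with $\mathbf{v_1},\mathbf{w_1},\mathbf{v_2},\mathbf{w_2}$ all log-integral, $\mathbf{v}=\mathbf{v_1}+\mathbf{v_2}$, $\mathbf{w}=\mathbf{w_1}+\mathbf{w_2}$, and $\mathbf{w_1}-\mathbf{v_1}=t(\mathbf{w}-\mathbf{v})$ for some $t\in(0,1)$ (the strict inequality uses that each of $A$ and $B$ has at least two points).

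Now I would produce a forbidden intermediate log-integral point on $\mathbf{vw}$. Componentwise, a sum of log-integral vectors $(\log c_1,\dots,\log c_n)+(\log c_1',\dots,\log c_n')=(\log c_1c_1',\dots,\log c_nc_n')$ is again log-integral, so the point $\mathbf{u}:=\mathbf{w_1}+\mathbf{v_2}$ is log-integral. On the other hand,
\begin{equation*}
\mathbf{u}=\mathbf{v_1}+t(\mathbf{w}-\mathbf{v})+\mathbf{v_2}=\mathbf{v}+t(\mathbf{w}-\mathbf{v}),
\end{equation*}
which shows that $\mathbf{u}$ lies strictly between $\mathbf{v}$ and $\mathbf{w}$ on the segment $\mathbf{vw}$. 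This contradicts the first step, which said the only log-integral points of $\mathbf{vw}$ are $\mathbf{v}$ and $\mathbf{w}$. Hence no such decomposition exists, and $\mathbf{vw}$ is log-integrally indecomposable.

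The only subtlety I anticipate is making the reduction from a general Minkowski decomposition to the linear parameter $t\in(0,1)$ airtight, but this is a standard consequence of the fact that Minkowski summands of a segment must themselves be segments parallel to it (or points, which is excluded by the assumption that each summand has at least two points). Everything else is mechanical once Lemma \ref{puncteR^n} is in hand.
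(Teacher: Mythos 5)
Your proof is correct and follows essentially the same route as the paper's: both first invoke Lemma \ref{puncteR^n} to conclude that $\mathbf{v}$ and $\mathbf{w}$ are the only log-integral points of the segment, and then both obtain a contradiction from a ``cross term'' lying in the Minkowski sum. The paper does this by a short combinatorial count (the four sums $X_i+Y_j$ cannot all land in $\{\mathbf{v},\mathbf{w}\}$), while you exhibit the interior log-integral point $\mathbf{w_1}+\mathbf{v_2}$ directly after reducing to parallel segment summands — a slightly more geometric but equivalent argument.
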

\begin{proof}\ By Lemma \ref{puncteR^n} we see that the only log-integral points belonging to the line segment $\mathbf{vw}$ are $\mathbf{v}$ and $\mathbf{w}$.
Let us assume to the contrary that $\mathbf{vw}=P^{log}(S_{1})+P^{log}(S_{2})$ for two finite sets of log-integral points $S_{1}, S_{2}\subset \mathbb{R}^{n}$, each one containing at least two points, say $X_{1},X_{2}\in S_{1}$ and $Y_{1},Y_{2}\in S_{2}$, with $X_{1}\neq X_{2}$ and $Y_{1}\neq Y_{2}$. As $X_{1}+Y_{1}$ and $X_{1}+Y_{2}$ are different log-integral points and both belong to $\{ \mathbf{v},\mathbf{w}\} $, we may assume without loss of generality that
\begin{equation}\label{primuta}
X_{1}+Y_{1}=\mathbf{v}\quad {\rm and}\quad  X_{1}+Y_{2}=\mathbf{w}.
\end{equation}
Now, since $X_{2}+Y_{1}$ and $X_{2}+Y_{2}$ are also different log-integral points belonging to $\{ \mathbf{v},\mathbf{w}\} $, we can neither have $X_{2}+Y_{1}=\mathbf{v}$ and $X_{2}+Y_{2}=\mathbf{w}$, as $X_{1}\neq X_{2}$, nor $X_{2}+Y_{1}=\mathbf{w}$ and $X_{2}+Y_{2}=\mathbf{v}$, for this will lead us after subtracting these two relations from the relations in (\ref{primuta}) to $X_{1}-X_{2}=\mathbf{v}-\mathbf{w}$ and $X_{1}-X_{2}=\mathbf{w}-\mathbf{v}$, which cannot hold simultaneously, as $\mathbf{v}\neq \mathbf{w}$. Thus the line segment $\mathbf{vw}$ must be log-integrally indecomposable.
\end{proof}

\medskip

Ostrowski \cite{Ostrowski2} obtained the following irreducibility criterion for multivariate polynomials:
\medskip

{\bf Theorem} (Ostrowski) {\em A two-term polynomial
\[
aX_{1}^{i_{1}}\cdots X_{k}^{i_{k}}+bX_{k+1}^{i_{k+1}}\cdots X_{n}^{i_{n}}\in K[X_{1},\dots ,X_{n}],\quad a,b\in K\setminus \{ 0\} ,
\]
is absolutely irreducible iff $\gcd(i_{1},\dots ,i_{n})=1$.}
\medskip

Combining now Theorem \ref{critOstroDir} and Corollary \ref{SegmentIndecompozabil}, we easily obtain a criterion of absolute irreducibility for multivariate Dirichlet polynomials consisting of two terms, that is similar to Ostrowski's irreducibility criterion above.
\begin{theorem}\label{SumaDubla}
Let $K$ be an algebraically closed field and $f(s_{1},\dots ,s_{n})=\frac{a}{a_{1}^{s_{1}}\cdots a_{n}^{s_{n}}}+\frac{b}{b_{1}^{s_{1}}\cdots b_{n}^{s_{n}}}$, with $a,b\in K$, $ab\neq 0$, and $a_{i}$ coprime to $b_{i}$ for each $i\in \{ 1,\dots ,n\} $. Then $f$ is absolutely irreducible if and only if the multiplicities of all the primes that appear in the prime factorizations of $a_{1},\dots ,a_{n},b_{1},\dots ,b_{n}$ are relatively prime.
\end{theorem}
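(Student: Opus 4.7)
The plan is to interpret the statement through Theorem~\ref{critOstroDir} and Corollary~\ref{SegmentIndecompozabil}: the Newton log-polytope of the two-term Dirichlet polynomial $f$ is the line segment joining the log-integral points $\mathbf{v}=(\log a_1,\dots,\log a_n)$ and $\mathbf{w}=(\log b_1,\dots,\log b_n)$, so absolute irreducibility will be controlled by the log-integral (in)decomposability of this segment, which in turn is controlled by a single gcd invariant computed from the exponents $a_i,b_i$.

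For the sufficiency direction, first I would verify that $f$ is algebraically primitive: a hypothetical monomial factor $\tfrac{c}{c_1^{s_1}\cdots c_n^{s_n}}$ would require $c_i\mid a_i$ and $c_i\mid b_i$ for every $i$, forcing $c_i=1$ by coprimality. Next, for each $i$, since $\gcd(a_i,b_i)=1$, every prime $p$ dividing $a_i b_i$ contributes to exactly one of $a_i,b_i$, so the quantity
\[
d_i=\gcd\{\nu_p(a_i)-\nu_p(b_i): p\text{ prime},\ p\mid a_ib_i\}
\]
that appears in Corollary~\ref{SegmentIndecompozabil} is exactly the gcd of the multiplicities $\nu_p(a_i)$ for $p\mid a_i$ together with the multiplicities $\nu_p(b_i)$ for $p\mid b_i$. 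Hence the hypothesis that all such multiplicities across all indices $i$ are globally coprime is precisely the equality $\gcd(d_1,\dots,d_n)=1$. Corollary~\ref{SegmentIndecompozabil} then gives the log-integral indecomposability of the segment $\mathbf{vw}=NP^{log}(f)$, and Theorem~\ref{critOstroDir} delivers the absolute irreducibility of $f$.

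For the necessity direction I would prove the contrapositive. Assume the gcd of all the aforementioned prime multiplicities equals some integer $d\geq 2$. Since $\gcd(a_i,b_i)=1$, every prime power appearing in either $a_i$ or $b_i$ has exponent divisible by $d$, so each $a_i$ and each $b_i$ is a perfect $d$-th power: write $a_i=\alpha_i^d$ and $b_i=\beta_i^d$ with $\alpha_i,\beta_i\in\mathbb{Z}_{\geq 1}$. Setting
\[
u(s)=\frac{1}{\alpha_1^{s_1}\cdots \alpha_n^{s_n}},\qquad v(s)=\frac{1}{\beta_1^{s_1}\cdots \beta_n^{s_n}},
\]
we obtain $f=a\,u^d+b\,v^d$. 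Since $K$ is algebraically closed, the binary form $aT^d+bU^d\in K[T,U]$ splits as a product of $d$ linear forms $\prod_{k=1}^{d}(T-\xi_k U)$ (with possible repetitions in positive characteristic, which is harmless for our purposes). Substituting $T=u$ and $U=v$ yields a factorization of $f$ into $d\geq 2$ two-term Dirichlet polynomials of the form $u(s)-\xi_k v(s)$, each one nonconstant because the hypothesis $(a_1,\dots,a_n)\neq(b_1,\dots,b_n)$ forces at least one $\alpha_i$ or $\beta_i$ to exceed $1$. Thus $f$ is reducible, finishing the contrapositive.

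The only mildly delicate point is the sufficiency argument's reduction of $d_i$ to the gcd of raw multiplicities, which crucially uses the coprimality of $a_i$ and $b_i$ so that primes do not mix within a single coordinate; the rest is bookkeeping on top of Corollary~\ref{SegmentIndecompozabil} and the explicit $d$-th power factorization enabled by algebraic closure.
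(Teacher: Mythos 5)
Your proposal is correct and follows essentially the same route as the paper: the sufficiency direction is exactly the paper's combination of Corollary~\ref{SegmentIndecompozabil} and Theorem~\ref{critOstroDir} after checking algebraic primitivity and unwinding the coprimality to identify $\gcd(d_1,\dots,d_n)$ with the gcd of the raw multiplicities, and the necessity direction uses the same $d$-th root extraction over the algebraically closed field $K$. The only cosmetic difference is that you exhibit the full splitting of $aT^d+bU^d$ into $d$ linear forms (modulo the leading coefficient $a$, which you dropped), whereas the paper just names one linear factor $\frac{\alpha}{\alpha_1^{s_1}\cdots\alpha_n^{s_n}}-\frac{\beta}{\beta_1^{s_1}\cdots\beta_n^{s_n}}$; both rely on algebraic closure identically.
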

\begin{proof}\ As $a_{i}$ is coprime to $b_{i}$ for each $i$, $f$ is algebraically primitive, and any term of the form $v_{p}(a_{i})-v_{p}(b_{i})$ with $p$ a prime dividing $a_{i}\cdot b_{i}$ will either be equal to $v_{p}(a_{i})$, or to $-v_{p}(b_{i})$. Our assumption that all these multiplicities are coprime leads via Corollary \ref{SegmentIndecompozabil} to the conclusion that the convex hull of $\{ (\log a_{1},\dots ,\log a_{n}),(\log b_{1},\dots ,\log b_{n})\} $, which is $NP^{log}(f)$, must be log-integrally indecomposable. This shows by Theorem \ref{critOstroDir} that $f$ must be absolutely irreducible. Conversely, let us assume that the greatest common divisor of the multiplicities of all the primes that appear in the prime factorizations of $a_{1},\dots ,a_{n},b_{1},\dots ,b_{n}$, say $d$, is greater than $1$. Then each of the integers $a_{1},\dots ,a_{n},b_{1},\dots ,b_{n}$ is a $d$-power of some positive integer, say $a_{i}=\alpha _{i}^d$ and $b_{i}=\beta _{i}^d$ for $i=1,\dots ,n$, and moreover, $\alpha _i$ and $\beta _i$ are coprime for each $i$. On the other hand, as $K$ is algebraically closed, there exist nonzero elements $\alpha ,\beta \in K$ such that $a=\alpha ^{d}$ and $-b=\beta ^{d}$. Then $f$ must be reducible, being divisible by $\alpha \cdot \alpha _{1}^{s_{1}}\cdots \alpha _{n}^{s_{n}}-\beta \cdot \beta _{1}^{s_{1}}\cdots \beta _{n}^{s_{n}}$ (which is a nonzero multivariate Dirichlet polynomial, as $\alpha _i$ and $\beta _i$ are coprime for each $i$). 
\end{proof}
Gao proved the following elegant criterion for integrally indecomposability of polytopes.
\medskip

{\bf Theorem \cite[Theorem 4.2]{Gao}}.\ {\em Let $Q$ be any integral polytope in $\mathbb{R}^n$ contained in a hyperplane $H$ and let $\mathbf{v}\in \mathbb{R}^n$ be an integral point lying outside of $H$. Suppose that $\mathbf{v_1},\dots ,\mathbf{v_k}$ are all the vertices of $Q$. Then the polytope ${\rm conv}(v,Q)$ is integrally indecomposable if and only if $\gcd(\mathbf{v}-\mathbf{v_1},\dots , \mathbf{v}-\mathbf{v_k})=1$.}
\medskip

A key ingredient in the proof of this result is the following lemma that counts integral points on a segment with integral endpoints.
\medskip

{\bf Lemma \cite[Lemma 4.1]{Gao}}.\ {\em Let $\mathbf{v_0}$ and $\mathbf{v_1}$ be two integral points in $\mathbb{R}^n$. Then the number of integral points on the line segment $\mathbf{v_0v_1}$, including $\mathbf{v_0}$ and $\mathbf{v_1}$, is equal to $\gcd(\mathbf{v_0}-\mathbf{v_1})+1$. Further, if $\mathbf{v_2}$ is any integral point on $\mathbf{v_0v_1}$, then $\frac{|\mathbf{v_2}-\mathbf{v_0}|}{|\mathbf{v_1}-\mathbf{v_0}|}=\frac{\gcd(\mathbf{v_2}-\mathbf{v_0})}{\gcd(\mathbf{v_1}-\mathbf{v_0})}$, where $|\mathbf{v}|$ denotes the Euclidean length of the vector $\mathbf{v}$.}

\medskip

A logarithmic analogue of Gao's Theorem is the following result.
\begin{theorem}\label{logarithmicGao}
Let $Q$ be any log-integral polytope in $\mathbb{R}^n$ contained in a hyperplane $H$, and let $\mathbf{v}\in \mathbb{R}^n$ be a log-integral point lying outside of $H$. Suppose that $\mathbf{v_1},\dots ,\mathbf{v_k}$ are all the vertices of the log-integral polytope $Q$. Then the polytope ${\rm conv}(v,Q)$ is log-integrally indecomposable if and only if \ $\overline{\gcd}(\mathbf{v}-\mathbf{v_1},\dots , \mathbf{v}-\mathbf{v_k})=1$.
\end{theorem}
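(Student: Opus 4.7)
My plan is to adapt Gao's proof of the integer analog, with Lemma \ref{puncteR^n} replacing his count of lattice points on a segment and with one extra shift construction to accommodate the positive-orthant nature of the logarithmic lattice. Fix notation $\mathbf{v} = (\log a_1, \ldots, \log a_n)$, $\mathbf{v_i} = (\log b_{i,1}, \ldots, \log b_{i,n})$, $Q' := \mathrm{conv}(\mathbf{v}, Q)$, and $d := \overline{\gcd}(\mathbf{v}-\mathbf{v_1}, \ldots, \mathbf{v}-\mathbf{v_k})$. For the forward direction I will argue by contrapositive: suppose $Q' = P_1 + P_2$ with $P_1, P_2$ log-integral and each containing at least two points. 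The vertex $\mathbf{v}$ of $Q'$ decomposes uniquely as $\mathbf{v} = \mathbf{p}_1 + \mathbf{p}_2$ for vertices $\mathbf{p}_i$ of $P_i$, and by the standard edge-decomposition of a Minkowski sum each edge $\mathbf{v v_i}$ of $Q'$ splits along its own direction, producing scalars $\alpha_i \in [0,1]$ with $\mathbf{p}_1 + \alpha_i(\mathbf{v_i}-\mathbf{v})$ a vertex of $P_1$ and $\mathbf{p}_2 + (1-\alpha_i)(\mathbf{v_i}-\mathbf{v})$ a vertex of $P_2$. The mixed Minkowski sum $\mathbf{p}_1 + \alpha_j(\mathbf{v_j}-\mathbf{v}) + \mathbf{p}_2 + (1-\alpha_i)(\mathbf{v_i}-\mathbf{v})$ can be rewritten as $\mathbf{v} + \lambda_{ij}(\mathbf{m}_{ij}-\mathbf{v})$ for some $\mathbf{m}_{ij} \in H$ on the segment $\mathbf{v_i v_j}$ and $\lambda_{ij} = 1 - \alpha_i + \alpha_j$; since the ray from $\mathbf{v}$ through $\mathbf{m}_{ij}$ exits $Q'$ exactly at $\mathbf{m}_{ij}$, membership in $P_1 + P_2 = Q'$ forces $\lambda_{ij} \leq 1$, that is $\alpha_j \leq \alpha_i$ for every ordered pair, so symmetry collapses all $\alpha_i$ to a common value $\alpha$, which lies in $(0,1)$ because each $P_i$ has at least two points. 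Log-integrality of $\mathbf{p}_1 + \alpha(\mathbf{v_i}-\mathbf{v})$ then gives $(b_{i,j}/a_j)^{\alpha} \in \mathbb{Q}_{>0}$ for every $i,j$, so writing $\alpha = r/s$ in lowest terms and comparing prime valuations forces $s$ to divide every $\nu_p(b_{i,j}) - \nu_p(a_j)$; hence $s \mid d$, contradicting $d = 1$ and $s \geq 2$.

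For the reverse direction, assuming $d \geq 2$, Lemma \ref{puncteR^n} supplies log-integral points $\mathbf{u}_i := \mathbf{v} + \tfrac{1}{d}(\mathbf{v_i}-\mathbf{v})$ and $\mathbf{u}_i' := \mathbf{v} + \tfrac{d-1}{d}(\mathbf{v_i}-\mathbf{v})$ on each edge $\mathbf{v v_i}$. Set $P_1^{\circ} := \tfrac{1}{d}(Q'-\mathbf{v})$ and $P_2^{\circ} := \mathbf{v} + \tfrac{d-1}{d}(Q'-\mathbf{v})$. Convexity of $Q'-\mathbf{v}$ gives $P_1^{\circ} + P_2^{\circ} = Q'$, and the vertices of $P_2^{\circ}$, namely $\mathbf{v}$ and the $\mathbf{u}_i'$, are already log-integral; but the vertices of $P_1^{\circ}$ are $\mathbf{0}$ and $\tfrac{1}{d}(\mathbf{v_i}-\mathbf{v})$, whose prime-power exponents are integers by the $\overline{\gcd}$ hypothesis yet may be negative. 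I would correct this by the log-integral shift $\mathbf{t} = (\log T_1, \ldots, \log T_n)$ with
\[
T_j := \prod_p p^{\max_i \max(0,\, (\nu_p(a_j) - \nu_p(b_{i,j}))/d)},
\]
setting $P_1 := P_1^{\circ} + \mathbf{t}$ and $P_2 := P_2^{\circ} - \mathbf{t}$. A direct $p$-adic check yields $\nu_p(T_j) \leq \nu_p(a_j)$ and $\nu_p(T_j) \leq \nu_p\bigl(a_j (b_{i,j}/a_j)^{(d-1)/d}\bigr)$ for all $i, j, p$, so both $P_1$ and $P_2$ have log-integral vertices; the hypothesis $d \geq 2$ together with $\mathbf{v_i} \neq \mathbf{v}$ ensures each contains at least two distinct points, and by construction $P_1 + P_2 = Q'$.

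The hard part, relative to Gao's setting, is exactly this tension between the algebraic and geometric constraints of the logarithmic lattice: Gao's original construction takes place in the full lattice $\mathbb{Z}^n$, where the analogous summands $\tfrac{1}{d}(Q'-\mathbf{v})$ and $\mathbf{v} + \tfrac{d-1}{d}(Q'-\mathbf{v})$ are automatically integral, whereas a log-integral polytope is forced to lie in the positive orthant $\{(\log k_1, \ldots, \log k_n) : k_i \in \mathbb{Z}_{>0}\}$. The shift $\mathbf{t}$ above is the cheapest way to restore log-integrality in both summands simultaneously, and the only nonroutine calculation in the proof is verifying that both divisibility conditions required of $T_j$ hold.
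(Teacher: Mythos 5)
Your proof is correct and follows the strategy the paper prescribes (adapt Gao's argument, substituting $\overline{\gcd}$ for $\gcd$ and Lemma \ref{puncteR^n} for the lattice-point count); the paper itself omits all details. In the forward direction, passing to the mixed Minkowski sum $\mathbf{p}_1+\alpha_j(\mathbf{v_j}-\mathbf{v})+\mathbf{p}_2+(1-\alpha_i)(\mathbf{v_i}-\mathbf{v})$ and using the fact that the ray from $\mathbf{v}$ through a point of $Q$ leaves $Q'$ at that point is a clean way to force all splitting ratios $\alpha_i$ to coincide, and the $p$-adic reduction from log-integrality of $\mathbf{p}_1+\alpha(\mathbf{v_i}-\mathbf{v})$ to $s\mid \overline{\gcd}(\mathbf{v}-\mathbf{v_i})$ for $\alpha=r/s$ in lowest terms is exactly where the definition of $\overline{\gcd}$ enters. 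The one spot that would benefit from an extra sentence is your claim that the common $\alpha$ lies in $(0,1)$: if $\alpha=0$ then every vertex of $Q'$ would decompose with $P_1$-component $\mathbf{p}_1$, and since every vertex of a Minkowski summand appears in the decomposition of some vertex of the sum, this forces $P_1$ to be a single point, contradicting the hypothesis (and symmetrically for $\alpha=1$).

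The genuinely nonroutine contribution, which is not acknowledged by the paper's ``the details will be omitted,'' is the shift $\mathbf{t}$ in the converse. The verbatim translation of Gao's decomposition $A=\tfrac1d(C-\mathbf{v})$, $B=\mathbf{v}+\tfrac{d-1}{d}(C-\mathbf{v})$ does not yield log-integral polytopes here: by Definition \ref{culogaritmi} a log-integral point has all coordinates of the form $\log k$ with $k$ a positive integer, hence nonnegative, whereas $\tfrac1d(\mathbf{v_i}-\mathbf{v})$ has negative $j$th coordinate whenever $b_{i,j}<a_j$. Your shift with $T_j=\prod_p p^{\max_i\max\left(0,\,(\nu_p(a_j)-\nu_p(b_{i,j}))/d\right)}$ is the minimal log-integral correction making $P_1^\circ+\mathbf{t}$ log-integral, and the two $p$-adic inequalities $\nu_p(T_j)\le \nu_p(a_j)$ and $\nu_p(T_j)\le \tfrac1d\nu_p(a_j)+\tfrac{d-1}{d}\nu_p(b_{i,j})$ (each a consequence of $\nu_p\ge 0$ and $d\ge 2$) do keep $P_2^\circ-\mathbf{t}$ log-integral. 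So your argument both fills in the details the paper leaves out and correctly repairs the one step where a literal substitution into Gao's proof would fail.
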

\begin{proof}
The proof follows the same lines used in the proof of Gao's Theorem, with ``integral'' replaced by ``log-integral'', $\gcd$ replaced by $\overline{\gcd}$, and the above lemma replaced by Lemma \ref{puncteR^n}. The details will be omitted.
\end{proof}

The reader may combine Theorem \ref{critOstroDir} and Theorem \ref{logarithmicGao} to obtain criteria for absolute irreducibility for multivariate Dirichlet polynomials whose Newton log-integral polytopes have more than two vertices. Other criteria for log-integrally indecomposability may be obtained by adapting the remaining results of Gao in \cite{Gao} for multivariate polynomials, for instance. For an algorithm for testing indecomposability we refer the reader to Gao and Lauder \cite{GaoLauder}, and for absolute irreducibility modulo large prime numbers to Gao and Rodrigues \cite{GaoRodrigues}.

\section{Logarithmic upper Newton polygons and Stepanov-Schmidt type results} \label{Stepanov}

In the case of multivariate polynomials, apart from Newton polytopes, another geometrical object useful in their study is the {\it upper Newton polygon}, used by Stepanov and Schmidt \cite{Schmidt} (see also Gao \cite{Gao}) to prove the following absolute irreducibility criterion for bivariate polynomials. 
\smallskip

{\bf The Stepanov-Schmidt criterion.} {\em Let $K$ be a field and let $f\in K[X,Y]$ with degree $n$ in $X$. If the upper Newton polygon of $f$ with respect to $Y$ has only
one line segment from $(0,m)$ to $(n,0)$ and $\gcd(m,n)=1$, then $f$ is absolutely
irreducible over $K$.}
\smallskip

Here the upper Newton polygon of $f(X,Y)=a_0(Y)+a_1(Y)X+\cdots +a_{n}(X)X^n$ is the upper convex hull of the set of points $(0,\deg a_0), (1,\deg a_1),\dots , (n,\deg a_n)$.

In this section we will construct a logarithmic analogue for algebraically primitive multivariate Dirichlet polynomials of the upper Newton polygon for multivariate polynomials, which we will call the {\it logarithmic upper Newton polygon}, or briefly the {\it upper Newton log-polygon}.
We will fix again an arbitrarily chosen real number $\mathfrak{b}$ greater than $1$, which will be the base our logarithms will be considered to, and for a given base $\mathfrak{b}$, we will call {\it log-integral} any point having coordinates $(\log _{\mathfrak{b}}x,\ \log_{\mathfrak{b}}y)$ with $x,y$ positive integers. As we shall see later, in this case too the base $\mathfrak{b}$ will be in some sense irrelevant, so to simplify notations we will choose again to work with natural logarithms, keeping in mind that the construction of the upper Newton log-polygon below can be done using any base $\mathfrak{b}>1$.

Now let $K$ be a field, and $f(s_{1},\dots ,s_{t})$ be an algebraically primitive multivariate Dirichlet polynomial in $t\geq 2$ indeterminates $s_{1},\dots ,s_{t}$ and coefficients in $K$. Fix one of the indeterminates, say $s_{t}$, and denote it by $s$, then write $f$ as $f(s)=\sum\limits_{i=n'}^{n}\frac{a_{i}}{i^{s}}$ with $a_{i}$ multivariate Dirichlet polynomials with indeterminates $s_{1},\dots ,s_{t-1}$ and coefficients in $K$, $a_{n'}a_{n}\neq 0$. As $f$ is algebraically primitive, we have $1\leq n'<n$. Let us fix another indeterminate, say $s_{r}$ with $r\in \{1,\dots,t-1\} $, and then let us consider the degrees of the coefficients $a_{i}$ regarded as Dirichlet polynomials in indeterminate $s_{r}$ and coefficients Dirichlet polynomials in the remaining indeterminates. Denote these degrees by $\deg _{r}a_{i}$, and to each term $\frac{a_{i}}{i^{s}}$ of $f$ with $a_{i}\neq 0$ let us associate a point in plane with coordinates $(\log i,\log \deg_{r}a_{i})$. The {\it upper} convex hull of all of these points will be called the {\it upper Newton log-polygon} of $f$ with respect to indeterminate $s_{r}$.  

At this point we have to mention that in the case of our multivariate Dirichlet polynomials, when applied to the coefficients $a_i$, $-\log \deg_{r}(\cdot )$ has the properties of a valuation, as in the case of $\nu _p(\cdot)$, that was used in the construction of the Newton log-polygon in Section \ref{NewtonLogPol}. In this respect one might use the {\it lower} convex hull of the set of points $(\log i,-\log \deg_{r}a_{i})$, to derive results similar to those in Section \ref{NewtonLogPol}. However, to avoid carrying the minus sign, we will prefer to use the upper convex hull of the set of points $(\log i,\log \deg_{r}a_{i})$. More precisely, this upper convex hull is obtained as follows: 
Let $i_{1}=n'$, $P_{1}=(\log i_{1},\log \deg_{r} a_{i_{1}})=(\log n',\log \deg _{r}a_{n'})$, and let $P_{2}=(\log i_{2},\log \deg _{r}a_{i_{2}})$ with $i_{2}$ the largest integer with the property that there are no points $(\log i,\log \deg_{r} a_{i})$ lying above the line passing through $P_{1}$ and $P_{2}$. Then let $P_{3}=(\log i_{3},\log \deg _{r}a_{i_{3}})$, with $i_{3}$ the largest integer such that there are no points $(\log i,\log \deg_{r} a_{i})$ lying above the line passing through $P_{2}$ and $P_{3}$, and so on. The last line segment built in this way will be $P_{v-1}P_{v}$, with $P_{v}=(\log n,\log \deg _{r}a_{n})$.
The broken line $P_{1}\dots P_{v}$ thus constructed is the upper Newton log-polygon of $f$ with respect to the indeterminate $s_{r}$. We will refer to the log-integral points $P_{1},\dots ,P_{v}$ as the {\it vertices} of the upper Newton log-polygon of $f$.
We note that when we construct this upper convex hull, the missing terms in the sum defining $f$, that is the terms $\frac{a_{i}}{i^{s}}$ with $a_{i}=0$, are irrelevant, as we can formally consider them as having $-\infty $ for their $y$-coordinate (in view of the convention that the zero Dirichlet polynomial has degree $0$).

The line segments $P_{l}P_{l+1}$ will be called {\it edges}. An edge $P_{l}P_{l+1}$ might pass through some log-integral points, other than its endpoints $P_{l}$ and $P_{l+1}$. These points might have coordinates of the form $(\log i, \log y_{i})$ with $i,y_{i}\in \mathbb{Z_{+}}$ and $y_{i}$ not necessarily equal to $\deg _{r} a_{i}$. Let us denote all these intermediate points by $Q_{1},\dots ,Q_{k}$, say. Then $P_{l}Q_{1}$, $Q_{1}Q_{2}$, $\dots $, $Q_{k-1}Q_{k}$ and $Q_{k}P_{l+1}$ will be called {\it segments} of the upper Newton log-polygon. Thus, as in the case of Newton log-polygons, a segment will contain no log-integral points other than its endpoints, and an edge consists of a number of segments, possibly just one, in which case the edge itself will be called a segment.  Note that any two edges must have different slopes, while two segments that belong to the same edge will obviously have the same slope. 

The vectors $\overline{P_{i}P_{i+1}}$ will be called {\it vectors} of the edges of the upper Newton log-polygon,
and {\it the vector system} of the upper Newton log-polygon will be the union of all the vectors of its edges, repetitions allowed.

At the cost of a translation on the $x$-axis, this construction too may be also used for multivariate Dirichlet polynomials that are not algebraically primitive. The use of logarithms brings here too more geometric insight, while arithmetical conditions lead to effective results on the factorization of these multivariate Dirichlet polynomials. The construction of the vertices $P_{1},\dots ,P_{r}$ and the identification of all the intermediate log-integral points lying on the edges $P_{1}P_{2},\dots ,P_{v-1}P_{v}$ depends now on some more sophysticated ``log-diophantine" inequalities and equations. Thus, if we choose two indices $i$ and $j$ with $i<j$, we see that a log-integral point $(\log k,\log \deg _{r}a_{k})$ lies below or on the line passing through the log-integral points $(\log i,\log \deg _{r}a_{i})$ and $(\log j,\log \deg _{r}a_{j})$ if and only if
\begin{equation}\label{dedesubt}
\deg _{r}a_{k}\leq (\deg _{r}a_{i})^{\log _{\frac{j}{i}}\frac{j}{k}}(\deg _{r}a_{j})^{1-\log _{\frac{j}{i}}\frac{j}{k}},
\end{equation}
while a log-integral point $(\log k,\log y)$ with $y$ and $k$ integers with $i<k<j$, lies on this line if and only if
\begin{equation}\label{exactpedreapta}
y= (\deg _{r}a_{i})^{\log _{\frac{j}{i}}\frac{j}{k}}(\deg _{r}a_{j})^{1-\log _{\frac{j}{i}}\frac{j}{k}}.
\end{equation}
We recall that the log-integral points lying on a line segment with endpoints that are log-integral too are described in Lemma \ref{puncteR^n}, which will allow us to find explicit irreducibility conditions. We mention here that conditions (\ref{dedesubt}) and (\ref{exactpedreapta}) also show that the construction of the upper Newton log-polygon doesn't essentially depend on the base of the logarithms that we are using. So changing this base will not affect the intrinsic arithmetic properties of the upper Newton log-polygon that we will rely on when studying the factorization of a multivariate Dirichlet polynomial. 

We are now ready to state the following result, that is in some analogous to Theorem \ref{DumDir}.

\begin{theorem}\label{DumStepDir}
Let $f,g,h$ be nonconstant algebraically primitive multivariate Dirichlet polynomials with $f=g\cdot h$. Then the upper Newton log-polygon of $f$ with respect to an indeterminate $s_{r}$ is obtained by using translates of the edges of the upper Newton log-polygons of $g$ and $h$ with respect to $s_{r}$, using precisely one translate for each edge, so as to form a polygonal line with decreasing slopes, when considered from left to the right.
\end{theorem}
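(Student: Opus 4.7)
The key algebraic observation is that the function $\deg_r$ on the ring of nonzero multivariate Dirichlet polynomials in $s_1,\dots,s_{t-1}$ is multiplicative, because the Dirichlet product of two polynomials multiplies their degrees in each indeterminate: $\deg_r(\alpha\beta)=\deg_r(\alpha)\cdot\deg_r(\beta)$. Hence $\log\deg_r$ is additive under multiplication, and one also has the max-triangle inequality $\deg_r(\alpha+\beta)\leq\max(\deg_r\alpha,\deg_r\beta)$ with equality whenever the maximum is uniquely attained. In other words, $v:=-\log\deg_r$ satisfies all the axioms of a valuation on the multivariate Dirichlet polynomial ring, and the upper Newton log-polygon of $f$ with respect to $s_r$ is exactly the standard upper Newton polygon attached to this valuation $v$, with the $x$-axis rescaled logarithmically to match the multiplicative structure of the Dirichlet indices. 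The plan is therefore to transpose the proof of Theorem \ref{DumDir} verbatim, with $\nu_p$ replaced by $v$, ``lower convex hull'' replaced by ``upper convex hull'', and all min/increasing-slope arguments replaced by max/decreasing-slope arguments.

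Concretely, write $f(s)=\sum_{i=n'}^{n}\tfrac{a_i}{i^s}$, $g(s)=\sum_{j=d'}^{d}\tfrac{b_j}{j^s}$, $h(s)=\sum_{k=n'/d'}^{n/d}\tfrac{c_k}{k^s}$ with nonzero coefficients that are multivariate Dirichlet polynomials in $s_1,\dots,s_{t-1}$. Fix an edge $P_lP_{l+1}$ of the upper Newton log-polygon of $f$ with endpoints at indices $i_m<i_M$, set $A=\log\deg_r a_{i_M}-\log\deg_r a_{i_m}$, $I=\log i_M-\log i_m>0$, and $F=I\log\deg_r a_{i_m}-A\log i_m$; assign to every nonzero term the weight $w(\tfrac{\alpha}{i^s}):=I\log\deg_r(\alpha)-A\log i$. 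By construction of the upper hull, $w(\tfrac{a_i}{i^s})\leq F$ for all $i$ with strict inequality for $i<i_m$ or $i>i_M$. Define $G:=\max_{d'\leq j\leq d}w(\tfrac{b_j}{j^s})$ with $j_m\leq j_M$ the least and largest $j$ attaining it, and analogously $H$, $k_m$, $k_M$ for $h$. Additivity of $\log\deg_r$ under multiplication yields the crucial identity
\[
w\Bigl(\tfrac{b_jc_k}{(jk)^s}\Bigr)=w\Bigl(\tfrac{b_j}{j^s}\Bigr)+w\Bigl(\tfrac{c_k}{k^s}\Bigr).
\]
Applying the multiplication rule $a_i=\sum_{jk=i}b_jc_k$ and the max-triangle inequality for $\log\deg_r$, one obtains $w(\tfrac{a_i}{i^s})\leq\max_{jk=i}[w(\tfrac{b_j}{j^s})+w(\tfrac{c_k}{k^s})]$. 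For $i=j_mk_m$ the maximum is uniquely attained at $(j_m,k_m)$, because any other factorization forces $j<j_m$ (hence $w(\tfrac{b_j}{j^s})<G$ strictly) or $k<k_m$ (hence $w(\tfrac{c_k}{k^s})<H$ strictly); uniqueness of the max converts the triangle inequality into equality, giving $w(\tfrac{a_{j_mk_m}}{(j_mk_m)^s})=G+H$. For $i<j_mk_m$ every factorization $i=jk$ has $j<j_m$ or $k<k_m$, so $w(\tfrac{a_i}{i^s})<G+H$; for $i>j_mk_m$ one still has $w(\tfrac{a_i}{i^s})\leq G+H$. These three facts force $F=G+H$ and $i_m=j_mk_m$, with the symmetric argument on the largest attaining indices giving $i_M=j_Mk_M$.

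Taking logarithms in $i_m=j_mk_m$ and $i_M=j_Mk_M$ gives $\log i_M-\log i_m=(\log j_M-\log j_m)+(\log k_M-\log k_m)$, while the defining equations for $G$ and $H$ force both of the segments joining $(\log j_m,\log\deg_r b_{j_m})$ to $(\log j_M,\log\deg_r b_{j_M})$ and $(\log k_m,\log\deg_r c_{k_m})$ to $(\log k_M,\log\deg_r c_{k_M})$ to lie on lines of slope $A/I$, and to be edges of the upper Newton log-polygons of $g$ and $h$ whenever their widths are nonzero. To finish, let $S_1,\dots,S_r$ be the distinct slopes occurring in the upper Newton log-polygon of $f$, and let $Lf_q$, $Lg_q$, $Lh_q$ denote the widths of the (possibly empty) edges of slope $S_q$ in the three log-polygons. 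The preceding analysis gives $Lf_q=Lg_q+Lh_q$ with at least one summand positive. Summing over $q$ and combining $\sum Lf_q=\log(n/n')$ with the inequalities $\sum Lg_q\leq\log(d/d')$, $\sum Lh_q\leq\log((n/d)/(n'/d'))$ and the identity $\log(n/n')=\log(d/d')+\log((n/d)/(n'/d'))$ forces equality throughout, so $g$ and $h$ have no edges of slopes outside $\{S_1,\dots,S_r\}$, and their edge vectors (placed in decreasing order of slope, since we are on the upper hull) glue into exactly the vector system of the upper Newton log-polygon of $f$. The main technical hurdle will be the careful bookkeeping in the degenerate case where one of $\log j_M-\log j_m$ or $\log k_M-\log k_m$ vanishes — meaning one of $g,h$ contributes no edge of slope $A/I$ at all — since in that case only the nondegenerate factor yields an edge of the required slope, and the width-accounting argument above must be invoked to rule out spurious edges of other slopes appearing in $g$ or $h$.
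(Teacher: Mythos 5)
Your proposal is correct and follows essentially the same route as the paper's own proof: the paper explicitly observes beforehand that $-\log\deg_r(\cdot)$ plays the role of a valuation, and its proof of Theorem \ref{DumStepDir} is a direct transposition of the proof of Theorem \ref{DumDir} with the same weight function, the same extremal indices $j_m,j_M,k_m,k_M$, the same unique-maximizer argument giving $\deg_r a_{j_mk_m}=\deg_r b_{j_m}\cdot\deg_r c_{k_m}$, and the same final width-accounting to force $Lf_q=Lg_q+Lh_q$ with no spurious slopes. Your explicit statement that $\deg_r$ is multiplicative under the Dirichlet product (which holds because the coefficients live in an integral domain, so leading parts cannot cancel) and your remark about the degenerate case where one factor contributes no edge of a given slope are both also present, either explicitly or implicitly, in the paper's argument.
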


In other words, this theorem says that the vector system of the upper Newton log-polygon of $f$ is obtained by considering the union of the vector systems of the upper Newton log-polygons of $g$ and $h$, repetitions allowed, and then replacing any two vectors having the same orientation by their sum.
\medskip

\begin{proof}\ The proof is similar to that of Theorem \ref{DumDir}, with almost the same phrasing, and only some minor modifications, so some of the details will be omitted.

Let $f(s)=\sum\limits _{i=n'}^{n}\frac{a_{i}}{i^{s}}$, $g(s)=\sum\limits _{j=d'}^{d}\frac{b_{j}}{j^{s}}$ and $h(s)=\sum\limits _{k=n'/d'}^{n/d}\frac{c_{k}}{k^{s}}$ with $a_{i},b_{j}$ and $c_{k}$ multivariate Dirichlet polynomials in indeterminates $s_{1},\dots ,s_{r}$. Let us consider an edge of the upper Newton log-polygon of $f$, say $P_{l}P_{l+1}$ (that may consist of more than a single segment), and let us assume that the points $P_{l}$ and $P_{l+1}$ have coordinates $(\log i_{m},\log \deg _{r}a_{i_{m}})$, and $(\log i_{M},\log \deg _{r}a_{i_{M}})$, respectively, with $i_{m}<i_{M}$. The slope of $P_{l}P_{l+1}$ is
\[
S=\frac{\log \deg _{r}a_{i_{M}}-\log \deg _{r}a_{i_{m}}}{\log i_{M}-\log i_{m}}.
\]
Let now $\log \deg _{r}a_{i_{M}}-\log \deg _{r}a_{i_{m}}=A$ and $\log i_{M}-\log i_{m}=I$, so $S=\frac{A}{I}$. The edge $P_{l}P_{l+1}$ of the Schmidt-Stepanov log-polygon of $f$ belongs to the line $Iy -Ax=F$, where
\[
F=I\log \deg _{r}a_{i_{M}}-A\log i_{M}=I\log \deg _{r}a_{i_{m}}-A\log i_{m}.
\] 
According to the definition, all the points $(\log i,\log \deg _{r}a_{i})$, $i=n',\dots ,n$, either lie on this line, or below it, so $I\log \deg _{r}a_{i}-A\log i\leq F$ for all $i$, with a strict inequality for $i<i_{m}$ and $i>i_{M}$, and we have equality in the endpoints corresponding to $i_{m}$ and $i_{M}$ (and for the points $(\log i, \log \deg _{r}a_{i})$ with $i_{m}<i<i_{M}$, if any, any of the cases ``$<$" and ``=" being possible).

To each term $\frac{a_{i}}{i^{s}}$ we will associate a real number
\[
w\Bigl(\frac{a_{i}}{i^{s}}\Bigr):=I\log \deg _{r}a_{i}-A\log i,
\]
called its {\it weight} with respect to the edge $P_{l}P_{l+1}$. We may then regard $i_{m}$ and $i_{M}$ as the least and the largest $i$ such that the corresponding term $\frac{a_{i}}{i^{s}}$ of $f$ has maximum weight (which is $F$) with respect to $P_{l}P_{l+1}$, so $i_{m}$ and $i_{M}$ are uniquely determined.
For $g$ we define
\[
G:=\underset{d'\leq j\leq d}{\rm max}\{I\log \deg _{r}b_{j}-A\log j\} 
\]
and we will also define $j_{m}$ and $j_{M}$ as the least and the largest index, respectively, such that
\begin{equation}\label{GDirichletStepanov}
G=I\log \deg _{r}b_{j_{m}}-A\log j_{m}=I\log \deg _{r}b_{j_{M}}-A\log j_{M}.
\end{equation}  
We notice that $j_{m}\leq j_{M}$. Similarly, for $h$ we define
\[
H:=\underset{n'/d'\leq k\leq n/d}{\rm max}\{I\log \deg _{r}c_{k}-A\log k\} 
\]
and we denote by $k_{m}$ and $k_{M}$ the least and the largest index, respectively, such that
\begin{equation}\label{HDirichletStepanov}
H=I\log \deg _{r}c_{k_{m}}-A\log k_{m}=I\log \deg _{r}c_{k_{M}}-A\log k_{M}.
\end{equation}  
Here too, we have $k_{m}\leq k_{M}$. By the multiplication rule for Dirichlet series, we have
\begin{equation}\label{regulaDirichletStepanov}
\frac{a_{j_{m}k_{m}}}{(j_{m}k_{m})^{s}}=\sum\limits _{j\cdot k=j_{m}\cdot k_{m}}\frac{b_{j}}{j^{s}}\cdot\frac{c_{k}}{k^{s}}.
\end{equation}  
For two terms that are multiplied in (\ref{regulaDirichletStepanov}) we have $w(\frac{b_{j}}{j^{s}}\cdot\frac{c_{k}}{k^{s}})=w(\frac{b_{j}}{j^{s}})+w(\frac{c_{k}}{k^{s}})$, so the weight of the summand with $j=j_{m}$ and $k=k_{m}$ in (\ref{regulaDirichletStepanov}) is $G+H$, while the weights of the rest of the summands are all less than $G+H$, since for these ones we either have $j<j_{m}$, or $k<k_{m}$. Besides, when $j\cdot k$ is constant, the weight of a product $\frac{b_{j}}{j^{s}}\cdot\frac{c_{k}}{k^{s}}$ regarded as a function on $\log \deg _{r}b_{j}+\log \deg _{r}c_{k}$ is a strictly increasing function, as $I>0$. Therefore, since in (\ref{regulaDirichletStepanov}) we have $j\cdot k=j_{m}\cdot k_{m}$, hence $j\cdot k$ is constant, the sum $\log \deg _{r}b_{j}+\log \deg _{r}c_{k}$ (and hence $\deg _{r}b_{j}c_{k}$) has maximum value only for $j=j_{m}$ and $k=k_{m}$, for any other pair $(j,k)$ being strictly smaller. Thus $\deg _{r}a_{j_{m}k_{m}}=\deg _{r}b_{j_{m}}c_{k_{m}}$, so
\begin{equation}\label{exactDirichletStepanov}
w\Bigl(\frac{a_{j_{m}k_{m}}}{(j_{m}k_{m})^{s}}\Bigr)=G+H.
\end{equation}
Reasoning similarly with (\ref{regulaDirichletStepanov}) replaced by $\frac{a_{i}}{i^{s}}=\sum\limits _{j\cdot k=i}\frac{b_{j}}{j^{s}}\cdot \frac{c_{k}}{k^{s}}$, one may prove that
\begin{eqnarray}
w\Bigl(\frac{a_{i}}{i^{s}}\Bigr) & < & G+H\qquad
{\rm for}\  i<j_{m}\cdot k_{m},\label{caz1DirichletStepanov}\\
w\Bigl(\frac{a_{i}}{i^{s}}\Bigr) & \leq & G+H\qquad
{\rm for}\  i\geq j_{m}\cdot k_{m}.\label{caz2DirichletStepanov}
\end{eqnarray}
By (\ref{exactDirichletStepanov}), (\ref{caz1DirichletStepanov}) and (\ref{caz2DirichletStepanov}), we must have
$G+H=F$ and $j_{m}\cdot k_{m}=i_{m}$.
Similarly, we also obtain $j_{M}\cdot k_{M}=i_{M}$, leading to $\log i_{M}-\log i_{m}=(\log j_{M}-\log j_{m})+(\log k_{M}-\log k_{m})$, with at least one of $\log j_{M}-\log j_{m}$ and $\log k_{M}-\log k_{m}$ positive. If both $\log j_{M}-\log j_{m}$ and $\log k_{M}-\log k_{m}$ are nonzero, then the geometric segment with endpoints $(\log j_{m},\log \deg _{r}b_{j_{m}})$ and $(\log j_{M},\log \deg _{r}b_{j_{M}})$ must be an edge of the upper Newton log-polygon of $g$, and the geometric segment with endpoints $(\log k_{m},\log \deg _{r}c_{k_{m}})$ and $(\log k_{M},\log \deg _{r}c_{k_{M}})$ must be an edge of the upper Newton log-polygon of $h$, the slopes of these two edges being $S=\frac{A}{I}$, since from (\ref{GDirichletStepanov}) and (\ref{HDirichletStepanov}) one obtains
\begin{equation*}
\frac{\log \deg _{r}b_{j_{M}}-\log \deg _{r}b_{j_{m}}}{\log j_{M}-\log j_{m}}=\frac{A}{I}=\frac{\log \deg _{r}c_{k_{M}}-\log \deg _{r}c_{k_{m}}}{\log k_{M}-\log k_{m}}.
\end{equation*}
Thus the sum of the lengths of the edges with slope $S$ in the upper Newton log-polygons of $g$ and $h$ is precisely the length of the edge with slope $S$ in the upper Newton log-polygon of $f$, and the vector of the edge with slope $S$ in the upper Newton log-polygon of $f$ is the sum of the vectors of the edges with slope $S$ in the upper Newton polygons of $g$ and $h$. Finally, one may prove that if one of $g$ and $h$ has an edge with slope $S$ in its upper Newton log-polygon, then $S$ must appear among the slopes of the edges in the upper Newton log-polygon of $f$. Also, the upper Newton log-polygons of $g$ and $h$ cannot have edges with slopes other than the ones appearing in the upper Newton log-polygon of $f$. This completes the proof. 
\end{proof}

In particular, Theorem \ref{DumStepDir} shows that each nonconstant factor of a multivariate Dirichlet polynomial $f$ must contribute at least one edge (that may consist of a single segment) to the upper Newton log-polygon of $f$. Therefore, we have the following analogue for multivariate Dirichlet polynomials of the Stepanov-Schmidt criterion for bivariate polynomials:
\begin{corollary}\label{coroDumasStepanov}
If with respect to an indeterminate $s_{r}$, the upper Newton log-polygon of an algebraically primitive multivariate Dirichlet polynomial $f$ consists of a single edge, and this edge contains no log-integral points other than its endpoints, then $f$ must be irreducible.
\end{corollary}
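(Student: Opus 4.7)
The plan is to argue by contradiction in direct analogy with the proof of Corollary \ref{coroDumas}, using Theorem \ref{DumStepDir} in place of Theorem \ref{DumDir}. Suppose that $f=g\cdot h$ with $g,h$ nonconstant multivariate Dirichlet polynomials. The first step is to note that the hypothesis of algebraic primitivity forces both $g$ and $h$ to have at least two terms; indeed, a nonconstant one-term factor of $f$ would violate algebraic primitivity by definition. Consequently, the upper Newton log-polygon of each of $g$ and $h$ with respect to $s_r$ contains at least one edge.

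The second step is to invoke Theorem \ref{DumStepDir}: the edges of the upper Newton log-polygon of $f$ with respect to $s_r$ are obtained from translates of the edges of the upper Newton log-polygons of $g$ and $h$, with edges of equal slope combined by vector addition. Since the upper Newton log-polygon of $f$ has only one edge, there is only one slope $S$ available, so all edges of $g$ and $h$ must have slope $S$. But the edges of the upper Newton log-polygon of a single factor have pairwise distinct slopes, so each of $g$ and $h$ has exactly one edge, say with vectors $v_g$ and $v_h$, and the single edge of $f$ has vector $v_f=v_g+v_h$.

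The third and decisive step is to extract an interior log-integral point on the edge of $f$. Using the notation from the proof of Theorem \ref{DumStepDir}, write $P_f=P_g+P_h$ for the left endpoint of $f$'s edge, where $P_g$ and $P_h$ are the left endpoints of the edges of $g$ and $h$; similarly $Q_f=Q_g+Q_h$. I plan to exhibit the log-integral point
\[
P_h+Q_g=(\log(k_m\cdot d),\ \log(\deg_r c_{k_m}\cdot \deg_r b_d)),
\]
which, being a coordinatewise sum of logarithms of positive integers, is log-integral. A quick check shows $P_h+Q_g=P_f+v_g=Q_f-v_h$; since $v_g$ and $v_h$ are nonzero collinear vectors with the same direction (both have positive $x$-component and equal slope $S$), this point lies strictly between $P_f$ and $Q_f$ on the edge, contradicting the assumption that the edge contains no log-integral points other than its endpoints. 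Hence $f$ is irreducible.

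The main conceptual obstacle I anticipate is simply the bookkeeping in the third step — verifying that the candidate interior point is genuinely log-integral and genuinely lies in the interior of the edge. This is why the decomposition $P_f=P_g+P_h$ coming from $i_m=j_m k_m$ in the proof of Theorem \ref{DumStepDir} must be invoked: without it, $P_f+v_g$ would only be known to lie on the correct line, not to have integer-logarithm coordinates. Everything else is formal manipulation inherited from Theorem \ref{DumStepDir}.
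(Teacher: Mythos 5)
Your proof is correct and takes essentially the same approach as the paper, which compresses the argument into the remark immediately preceding the corollary (each nonconstant factor contributes at least one edge, so a single edge with no interior log-integral points leaves room for only one contribution). Your third step makes explicit the interior log-integral ``glue point'' $P_h+Q_g=P_f+v_g$ that the paper only alludes to when it warns that an edge ``might in principle consist of multiple segments coming from the upper Newton log-polygons of two or more of the nonconstant factors.''
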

Here too it is crucial to make distinction between edges and segments. It is not sufficient to ask the upper Newton log-polygon of $f$ to have a single edge, for an edge might in principle consist of multiple segments coming from the upper Newton log-polygons of two or more of the nonconstant factors of $f$. That's why in Corollary \ref{coroDumasStepanov} we had to ask the edge to have no log-integral points other than its endpoints.
In order to obtain an effective instance of this corollary, we will make use of Lemma \ref{puncteR^n}, as follows.
\begin{theorem}\label{EffectiveStepSchmidt}
Let $K$ be a field, $m<n$ two positive integers and $a_m(t),\dots ,a_n(t)$ Dirichlet polynomials in the indeterminate $t$ over $K$, with $a_ma_n\neq 0$ and $\deg a_m\neq \deg a_n$. Let also $f(s,t)=\sum _{i=m}^{n}\frac{a_i(t)}{i^{s}}$, assume that $f$ is algebraically primitive, and let
\begin{eqnarray*} 
d_{1} & = & \gcd\{ \nu_{p}(m)-\nu_{p}(n):p\ {\rm prime},\ p\mid m\cdot n\}\ and \\
d_{2} & = & \gcd\{ \nu_{p}(\deg a_m)-\nu_{p}(\deg a_n):p\ {\rm prime},\ p\mid \deg a_m\cdot\deg a_n\} .
\end{eqnarray*}
If $\deg a_{i}< (\deg a_{m})^{\log _{\frac{n}{m}}\frac{n}{i}}(\deg a_{n})^{1-\log _{\frac{n}{m}}\frac{n}{i}}$ for $m<i<n$ and  $\gcd(d_1,d_2)=1$, then $f$ is irreducible.
\end{theorem}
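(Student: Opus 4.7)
The plan is to apply Corollary \ref{coroDumasStepanov} directly: I will show that, under the hypotheses, the upper Newton log-polygon of $f$ with respect to $s$ (treating $t$ as the role of $s_r$ in the setup) consists of a single edge joining $P_1 = (\log m, \log\deg a_m)$ to $P_2 = (\log n, \log\deg a_n)$, and that this edge is in fact a segment, i.e.\ contains no log-integral points besides its endpoints.

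First, I would verify that the upper Newton log-polygon reduces to the single edge $P_1P_2$. By construction, this amounts to showing that every intermediate point $(\log i, \log\deg a_i)$ with $m<i<n$ and $a_i\neq 0$ lies \emph{strictly below} the line through $P_1$ and $P_2$. But this is precisely condition (\ref{dedesubt}) with strict inequality, applied with $(i,j,k) = (m,n,i)$: it reads $\deg a_i < (\deg a_m)^{\log_{n/m}(n/i)}(\deg a_n)^{1-\log_{n/m}(n/i)}$, which is exactly the first hypothesis. Because $\deg a_m \neq \deg a_n$, the edge $P_1P_2$ has nonzero slope and is well defined as an edge of the upper convex hull.

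Next, I would show the edge $P_1P_2$ is a segment by invoking Lemma \ref{puncteR^n} in $\mathbb{R}^2$. Writing $\mathbf{v} = (\log m, \log\deg a_m)$ and $\mathbf{w} = (\log n, \log\deg a_n)$, both log-integral points, the lemma says the number of log-integral points on the segment $\mathbf{vw}$ equals $\overline{\gcd}(\mathbf{vw})+1$. Unwinding the definition of $\overline{\gcd}$, the first coordinate contributes $d_1=\gcd\{\nu_p(m)-\nu_p(n): p\mid mn\}$ and the second coordinate contributes $d_2=\gcd\{\nu_p(\deg a_m)-\nu_p(\deg a_n): p\mid \deg a_m\deg a_n\}$, so $\overline{\gcd}(\mathbf{vw}) = \gcd(d_1,d_2)=1$ by hypothesis. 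Hence the only log-integral points on $P_1P_2$ are its endpoints, so it is indeed a segment.

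With these two observations in hand, Corollary \ref{coroDumasStepanov} applies and yields that $f$ is irreducible. The only mild subtlety to be careful about is the distinction between edges and segments in applying Theorem \ref{DumStepDir}: a decomposition $f=g\cdot h$ could in principle split the unique edge of $f$'s upper Newton log-polygon into two sub-segments, so the coprimality condition on the $d_i$'s is essential and not merely convenient—this is the real content ruling out any nontrivial factorization. No serious obstacle is expected; the proof is essentially a two-dimensional transcription of the Newton-log-polygon argument already used in Theorem \ref{calaDumas}, with the valuation $-\log\deg_r(\cdot)$ playing the role of $\nu_p(\cdot)$.
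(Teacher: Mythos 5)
Your proposal is correct and follows essentially the same route as the paper: establish a single edge via condition~(\ref{dedesubt}), establish that the edge is a segment via Lemma~\ref{puncteR^n} and $\gcd(d_1,d_2)=1$, and conclude by Corollary~\ref{coroDumasStepanov}. One small mischaracterization: the hypothesis $\deg a_m\neq\deg a_n$ is not needed to make $P_1P_2$ a well-defined edge (a horizontal edge is perfectly valid); rather it guarantees that $d_2$ is a well-defined positive integer so that $\gcd(d_1,d_2)$ is meaningful in the segment-counting step.
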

\begin{proof}\ In view of (\ref{dedesubt}), our hypothesis that $\deg a_{i}< (\deg a_{m})^{\log _{\frac{n}{m}}\frac{n}{i}}(\deg a_{n})^{1-\log _{\frac{n}{m}}\frac{n}{i}}$ for $m<i<n$ shows that all the log-integral points $(\log i,\log \deg a_i)$ with $m<i<n$ lie below the line passing through the log-integral points $A=(\log m,\log \deg a_m)$ and $B=(\log n,\log \deg a_n)$, so the upper Newton polygon of $f$ consists of a single edge joining $A$ and $B$. Conditions $\deg a_m\neq \deg a_n$ and $\gcd(d_1,d_2)=1$ show that the only log-integral points on the edge $AB$ are its endpoints $A$ and $B$, so the edge $AB$ consists of a single segment. By Corollary \ref{coroDumasStepanov}, $f$ must be irreducible. 
\end{proof}

We mention here that if the coefficients $a_i$ are multivariate Dirichlet polynomials, one may simultaneously use information on the upper Newton log-polygons with respect to different indeterminates, and obtain results similar to those in Section \ref{moreprimes}, but with considerably more involved statements.

\section{Examples}\label{Examples}

1) For every integer $n>1$, the Dirichlet polynomials $\zeta_n(s)=1+\frac{1}{2^s}+\cdots +\frac{1}{n^s}$ that approximate the Riemann zeta-function are irreducible. The claim follows by Proposition \ref{prop1} for prime $n$, and by Proposition \ref{prop2} for composite $n$.

2) For every nonzero integers $a,b,c,d$, the Dirichlet polynomial $f(s)=\frac{a}{10^s}+\frac{b}{11^s}+\frac{c}{14^s}+\frac{d}{16^s}$ is irreducible. Here one may apply Proposition \ref{prop3} with $m=10$, $n=16$ and $i=11$.

3) Let $f(s)$ be a Dirichlet polynomial with integer coefficients and support $\{ i_1,i_2,\dots ,i_k\}$ with $i_1<i_2<\cdots <i_k$, and let $a$ be a nonzero integer and $n>i_k$ an integer with $\gcd (i_1,\dots ,i_k,n)=1$. Then the Dirichlet polynomial $p\cdot f(s)+\frac{a}{n^s}$ is irreducible over $\mathbb{Q}$ for all but finitely many prime numbers $p$.

Indeed, assume that $f(s)=\frac{a_1}{i_1^s}+\cdots +\frac{a_k}{i_k^s}$ with $a_1,\dots ,a_k\in\mathbb{Z}$, $a_1\cdots a_k\neq 0$. To apply Theorem \ref{naiveEisenstein} i) to the Dirichlet polynomial $\frac{p\cdot a_1}{i_1^s}+\cdots + \frac{p\cdot a_k}{i_k^s}+\frac{a}{n^s}$, which is algebraically primitive (as $\gcd (i_1,\dots ,i_k,n)=1$), it suffices to choose any prime $p$ such that $p\nmid a_1a$.

4) Let $f(s)$ be a Dirichlet polynomial with integer coefficients and support $\{ i_1,i_2,\dots ,i_k\}$ with $1<i_1<i_2<\cdots <i_k$, and let $a$ be a nonzero integer and $m<i_1$ a positive integer with $\gcd (m,i_1,\dots ,i_k)=1$. Then the Dirichlet polynomial $\frac{a}{m^s}+p\cdot f(s)$ is irreducible over $\mathbb{Q}$ for all but finitely many prime numbers $p$.

To see this, assume again that $f(s)=\frac{a_1}{i_1^s}+\cdots +\frac{a_k}{i_k^s}$ with $a_1,\dots ,a_k\in\mathbb{Z}$, $a_1\cdots a_k\neq 0$. Here one may choose $m=1$, for instance. To apply Theorem \ref{naiveEisenstein} ii) to the Dirichlet polynomial $\frac{a}{m^s}+\frac{p\cdot a_1}{i_1^s}+\cdots + \frac{p\cdot a_k}{i_k^s}$, which is also algebraically primitive, it suffices to choose any prime $p$ such that $p\nmid a_ka$.

5) Let $m<i<n$ be integers with $m$ coprime to $n$, and let $f(s)=\frac{a_{m}}{m^{s}}+\frac{a_{i}}{i^{s}} +\frac{a_{n}}{n^{s}}$ be a Dirichlet polynomial with integer coefficients, $a_{m}a_ia_{n}\neq 0$. Assume that $q_{1},\dots ,q_{k}$ are all the prime factors of $m\cdot n$, and that their multiplicities in the prime decomposition of $mn$ are all odd. If $a_n$ is odd, $4\mid a_m$, $8\mid a_i$ but $8\nmid a_m$, then $f$ is irreducible over $\mathbb{Q}$.

Note first that $f$ is algebraically primitive, as $m$ an $n$ are coprime. The conclusion follows from Theorem \ref{calaDumas} with $p=2$. Indeed, we have $\nu _2(a_i)>2=\nu _2(a_m)>\nu _2(a_n)=0$. Condition ii) is obviously satisfied, since $\frac{n}{i}>1$ while $\frac{m}{i}<1$ and the exponents $\nu _{p}(a_{i})-\nu _{p}(a_{m})$ and $\nu _{p}(a_{i})-\nu _{p}(a_{n})$ are both positive. Condition iii) also holds, as $\nu _{2}(a_{n})-\nu _{2}(a_{m})=-2$, and for each $i=1,\dots ,k$, $\nu _{q_{i}}(n)-\nu _{q_{i}}(m)$ is odd, being either equal to $\nu _{q_{i}}(n)$, or to $-\nu _{q_{i}}(m)$, as $m$ and $n$ are coprime.

6) Let $m<n$ be coprime positive integers, at least one of them being not a square. Let also $i$ be an integer with $m<i<\sqrt{mn}$. Then for every prime number $p$, the Dirichlet polynomial $f(s)=\frac{1}{m^s}+\frac{p}{i^s}+\frac{p^2}{n^s}$ is irreducible. Indeed, $f$ is algebraically primitive, and since at least one of $m$ and $n$ is not a square, we have $\rho(m,n)<\sqrt{\frac{n}{m}}$. We may now apply Theorem \ref{coroFilaseta1}, since condition (\ref{pantamica}) reduces to the inequalities $\rho(m,n)<\sqrt{\frac{n}{m}}$ and $\rho(m,n)<\frac{n}{i}$, the second one being satisfied, as $i<\sqrt{mn}$. Moreover, for any nonzero integers $a,b,c$ with $p\nmid ac$, the Dirichlet polynomial $\frac{a}{m^s}+\frac{pb}{i^s}+\frac{p^2c}{n^s}$ is also irreducible over $\mathbb{Q}$.

7) Let $m<n$ be coprime positive integers, with at least one of them not a square, and let $i$ be an integer with $\sqrt{mn}<i<n$. Then for every prime number $p$, the Dirichlet polynomial $f(s)=\frac{p^2}{m^s}+\frac{p}{i^s}+\frac{1}{n^s}$ is irreducible. Again, $f$ is algebraically primitive, and since at least one of $m$ and $n$ is not a square, we have $\rho(m,n)<\sqrt{\frac{n}{m}}$. Here we will apply Theorem \ref{coroFilaseta2}, since condition (\ref{pantamare}) reduces to the inequalities $\rho(m,n)<\sqrt{\frac{n}{m}}$ and $\rho(m,n)<\frac{i}{m}$, the second one being satisfied, as $\sqrt{mn}<i$. Besides, for any nonzero integers $a,b,c$ with $p\nmid ac$, the Dirichlet polynomial $\frac{ap^2}{m^s}+\frac{pb}{i^s}+\frac{c}{n^s}$ is irreducible over $\mathbb{Q}$.

8) For every distinct prime numbers $p$ and $q$, the Dirichlet polynomial $f(s)=\frac{pq}{10^{s}}+\frac{q}{15^s} +\frac{p}{18^s}+\frac{pq}{38^{s}}$
is irreducible over $\mathbb{Q}$. Note that $f$ is algebraically primitive. Its irreducibility follows by Theorem \ref{npnp} with $j_1=15$ and $j_2=18$.

9) For any prime numbers $p,q,r$ with $p\neq q$, and any positive integer $n$, the Dirichlet polynomial $p(1+\frac{1}{r^s})^n+q(2+\frac{1}{r^s})^n$ is irreducible over $\mathbb{Q}$. 
To prove this, one may apply Corollary \ref{CorolarpqSchone} to the pair $(F,G)$ with $F(s)=1+\frac{1}{r^s}$ and $G(s)=2+\frac{1}{r^s}$. 

10) For every integers $m,n$ with $3\nmid m$ and $n\geq 2$, and every integers $b,c$ with $b\not\equiv c\pmod 3$, the Dirichlet polynomial $m(-1+\frac{1}{2^s})^n+3(-b+\frac{c}{2^s})$ is irreducible over $\mathbb{Q}$. Here one may apply Corollary \ref{CorolarGradePrime} with $k=1$, $F_1(s)=-1+\frac{1}{2^s}$, $p=3$, $G(s)=-b+\frac{c}{2^s}$ and $a=-2$.

11) Let $\{p_1,p_2,\dots ,p_{2n}\}$ and $\{q_1,q_2,\dots ,q_{2n}\}$ be two sets of prime numbers, not necessarily disjoint. Then for every $a,b\in \mathbb{C}\setminus \{ 0\} $, the multivariate Dirichlet polynomial $f(s_{1},\dots ,s_{n})=\frac{a}{(p_{1}^{q_1})^{s_{1}}\cdots (p_{n}^{q_n})^{s_{n}}}+\frac{b}{(p_{n+1}^{q_{n+1}})^{s_{1}}\cdots (p_{2n}^{q_{2n}})^{s_{n}}}$ is absolutely irreducible. Here we may apply Theorem \ref{SumaDubla}, since the multiplicities $q_1,q_2,\dots ,q_{2n}$ are obviously relatively prime.

12) Let $f(s,t)=\frac{a}{(p_1^{q_1})^s(p_2^{q_2})^t}+\frac{b}{(p_3^{q_3})^s(p_4^{q_4})^t}+\frac{c}{(p_5^{q_5})^s(p_6^{q_6})^t}$ with $p_1,\dots ,p_6,q_1,\dots,q_6$ distinct prime numbers, and $a,b,c\in\mathbb{C}\setminus\{0\}$. Let us denote $p_i^{q_i}$ by $r_i$ for $i=1,\dots ,6$, and assume without loss of generality that $r_1<r_3<r_5$. If
\[
\log _{\frac{r_4}{r_2}} \Bigl(\frac{r_3}{r_1}\Bigr)\neq \log _{\frac{r_6}{r_4}} \Bigl(\frac{r_5}{r_3}\Bigr),
\]
then $f$ is absolutely irreducible. Observe first that $f$ is algebraically primitive. We then see that $S_f^{log}=(\mathbf{v_1},\mathbf{v_2},\mathbf{v_3})$ with $\mathbf{v_1}=(\log r_1,\log r_2)$, $\mathbf{v_2}=(\log r_3,\log r_4)$ and $\mathbf{v_3}=(\log r_5,\log r_6)$, so condition $\overline{\gcd}(\mathbf{v_3}-\mathbf{v_1},\mathbf{v_3}-\mathbf{v_2})=1$ in Theorem \ref{logarithmicGao} is obviously satisfied. Since the condition above involving logarithms prevents $\mathbf{v_1},\mathbf{v_2}$ and $\mathbf{v_3}$ to be collinear, the conclusion follows by Theorem \ref{logarithmicGao} and Theorem \ref{critOstroDir}.

13) Let $f(s,t)=1+(1+\frac{1}{2^t})\frac{1}{8^s}+(1+\frac{1}{32^t})\frac{1}{16^s}$. We will conclude that $f$ is irreducible by applying Theorem \ref{EffectiveStepSchmidt}. We may write $f(s,t)=\frac{a_1(t)}{1^s}+\frac{a_2(t)}{8^s}+\frac{a_3(t)}{16^s}$ with $a_1=1$, $a_8=1+\frac{1}{2^t}$ and $a_{16}=1+\frac{1}{32^t}$. In this case we have $d_1=\gcd(\nu_2(16)-\nu _2(1))=4$ and $d_2=\gcd(\nu_2(32)-\nu _2(1))=5$, so $\gcd(d_1,d_2)=1$, and condition $\deg a_{i}< (\deg a_{m})^{\log _{\frac{n}{m}}\frac{n}{i}}(\deg a_{n})^{1-\log _{\frac{n}{m}}\frac{n}{i}}$ for $m=1$, $i=8$ and $n=16$ reduces to $2<1\cdot 32^{\frac{3}{4}}=13.4543...$, which obviously holds. Note that the same conclusion holds if we replace $a_8(t)$ by any Dirichlet polynomial in $t$ of degree at most $13$.

\setcounter{section}{1} 
\renewcommand{\thesection}{\Alph{section}}

\section*{Appendix A. Variations on a theme by Sch\"onemann}
\label{Schone}

Sch\"onemann \cite{Schonemann} proved the following irreducibility criterion for polynomials with integer coefficients, that admits a straightforward generalization to unique factorization domains.
\medskip

{\bf  Irreducibility criterion of Sch\" onemann} {\em  
Suppose that a polynomial $f(X)\in \mathbb{Z}[X]$ has the form $f(X)=\phi (X)^e+pM(X)$, where $p$ is a prime number, $\phi(X)$ is an irreducible polynomial modulo $p$, and $M(X)$ is a polynomial relatively prime to $\phi (X)$ modulo $p$, with $\deg M<\deg f$. Then $f$ is irreducible over $\mathbb{Q}$.
}

\medskip

For some generalizations of this result we refer the reader to Panaitopol and \c Stef\u anescu \cite{PanaitopolStefanescu1}, \cite{PanaitopolStefanescu2}, and Jakhar and Sangwan \cite{Jakhar3}, for instance.

In this section we will provide several irreducibility criteria for some classes of linear combinations of Dirichlet polynomials, that may be regarded as analogous results of Sch\"onemann's criterion and its variations. Our first result that relies on Sch\"onemann's ideea is the following.

\begin{theorem}\label{calaSchonemann}
Let $f(s)=qF(s)^{n}+pG(s)$ where $n$ is a positive integer, $F,G$ are Dirichlet polynomials with coefficients in a unique factorization domain $R$, $p$ is a prime element of $R$, and $q$ a nonzero element of $R$. If the leading coefficient of $f$ is not divisible by $p$, $F$ is irreducible modulo $p$, and $F$ and $G$ are relatively prime modulo $p$, then $f$ is irreducible over $Q(R)$, the quotient field of $R$.
\end{theorem}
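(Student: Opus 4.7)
The approach is to adapt Sch\"onemann's classical argument to the Dirichlet polynomial setting. Because $DP(R)[s]$ is a UFD (isomorphic to a polynomial ring over $R$ in countably many indeterminates), Gauss's lemma allows us to assume, toward a contradiction, that $f = gh$ in $DP(R)[s]$ with $g,h$ both nonconstant. The leading coefficient of $f$ is the product of the leading coefficients of $g$ and $h$ and, by hypothesis, is not divisible by $p$; hence neither leading coefficient is, so reduction modulo $p$ preserves degrees, and in $DP(R/p)[s]$ we have $\deg \bar g = \deg g \ge 2$, $\deg \bar h = \deg h \ge 2$, and $\bar g\,\bar h = \bar q\,\bar F^{n}$.

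Passing, if necessary, to the field of fractions $K$ of the integral domain $R/p$, the ring $DP(K)[s]$ is a UFD in which $\bar F$ remains irreducible. Unique factorization forces $\bar g = \alpha\,\bar F^{i}$ and $\bar h = \beta\,\bar F^{n-i}$ for some $0 \le i \le n$ and some $\alpha,\beta \in K^{\times}$. The boundary cases $i=0$ and $i=n$ are ruled out immediately: either would make one of $\bar g,\bar h$ a constant, contradicting the degree bound above.

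So $0 < i < n$, and $\bar F$ divides both $\bar g$ and $\bar h$ in $DP(R/p)[s]$. Lifting this to $DP(R)[s]$, we may write $g = F g_1 + p g_2$ and $h = F h_1 + p h_2$. Expanding $gh$ and equating with $qF^n + pG$ yields
\[
F^{2}\bigl(g_1 h_1 - q F^{n-2}\bigr) \;=\; p\bigl[G - F(g_1 h_2 + g_2 h_1) - p\,g_2 h_2\bigr].
\]
Since $p$ is prime in $DP(R)[s]$ and $p \nmid F$ (as $\bar F \neq 0$), $p$ divides $g_1 h_1 - q F^{n-2}$; writing the quotient as $E$ and substituting back, after cancelling one factor of $p$ we obtain
\[
G \;=\; F\bigl(FE + g_1 h_2 + g_2 h_1\bigr) + p\,g_2 h_2,
\]
whence $\bar F \mid \bar G$ in $DP(R/p)[s]$, contradicting the coprimality of $F$ and $G$ modulo $p$. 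The case $n=1$ is a strict subcase of the boundary analysis and requires no new ideas.

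The main technical hurdle is ensuring that the factorization $\bar g = \alpha\,\bar F^{i}$, which a priori lives only in $DP(K)[s]$, descends to a divisibility in $DP(R/p)[s]$ strong enough to lift back to the congruence $g \equiv F g_1 \pmod{p}$ in $DP(R)[s]$. The degree-preservation observation (from the leading coefficient hypothesis) together with the primality of $p$ in $DP(R)[s]$ carry this through.
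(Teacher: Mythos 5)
Your proof mirrors the paper's argument in its essential structure: assume $f=gh$, reduce mod $p$, use the irreducibility of $\bar F$ to conclude that each factor is a nonzero constant times a positive power of $\bar F$, lift back to $DP(R)[s]$, expand, strip one factor of $p$, and read off $\bar F\mid\bar G$, contradicting the coprimality hypothesis. Your variations are minor and perfectly serviceable: you extract only a single factor of $F$ from each of $g,h$ (writing $g=Fg_1+pg_2$), where the paper keeps the full powers $F_1=cF^a+ph_1$, $F_2=dF^b+ph_2$; and you cancel the spare factor of $p$ by invoking primality of $p$ in $DP(R)[s]$, where the paper instead introduces an explicit $e$ with $cd+pe=q$. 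Your version forces you to treat $n=1$ separately because of the $F^{n-2}$, which you do correctly.

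The one place your write-up is looser than the paper's is precisely the step you flag in your closing paragraph, and your proposed fix does not actually close it. You obtain $\bar g=\alpha\bar F^{i}$ only over $K=Q(R/p)$, and to lift to $g\equiv Fg_1\pmod p$ you need $\bar F\mid\bar g$ in $DP(R/p)[s]$, i.e. that $\alpha\bar F^{i-1}$ has coefficients in $R/p$. Degree preservation tells you nothing about this, and primality of $p$ in $DP(R)[s]$ enters only later, when you divide the expanded identity by $p$; neither effects the descent of the scalar $\alpha\in K^{\times}$ to $R/p$. The paper sidesteps the detour through $K$ entirely by asserting $\bar F_1=\bar c\,\bar F^{a}$ directly in $DP(R/p)[s]$ with $\bar c\in R/p$ (which, strictly speaking, rests on $\bar F$ being prime in $DP(R/p)[s]$ -- a point the paper also leaves tacit when $R/p$ fails to be a UFD). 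So the underlying issue is shared, but your phrasing introduces an extra descent step and then mis-attributes what resolves it; either argue the factorization over $R/p$ directly, as the paper does, or supply a genuine content/Gauss-lemma argument for the descent.
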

\begin{proof}\ The proof is similar to that of Sch\"onemann's irreducibility criterion. For an arbitrary Dirichlet polynomial $h(s)$ we will denote by $\overline{h}(s)$ the Dirichlet polynomial obtained from $h$ by reducing its coefficients modulo $p$.
Let us first note that our assumption that the leading coefficient of $f$ is not divisible by $p$ forces the equality $\deg G\leq (\deg F)^{n}$ and also the fact that $q$ is not divisible by $p$. We also note that the leading coefficient of $F$ is allowed to be divisible by $p$, but as $F$ is irreducible modulo $p$, $f$ cannot have all the coefficients divisible by $p$. By Proposition \ref{prop1} we may assume that the degree of $f$ is composite. Now let us assume to the contrary that $f(s)=F_{1}(s)F_{2}(s)$, with $F_{1},F_{2}$ Dirichlet polynomials of degrees $r\geq 2$ and $t\geq 2$, respectively.  By reducing modulo $p$ the relation $f(s)=qF(s)^{n}+pG(s)=F_{1}(s)F_{2}(s)$, one obtains 
\begin{equation}\label{descompun}
\overline{f}(s)=\overline{q}\cdot \overline{F}(s)^{n}=\overline{F_{1}}(s)\overline{F_{2}}(s).
\end{equation}

Since the leading coefficient of $f$ is not divisible by $p$, the same will hold for the leading coefficients of the hypothetical factors $F_{1}$ and $F_{2}$, hence $\deg \overline{F}_{1}=\deg F_{1}\geq 2$ and $\deg \overline{F}_{2}=\deg F_{2}\geq 2$. Since $\overline{F}$ is irreducible modulo $p$, we deduce by (\ref{descompun}) that
\begin{equation}\label{F1F2}
\overline{F_{1}}(s)=\overline{c}\cdot \overline{F}(s)^{a}\quad {\rm and}\quad \overline{F_{2}}(s)=\overline{d}\cdot \overline{F}(s)^{b}
\end{equation}
for two integers $a,b\geq 0$ with $a+b=n$, and two elements $c,d\in R$, with $\overline{c}\overline{d}=\overline{q}$. 
We note here that in view of (\ref{F1F2}), none of $a$ and $b$ can be zero, as $\deg \overline{F}_{1}\geq 2$ and $\deg \overline{F}_{2}\geq 2$, so for $n=1$ we already have a contradiction. Therefore, in what follows we may assume that $a\geq 1$, $b\geq 1$ and $n\geq 2$.
As we shall see, the fact that both $a$ and $b$ are positive will be of crucial importance for the remaining part of the proof. Next, we see that (\ref{F1F2}) implies the existence of two Dirichlet polynomials $h_{1},h_{2}$ such that
\[
F_{1}(s)=cF(s)^{a}+ph_{1}(s)\quad {\rm and}\quad F_{2}(s)=dF(s)^{b}+ph_{2}(s),
\] 
so $qF^{n}+pG=(cF^{a}+ph_{1})(dF^{b}+ph_{2})=cdF^{n}+pch_{2}F^{a}+pdh_{1}F^{b}
+p^{2}h_{1}h_{2}$, that is
\begin{equation}\label{semifinal1}
(q-cd)F^{n}+pG=pch_{2}F^{a}+pdh_{1}F^{b}
+p^{2}h_{1}h_{2}.
\end{equation}
Recalling now that $\overline{c}\overline{d}=\overline{q}$, there exists an element $e\in R$ such that
\begin{equation}\label{Bezout}
cd+pe=q.
\end{equation}
Combining (\ref{semifinal1}) and (\ref{Bezout}), we deduce after division by $p$ that
\begin{equation}\label{semifinal2}
G(s)=ch_{2}(s)F(s)^{a}+dh_{1}(s)F(s)^{b}-eF(s)^{n}
+ph_{1}(s)h_{2}(s).
\end{equation}
Reducing now (\ref{semifinal2}) modulo $p$, one obtains
\[
\overline{G}=\overline{c}\cdot \overline{h}_{2}\cdot \overline{F}^{a}
+\overline{d}\cdot \overline{h}_{1}\cdot \overline{F}^{b}
-\overline{e}\cdot \overline{F}^{n},
\]
with the right term being divisible by $\overline{F}$, as both $a$ and $b$ are positive. This is obviously a contradiction, since $F$ and $G$ are supposed to be relatively prime modulo $p$, and this completes the proof of the theorem. 
\end{proof}

In particular, as a corollary of the proof of Theorem \ref{calaSchonemann}, we have the following result.

\begin{corollary}\label{CorolarcalaSchonemann}
Let $f(s)=qF(s)^{n}+pG(s)$ where $n$ is a positive integer, $F,G$ are Dirichlet polynomials with integer coefficients, $p$ a prime number, and $q$ a nonzero integer. If the leading coefficient of $f$ is not divisible by $p$, $F$ is irreducible modulo $p$, and there exists an integer $m$ with $p\mid F(m)$ and $p\nmid G(m)$, then $f$ is irreducible over $\mathbb{Q}$.
\end{corollary}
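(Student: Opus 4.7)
The plan is to reuse the argument of Theorem \ref{calaSchonemann} essentially verbatim, diverging from it only at the very last step. Suppose, for a contradiction, that $f(s)=F_1(s)F_2(s)$ with $F_1,F_2\in DP(\mathbb{Z})[s]$ nonconstant. Because the leading coefficient of $f$ is not divisible by $p$, the same holds for the leading coefficients of $F_1$ and $F_2$, so $\deg\overline{F_i}=\deg F_i$ for $i=1,2$. Reducing the identity $qF^n+pG=F_1F_2$ modulo $p$ and invoking the irreducibility of $\overline{F}$ modulo $p$, we obtain $\overline{F_1}=\overline{c}\cdot\overline{F}^a$ and $\overline{F_2}=\overline{d}\cdot\overline{F}^b$ for some integers $a,b\geq 1$ with $a+b=n$ and some $c,d\in\mathbb{Z}$ with $\overline{cd}=\overline{q}$ (in particular $n\geq 2$, as the case $n=1$ is ruled out exactly as in the proof of Theorem \ref{calaSchonemann}).

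Setting $h_1:=(F_1-cF^a)/p$ and $h_2:=(F_2-dF^b)/p$, both of which lie in $DP(\mathbb{Z})[s]$, and $e:=(q-cd)/p\in\mathbb{Z}$, I would substitute into $qF^n+pG=F_1F_2$ and divide by $p$ to arrive (exactly as in the proof of Theorem \ref{calaSchonemann}) at the identity of Dirichlet polynomials
\[
G(s)=c\,h_2(s)F(s)^{a}+d\,h_1(s)F(s)^{b}-e\,F(s)^{n}+p\,h_1(s)h_2(s).
\]

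The divergence from the theorem's proof occurs now: instead of exploiting a global coprimality between $\overline{F}$ and $\overline{G}$, I would evaluate the displayed identity at $s=m$. Since $h_1,h_2\in DP(\mathbb{Z})[s]$, each of $F(m),G(m),h_1(m),h_2(m)$ is a well-defined rational number; and since $a,b,n\geq 1$ and $\nu_p(F(m))\geq 1$ by hypothesis, each of the three summands $c\,h_2(m)F(m)^{a}$, $d\,h_1(m)F(m)^{b}$, $-e\,F(m)^{n}$ on the right-hand side has $p$-adic valuation at least $1$, while the last summand $p\,h_1(m)h_2(m)$ obviously does as well. Therefore $\nu_p(G(m))\geq 1$, contradicting the hypothesis $p\nmid G(m)$.

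Since the algebraic heavy lifting was already carried out in Theorem \ref{calaSchonemann}, no genuine obstacle arises; the only small point deserving attention is verifying that $h_1,h_2$ have integer coefficients, which is immediate from the congruences $F_1\equiv cF^a\pmod p$ and $F_2\equiv dF^b\pmod p$ that underlie their construction, so that evaluation at the integer $m$ makes sense in $\mathbb{Q}$ and the $p$-adic valuation argument is legitimate.
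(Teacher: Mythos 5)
Your proposal matches the paper's proof: the paper explicitly states that one follows the argument of Theorem \ref{calaSchonemann} verbatim up to identity (\ref{semifinal2}) and then obtains the contradiction by setting $s=m$, which is exactly what you do. The one small point you flag at the end — that $h_1,h_2$ have integer coefficients so that the evaluation at $m$ and the $p$-adic valuation argument are legitimate — is a reasonable thing to check, and it is correctly resolved by the congruences $F_i\equiv(\text{scalar})\cdot F^{\,\cdot}\pmod p$ that define $h_1$ and $h_2$.
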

\begin{proof}\ The proof follows exactly the same lines as in the case of Theorem \ref{calaSchonemann}, until we reach (\ref{semifinal2}). The contradiction follows now by taking $s=m$ in equation (\ref{semifinal2}).
\end{proof}

We will also prove here a similar result that provides symmetric irreducibility conditions with respect to two non-associated prime elements of $R$.
\begin{theorem}\label{pqSchone}
Let $f(s)=pF(s)^{n}+qG(s)^{n}$ with $n$ a positive integer, $F,G$ monic nonconstant Dirichlet polynomials with coefficients in a unique factorization domain $R$ with $\deg F=\deg G$, and $p,q$ non-associated prime elements of $R$. If $F$ is irreducible modulo $q$, $G$ is irreducible modulo $p$, and 
$F\not\equiv G\ (\mbox{\em mod}\ pq)$, then $f$ is irreducible over $Q(R)$.
\end{theorem}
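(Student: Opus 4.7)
The plan is to mimic the argument of Theorem \ref{calaSchonemann}, exploiting the symmetric roles of $p$ and $q$ by reducing modulo each of the two primes in turn. Suppose, aiming for a contradiction, that $f(s)=f_1(s)f_2(s)$ with $f_1,f_2\in DP(R)[s]$ nonconstant, let $d:=\deg F=\deg G\ge 2$, and, for any $h\in DP(R)[s]$, let $\overline{h}$ denote the Dirichlet polynomial obtained by reducing the coefficients of $h$ (modulo $p$ or modulo $q$, as the context will make clear). Since $p$ and $q$ are non-associated primes, $\overline{q}$ is nonzero in $R/p$ (otherwise $q=pu$ with $u\in R$ a unit, as $q$ is irreducible), and symmetrically $\overline{p}\ne 0$ in $R/q$. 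Reducing $f=pF^n+qG^n$ modulo $p$ yields $\overline{f_1}\,\overline{f_2}=\overline{q}\,\overline{G}^{\,n}$ in the Dirichlet polynomial ring over $R/p$, which is a UFD. Because $\overline{G}$ is irreducible modulo $p$, unique factorization forces
\[
\overline{f_1}=\overline{c_1}\,\overline{G}^{\,a},\qquad \overline{f_2}=\overline{c_2}\,\overline{G}^{\,b},\qquad a+b=n,\quad \overline{c_1}\,\overline{c_2}=\overline{q},
\]
for some $c_1,c_2\in R$. Reducing modulo $q$ analogously yields $\overline{f_1}=\overline{d_1}\,\overline{F}^{\,a'}$ and $\overline{f_2}=\overline{d_2}\,\overline{F}^{\,b'}$ with $a'+b'=n$ and $\overline{d_1}\,\overline{d_2}=\overline{p}$.

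Next I would compare degrees. Since $F,G$ are monic of degree $d$, their leading coefficients survive both reductions, so $\deg\overline{F}=\deg\overline{G}=d$ and hence $\deg\overline{f_i}=d^{a_i}=d^{a'_i}$ for $i=1,2$. Combining the chain $d^n=\deg\overline{f_1}\cdot\deg\overline{f_2}\le\deg f_1\cdot\deg f_2=\deg f\le d^n$ forces equality throughout, giving $\deg f_i=d^{a_i}$ and $a=a'$, $b=b'$. As $f_1$ and $f_2$ are nonconstant and $d\ge 2$, both $a$ and $b$ are at least $1$. I would then lift the mod-$p$ factorizations to $f_1=c_1G^a+ph_1$ and $f_2=c_2G^b+ph_2$ with $h_1,h_2\in DP(R)[s]$, and write $c_1c_2=q+pe$ for some $e\in R$ (possible since $c_1c_2\equiv q\pmod p$). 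Expanding $f_1f_2=pF^n+qG^n$ and dividing by $p$ gives the identity
\[
F^n=eG^n+c_1h_2G^a+c_2h_1G^b+ph_1h_2.
\]
Reducing this modulo $p$ and using $a,b\ge 1$, every term on the right-hand side is divisible by $\overline{G}$, so $\overline{G}\mid\overline{F}^{\,n}$ in the UFD of Dirichlet polynomials over $R/p$. Irreducibility of $\overline{G}$ then gives $\overline{G}\mid\overline{F}$, and since $F$ and $G$ are monic of the same degree $d$, this divisibility is in fact an equality $\overline{F}=\overline{G}$, i.e.\ $F\equiv G\pmod p$.

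The argument is symmetric in $p$ and $q$, so running it with the two primes interchanged yields $F\equiv G\pmod q$. Because $p$ and $q$ are non-associated primes in the UFD $R$, they are coprime, so any element of $R$ divisible by both $p$ and $q$ is divisible by $pq$; applying this coefficientwise to $F-G$ produces $F\equiv G\pmod{pq}$, contradicting the hypothesis. The main technical obstacle is the careful bookkeeping of degrees needed to conclude that $a=a'$, together with the upgrade from $\overline{G}\mid\overline{F}$ to the equality $\overline{F}=\overline{G}$; both steps hinge crucially on $F$ and $G$ being \emph{monic} of the same degree.
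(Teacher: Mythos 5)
Your proof is correct and follows essentially the same approach as the paper's: reduce modulo one of the primes, use unique factorization and irreducibility to write each factor as a power of the reduced $G$ (resp.\ $F$) up to a unit, lift, divide by the prime, reduce again to force divisibility, upgrade to equality via monicity, and finally run the argument symmetrically to get congruence modulo $pq$, contradicting the hypothesis. The only real difference is cosmetic: you establish that the exponents $a,b$ are both $\geq 1$ via the degree chain $d^n=\deg\overline{f_1}\cdot\deg\overline{f_2}\leq\deg f_1\cdot\deg f_2=\deg f\leq d^n$, whereas the paper gets the same conclusion more directly by observing that the leading coefficient of $f$ is $p+q$, which is coprime to both $p$ and $q$, so degrees are preserved under reduction; your discussion of $a=a'$ is correct but never actually used.
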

\begin{proof}\  For a Dirichlet polynomial $h$ we will denote by $\tilde{h}$ and $\overline{h}$ the Dirichlet polynomials obtained by reducing the coefficients of $h$ modulo $p$ and modulo $q$, respectively. Here the leading coefficient of $f$ is $p+q$, which is coprime to both $p$ and $q$, and $\deg f=n\deg F$, which is a prime number only if $n=1$ and $\deg F$ is prime. Assume again that $\deg f$ is composite and that $f(s)=F_{1}(s)F_{2}(s)$, with $F_{1},F_{2}$ Dirichlet polynomials of degrees $r\geq 2$ and $t\geq 2$, respectively.  By reducing modulo $q$ the relation $f(s)=pF(s)^{n}+qG(s)^{n}=F_{1}(s)F_{2}(s)$, one obtains 
\begin{equation}\label{descompun2}
\overline{f}(s)=\overline{p}\cdot \overline{F}(s)^{n}=\overline{F_{1}}(s)\overline{F_{2}}(s).
\end{equation}
Since the leading coefficient of $f$ is not divisible by $p$, the same will hold for the leading coefficients of the hypothetical factors $F_{1}$ and $F_{2}$, hence $\deg \overline{F}_{1}=\deg F_{1}\geq 2$ and $\deg \overline{F}_{2}=\deg F_{2}\geq 2$. As before, since $\overline{F}$ is irreducible modulo $q$, we deduce by (\ref{descompun2}) that
\begin{equation}\label{F1F2nou}
\overline{F_{1}}(s)=\overline{c}\cdot \overline{F}(s)^{a}\quad {\rm and}\quad \overline{F_{2}}(s)=\overline{d}\cdot \overline{F}(s)^{b}
\end{equation}
for two integers $a,b\geq 0$ with $a+b=n$, and two elements $c,d\in R$, with $\overline{c}\overline{d}=\overline{p}$. 
By (\ref{F1F2nou}) we see that in this case too, none of $a$ and $b$ can be zero, for $\deg \overline{F}_{1}\geq 2$ and $\deg \overline{F}_{2}\geq 2$, so for $n=1$ we already have a contradiction. Therefore, in what follows we will assume that $a\geq 1$, $b\geq 1$ and $n\geq 2$.
By (\ref{F1F2nou}) there exist two Dirichlet polynomials $h_{1},h_{2}$ such that
\[
F_{1}(s)=cF(s)^{a}+qh_{1}(s)\quad {\rm and}\quad F_{2}(s)=dF(s)^{b}+qh_{2}(s),
\] 
so $pF^{n}+qG^{n}=(cF^{a}+qh_{1})(dF^{b}+qh_{2})=cdF^{n}+qch_{2}F^{a}+qdh_{1}F^{b}
+q^{2}h_{1}h_{2}$, which yields
\begin{equation}\label{semifinal1nou}
(p-cd)F^{n}+qG^{n}=qch_{2}F^{a}+qdh_{1}F^{b}
+q^{2}h_{1}h_{2}.
\end{equation}
As before, since $\overline{c}\overline{d}=\overline{p}$, there exists an element $e\in R$ such that $p-cd=qe$, so by (\ref{semifinal1nou}) we deduce after division by $q$ that
\begin{equation}\label{semifinal2nou}
G^{n}=ch_{2}F^{a}+dh_{1}F^{b}-eF^{n}
+qh_{1}h_{2}.
\end{equation}
Reducing now (\ref{semifinal2nou}) modulo $q$, one obtains
\[
\overline{G}^{n}=\overline{c}\cdot \overline{h}_{2}\cdot \overline{F}^{a}
+\overline{d}\cdot \overline{h}_{1}\cdot \overline{F}^{b}
-\overline{e}\cdot \overline{F}^{n},
\]
with the right side being divisible by $\overline{F}$, as both $a$ and $b$ are positive. This shows that 
$\overline{G}^{n}$ must be divisible by $\overline{F}$. Recalling that $F$ is irreducible modulo $q$, we see that $\overline{G}$ must be divisible by $\overline{F}$. Now, since $\deg \overline{F}=\deg \overline{G}$, we actually have $\overline{G}=\overline{r}\cdot\overline{F}$ for some element $r\in R$ not divisible by $q$. As both $F$ and $G$ are monic, we conclude that $\overline{r}=\overline{1}$, so
$\overline{F}=\overline{G}$. Absolutely similar, by reducing the coefficients modulo $p$, one proves that $\tilde{F}=\tilde{G}$. This obviously cannot hold, as $F\not\equiv G\ (\mbox{\rm mod}\ pq)$, and this completes the proof. 
\end{proof}

In particular, we have the following irreducibility criterion for Dirichlet polynomials of prime power degree.
\begin{corollary}\label{CorolarpqSchone}
Let $f(s)=pF(s)^{n}+qG(s)^{n}$ with $p,q$ different positive primes, $n$ a positive integer, and $F,G$ monic Dirichlet polynomials with integer coefficients of equal degree $r$, with $r$ a prime number. If 
$F\not\equiv G\ (\mbox{\em mod}\ pq)$, then $f$ is irreducible over $\mathbb{Q}$.
\end{corollary}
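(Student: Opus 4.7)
\textbf{Proof proposal for Corollary \ref{CorolarpqSchone}.}

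The plan is to deduce the result as an essentially immediate consequence of Theorem \ref{pqSchone}, by verifying its three hypotheses in the present setting. Theorem \ref{pqSchone} requires: (i) $F$ to be irreducible modulo $q$; (ii) $G$ to be irreducible modulo $p$; (iii) $F \not\equiv G \pmod{pq}$. Condition (iii) is given, so only (i) and (ii) need to be checked.

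First I would observe that since $F$ is monic of degree $r$ as a Dirichlet polynomial with integer coefficients, its leading coefficient $1$ is coprime to $q$. Hence the reduction $\overline{F}$ of $F$ modulo $q$ is a Dirichlet polynomial of degree exactly $r$ with coefficients in the field $\mathbb{F}_q$, which is a unique factorization domain. Since $r$ is prime by hypothesis, Proposition \ref{prop1} applies in $DP(\mathbb{F}_q)[s]$ and forces $\overline{F}$ to be irreducible, so condition (i) is verified. The symmetric argument, applied to $G$ monic of prime degree $r$ reduced modulo $p$, shows that $\overline{G} \in DP(\mathbb{F}_p)[s]$ is irreducible by Proposition \ref{prop1}, which is condition (ii).

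With all three hypotheses of Theorem \ref{pqSchone} in hand (using $R = \mathbb{Z}$ and $Q(R)=\mathbb{Q}$, where $p$ and $q$ are non-associated prime elements since they are distinct positive primes), the theorem directly yields that $f(s)=pF(s)^n+qG(s)^n$ is irreducible over $\mathbb{Q}$, completing the proof.

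There is essentially no main obstacle here: the corollary is designed so that the assumption that $r$ is prime automatically supplies the modular irreducibility conditions of Theorem \ref{pqSchone} through Proposition \ref{prop1}. The only subtle point worth double-checking is that the monicity of $F$ and $G$ is genuinely used (to ensure that the reductions modulo $p$ and modulo $q$ do not drop the degree and thus remain of prime degree $r$); without monicity, one would need to additionally require that the leading coefficients of $F$ and $G$ be coprime to $pq$.
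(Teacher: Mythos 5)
Your proof is correct and matches the paper's one-line argument exactly: both deduce the corollary from Theorem \ref{pqSchone} by using Proposition \ref{prop1} to obtain irreducibility of $F$ modulo $q$ and of $G$ modulo $p$ from the primality of the degree $r$. Your added remark that monicity keeps the reduced polynomials at degree $r$ is a correct and worthwhile clarification that the paper leaves implicit.
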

\begin{proof}\ By Proposition \ref{prop1}, $F$ is irreducible modulo $q$ and $G$ is irreducible modulo $p$, as both $F$ and $G$ have prime degree.  
\end{proof}

One may find similar symmetric irreducibility conditions that do not require $F$ and $G$ to be monic, with slightly more involved statements.
We will end this section with a result for Dirichlet polynomials with integer coefficients, that uses information on the values that such series take at a certain negative integral argument. 

\begin{theorem}\label{gradeprime}
Let $f(s)=mF_{1}(s)^{n_{1}}\cdots F_{k}(s)^{n_{k}}+pG(s)$ with $F_{1},\dots ,F_{k},G$ Dirichlet polynomials with integer coefficients, $m$ a nonzero integer, $p$ a prime number, $n_{1},\dots , n_{k}$ positive integers, and assume that $F_{1},\dots ,F_{k}$ are irreducible modulo $p$. If the leading coefficient of $f$ is not divisible by $p$, and there exists a negative integer $a$ such that $F_{1}(a),\dots , F_{k}(a)$ are all divisible by $p$, but $G(a)$ is not divisible by $p$, then $f(s)$ is irreducible over $\mathbb{Q}$.
\end{theorem}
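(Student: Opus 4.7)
I will adapt the Sch\"onemann-style argument behind Theorem \ref{calaSchonemann} to the multi-factor setting and combine it with the evaluation trick of Corollary \ref{CorolarcalaSchonemann}. Suppose, for a contradiction, that $f(s) = H_1(s)H_2(s)$ with $H_1,H_2 \in DP(\mathbb{Z})[s]$ nonconstant. Since the leading coefficient of $f$ equals the product of the leading coefficients of $H_1$ and $H_2$, the hypothesis that $p$ does not divide the former forces $p$ not to divide the leading coefficients of $H_1$ or $H_2$. Consequently, the reductions $\overline{H_1}$ and $\overline{H_2}$ modulo $p$ retain their degrees and, being nonconstant in $DP(\mathbb{Z})[s]$, remain nonconstant modulo $p$. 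The same comparison of leading coefficients in $f = mF_1^{n_1}\cdots F_k^{n_k} + pG$ also forces $p \nmid m$.

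Working in $DP(\mathbb{F}_p)[s] \cong \mathbb{F}_p[X_1,X_2,\dots]$, which is a unique factorization domain whose units are the nonzero constants in $\mathbb{F}_p$, the identity
\[
\overline{H_1}\,\overline{H_2} \;=\; \overline{m}\,\overline{F_1}^{n_1}\cdots \overline{F_k}^{n_k}
\]
together with the irreducibility of each $\overline{F_i}$ yields nonnegative integers $a_i, b_i$ with $a_i+b_i = n_i$ and integers $c,d$ with $cd \equiv m\pmod{p}$ such that $\overline{H_1} = \overline{c}\,\overline{F_1}^{a_1}\cdots \overline{F_k}^{a_k}$ and $\overline{H_2} = \overline{d}\,\overline{F_1}^{b_1}\cdots \overline{F_k}^{b_k}$. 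The non-constantness of $\overline{H_1}$ and $\overline{H_2}$ then forces at least one $a_i$ and at least one $b_j$ to be positive. Lifting back to $\mathbb{Z}$, write $H_1 = cF_1^{a_1}\cdots F_k^{a_k} + ph_1$ and $H_2 = dF_1^{b_1}\cdots F_k^{b_k} + ph_2$ for suitable $h_1, h_2 \in DP(\mathbb{Z})[s]$, and set $m - cd = pe$ with $e \in \mathbb{Z}$. Expanding $H_1 H_2$, equating to $mF_1^{n_1}\cdots F_k^{n_k} + pG$, and dividing through by $p$, I obtain
\[
G \;=\; -eF_1^{n_1}\cdots F_k^{n_k} \;+\; ch_2\,F_1^{a_1}\cdots F_k^{a_k} \;+\; dh_1\,F_1^{b_1}\cdots F_k^{b_k} \;+\; p\,h_1 h_2.
\]

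To finish I will evaluate both sides at the negative integer $s = a$. Each of the four summands on the right is divisible by $p$: the first because every $n_i \geq 1$ and $p \mid F_i(a)$; the second and the third because at least one $a_i$ (respectively $b_j$) is positive, contributing at least one factor $F_i(a)$ (respectively $F_j(a)$) that is a multiple of $p$; the fourth trivially. This forces $p \mid G(a)$, contradicting the hypothesis. The only delicate step, and the one where the hypothesis on the leading coefficient of $f$ is indispensable, is the assertion that $\overline{H_1}$ and $\overline{H_2}$ are both nonconstant: without it, the multiplicities $a_i$ or $b_j$ could all vanish, allowing one factor to reduce to a mere nonzero constant of $\mathbb{F}_p$, which would make one of the middle two summands into $ch_2$ or $dh_1$ alone and destroy the $p$-divisibility argument at $s=a$.
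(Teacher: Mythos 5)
Your proof is correct and follows essentially the same route as the paper's: reduce modulo $p$, invoke unique factorization in $DP(\mathbb{F}_p)[s]$ to write each factor as a unit times a product of the $\overline{F_i}$'s (with the leading-coefficient hypothesis ensuring at least one exponent is positive on each side), lift back to $\mathbb{Z}$, expand, divide by $p$, and evaluate at $s=a$ to derive the contradiction. The notation differs but every step matches the paper's argument.
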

\begin{proof}\ Our assumption that the leading coefficient of $f$ is not divisible by $p$ forces $\deg G$ to be less than or equal to $\deg F_{1}^{n_{1}}\cdots F_{k}^{n_{k}}$, and prevents $m$ to be divisible by $p$. We note that the leading coefficients of $F_{1},\dots ,F_{k}$ are allowed to be divisible by $p$, but $f\not\equiv 0 \pmod p$, as $F_{1},\dots ,F_{k}$ are irreducible modulo $p$. As in previous results, we may assume that $f$ has composite degree. Assume to the contrary that $f$ is reducible, say $f(s)=f_{1}(s)f_{2}(s)$ with $\deg f_{1}\geq 2$ and $\deg f_{2}\geq 2$. Reducing modulo $p$, we obtain $\overline{m}\overline{F_{1}}^{n_{1}}\cdots \overline{F_{k}}^{n_{k}}=\overline{f_{1}}\cdot \overline{f_{2}}$. We note that the leading coefficients of $f_1$ and $f_2$ are not divisible by $p$, so $\deg \overline{f_{1}}=\deg f_{1}$ and $\deg \overline{f_{2}}=\deg f_{2}$. Therefore there exist integers $m_{1},m_{2}$ with
$\overline{m}=\overline{m_{1}}\cdot \overline{m_{2}}$ and nonnegative integers $a_{1},\dots a_{k}$, $b_{1},\dots ,b_{k}$ with $a_{i}+b_{i}=n_{i}$ for $i=1,\dots ,k$ such that
\[
\overline{f_{1}}=\overline{m_{1}}\overline{F_{1}}^{a_{1}}\cdots \overline{F_{k}}^{a_{k}}\quad {\rm and}\quad 
\overline{f_{2}}=\overline{m_{2}}\overline{F_{1}}^{b_{1}}\cdots \overline{F_{k}}^{b_{k}}.
\]  
Observe now that we can neither have $a_{1}=\cdots =a_{k}=0$, nor $b_{1}=\cdots =b_{k}=0$, as $\deg \overline{f_{1}}\geq 2$ and $\deg \overline{f_{2}}\geq 2$. The two equalities above imply the existence of two Dirichlet polynomials $g_{1},g_{2}$ such that $f_{1}=m_{1}F_{1}^{a_{1}}\cdots F_{k}^{a_{k}}+pg_{1}$ and $f_{2}=m_{2}F_{1}^{b_{1}}\cdots F_{k}^{b_{k}}+pg_{2}$, so we have
\begin{equation}\label{paranteze}
mF_{1}^{n_{1}}\cdots F_{k}^{n_{k}}+pG=
(m_{1}F_{1}^{a_{1}}\cdots F_{k}^{a_{k}}+pg_{1})
(m_{2}F_{1}^{b_{1}}\cdots F_{k}^{b_{k}}+pg_{2}).
\end{equation}
Since $m-m_{1}m_{2}=pe$ for a certain integer $e$, we obtain by (\ref{paranteze}) after divison by $p$ that
\begin{eqnarray*}\label{parantezedesfacute}
G(s) & = & m_{1}g_{2}(s)F_{1}(s)^{a_{1}}\cdots F_{k}(s)^{a_{k}}+m_{2}g_{1}(s)F_{1}(s)^{b_{1}}\cdots F_{k}(s)^{b_{k}}\\
& & -eF_{1}(s)^{n_{1}}\cdots F_{k}(s)^{n_{k}}+pg_{1}(s)g_{2}(s).
\end{eqnarray*}
Recall now our assumption that for some integer $a$, all of $F_{1}(a),\dots , F_{k}(a)$ are divisible by $p$, while $G(a)$ is not a multiple of $p$. The desired contradiction is obtained by letting $s=a$ in our previous equality. Therefore $f$ must be irreducible over $\mathbb{Q}$. \end{proof}

\begin{corollary}\label{CorolarGradePrime}
Let $f(s)=mF_{1}(s)^{n_{1}}\cdots F_{k}(s)^{n_{k}}+pG(s)$ with $F_{1},\dots ,F_{k},G$ Dirichlet polynomials with integer coefficients, $F_{1},\dots ,F_{k}$ monic and of prime degree, $m$ a nonzero integer, $p$ a prime number, and $n_{1},\dots , n_{k}$ positive integers. If the leading coefficient of $f$ is not divisible by $p$, and there exists a negative integer $a$ such that $F_{1}(a),\dots , F_{k}(a)$ are all divisible by $p$, but $G(a)$ is not divisible by $p$, then $f(s)$ is irreducible over $\mathbb{Q}$.
\end{corollary}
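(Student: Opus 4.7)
The plan is to derive this corollary as a direct application of Theorem \ref{gradeprime}, the only task being to verify that the hypotheses of that theorem are met. All of Theorem \ref{gradeprime}'s hypotheses are assumed here verbatim except the requirement that $F_1,\dots,F_k$ be irreducible modulo $p$. So the entire proof reduces to showing that if $F_i$ is a monic Dirichlet polynomial of prime degree, then the reduction $\overline{F_i}$ modulo $p$ is irreducible in $DP(\mathbb{F}_p)[s]$.

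First I would observe that since $F_i$ is monic, its leading coefficient is $1$, which is not congruent to $0$ modulo $p$. Consequently $\deg \overline{F_i}=\deg F_i$, so $\overline{F_i}$ is a Dirichlet polynomial of prime degree over the field $\mathbb{F}_p$. Next I would invoke Proposition \ref{prop1}, whose proof is purely multiplicative in the degree (any nontrivial factorization $\deg \overline{F_i}=de$ of a prime forces $d=1$ or $e=1$, and a factor of degree $1$ is a constant $\frac{a_1}{1^s}$), and applies verbatim over any unique factorization domain, in particular over $\mathbb{F}_p$. Hence $\overline{F_i}$ is irreducible, so each $F_i$ is irreducible modulo $p$.

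Having verified this remaining hypothesis, I would apply Theorem \ref{gradeprime} directly: the leading coefficient of $f$ is not divisible by $p$ by assumption, the Dirichlet polynomials $F_1,\dots,F_k$ are irreducible modulo $p$ by the argument just given, and by hypothesis there exists a negative integer $a$ at which $p$ divides each $F_i(a)$ but $p\nmid G(a)$. The conclusion of Theorem \ref{gradeprime} then gives that $f(s)$ is irreducible over $\mathbb{Q}$.

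There is no real obstacle here; the corollary is essentially a repackaging of Theorem \ref{gradeprime} with the cleaner, more easily verifiable hypothesis ``monic of prime degree'' replacing the structural condition ``irreducible modulo $p$''. The only substantive point that must be remarked on is the mild lemma that Proposition \ref{prop1} transfers to the coefficient field $\mathbb{F}_p$, together with the observation that monicity guarantees preservation of degree under reduction modulo $p$.
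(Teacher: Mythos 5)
Your proof is correct and is essentially the same as the paper's, which also derives the corollary immediately from Theorem \ref{gradeprime} by noting that Proposition \ref{prop1} forces each $F_i$ to be irreducible modulo $p$. You spell out the intermediate observations (monicity preserves degree under reduction, and Proposition \ref{prop1} holds over $\mathbb{F}_p$) more explicitly than the paper, which is a harmless refinement.
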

\begin{proof}\ By Proposition \ref{prop1}, $F_{1},\dots ,F_{k}$ are all irreducible modulo $p$. 
\end{proof}


\medskip

{\bf Acknowledgements} This work was done in the frame of the GDRI ECO-Math. The author is grateful to M.D. Staic and C.M. Bonciocat for helpful and insightful comments ans suggestions.

\end{document}